\def\@tipextend{
	\arrowsize=.3pt
	\advance\arrowsize by .5\pgflinewidth
	\pgfarrowsleftextend{-3\arrowsize-.5\pgflinewidth}
	\pgfarrowsrightextend{.5\pgflinewidth}
}
\def\@tipoptions{
	\pgfsetdash{}{0pt}
	\pgfsetroundcap
	\pgfsetroundjoin
}
	\pgfmathsetlength{\pgfutil@tempdima}{\pgflinewidth}
	\pgfmathsetlength{\pgfutil@tempdima}{\pgflinewidth}
	\pgfmathsetlength{\pgfutil@tempdima}{\pgflinewidth}
	\pgfmathsetlength{\pgfutil@tempdima}{\pgflinewidth}
\let\csname pgf@ar@means@|\endcsname=\relax
\let\csname pgf@arrow@code@|\endcsname=\relax
	\pgfmathsetlength{\pgfutil@tempdima}{\pgflinewidth}
\def\rowsep{2.5em}
\def\colsep{2.5em}
\def\dbl{1.7}
\def\diagram{\matrix[diagram]}
\def\arrows{\path[->,font=\scriptsize]}
\newenvironment{tikzmath}{\displaymath\tikzpicture}{\endtikzpicture\enddisplaymath}
\newenvironment{tikzequation}{\equation\tikzpicture}{\endtikzpicture\endequation}
\tikzset{
	every picture/.style={
		baseline=(current bounding box.center),
		>=tip,
		line cap=round 
	},
	diagram/.style={
		inner sep=0,
		execute at begin cell=\node\bgroup\math\displaystyle,
		execute at end cell={
			\endmath\egroup;
			\coordinate (-\the\pgfmatrixcurrentrow\the\pgfmatrixcurrentcolumn) at ([yshift=2.6] \the\pgfmatrixcurrentrow\the\pgfmatrixcurrentcolumn.base west);
			\coordinate (\the\pgfmatrixcurrentrow\the\pgfmatrixcurrentcolumn -) at ([yshift=2.6] \the\pgfmatrixcurrentrow\the\pgfmatrixcurrentcolumn.base east);
		},
		cells={anchor=base},
		nodes={
			name=\the\pgfmatrixcurrentrow\the\pgfmatrixcurrentcolumn,
			inner sep=.3333em 
		},
		row sep=\rowsep,
		column sep=\colsep
	},
	vshift/.style={
		decoration={normal,raise=#1},
		decorate 
	}
}
\newcommand{\<}{\discretionary{}{}{}}
\numberwithin{equation}{section}
\theoremstyle{plain}
\newtheorem*{theorem*}{Theorem}
\newtheorem{theorem}[equation]{Theorem}
\newtheorem{proposition}[equation]{Proposition}
\newtheorem{lemma}[equation]{Lemma}
\newtheorem{corollary}[equation]{Corollary}
\theoremstyle{definition}
\newtheorem{definition}[equation]{Definition}
\newtheorem{example}[equation]{Example}
\newtheorem{notation}[equation]{Notation}
\newtheorem{remark}[equation]{Remark}
\let\scr=\mathcal
\let\bb=\mathbb
\let\rm=\mathrm
\def\Z{\bb Z}
\def\R{\bb R}
\def\Q{\bb Q}
\def\C{\bb C}
\def\A{\bb A}
\def\P{\bb P}
\def\V{\bb V}
\def\1{\mathbf 1}
\def\h{\mathrm h}
\def\G{\mathbb G}
\def\ph{\mathord-}
\def\pt{{\mathpalette\pt@{.75}}}
\def\pt@#1#2{\mathord{\scalebox{#2}{$\m@th#1\bullet$}}}
\def\eff{\mathrm{eff}}
\def\veff{\mathrm{veff}}
\def\L{\mathrm{L}{}}
\def\Spi{\mathrm{Spi}}
\newcommand{\comp}{\wedge}
\let\from=\leftarrow
\let\into=\hookrightarrow
\let\onto=\twoheadrightarrow
\let\tens=\otimes
\def\suchthat{\:\vert\:}
\DeclareMathOperator{\Sym}{Sym}
\DeclareMathOperator{\NSym}{NSym}
\def\id{\mathrm{id}}
\def\Hom{\mathrm{Hom}}
\def\Aut{\mathrm{Aut}}
\def\Map{\mathrm{Map}}
\def\MAP{\mathbf{Map}}
\DeclareMathOperator{\Spec}{Spec}
\def\Th{\mathrm{Th}}
\def\Pic{\mathrm{Pic}}
\def\GW{\mathrm{GW}}
\def\W{\mathrm W}
\def\CH{\mathrm{CH}}
\def\rB{\mathrm{Re_B}}
\def\SingB{\mathrm{Sing_B}}
\def\ul{\underline}
\def\M{\mathrm{M}}
\def\Weil{\mathrm{R}}
\def\B{\mathrm{B}}
\def\HH{\mathrm{H}}
\let\sect=\S
\def\S{\mathrm{S}}
\def\E{\mathrm{E}}
\def\Norm{\mathrm{N}}
\def\Def{\mathrm{Def}}
\def\Orb{\mathrm{Orb}}
\def\noe{\mathrm{noe}}
\def\FM{\mathrm{FM}}
\def\r{r}
\def\cst{\mathrm{const}}
\DeclareMathOperator{\fib}{fib}
\DeclareMathOperator{\cofib}{cofib}
\DeclareMathOperator{\rk}{rk}
\def\Nis{\mathrm{Nis}}
\def\Zar{\mathrm{Zar}}
\def\cdh{\mathrm{cdh}}
\def\mot{\mathrm{mot}}
\def\pre{\mathrm{pre}}
\def\et{\mathrm{\acute et}}
\DeclareMathOperator{\Char}{char}
\DeclareMathOperator{\Tr}{Tr}
\def\tr{\mathrm{tr}}
\def\QCoh{\mathrm{QCoh}{}}
\def\Ex{\mathrm{Ex}}
\def\Dist{\mathrm{Dis}}
\DeclareMathOperator{\Bl}{Bl}
\let\cat=\mathrm
\def\Gr{\mathrm{Gr}{}}
\def\MGL{\mathrm{MGL}}
\def\KGL{\mathrm{KGL}}
\def\K{\mathrm{K}{}}
\def\KH{\mathrm{KH}{}}
\def\MSL{\mathrm{MSL}}
\def\MSp{\mathrm{MSp}}
\def\SL{\mathrm{SL}}
\def\Sp{\mathrm{Sp}}
\def\Sph{\cat S\mathrm{ph}{}}
\def\H{\mathcal H}
\def\SH{\mathcal S\mathcal H}
\def\DM{\mathcal D\mathcal M}
\def\Mod{\cat{M}\mathrm{od}{}}
\def\Sch{\cat{S}\mathrm{ch}{}}
\def\SmQP{\mathrm{SmQP}{}}
\def\QP{\mathrm{QP}{}}
\def\QPCor{\mathrm{QPCor}{}}
\def\Sm{{\cat{S}\mathrm{m}}}
\def\SmQPCor{\mathrm{SmQPCor}{}}
\def\Et{\cat{E}\mathrm{t}}
\def\FEt{\mathrm{FEt}{}}
\def\Fin{\cat F\mathrm{in}}
\def\op{\mathrm{op}}
\def\oop{\textnormal{1-op}}
\def\Vect{\cat{V}\mathrm{ect}{}}
\def\Sp{\mathrm{Sp}}
\def\Pr{\mathcal{P}\mathrm{r}}
\def\St{\mathrm{St}}
\def\CAlg{\mathrm{CAlg}{}}
\def\NAlg{\mathrm{NAlg}{}}
\def\Ind{\mathrm{Ind}{}}
\def\ev{\mathrm{ev}}
\def\minus{\smallsetminus}
\def\MW{\mathrm{MW}}
\def\spi{\underline{\pi}}
\def\longinto{\lhook\joinrel\longrightarrow}
\def\Cat{\mathcal{C}\mathrm{at}{}}
\def\Set{\mathcal{S}\mathrm{et}{}}
\def\Ab{\mathcal{A}\mathrm{b}}
\def\CW{\mathcal{CW}}
\def\PSh{\mathcal{P}}
\def\Shv{\mathrm{Shv}}
\def\Aff{\mathrm{Aff}}
\def\GL{\mathrm{GL}}
\def\Tw{\mathrm{Tw}{}}
\def\fp{\mathrm{fp}}
\def\Fun{\mathrm{Fun}}
\def\flf{\mathrm{f{}lf}}
\def\fet{\mathrm{f\acute et}}
\def\all{\mathrm{all}}
\def\lleft{\mathrm{left}}
\def\rright{\mathrm{right}}
\def\fold{\mathrm{fold}}
\def\Span{\mathrm{Span}}
\def\Sect{\mathrm{Sect}}
\def\nc{\mathrm{nc}}
\def\fin{\mathrm{fin}}
\def\inj{\mathrm{inj}}
\def\equi{\mathrm{equi}}
\def\SmSep{\mathrm{SmSep}{}}
\def\Chow{\mathcal C\mathrm{how}{}}
\def\Perf{\mathrm{Perf}{}}
\def\cg{\mathrm{cg}}
\def\ft{\mathrm{ft}}
\def\SmAff{\mathrm{SmAff}{}}
\def\SmNC{\mathrm{SmNC}{}}
\def\exc{\mathrm{exc}}
\def\wnc{\mathrm{wnc}}
\def\wexc{\mathrm{wexc}}
\def\gp{\mathrm{gp}}
\def\Pos{\mathcal P\mathrm{os}}
\def\Pro{\mathrm{Pro}}
\def\Gpd{\mathrm{Gpd}}
\def\FinGpd{\mathrm{FinGpd}}
\def\fincov{\mathrm{fcov}}
\def\Top{\mathcal{T}\mathrm{op}{}}
\let\amalg=\sqcup
\def\Bar{\mathrm{Bar}{}}
\newcommand{\adj}{\rightleftarrows}
\newcommand{\sslash}{\mathbin{/\mkern-6mu/}}
\let\lim=\relax
\DeclareMathOperator*{\lim}{lim}
\DeclareMathOperator*{\colim}{colim}
\def\f{\mathrm{f}}
\def\s{\mathrm{s}}
\let\what=\relax
\title{Norms in motivic homotopy theory}
\date{\today}
\author{Tom Bachmann}
\address{Mathematisches Institut\\
LMU München\\
Theresienstr. 39\\
80333 München\\
Germany}
\email{\href{mailto:tom.bachmann@zoho.com}{tom.bachmann@zoho.com}}
\urladdr{\url{http://tom-bachmann.com}}
\author{Marc Hoyois}
\address{Fakultät für Mathematik\\
Universität Regensburg\\
Universitätsstr. 31\\
93040 Regensburg\\
Germany}
\email{\href{mailto:marc.hoyois@ur.de}{marc.hoyois@ur.de}}
\urladdr{\url{http://www.mathematik.ur.de/hoyois/}}
\begin{document}
	
\maketitle

\begin{abstract}
If $f\colon S' \to S$ is a finite locally free morphism of schemes, we construct a symmetric monoidal ``norm'' functor $f_\otimes\colon \H_\pt(S') \to \H_\pt(S)$, where $\H_\pt(S)$ is the pointed unstable motivic homotopy category over $S$. 
If $f$ is finite étale, we show that it stabilizes to a functor $f_\otimes\colon \SH(S') \to \SH(S)$, where $\SH(S)$ is the $\P^1$-stable motivic homotopy category over $S$. 
Using these norm functors, we define the notion of a \emph{normed motivic spectrum}, which is an enhancement of a motivic $\E_\infty$-ring spectrum.
The main content of this text is a detailed study of the norm functors and of normed motivic spectra, and the construction of examples. In particular: we investigate the interaction of norms with Grothendieck's Galois theory, with Betti realization, and with Voevodsky's slice filtration; we prove that the norm functors categorify Rost's multiplicative transfers on Grothendieck–Witt rings; and we construct normed spectrum structures on the motivic cohomology spectrum $\HH\Z$, the homotopy $\K$-theory spectrum $\KGL$, and the algebraic cobordism spectrum $\MGL$. 
The normed spectrum structure on $\HH\Z$ is a common refinement of Fulton and MacPherson's mutliplicative transfers on Chow groups and of Voevodsky's power operations in motivic cohomology.
\end{abstract}

\tableofcontents

\section{Introduction}
\label{sec:intro}

The goal of this paper is to develop the formalism of norm functors in motivic homotopy theory, to study the associated notion of normed motivic spectrum, and to construct many examples.

\subsection{Norm functors}
\label{sub:norm-functors}

Norm functors in unstable motivic homotopy theory were first introduced by Voevodsky in \cite[\sect5.2]{DeligneNote}, in order to extend the symmetric power functors to the category of motives \cite[\sect 2]{MEMS}. In this work, we will be mostly interested in norm functors in \emph{stable} motivic homotopy theory. For every finite étale morphism of schemes $f\colon T\to S$, we will construct a norm functor
\[
f_\otimes\colon \SH(T) \to \SH(S),
\]
where $\SH(S)$ is Voevodsky's $\infty$-category of motivic spectra over $S$ \cite[Definition 5.7]{Voevodsky:1998}.
Another functor associated with $f$ is the pushforward $f_*\colon \SH(T)\to\SH(S)$. For comparison, if $\nabla\colon S\amalg S\to S$ is the fold map, then
\[
\nabla_*\colon \SH(S\amalg S)\simeq \SH(S)\times\SH(S)\to\SH(S)
\]
is the direct sum functor, whereas
\[
\nabla_\otimes\colon \SH(S\amalg S)\simeq \SH(S)\times\SH(S)\to\SH(S)
\]
is the smash product functor. This example already shows one of the main technical difficulties in the theory of norm functors: unlike $f_*$, the functor $f_\otimes$ preserves neither limits nor colimits. 

The norm functors are thus a new basic functoriality of stable motivic homotopy theory, to be added to the list $f_\sharp$, $f^*$, $f_*$, $f_!$, $f^!$. The three covariant functors $f_\sharp$, $f_*$, and $f_\otimes$ should be viewed as parametrized versions of the sum, product, and tensor product in $\SH(S)$:
\begin{center}
	\begin{tabular}{cc}\addlinespace[5pt]
		 categorical structure & parametrized extension  \\ \midrule 
		sum & $f_\sharp$  for $f$ smooth \\
		product & $f_*$ for $f$ proper \\
		tensor product & $f_\otimes$ for $f$ finite étale \\
		\midrule\addlinespace[5pt]
	\end{tabular}
\end{center}
Of course, the sum and the product coincide in $\SH(S)$, and much more generally the functors $f_\sharp$ and $f_*$ coincide up to a ``twist'' when $f$ is both smooth and proper.

The basic interactions between sums, products, and tensor products extend to this parametrized setting. For example, the \emph{associativity} and \emph{commutativity} of these operations are encoded in the compatibility of the assignments $f\mapsto f_\sharp,f_*,f_\otimes$ with composition and base change. The \emph{distributivity} of tensor products over sums and products has a parametrized extension as well. 
To express these properties in a coherent manner, we are led to consider \emph{$2$-categories of spans}\footnote{All the $2$-categories that appear in this introduction are $(2,1)$-categories, i.e., their $2$-morphisms are invertible.}, which will be ubiquitous throughout the paper. The most fundamental example is the $2$-category $\Span(\Fin)$ of spans of finite sets: its objects are finite sets, its morphisms are spans $X\from Y\to Z$, and composition of morphisms is given by pullback. The one-point set in this $2$-category happens to be the \emph{universal commutative monoid}: if $\scr C$ is any $\infty$-category with finite products, then a finite-product-preserving functor $\Span(\Fin)\to\scr C$ is precisely a commutative monoid in $\scr C$ (we refer to Appendix~\ref{app:spans} for background on $\infty$-categories of spans and for a proof of this fact). 
For example, the $\infty$-category $\SH(S)$ is itself a commutative monoid in the $\infty$-category $\Cat_\infty$ of $\infty$-categories, under either the direct sum or the smash product. These commutative monoid structures correpond to functors
\[
\SH(S)^\oplus, \SH(S)^\otimes\colon \Span(\Fin) \to \Cat_\infty,\quad I\mapsto \SH(S)^I.
\]
Using that $\SH(S)^I\simeq \SH(\coprod_I S)$, one can splice these functors together as $S$ varies to obtain
\[
\SH^\oplus,\SH^\otimes\colon \Span(\Sch,\all,\fold) \to \Cat_\infty, \quad S\mapsto \SH(S).
\]
Here, $\Span(\Sch,\all,\fold)$ is a $2$-category whose objects are schemes and whose morphisms are spans $X\from Y\to Z$ where $X\from Y$ is arbitrary and $Y\to Z$ is a sum of fold maps (i.e., there is a finite coproduct decomposition $Z=\coprod_i Z_i$ and isomorphisms $Y\times_ZZ_i \simeq Z_i^{\sqcup n_i}$ over $Z_i$). It turns out that the functors $\SH^\oplus$ and $\SH^\otimes$ encode exactly the same information as the presheaves of symmetric monoidal $\infty$-categories $S\mapsto \SH(S)^\oplus$ and $S\mapsto\SH(S)^\otimes$. 
The basic properties of parametrized sums, products, and tensor products are then encoded by extensions of $\SH^\oplus$ and $\SH^\otimes$ to larger $2$-categories of spans:
\begin{align*}
	\SH^\amalg\colon &\Span(\Sch,\all,\mathrm{smooth}) \to \Cat_\infty,\quad S\mapsto \SH(S),\quad (X\xleftarrow f Y\xrightarrow p Z)\mapsto p_\sharp f^*,\\
	\SH^\times\colon &\Span(\Sch,\all,\mathrm{proper}) \to \Cat_\infty,\quad S\mapsto \SH(S),\quad (X\xleftarrow f Y\xrightarrow p Z)\mapsto p_* f^*,\\
	\SH^\otimes\colon &\Span(\Sch,\all,\fet) \to \Cat_\infty,\quad S\mapsto \SH(S),\quad (X\xleftarrow f Y\xrightarrow p Z)\mapsto p_\otimes f^*,
\end{align*}
where ``$\fet$'' is the class of finite étale morphisms.
A crucial difference between the first two functors and the third one is that the former are completely determined by their restriction to $\Sch^\op$ and the knowledge that the functors $f_\sharp$ and $f_*$ are left and right adjoint to $f^*$. The existence of the extensions $\SH^\amalg$ and $\SH^\times$ is then a formal consequence of the smooth base change theorem (which holds in $\SH$ by design) and the proper base change theorem (formulated by Voevodsky \cite{Voevodsky6functors} and proved by Ayoub \cite{Ayoub}).
By contrast, the functor $\SH^\otimes$ must be constructed by hand.

We will not have much use for the compactly supported pushforward $f_!$ and its right adjoint $f^!$. For completeness, we note that there is a functor
\[
\Span(\Sch,\all,\mathrm{lft}) \to \Cat_\infty,\quad S\mapsto \SH(S),\quad (X\xleftarrow f Y\xrightarrow p Z)\mapsto p_! f^*,
\]
where ``lft'' is the class of morphisms locally of finite type,
that simultaneously extends $\SH^\times$ and a twisted version of $\SH^\amalg$. This can be regarded as a vast generalization of the identification of finite sums and finite products in $\SH(S)$, which also subsumes Poincaré duality.

Any finite-product-preserving functor
\[
\Span(\Sch,\all,\fet) \to \Cat_\infty,
\]
lifts uniquely to the $\infty$-category of symmetric monoidal $\infty$-categories. Such a ``presheaf of $\infty$-categories with finite étale transfers'' can therefore be viewed as a presheaf of symmetric monoidal $\infty$-categories, where the symmetric monoidal structures are enhanced with norm functors along finite étale maps. 
In addition to the case of stable motivic homotopy theory $\SH(S)$, we will construct norm functors for several other theories, such as:
\begin{itemize}
	\item Morel and Voevodsky's unstable motivic homotopy theory $\H(S)$ and its pointed version $\H_\pt(S)$;
	\item Voevodsky's theory of motives $\DM(S)$ and some of its variants;
	\item Robalo's theory of noncommutative motives $\SH_\nc(S)$ and some of its variants.
\end{itemize}
In the case of $\H(S)$ and $\H_\pt(S)$, we even have well-behaved norm functors $f_\otimes$ for all finite locally free morphisms $f$, but in the other cases we do not know how to construct norm functors in this generality.

It is worth noting here that any étale sheaf of symmetric monoidal $\infty$-categories on the category of schemes \emph{extends uniquely} to a functor
\[
\Span(\Sch,\all,\fet) \to \Cat_\infty.
\]
This is because finite étale maps are fold maps (i.e., of the form $S^{\sqcup n}\to S$) locally in the étale topology, and so one may use descent and the symmetric monoidal structure to define norm functors $f_\otimes$.
For example, the étale version of stable motivic homotopy theory $\SH_\et(S)$, the theory of rational motives $\DM(S,\Q)$, and the theory of $\ell$-adic sheaves automatically acquire norm functors in this way.
All the examples mentioned above are crucially \emph{not} étale sheaves, which is why the construction of norm functors is nontrivial and interesting.

\subsection{Normed motivic spectra}
Given a symmetric monoidal $\infty$-category
\[
\scr C^\otimes\colon \Span(\Fin) \to \Cat_\infty, \quad I\mapsto\scr C^I, \quad (I\xleftarrow fJ\xrightarrow pK)\mapsto p_\otimes f^*,
\]
we may consider the cocartesian fibration $q\colon \int\scr C^\otimes \to \Span(\Fin)$ classified by $\scr C^\otimes$, which is an $\infty$-categorical version of the Grothendieck construction.
Every object $A\in\scr C$ induces a section of $q$ over $\Fin^\op\subset \Span(\Fin)$ that sends the finite set $I$ to the constant family $p_I^*(A)\in\scr C^I$, where $p_I\colon I\to *$.
An \emph{$\E_\infty$-algebra structure} on $A$ is an extension of this section to $\Span(\Fin)$. In particular, for every finite set $I$, such an extension provides a multiplication map
\[
\mu_{p_I}\colon p_{I\otimes} p_I^*(A) =\bigotimes_{i\in I}A \to A,
\]
and the functoriality on $\Span(\Fin)$ encodes the associativity and commutativity of this multiplication.

Consider the functor
\[
\SH^\otimes\colon \Span(\Sch,\all,\fet) \to \Cat_\infty,\quad S\mapsto \scr \SH(S),
\]
encoding the norms in stable motivic homotopy theory.
If $\scr C$ is any full subcategory of $\Sch_S$ that contains $S$ and is closed under finite sums and finite étale extensions, one can show that an $\E_\infty$-ring spectrum in $\SH(S)$ is equivalent to a section of $q\colon \int\SH^\otimes \to \Span(\Sch,\all,\fet)$ over $\Span(\scr C,\all,\fold)$ that sends backward maps to cocartesian edges.
This immediately suggests a notion of \emph{normed spectrum} in $\SH(S)$: it is a section of $q$ over $\Span(\scr C,\all,\fet)$ that sends backward maps to cocartesian edges.
In fact, we obtain notions of normed spectra of varying strength depending on $\scr C$: the weakest with $\scr C=\FEt_S$ and the strongest with $\scr C=\Sch_S$. The intermediate case $\scr C=\Sm_S$ seems to be the most relevant for applications, but several of our examples will be constructed with $\scr C=\Sch_S$.
In this introduction, we will only consider the case $\scr C=\Sm_S$ for simplicity, and we refer to the text for more general statements.

Concretely, a normed spectrum over $S$ is a motivic spectrum $E\in \SH(S)$ equipped with maps
\[
\mu_f\colon f_\otimes E_Y\to E_X
\]
in $\SH(X)$ for all finite étale maps $f\colon Y\to X$ in $\Sm_S$, subject to some coherence conditions that in particular make $E$ an $\E_\infty$-ring spectrum.
The $\infty$-category of normed motivic spectra over $S$ is denoted by $\NAlg_\Sm(\SH(S))$. It is monadic over $\SH(S)$ as well as monadic and comonadic over the $\infty$-category $\CAlg(\SH(S))$ of motivic $\E_\infty$-ring spectra over $S$.

A motivic space $E\in\H(S)$ has an underlying cohomology theory 
\[
E^{0,0}\colon \Sm_S^\op \to \Set,\quad X\mapsto [\1_X,E_X].
\]
If $E\in\SH(S)$, this cohomology theory acquires \emph{additive transfers} $\tau_f\colon E^{0,0}(Y)\to E^{0,0}(X)$ along finite étale maps $f\colon Y\to X$, which define an extension of $E^{0,0}$ to the category of spans $\Span(\Sm_S,\all,\fet)$. 
If $E\in\NAlg_\Sm(\SH(S))$,
the cohomology theory $E^{0,0}$ also acquires \emph{multiplicative transfers} $\nu_f\colon E^{0,0}(Y)\to E^{0,0}(X)$ along finite étale maps $f\colon Y\to X$, which define another extension of $E^{0,0}$ to $\Span(\Sm_S,\all,\fet)$. Together, these additive and multiplicative extensions form a \emph{Tambara functor} on $\Sm_S$, in the sense of \cite[Definition 8]{bachmann-gwtimes}.
In fact, all of this structure exists at the level of the space-valued cohomology theory 
\[
\Omega^\infty E\colon\scr \Sm_S^\op\to \scr S,\quad X\mapsto\Map(\1_X,E_X).
\]
For every $X\in\Sm_S$, $\Omega^\infty E(X)$ is an $\E_\infty$-ring space, and for every finite étale map $f\colon Y\to X$ in $\Sm_S$, we have an additive $\E_\infty$-map $\tau_f\colon \Omega^\infty E(Y)\to \Omega^\infty E(X)$ and a multiplicative $\E_\infty$-map $\nu_f\colon \Omega^\infty E(Y)\to \Omega^\infty E(X)$, which together form a space-valued Tambara functor on $\Sm_S$.
This Tambara functor contains much of the essential structure of the normed motivic spectrum $E$, though in general $E$ also includes the data of \emph{nonconnective} infinite deloopings of the spaces $\Omega^\infty E(X)$.

Normed spectra also have \emph{power operations} generalizing Voevodsky's power operations in motivic cohomology (which arise from a normed structure on the motivic Eilenberg–Mac Lane spectrum $\HH \Z$). For example, if $E$ is a normed spectrum over $S$ and $X\in\Sm_S$, the $n$th power map $\Omega^\infty E(X)\to\Omega^\infty E(X)$ in cohomology admits a refinement
\[
\rm P_n\colon \Omega^\infty E(X) \to \Omega^\infty E(X\times\B_\et\Sigma_n),
\]
where $\B_\et\Sigma_n$ is the classifying space of the symmetric group $\Sigma_n$ in the étale topology.

Finally, we will show that normed spectra are closed under various operations. If $E\in\SH(S)$ is a normed spectrum, then the following spectra inherit normed structures: the localization $E[1/n]$ and the completion $E_n^\comp$ for any integer $n$, the rationalization $E_\Q$, the effective cover $\f_0E$, the very effective cover $\tilde\f_0E$, the zeroth slice $\s_0E$, the generalized zeroth slice $\tilde\s_0E$, the zeroth effective homotopy module $\spi_0^\eff(E)$, the associated graded for the slice filtration $\bigvee_{n\in \Z}\s_nE$ and for the generalized slice filtration $\bigvee_{n\in \Z}\tilde\s_nE$, the pullback $f^*(E)$ for any pro-smooth morphism $f\colon S'\to S$, and the pushforward $f_*(E)$ for any morphism $f\colon S\to S'$.

\subsection{Examples of normed spectra}
Besides developing the general theory, the main goal of this paper is to provide many examples and techniques of construction of normed motivic spectra. 
Let us first review some examples of multiplicative transfers in algebraic geometry:
\begin{itemize}
	\item In \cite{FultonMacPherson}, Fulton and MacPherson construct norm maps on Chow groups: for $f\colon Y\to X$ a finite étale map between smooth quasi-projective schemes over an algebraically closed field, they construct a multiplicative transfer $\CH^*(Y)\to \CH^*(X)$, where
	\[
	\CH^*(X) = \bigoplus_{n\geq 0}\CH^n(X).
	\]
	\item In \cite{Joukhovitski}, Joukhovitski constructs norm maps on $\K_0$-groups: for $f\colon Y\to X$ a finite étale map between quasi-projective schemes over a field, he constructs a multiplicative transfer $\K_0(Y)\to \K_0(X)$, where $\K_0(X)$ is the Grothendieck group of the exact category of vector bundles on $X$.
	\item In \cite{rost2003multiplicative}, Rost constructs norm maps on Grothendieck–Witt groups of field: for $F\subset L$ a finite separable extension of fields, he constructs a multiplicative transfer $\GW(L)\to \GW(F)$. This transfer was further studied by Wittkop in \cite{wittkop2006multiplikative}.
\end{itemize}
Each of these examples is the underlying cohomology theory of a motivic spectrum: the Chow group $\CH^*(X)$ is the underlying set of the ``periodized'' motivic Eilenberg–Mac Lane spectrum $\bigvee_{n\in\Z}\Sigma^{2n,n}\HH\Z_X$, the group $\K_0(X)$ is the underlying set of Voevodsky's $\K$-theory spectrum $\KGL_X$ (assuming $X$ regular), and the Grothendieck–Witt group $\GW(F)$ of a field $F$ is the underlying set of the motivic sphere spectrum $\1_F$.
We will show that the above norm maps are all induced by normed structures on the corresponding motivic spectra (assuming $\Char(F)\neq 2$ in the last case).

Being the unit object for the smash product, the motivic sphere spectrum has a unique structure of normed spectrum, so the task in this case is to compute the effect of the norm functors on the endomorphisms of the sphere spectrum.
Our strategy for constructing norms on $\HH\Z$ and $\KGL$ is \emph{categorification}: as mentioned in~\sect\ref{sub:norm-functors}, we construct norm functors on the $\infty$-categories $\DM(S)$ and $\SH_\nc(S)$. 
We also promote the canonical functors $\SH(S)\to \DM(S)$ and $\SH(S)\to\SH_\nc(S)$ to natural transformations
\[
\SH^\otimes \to \DM^\otimes,\; \SH^\otimes \to \SH_\nc^\otimes\colon \Span(\Sch,\all,\fet) \to \Cat_\infty.
\]
The right adjoints of these functors send the unit objects to $\HH\Z$ and $\KGL$, respectively.
Just as the right adjoint of a symmetric monoidal functor preserves $\E_\infty$-algebras, it follows formally that $\HH\Z$ and $\KGL$ are normed spectra.
In the case of $\HH\Z$, some amount of work is needed to show that this structure recovers the Fulton–MacPherson norms, since for this model of $\HH\Z$ the isomorphism $\CH^n(X)\simeq [\1_X,\Sigma^{2n,n}\HH\Z_X]$ is highly nontrivial.
We will also show that $\HH R$ is a normed spectrum for every commutative ring $R$.

We obtain other interesting examples using the compatibility of norms with the slice filtration and with the effective homotopy $t$-structure. Over a Dedekind domain of mixed characteristic, we show that Spitzweck's $\E_\infty$-ring spectrum $\HH\Z^\Spi$ representing Bloch–Levine motivic cohomology is in fact a normed spectrum. In particular, this extends the Fulton–MacPherson norms to Chow groups in mixed characteristic. Over a field, we show that the spectrum $\HH\tilde\Z$ representing Milnor–Witt motivic cohomology is a normed spectrum. In particular, we obtain a lift of the Fulton–MacPherson norms to Chow–Witt groups.

Our next major example comes from the theory of motivic Thom spectra. In particular, Voevodsky's algebraic cobordism spectrum $\MGL$ and its periodization $\bigvee_{n\in\Z}\Sigma^{2n,n}\MGL$ are normed spectra, and similarly for other standard families of algebraic groups instead of the $\GL$-family. While it is well-known that $\MGL$ is an $\E_\infty$-ring spectrum, the existing constructions of this $\E_\infty$-ring structure rely on point-set models for the smash product of motivic spectra. To realize $\MGL$ as a normed spectrum, we will need a new description of $\MGL$ itself.
 In topology, the complex cobordism spectrum $\mathrm{MU}$ is the Thom spectrum (i.e., the colimit) of the $\rm J$-homomorphism $j\colon \B\mathrm{U} \to \Sp$, where $\Sp$ is the $\infty$-category of spectra. From this perspective, the $\E_\infty$-ring structure on $\mathrm{MU}$ comes from the fact that $j\colon \B\mathrm{U} \to \Sp$ is an $\E_\infty$-map for the smash product symmetric monoidal structure on $\Sp$. We will establish a similar picture in motivic homotopy theory. The motivic analog of the $\rm J$-homomorphism is a natural transformation 
 $j\colon \K^\circ\to \SH$,
 where $\K^\circ$ is the rank $0$ summand of the Thomason–Trobaugh $\K$-theory space, and we show that $\MGL_S\in\SH(S)$ is the \emph{relative colimit} of $j|\Sm_S$. We moreover extend $j$ to the $2$-category of spans $\Span(\Sch,\all,\fet)$ using the pushforward in $\K$-theory and the norm functors on $\SH(-)$. It then follows more or less formally that $\MGL_S$ is a normed spectrum. 
 
In general, the \emph{motivic Thom spectrum functor} $\M_S$ sends a presheaf $A\in\PSh(\Sm_S)$ equipped with a natural transformation $\phi\colon A\to\SH$ to a motivic spectrum $\M_S(\phi)\in\SH(S)$. As in topology, this functor is a powerful tool to construct structured motivic spectra, including normed spectra. We also use $\M_S$ to give a formula for the free normed spectrum on $E\in\SH(S)$: it is the motivic Thom spectrum of a certain natural transformation $\coprod_{n\geq 0} \B_\et\Sigma_n\to\SH$ that refines the smash powers $E^{\wedge n}$ with their $\Sigma_n$-action.

Our list of examples of normed motivic spectra is of course not exhaustive. An important missing example is the Hermitian $\K$-theory spectrum $\mathrm{KO}$ \cite{Hornbostel}: we expect that it admits a canonical structure of normed spectrum, but we do not attempt to construct it in this paper. 
Finally, it is worth noting that not every motivic $\E_\infty$-ring spectrum admits a normed structure (see Examples \ref{ex:hill-hopkins}, \ref{ex:eta-localization}, and~\ref{ex:h-completion}). Witt theory is a concrete example of a cohomology theory that is represented by an $\E_\infty$-ring spectrum but does not have norms.

\subsection{Norms in other contexts}
Many of the ideas presented so far are not specific to the category of schemes. 
For example, if $f\colon Y\to X$ is a finite covering map of topological spaces, there is a norm functor $f_\otimes$ between the $\infty$-categories of sheaves of spectra.
 However, the construction of these norm functors is essentially trivial because finite covering maps are locally fold maps. Moreover, normed spectra in this context are nothing more than sheaves of $\E_\infty$-ring spectra (see Remark~\ref{rmk:SH-top} for justifications of these claims). This also means that the cohomology theory represented by any $\E_\infty$-ring spectrum automatically has norms along finite covering maps. In the case of ordinary cohomology, these norm maps were first studied by Steiner \cite{Steiner}, expanding on earlier work of Evens \cite{Evens} for the cohomology of groups.
 
The story becomes much more interesting when we consider stable \emph{equivariant} homotopy theory. Norm functors in this context were constructed by Hill, Hopkins, and Ravenel and played an important role in their resolution of the Kervaire invariant one problem \cite{HHR}. 
To put this theory in our framework, one must first note that the $\infty$-category of $G$-spectra, for $G$ a finite group, does not fully depend on the group $G$ but only on the \emph{groupoid} $\B G$; we will write it as $\SH(\B G)$. Being a presheaf of symmetric monoidal $\infty$-categories under the smash product, $\B G\mapsto \SH(\B G)$ extends to a finite-product-preserving functor
\[
\SH^\otimes\colon \Span(\FinGpd,\all,\fold) \to\Cat_\infty,
\]
where $\FinGpd$ is the $2$-category of finite groupoids. The theory of norm functors is then encoded by an extension of $\SH^\otimes$ to a functor
\[
\SH^\otimes\colon \Span(\FinGpd,\all,\fincov) \to \Cat_\infty,\quad X\mapsto \SH(X),\quad (X\xleftarrow f Y\xrightarrow p Z)\mapsto p_\otimes f^*
\]
where ``$\fincov$'' is the class of finite covering maps. As in the motivic case, this extension is nontrivial because the presheaf $\SH\colon \FinGpd^\op\to \Cat_\infty$ does not satisfy descent with respect to finite covering maps. If $G$ is a finite group and $H\subset G$ is a subgroup, the induced map $p\colon \B H\to \B G$ is a finite covering map and $p_\otimes\colon \SH(\B H)\to \SH(\B G)$ is the Hill–Hopkins–Ravenel norm $\Norm_H^G$.

The category $\Fin_G$ of finite $G$-sets can be identified with the full subcategory of $\FinGpd_{/\B G}$ spanned by the finite covering maps. A \emph{normed $G$-spectrum} is a section of the cocartesian fibration $\int\SH^\otimes\to \Span(\FinGpd,\all,\fincov)$ over $\Span(\Fin_G)$ that sends backward maps to cocartesian edges. One can show that this recovers the classical notion of $G$-$\E_\infty$-ring spectrum or $G$-commutative ring spectrum.

The norm functors in stable equivariant homotopy theory can be constructed in essentially the same way as the norm functors in stable motivic homotopy theory. Moreover, the two constructions are directly related via Grothendieck's Galois theory of schemes. Indeed, the profinite étale fundamental groupoid can be promoted to a functor
\[
\widehat\Pi_1^\et\colon \Span(\Sch,\all,\fet) \to \Span(\Pro(\FinGpd),\all,\fincov),
\]
allowing us to view stable equivariant homotopy theory (extended in an obvious way to profinite groupoids) as a presheaf of $\infty$-categories on schemes with finite étale transfers.
Galois theory then provides a functor from stable equivariant homotopy theory to stable motivic homotopy theory (see \cite{HellerOrmsby} in the case of a base field), which is compatible with the norm functors. More precisely, there is a natural transformation
\[
\SH^\otimes \circ \widehat\Pi_1^\et \to \SH^\otimes \colon \Span(\Sch,\all,\fet)\to\Cat_\infty
\]
that sends the suspension spectrum of a finite $\widehat\Pi_1^\et(S)$-set to the suspension spectrum of the corresponding finite étale $S$-scheme.

This Galois correspondence turns out to be useful in the study of motivic norms. For example, we use it to show that one can invert integers in normed motivic spectra and that the motivic norm functors preserve effective spectra. On the other hand, one can use the Galois correspondence to construct $G$-$\E_\infty$-ring spectra from algebraic geometry. For example, if $S$ is smooth over a field and $S'\to S$ is an étale Galois cover with Galois group $G$, then Bloch's cycle complex $z^*(S,*)$ can be promoted to a $G$-$\E_\infty$-ring spectrum (whose underlying spectrum is a connective $\HH\Z$-module). Since the $\E_\infty$-ring structure is already very difficult to construct, this is a particularly nontrivial example.

Another connection between motivic and equivariant homotopy theory is the real Betti realization functor $\SH(\R)\to \SH(\B\rm C_2)$ (defined unstably in \cite[\sect 3.3]{MV} and stably in \cite[\sect 4.4]{HellerOrmsby}). To prove that this functor is compatible with norms, we will give another construction of the norm functors in stable equivariant homotopy theory based on a topological model for $G$-spectra.

\subsection{Norms vs.\ framed transfers}

In ordinary homotopy theory, an important property of an $\E_\infty$-ring spectrum $E$ is that its space of units $(\Omega^\infty E)^\times$ is canonically the zeroth space of a connective spectrum. This is because $(\Omega^\infty E)^\times$ is a grouplike $\E_\infty$-space, and by a theorem of Segal $\Omega^\infty$ induces an equivalence between connective spectra and grouplike $\E_\infty$-spaces \cite[Proposition 3.4]{segal1974categories}. This is already interesting for the sphere spectrum, which is an $\E_\infty$-ring spectrum for trivial reasons. Indeed, the resulting spectrum of units classifies stable spherical fibrations and plays a crucial role in surgery theory.

In equivariant homotopy theory, the space of units of a \emph{normed} $G$-spectrum is similarly the zeroth space of a connective $G$-spectrum. Again, this is because connective $G$-spectra may be identified with grouplike ``normed $G$-spaces'', after a theorem of Guillou–May \cite[Theorem 0.1]{GuillouMay} and Nardin \cite[Theorem A.4]{BDGNS4}.

An analogous result in motivic homotopy theory is highly desirable. Evidence for such a result was provided by the first author in \cite{bachmann-gwtimes}, where it is shown that the units in $\underline{\GW}\simeq \Omega^\infty\spi^\eff_0(\1)$ are the zeroth space of a motivic spectrum. Unfortunately, our current understanding of stable motivic homotopy theory is not sufficient to formulate a more general result. On the one hand, if $E$ is a normed spectrum, then $(\Omega^\infty E)^\times$ is a motivic space with finite étale transfers. On the other hand, working over a perfect field, there is an equivalence between very effective motivic spectra and grouplike motivic spaces with \emph{framed finite syntomic} transfers, which are more complicated than finite étale transfers \cite{EHKSY}. 
We would therefore need to bridge the gap between finite étale transfers and framed finite syntomic transfers. This could be achieved in one of two ways:
\begin{itemize}
	\item by showing that a motivic space with finite étale transfers automatically has framed finite syntomic transfers;
	\item by enhancing the theory of normed spectra so as to entail framed finite syntomic transfers on spaces of units.
\end{itemize}
Either approach seems very difficult.

\subsection{Summary of the construction}
We give a quick summary of the construction of the norm functor
\[
f_\otimes\colon \SH(T)\to \SH(S)
\]
for a finite étale morphism $f\colon T\to S$,
which is the content of Section~\ref{sec:stablenorms}.
Let $\SmQP_{S+}$ denote the category of pointed $S$-schemes of the form $X_+ = X \amalg S$, where $X \to S$ is smooth and quasi-projective. 
It is a symmetric monoidal category under the \emph{smash product} $X_+ \wedge Y_+ = (X \times Y)_+$. 
Alternatively, one can view $\SmQP_{S+}$ as the category whose objects are smooth quasi-projective $S$-schemes and whose morphisms are partially defined maps with clopen domains of definition.
If $f\colon T \to S$ is finite locally free (i.e., finite, flat, and of finite presentation), the pullback functor $f^*\colon \SmQP_{S} \to \SmQP_{T}$ has a right adjoint $\Weil_f$ called \emph{Weil restriction} \cite[\sect7.6]{NeronModels}. Moreover, $\Weil_f$ preserves clopen immersions and therefore extends in a canonical way to a symmetric monoidal functor $f_\otimes\colon \SmQP_{T+} \to \SmQP_{S+}$.

The symmetric monoidal $\infty$-category $\SH(S)$ can be obtained from $\SmQP_{S+}$ in three steps:
\begin{enumerate}
	\item sifted cocompletion $\SmQP_{S+}\to \PSh_\Sigma(\SmQP_S)_\pt$;
	\item motivic localization $\PSh_\Sigma(\SmQP_S)_\pt\to \H_\pt(S)$;
	\item $\P^1$-stabilisation $\H_\pt(S)\to\SH(S)$.
\end{enumerate}
Here, $\PSh_\Sigma(\SmQP_S)_\pt$ is the $\infty$-category of presheaves of pointed spaces on $\SmQP_{S}$ that transform finite coproducts into finite products (called \emph{radditive presheaves} by Voevodsky \cite{Voevodsky:2010b}).
Crucially, each of these steps is described by a universal construction in the $\infty$-category of symmetric monoidal $\infty$-categories with \emph{sifted} colimits. For step (3), this is a minor refinement of the universal property of $\P^1$-stabilization proved by Robalo \cite[\sect 2]{Robalo}.
It follows that an arbitrary symmetric monoidal functor $F\colon \SmQP_{T+}\to \SmQP_{S+}$ has at most one extension to a symmetric monoidal functor $\SH(T)\to \SH(S)$ that preserves sifted colimits, in the sense that the space of such extensions is either empty or contractible. More precisely:
\begin{enumerate}
	\item $F$ extends unconditionally to a functor $\PSh_\Sigma(\SmQP_T)_\pt\to \PSh_\Sigma(\SmQP_S)_\pt$.
	\item $F$ extends to $\H_\pt(T)\to\H_\pt(S)$ if and only if:
	\begin{itemize}
		\item for every étale map $U\to X$ in $\SmQP_T$ that is a covering map for the Nisnevich topology, the augmented simplicial diagram
	\begin{tikzmath}
		\def\colsep{.9em}
		\diagram{
		\dotsb & F((U\times_XU)_+) & F(U_+) & F(X_+) \\
		};
		\arrows
		(11-) edge[-top,vshift=2*\dbl] (-12) edge[-mid] (-12) edge[-bot,vshift=-2*\dbl] (-12)
		(12-) edge[-top,vshift=\dbl] (-13) edge[-bot,vshift=-\dbl] (-13) (13-) edge (-14);
	\end{tikzmath}
	is a colimit diagram in $\H_\pt(S)$;
		\item for every $X\in\SmQP_T$, the map $F((\A^1\times X)_+) \to F(X_+)$ induced by the projection $\A^1\times X\to X$ is an equivalence in $\H_\pt(S)$.
	\end{itemize} 
	\item $F$ extends further to $\SH(T)\to \SH(S)$ if and only if the cofiber of $F(\infty)\colon F(T_+) \to F(\P^1_{T+})$ is invertible in $\SH(S)$.
\end{enumerate}
Now if $f\colon T\to S$ is finite locally free, we show that the functor $f_\otimes\colon \SmQP_{T+}\to\SmQP_{S+}$ satisfies condition (2) (see Theorem \ref{thm:norm}), which gives us the unstable norm functor
\[
f_\otimes\colon \H_\pt(T)\to \H_\pt(S).
\]
Unfortunately, $f_\otimes$ does not satisfy condition (3) in general (see Remark~\ref{rmk:norms-dont-stabilize}), but we show that it does when $f$ is finite étale (see Lemma~\ref{lem:spheres}).
This requires a detailed study of the unstable norm functors, which is the content of Section~\ref{sec:pointednorms}. 
In Section~\ref{sec:coherence}, we explain how to assemble the norm functors into a functor
\[
\SH^\otimes\colon \Span(\Sch,\all,\fet) \to \Cat_\infty,
\]
which allows us to define the $\infty$-categories of normed spectra in Section~\ref{sec:normedspectra}.

\subsection{Summary of results}
Beyond the construction of the norm functors,
our main results are as follows.
\begingroup\renewcommand{\descriptionlabel}[1]{\hspace{\labelsep}\textit{#1.}}
\begin{description}
\item[Basic properties of norms (Section \ref{sec:functoriality})] The norm functors $f_\otimes$ interact with functors of the form $g_\sharp$ and $h_*$ via so-called \emph{distributivity laws}, and they are compatible with the purity and ambidexterity equivalences \cite[\sect 1.6.3 and \sect 1.7.2]{Ayoub}. If $f\colon T\to S$ is finite étale of degree $\leq n$, then $f_\otimes\colon \SH(T) \to \SH(S)$ is an $n$-excisive functor \cite[Definition 6.1.1.3]{HA}.
\item[Coherence of norms (Section \ref{sec:coherence})] We construct the functor $\SH^\otimes\colon \Span(\Sch,\all,\fet)\to \Cat_\infty$, and we give general criteria for norms to preserve subcategories or to be compatible with localizations.
\item[Basic properties of normed spectra (Section \ref{sec:normedspectra})] The $\infty$-category $\NAlg_\Sm(\SH(S))$ is presentable and is both monadic and comonadic over $\CAlg(\SH(S))$ \cite[Definition 4.7.3.4]{HA}. If $f\colon S' \to S$ is an arbitrary morphism, then $f_*\colon \SH(S') \to \SH(S)$ preserves normed spectra, and if $f$ is pro-smooth, then $f^*\colon \SH(S) \to \SH(S')$ preserves normed spectra.
If $E$ is a normed spectrum, its underlying cohomology theory $E^{0,0}(\ph)$ is a Tambara functor \cite[Definition 8]{bachmann-gwtimes}.
\item[Norm–pullback–pushforward adjunctions (Section \ref{sec:f-tens-*-adj})] If $f\colon S' \to S$ is finite étale, there is an adjunction \[f_\otimes: \NAlg_\Sm(\SH(S')) \adj \NAlg_\Sm(\SH(S)): f^*.\] 
If $f\colon S' \to S$ is pro-smooth, there is an adjunction \[f^*: \NAlg_\Sm(\SH(S)) \adj \NAlg_\Sm(\SH(S')): f_*.\]
\item[Equivariant spectra (Section \ref{sec:galois-equiv})] We construct a functor $\SH^\otimes\colon \Span(\Pro(\Fin\Gpd),\all,\fp)\to\Cat_\infty$ that encodes norms and geometric fixed points in stable equivariant homotopy theory \cite{lewis1986equivariant,HHR}.
\item[Grothendieck's Galois theory (Section \ref{sec:ggt})] 
If $S$ is a scheme, there is a canonical symmetric monoidal functor $c_S\colon \SH(\widehat\Pi_1^\et(S)) \to \SH(S)$, where $\widehat\Pi_1^\et(S)$ is the profinite completion of the étale fundamental groupoid of $S$. It is compatible with norms, and its right adjoint induces a functor $\NAlg_{\Sm}(\SH(S))\to \NAlg(\SH(\widehat\Pi_1^\et(S)))$. 
If $S$ is a regular local scheme over a field of characteristic $\neq 2$ and $f\colon T\to S$ is finite étale, the map $f_\otimes\colon [\1_T,\1_T]\to [\1_S,\1_S]$ coincides with Rost's multiplicative transfer $\GW(T)\to\GW(S)$ \cite{rost2003multiplicative}.
\item[Betti realization (Section \ref{sec:betti})] The $\rm C_2$-equivariant Betti realization functor $\rB\colon \SH(\R) \to \SH(\B\rm C_2)$ \cite[\sect4.4]{HellerOrmsby} is compatible with norms and induces a functor $\NAlg_\Sm(\SH(\R)) \to \NAlg(\SH(\B\rm C_2))$.
\item[Localization (Section \ref{sec:localization})] If $E$ is a normed spectrum and $\alpha \in \pi_{-n,-m}(E)$, we give criteria for 
\begin{align*}
E[1/\alpha] &= \colim(E \xrightarrow{\alpha} E \wedge \S^{n,m} \xrightarrow{\alpha} E \wedge \S^{2n,2m} \xrightarrow{\alpha} \dotsb)\quad \text{and}\\
E_\alpha^\comp &= \lim(\dotsb \rightarrow E/\alpha^3 \rightarrow E/\alpha^2 \rightarrow E/\alpha)
\end{align*}
to be normed spectra, and we provide positive and negative examples. In particular we show that $E[1/n]$ and $E_n^\comp$ are normed spectra for any integer $n$.
\item[Slices (Section \ref{sec:slices})] The norm functors are compatible with the slice filtration \cite[\sect 2]{Voevodsky:2002}. If $E$ is a normed spectrum, then $\f_0 E$, $\s_0 E$, $\bigvee_{n\in\Z}\f_n E$, and $\bigvee_{n\in\Z}\s_n E$ are normed spectra, where $\f_n E$ is the $n$-effective cover of $E$ and $\s_n E$ is the $n$th slice of $E$. In fact, there is a notion of graded normed motivic spectrum, and $\f_*E$ and $\s_* E$ are $\Z$-graded normed motivic spectra. We obtain analogous results for the generalized slice filtration \cite{Bachmann-slices}, and we also show that the $0$th effective homotopy module $\spi_0^\eff(E)$ is a normed spectrum. We construct normed structures on Spitzweck's spectrum $\HH\Z^\Spi$ representing Bloch–Levine motivic cohomology in mixed characteristic \cite{SpitzweckHZ}, and on the generalized motivic cohomology spectrum $\HH\tilde\Z$ representing Milnor–Witt motivic cohomology \cite{CalmesFasel}.
\item[Motives (Section \ref{sec:PST})] We construct norm functors $f_\otimes\colon \DM(T, R) \to \DM(S, R)$ for $f\colon T\to S$ finite étale and $S$ noetherian, where $\DM(S, R)$ is Voevodsky's $\infty$-category of motives over $S$ with coefficients in a commutative ring $R$. The canonical functor $\SH(S)\to \DM(S,R)$ is compatible with norms and induces an adjunction 
\[\NAlg_\Sm(\SH(S)) \adj \NAlg_\Sm(\DM(S,R)).\] 
In particular, the motivic Eilenberg–Mac Lane spectrum $\HH R_S$ is a normed spectrum. The induced norm maps on Chow groups coincide with the Fulton–MacPherson construction \cite{FultonMacPherson}.
\item[Noncommutative motives (Section \ref{sec:dgCat})] We prove similar results for noncommutative motives (in the sense of Robalo \cite{Robalo}) 
in place of ordinary motives, and we deduce that the homotopy $\K$-theory spectrum $\KGL_S$ is a normed spectrum. We also show that the nonconnective $\K$-theory spectrum of a scheme $S$ can be promoted to a normed $\widehat\Pi_1^\et(S)$-spectrum.
\item[Thom spectra (Section \ref{sec:thomspectra})] We define a general motivic Thom spectrum functor and show that it is compatible with norms. Among other things, this allows us to construct canonical normed spectrum structures on the algebraic cobordism spectra $\MGL$, $\MSL$, $\MSp$, $\mathrm{MO}$, and $\mathrm{MSO}$, and to give a formula for free normed spectra. Over a field, we show that $\bigvee_{n\in\Z}\Sigma^{2n,n}\HH\Z$ and $\bigvee_{n\in\Z}\Sigma^{4n,2n}\HH\tilde\Z$ are normed spectra and describe their norms in terms of Thom isomorphisms.
\end{description}
\endgroup

\subsection{Guide for the reader}
The reader should glance at the table of notation at the end if they find themselves confronted with unexplained notation.
Sections \ref{sec:prelim} to~\ref{sec:normedspectra} build up the definition of the $\infty$-category of normed motivic spectra and are fundamental for essentially everything that follows.
Subsection~\ref{sub:coherence} is especially important, because it explains in detail some techniques that are used repeatedly in the later sections.
 The remaining sections are mostly independent of one another, except that Sections~\ref{sec:ggt} (on Grothendieck's Galois theory) and~\ref{sec:betti} (on equivariant Betti realization) depend on Section~\ref{sec:galois-equiv} (on equivariant homotopy theory). Some of the proofs in Sections~\ref{sec:localization} (on localization), \ref{sec:slices} (on the slice filtration), and~\ref{sec:dgCat} (on noncommutative motives) also make use of the results of Section~\ref{sec:ggt}.
There are four appendices, which are used throughout the text and may be referred to as needed.

\subsection{Remarks on \texorpdfstring{$\infty$}{∞}-categories}
We freely use the language of $\infty$-categories throughout this paper, as set out in \cite{HTT, HA}. This is not merely a cosmetic choice, as we do not know how to construct the norm functors without $\infty$-category theory. In any case, the framework of model categories would not be adapted to the study of such functors since they cannot be Quillen functors. Although it might be possible to construct motivic norms using suitable categories with weak equivalences, as is done in \cite{HHR} in the case of equivariant homotopy theory, it would be prohibitively difficult to prove even the most basic properties of norms in such a framework.

This text assumes in particular that the reader is familiar with the basics of $\infty$-category theory, notably the notions of limits and colimits \cite[\sect 1.2.13]{HTT}, adjunctions \cite[\sect 5.2]{HTT}, Kan extensions \cite[\sect 4.3]{HTT}, (co)cartesian fibrations \cite[Chapter 2]{HTT}, sheaves \cite[\sect 6.2.2]{HTT}, and commutative algebras \cite[\sect 2.1.3]{HA}.

Furthermore, we always use the language of $\infty$-category theory in a model-independent manner, which means that some standard terminology is used with a slightly different meaning than usual (even for ordinary category theory). For example, a small $\infty$-category means an essentially small $\infty$-category, a full subcategory is always closed under equivalences, a cartesian fibration is any functor that is equivalent to a cartesian fibration in the sense of \cite[\sect2.4]{HTT}, etc.
We also tacitly regard categories as $\infty$-categories, namely those with discrete mapping spaces.

In Section \ref{sec:f-tens-*-adj} and Appendices \ref{app:spans} and \ref{sec:app-adjunctions}, we also use a small amount of $(\infty,2)$-category theory, as set out in \cite[Appendix A]{GRderalg}. However, the only result that uses $(\infty,2)$-categories in an essential way is Theorem~\ref{thm:norm-pullback}. Elsewhere, $(\infty,2)$-categories are only used to clarify the discussion and could easily be avoided.

\subsection{Standing assumptions}
Except in Appendix \ref{app:nisnevich}, and unless the context clearly indicates otherwise, all schemes are assumed to be \emph{quasi-compact and quasi-separated} (qcqs). We note however that essentially all results generalize to arbitrary schemes: often the same proofs work, and sometimes one needs a routine Zariski descent argument to reduce to the qcqs case. For example, our construction of the norm functors does not apply to non-qcqs schemes because the $\infty$-category $\SH(X)$ may fail to be compactly generated, but nevertheless the norm functors immediately extend with all their properties to arbitrary schemes by descent (using Proposition~\ref{prop:span-RKE}). It seemed far too tedious to systematically include this generality, as additional (albeit trivial) arguments would be required in many places.

\subsection{Acknowledgments}
The main part of this work was completed during a joint stay at Institut Mittag-Leffler as part of the research program ``Algebro-Geometric and Homotopical Methods''. We are grateful to the Institute and to the organizers Eric Friedlander, Lars Hesselholt, and Paul Arne Østvær for this opportunity.
We thank Denis Nardin for numerous helpful comments regarding $\infty$-categories, Akhil Mathew for telling us about polynomial functors and several useful discussions, and Lorenzo Mantovani for comments on a draft version.
Finally, we thank the anonymous referee for many suggestions that improved the readability of the text.
T.B. was partially supported by the DFG under SPP 1786.
M.H. was partially supported by the NSF under grants DMS-1508096 and DMS-1761718.

\section{Preliminaries}
\label{sec:prelim}

\subsection{Nonabelian derived \texorpdfstring{$\infty$}{∞}-categories}

Let $\scr C$ be a small $\infty$-category with finite coproducts. We denote by $\PSh_\Sigma(\scr C)\subset\PSh(\scr C)$ the full subcategory of presheaves that transform finite coproducts into finite products, also known as the nonabelian derived $\infty$-category of $\scr C$. We refer to \cite[\sect5.5.8]{HTT} for basic properties of this $\infty$-category (another treatment using the language of simplicial presheaves is \cite{Voevodsky:2010b}). In particular, recall that $\PSh_\Sigma(\scr C)$ is the $\infty$-category freely generated by $\scr C$ under sifted colimits \cite[Proposition 5.5.8.15]{HTT}. We denote by
\[
\L_\Sigma\colon \PSh(\scr C)\to\PSh_\Sigma(\scr C)
\]
the left adjoint to the inclusion.

If $\scr C$ has a final object $*$, we denote by $\scr C_\pt=\scr C_{*/}$ the $\infty$-category of pointed objects of $\scr C$, and we let $\scr C_+$ be the full subcategory of $\scr C_\pt$ spanned by objects of the form $X_+=X\amalg *$.
The following lemma is the $\infty$-categorical version of \cite[Lemma 3.2]{Voevodsky:2010b}.

\begin{lemma}\label{lem:C+}
	Let $\scr C$ be a small $\infty$-category with finite coproducts and a final object. Then the Yoneda embedding $\scr C_+\into \PSh_\Sigma(\scr C)_\pt$ induces an equivalence $\PSh_\Sigma(\scr C_+)\simeq \PSh_\Sigma(\scr C)_\pt$.
\end{lemma}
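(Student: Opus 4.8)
The plan is to construct the comparison functor from the universal property of $\PSh_\Sigma$ and then check it is an equivalence. First one records that $\scr C_+$ has finite coproducts, with binary coproducts $X_+\amalg Y_+\simeq(X\amalg Y)_+$ and zero object $(\emptyset_{\scr C})_+$; that the Yoneda embedding $\scr C\into\PSh_\Sigma(\scr C)$ preserves the final object, hence induces a functor $\scr C_\pt\to\PSh_\Sigma(\scr C)_\pt$; and that the restriction $\iota\colon\scr C_+\to\PSh_\Sigma(\scr C)_\pt$ of the latter preserves finite coproducts (coproducts of disjointly based objects $G_+,H_+$ of $\PSh_\Sigma(\scr C)_\pt$ are again of the form $(G\amalg H)_+$, since $(\ph)_+$ is a left adjoint). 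As $\PSh_\Sigma(\scr C)_\pt$ is an undercategory of a presentable $\infty$-category and hence admits all small colimits, the universal property of $\PSh_\Sigma(\scr C_+)$ as the $\infty$-category freely generated by $\scr C_+$ under sifted colimits \cite[Proposition 5.5.8.15]{HTT} produces a unique sifted-colimit-preserving functor $\Phi\colon\PSh_\Sigma(\scr C_+)\to\PSh_\Sigma(\scr C)_\pt$ restricting to $\iota$; on the representable $j(X_+)$ it takes the value $(j_{\scr C}X)_+$, the free pointed presheaf on $j_{\scr C}X$. It remains to show $\Phi$ is an equivalence.

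To see $\Phi$ is fully faithful I would first check that $\iota$ is. Using the free--forgetful adjunction $(\ph)_+\dashv U$ between $\PSh_\Sigma(\scr C)_\pt$ and $\PSh_\Sigma(\scr C)$, together with the identification $U\big((j_{\scr C}Y)_+\big)\simeq j_{\scr C}Y\amalg j_{\scr C}(\ast)\simeq j_{\scr C}(Y_+)$ (representables are closed under finite coproducts in $\PSh_\Sigma(\scr C)$), one obtains natural equivalences
\[
\Map\big((j_{\scr C}X)_+,(j_{\scr C}Y)_+\big)\;\simeq\;\Map_{\scr C}(X,Y_+)\;\simeq\;\Map_{\scr C_+}(X_+,Y_+).
\]
The general statement then follows by a routine reduction to generators: every object of $\PSh_\Sigma(\scr C_+)$ is a sifted colimit of representables, and both $\Map(\ph,Z)$ and $\Map(\Phi(\ph),\Phi Z)$ carry sifted colimits to limits (the latter because $\Phi$ preserves sifted colimits), reducing the claim to a representable source $j(X_+)$; for such a fixed source both $\Map(j(X_+),\ph)$ and $\Map\big((j_{\scr C}X)_+,\Phi(\ph)\big)$ preserve sifted colimits — for the second one uses that $(j_{\scr C}X)_+$ is compact projective in $\PSh_\Sigma(\scr C)_\pt$, which follows from $(\ph)_+\dashv U$ and the fact that $U$ preserves sifted colimits (sifted $\infty$-categories are weakly contractible, and the forgetful functor from an undercategory preserves colimits of weakly contractible diagrams) — so the claim reduces further to a representable target, where it is the displayed equivalence.

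For essential surjectivity, note that the essential image of $\Phi$ is closed under sifted colimits and contains every $(j_{\scr C}X)_+$. Given $W\in\PSh_\Sigma(\scr C)_\pt$ with underlying presheaf $V\in\PSh_\Sigma(\scr C)$ and basepoint $\varepsilon\colon\ast\to V$, the crucial point is that $W$ is recovered from $V$ by a single pushout — collapsing the freely added basepoint of $V_+$ onto $\varepsilon$ — yielding a canonical equivalence $W\simeq\cofib\big(\varepsilon_+\colon\ast_+\to V_+\big)$ in $\PSh_\Sigma(\scr C)_\pt$. Realizing this pushout as the geometric realization of its bar resolution exhibits $W$ as a sifted colimit of objects of the form $G_+$ with $G\in\PSh_\Sigma(\scr C)$; since each such $G$ is a sifted colimit of representables $j_{\scr C}X$ and $(\ph)_+$ preserves colimits, $G_+$ is a sifted colimit of objects $(j_{\scr C}X)_+$ and hence lies in the essential image, and therefore so does $W$. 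Together with full faithfulness this shows $\Phi$ is an equivalence. The step carrying the real content, and the one I expect to be the main obstacle, is this last reduction — passing from an arbitrary \emph{pointed} radditive presheaf to a diagram of \emph{disjointly based} representables; the cofiber presentation $W\simeq\cofib(\ast_+\to V_+)$ is the device that makes it work, confining the entire role of the basepoint to the already understood $\infty$-category $\PSh_\Sigma(\scr C)$ plus one elementary colimit, after which everything reduces to bookkeeping with universal properties.
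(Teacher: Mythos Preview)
Your proof is correct and follows the same underlying strategy as the paper: the paper simply cites \cite[Proposition 5.5.8.22]{HTT}, which is precisely the criterion (fully faithful embedding into compact projectives that generate under colimits) whose hypotheses you verify by hand. Your argument is an explicit unpacking of what applying that proposition entails in this case, so there is no substantive difference in approach.
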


\begin{proof}
	This is a straightforward application of \cite[Proposition 5.5.8.22]{HTT}.
\end{proof}

Let $\scr C$ be a small $\infty$-category with finite coproducts, and suppose that $\scr C$ is equipped with a symmetric monoidal structure. Then $\PSh_\Sigma(\scr C)$ acquires a symmetric monoidal structure that preserves sifted colimits in each variable, called the Day convolution, and it has a universal property as such \cite[Proposition 4.8.1.10]{HA}; it is cartesian if the symmetric monoidal structure on $\scr C$ is cartesian.
If the tensor product in $\scr C$ distributes over finite coproducts, then $\PSh_\Sigma(\scr C)$ is \emph{presentably symmetric monoidal}, i.e., it is a commutative algebra in $\Pr^\mathrm{L}$ (equipped with the Lurie tensor product \cite[\sect 4.8.1]{HA}).
In this case, we get an induced symmetric monoidal structure on $\PSh_\Sigma(\scr C)_\pt \simeq \PSh_\Sigma(\scr C)\otimes \scr S_\pt$; if $\scr C$ has a final object, it restricts to a symmetric monoidal structure on the full subcategory $\scr C_+$.

\begin{lemma}\label{lem:Day}
	Let $\scr C$ be a small symmetric monoidal $\infty$-category with finite coproducts and a final object, whose tensor product distributes over finite coproducts.
	Then the Yoneda embedding $\scr C_+\into \PSh_\Sigma(\scr C)_\pt$ induces an equivalence of symmetric monoidal $\infty$-categories $\PSh_\Sigma(\scr C_+) \simeq \PSh_\Sigma(\scr C)_\pt$.
\end{lemma}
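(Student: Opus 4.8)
The plan is to promote the equivalence of underlying $\infty$-categories from Lemma~\ref{lem:C+} to a symmetric monoidal equivalence by a universal-property argument: exhibit the comparison functor as a symmetric monoidal, sifted-cocontinuous extension of the Yoneda embedding $\scr C_+\into\PSh_\Sigma(\scr C)_\pt$, and then use uniqueness of sifted-cocontinuous extensions to identify it with the equivalence of Lemma~\ref{lem:C+}.

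First I would check that $\scr C_+$ inherits enough structure to feed into the universal property of Day convolution. The category $\scr C_+$ has finite coproducts, computed from those of $\scr C$ via $X_+\amalg Y_+\simeq(X\amalg Y)_+$, with $\emptyset_+=*$ as zero object. Its symmetric monoidal structure is, by definition, the restriction of the pointed Day convolution along the fully faithful functor $\scr C_+\into\PSh_\Sigma(\scr C)_\pt$; since both the Yoneda embedding $\scr C\to\PSh_\Sigma(\scr C)$ and the pointification functor $(\ph)_+$ are symmetric monoidal, one has $X_+\wedge Y_+\simeq(X\otimes Y)_+$ and $\1\simeq *_+$, so $\scr C_+$ really is a full symmetric monoidal subcategory of $\PSh_\Sigma(\scr C)_\pt$, and its tensor product distributes over finite coproducts because the tensor product of $\scr C$ does. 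Hence $\PSh_\Sigma(\scr C_+)$ is presentably symmetric monoidal under Day convolution and the Yoneda embedding $\scr C_+\into\PSh_\Sigma(\scr C_+)$ is symmetric monoidal with the universal property of \cite[Proposition 4.8.1.10]{HA}.

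Next I would apply this universal property to the symmetric monoidal functor $\scr C_+\into\PSh_\Sigma(\scr C)_\pt$, which is symmetric monoidal by the very construction of the monoidal structure on $\scr C_+$. Since $\PSh_\Sigma(\scr C)_\pt$ is presentably symmetric monoidal, it is in particular a symmetric monoidal $\infty$-category with sifted colimits whose tensor product preserves sifted colimits in each variable, hence an admissible target; so we obtain a symmetric monoidal, sifted-cocontinuous functor $\Phi\colon\PSh_\Sigma(\scr C_+)\to\PSh_\Sigma(\scr C)_\pt$ extending the Yoneda embedding. Its underlying functor is then a sifted-cocontinuous extension of $\scr C_+\into\PSh_\Sigma(\scr C)_\pt$, and by the universal property of $\PSh_\Sigma(\scr C_+)$ as the free sifted cocompletion of $\scr C_+$ \cite[Proposition 5.5.8.15]{HTT} such an extension is unique; hence it coincides with the equivalence of Lemma~\ref{lem:C+}. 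Therefore $\Phi$ is an equivalence of $\infty$-categories, and being symmetric monoidal it is an equivalence of symmetric monoidal $\infty$-categories.

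I do not expect a genuine obstacle. The step requiring the most care is the verification in the second paragraph that $\scr C_+$ sits inside $\PSh_\Sigma(\scr C)_\pt$ as a full symmetric monoidal subcategory whose tensor product distributes over finite coproducts, so that \cite[Proposition 4.8.1.10]{HA} is applicable to $\PSh_\Sigma(\scr C_+)$; once that is in hand, the remainder of the argument is entirely formal.
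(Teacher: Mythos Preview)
Your proposal is correct and follows essentially the same approach as the paper: invoke the universal property of Day convolution to produce a symmetric monoidal functor $\PSh_\Sigma(\scr C_+)\to\PSh_\Sigma(\scr C)_\pt$, then observe that its underlying functor is the equivalence of Lemma~\ref{lem:C+}. Your version simply spells out in more detail why the universal property applies and why the resulting functor agrees with that equivalence.
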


\begin{proof}
	The universal property of the Day convolution provides a symmetric monoidal functor $\PSh_\Sigma(\scr C_+) \to \PSh_\Sigma(\scr C)_\pt$, which is an equivalence by Lemma~\ref{lem:C+}.
\end{proof}

In general, $\PSh_\Sigma(\scr C)$ is far from being an $\infty$-topos. For example, if $\scr C$ is the category of finitely generated free abelian groups, then $\PSh_\Sigma(\scr C)$ is the $\infty$-category of connective $\HH\Z$-module spectra.
We can nevertheless consider the topology on $\scr C$ generated by finite coproduct decompositions; we denote by $\Shv_\amalg(\scr C)\subset\PSh(\scr C)$ the $\infty$-topos of sheaves for this topology.

\begin{definition}\label{def:extensive}
	An $\infty$-category is called \emph{extensive} if it admits finite coproducts, if coproducts are disjoint (i.e., for every objects $X$ and $Y$, $X\times_{X\sqcup Y}Y$ is an initial object), and if finite coproduct decompositions are stable under pullbacks.
\end{definition}

 For example, if $S$ is a scheme, any full subcategory of $\Sch_S$ that is closed under finite coproducts and summands is extensive.

\begin{lemma}\label{lem:topos}
	Let $\scr C$ be a small extensive $\infty$-category.
	Then $\PSh_\Sigma(\scr C)=\Shv_\amalg(\scr C)$.
	In particular, $\L_\Sigma$ is left exact and $\PSh_\Sigma(\scr C)$ is an $\infty$-topos. 
\end{lemma}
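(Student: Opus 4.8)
The plan is to identify both $\PSh_\Sigma(\scr C)$ and $\Shv_\amalg(\scr C)$ with the same full subcategory of $\PSh(\scr C)$, namely the one spanned by those presheaves $F$ such that, for every finite coproduct decomposition $X\simeq\coprod_{i\in I}X_i$ in $\scr C$ (including the empty decomposition of the initial object $\varnothing$), the canonical map $F(X)\to\prod_{i\in I}F(X_i)$ is an equivalence. For $\PSh_\Sigma(\scr C)$ this is a restatement of the definition, the case $I=\varnothing$ being the condition $F(\varnothing)\simeq\ast$. Granting the analogous description of $\Shv_\amalg(\scr C)$, the equality $\PSh_\Sigma(\scr C)=\Shv_\amalg(\scr C)$ follows at once; then $\L_\Sigma$ is the $\amalg$-sheafification functor, which is left exact because sheafification for any Grothendieck topology is, and $\PSh_\Sigma(\scr C)=\Shv_\amalg(\scr C)$ is an $\infty$-topos by construction. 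So the whole content is the description of $\Shv_\amalg(\scr C)$.

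To obtain it, I would first recall from the general theory of topologies on $\infty$-categories \cite[\sect 6.2]{HTT} that a presheaf is a $\tau$-sheaf if and only if it is local with respect to a generating family of covering sieves; here extensivity makes the covers attached to finite coproduct decompositions stable under base change (decompositions pull back to decompositions), so these covers form such a generating family for $\tau_\amalg$. Now fix a decomposition $X\simeq\coprod_{i\in I}X_i$. The associated family is the set of summand inclusions $\{X_i\hookrightarrow X\}_{i\in I}$, and the sieve it generates is the image of $\coprod_{i\in I}h_{X_i}\to h_X$ in $\PSh(\scr C)$, i.e.\ the colimit $\lvert\check C_\bullet\rvert$ of the \v{C}ech nerve $\check C_\bullet$ of this map. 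For $I=\varnothing$ the sieve is $\varnothing_{\PSh(\scr C)}\hookrightarrow h_\varnothing$, and $F$ is local with respect to it precisely when $F(\varnothing)\simeq\ast$.

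For $I\neq\varnothing$ I would compute $\check C_\bullet$ using extensivity. Since the Yoneda embedding preserves fiber products, $\check C_n\simeq\coprod_{(i_0,\dots,i_n)\in I^{n+1}}h_{X_{i_0}\times_X\cdots\times_X X_{i_n}}$, and by disjointness of the decomposition together with the identity $X_i\times_X X_i\simeq X_i$ (via the diagonal), the object $X_{i_0}\times_X\cdots\times_X X_{i_n}$ is $X_{i_0}$ when $i_0=\dots=i_n$ and is $\varnothing$ otherwise. Thus, up to ``padding'' by copies of $h_\varnothing$ in the off-diagonal positions, $\check C_\bullet$ is the constant simplicial object on $\coprod_{i\in I}h_{X_i}$. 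Applying $\Map(\ph,F)$ to a presheaf $F$ already known to satisfy $F(\varnothing)\simeq\ast$, the off-diagonal factors collapse and every cosimplicial structure map becomes an equivalence, so $\Map(\lvert\check C_\bullet\rvert,F)\simeq\lim_{\Delta}\prod_{i\in I}F(X_i)\simeq\prod_{i\in I}F(X_i)$. Hence $F$ is local with respect to this sieve if and only if $F(X)\to\prod_{i\in I}F(X_i)$ is an equivalence; ranging over all finite decompositions yields the desired description of $\Shv_\amalg(\scr C)$.

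The only genuinely non-formal part is the extensivity bookkeeping invoked above: that in an extensive $\infty$-category distinct coproduct summands have fiber product the (strict) initial object, that a summand satisfies $X_i\times_X X_i\simeq X_i$ via its diagonal, and that the covering sieves coming from finite coproduct decompositions are stable under base change. All of these are routine consequences of Definition~\ref{def:extensive}, and with them in hand the rest of the argument is a formal application of \cite[\sect 6.2]{HTT}.
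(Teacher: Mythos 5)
Your proof is correct, but it takes a genuinely different route from the paper's. The paper disposes of the lemma in one line: the topology of finite coproduct decompositions is generated by an evident cd-structure (the binary coproduct squares with corners $\emptyset$, $U$, $V$, $U\amalg V$), and Voevodsky's descent criterion in the form of \cite[Theorem 3.2.5]{AHW} then says that a presheaf is a sheaf if and only if $F(\emptyset)\simeq *$ and $F(U\amalg V)\simeq F(U)\times F(V)$, i.e., iff $F\in\PSh_\Sigma(\scr C)$; the only extra care there is the footnote upgrading that theorem from $1$-categories to $\infty$-categories (possible because the relevant vertical maps are truncated). You instead argue by hand: you identify the covering sieve of a decomposition with the realization of the \v{C}ech nerve of $\coprod_i h_{X_i}\to h_X$, use disjointness, monomorphy of summand inclusions, and strictness of the initial object to collapse that nerve to an essentially constant simplicial object, and conclude by weak contractibility of $\Delta$. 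Both arguments are sound; yours buys self-containedness (no cd-structure formalism, no imported theorem to upgrade) at the cost of the \v{C}ech computation—which happens to collapse here—and of the ``descent on a basis suffices'' principle. On that last point, phrase it carefully: locality with respect to an arbitrary generating family of sieves does \emph{not} in general imply the sheaf condition; what you need, and what extensivity provides, is that the generating covers are stable under base change, after which the claim follows from \cite[\sect 6.2.1--6.2.2]{HTT} (the strong saturation of a pullback-stable class of monomorphisms is pullback-stable, and topological localizations correspond to Grothendieck topologies), or from a direct sandwich argument using universality of colimits in $\PSh(\scr C)$. With that reference made precise, the remaining extensivity bookkeeping you list (summand inclusions are monomorphisms, distinct summands intersect in the strict initial object, decompositions pull back to decompositions) is indeed routine, and your argument goes through.
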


\begin{proof}
	Under these assumptions on $\scr C$, the topology of finite coproduct decompositions is generated by an obvious cd-structure satisfying the assumptions of \cite[Theorem 3.2.5]{AHW}.\footnote{The statement of \cite[Theorem 3.2.5]{AHW} assumes that $\scr C$ is a $1$-category. However, the proof works for any $\infty$-category $\scr C$, provided that the vertical maps in the cd-structure are truncated (i.e., their iterated diagonals are eventually equivalences).} The conclusion is that a presheaf $F$ on $\scr C$ is a sheaf if and only if $F(\emptyset)$ is contractible and $F(U\amalg V)\simeq F(U)\times F(V)$, i.e., if and only if $F\in\PSh_\Sigma(\scr C)$.
\end{proof}

\begin{remark}
A key property of $\infty$-topoi that we will use several times is \emph{universality of colimits} \cite[Theorem 6.1.0.6(3)(ii)]{HTT}:
if $f\colon V\to U$ is a morphism in an $\infty$-topos $\scr X$, the pullback functor $f^*\colon \scr X_{/U}\to \scr X_{/V}$ preserves colimits.
\end{remark}

The fact that the $\infty$-topos of Lemma~\ref{lem:topos} is hypercomplete will be useful. Recall that a presentable $\infty$-category $\scr C$ is \emph{Postnikov complete} if the canonical functor
\[
\scr C\to \lim_{n\to\infty} \tau_{\leq n}\scr C
\]
is an equivalence, where $\tau_{\leq n}\scr C\subset\scr C$ is the full subcategory of $n$-truncated objects. A Postnikov complete $\infty$-topos is in particular hypercomplete (i.e., the above functor is conservative).

\begin{lemma}\label{lem:postnikov}
	Let $\scr C$ be a small $\infty$-category with finite coproducts. Then $\PSh_\Sigma(\scr C)$ is Postnikov complete.
\end{lemma}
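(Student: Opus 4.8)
The plan is to derive this from the Postnikov completeness of the ambient presheaf $\infty$-category $\PSh(\scr C)$, together with the observation that $\PSh_\Sigma(\scr C)$ is closed under both limits and truncations inside $\PSh(\scr C)$.

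First I would record that $\PSh(\scr C)$ is Postnikov complete. Truncation functors in a functor $\infty$-category are computed objectwise, so $\tau_{\leq n}\PSh(\scr C)\simeq\Fun(\scr C^\op,\tau_{\leq n}\scr S)$ naturally in $n$, whence
\[
\lim_{n}\tau_{\leq n}\PSh(\scr C)\simeq\Fun\bigl(\scr C^\op,\textstyle\lim_{n}\tau_{\leq n}\scr S\bigr)\simeq\PSh(\scr C),
\]
the last equivalence being the (standard) Postnikov completeness of $\scr S$ and the fact that $\Fun(\scr C^\op,-)$ preserves limits; one then checks this chain of equivalences is inverse to the canonical Postnikov tower functor.

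Next I would observe that $\PSh_\Sigma(\scr C)$ is closed under limits in $\PSh(\scr C)$, since the inclusion is right adjoint to $\L_\Sigma$, and closed under $n$-truncation for every $n$: because $\tau_{\leq n}$ on $\PSh(\scr C)$ is objectwise and commutes with finite products and the terminal object, $\tau_{\leq n}F$ carries finite coproducts to finite products whenever $F$ does (and sends $\emptyset$ to a contractible space). Consequently the truncation functors of $\PSh_\Sigma(\scr C)$ agree with the restrictions of those of $\PSh(\scr C)$, so the Postnikov tower of an object of $\PSh_\Sigma(\scr C)$ is computed the same way in both categories.

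Finally, putting these together: under the equivalence $\PSh(\scr C)\simeq\lim_n\tau_{\leq n}\PSh(\scr C)$ of the first step — whose inverse sends a compatible family $(F_n)_n$ to $\lim_n F_n$ computed in $\PSh(\scr C)$ — the full subcategory $\lim_n\tau_{\leq n}\PSh_\Sigma(\scr C)$, which sits fully faithfully in $\lim_n\tau_{\leq n}\PSh(\scr C)$ as a limit of fully faithful inclusions, corresponds exactly to $\PSh_\Sigma(\scr C)$: a compatible family of truncated objects of $\PSh_\Sigma(\scr C)$ has its limit in $\PSh_\Sigma(\scr C)$ by closure under limits, and conversely every term of the Postnikov tower of an object of $\PSh_\Sigma(\scr C)$ lies in $\PSh_\Sigma(\scr C)$ by closure under truncations; chasing the definitions, the canonical functor $\PSh_\Sigma(\scr C)\to\lim_n\tau_{\leq n}\PSh_\Sigma(\scr C)$ is therefore an equivalence. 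I expect the only delicate point to be this last bookkeeping — identifying $\lim_n\tau_{\leq n}\PSh_\Sigma(\scr C)$ as a full subcategory of $\lim_n\tau_{\leq n}\PSh(\scr C)$ and recognizing the inverse Postnikov tower functor as ``take the limit'', so that closure of $\PSh_\Sigma(\scr C)$ under limits can be invoked; everything else is formal. Alternatively, one can isolate the general statement that a full subcategory of a Postnikov-complete presentable $\infty$-category which is closed under limits and truncations is again Postnikov complete, and apply it directly.
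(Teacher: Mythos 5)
Your proof is correct and takes essentially the same approach as the paper: the key observation in both is that $\tau_{\leq n}$ on $\PSh(\scr C)$ preserves finite products (being computed objectwise), so $\PSh_\Sigma(\scr C)$ is closed under truncation and the inclusion commutes with the Postnikov tower, from which Postnikov completeness is inherited. The paper's proof is simply a terser version of your argument, leaving implicit the bookkeeping you carry out at the end.
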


\begin{proof}
	Since the truncation functors $\tau_{\leq n}$ preserve finite products of spaces, the inclusion $\PSh_\Sigma(\scr C)\subset \PSh(\scr C)$ commutes with truncation. Postnikov completeness of $\PSh_\Sigma(\scr C)$ then follows from that of $\PSh(\scr C)$.
\end{proof}

Next we recall some well-known techniques for computing colimits in $\infty$-categories.
If $\scr C$ is an $\infty$-category with finite colimits and $X\from Y\to Z$ is a span in $\scr C$, the bar construction provides a simplicial object $\Bar_Y(X,Z)_\bullet$ with
	\[
	\Bar_Y(X,Z)_n = X\amalg Y^{\amalg n}\amalg Z
	\] 
(this can be regarded as a special case of \cite[Construction 4.4.2.7]{HA} for the cocartesian monoidal structure on $\scr C$).

\begin{lemma}\label{lem:bar-construction}
	Let $\scr C$ be an $\infty$-category with finite coproducts and let $X\from Y\to Z$ be a span in $\scr C$. 
	Then the pushout $X\amalg_YZ$ exists if and only if the geometric realization $\lvert \Bar_Y(X,Z)_\bullet\rvert$ exists, in which case there is a canonical equivalence
	\[
	X\amalg_YZ \simeq \lvert \Bar_Y(X,Z)_\bullet\rvert.
	\]
\end{lemma}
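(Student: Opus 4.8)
The plan is to deduce this from the theory of two-sided bar constructions of \cite[\sect 4.4.2]{HA}, applied to the cocartesian symmetric monoidal structure $\scr C^{\amalg}$ on $\scr C$ (available since $\scr C$ has finite coproducts), for which the tensor product is the coproduct. For $\scr C^{\amalg}$, every object is canonically a commutative — hence associative — algebra, and a span $X\from Y\to Z$ exhibits $Y$ as an algebra, $X$ as a right $Y$-module and $Z$ as a left $Y$-module, with the module structures induced by the maps $Y\to X$ and $Y\to Z$; unwinding \cite[Construction 4.4.2.7]{HA} for this data recovers precisely $\Bar_Y(X,Z)_\bullet$. Moreover the relative tensor product $X\otimes_Y Z$ formed in $\scr C^{\amalg}$, whenever it exists, is canonically the pushout $X\amalg_Y Z$: comparing universal properties, a $Y$-bilinear map $X\amalg Z\to W$ is exactly a pair of maps $X\to W$ and $Z\to W$ that agree after restriction along $Y\to X$ and $Y\to Z$. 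So it remains to match $\lvert\Bar_Y(X,Z)_\bullet\rvert$ with $X\otimes_Y Z$, including the existence clause.

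The subtlety is that $\scr C$ is not assumed to admit colimits beyond finite coproducts, so one cannot invoke the version of the bar-construction comparison that presupposes geometric realizations in the target. What must be checked instead is that the tensor-product functors preserve geometric realizations whenever these exist in $\scr C$ — and for $\scr C^{\amalg}$ this holds unconditionally. Indeed, for $W\in\scr C$ the functor $W\amalg(-)\colon\scr C\to\scr C$ factors as $\scr C\to\scr C_{W/}\to\scr C$, where the first functor is left adjoint to the forgetful functor (hence preserves all colimits) and the forgetful functor $\scr C_{W/}\to\scr C$ preserves colimits indexed by weakly contractible $\infty$-categories, in particular geometric realizations; iterating, each of the functors $X\amalg Y^{\amalg n}\amalg(-)$ and $(-)\amalg Y^{\amalg n}\amalg Z$ has the same property. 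Granting this, the operadic colimit computing $X\otimes_Y Z$ agrees with the ordinary colimit of the diagram $\Bar_Y(X,Z)_\bullet\colon\Delta^{\op}\to\scr C$; hence $X\otimes_Y Z$ exists if and only if $\lvert\Bar_Y(X,Z)_\bullet\rvert$ does, and then the two coincide. Together with the first paragraph this gives $X\amalg_Y Z\simeq X\otimes_Y Z\simeq\lvert\Bar_Y(X,Z)_\bullet\rvert$ with matching existence, which is the claim.

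As a sanity check — and an alternative route that avoids the operadic machinery — one can argue by corepresentability. A pushout of $X\from Y\to Z$ is, by definition, an object corepresenting $W\mapsto\Map(X,W)\times_{\Map(Y,W)}\Map(Z,W)$; on the other hand $\Map(\lvert\Bar_Y(X,Z)_\bullet\rvert,W)\simeq\lim_{[n]\in\Delta}\big(\Map(X,W)\times\Map(Y,W)^{\times n}\times\Map(Z,W)\big)$, and this totalization is the standard cobar resolution of that same homotopy fiber product. Since a colimit exists precisely when the functor it would corepresent is corepresentable, existence matches on the nose and the two objects agree. Either way, the only real obstacle is staying within the structure one is actually given on $\scr C$: one cannot help oneself to ambient colimits, so the crux is the (fortunately automatic) fact that the coproduct functors on an $\infty$-category with finite coproducts preserve geometric realizations.
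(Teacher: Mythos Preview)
Your corepresentability argument (the ``sanity check'') is correct and self-contained: both $X\amalg_Y Z$ and $\lvert\Bar_Y(X,Z)_\bullet\rvert$ exist precisely when the same functor $W\mapsto\Map(X,W)\times_{\Map(Y,W)}\Map(Z,W)$ is corepresentable, since the totalization of the cobar cosimplicial object in $\scr S$ computes that fiber product. This is a genuinely different route from the paper's. The paper gives a one-line diagram-shape argument: it exhibits an explicit functor $\Delta^\op\to(\Set_\Delta)_{/\Lambda^2_0}$, $[n]\mapsto\Delta^{\{0,1\}}\amalg(\Delta^{\{0\}})^{\amalg n}\amalg\Delta^{\{0,2\}}$, and invokes \cite[Corollary 4.2.3.10]{HTT} directly, which handles the existence clause and the identification in one stroke without passing through spaces or monoidal structures. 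Your approach is more hands-on and perhaps more transparent about why the two colimits agree; the paper's is shorter and stays entirely on the level of indexing diagrams.

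A caution about your first route via \cite[\S4.4.2]{HA}: the identification of the operadic colimit defining $X\otimes_Y Z$ with the pushout is not as immediate as ``comparing universal properties'' suggests, since the relative tensor product in HA is not \emph{defined} by a bilinear-map universal property, and the comparison with $\lvert\Bar_Y(X,Z)_\bullet\rvert$ in \cite[Theorem 4.4.2.8]{HA} is stated under the hypothesis that $\scr C$ admits geometric realizations. Your observation that $W\amalg(-)$ preserves weakly contractible colimits is correct and relevant, but you would still need to trace through the proof there to confirm the existence biconditional survives without that blanket hypothesis. Since your second argument already does the job cleanly, you could drop the first paragraph or present it as motivation only.
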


\begin{proof}
	This is a direct application of \cite[Corollary 4.2.3.10]{HTT}, using the functor $\Delta^\op\to (\Set_\Delta)_{/\Lambda^2_0}$, $[n]\mapsto \Delta^{\{0,1\}}\amalg (\Delta^{\{0\}})^{\amalg n}\amalg \Delta^{\{0,2\}}$.
\end{proof}

\begin{lemma}\label{lem:sifted+coprod}
	Let $\scr C$ be an $\infty$-category admitting sifted colimits and finite coproducts. Then $\scr C$ admits small colimits. Moreover, if $f\colon \scr C\to\scr D$ is a functor that preserves sifted colimits and finite coproducts, then $f$ preserves small colimits.
\end{lemma}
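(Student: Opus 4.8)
The plan is to build every small colimit out of the two given classes of colimits. The key observations are that geometric realizations (indexed by the sifted category $\Delta^\op$) and filtered colimits are both sifted colimits, so $\scr C$ automatically admits them; that pushouts then come for free from the bar construction of Lemma~\ref{lem:bar-construction}; that small coproducts are filtered colimits of finite coproducts; and that pushouts together with small coproducts generate all small colimits. The functoriality statement will be proved by running the same three reductions compatibly.

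In more detail, since $\Delta^\op$ is sifted \cite[Lemma 5.5.8.4]{HTT} and every filtered $\infty$-category is sifted, $\scr C$ admits geometric realizations and small filtered colimits. Given a span $X \from Y \to Z$ in $\scr C$, Lemma~\ref{lem:bar-construction} identifies the pushout $X \amalg_Y Z$ with the geometric realization of $\Bar_Y(X,Z)_\bullet$, whose terms $X \amalg Y^{\amalg n} \amalg Z$ are finite coproducts; hence $\scr C$ admits pushouts. For a small family $(X_i)_{i \in I}$, the poset $\mathcal P_{\fin}(I)$ of finite subsets of $I$ is filtered, and the evident cocone exhibits $\colim_{J \in \mathcal P_{\fin}(I)} \coprod_{i \in J} X_i$ as $\coprod_{i \in I} X_i$; one checks the universal property by noting that maps out of this cocone into an object $T$ compute $\lim_J \prod_{i \in J} \Map(X_i, T) = \prod_{i \in I} \Map(X_i, T)$. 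So $\scr C$ admits small coproducts, and by the standard generation result \cite[\sect 4.4.2]{HTT} an $\infty$-category with pushouts and small coproducts admits all small colimits.

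For the second statement, let $f \colon \scr C \to \scr D$ preserve sifted colimits and finite coproducts; then $f$ preserves geometric realizations and filtered colimits. Applying $f$ termwise to the bar construction and invoking Lemma~\ref{lem:bar-construction} on both the source and the target shows that $f$ preserves pushouts, while applying $f$ to the filtered diagram $J \mapsto \coprod_{i \in J} X_i$ shows that $f$ preserves small coproducts, in particular the initial object. The functor version of the generation result then yields that $f$ preserves all small colimits. The only point that needs more than a citation is the identification of an arbitrary small coproduct with the filtered colimit of its finite sub-coproducts; this is routine, and once it is granted the entire lemma is formal, so I do not expect any genuine obstacle here.
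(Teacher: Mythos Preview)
Your proof is correct and follows essentially the same approach as the paper: reduce to pushouts and small coproducts via \cite[Propositions 4.4.2.6 and 4.4.2.7]{HTT}, obtain pushouts as geometric realizations of finite coproducts via the bar construction (Lemma~\ref{lem:bar-construction}), and obtain small coproducts as filtered colimits of finite coproducts. The paper's proof is just a terser version of what you wrote.
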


\begin{proof}
	If $\scr C$ has pushouts and coproducts, the analogous statements are \cite[Propositions 4.4.2.6 and 4.4.2.7]{HTT}.
	An arbitrary coproduct is a filtered colimit of finite coproducts, and a pushout can be written as a geometric realization of finite coproducts by Lemma~\ref{lem:bar-construction}. Since filtered colimits and geometric realizations are sifted colimits, the result follows. 
\end{proof}

Let $\scr C$ be an $\infty$-category with small colimits. Recall that a class of morphisms in $\scr C$ is \emph{strongly saturated} if it is closed under 2-out-of-3, cobase change, and small colimits in $\Fun(\Delta^1,\scr C)$ \cite[Definition 5.5.4.5]{HTT}.

\begin{lemma}\label{lem:strong-saturation}
	Let $\scr C$ be an $\infty$-category with small colimits and $E$ a class of morphisms in $\scr C$. Then $E$ is strongly saturated if and only if it contains all equivalences and is closed under 2-out-of-3 and small colimits in $\Fun(\Delta^1,\scr C)$.
\end{lemma}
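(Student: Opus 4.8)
\emph{Proof strategy.}
The plan is to cancel the conditions common to both sides of the equivalence and then prove what remains directly, using only that colimits in arrow $\infty$-categories are computed objectwise. By definition, $E$ is strongly saturated exactly when it is closed under $2$-out-of-$3$, under cobase change, and under small colimits in $\Fun(\Delta^1,\scr C)$. Both sides of the claimed equivalence include closure under $2$-out-of-$3$ and under small colimits in $\Fun(\Delta^1,\scr C)$, so it suffices to show: for a class $E$ that is closed under small colimits in $\Fun(\Delta^1,\scr C)$, $E$ is closed under cobase change if and only if $E$ contains every equivalence of $\scr C$. (In particular, the $2$-out-of-$3$ hypothesis will play no role.)

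\emph{From cobase change to equivalences.}
First I would observe that the initial object of $\Fun(\Delta^1,\scr C)$ is $\id_\emptyset$, where $\emptyset$ is the initial object of $\scr C$; being the colimit of the empty diagram, it lies in $E$. For any $X\in\scr C$, the square with $\id_\emptyset$ on the left, $\emptyset\to X$ on top, and $\id_X$ on the right is cocartesian (as the relevant pushout is $X$), so $\id_X\in E$. And for any equivalence $e\colon X\to Y$, the square with $\id_X$ along the top and left and $e$ along the bottom and right is cocartesian precisely because $e$ is an equivalence, so it exhibits $e$ as a cobase change of $\id_X$, whence $e\in E$.

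\emph{From equivalences to cobase change.}
Conversely, assume $E$ contains all equivalences, let $g\colon A\to B$ lie in $E$, and let $f\colon X\to Y$ be the parallel morphism in a cocartesian square with horizontal maps $u\colon A\to X$ and $v\colon B\to Y$. In $\Fun(\Delta^1,\scr C)$ form the span $\id_X\from\id_A\to g$, where the map $\id_A\to\id_X$ has both components equal to $u$ and the map $\id_A\to g$ has components $\id_A$ and $g$. Since colimits in $\Fun(\Delta^1,\scr C)$ are computed objectwise, the pushout of this span evaluates at $0$ to $X\amalg_A A\simeq X$ and at $1$ to $X\amalg_A B\simeq Y$, and its structure map is $f$ by the universal property of the original cocartesian square; hence this pushout is $(f\colon X\to Y)$. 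As $\id_X$ and $\id_A$ are equivalences they lie in $E$, as does $g$, so $f\in E$, i.e.\ $E$ is closed under cobase change. The only mildly delicate point is this objectwise computation of a pushout in the arrow $\infty$-category together with the identification of its structure map with $f$; everything else is formal, and I do not anticipate a genuine obstacle.
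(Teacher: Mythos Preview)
Your proof is correct and follows essentially the same approach as the paper: the forward direction uses that equivalences are cobase changes of $\id_\emptyset$ (you do this in two steps rather than one, but the idea is identical), and the reverse direction is exactly the paper's observation that a cobase change $f$ of $g$ along $u\colon A\to X$ can be written as the pushout $g\amalg_{\id_A}\id_X$ in $\Fun(\Delta^1,\scr C)$.
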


\begin{proof}
	An equivalence is a cobase change of the initial object in $\Fun(\Delta^1,\scr C)$, so strongly saturated classes contain all equivalences. Conversely, if $g\colon C\to D$ is a cobase change of $f\colon A\to B$, then $g=f\amalg_{\id_A}\id_C$ in $\Fun(\Delta^1,\scr C)$.
\end{proof}

The following lemma with $\scr C=\PSh_\Sigma(\scr C_0)$ is an $\infty$-categorical version of \cite[Corollary 3.52]{Voevodsky:2010b}.

\begin{lemma}
	\label{lem:voevodsky}
	Let $\scr C$ be an $\infty$-category with small colimits and $\scr C_0\subset\scr C$ a full subcategory that generates $\scr C$ under sifted colimits and is closed under finite coproducts. Let $E$ be a class of morphisms in $\scr C$ containing the equivalences in $\scr C_0$. Then the strong saturation of $E$ is generated under 2-out-of-3 and sifted colimits by morphisms of the form $f\amalg\id_X$ with $f\in E$ and $X\in\scr C_0$.
\end{lemma}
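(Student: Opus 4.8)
The plan is to identify the strong saturation $\overline E$ of $E$ with the class $E'$ of morphisms of $\scr C$ generated under 2-out-of-3 and sifted colimits by the morphisms $f\amalg\id_X$ with $f\in E$ and $X\in\scr C_0$; note that $E'$, being a class of morphisms (a full subcategory of $\Fun(\Delta^1,\scr C)$), is closed under equivalences. One inclusion is immediate: by Lemma~\ref{lem:strong-saturation} the class $\overline E$ contains all equivalences and is closed under 2-out-of-3 and small (in particular sifted) colimits in $\Fun(\Delta^1,\scr C)$, and each generator $f\amalg\id_X$ is the coproduct in $\Fun(\Delta^1,\scr C)$ of $f\in E\subseteq\overline E$ with the equivalence $\id_X$; hence $E'\subseteq\overline E$. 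For the reverse inclusion, since $\overline E$ is by definition the smallest strongly saturated class containing $E$, it suffices to check that $E'$ contains $E$ and is strongly saturated. That $E\subseteq E'$ is clear, since for $f\in E$ the coproduct $f\amalg\id_\emptyset$ with $\emptyset\in\scr C_0$ the empty coproduct is a generator and is canonically equivalent to $f$.

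To verify that $E'$ is strongly saturated I would use Lemma~\ref{lem:strong-saturation} once more, which reduces the task to three points: $E'$ contains all equivalences, $E'$ is closed under 2-out-of-3 (automatic by construction), and $E'$ is closed under small colimits in $\Fun(\Delta^1,\scr C)$. For the first, the full subcategory $\{C\in\scr C:\id_C\in E'\}$ contains $\scr C_0$, because for $X\in\scr C_0$ the identity $\id_X$ lies in $E\subseteq E'$ (being an equivalence of objects of $\scr C_0$), and it is closed under sifted colimits, because colimits in $\Fun(\Delta^1,\scr C)$ are computed objectwise so that $\id_{\colim_i C_i}\simeq\colim_i\id_{C_i}$; hence this subcategory is all of $\scr C$, and every equivalence, being equivalent to an identity, lies in $E'$. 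For closure under small colimits, I would apply Lemma~\ref{lem:sifted+coprod} to the inclusion $E'\hookrightarrow\Fun(\Delta^1,\scr C)$: this inclusion preserves sifted colimits, so it will preserve all small colimits once we know that $E'$ is closed under finite coproducts; and since $E'$ contains the initial object $\id_\emptyset$, this reduces to closure under binary coproducts in $\Fun(\Delta^1,\scr C)$.

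The crux is therefore the claim that $E'$ is stable under the operation $g\mapsto g\amalg\id_Z$ for every $Z\in\scr C$ (and hence, by symmetry of the coproduct and closure of $E'$ under equivalences, also under $g\mapsto\id_Z\amalg g$). Granting this, for $g_1\colon A_1\to B_1$ and $g_2\colon A_2\to B_2$ in $E'$ the coproduct $g_1\amalg g_2$ factors in $\scr C$ as $(\id_{B_1}\amalg g_2)\circ(g_1\amalg\id_{A_2})$; both factors lie in $E'$ by the claim, and the composite lies in $E'$ by 2-out-of-3, giving the required closure under binary coproducts. To prove the claim, I would first treat $Z\in\scr C_0$: the subclass $E'_0=\{g\in E':g\amalg\id_X\in E'\text{ for all }X\in\scr C_0\}$ contains every generator, because $(f\amalg\id_Y)\amalg\id_X=f\amalg\id_{Y\amalg X}$ with $Y\amalg X\in\scr C_0$, and it is closed under 2-out-of-3 and sifted colimits because, for each $X\in\scr C_0$, the functor $(-)\amalg X\colon\scr C\to\scr C$ preserves composites and sifted colimits; hence $E'_0=E'$. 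Then for arbitrary $Z\in\scr C$, fixing $g\in E'$, the full subcategory $\{Z\in\scr C:g\amalg\id_Z\in E'\}$ contains $\scr C_0$ by the case just treated and is closed under sifted colimits (using $\id_{\colim_j Z_j}\simeq\colim_j\id_{Z_j}$ and the fact that $g\amalg(-)$ preserves sifted colimits), hence equals $\scr C$ because $\scr C_0$ generates $\scr C$ under sifted colimits.

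The point I expect to require the most care is the one non-formal ingredient of the argument for the claim: for fixed $X$, the functor $(-)\amalg X\colon\scr C\to\scr C$ preserves sifted colimits. This is exactly where siftedness of the indexing categories is used; it follows from the facts that coproducts commute with colimits in each variable and that the diagonal $I\to I\times I$ is cofinal for $I$ sifted, via $(\colim_i c_i)\amalg X\simeq(\colim_i c_i)\amalg(\colim_i X)\simeq\colim_{(i,j)}(c_i\amalg X)\simeq\colim_i(c_i\amalg X)$. Everything else is bookkeeping with strongly saturated classes and objectwise colimits in $\Fun(\Delta^1,\scr C)$.
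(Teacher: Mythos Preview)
Your proof is correct and follows essentially the same strategy as the paper: reduce strong saturation to closure under binary coproducts via Lemma~\ref{lem:sifted+coprod}, then bootstrap the stability of $E'$ under $g\mapsto g\amalg\id_Z$ from $Z\in\scr C_0$ to arbitrary $Z\in\scr C$ using that $(\ph)\amalg Z$ preserves sifted (equivalently, weakly contractible) colimits. The only cosmetic difference is that the paper fixes $f\in E^\amalg$ and varies $X$ first, then fixes $X\in\scr C$ and varies $f$, whereas you do these two steps in the opposite order; you also spell out the argument that $E'$ contains all equivalences, which the paper leaves implicit.
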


\begin{proof}
	Let $E^\amalg$ be the class of morphisms of $\scr C$ of the form $f\amalg\id_X$ with $f\in E$ and $X\in\scr C_0$.
	Since $\scr C_0$ is closed under finite coproducts, $E^\amalg$ contains $E$ and is closed under $(\ph)\amalg \id_X$ for every $X\in\scr C_0$.
	Let $E'$ be the class of morphisms generated under 2-out-of-3 and sifted colimits by $E^\amalg$. We must show that $E'$ is strongly saturated. Note that $E'$ contains all equivalences. By Lemma~\ref{lem:strong-saturation}, it remains to show that $E'$ is closed under small colimits in $\Fun(\Delta^1,\scr C)$. By Lemma~\ref{lem:sifted+coprod}, it suffices to show that $E'$ is closed under binary coproducts.
	If $f\in E^\amalg$, the class of all $X\in \scr C$ such that $f\amalg \id_X\in E'$ contains $\scr C_0$ and is closed under sifted colimits (because sifted simplicial sets are weakly contractible and $\amalg$ preserves weakly contractible colimits in each variable), hence it equals $\scr C$. For $X\in\scr C$, the class of morphisms $f$ in $\scr C$ such that $f\amalg \id_X\in E'$ contains $E^\amalg$ and is closed under 2-out-of-3 and sifted colimits, hence it contains $E'$. We have shown that if $f\in E'$ and $X\in\scr C$, then $f\amalg\id_X\in E'$. By 2-out-of-3, if $f,g\in E'$, then $f\amalg g\in E'$, as desired.
\end{proof}

\subsection{Unstable motivic homotopy theory}

Let $S$ be a qcqs scheme. The category $\Sm_S$ of finitely presented smooth $S$-schemes is extensive, so that $\PSh_\Sigma(\Sm_S) = \Shv_\amalg(\Sm_S)$ by Lemma~\ref{lem:topos}.
We refer to Appendix~\ref{app:nisnevich} for a discussion of the Nisnevich topology on $\Sm_S$; in particular, it is quasi-compact (Proposition~\ref{prop:nisnevich-definition}) and its points are given by henselian local schemes (Proposition~\ref{prop:nisnevich-properties}). Since coproduct decompositions are Nisnevich coverings, it follows that
\[
\Shv_\Nis(\Sm_S)\subset \PSh_\Sigma(\Sm_S).
\]
We let $\PSh_{\A^1}(\Sm_S)\subset \PSh(\Sm_S)$ be the full subcategory of $\A^1$-invariant presheaves. The $\infty$-category $\H(S)$ of motivic spaces over $S$ is defined by
\[\H(S)=\Shv_\Nis(\Sm_S)\cap \PSh_{\A^1}(\Sm_S)\subset \PSh(\Sm_S).\]
We denote by $\L_\Nis$, $\L_{\A^1}$, and $\L_\mot$ the corresponding localization functors on $\PSh(\Sm_S)$, and we say that a morphism $f$ in $\PSh(\Sm_S)$ is a \emph{Nisnevich equivalence}, an \emph{$\A^1$-equivalence}, or a \emph{motivic equivalence} if $\L_\Nis(f)$, $\L_{\A^1}(f)$, or $\L_\mot(f)$ is an equivalence. We recall that $\L_\Nis$ is left exact and that $\L_{\A^1}$ and $\L_\mot$ preserve finite products \cite[Proposition C.6]{HoyoisGLV}.
By \cite[Proposition 5.5.4.15]{HTT} and \cite[Corollary C.2]{HoyoisGLV}, the class of Nisnevich equivalences (resp.\ of $\A^1$-equivalences) in $\PSh(\Sm_S)$ is the strongly saturated class generated by the finitely generated Nisnevich covering sieves (resp.\ by the projections $X\times\A^1\to X$ for $X\in\Sm_S$). Together these classes generate the strongly saturated class of motivic equivalences.

For any morphism of schemes $f\colon T\to S$, the base change functor $f^*\colon \Sm_S\to\Sm_T$ preserves finite coproducts, Nisnevich squares, and $\A^1$-homotopy equivalences. It follows that the pushforward functor $f_*\colon \PSh(\Sm_T) \to \PSh(\Sm_S)$ restricts to functors
\[
f_*\colon \PSh_\Sigma(\Sm_T) \to \PSh_\Sigma(\Sm_S),\quad f_*\colon \Shv_\Nis(\Sm_T)\to\Shv_\Nis(\Sm_S),\quad f_*\colon \H(T)\to\H(S).
\]
The first preserves sifted colimits and the last two preserve filtered colimits, since they are computed pointwise in each case.

\begin{proposition}\label{prop:integralsifted}
	Let $p\colon T\to S$ be an integral morphism of schemes. Then the functor
	\[
	p_*\colon \PSh_\Sigma(\Sm_T)\to \PSh_\Sigma(\Sm_S)
	\]
	preserves Nisnevich and motivic equivalences.
\end{proposition}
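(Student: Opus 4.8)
The plan is to handle the two classes of equivalences separately: first the Nisnevich equivalences, by a stalkwise computation, and then the motivic equivalences, by bootstrapping from the Nisnevich case.

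\emph{Nisnevich equivalences.} By the analysis of the Nisnevich topology in Appendix~\ref{app:nisnevich} (in particular Propositions~\ref{prop:nisnevich-definition} and~\ref{prop:nisnevich-properties}), a morphism in $\PSh_\Sigma(\Sm_S)$ is a Nisnevich equivalence if and only if it induces an equivalence on the stalk at every henselian local scheme $V$ essentially smooth over $S$, and if $V$ is written as a cofiltered limit $V=\lim_\lambda U_\lambda$ with $U_\lambda\in\Sm_S$ and affine transition maps, that stalk is the pro-evaluation $\colim_\lambda F(U_\lambda)$. Now if $\alpha\colon F\to G$ is a Nisnevich equivalence in $\PSh_\Sigma(\Sm_T)$, the stalk of $p_*F$ at such a $V$ is $\colim_\lambda(p_*F)(U_\lambda)=\colim_\lambda F(U_\lambda\times_ST)$, the pro-evaluation of $F$ at $W:=V\times_ST$; here $W$ is integral over the henselian local ring $\mathcal O(V)$ and is essentially smooth over $T$ via the pro-system $\{U_\lambda\times_ST\}_\lambda$ in $\Sm_T$. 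So the whole statement for Nisnevich equivalences reduces to: for $W=\Spec B$ with $B$ integral over a henselian local ring and $W$ essentially smooth over $T$, the pro-evaluation $F\mapsto\colim_\lambda F(W_\lambda)$ (along any presentation of $W$ as a cofiltered limit of objects of $\Sm_T$ with affine transition maps) carries Nisnevich equivalences to equivalences.

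I expect \textbf{this reduction to be the main obstacle}. To prove it, write $B$ as the filtered union of its finite subalgebras $B_\mu$ over the henselian local ring $R:=\mathcal O(V)$; since $R$ is henselian local, each $B_\mu$ decomposes as a finite product $B_\mu=\prod_{k=1}^{n_\mu}B_{\mu,k}$ of henselian local rings. For $\lambda$ large the (finitely many) idempotents of $B_\mu$ lift to idempotents of $\mathcal O(W_\lambda)$; tracking the resulting clopen decompositions of the $W_\lambda$ shows that each $\Spec B_{\mu,k}$ is again essentially smooth over $T$ --- hence a Nisnevich point of $\Sm_T$ --- and that $W\simeq\lim_\mu\coprod_{k=1}^{n_\mu}\Spec B_{\mu,k}$. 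Using that objects of $\PSh_\Sigma$ turn finite coproducts into finite products, that finite products commute with filtered colimits, and that Nisnevich sheaves are continuous along cofiltered limits of qcqs schemes with affine transition maps (a consequence of the quasi-compactness of the topology, Proposition~\ref{prop:nisnevich-definition}), one identifies both $\colim_\lambda F(W_\lambda)$ and the analogous expression for $\L_\Nis F$ with $\colim_\mu\prod_{k=1}^{n_\mu}x_{\mu,k}^*(F)$, where $x_{\mu,k}^*$ is the stalk at the Nisnevich point $\Spec B_{\mu,k}$; this identification is natural in $F$, and since the $x_{\mu,k}^*$ detect Nisnevich equivalences the claim follows. (The only delicate point is the bookkeeping with the possibly infinitely --- in fact profinitely --- many connected components of $W$, which is why one works through the finite levels $B_\mu$.)

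\emph{Motivic equivalences.} Here I would combine the Nisnevich case with a compatibility of $p_*$ with the $\A^1$-singular construction $\mathrm{Sing}^{\A^1}$. Since $p_*\colon\PSh_\Sigma(\Sm_T)\to\PSh_\Sigma(\Sm_S)$ preserves sifted colimits and $Y_T\times_T\A^1_T\simeq(Y\times_S\A^1_S)_T$ naturally for $Y\in\Sm_S$, the canonical maps assemble into an equivalence $p_*\circ\mathrm{Sing}^{\A^1}_T\simeq\mathrm{Sing}^{\A^1}_S\circ p_*$ under which the unit $p_*(F\to\mathrm{Sing}^{\A^1}_TF)$ becomes the $\mathrm{Sing}^{\A^1}_S$-unit of $p_*F$, an $\A^1$-equivalence. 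Given a motivic equivalence $\alpha$, the square comparing $\alpha$ with $\L_\mot\alpha$ together with $2$-out-of-$3$ for motivic equivalences reduces us to showing that $p_*(F\to\L_\mot F)$ is a motivic equivalence for every $F$. Present $F\to\L_\mot F$ as the transfinite composite of a tower $F=F_0\to F_1\to\cdots$ with $F_{\beta+1}=\L_\Nis\mathrm{Sing}^{\A^1}F_\beta$ and filtered colimits at limit stages (as in the standard iterative construction of $\L_\mot$, cf.\ \cite{MV}). Each step factors as $F_\beta\to\mathrm{Sing}^{\A^1}F_\beta\to\L_\Nis\mathrm{Sing}^{\A^1}F_\beta$; after applying $p_*$, the first map is a $\mathrm{Sing}^{\A^1}_S$-unit and the second is $p_*$ of a Nisnevich equivalence, so by the first part $p_*(F_\beta\to F_{\beta+1})$ is a motivic equivalence. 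As $p_*$ preserves filtered colimits and the motivic equivalences are closed under $2$-out-of-$3$, transfinite composition, and filtered colimits, $p_*(F\to\L_\mot F)$ is a motivic equivalence, which completes the argument.
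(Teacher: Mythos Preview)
Your geometric core is the same as the paper's: an integral extension of a henselian local ring is a filtered colimit of finite products of henselian local rings, so the Nisnevich ``stalk'' of $p_*F$ at $V$ decomposes into Nisnevich stalks of $F$. But there is a genuine gap in how you use this. You assert at the outset that a map in $\PSh_\Sigma(\Sm_S)$ is a Nisnevich equivalence \emph{if and only if} it is so on all henselian stalks, citing Proposition~\ref{prop:nisnevich-properties}. That proposition only gives conservativity of these points on the \emph{hypercompletion} of $\Shv_\Nis$; for the big Nisnevich site of a general qcqs scheme this topos need not be hypercomplete, so knowing that all stalks of $p_*\alpha$ are equivalences does not directly yield that $\L_\Nis(p_*\alpha)$ is one. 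The paper sidesteps this by first invoking Lemma~\ref{lem:voevodsky}: since $p_*$ preserves sifted colimits, it suffices to treat the generating Nisnevich equivalences $\L_\Sigma U\hookrightarrow X$ for $U$ a finitely generated sieve. These are maps between $0$-truncated objects, where stalk-conservativity is unproblematic, and the stalk computation is then exactly the lifting argument you sketch (reduce to $p$ finite by a limit argument, then use that $V\times_ST$ is a finite sum of henselian local schemes).

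For the motivic case your argument via $p_*\circ\mathrm{Sing}^{\A^1}\simeq\mathrm{Sing}^{\A^1}\circ p_*$ and the transfinite tower $(\L_\Nis\mathrm{Sing}^{\A^1})^\beta$ is correct but considerably heavier than what the paper does. The paper simply observes that $p_*$ preserves $\A^1$-homotopy equivalences (it preserves products and $p^*(\A^1_S)\simeq\A^1_T$), notes that both Nisnevich equivalences and $\A^1$-homotopy equivalences are stable under $(\ph)\amalg\id_X$, and then applies Lemma~\ref{lem:voevodsky} again to reduce preservation of motivic equivalences to preservation of Nisnevich equivalences. So the organizing principle you are missing is Lemma~\ref{lem:voevodsky}: it both simplifies the motivic-to-Nisnevich reduction and, by reducing to $0$-truncated generators, fixes the hypercompleteness gap in your Nisnevich argument.
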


\begin{proof}
	It is clear that $p_*$ preserves $\A^1$-homotopy equivalences, since it preserves products and $p^*(\A^1_S)\simeq \A^1_T$. Note that if $f$ is either a Nisnevich equivalence or an $\A^1$-homotopy equivalence, so is $f \amalg \id_X$ for any $X\in\Sm_T$. Thus by Lemma~\ref{lem:voevodsky}, it suffices to show that $p_*$ preserves Nisnevich equivalences.
	Again by Lemma~\ref{lem:voevodsky}, the class of Nisnevich equivalences in $\PSh_\Sigma(\Sm_T)$ is generated under 2-out-of-3 and sifted colimits by morphisms of the form $\L_\Sigma U\into X$, where $U\into X$ is a finitely generated Nisnevich sieve.
	Note that if $U\into X$ is the sieve generated by $U_1,\dotsc,U_n$ then $\L_\Sigma U\into X$ is the sieve generated by $U_1\amalg\dotsb\amalg U_n$.
	It therefore remains to show the following: if $X\in\Sm_T$ and $U\into X$ is a Nisnevich sieve generated by a single map $X'\to X$, then $p_*U\into p_*X$ is a Nisnevich equivalence. Since this is a morphism between $0$-truncated objects, it suffices to check that it is an equivalence on stalks. If $V$ is the henselian local scheme of a point in a smooth $S$-scheme, we must show that every $T$-morphism $V\times_ST\to X$ lifts to $X'$. We can write $T=\lim_\alpha T_\alpha$ where $T_\alpha$ is finite over $S$ \cite[Tag 09YZ]{Stacks}. 
	Since $X'\to X\to T$ are finitely presented, we can assume that $p\colon T\to S$ is finite. Then $V\times_ST$ is a finite sum of henselian local schemes \cite[Proposition 18.5.10]{EGA4-4}, and hence every $T$-morphism $V\times_ST\to X$ lifts to $X'$.
\end{proof}

\begin{corollary}\label{cor:integralsifted}
	Let $p\colon T\to S$ be an integral morphism of schemes. Then the functor
	\[
	p_*\colon \H(T)\to \H(S)
	\]
	preserves sifted colimits.
\end{corollary}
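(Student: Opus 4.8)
The plan is to deduce this from Proposition~\ref{prop:integralsifted}, using two facts already recorded above: that $p_*\colon\PSh_\Sigma(\Sm_T)\to\PSh_\Sigma(\Sm_S)$ preserves sifted colimits (these are computed pointwise, since $\PSh_\Sigma$ is closed under sifted colimits in $\PSh$), and that this functor restricts to the pushforward $p_*\colon\H(T)\to\H(S)$. The one extra input I would isolate first is that $p_*$ is compatible with the motivic localization. Write $\L_\mot$ for the localization functor onto $\H(S)$, regarded as a reflective subcategory of $\PSh_\Sigma(\Sm_S)$, and likewise over $T$. By Proposition~\ref{prop:integralsifted}, $p_*$ sends motivic equivalences in $\PSh_\Sigma(\Sm_T)$ to motivic equivalences in $\PSh_\Sigma(\Sm_S)$, so for every $G\in\PSh_\Sigma(\Sm_T)$ the map $p_*G\to p_*\L_\mot G$ is a motivic equivalence whose target already lies in $\H(S)$. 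This exhibits a natural equivalence
\[
\L_\mot\, p_*\;\simeq\;p_*\,\L_\mot\colon\PSh_\Sigma(\Sm_T)\longrightarrow\H(S).
\]

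Now let $I$ be a sifted simplicial set and $F\colon I\to\H(T)$ a diagram, and let $C\in\PSh_\Sigma(\Sm_T)$ be its colimit computed in $\PSh_\Sigma(\Sm_T)$. Since $\H(T)$ and $\H(S)$ are reflective localizations of $\PSh_\Sigma(\Sm_T)$ and $\PSh_\Sigma(\Sm_S)$, the colimit of $F$ in $\H(T)$ is $\L_\mot C$, and the colimit of $p_*\circ F$ in $\H(S)$ is $\L_\mot$ applied to its colimit in $\PSh_\Sigma(\Sm_S)$; the latter is $p_*C$, because $p_*$ preserves sifted colimits on $\PSh_\Sigma$. Chaining these identifications with the displayed equivalence yields
\[
p_*\bigl(\colim_{i\in I}F_i\bigr)\;\simeq\;p_*\,\L_\mot C\;\simeq\;\L_\mot\, p_*C\;\simeq\;\colim_{i\in I}p_*F_i,
\]
where the first colimit is formed in $\H(T)$ and the last in $\H(S)$. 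Tracing through the construction shows that this equivalence is the canonical comparison map, so $p_*$ preserves the colimit of $F$; as $I$ was arbitrary, $p_*\colon\H(T)\to\H(S)$ preserves sifted colimits.

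I do not expect a genuine obstacle here: all the substantive work already sits in Proposition~\ref{prop:integralsifted}, whose proof carried out the reduction to finite morphisms and the henselian lifting argument. The only points that need care are the naturality of $\L_\mot\, p_*\simeq p_*\,\L_\mot$ and keeping straight in which of the nested categories $\H(S)\subset\PSh_\Sigma(\Sm_S)\subset\PSh(\Sm_S)$ (and similarly over $T$) each colimit is formed.
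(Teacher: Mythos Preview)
Your proof is correct and follows essentially the same approach as the paper's: both argue that a sifted colimit in $\H(T)$ can be computed as $\L_\mot$ applied to the sifted colimit in $\PSh_\Sigma(\Sm_T)$, and then use that $p_*$ on $\PSh_\Sigma$ preserves both sifted colimits and motivic equivalences (Proposition~\ref{prop:integralsifted}). The paper compresses this into two sentences, whereas you make the intermediate equivalence $\L_\mot\, p_*\simeq p_*\,\L_\mot$ explicit, but the content is the same.
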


\begin{proof}
	Let $i\colon \H(T)\into \PSh(\Sm_T)$ be the inclusion, with left adjoint $\L_\mot$.
If $X\colon K \to \H(T)$ is a diagram, then $\colim_K X \simeq \L_\mot \colim_K iX$. Since $p_*\colon \PSh_\Sigma(\Sm_T) \to \PSh_\Sigma(\Sm_S)$ preserves motivic equivalences and sifted colimits, the result follows.
\end{proof}

We shall denote by $\H_\pt(S)$ the $\infty$-category of pointed objects in $\H(S)$, that is, the full subcategory of $\PSh_\Sigma(\Sm_S)_\pt$ spanned by the $\A^1$-invariant Nisnevich sheaves. Since $\L_\mot$ preserves finite products, the localization functor $\L_\mot\colon \PSh_\Sigma(\Sm_S)_\pt\to\H_\pt(S)$ is symmetric monoidal with respect to the smash product.
Moreover, since finite products distribute over finite coproducts in $\Sm_S$, Lemmas~\ref{lem:C+} and \ref{lem:Day} apply: we have $\PSh_\Sigma(\Sm_S)_\pt\simeq \PSh_\Sigma(\Sm_{S+})$ and the smash product symmetric monoidal structure on the former agrees with the Day convolution symmetric monoidal structure on the latter.

\subsection{Weil restriction}
Let $p\colon T\to S$ be a morphism of schemes and $X$ a scheme over $T$. If the image of $X$ by the pushforward functor $p_*\colon \PSh(\Sch_T) \to \PSh(\Sch_S)$ is representable by an $S$-scheme, this scheme is called the \emph{Weil restriction} of $X$ along $p$ and denoted by $\Weil_pX$. Explicitly, $\Weil_pX$ is characterized by the following universal property: there is a bijection
\[
\Map_S(Y, \Weil_p X) \simeq \Map_T(Y\times_ST, X)
\]
natural in the $S$-scheme $Y$. We refer to \cite[\sect 7.6]{NeronModels} for basic properties of Weil restriction and for existence criteria. In particular, if $p$ is finite locally free and $X$ is quasi-projective over $T$, then $\Weil_p X$ exists \cite[\sect 7.6, Theorem 4]{NeronModels}.

\begin{lemma}\label{lem:Weil}
	Let $S$ be an arbitrary scheme, $p\colon T\to S$ a finite locally free morphism, and $X$ a quasi-projective $T$-scheme. Then the Weil restriction $\Weil_pX$ exists and is quasi-projective over $S$.
\end{lemma}

\begin{proof}
	The proof of \cite[Proposition A.5.8]{CGP} shows that $\Weil_pX$ has a relatively ample line bundle.
	By \cite[\sect7.6, Proposition 5(c)]{NeronModels}, $\Weil_pX\to S$ is locally of finite type, so it remains to show that it is quasi-compact. 
	Locally on $S$, $X\to T$ is the composition of a closed immersion and a finitely presented morphism. Both types of morphisms are preserved by Weil restriction \cite[\sect7.6, Propositions 2(ii) and 5(e)]{NeronModels}, so $\Weil_pX\to S$ is quasi-compact.
\end{proof}

\begin{remark}
	Quasi-projectivity assumptions are often made in the sequel to ensure that Weil restrictions exist as schemes.
	They can be removed at the mild cost of extending $\H_\pt(-)$ and $\SH(-)$ to algebraic spaces, since Weil restrictions along finite locally free morphisms always exist as algebraic spaces \cite[Theorem 3.7]{RydhHilb}.
	By \cite[Proposition 5.7.6]{GrusonRaynaud}, Nisnevich sheaves such as $\H_\pt(-)$ and $\SH(-)$ extend uniquely to the category of qcqs algebraic spaces.
\end{remark}

\begin{lemma}\label{lem:Weil-smooth}
	Let $S$ be an arbitrary scheme, $p\colon T\to S$ a finite locally free morphism, and $X$ a smooth $T$-scheme. If $\Weil_p X$ exists, then $\Weil_pX$ is smooth over $S$.
\end{lemma}

\begin{proof}
	It follows immediately from the universal property of Weil restriction that $\Weil_p X$ is formally smooth. Moreover, $\Weil_pX$ is locally of finite presentation by \cite[\sect 7.6, Proposition 5(d)]{NeronModels}, hence smooth.
\end{proof}

\section{Norms of pointed motivic spaces}
\label{sec:pointednorms}

Let $p\colon T\to S$ be a morphism of schemes. Our goal is to construct a ``multiplicative pushforward'' or ``parametrized smash product'' functor
\[
p_\otimes\colon \H_\pt(T) \to \H_\pt(S).
\]
More precisely, we ask for a functor $p_\otimes$ satisfying the following properties:
\begin{enumerate}
	\item[(N1)] $p_\otimes$ is a symmetric monoidal functor.
	\item[(N2)] $p_\otimes$ preserves sifted colimits.
	\item[(N3)] There is a natural equivalence $p_\otimes(X_+)\simeq (p_*X)_+$.
	\item[(N4)] If $T=S^{\amalg n}$ and $p$ is the fold map, then $p_\otimes$ is the $n$-fold smash product.
\end{enumerate}
It is unreasonable to expect that such a functor $p_\otimes$ exists for arbitrary $p$. For example, it is not at all clear how to make sense of the smash product of an infinite family of objects in $\H_\pt(S)$. Conditions (N2) and (N3) hint that we should at least assume $p$ integral, as this guarantees that $p_*$ preserves sifted colimits (Corollary~\ref{cor:integralsifted}). The main result of this section is that there is a canonical definition of $p_\otimes$ satisfying conditions (N1)–(N4) above, provided that $p$ is \emph{integral} and \emph{universally open}, for example if $p$ is finite locally free, or if $p$ is $\Spec L\to\Spec K$ for an algebraic field extension $L/K$.
 
 Our strategy is to first construct a functor $p_\otimes\colon \PSh_\Sigma(\Sm_T)_\pt \to \PSh_\Sigma(\Sm_S)_\pt$ satisfying the conditions (N1)–(N4) and then prove that it preserves motivic equivalences.
 It might seem at first glance that such a functor $p_\otimes$ should be determined by conditions (N2) and (N3), because $\PSh_\Sigma(\Sm_T)_\pt$ is generated under sifted colimits by objects of the form $X_+$. However, the maps in these colimit diagrams are not all in the image of the functor $X\mapsto X_+$, so neither existence nor uniqueness of $p_\otimes$ is evident. The typical example of a pointed map that is not in the image of that functor is the map $f\colon (X\amalg Y)_+\to X_{+}$ that collapses $Y$ to the base point. Intuitively, the induced map $p_\otimes(f)\colon p_*(X\amalg Y)_+ \to p_*(X)_+$ should collapse $p_*(Y)$ and all the ``cross terms'' in $p_*(X\amalg Y)$ to the base point: this is what happens when $p$ is a fold map, according to condition (N4). 
Lemma~\ref{lem:crossterms} below shows that there is a well-defined notion of ``cross terms'' when $p$ is both universally closed and universally open.

\subsection{The unstable norm functors}

Let $p\colon T\to S$ be a morphism of schemes and let $X\in\PSh(\Sm_T)$. 
By a \emph{subpresheaf} $Y\subset X$ we will mean a monomorphism of presheaves $Y\to X$, i.e., a morphism $Y\to X$ such that for every $U\in\Sm_T$, $Y(U)\to X(U)$ is an inclusion of connected components. We will say that a morphism $Y\to X$ is \emph{relatively representable} if for every $U\in \Sm_T$ and every map $U\to X$, the presheaf $U\times_XY$ is representable.
Let $Y_1,\dotsc,Y_k\to X$ be relatively representable morphisms.
For $U\in\Sm_S$, let
\[
p_*(X|Y_1,\dotsc,Y_k)(U) = \{s\colon U\times_ST \to X\suchthat s^{-1}(Y_i) \to U\text{ is surjective for all }i\}.
\]
It is clear that $p_*(X|Y_1,\dotsc,Y_k)$ is a subpresheaf of $p_*(X)$, since surjective maps are stable under base change. Moreover, if $X\in\PSh_\Sigma(\Sm_T)$, then $p_*(X|Y_1,\dotsc,Y_k)\in\PSh_\Sigma(\Sm_S)$.

We say that a morphism of schemes is \emph{clopen} if it is both closed and open.

\begin{lemma}\label{lem:crossterms}
	Let $p\colon T\to S$ be a universally clopen morphism, let $X\in\PSh_\Sigma(\Sm_T)$, and let $Y_1,\dotsc,Y_k\to X$ be relatively representable morphisms.
	 For every coproduct decomposition $X\simeq X'\amalg X''$ in $\PSh_\Sigma(\Sm_T)$,
 there is a coproduct decomposition
	\[
	p_*(X|Y_1,\dotsc,Y_k)\simeq p_*(X'|Y_1',\dotsc,Y_k')\amalg p_*(X|X'',Y_1,\dotsc,Y_k)
	\]
	in $\PSh_\Sigma(\Sm_S)$, where $Y_i'=Y_i\times_X X'$.
\end{lemma}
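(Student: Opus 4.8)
The plan is to check the asserted coproduct decomposition on $U$-sections for every $U\in\Sm_S$ and then verify naturality. The key input is that $\PSh_\Sigma(\Sm_S)=\Shv_\amalg(\Sm_S)$ by Lemma~\ref{lem:topos}, so that binary coproducts in $\PSh_\Sigma(\Sm_S)$ are disjoint and are computed by
\[
(F\amalg G)(U)\simeq\coprod_{U=U_a\amalg U_b}F(U_a)\times G(U_b),
\]
the coproduct ranging over coproduct decompositions of $U$ in $\Sm_S$; the same holds in $\PSh_\Sigma(\Sm_T)$. The comparison map is induced on the first factor by the summand inclusion $X'\into X$ (which sends $p_*(X'|Y_1',\dotsc,Y_k')$ into $p_*(X|Y_1,\dotsc,Y_k)$ since $(\iota\circ s)^{-1}(Y_i)=s^{-1}(Y_i')$) and on the second factor by the evident inclusion of subpresheaves $p_*(X|X'',Y_1,\dotsc,Y_k)\subset p_*(X|Y_1,\dotsc,Y_k)$; concretely, on $U$-sections it glues a section over $U_a$ factoring through $X'$ with a section over $U_b$ into a single section over $U=U_a\amalg U_b$.

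To construct the inverse, take $s\colon U\times_S T\to X$ with each $s^{-1}(Y_i)\to U$ surjective. Since $U\times_S T\in\Sm_T$ is representable and coproducts are disjoint in $\PSh_\Sigma(\Sm_T)$, the decomposition $X\simeq X'\amalg X''$ makes $s$ correspond to a clopen decomposition $U\times_S T=W'\amalg W''$ with $s|_{W'}$ factoring through $X'$ and $s|_{W''}$ through $X''$; thus $W''=s^{-1}(X'')$. \emph{This is the only place the hypothesis enters:} because $p$, hence its base change $U\times_S T\to U$, is universally clopen, the image $U_b\subseteq U$ of the clopen subscheme $W''$ is clopen, so $U=U_a\amalg U_b$ with $U_a:=U\setminus U_b$ is a coproduct decomposition in $\Sm_S$. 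Over $U_a$ the preimage of $X''$ has empty image in $U_a$, hence is empty, so $s|_{U_a\times_S T}$ factors through $X'$; over $U_b$ the map $s^{-1}(X'')\to U_b$ is surjective by definition of $U_b$. Since surjectivity of morphisms of schemes is stable under base change, $s^{-1}(Y_i)\times_U U_a\to U_a$ and $s^{-1}(Y_i)\times_U U_b\to U_b$ are surjective, and $s^{-1}(Y_i)\times_U U_a=(s|_{U_a})^{-1}(Y_i)=(s|_{U_a})^{-1}(Y_i')$ because $s|_{U_a}$ lands in $X'$. Hence $s|_{U_a}\in p_*(X'|Y_1',\dotsc,Y_k')(U_a)$ and $s|_{U_b}\in p_*(X|X'',Y_1,\dotsc,Y_k)(U_b)$, and we set $s\mapsto(U=U_a\amalg U_b,\,s|_{U_a},\,s|_{U_b})$.

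It then remains to check that the two maps are mutually inverse and natural in $U$. One composite is the identity because restricting a section along a coproduct decomposition of $U$ is inverse to gluing. For the other composite one must check that a section obtained by gluing $s_a$ (valued in $X'$) over $U_a$ with $s_b$ over $U_b$ has $s^{-1}(X'')$ empty over $U_a$ — which holds by disjointness of $X'\amalg X''$ — and $s^{-1}(X'')\to U_b$ surjective — which is the defining condition on $p_*(X|X'',Y_1,\dotsc,Y_k)$ — so the recovered decomposition of $U$ is again $U_a\amalg U_b$. Naturality in $U$ follows since forming $W''=s^{-1}(X'')$ and taking its image in $U$ commute with pullback along any $U'\to U$ in $\Sm_S$, once more using stability of surjections under base change. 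I expect the only genuinely non-formal step to be the passage from the clopen $W''\subseteq U\times_S T$ to the clopen $U_b\subseteq U$, which is precisely where universal openness and universal closedness of $p$ are both needed; everything else is bookkeeping with disjoint coproducts in the topos $\PSh_\Sigma$ and with stability of surjections under base change.
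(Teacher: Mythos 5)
Your proof is correct, and its essential geometric step is exactly the paper's: because $p_U$ is clopen, the image of the clopen subscheme $s^{-1}(X'')\subset U\times_S T$ is a clopen $U_b\subset U$, which produces the decomposition $U=U_a\amalg U_b$. Where you differ is in the organization around that step. You build an explicit two-sided inverse for general $k$, relying on the explicit formula for binary coproducts in $\Shv_\amalg(\Sm_S)$ (which is indeed valid, but deserves a word of justification: it is a repackaging of universality of colimits together with disjointness of coproducts, i.e.\ exactly the content of $\PSh_\Sigma(\Sm_S)$ being an $\infty$-topos with the $\amalg$-topology). The paper instead first reduces to $k=0$ by observing that $p_*(X'|Y_1',\dotsc,Y_k')=p_*(X')\cap p_*(X|Y_1,\dotsc,Y_k)$ and $p_*(X|X'',Y_1,\dotsc,Y_k)=p_*(X|X'')\cap p_*(X|Y_1,\dotsc,Y_k)$ as subpresheaves of $p_*(X)$, so that the desired map is a base change of $\phi\colon p_*(X')\amalg p_*(X|X'')\to p_*(X)$; it then shows $\phi$ is a monomorphism (the diagonal is an equivalence by universality of colimits plus emptiness of the cross-term pullback) and objectwise $\pi_0$-surjective (your clopen decomposition). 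The mono-plus-objectwise-epi argument is a little leaner in the $\infty$-categorical setting, since it establishes the equivalence without ever having to verify coherence/naturality of an explicitly constructed inverse; your version keeps $k$ general throughout and makes the bijection on sections more visible. Both routes are sound.
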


\begin{proof}
	Let $\phi\colon p_*(X')\amalg p_*(X|X'') \to p_*(X)$ be the morphism induced by the inclusions $p_*(X')\subset p_*(X)$ and $p_*(X|X'')\subset p_*(X)$.
	We first note that 
	\begin{align*}
	&p_*(X'|Y_1',\dotsc,Y_k')=p_*(X')\cap p_*(X|Y_1,\dotsc,Y_k)\\
	\text{and}\quad & p_*(X|X'',Y_1,\dotsc,Y_k)=p_*(X|X'')\cap p_*(X|Y_1,\dotsc,Y_k)
	\end{align*}
	as subpresheaves of $p_*(X)$. By universality of colimits in $\PSh_\Sigma(\Sm_S)$, we obtain a cartesian square
\begin{tikzmath}
	\def\colsep{1.5em}
	\diagram{p_*(X'|Y_1',\dotsc,Y_k')\amalg p_*(X|X'',Y_1,\dotsc,Y_k) & p_*(X|Y_1,\dotsc,Y_k)\\
	p_*(X')\amalg p_*(X|X'') & p_*(X)\rlap, \\};
	\arrows (11-) edge (-12) (21-) edge node[above]{$\phi$} (-22) (11) edge[c->] (21) (12) edge[c->] (22);
\end{tikzmath}
and we may therefore assume that $k=0$.
Note that $p_*(X')\times_{p_*(X)}p_*(X|X'')$ has no sections over nonempty schemes, hence is the initial object of $\PSh_\Sigma(\Sm_S)$. By universality of colimits, it follows that the diagonal of $\phi$ is an equivalence, i.e., that $\phi$ is a monomorphism.
	 
	It remains to show that $\phi$ is objectwise an effective epimorphism.
	Given $U\in\Sm_S$ and $s \in p_*(X)(U)$, let $U'=\{x\in U\suchthat p_U^{-1}(x)\subset s^{-1}(X')\}$ and let $U''$ be its complement. Then $U''=p_U(s^{-1}(X''))$, which is a clopen subset of $U$ since $p_U$ is clopen. We thus have a coproduct decomposition $U=U'\amalg U''$. By construction, the restriction of $s$ to $U'$ belongs to $p_*(X')(U')$, and its restriction to $U''$ belongs to $p_*(X|X'')(U'')$. Hence, these restrictions define a section of $p_*(X')\amalg p_*(X|X'')$ over $U$, which is clearly a preimage of $s$ by $\phi_U$.
 \end{proof}

 \begin{example}\label{ex:addition-formula}
 	Let $p\colon T\to S$ be a universally clopen morphism and let $X,Y\in\PSh_\Sigma(\Sm_T)$. Then we have the decomposition
 	\[
 	p_*(X\amalg Y) \simeq p_*(\emptyset)\amalg p_*(X|X) \amalg p_*(Y|Y) \amalg p_*(X\amalg Y|X,Y)
 	\]
 	in $\PSh_\Sigma(\Sm_S)$ (apply Lemma~\ref{lem:crossterms} three times). Note that $p_*(\emptyset)$ is represented by the complement of the image of $p$, while $p_*(X|X)$ is the restriction of $p_*(X)$ to the image of $p$. If $p$ is surjective, then $p_*(\emptyset)=\emptyset$ and $p_*(X|X)=p_*(X)$, in which case we get
 	\[
 	p_*(X\amalg Y) \simeq p_*(X)\amalg p_*(Y) \amalg p_*(X\amalg Y|X,Y).
 	\]
 \end{example}
 
 In the situation of Lemma~\ref{lem:crossterms}, we obtain in particular a map
 \begin{equation*}\label{eqn:collapse}
p_*(X)_+ \to p_*(X')_+
 \end{equation*}
 in $\PSh_\Sigma(\Sm_S)_\pt$ by sending $p_*(X|X'')$ to the base point; it is a retraction of the inclusion $p_*(X')_+\into p_*(X)_+$.

\begin{theorem}\label{thm:norm}
	Let $p\colon T\to S$ be a universally clopen morphism. Then there is a unique symmetric monoidal functor
	\begin{equation*}\label{eqn:normPSigma}
	p_\otimes\colon \PSh_\Sigma(\Sm_T)_\pt \to \PSh_\Sigma(\Sm_S)_\pt
	\end{equation*}
	such that:
	\begin{enumerate}
		\item $p_\otimes$ preserves sifted colimits;
		\item there is given a symmetric monoidal natural equivalence $p_\otimes(X_+)\simeq p_*(X)_+$;
		\item for every $f\colon Y_+\to X_+$ with $X,Y\in\PSh_\Sigma(\Sm_T)$, the map $p_\otimes(f)$ is the composite
		\[
		p_*(Y)_+ \to p_*(f^{-1}(X))_+ \xrightarrow{f} p_*(X)_+,
		\]
		where the first map collapses $p_*(Y|Y\minus f^{-1}(X))$ to the base point.
	\end{enumerate}
	Moreover:
	\begin{enumerate}\setcounter{enumi}{3}
		\item if $p$ is integral, then $p_\otimes$ preserves Nisnevich and motivic equivalences;
		\item if $p$ is a universal homeomorphism, then $p_\otimes\simeq p_*$;
		\item if $p\colon S^{\amalg n}\to S$ is the fold map, then
		\[
		p_\otimes\colon \PSh_\Sigma(\Sm_{S^{\amalg n}})_\pt \simeq (\PSh_\Sigma(\Sm_S)_\pt)^{\times n} \to \PSh_\Sigma(\Sm_S)_\pt
		\]
		is the $n$-fold smash product.
	\end{enumerate}
\end{theorem}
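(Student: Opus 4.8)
\emph{Uniqueness.} By Lemma~\ref{lem:Day}, $\PSh_\Sigma(\Sm_T)_\pt\simeq\PSh_\Sigma(\Sm_{T+})$ is the free symmetric monoidal $\infty$-category with sifted colimits on $\Sm_{T+}$ (Day convolution), so a symmetric monoidal functor out of it that preserves sifted colimits is determined, up to a contractible space of choices, by its restriction to $\Sm_{T+}$. Every morphism of $\Sm_{T+}$ is a partial map $(V\into X,\ g\colon V\to Y)$ and factors as the collapse $X_+\to V_+$ followed by the honest map $g_+\colon V_+\to Y_+$; conditions (2) and (3) pin down $p_\otimes$ on objects and on both kinds of morphism (for $g_+$ the collapse in (3) is an identity, and for a collapse $X_+\to V_+$ condition (3) prescribes exactly the retraction $p_*(X)_+\to p_*(V)_+$ furnished by Lemma~\ref{lem:crossterms}). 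Hence $p_\otimes$ is unique if it exists.

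\emph{Existence.} I would first produce a symmetric monoidal functor $\nu\colon\Sm_{T+}\to\PSh_\Sigma(\Sm_S)_\pt$ with $\nu(X_+)=p_*(X)_+$ — here $p_*$ of a representable is a discrete object of $\PSh_\Sigma(\Sm_S)$ and $p_*$ preserves finite products, which yields the symmetric monoidality via the canonical identification $p_*(X\times_T X')\simeq p_*(X)\times_S p_*(X')$ — sending a partial map $(V\into X,\ g\colon V\to Y)$ to the composite $p_*(X)_+\to p_*(V)_+\xrightarrow{p_*(g)_+}p_*(Y)_+$ whose first arrow collapses $p_*(X|X\minus V)$ as in Lemma~\ref{lem:crossterms}. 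The substance of this step is checking that $\nu$ is a well-defined functor: composition of partial maps is computed by pullback, so functoriality amounts to transitivity of the collapse maps and their naturality with respect to $p_*(g)$, both of which one reads off from the (manifestly natural) coproduct decomposition of Lemma~\ref{lem:crossterms}; equivalently, one views $\Sm_{T+}$ as a span category $\Span(\Sm_T;\mathrm{clopen},\all)$ and assembles $\nu$ from the covariant assignment $g\mapsto p_*(g)_+$ together with the contravariant collapse maps. Extending $\nu$ along the universal property of Lemma~\ref{lem:Day} gives $p_\otimes$ satisfying (1) and (2); condition (3) then holds for representable $X,Y$ by construction, and for general $X,Y\in\PSh_\Sigma(\Sm_T)$ by writing them as sifted colimits of representables, since all the constructions entering (3) — in particular the collapse of Lemma~\ref{lem:crossterms} — commute with sifted colimits by universality of colimits in the $\infty$-topos $\PSh_\Sigma(\Sm_T)$.

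\emph{Property (4).} I would reduce to Proposition~\ref{prop:integralsifted}. Arguing as there (via Lemma~\ref{lem:voevodsky}, together with the compatibility of the $(\ph)_+$ construction with Nisnevich and motivic localization), the pointed motivic equivalences in $\PSh_\Sigma(\Sm_T)_\pt$ form the strong saturation of the maps $(\L_\Sigma U\amalg Z)_+\to(X\amalg Z)_+$ and $(\A^1\times X\amalg Z)_+\to(X\amalg Z)_+$, where $U\into X$ is a finitely generated Nisnevich sieve and $X,Z\in\Sm_T$ (and of the first family alone for Nisnevich equivalences). Each such generator $f$ has preimage of the non-basepoint part equal to the whole source minus the basepoint, so its collapse part in (3) is trivial and $p_\otimes(f)$ is $(\ph)_+$ applied to $p_*$ of the underlying unpointed map; since $p$ is integral, $p_*$ preserves Nisnevich and motivic equivalences by Proposition~\ref{prop:integralsifted}, and adding a basepoint preserves them, so $p_\otimes(f)$ is again such an equivalence. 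As $p_\otimes$ preserves sifted colimits and these classes of equivalences are closed under $2$-out-of-$3$ and sifted colimits, (4) follows.

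\emph{Properties (5) and (6), and the main obstacle.} If $p$ is a universal homeomorphism, then for disjoint clopen $Y',Y''\subseteq Y$ and any section $s\colon U\times_S T\to Y$ over a nonempty $U$ the preimages $s^{-1}(Y')$ and $s^{-1}(Y'')$ cannot both surject onto $U$, because $U\times_S T\to U$ is a homeomorphism; hence the cross-term $p_*(Y|Y',Y'')$ is empty, and by Example~\ref{ex:addition-formula} and surjectivity of $p$ the unpointed $p_*$ preserves finite coproducts, so by Lemma~\ref{lem:sifted+coprod} it preserves all colimits. Then the collapse maps of Lemma~\ref{lem:crossterms} degenerate to the wedge projections, $\nu$ agrees on $\Sm_{T+}$ with the pointed pushforward $p_*$, and uniqueness gives $p_\otimes\simeq p_*$. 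For the fold map $p\colon S^{\amalg n}\to S$, the equivalence $\Sm_{S^{\amalg n}}\simeq(\Sm_S)^{\times n}$ induces $\PSh_\Sigma(\Sm_{S^{\amalg n}})_\pt\simeq(\PSh_\Sigma(\Sm_S)_\pt)^{\times n}$, under which $p_*$ of a representable is the iterated cartesian product; so $\nu$ sends $((X_1)_+,\dotsc,(X_n)_+)$ to $(X_1)_+\wedge\dotsb\wedge(X_n)_+$, and, matching the partial-map formula of (3) with the $n$-fold smash on $\Sm_{S^{\amalg n}+}$, uniqueness identifies $p_\otimes$ with the $n$-fold smash product. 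I expect the main obstacle to be the construction of $\nu$ in the existence step — upgrading the collapse-map recipe to a genuinely functorial, symmetric monoidal assignment, which requires a delicate bookkeeping of the iterated coproduct decompositions produced by Lemma~\ref{lem:crossterms}; the verifications of (4)–(6) are comparatively formal once (3) and the universal property are in hand.
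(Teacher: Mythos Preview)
Your proposal is correct and follows essentially the same strategy as the paper: use the universal property of $\PSh_\Sigma(\Sm_{T+})$ (Lemma~\ref{lem:Day}) to reduce existence and uniqueness to the construction of $\nu$ on $\Sm_{T+}$ via the collapse maps of Lemma~\ref{lem:crossterms}, then verify (4)--(6) from this description. The only notable difference is in (4): the paper avoids identifying generators of the pointed motivic equivalences by instead writing an arbitrary pointed map $f$ as a simplicial colimit of maps of the form $g_{+\dotsb+}$ (via the bar construction for the adjunction $(\ph)_+\dashv U$), so that $p_\otimes(f)$ becomes a simplicial colimit of maps $p_*(\ph)_+$, to which Proposition~\ref{prop:integralsifted} applies directly---this is a slightly slicker packaging of the same reduction you make.
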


\begin{proof}
	Recall from Lemma~\ref{lem:Day} that $\PSh_\Sigma(\Sm_S)_\pt\simeq \PSh_\Sigma(\Sm_{S+})$, where $\Sm_{S+}$ is the full subcategory of pointed objects of $\Sm_S$ of the form $X\amalg S$, and that the smash product on $\PSh_\Sigma(\Sm_S)_\pt$ corresponds to the Day convolution product on $\PSh_\Sigma(\Sm_{S+})$. Hence, defining a symmetric monoidal functor $p_\otimes$ that preserves sifted colimits is equivalent to defining a symmetric monoidal functor
	\[
	\Sm_{T+}\to \PSh_\Sigma(\Sm_S)_\pt.
	\]
	Moreover, symmetric monoidal equivalences $p_\otimes((\ph)_+)\simeq p_*(\ph)_+$ are determined by their restriction to $\Sm_{T}$.
	It is straightforward to check that (3) with $X,Y\in\Sm_T$ uniquely defines such a symmetric monoidal functor and equivalence.
	To prove that (3) holds in general, we must compute $p_\otimes$ on the collapse map $Y_+\to f^{-1}(X)_+$, which we can do by writing $f^{-1}(X)$ and $Y\minus f^{-1}(X)$ as sifted colimits of representables.
	
	Any morphism $f$ in $\PSh_\Sigma(\Sm_T)_\pt$ is a simplicial colimit of morphisms of the form $f_{+\dotsb +}$.
	As both functors in the adjunction $\PSh_\Sigma(\Sm_T)\rightleftarrows \PSh_\Sigma(\Sm_T)_\pt$ preserve Nisnevich and motivic equivalences, if $f$ is such an equivalence, so is $f_{+\dotsb +}$. 
	Since $p_\otimes$ preserves simplicial colimits and $p_\otimes(g_+)\simeq p_*(g)_+$, we deduce that $p_\otimes$ preserves Nisnevich or motivic equivalences whenever $p_*$ does. By Proposition~\ref{prop:integralsifted}, this is the case if $p$ is integral, which proves (4).
	
	To prove (5), it suffices to check that $p_\otimes$ and $p_*$ agree on $\Sm_{T+}$.
	Since $p$ is a universal homeomorphism, the functor $p_*$ preserves finite sums. Hence, for $f\colon Y_+\to X_+$ in $\Sm_{T+}$, the map $p_\otimes(f)$ described in (3) coincides with $p_*(f)$.
	
	To prove (6), it suffices to note that the $n$-fold smash product preserves sifted colimits and has the functoriality described in (3).
\end{proof}

By Theorem~\ref{thm:norm}(4) and the universal property of localization, if $p\colon T\to S$ is integral and universally open, we obtain norm functors
\begin{align*}
	p_\otimes &\colon \Shv_\Nis(\Sm_T)_\pt \to \Shv_\Nis(\Sm_S)_\pt,\\
p_\otimes &\colon \H_\pt(T)\to\H_\pt(S)
\end{align*}
with the desired properties (N1)–(N4). 

The following lemma provides a more explicit description of $p_\otimes$ when $S$ is noetherian (in which case every smooth $S$-scheme is the sum of its connected components).

\begin{lemma}\label{lem:wedge}
	Let $p\colon T\to S$ be a universally clopen morphism, let $X\in\PSh_\Sigma(\Sm_T)_\pt$, and let $U$ be a pro-object in $\Sm_S$ corepresented by a connected $S$-scheme.
	Then $U\times_ST$ has a finite set of connected components $\{V_i\}_{i\in I}$, and there is an equivalence
	\[
	p_\otimes(X)(U) \simeq \bigwedge_{i\in I}X(V_i)
	\]
	natural in $X$ and in $U$.
\end{lemma}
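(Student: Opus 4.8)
The plan is to reduce to the case $X=Y_+$ with $Y\in\Sm_T$ representable and then to compute both sides directly. I begin with the geometric input. Writing the pro-object $U$ as a cofiltered limit $\lim_\alpha U_\alpha$ with $U_\alpha\in\Sm_S$, the projection $U\times_S T\to U$ is clopen (open and closed), being a base change of the universally clopen morphism $p$; since its target $U$ is connected and $U\times_S T$ is quasi-compact, $U\times_S T$ decomposes as a finite disjoint union $\coprod_{i\in I}V_i$ of its connected components, and each $V_i$ is itself a pro-object corepresented by a connected $S$-scheme (a cofiltered limit of clopen subschemes of the $U_\alpha\times_S T$). Throughout I write $F(W)=\colim_\beta F(W_\beta)$ for a presheaf $F$ on $\Sm_T$ and a pro-object $W=\lim_\beta W_\beta$, and I note that replacing $X$ by $\L_\Sigma X$ affects neither side, since $\L_\Sigma$ does not change sections over a connected pro-object; so I may assume $X\in\PSh_\Sigma(\Sm_T)_\pt$. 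Establishing the \emph{finiteness} of $I$ is the delicate point here, and the only place where the full strength of the hypothesis ``universally clopen'' is needed --- a general quasi-compact scheme can have infinitely many connected components.

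Next I would check that both $X\mapsto p_\otimes(X)(U)$ and $X\mapsto\bigwedge_{i\in I}X(V_i)$ are functors $\PSh_\Sigma(\Sm_T)_\pt\to\scr S_\pt$ preserving sifted colimits. For the first: $p_\otimes$ preserves sifted colimits by Theorem~\ref{thm:norm}(1), and evaluation at $U$ preserves sifted colimits, being a filtered colimit of evaluations at representables while sifted colimits of pointed presheaves are computed pointwise. For the second: each $X\mapsto X(V_i)$ preserves sifted colimits for the same reason, the smash product of finitely many pointed spaces preserves sifted colimits separately in each variable (each $A\wedge(\ph)$ being a left adjoint), and the diagonal $K\to K^{I}$ is cofinal for every sifted simplicial set $K$ \cite[\sect5.5.8]{HTT}, so the composite preserves sifted colimits. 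Since $\PSh_\Sigma(\Sm_T)_\pt\simeq\PSh_\Sigma(\Sm_{T+})$ (Lemma~\ref{lem:C+}) is generated under sifted colimits by the representables $Y_+$, $Y\in\Sm_T$, it suffices to produce the equivalence for such $X$ and to verify its naturality for all morphisms of $\Sm_{T+}$; it then extends uniquely to $\PSh_\Sigma(\Sm_T)_\pt$.

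For $X=Y_+$ the computation is immediate. By Theorem~\ref{thm:norm}(2), $p_\otimes(Y_+)\simeq(p_*Y)_+$, so, using $U\times_S T\simeq\coprod_{i\in I}V_i$ and that a map out of a finite coproduct is a tuple of maps,
\[
p_\otimes(Y_+)(U)\simeq(p_*Y)(U)_+=Y(U\times_S T)_+=\Big(\prod_{i\in I}Y(V_i)\Big)_+,
\]
while on the other side $(Y_+)(V_i)=Y(V_i)_+$ (because $V_i$ is connected) and $A_+\wedge B_+=(A\times B)_+$ for pointed spaces, so
\[
\bigwedge_{i\in I}(Y_+)(V_i)=\bigwedge_{i\in I}Y(V_i)_+=\Big(\prod_{i\in I}Y(V_i)\Big)_+;
\]
the two agree, manifestly naturally in $Y\in\Sm_T$.

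It remains to upgrade this to naturality for an arbitrary morphism $g_+\colon Y_+\to Y'_+$ of $\Sm_{T+}$ --- equivalently, a map $g\colon W\to Y'$ defined on a clopen $W\subseteq Y$. Here I would use the description of $p_\otimes(g_+)$ in Theorem~\ref{thm:norm}(3): since each $V_i$ is connected, a section $V_i\to Y$ either factors through $W$ entirely or meets $Y\minus W$, so under the identification above $p_\otimes(g_+)(U)$ sends $(s_i)_{i\in I}$ to $(g\circ s_i)_{i\in I}$ when every $s_i$ factors through $W$, and to the basepoint otherwise --- which is exactly $\bigwedge_{i\in I}(g_+)(V_i)$. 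Naturality in $U$ follows from the same description. Apart from the finiteness in the first paragraph, this last compatibility --- matching the collapse maps built into Theorem~\ref{thm:norm}(3) against the basepoint-collapsing behaviour of the smash product --- is the only step I expect to require genuine care; the remaining ingredients are formal consequences of Theorem~\ref{thm:norm} and the universal property of sifted cocompletion.
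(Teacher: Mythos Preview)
Your proposal is correct and follows essentially the same approach as the paper: reduce to $X=Y_+$ using that both sides preserve sifted colimits in $X$, then identify both with $\Map_T(U\times_ST,Y)_+$. The paper's proof is considerably terser---it simply asserts that both sides preserve sifted colimits and that for $X=Y_+$ both are ``naturally identified with the pointed set $\Map_T(U\times_ST,Y)_+$''---whereas you spell out the sifted-colimit preservation and the naturality on collapse maps explicitly.

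One remark on the finiteness of $I$: the paper dispatches this by noting that $p_U$ is quasi-compact (which follows from the standing qcqs hypotheses) and citing \cite[Tag 07VB]{Stacks}. Your sketch invokes clopenness of $p_U$ and quasi-compactness of $U\times_ST$, but as you yourself note, quasi-compactness alone does not suffice, and the role of ``clopen'' in your argument is left implicit. Also, ``universally clopen'' is not used \emph{only} for finiteness---it is the standing hypothesis under which $p_\otimes$ is defined in Theorem~\ref{thm:norm}---so that remark is slightly misleading.
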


\begin{proof}
	Since $p_U$ is quasi-compact, the fact that $U\times_ST$ has finitely many connected components follows from \cite[Tag 07VB]{Stacks}.
	With $U$ fixed, both sides are functors of $X$ that preserve sifted colimits. Hence, it suffices to define a natural equivalence for $X=Y_+\in \Sm_{T+}$. In that case, both sides are naturally identified with the pointed set $\Map_T(U\times_ST,Y)_+$.
\end{proof}

\begin{example}\label{ex:degree2}
	 Let $p\colon T\to S$ be finite étale of degree $2$. Using Lemma~\ref{lem:wedge}, we find 
	 \[
	 p_\otimes(\S^1) \simeq \S^1 \wedge \Sigma T
	 \]
	 in $\PSh_\Sigma(\Sm_S)_\pt$, where $\Sigma T=S\amalg_{T}S$.
\end{example}

\begin{lemma}\label{lem:0truncated}
	Let $p\colon T\to S$ be a universally clopen morphism. Then the functor $p_\otimes\colon \PSh_\Sigma(\Sm_T)_\pt\to \PSh_\Sigma(\Sm_S)_\pt$ preserves $0$-truncated objects (i.e., set-valued presheaves).
\end{lemma}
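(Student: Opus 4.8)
The plan is to check $0$-truncatedness objectwise. Since the inclusion $\PSh_\Sigma(\Sm_S)\subset\PSh(\Sm_S)$ commutes with truncation (cf.\ the proof of Lemma~\ref{lem:postnikov}), a pointed object $F\in\PSh_\Sigma(\Sm_S)_\pt$ is $0$-truncated if and only if $F(U)$ is discrete for every $U\in\Sm_S$. So I would fix a $0$-truncated $X\in\PSh_\Sigma(\Sm_T)_\pt$ and $U\in\Sm_S$ and show that $p_\otimes(X)(U)$ is a set.

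The case of connected $U$ is immediate from Lemma~\ref{lem:wedge} (applied with $U$ as a constant pro-object): it gives $p_\otimes(X)(U)\simeq\bigwedge_{i\in I}X(V_i)$, a finite smash product of the pointed sets $X(V_i)$, hence a set. For general $U$, I would pass to the connected components $\{U_j\}_{j\in\pi_0(U)}$. As $U$ is qcqs, $\pi_0(U)$ is profinite and each $U_j$ is the cofiltered intersection of the clopen subschemes of $U$ containing it, so $U_j$ is a pro-object of $\Sm_S$ corepresented by a connected $S$-scheme; Lemma~\ref{lem:wedge} then yields $p_\otimes(X)(U_j)\simeq\bigwedge_{i\in I_j}X(V_{j,i})$ with $I_j$ finite and each $X(V_{j,i})$ a filtered colimit of pointed sets, so each $p_\otimes(X)(U_j)$ is again a set.

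It remains to descend from the $U_j$ back to $U$. Since $p_\otimes(X)$ takes finite coproducts to finite products, $p_\otimes(X)(U)\simeq\prod_a p_\otimes(X)(U_a)$ for every finite clopen partition $U=\coprod_a U_a$, and passing to the cofiltered systems defining the $U_j$ produces a canonical map
\[
p_\otimes(X)(U)\longrightarrow\prod_{j\in\pi_0(U)}p_\otimes(X)(U_j).
\]
The key point is that this map is a monomorphism: two sections over $U$ with the same germ at every $U_j$ agree on a clopen subscheme of $U$ around each $j$, and by quasi-compactness of $\pi_0(U)$ finitely many of these clopen subschemes cover $U$, so a common clopen refinement shows the two sections coincide — and the same compactness argument at every homotopy level shows the map is $(-1)$-truncated. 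Given this, $p_\otimes(X)(U)$ is a subobject of a product of sets, hence a set, which finishes the proof.

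I expect the monomorphism in the last paragraph to be the only nontrivial point: it is precisely what accommodates the fact that a smooth $S$-scheme need not split as the coproduct of its connected components when $S$ is not, say, locally Noetherian (which is also why the pro-object form of Lemma~\ref{lem:wedge} is the one needed here). When $S$ is locally Noetherian — or more generally when every object of $\Sm_S$ has finitely many clopen connected components — each $U$ is literally $\coprod_j U_j$ with the $U_j\in\Sm_S$ connected, and the statement follows from Lemma~\ref{lem:wedge} and finite products alone.
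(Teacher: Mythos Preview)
Your proof is correct, but takes a different route than the paper. The paper restricts $p_\otimes(X)$ to the poset $\mathrm{Clop}_U$ of clopen subsets of $U$, observes that $\PSh_\Sigma(\mathrm{Clop}_U)$ is a hypercomplete $\infty$-topos (Lemmas~\ref{lem:topos} and~\ref{lem:postnikov}), and then invokes Deligne's completeness theorem to reduce to checking $0$-truncatedness on points; the points are exactly the connected pro-objects you describe, and Lemma~\ref{lem:wedge} finishes. Your compactness argument --- trivializing a homotopy class on clopen neighborhoods of each component, extracting a finite subcover, refining to a clopen partition, and using that $p_\otimes(X)$ takes finite coproducts to products --- is essentially a by-hand proof of the special case of ``enough points plus hypercompleteness'' needed here, specialized to sheaves on the Stone space $\pi_0(U)$. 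Your approach is more self-contained and avoids citing Deligne's theorem; the paper's is shorter and more conceptual, packaging the compactness argument into standard topos-theoretic machinery. One small comment on presentation: the phrase ``the same compactness argument at every homotopy level shows the map is $(-1)$-truncated'' is a slightly indirect way to put it --- what the argument actually shows directly is that $\pi_n(p_\otimes(X)(U),s)=0$ for $n\geq 1$ (since any class dies at every stalk, hence on a finite clopen partition), which already gives discreteness without needing the monomorphism reformulation.
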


\begin{proof}
	Let $X\in\PSh_\Sigma(\Sm_T)_\pt$ be $0$-truncated.
	For a scheme $U$, let $\mathrm{Clop}_U$ denote the poset of clopen subsets of $U$. Note that a presheaf on $\Sm_S$ is $0$-truncated if and only if its restriction to $\mathrm{Clop}_U$ is $0$-truncated for all $U\in\Sm_S$.
	By Lemmas \ref{lem:topos} and~\ref{lem:postnikov}, $\PSh_\Sigma(\mathrm{Clop}_U)$ is a hypercomplete $\infty$-topos. By Deligne's completeness theorem \cite[Theorem A.4.0.5]{SAG}, it suffices to show that the stalks of $p_\otimes(X)|\mathrm{Clop}_U$ are $0$-truncated. The points of the $\infty$-topos $\PSh_\Sigma(\mathrm{Clop}_U)$ are given by the pro-objects in $\mathrm{Clop}_U$ whose limits are connected schemes (see for example \cite[Remark A.9.1.4]{SAG}). 
	The formula of Lemma~\ref{lem:wedge} then concludes the proof.
\end{proof}

\subsection{Norms of quotients}

Let $p\colon T\to S$ be a morphism of schemes, let $X\in\PSh(\Sm_T)$, and let $Y\subset X$ be a subpresheaf.
For $U\in\Sm_S$, let
\[
p_*(X\Vert Y)(U) = \{s\colon U\times_ST \to X\suchthat \text{$s$ sends a clopen subset covering $U$ to $Y$}\}.
\]
Note that $p_*(X\Vert Y)$ is a subpresheaf of $p_*(X)$, and it is in $\PSh_\Sigma$ if $X$ and $Y$ are.
Moreover, if $Y\subset X$ is relatively representable, then $p_*(X\Vert Y)\subset p_*(X|Y)$. 

\begin{proposition}\label{prop:quotients}
	Let $p\colon T\to S$ be a universally clopen morphism, let $X\in\PSh_\Sigma(\Sm_T)$, and let $Y\subset X$ be a subpresheaf in $\PSh_\Sigma$. Then there is a natural equivalence
	\[
	p_\otimes(X/Y) \simeq p_*(X)/p_*(X\Vert Y)
	\]
	in $\PSh_\Sigma(\Sm_S)_\pt$.
\end{proposition}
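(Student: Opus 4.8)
The plan is to reduce everything to a computation with sets, using that both sides are \emph{$0$-truncated}. First I would note that if $Z'\into Z$ is a monomorphism of $0$-truncated objects in an $\infty$-topos, then $Z/Z'=\mathrm{cofib}(Z'_+\to Z_+)$ is again $0$-truncated, since on stalks it becomes the quotient set $(Z\smallsetminus Z')\sqcup\{*\}$. Applying this to $Y\into X$ and to $p_*(X\Vert Y)\into p_*(X)$ shows that $X/Y$ and $p_*(X)/p_*(X\Vert Y)$ are $0$-truncated, hence so is $p_\otimes(X/Y)$ by Lemma~\ref{lem:0truncated}. As in the proofs of Lemmas~\ref{lem:0truncated} and~\ref{lem:wedge}, a map between $0$-truncated objects of $\PSh_\Sigma(\Sm_S)_\pt$ is null (resp.\ an equivalence) as soon as this can be verified after evaluation at every pro-object $U$ of $\Sm_S$ corepresented by a connected scheme, and these evaluations preserve cofibers of monomorphisms of $0$-truncated objects. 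So it suffices to construct a natural map and to check it is an equivalence ``at every connected $U$''.

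To construct the map, I would apply $p_\otimes$ to the collapse map $X_+\onto X/Y$, obtaining a natural map $p_*(X)_+\simeq p_\otimes(X_+)\to p_\otimes(X/Y)$, and claim its restriction along the inclusion $p_*(X\Vert Y)_+\into p_*(X)_+$ is nullhomotopic. Fix $U$ as above and let $U\times_ST=\coprod_{i\in I}V_i$ be its decomposition into connected components, finite by (the proof of) Lemma~\ref{lem:wedge}. Naturally in $X$, Lemma~\ref{lem:wedge} identifies the evaluation at $U$ of $p_*(X)_+\to p_\otimes(X/Y)$ with the smash product $\bigwedge_{i\in I}\bigl(X(V_i)_+\to(X/Y)(V_i)\bigr)$, and since $V_i$ is connected we have $(X/Y)(V_i)=\bigl(X(V_i)\smallsetminus Y(V_i)\bigr)_+$, so this map sends $Y(V_i)$ to the basepoint in the $i$th factor. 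Now comes the \emph{combinatorial heart} of the proof: a section of $p_*(X\Vert Y)$ over $U$ is a section $(x_i)_{i\in I}\in\prod_{i\in I}X(V_i)$ of $p_*(X)$ such that some clopen subset of $U\times_ST$ covering $U$ is sent into $Y$; since $U$ is connected and $p$ is universally clopen, every such clopen subset is a nonempty sub-union $\coprod_{i\in J}V_i$, which is sent into $Y$ exactly when $x_i\in Y(V_i)$ for all $i\in J$. Hence $(x_i)\in p_*(X\Vert Y)(U)$ if and only if $x_i\in Y(V_i)$ for some $i$, and any such tuple is killed by $\bigwedge_{i\in I}\bigl(X(V_i)_+\to(X/Y)(V_i)\bigr)$. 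This proves the nullity claim, and the universal property of the cofiber $p_*(X)/p_*(X\Vert Y)=\mathrm{cofib}(p_*(X\Vert Y)_+\to p_*(X)_+)$ then yields a natural map
\[
\bar\phi\colon p_*(X)/p_*(X\Vert Y)\longrightarrow p_\otimes(X/Y).
\]

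It remains to see that $\bar\phi$ is an equivalence, which I would again check at each connected $U$. On one side, Lemma~\ref{lem:wedge} together with $(X/Y)(V_i)=\bigl(X(V_i)\smallsetminus Y(V_i)\bigr)_+$ gives $p_\otimes(X/Y)(U)\simeq\bigwedge_{i\in I}\bigl(X(V_i)\smallsetminus Y(V_i)\bigr)_+$. On the other side, since $p_*(X)/p_*(X\Vert Y)$ is a cofiber of a monomorphism of $0$-truncated presheaves, its evaluation at $U$ is $\bigl(p_*(X)(U)\smallsetminus p_*(X\Vert Y)(U)\bigr)_+$, and by the description of $p_*(X\Vert Y)(U)$ above this set equals $\prod_{i\in I}\bigl(X(V_i)\smallsetminus Y(V_i)\bigr)$, whence the same smash product. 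Chasing the construction of $\bar\phi$ shows that it realizes this identification, so $\bar\phi$ is an equivalence at every connected $U$, hence an equivalence.

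I expect the main obstacle to be twofold. The serious content is the combinatorial identification of the sections of $p_*(X\Vert Y)$ over a connected pro-object: this is precisely where the hypothesis that $p$ is universally clopen enters (so that $p_U$ is clopen and the images of the $V_i$ are clopen in $U$, hence either empty or all of $U$). The remainder is topos-theoretic bookkeeping — justifying the $0$-truncatedness statements, organizing the reduction to evaluations at connected pro-objects exactly as in Lemmas~\ref{lem:0truncated} and~\ref{lem:wedge}, and checking that these evaluations preserve the relevant cofibers — which is routine but needs care. Notably, thanks to the explicit stalk formula of Lemma~\ref{lem:wedge}, there is no need to pass to representables or to compute $p_\otimes$ on collapse maps by hand.
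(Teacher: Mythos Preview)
Your argument has a genuine gap: you assume throughout that $X$ is $0$-truncated, but the proposition is stated for arbitrary $X\in\PSh_\Sigma(\Sm_T)$. Your opening reduction (``if $Z'\hookrightarrow Z$ is a monomorphism of $0$-truncated objects, then $Z/Z'$ is $0$-truncated\ldots applying this to $Y\hookrightarrow X$'') imports this hypothesis without justification. When $X$ is not $0$-truncated, $X/Y$ is not $0$-truncated either, Lemma~\ref{lem:0truncated} says nothing about $p_\otimes(X/Y)$, and the stalk-checking principle you invoke (for nullity of the composite and for the final equivalence) no longer applies. The same issue infects your construction of the map $\bar\phi$: you need the composite $p_*(X\Vert Y)_+\to p_\otimes(X/Y)$ to be \emph{null}, not merely null on stalks, and for non-$0$-truncated targets this is a strictly stronger condition.

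The paper's proof handles this by writing $X/Y$ as the bar construction $\lvert\Bar_Y(*,X)_\bullet\rvert$, applying $p_\otimes$ to this sifted colimit, and then performing an explicit sifted-colimit reduction to the case $X\in\Sm_T$ (using universality of colimits to control how $Y$ and $p_*(X\Vert Y)$ behave under the reduction). Only at the very end, once $X$ is representable and hence $0$-truncated, does one invoke Lemma~\ref{lem:0truncated} and read off the answer. Your stalk computation is a clean way to finish that last step, and in fact gives a pleasant alternative to the paper's description of the image of the fold map; but it cannot replace the reduction. If you add the paper's sifted-colimit argument (or an equivalent reduction to discrete $X$, cf.\ Remark~\ref{rmk:cofiber}) before your stalk computation, and construct the comparison map in a way that survives the reduction (e.g.\ via the simplicial diagram as the paper does), your approach would go through.
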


\begin{proof}
	The quotient $X/Y$ is the colimit of the following simplicial diagram in $\PSh_\Sigma(\Sm_T)_\pt$:
	\begin{tikzmath}
		\def\colsep{.9em}
		\diagram{
		\dotsb & (X\amalg Y\amalg Y)_+ & (X\amalg Y)_+ & X_+\rlap. \\
		};
		\arrows
		(11-) edge[-top,vshift=3*\dbl] (-12) edge[-mid,vshift=\dbl] (-12) edge[-mid,vshift=-\dbl] (-12) edge[-bot,vshift=-3*\dbl] (-12)
		(12-) edge[-top,vshift=2*\dbl] (-13) edge[-mid] (-13) edge[-bot,vshift=-2*\dbl] (-13)
		(13-) edge[-top,vshift=\dbl] (-14) edge[-bot,vshift=-\dbl] (-14);
	\end{tikzmath}
	Indeed, this is the standard bar construction for the pushout $X\amalg_Y*$ (Lemma~\ref{lem:bar-construction}).
	Hence, $p_\otimes(X/Y)$ is the colimit of the induced simplicial diagram:
	\begin{tikzmath}
		\def\colsep{.9em}
		\diagram{
		\dotsb & p_*(X\amalg Y\amalg Y)_+ & p_*(X\amalg Y)_+ & p_*(X)_+\rlap. \\
		};
		\arrows
		(11-) edge[-top,vshift=3*\dbl] (-12) edge[-mid,vshift=\dbl] (-12) edge[-mid,vshift=-\dbl] (-12) edge[-bot,vshift=-3*\dbl] (-12)
		(12-) edge[-top,vshift=2*\dbl] (-13) edge[-mid] (-13) edge[-bot,vshift=-2*\dbl] (-13)
		(13-) edge[-top,vshift=\dbl] (-14) edge[-bot,vshift=-\dbl] (-14);
	\end{tikzmath}
	The quotient $p_*(X)/p_*(X\Vert Y)$ may be written as the colimit of a similar simplicial diagram.
		By successive applications of Lemma~\ref{lem:crossterms}, we obtain the decomposition
		\[
		p_*(X\amalg Y^{\amalg n}) = p_*(X) \amalg p_*(X\amalg Y|Y) \amalg\dotsb \amalg p_*(X\amalg Y^{\amalg n}|Y).
		\]
		Note that the map $p_*(X\amalg Y^{\amalg i})\to p_*(X)$ induced by the fold map sends
		$p_*(X\amalg Y^{\amalg i}|Y)$ to $p_*(X\Vert Y)$. By inspection, these maps fit together in a natural transformation of simplicial diagrams
		inducing a natural map 
		\begin{equation}\label{eqn:X/Y}
		p_\otimes(X/Y) \to p_*(X)/p_*(X\Vert Y)
		\end{equation}
		 in the colimit.
		 Suppose that $X$ is the sifted colimit of presheaves $X_i\in\PSh_\Sigma(\Sm_T)$ and define $Y_i=Y\times_XX_i\subset X_i$. By universality of colimits, we have $Y\simeq\colim_i Y_i$.
		 As $p_\otimes$ preserves sifted colimits, we get $p_\otimes(X/Y)\simeq\colim_i p_\otimes(X_i/Y_i)$. On the other hand, we have
		\[
		p_*(X_i\Vert Y_i)=p_*(X\Vert Y)\times_{p_*(X)} p_*(X_i)
		\]
		as subpresheaves of $p_*(X_i)$. By universality of colimits, we deduce that $p_*(X\Vert Y)\simeq\colim_i p_*(X_i\Vert Y_i)$.
		Hence, to check that~\eqref{eqn:X/Y} is an equivalence, we may assume that $X\in \Sm_{T}$, and in particular that $X$ is $0$-truncated.
	In that case, we know that $p_\otimes(X/Y)$ is $0$-truncated by Lemma~\ref{lem:0truncated}, so we deduce that $p_\otimes(X/Y)$ is the quotient of $p_*(X)$ by the image of the fold map $p_*(X\amalg Y|Y)\to p_*(X)$.
	It follows from the definitions that this image is exactly $p_*(X\Vert Y)$.
\end{proof}

\begin{remark}\label{rmk:cofiber}
	Any morphism $f\colon Y\to X$ in $\PSh_\Sigma(\Sm_T)$ can be written as a simplicial colimit of monomorphisms between discrete presheaves, by using the injective model structure on simplicial presheaves. As $p_\otimes$ preserves simplicial colimits, Proposition~\ref{prop:quotients} effectively gives a formula for $p_\otimes(\cofib(f))$. In particular, any $X\in\PSh_\Sigma(\Sm_T)_\pt$ is the colimit of a pointed simplicial presheaf $X_\bullet$, and we get
	\[
	p_\otimes(X) \simeq \colim_{n\in\Delta^\op} p_*(X_n)/p_*(X_n\Vert *).
	\]
\end{remark}

Finally, we investigate the behavior of $p_\otimes$ with respect to the Nisnevich topology.

\begin{lemma}\label{lem:comparison}
	Let $p\colon T\to S$ be an integral morphism, let $X\in\PSh(\Sm_T)$, and let $Y\subset X$ be an open subpresheaf.
	Then the inclusion
	$p_*(X\Vert Y) \into p_*(X|Y)$
	in $\PSh(\Sm_S)$ is a Nisnevich equivalence.
\end{lemma}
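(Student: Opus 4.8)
The plan is to use that $p_*(X\Vert Y)\into p_*(X|Y)$ is a monomorphism of presheaves (both sides being subpresheaves of $p_*(X)$). Since $\L_\Nis$ is left exact, it carries this to a monomorphism of Nisnevich sheaves, and such a map is an equivalence precisely when it is an isomorphism on every stalk. The points of $\Shv_\Nis(\Sm_S)$ are henselian local schemes (Proposition~\ref{prop:nisnevich-properties}), and the stalk of a presheaf at such a $V$ is the filtered colimit of its values on the étale neighbourhoods presenting $V$; so, because all the data defining $p_*(X|Y)$ and $p_*(X\Vert Y)$ is finitely presented over the base, a routine spreading-out argument reduces the lemma to the following statement. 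Let $V$ be a henselian local scheme occurring as a point of $\Shv_\Nis(\Sm_S)$, with closed point $v$, and let $s\colon V\times_S T\to X$ be a section with $Z:=s^{-1}(Y)\subseteq V\times_S T$ open and $Z\to V$ surjective; then I must produce a clopen subscheme $W\subseteq V\times_S T$ with $W\subseteq Z$ and $W\to V$ surjective.

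Next I would analyse $V\times_S T$ over the henselian local base $V$. As $p$ is integral, $p_V\colon V\times_S T\to V$ is affine and universally closed with $0$-dimensional fibres, so $V\times_S T$ and its closed fibre $F_v=p_V^{-1}(v)$ are spectral spaces whose sets of connected components are profinite. Writing $T=\lim_\alpha T_\alpha$ with each $T_\alpha$ finite over $S$ (\cite[Tag 09YZ]{Stacks}), each $V\times_S T_\alpha$ is finite over $V$, hence a finite coproduct of henselian local schemes (\cite[Proposition 18.5.10]{EGA4-4}), and this decomposition is the one induced by restriction to the finite discrete closed fibre $(V\times_S T_\alpha)_v$. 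Taking the limit over $\alpha$, restriction of idempotents therefore induces a bijection between the clopen subschemes of $V\times_S T$ and the clopen subsets of $F_v$; equivalently $\pi_0(V\times_S T)\xrightarrow{\ \sim\ }\pi_0(F_v)=\lvert F_v\rvert$, and a clopen $C\subseteq F_v$ corresponds to the clopen $W_C\subseteq V\times_S T$ that is the union of the connected components meeting $C$.

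Then I would make the choice and check the containment in $Z$: the set $Z\cap F_v$ is open in the profinite space $F_v$ and nonempty (since $Z\to V$ is surjective), hence contains a nonempty clopen $C$, and I put $W:=W_C$. To see $W\subseteq Z$, note that $W\smallsetminus Z$ is closed in $V\times_S T$, so its image $p_V(W\smallsetminus Z)$ is closed in $V$, and it avoids $v$ because $W\cap F_v=C\subseteq Z$; since $V$ is local, every nonempty closed subset contains $v$, so $W\smallsetminus Z=\emptyset$.

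The hard part is the remaining claim that $W\to V$ is surjective — this is the step where integrality of $p$ alone does not suffice, as one needs $p_V$ to be an open map, which holds whenever $p$ is universally open (hence in all the cases in which the norm functor $p_\otimes$ is defined). Granting this, it is enough, because $C\neq\emptyset$, to show that a single connected component $V_x\subseteq W$ (with $x\in C$) surjects onto $V$. Choosing a cofinal family of clopen neighbourhoods $C'$ of $x$ in $F_v$, one has $V_x=\bigcap_{C'}W_{C'}$, and each $W_{C'}$ is both open and closed in $V\times_S T$, so $W_{C'}\to V$ is open and closed with nonempty image in the connected scheme $V$, hence surjective; presenting $V_x=\Spec(\colim_{C'}\Gamma(W_{C'},\mathcal O))$ as a cofiltered limit of affine $V$-schemes then shows that the image of $V_x\to V$ is $\bigcap_{C'}\operatorname{im}(W_{C'}\to V)=V$. (That openness of $p_V$ is genuinely required is visible already from $p\colon\Spec(k\times k[x])\to\Spec k[x]$, where a component of $V\times_S T$ over a henselian local $V$ can fail to meet every fibre of $p_V$.)
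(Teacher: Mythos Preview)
You have correctly identified a gap: the lemma as stated, for merely integral $p$, is false, and your counterexample $p\colon\Spec(k\times k[x])\to\Spec k[x]$ demonstrates this. Taking $X=T$ and the open subpresheaf $Y=\Spec k\amalg\G_m\subset T$, over the henselization $V$ of $\A^1_k$ at the origin one has $V\times_S T=\Spec k\amalg V$ with $s^{-1}(Y)=\Spec k\amalg(V\minus\{v\})$; this open covers $V$, but the only clopens it contains are $\emptyset$ and $\Spec k$, neither surjecting onto $V$. The paper's proof asserts for $p$ finite that ``any open subset of $U\times_ST$ covering $U$ contains a clopen subset covering $U$,'' which is precisely what fails here: the implicit step is that a local summand $V_i\subset U\times_S T$ contained in $Z$ surjects onto $U$, and this needs openness of $p_U$.

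With the added hypothesis that $p$ is universally open---satisfied in every application of the lemma in the paper (cf.\ Corollary~\ref{cor:quotients}, Proposition~\ref{prop:pairs})---your argument is correct and in fact simpler than you make it. Once you have produced a \emph{nonempty} clopen $W\subset Z$ (which you do: your $C\subset F_v$ is nonempty, hence so is $W=W_C$, and your closedness argument gives $W\minus Z=\emptyset$), the image of $W$ in $V$ is open (since $p_V$ is open), closed (since $p_V$ is integral), and nonempty, hence all of the connected scheme $V$. The passage through a single connected component $V_x$ and the cofiltered-limit argument for its surjectivity are unnecessary. The paper's approach via reduction to $p$ finite works the same way once one adds the openness hypothesis: at some finite level $\alpha$ one finds a local summand $V_{i,\alpha}\subset U\times_S T_\alpha$ containing the image of some $z\in Z$ over $v$, hence $V_{i,\alpha}\subset Z_\alpha$; the preimage of $V_{i,\alpha}$ in $U\times_S T$ is then a nonempty clopen contained in $Z$, and openness of $p_U$ (not of $T_\alpha\to S$) gives surjectivity onto $U$.
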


\begin{proof}
	Let $U$ be the henselian local scheme of a point in a smooth $S$-scheme. 
	If $p$ is finite, $U\times_ST$ is a sum of local schemes, so any open subset of $U\times_ST$ covering $U$ contains a clopen subset covering $U$.
	In general, $p$ is a limit of finite morphisms and any quasi-compact open subset of $U\times_ST$ is defined at a finite stage, so the same result holds.
	 This implies that the inclusion
	\[
	p_*(X\Vert Y)(U) \into p_*(X|Y)(U)
	\]
	is an equivalence.
	By Proposition~\ref{prop:nisnevich-properties}(1), we deduce that the inclusion $p_*(X\Vert Y) \into p_*(X|Y)$ becomes $\infty$-connective in $\Shv_\Nis(\Sm_S)$, and in particular an effective epimorphism. Since it is also a monomorphism, it is a Nisnevich equivalence by \cite[Proposition 6.2.3.4]{HTT}.
\end{proof}

\begin{corollary}\label{cor:quotients}
	Let $p\colon T\to S$ be an integral universally open morphism, let $X\in\PSh_\Sigma(\Sm_T)$, and let $Y\subset X$ be an open subpresheaf. Then there is a natural equivalence
	\[
	p_\otimes(X/Y)\simeq p_*(X)/p_*(X|Y)
	\]
	in $\Shv_\Nis(\Sm_S)_\pt$.
\end{corollary}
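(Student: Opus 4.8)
The plan is to obtain this formally from Proposition~\ref{prop:quotients} and Lemma~\ref{lem:comparison}. Since $p$ is integral it is in particular universally closed, and being moreover universally open it is universally clopen; hence Proposition~\ref{prop:quotients} applies and provides a natural equivalence $p_\otimes(X/Y)\simeq p_*(X)/p_*(X\Vert Y)$ in $\PSh_\Sigma(\Sm_S)_\pt$. It therefore remains to identify the two quotients $p_*(X)/p_*(X\Vert Y)$ and $p_*(X)/p_*(X|Y)$ after Nisnevich localization.

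For this, first note that the open subpresheaf $Y\subset X$ is relatively representable --- for any $U\in\Sm_T$ equipped with a map $U\to X$, the fiber product $U\times_X Y$ is an open subscheme of $U$ --- so that $p_*(X|Y)$ is defined and contains $p_*(X\Vert Y)$ as a subpresheaf, naturally in $X$. By Lemma~\ref{lem:comparison}, this inclusion $p_*(X\Vert Y)\into p_*(X|Y)$ is a Nisnevich equivalence. Since the Nisnevich localization functor $\L_\Nis\colon \PSh_\Sigma(\Sm_S)_\pt\to\Shv_\Nis(\Sm_S)_\pt$ is a left adjoint, it preserves pushouts, hence sends the cofiber of $p_*(X\Vert Y)\to p_*(X)$ to the cofiber of $\L_\Nis(p_*(X\Vert Y))\to\L_\Nis(p_*(X))$, and likewise with $p_*(X|Y)$ in place of $p_*(X\Vert Y)$. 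As $\L_\Nis$ carries $p_*(X\Vert Y)\into p_*(X|Y)$ to an equivalence, these two cofibers are canonically identified; applying $\L_\Nis$ to the equivalence of Proposition~\ref{prop:quotients} then yields the asserted natural equivalence $p_\otimes(X/Y)\simeq p_*(X)/p_*(X|Y)$ in $\Shv_\Nis(\Sm_S)_\pt$.

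There is essentially no obstacle here beyond the two cited results: the only point requiring care is naturality --- that the identification of $p_*(X)/p_*(X\Vert Y)$ with $p_*(X)/p_*(X|Y)$ is natural in $X$. This holds because both the inclusion of Lemma~\ref{lem:comparison} and the equivalence of Proposition~\ref{prop:quotients} (which is built from the bar-construction colimit presentation of $X/Y$) are natural in $X$, and $\L_\Nis$ is a functor.
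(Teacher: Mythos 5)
Your proof is correct and follows exactly the approach the paper intends: the paper's proof is the one-liner ``Combine Proposition~\ref{prop:quotients} and Lemma~\ref{lem:comparison},'' and you have simply spelled out the (routine) details of that combination, including the observation that integral plus universally open implies universally clopen and the use of left-exactness of $\L_\Nis$ to pass from the Nisnevich equivalence $p_*(X\Vert Y)\into p_*(X|Y)$ to an identification of the quotients.
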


\begin{proof}
	Combine Proposition~\ref{prop:quotients} and Lemma~\ref{lem:comparison}.
\end{proof}

\begin{remark}
	The only property of $\Sm_S$ that we have used so far is that it is an admissible category in the sense of \cite[\sect0]{MEMS}.
	If we use instead the category $\mathrm{QP}_S$ of quasi-projective $S$-schemes, we can compare our functor $p_\otimes p^*$ with the smash power $X\mapsto X^{\wedge T}$ from \cite[\sect5.2]{DeligneNote}, defined when $p\colon T\to S$ is finite locally free.
	For $X\in\PSh_\Sigma(\mathrm{QP}_S)_\pt$ discrete, $X^{\wedge T}$ is the quotient of $p_*(X_T)$ by a certain subpresheaf $(X,*)^T_1$. If the inclusion $*\into X$ is open, then $(X,*)^T_1=p_*(X_T|*)$. By Corollary~\ref{cor:quotients}, we then have a canonical equivalence $p_\otimes(X_T)\simeq X^{\wedge T}$ in $\Shv_{\Nis}(\QP_S)_\pt$. The definition of $(X,*)^T_1$ when $*\into X$ is not open appears to be wrong, as it does not make \cite[Lemma 20]{DeligneNote} true; presumably, the intended definition is $(X,*)^T_1=p_*(X_T\Vert *)$.
\end{remark}

\begin{proposition}\label{prop:pairs}
	Let $p\colon T\to S$ be a finite étale morphism, let $X\in\Sm_T$, and let $Z\subset X$ be a closed subscheme. Suppose that the Weil restriction $\Weil_pX$ exists, e.g., that $X$ is quasi-projective over $T$.
	Then 
	\[
	p_\otimes\left(\frac X{X\minus Z}\right)\simeq \frac{\Weil_pX}{\Weil_pX\minus \Weil_pZ}
	\]
	in $\Shv_\Nis(\Sm_S)_\pt$.
\end{proposition}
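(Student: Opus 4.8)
The plan is to deduce the statement from Corollary~\ref{cor:quotients} and then to make the resulting quotient completely explicit. A finite étale morphism is integral and universally open, and $X\minus Z$ is an open subpresheaf of the representable presheaf $X\in\Sm_T$; so Corollary~\ref{cor:quotients} applies and gives a natural equivalence
\[
p_\otimes\!\left(\frac{X}{X\minus Z}\right)\simeq \frac{p_*(X)}{p_*(X|X\minus Z)}
\]
in $\Shv_\Nis(\Sm_S)_\pt$. By the adjunction between base change and Weil restriction, $p_*(X)$ is the presheaf represented by $\Weil_pX$, which lies in $\Sm_S$ by hypothesis (and by \cite[\sect7.6]{NeronModels} for smoothness of Weil restrictions along finite étale maps); likewise $\Weil_pZ\into\Weil_pX$ is a closed immersion, so the open complement $\Weil_pX\minus\Weil_pZ$ is a well-defined object of $\Sm_S$. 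It therefore remains to identify the subpresheaf $p_*(X|X\minus Z)$ of $p_*(X)=\Weil_pX$ with $\Weil_pX\minus\Weil_pZ$.

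To do this, I would fix $U\in\Sm_S$ and a section $s\colon U\times_ST\to X$ with adjoint $g\colon U\to\Weil_pX$, and argue that $g$ factors through the open subscheme $\Weil_pX\minus\Weil_pZ$ — equivalently, that $g^{-1}(\Weil_pZ)=\emptyset$ — if and only if $s\in p_*(X|X\minus Z)(U)$. The point is a fibrewise analysis: for $u\in U$ the fibre $p_U^{-1}(u)$ is canonically $\Spec\kappa(u)\times_ST$, and under this identification $s|_{p_U^{-1}(u)}$ is the mate of $\Spec\kappa(u)\to U\xrightarrow{g}\Weil_pX$, so $u\in g^{-1}(\Weil_pZ)$ precisely when $s|_{p_U^{-1}(u)}\colon p_U^{-1}(u)\to X$ factors through $Z$. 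Since $p$ is finite étale, $p_U^{-1}(u)$ is finite étale over $\kappa(u)$ and hence reduced, so this happens exactly when $s$ sends every point of $p_U^{-1}(u)$ into $Z$, i.e.\ exactly when $u\notin p_U\bigl(s^{-1}(X\minus Z)\bigr)$. Thus $g^{-1}(\Weil_pZ)=U\minus p_U\bigl(s^{-1}(X\minus Z)\bigr)$, which is empty iff $s^{-1}(X\minus Z)\to U$ is surjective, i.e.\ iff $s\in p_*(X|X\minus Z)(U)$. This identification is clearly natural in $U$, so $p_*(X|X\minus Z)=\Weil_pX\minus\Weil_pZ$ as subpresheaves of $\Weil_pX$, and plugging this into the equivalence above gives the claim.

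I expect the fibrewise step to be the main obstacle: one must match the purely topological surjectivity condition defining $p_*(X|X\minus Z)$ with the scheme-theoretic condition of factoring through the closed subscheme $\Weil_pZ$. This is exactly the place where reducedness of the fibres of $p$ is used — equivalently, where the hypothesis ``finite étale'' rather than merely ``finite locally free'' becomes essential — since for a finite locally free map with nonreduced fibres the two conditions genuinely differ. A minor additional point worth spelling out is the behaviour over the clopen locus of $S$ on which $p$ has empty fibres, where both sides collapse to the zero object and the equivalence is immediate.
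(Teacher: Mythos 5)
Your proof is correct and follows essentially the same route as the paper: invoke Corollary~\ref{cor:quotients} to reduce to identifying $p_*(X|X\minus Z)$ with $\Weil_pX\minus\Weil_pZ$, then verify this pointwise on $U$ by observing that the adjoint map on a fibre $p_U^{-1}(u)\simeq\Spec\kappa(u)\times_ST$ factors through $Z$ scheme-theoretically iff it does set-theoretically, since finite étale implies the fibre is reduced. This is exactly the paper's argument, which you have merely unpacked in slightly more detail. One small slip in your closing aside: over the clopen locus of $S$ where $p$ has empty fibres, both sides of the equivalence collapse to the \emph{unit} object $S_{0+}$ (the empty smash product on the left, $S_0/\emptyset$ on the right), not to the zero object. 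This does not affect the main argument and is in any case already subsumed by the general pointwise check.
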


\begin{proof}
	By Lemma~\ref{lem:Weil-smooth}, $\Weil_pX$ is a smooth $S$-scheme.
	Since $p$ is finite locally free, $\Weil_pZ$ is a closed subscheme of $\Weil_pX$ \cite[\sect7.6, Proposition 2(ii)]{NeronModels}, so $\Weil_pX\minus \Weil_pZ$ is a well-defined smooth $S$-scheme.
	Note that $p_*(X|X\minus Z)\subset \Weil_pX\minus \Weil_pZ$. By Corollary~\ref{cor:quotients}, it will suffice to show that this inclusion is an isomorphism. Let $U$ be a smooth $S$-scheme.
	A morphism $s\colon U\to \Weil_pX$ factors through $\Weil_pX\minus \Weil_pZ$ if and only if, for every $x\in U$, the adjoint morphism $p_U^{-1}(x)\to X$ does not factor through $Z$. As $p_U^{-1}(x)$ is reduced, this is the case if and only if the image of $p_U^{-1}(x) \to X$ intersects $X\minus Z$, i.e., if and only if $s$ factors through $p_*(X|X\minus Z)$.
\end{proof}

\begin{example}\label{ex:tangent}
	Proposition~\ref{prop:pairs} fails if we only assume $p$ finite locally free. In fact, if $p$ is a universal homeomorphism, then $p_*(X|X\minus Z)=\Weil_p(X\minus Z)$ which is usually a strict open subset of $\Weil_pX\minus \Weil_pZ$. For example, let $S[\epsilon]=S\times\Spec \Z[t]/(t^2)$ and let $p\colon S[\epsilon]\to S$ be the projection, so that $\Weil_p(X[\epsilon])$ is the tangent bundle $\rm T_{X/S}$ for every $S$-scheme $X$. If $Z\subset X$ is reduced and has positive codimension, then the canonical map
	\[
	p_\otimes\left(\frac{X[\epsilon]}{X[\epsilon]\minus Z[\epsilon]}\right) \simeq \frac{\rm T_{X/S}}{\rm T_{X\minus Z/S}}\to \frac{\rm T_{X/S}}{\rm T_{X/S}\minus \rm T_{Z/S}}
	\]
	in $\Shv_\Nis(\Sm_S)_\pt$ is not an equivalence. If $Z\subset X$ is $S\subset \A^1_S$, it is even nullhomotopic in $\H_\pt(S)$.
\end{example}

\section{Norms of motivic spectra}
\label{sec:stablenorms}

\subsection{Stable motivic homotopy theory}

If $V$ is a vector bundle over a scheme $S$, we denote by $\S^V\in\H_\pt(S)$ the motivic sphere defined by
\[
\S^V= V/(V\minus 0) \simeq \P(V\times\A^1)/\P(V),
\]
and we write $\Sigma^V=\S^V\wedge (\ph)$ and $\Omega^V=\Hom(\S^V,\ph)$.

The $\infty$-category $\SH(S)$ of motivic spectra over $S$ is the presentably symmetric monoidal $\infty$-category obtained from $\H_\pt(S)$ by inverting the motivic sphere $\S^{\A^1}$ \cite[\sect2]{Robalo}. We will denote by
\[
\Sigma^\infty : \H_\pt(S) \adj \SH(S) : \Omega^\infty
\] 
the canonical adjunction. By \cite[Corollary 2.22]{Robalo}, the underlying $\infty$-category of $\SH(S)$ is the limit of the tower
\[
\dotsb \to \H_\pt(S)\xrightarrow{\Omega^{\A^1}}\H_\pt(S)\xrightarrow{\Omega^{\A^1}}\H_\pt(S).
\]
Recall that, for every vector bundle $V$ on $S$, the motivic sphere $\S^V$ becomes invertible in $\SH(S)$ \cite[Corollary 2.4.19]{CD}; we denote by $\S^{-V}$ its inverse.

Because the unstable norm functor $p_\otimes\colon \H_\pt(T)\to\H_\pt(S)$ does not preserve colimits, we cannot extend it to a symmetric monoidal functor $p_\otimes\colon \SH(T)\to\SH(S)$ using the universal property of $\Sigma^\infty\colon \H_\pt(T)\to\SH(T)$ in $\CAlg(\Pr^\mathrm{L})$. However, $\Sigma^\infty$ has a slightly more general universal property, as a special case of the following lemma:

\begin{lemma}\label{lem:inversion}
	Let $\scr C$ be a compactly generated symmetric monoidal $\infty$-category whose tensor product preserves compact objects and colimits in each variable. Let $X$ be a compact object of $\scr C$ such that the cyclic permutation of $X^{\otimes n}$ is homotopic to the identity for some $n\geq 2$, and let $\Sigma^\infty\colon \scr C \to \scr C[X^{-1}]$ be the initial functor in $\CAlg(\Pr^\mathrm{L})$ sending $X$ to an invertible object.
	Let $\scr K$ be a collection of simplicial sets containing filtered simplicial sets and $\scr D$ a symmetric monoidal $\infty$-category admitting $\scr K$-indexed colimits and whose tensor product preserves $\scr K$-indexed colimits in each variable. Composition with $\Sigma^\infty$ then induces a fully faithful embedding
	\[
	\Fun^{\otimes,\scr K}(\scr C[X^{-1}],\scr D) \into \Fun^{\otimes,\scr K}(\scr C,\scr D),
	\]
	where $\Fun^{\otimes,\scr K}$ denotes the $\infty$-category of symmetric monoidal functors that preserve $\scr K$-indexed colimits,
	whose essential image consists of those functors $F\colon\scr C\to\scr D$ such that $F(X)$ is invertible.
\end{lemma}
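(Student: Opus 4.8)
\emph{Proof plan.}
The plan is to combine Robalo's explicit model for $\scr C[X^{-1}]$ with the corresponding localization statement for \emph{small} symmetric monoidal $\infty$-categories, the only genuinely new input being that a symmetric monoidal functor out of $\scr C[X^{-1}]$ which preserves filtered colimits automatically preserves all $\scr K$-indexed colimits. I would begin by recalling, from \cite[Corollary 2.22]{Robalo} and the compactness of $X$ (the cyclic permutation hypothesis being what makes Robalo's construction available), that $\scr C[X^{-1}]$ has underlying $\infty$-category the sequential colimit $\colim(\scr C\xrightarrow{-\otimes X}\scr C\xrightarrow{-\otimes X}\dotsb)$ in $\Pr^\mathrm{L}$ — equivalently the limit of $\dotsb\xrightarrow{\iHom(X,-)}\scr C$ in $\Cat_\infty$ — that it is again compactly generated with $\Sigma^\infty$ preserving compact objects, and that the $n$-th coprojection $\iota_n\colon\scr C\to\scr C[X^{-1}]$ is $\Sigma^\infty(X)^{\otimes-n}\otimes\Sigma^\infty(-)$; in particular $\iota_0=\Sigma^\infty$, and each $\iota_n$ is a morphism of $\Pr^\mathrm{L}$, hence preserves all colimits and admits a right adjoint $\rho_n$. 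Since $\Sigma^\infty$ preserves compact objects, $j:=\Sigma^\infty|_{\scr C^\omega}\colon\scr C^\omega\to\scr C[X^{-1}]^\omega$ is the localization of small symmetric monoidal $\infty$-categories that inverts $X$ (by the same construction as \cite{Robalo}, carried out for small categories), so $j^*\colon\Fun^\otimes(\scr C[X^{-1}]^\omega,\scr D)\to\Fun^\otimes(\scr C^\omega,\scr D)$ is fully faithful with essential image the functors inverting $X$.

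Next, because $\scr K$ contains the filtered simplicial sets, $\scr D$ admits filtered colimits and its tensor product preserves them in each variable, so the universal properties of $\Ind$ and of Day convolution give $\Fun^{\otimes,\mathrm{filt}}(\Ind(\scr E),\scr D)\simeq\Fun^\otimes(\scr E,\scr D)$ for every small symmetric monoidal $\scr E$ (where $\Fun^{\otimes,\mathrm{filt}}$ denotes symmetric monoidal functors preserving filtered colimits). Applying this with $\scr E=\scr C^\omega$ and $\scr E=\scr C[X^{-1}]^\omega$ (using $\scr C=\Ind(\scr C^\omega)$ and $\scr C[X^{-1}]=\Ind(\scr C[X^{-1}]^\omega)$ as presentably symmetric monoidal $\infty$-categories), and observing that $\Fun^{\otimes,\scr K}$ is a full subcategory of $\Fun^{\otimes,\mathrm{filt}}$, I obtain a commuting square in which $-\circ\Sigma^\infty$ sits on top, the two vertical maps are restriction to compact objects, and the bottom map is $j^*$; the bottom map and both vertical maps are fully faithful, so $-\circ\Sigma^\infty$ is fully faithful, and it visibly lands in the functors inverting $X$. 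For essential surjectivity onto that subcategory: given such an $F$, restrict it to $\scr C^\omega$, extend along $j$ by the small localization, and $\Ind$-extend to obtain a symmetric monoidal $\bar F\colon\scr C[X^{-1}]\to\scr D$ which preserves filtered colimits and satisfies $\bar F\circ\Sigma^\infty\simeq F$.

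It remains to check that $\bar F$ preserves \emph{all} $\scr K$-indexed colimits, and this is the crux. From $\iota_n\simeq\Sigma^\infty(X)^{\otimes-n}\otimes\Sigma^\infty(-)$ and symmetric monoidality of $\bar F$ one gets the twist identity $\bar F\circ\iota_n\simeq F(X)^{\otimes-n}\otimes F(-)$, which preserves $\scr K$-colimits because $F$ does and $\otimes$ on $\scr D$ preserves $\scr K$-colimits in each variable. Since a colimit in $\Pr^\mathrm{L}$ is computed as the corresponding limit in $\Pr^\mathrm{R}$, the tower description above yields the resolution $\id_{\scr C[X^{-1}]}\simeq\colim_n\iota_n\rho_n$, so for any diagram $q\colon K\to\scr C[X^{-1}]$ with $K\in\scr K$ one may write $\colim_K q\simeq\colim_n\iota_n(\colim_K\rho_n q)$ and compute
\begin{align*}
\bar F(\colim_K q)&\simeq\colim_n\bar F\iota_n(\colim_K\rho_n q)\simeq\colim_n\colim_K\bar F\iota_n\rho_n q\\
&\simeq\colim_K\colim_n\bar F\iota_n\rho_n q\simeq\colim_K\bar Fq,
\end{align*}
where the steps use, in turn, that $\bar F$ preserves filtered colimits, that each $\bar F\iota_n$ preserves $\scr K$-colimits, that colimits commute with colimits, and once more that $\bar F$ preserves filtered colimits (with the resolution applied pointwise over $K$). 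Hence $\bar F\in\Fun^{\otimes,\scr K}(\scr C[X^{-1}],\scr D)$, which finishes essential surjectivity. I expect this last step — upgrading "preserves filtered colimits" to "preserves $\scr K$-colimits" via the twist identity and the resolution — to be the main obstacle; everything else is formal bookkeeping with the universal properties of $\Ind$, Day convolution, and Robalo's localization.
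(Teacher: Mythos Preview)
Your proposal is correct and follows essentially the same route as the paper's proof. Both arguments reduce the filtered case to the universal property of the \emph{small} localization $\scr C^\omega\to\scr C^\omega[X^{-1}]$ via $\Ind$ (the paper phrases this as the equivalence $\Ind(\scr C^\omega[X^{-1}])\simeq\scr C[X^{-1}]$, proved by comparing Robalo's filtered-colimit descriptions in $\Cat_\infty$ and $\Pr^\mathrm{L}$ and using that $\Ind$ preserves filtered colimits), and then upgrade from filtered colimits to $\scr K$-colimits using the tower resolution $A\simeq\colim_n(\Sigma^\infty X)^{\otimes -n}\otimes\Sigma^\infty A_n$; your $\iota_n\rho_n$ notation packages exactly this resolution, and your final displayed computation is the same as the paper's terse ``it suffices to show that $F$ preserves the colimit of $\Sigma^\infty A_n$'' spelled out in full.
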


\begin{proof}
	Suppose first that $\scr K$ is the collection of filtered simplicial sets. To prove the statement in that case, it suffices to show that the symmetric monoidal functor $\scr C^\omega \to \scr C[X^{-1}]$ induces an equivalence $\Ind(\scr C^\omega[X^{-1}])\simeq\scr C[X^{-1}]$, where $\scr C^\omega \to \scr C^\omega[X^{-1}]$ is the initial functor in $\CAlg(\Cat_\infty)$ sending $X$ to an invertible object. By \cite[Proposition 2.19]{Robalo}, since $X$ is a symmetric object, $\scr C^\omega[X^{-1}]$ has underlying $\infty$-category the filtered colimit
	\[
	\scr C^\omega \xrightarrow{X\otimes\ph} \scr C^\omega \xrightarrow{X\otimes\ph} \scr C^\omega \to\dotsb
	\]
	in $\Cat_\infty$. By \cite[Proposition 5.5.7.11]{HTT}, $\scr C^\omega[X^{-1}]$ is also the colimit of this sequence in $\Cat_\infty^\mathrm{rex}$.
	 On the other hand, by \cite[Corollary 2.22]{Robalo}, $\scr C[X^{-1}]$ has underlying $\infty$-category the filtered colimit
	\[
	\scr C \xrightarrow{X\otimes\ph} \scr C \xrightarrow{X\otimes\ph} \scr C \to\dotsb
	\]
	in $\Pr^\mathrm{L}$. The claim now follows from the fact that $\Ind\colon \Cat_\infty^\mathrm{rex}\to\Pr^\mathrm{L}$ preserves colimits, which follows from \cite[Propositions 5.5.7.10, 5.5.7.6, and 5.5.3.18]{HTT}.
	
	It remains to prove the following: if a symmetric monoidal functor $F\colon \scr C[X^{-1}]\to\scr D$ preserves filtered colimits and if $F\circ\Sigma^\infty$ preserves colimits of shape $\scr I$, then $F$ preserves colimits of shape $\scr I$. 
	Under the equivalence $\Pr^\mathrm{L,\op}\simeq\Pr^\mathrm{R}$, $\scr C[X^{-1}]$ becomes the limit of the tower
	\[
	\dotsb \to \scr C \xrightarrow{\Hom(X,\ph)} \scr C \xrightarrow{\Hom(X,\ph)} \scr C
	\]
	in $\Pr^\mathrm{R}$, whence in $\Cat_\infty$ \cite[Proposition 5.5.3.18]{HTT}.
	Let $A\colon\scr I\to \scr C[X^{-1}]$ be a diagram and let $A_n\colon \scr I \to \scr C$ be its $n$th component in the above limit. Then $A\simeq\colim_{n} (X^{\otimes(-n)}\otimes\Sigma^\infty A_n)$ by \cite[Lemma 6.3.3.7]{HTT}, so it suffices to show that $F$ preserves the colimit of $\Sigma^\infty A_n$, which is true by assumption.
\end{proof}

\begin{remark}\label{rmk:inversion}
	Lemma~\ref{lem:inversion} also holds if we replace the object $X$ by an arbitrary set of such objects (see \cite[\sect6.1]{Hoyois} for a discussion of $\scr C[X^{-1}]$ in this case). In fact, the following observation reduces the proof to the single-object case. If $D$ is a filtered diagram of compactly generated $\infty$-categories whose transition functors preserve colimits and compact objects, then the colimit of $D$ in $\Pr^\mathrm{L}$ is also the colimit of $D$ in the larger $\infty$-category of $\infty$-categories with $\scr K$-indexed colimits (and functors preserving those colimits), where $\scr K$ is any collection of simplicial sets containing filtered simplicial sets.
\end{remark}

\begin{remark}\label{rmk:C-module}
	With $\scr C$, $X$, and $\scr K$ as in Lemma~\ref{lem:inversion}, the functor $\Sigma^\infty\colon \scr C\to \scr C[X^{-1}]$ also has a universal property as a $\scr C$-module functor that preserves $\scr K$-indexed colimits. The proof is essentially the same.
\end{remark}

\subsection{The stable norm functors}

If $p\colon T\to S$ is a finite étale morphism and $V\to T$ is a vector bundle, its Weil restriction $\Weil_p V\to S$ has a canonical structure of vector bundle. In fact, if $V=\V(\scr E)$, then $\Weil_p V \simeq \V(p_*(\scr E))$ by comparison of universal properties, using that $p_*$ is also \emph{left} adjoint to $p^*$ (by Lemma~\ref{lem:frobenius}).

\begin{lemma}\label{lem:spheres}
	Let $p\colon T\to S$ be a finite étale morphism and let $V$ be a vector bundle over $T$. Then \[p_\otimes(\S^V)\simeq \S^{\Weil_pV}\] in $\H_\pt(S)$.
\end{lemma}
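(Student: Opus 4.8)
The plan is to read this equivalence off from Proposition~\ref{prop:pairs}, applied to the zero section of $V$.

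First I would recall from the start of this section that $\S^V=V/(V\minus Z)$, where $Z\subset V$ is the zero section of the vector bundle $V$: a closed subscheme isomorphic to $T$, whose open complement is $V\minus Z$. The total space $V$ is smooth and quasi-projective over $T$ (it is an open subscheme of $\P(V\times\A^1_T)$, which is projective over $T$), so $\Weil_p V$ is representable. Taking $X=V$ and this $Z$ in Proposition~\ref{prop:pairs} would then yield
\[
p_\otimes(\S^V)\simeq\frac{\Weil_p V}{\Weil_p V\minus\Weil_p Z}
\]
in $\Shv_\Nis(\Sm_S)_\pt$, and hence --- applying $\L_\mot$ and invoking Theorem~\ref{thm:norm}(4) to identify the sheaf-level and motivic-level norm functors --- in $\H_\pt(S)$ as well.

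It would then remain to recognize the right-hand side as $\S^{\Weil_p V}$, i.e.\ to check that $\Weil_p V\to S$ is again a vector bundle and that $\Weil_p Z$ is its zero section. Since $\Weil_p$ preserves finite products, it carries the additive-group structure and the $\G_{m,T}$-action on $V$ to an additive-group structure and a $\G_{m,S}$-action on $\Weil_p V$ (the latter via the unit $\G_{m,S}\to\Weil_p p^*\G_{m,S}=\Weil_p\G_{m,T}$), with $\Weil_p Z$ the zero section of this group structure. Whether such data constitute a vector bundle with the prescribed zero section may be checked after an étale base change $S'\to S$ over which $p$ splits, say $T\times_S S'\simeq\coprod_{i=1}^n S'$ with $V$ pulling back to vector bundles $V_1,\dotsc,V_n$; since Weil restriction commutes with base change and turns a fold map into a product, over $S'$ one gets $V_1\times_{S'}\dotsb\times_{S'}V_n$ together with its product zero section, which is visibly a vector bundle (of rank $\sum_i\rk V_i$). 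Alternatively, one may simply cite the standard fact that Weil restriction along a finite locally free morphism preserves vector bundles \cite[\sect7.6]{NeronModels}. Either way, $\Weil_p V\minus\Weil_p Z$ is the complement of the zero section of the vector bundle $\Weil_p V$, so the right-hand side is $\S^{\Weil_p V}$.

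The one step that requires genuine thought is this last one --- the identification of $\Weil_p(V)$ as a vector bundle and of $\Weil_p Z$ as its zero section. Everything else is a direct application of Proposition~\ref{prop:pairs} and of the formalism already established.
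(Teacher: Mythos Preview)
Your proof is correct and follows the same route as the paper, which simply says the lemma ``follows immediately from Proposition~\ref{prop:pairs}.'' You have just unpacked the implicit identifications --- that $V$ is smooth quasi-projective so Proposition~\ref{prop:pairs} applies, and that $\Weil_pV$ is a vector bundle with zero section $\Weil_pZ$ --- which the paper leaves to the reader.
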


\begin{proof}
	This follows immediately from Proposition~\ref{prop:pairs}.
\end{proof}

\begin{proposition}\label{prop:stablenorm}
	Let $p\colon T\to S$ be a finite étale morphism. Then the functor $\Sigma^\infty p_\otimes\colon \H_\pt(T)\to\SH(S)$ has a unique symmetric monoidal extension
	\[
	p_\otimes\colon\SH(T)\to\SH(S)
	\]
	that preserves filtered colimits. Moreover, it preserves sifted colimits.
\end{proposition}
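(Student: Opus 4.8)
The plan is to obtain $p_\otimes\colon\SH(T)\to\SH(S)$ as a formal consequence of the refined universal property of $\P^1$-inversion recorded in Lemma~\ref{lem:inversion}, together with the computation of norms of Thom spheres in Lemma~\ref{lem:spheres}. The lemma will be applied with $\scr C=\H_\pt(T)$, $X=\S^{\A^1}$, and target $\scr D=\SH(S)$, so first I would check its hypotheses. For $T$ qcqs the $\infty$-category $\H_\pt(T)$ is compactly generated and presentably symmetric monoidal, and its tensor product preserves compact objects and colimits in each variable: it is generated under colimits by the compact objects $U_+$ with $U\in\Sm_T$, and $U_+\wedge V_+=(U\times V)_+$ is again of that form. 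The motivic sphere $\S^{\A^1}$ is a compact object a tensor power of which admits a cyclic permutation homotopic to the identity, so that $\SH(T)=\H_\pt(T)[(\S^{\A^1})^{-1}]$ in the sense of Robalo \cite[\sect2]{Robalo}. Finally, $\SH(S)$ is presentably symmetric monoidal, hence admits sifted colimits with tensor product preserving them in each variable.

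Next I would verify that $\Sigma^\infty p_\otimes\colon\H_\pt(T)\to\SH(S)$ is a functor of the type Lemma~\ref{lem:inversion} extends. It is symmetric monoidal, being the composite of the symmetric monoidal unstable norm functor $p_\otimes\colon\H_\pt(T)\to\H_\pt(S)$ (property (N1) of Theorem~\ref{thm:norm}, available because a finite étale morphism is integral and universally open) with the symmetric monoidal functor $\Sigma^\infty$. It preserves sifted colimits, since $p_\otimes$ does by property (N2) and $\Sigma^\infty$ preserves all colimits. And, crucially, it carries $\S^{\A^1}$ to an invertible object of $\SH(S)$: Lemma~\ref{lem:spheres} gives $p_\otimes(\S^{\A^1})\simeq\S^{\Weil_p\A^1_T}$ in $\H_\pt(S)$, where $\Weil_p\A^1_T$ is a vector bundle on $S$, and the motivic sphere of a vector bundle on $S$ becomes invertible in $\SH(S)$ by \cite[Corollary 2.4.19]{CD}; hence $\Sigma^\infty p_\otimes(\S^{\A^1})$ is invertible.

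Granting these points, Lemma~\ref{lem:inversion} with $\scr K$ taken to be the class of sifted simplicial sets produces a symmetric monoidal extension $p_\otimes\colon\SH(T)\to\SH(S)$ of $\Sigma^\infty p_\otimes$ that preserves sifted colimits, unique as such because composition with $\Sigma^\infty$ is fully faithful on such functors. Since the sifted simplicial sets contain the filtered ones, this functor a fortiori preserves filtered colimits, and reapplying Lemma~\ref{lem:inversion} with $\scr K$ the class of filtered simplicial sets identifies it as the \emph{unique} symmetric monoidal extension of $\Sigma^\infty p_\otimes$ that preserves filtered colimits; this yields both assertions of the proposition. The only non-formal ingredient here is Lemma~\ref{lem:spheres} (ultimately Proposition~\ref{prop:pairs}), which supplies the invertibility of $p_\otimes(\S^{\A^1})$; once the unstable norm functor and its basic properties are in hand, there is no genuine obstacle left — the argument is pure bookkeeping with universal properties.
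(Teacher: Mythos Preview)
Your proposal is correct and follows exactly the paper's approach: the paper's proof reads in its entirety ``Combine Lemmas~\ref{lem:inversion} and~\ref{lem:spheres} with the fact that $\Sigma^\infty\S^{\Weil_p\A^1}\in\SH(S)$ is invertible,'' and you have simply unpacked this by verifying the hypotheses of Lemma~\ref{lem:inversion} and explaining why the invertibility holds.
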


\begin{proof}
	Combine Lemmas \ref{lem:inversion} and~\ref{lem:spheres} with the fact that $\Sigma^\infty\S^{\Weil_p\A^1}\in\SH(S)$ is invertible.
\end{proof}

\begin{remark}
	Let $p\colon T\to S$ be finite étale and let $E\in\SH(T)$. We can write $E\simeq \colim_n \Sigma^{-\A^n}\Sigma^\infty E_n$, where $E_n$ is the $n$th space of $E$. Hence, by Lemma~\ref{lem:spheres},
	\[
	p_\otimes(E) \simeq \colim_n \Sigma^{-\Weil_p\A^n} \Sigma^\infty p_\otimes(E_n).
	\]
\end{remark}

\begin{remark}\label{rmk:non-etale-norm}
	The assumption that $p$ is finite étale in Proposition~\ref{prop:stablenorm} is not always necessary. For example, if $p\colon T\to S$ is a universal homeomorphism with a section, then $p_\otimes(\S^{\A^1})\simeq p_*(\S^{\A^1})\simeq \S^{\A^1}$ by Theorem~\ref{thm:norm}(5) and so the extension $p_\otimes\colon \SH(T)\to \SH(S)$ exists. In fact, it is equivalent to $p_*$ and is an equivalence of $\infty$-categories.
\end{remark}

\begin{remark}\label{rmk:norms-dont-stabilize}
The assumption that $p$ is finite étale in Proposition~\ref{prop:stablenorm} cannot be dropped in general. Let $S$ be the spectrum of a discrete valuation ring over a field of characteristic $\neq 2$, with closed point $i\colon \{x\} \to S$ and generic point $j\colon \{\eta\} \to S$. Let $p\colon T \to S$ be a finite locally free morphism with generic fiber étale of degree $n>1$ and the reduction of the special fiber mapping isomorphically to $x$. We claim that $E = \Sigma^\infty p_\otimes(\S^{\A^1}) \in \SH(S)$ is not invertible. Since $p_\otimes$ commutes with base change (Proposition~\ref{prop:BC}), we have $j^* E \simeq \S^{\A^n}$ by Lemma \ref{lem:spheres}, whereas $i^* E \simeq \S^{\A^1}$ by Remark~\ref{rmk:non-etale-norm}. Consider the gluing triangle
\[ j_! j^* E \to E \to i_! i^* E \xrightarrow{\alpha} j_!j^* E[1]. \]
It suffices to show that $\alpha \simeq 0$, since for invertible $E\in\SH(S)$ we have $[E, E] \simeq [\1_S, \1_S] \simeq \GW(S)$ (Theorem \ref{thm:GW}), which has no idempotents by \cite[Theorem 2.4]{gille2015quadratic} and \cite[Proposition II.2.22]{wittrings}. It thus suffices to show that $[\S^{\A^1}, i^! j_! \S^{\A^n}[1]] = 0$.
This follows from the gluing exact sequences
\begin{gather*}
[\1_\eta, \S^{\A^{n-1}}]\to [\1_x, i^! j_! \S^{\A^{n-1}}[1]] \to [\1_S, j_! \S^{\A^{n-1}}[1]],\\
[\1_x, \S^{\A^{n-1}}]\to [\1_S, j_! \S^{\A^{n-1}}[1]]\to [\1_S, \S^{\A^{n-1}}[1]],
\end{gather*}
and Morel's stable $\A^1$-connectivity theorem \cite[Theorem 6.1.8]{Morel:2005}.
\end{remark}

\begin{remark}\label{rmk:Pic}
	Let $p\colon T\to S$ be finite étale. Since $p_\otimes\colon\SH(T)\to\SH(S)$ is symmetric monoidal, it induces a morphism of grouplike $\E_\infty$-spaces
	\[
	p_\otimes\colon \Pic(\SH(T)) \to \Pic(\SH(S)).
	\]
	Moreover, as the equivalence of Lemma~\ref{lem:spheres} is manifestly natural and symmetric monoidal as $V$ varies in the groupoid of vector bundles, there is a commutative square of grouplike $\E_\infty$-spaces
	\begin{tikzmath}
		\diagram{ \K^\oplus(T) & \Pic(\SH(T)) \\ \K^\oplus(S) & \Pic(\SH(S))\rlap, \\};
		\arrows (11-) edge (-12) (21-) edge (-22) (11) edge node[left]{$p_*$} (21) (12) edge node[right]{$p_\otimes$} (22);
	\end{tikzmath}
	where $\K^\oplus$ is the direct-sum $\K$-theory of vector bundles and the horizontal maps are induced by $V\mapsto \S^{V}$.
	In fact, since this square is compatible with base change in $S$ (see \sect\ref{sub:coherence}) and $\SH(\ph)$ is a Zariski sheaf, one can replace $\K^\oplus$ by the Thomason–Trobaugh $\K$-theory. We will discuss this in more detail in \sect\ref{sub:Jhomomorphism}.
\end{remark}

\begin{example}
	If $p\colon T\to S$ is finite étale of degree $2$, it follows from Proposition~\ref{prop:stablenorm} and Example~\ref{ex:degree2} that $\Sigma^\infty\Sigma T\in \SH(S)$ is an invertible object. When $S$ is the spectrum of a field $k$ of characteristic $\neq 2$ and $T=\Spec k[\sqrt a]$ for some $a\in k^\times$, Hu gave an explicit description of the inverse \cite[Proposition 1.1]{Hu}, which implies that $p_\otimes(\S^{-1})\simeq \Sigma^{-2}\Sigma^{-1,-1}\Sigma^\infty\Spec k[x,y]/(x^2-ay^2-1)$. By \cite[Proposition 3.4]{Hu}, if $a$ is not a square, then $p_\otimes(\S^{1})\not\simeq \S^{p,q}$ in $\SH(k)$ for all $p,q\in\Z$.
\end{example}

\section{Properties of norms}
\label{sec:functoriality}

In this section, we investigate the compatibility of finite étale norms with various features of the $\infty$-category $\SH(S)$. We will see that norms commute with arbitrary base change and \emph{distribute} over smoothly parametrized sums and properly parametrized products, and we will describe precisely the effect of norms on the purity and ambidexterity equivalences.

\subsection{Composition and base change}

\begin{proposition}[Composition]
	\label{prop:composition}
	Let $p\colon T\to S$ and $q\colon S\to R$ be universally clopen morphisms. Then there is a symmetric monoidal natural equivalence
	\[
	(qp)_\otimes \simeq q_\otimes p_\otimes \colon \PSh_\Sigma(\Sm_T)_\pt \to \PSh_\Sigma(\Sm_R)_\pt.
	\]
	Hence, if $p$ and $q$ are integral and universally open (resp.\ are finite étale), the same holds in $\H_\pt$ (resp.\ in $\SH$).
\end{proposition}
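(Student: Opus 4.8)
The plan is to invoke the uniqueness clause of Theorem~\ref{thm:norm} applied to the morphism $qp$, which is again universally clopen (and integral, resp.\ finite étale, whenever $p$ and $q$ are). The composite $q_\otimes p_\otimes\colon\PSh_\Sigma(\Sm_T)_\pt\to\PSh_\Sigma(\Sm_R)_\pt$ is symmetric monoidal and preserves sifted colimits, being a composite of two functors with these properties; and composing the natural equivalences of Theorem~\ref{thm:norm}(2) for $p$ and for $q$ with the canonical symmetric monoidal equivalence $q_*p_*\simeq(qp)_*$ (which on presheaves is just $(q_*p_*F)(V)=F(V\times_R T)$) yields a symmetric monoidal natural equivalence $q_\otimes p_\otimes((\ph)_+)\simeq(qp)_*(\ph)_+$. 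Hence, by the uniqueness in Theorem~\ref{thm:norm}, it suffices to check that $q_\otimes p_\otimes$ satisfies condition~(3) of that theorem with respect to $qp$; the first claim (including the symmetric monoidal compatibility) then follows.

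To verify condition~(3) I would take a morphism $f\colon Y_+\to X_+$ with $X,Y\in\PSh_\Sigma(\Sm_T)$, write $W=f^{-1}(X)$ and $Z=Y\minus W$ (so $Y\simeq W\amalg Z$, since $X\into X_+$ is a summand inclusion and such inclusions are stable under pullback in the $\infty$-topos $\PSh_\Sigma(\Sm_T)$), and compute $q_\otimes p_\otimes(f)$ by applying condition~(3) twice. Condition~(3) for $p$ writes $p_\otimes(f)$ as $p_*(f|_W)_+\circ c_p$, where $c_p\colon p_*(Y)_+\to p_*(W)_+$ is the collapse map attached by Lemma~\ref{lem:crossterms} (case $k=0$) to the decomposition $p_*(Y)\simeq p_*(W)\amalg p_*(Y|Z)$. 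Applying $q_\otimes$ and invoking Theorem~\ref{thm:norm}(2) for $q$ on the first factor gives $q_\otimes p_\otimes(f)\simeq(qp)_*(f|_W)_+\circ q_\otimes(c_p)$. The non-basepoint part of the target of $c_p$ is $p_*(W)$, its preimage under $c_p$ is precisely the summand $p_*(W)\subseteq p_*(Y)$, and $c_p$ restricts there to the identity; so condition~(3) for $q$ identifies $q_\otimes(c_p)$ with the collapse map attached to $q_*(p_*(Y))\simeq q_*(p_*(W))\amalg q_*(p_*(Y)|p_*(Y|Z))$.

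The crux --- and the step I expect to carry the only real content --- is to identify this last decomposition, via $q_*p_*\simeq(qp)_*$, with $(qp)_*(Y)\simeq(qp)_*(W)\amalg(qp)_*(Y|Z)$. The identification $q_*(p_*(W))\simeq(qp)_*(W)$ of first summands is immediate, so what remains is a natural equivalence $q_*(p_*(Y)|p_*(Y|Z))\simeq(qp)_*(Y|Z)$ compatible with the inclusions into $(qp)_*(Y)$, which I would obtain by unwinding the definitions: for $V\in\Sm_R$, a section $s\colon V\times_R T\to Y$ with adjoint $t\colon V\times_R S\to p_*(Y)$ has $t(v')\in p_*(Y|Z)$, for a point $v'$ lying over $v\in V$, exactly when $s$ meets $Z$ on the fibre of $V\times_R T\to V\times_R S$ over $v'$; therefore $t^{-1}(p_*(Y|Z))\to V$ is surjective if and only if $s^{-1}(Z)\to V$ is surjective, i.e.\ $s\in(qp)_*(Y|Z)(V)$. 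Granting this, $q_\otimes(c_p)$ is exactly the collapse map $(qp)_*(Y)_+\to(qp)_*(W)_+$ prescribed by condition~(3) for $qp$, so $q_\otimes p_\otimes(f)\simeq(qp)_*(f|_W)_+\circ c_{qp}\simeq(qp)_\otimes(f)$ naturally in $f$, and $q_\otimes p_\otimes\simeq(qp)_\otimes$ as symmetric monoidal functors on $\PSh_\Sigma$.

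The remaining two assertions are then formal. When $p$ and $q$ are integral universally open, both $q_\otimes p_\otimes$ and $(qp)_\otimes$ preserve motivic equivalences by Theorem~\ref{thm:norm}(4), so the equivalence on $\PSh_\Sigma$ descends along the motivic localization to an equivalence on $\H_\pt$. When $p$ and $q$ are finite étale, so is $qp$, and by Proposition~\ref{prop:stablenorm} (via the full faithfulness in Lemma~\ref{lem:inversion}) the stable $(qp)_\otimes$ is the unique symmetric monoidal filtered-colimit-preserving extension of $\Sigma^\infty_R\circ(qp)_\otimes$ along $\Sigma^\infty_T$; since $q_\otimes p_\otimes\colon\SH(T)\to\SH(R)$ is symmetric monoidal, preserves filtered colimits, and restricts along $\Sigma^\infty_T$ to $\Sigma^\infty_R$ composed with the unstable $q_\otimes p_\otimes\simeq(qp)_\otimes$, it coincides with the stable $(qp)_\otimes$. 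The only genuine obstacle is the cross-term identification of the third paragraph; everything else is bookkeeping with the universal properties of Theorem~\ref{thm:norm}, Proposition~\ref{prop:stablenorm}, and Lemma~\ref{lem:inversion}.
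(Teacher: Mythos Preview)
Your proposal is correct and follows essentially the same route as the paper. Both arguments reduce to comparing the two functors on $\Sm_{T+}$ via the characterizing properties (2) and (3) of Theorem~\ref{thm:norm}; the paper simply declares this a ``straightforward 1-categorical task'' (invoking Lemma~\ref{lem:0truncated} to justify that no higher coherence is needed), whereas you carry out that task explicitly, with the cross-term identification $q_*(p_*(Y)\mid p_*(Y\mid Z))\simeq (qp)_*(Y\mid Z)$ being the one substantive step.
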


\begin{proof}
	By the universal property of $\PSh_\Sigma$, it suffices to construct such an equivalence between the restrictions of these functors to $\Sm_{T+}$. By Lemma~\ref{lem:0truncated}, this is now a $1$-categorical task, which is straightforward using properties (2) and (3) of Theorem~\ref{thm:norm}.
\end{proof}

Consider a cartesian square of schemes
\begin{tikzequation}\label{eqn:pfsquare}
	\diagram{T' & T \\ S' & S \\};
	\arrows (11-) edge node[above]{$g$} (-12) (11) edge node[left]{$q$} (21) (21-) edge node[below]{$f$} (-22) (12) edge node[right]{$p$} (22);
\end{tikzequation}
with $p$ universally clopen. Then there is a unique symmetric monoidal natural transformation
\[
\Ex^*_\otimes\colon f^*p_\otimes \to q_\otimes g^* \colon \PSh_\Sigma(\Sm_T)_\pt \to \PSh_\Sigma(\Sm_{S'})_\pt
\]
given on $X_+\in\Sm_{T+}$ by the exchange transformation $f^*p_*(X)_+ \to q_*g^*(X)_+$.
One only needs to check that the latter is natural with respect to collapse maps $(X\amalg Y)_+\to X_+$, which is straightforward.
If $p$ is moreover integral, then $f^*p_\otimes$ and $q_\otimes g^*$ preserve motivic equivalences (Theorem~\ref{thm:norm}(4)), and by the universal property of $\L_\mot\colon \PSh_\Sigma(\Sm_T)_\pt\to \H_\pt(T)$ we obtain an induced symmetric monoidal natural transformation
\[
	\Ex^*_\otimes\colon f^*p_\otimes \to q_\otimes g^* \colon \H_\pt(T) \to \H_\pt(S').
\]
If $p$ is finite étale, then by the universal property of $\H_\pt(T)\to\SH(T)$ we further obtain an induced symmetric monoidal natural transformation
\[
	\Ex^*_\otimes\colon f^*p_\otimes \to q_\otimes g^* \colon \SH(T) \to \SH(S').
\]

\begin{proposition}[Base change]
	\label{prop:BC}
	Consider the cartesian square of schemes~\eqref{eqn:pfsquare}, with $p$ universally clopen. 
	Let $\scr C\subset\Sm_T$ be a full subcategory and let $X\in\PSh_\Sigma(\scr C)_\pt$.
	Assume that either of the following conditions hold:
	\begin{enumerate}
		\item $f$ is smooth;
		\item for every $U\in\scr C$, the Weil restriction $\Weil_pU$ is a smooth $S$-scheme; for example, $p$ is finite locally free and $\scr C=\SmQP_T$.
	\end{enumerate}
	Then the transformation $\Ex^*_\otimes\colon f^*p_\otimes(X) \to q_\otimes g^*(X)$ is an equivalence. Hence, if $p$ is finite locally free (resp.\ finite étale) and $f$ is arbitrary, then $\Ex^*_\otimes\colon f^*p_\otimes \to q_\otimes g^*$ is an equivalence in $\H_\pt$ (resp.\ in $\SH$).
\end{proposition}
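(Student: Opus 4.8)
The plan is to reduce the statement to the generating objects $U_+$ with $U\in\scr C$, and then to compute both sides by hand, treating the two hypotheses separately. Since $f^*$ preserves all colimits and $p_\otimes$ preserves sifted colimits (Theorem~\ref{thm:norm}(1)), and likewise for $g^*$ and $q_\otimes$, and since the canonical functor $\PSh_\Sigma(\scr C)_\pt\to\PSh_\Sigma(\Sm_T)_\pt$ preserves sifted colimits, both composites $f^*p_\otimes$ and $q_\otimes g^*$ preserve sifted colimits on $\PSh_\Sigma(\scr C)_\pt$. As $\PSh_\Sigma(\scr C)_\pt$ is generated under sifted colimits by the $U_+$ and $\Ex^*_\otimes$ is a natural transformation, it will suffice to check that $\Ex^*_\otimes$ is an equivalence on each $U_+$. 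By Theorem~\ref{thm:norm}(2) we have $p_\otimes(U_+)\simeq p_*(U)_+$ and $q_\otimes g^*(U_+)\simeq q_*(g^*U)_+$, and by construction $\Ex^*_\otimes$ on $U_+$ is the pointed image of the ordinary exchange map $\Ex^*\colon f^*p_*(U)\to q_*g^*(U)$ of presheaves on $\Sm_{S'}$. So the whole proposition reduces to showing that this map of presheaves is an equivalence under hypothesis (1) or (2).

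Under hypothesis (2), $\Weil_pU=p_*(U)$ is a finitely presented $S$-scheme, so $f^*p_*(U)$ is represented by the base change $(\Weil_pU)\times_SS'$; since $T'=T\times_SS'$, the universal property of Weil restriction identifies this with $\Weil_q(U\times_TT')=\Weil_q(g^*U)=q_*(g^*U)$, and one checks that $\Ex^*$ is precisely this identification (the standard base-change compatibility of Weil restriction, \cite[\sect7.6]{NeronModels}). Under hypothesis (1), $f$ is smooth, hence so is its base change $g\colon T'\to T$, and for a smooth morphism the pullback on presheaves is restriction along the forgetful functor (one verifies $f^*h_X\simeq h_{X\times_SS'}$ on representables and both sides preserve colimits, so $f^*F\simeq F\circ u$ with $u$ forgetting to $S$); likewise for $g^*$. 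Evaluating at $V\in\Sm_{S'}$ then gives $f^*p_*(U)(V)\simeq\Map_T(V\times_ST,U)$ and $q_*g^*(U)(V)\simeq\Map_T(V\times_{S'}T',U)$, and since $V\times_{S'}T'\simeq V\times_ST$ these agree compatibly with $\Ex^*$. This settles the claim at the level of $\PSh_\Sigma$.

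It then remains to descend. For $p$ finite locally free one applies the previous step with $\scr C=\SmQP_T$, for which hypothesis (2) holds (Weil restrictions of quasi-projective schemes are quasi-projective); since $\L_\mot$ intertwines $p_\otimes$, $f^*$, $g^*$, $q_\otimes$ with their counterparts on $\H_\pt$ (Theorem~\ref{thm:norm}(4) and the standard compatibilities), and $\H_\pt(-)$ may be built from $\SmQP_{(-)}$, the $\PSh_\Sigma$-level equivalence descends to $\H_\pt$; under hypothesis (1) one may instead take $\scr C=\Sm_T$ and descend for any integral universally open $p$. For $p$ finite étale one passes further to $\SH$ using the formula $p_\otimes(E)\simeq\colim_n\Sigma^{-\Weil_p\A^n}\Sigma^\infty p_\otimes(E_n)$ of the preceding Remark together with its analogue for $q_\otimes g^*$: the case-(2) identification $f^*\Weil_p\A^n\simeq\Weil_q\A^n$ makes these two presentations agree term by term, so $\Ex^*_\otimes$ on a general $E$ is a filtered colimit of twisted suspensions of the maps $\Ex^*_\otimes$ on the $E_n\in\H_\pt(T)$, which are equivalences by the previous case. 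The step I expect to require the most care is this last bookkeeping in the stable setting — matching the twisting vector bundles $\Weil_p\A^n$ and $\Weil_q\A^n$ coherently so that the two colimit presentations are identified as maps, not just objectwise — but that is itself an instance of the $\PSh_\Sigma$-statement applied to $U=\A^n_T$.
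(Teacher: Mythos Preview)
Your proof is correct and follows essentially the same strategy as the paper: reduce to representables $U_+$ using sifted colimits, identify $\Ex^*_\otimes$ on $U_+$ with the unpointed exchange map $f^*p_*(U)\to q_*g^*(U)$, verify this directly under each hypothesis, then descend to $\H_\pt$ and $\SH$.

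Two places where the paper's execution is slightly cleaner: for hypothesis (1), instead of unwinding $f^*$ as restriction along the forgetful functor, the paper observes that $f^*p_*\to q_*g^*$ is the mate of the equivalence $g_\sharp q^*\simeq p^*f_\sharp$ (valid for smooth $f$) and hence an equivalence; for the passage to $\SH$, instead of the explicit colimit presentation and matching of twisting bundles, the paper notes that $\SH(T)$ is generated by suspension spectra under filtered colimits, smash products, and smash inverses, and that $\Ex^*_\otimes$ is a symmetric monoidal transformation between functors preserving these, so the equivalence on suspension spectra propagates automatically. Your arguments for these steps are valid too, just a bit more hands-on.
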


\begin{proof}
	By definition of $\Ex^*_\otimes$, it suffices to check that, for every $U\in\scr C$, the map $f^*p_*(U)\to q_*g^*(U)$ is an equivalence in $\PSh_\Sigma(\Sm_{S'})$.
	This is clear if $f$ is smooth, as $f^*p_*\to q_*g^*$ is then the mate of the equivalence $g_\sharp q^* \simeq p^*f_\sharp$. 
	Under assumption (2), the result holds because Weil restriction commutes with arbitrary base change.
	The claim in (2) follows from Lemmas~\ref{lem:Weil} and \ref{lem:Weil-smooth}.
	The last statement for $\H_\pt$ follows from (2) because $\SmQP_{T+}$ generates $\H_\pt(T)$ under sifted colimits. 
	Finally, the last statement for $\SH$ follows because $\SH(T)$ is generated by suspension spectra under filtered colimits, smash products, and smash inverses.
\end{proof}

\begin{remark}
	Given two instances of~\eqref{eqn:pfsquare} glued horizontally, we obtain three exchange transformations of the form $\Ex^*_\otimes$ that fit in an obvious commutative diagram. Similarly, given two instances of~\eqref{eqn:pfsquare} glued vertically, the three induced exchange transformations fit in a commutative diagram, which now also involves the equivalence of Proposition~\ref{prop:composition}. Given a more general grid of such squares, we can ask what compatibilities exist between these commutative diagrams. We will answer this coherence question in \sect\ref{sub:coherence}.
\end{remark}

\begin{remark}\label{rmk:mates}
	The transformation $\Ex_\otimes^*\colon f^*p_\otimes\to q_\otimes g^*$ induces by adjunction
	\[
	\Ex_{\otimes*}\colon p_\otimes g_* \to f_* q_\otimes.
	\]
	If $f$ is smooth, we also have
	\[
	\Ex_{\sharp\otimes}\colon f_\sharp q_\otimes\to p_\otimes g_\sharp,
	\]
	since $\Ex_\otimes^*$ is invertible by Proposition~\ref{prop:BC}(1). Since $\Ex^*_\otimes$ is a symmetric monoidal transformation, $\Ex_{\otimes*}$ and $\Ex_{\sharp\otimes}$ are $\SH(T)$-linear transformations.
\end{remark}

\subsection{The distributivity laws}

\begin{lemma}
	\label{lem:distributivity}
	Let $p\colon T\to S$ be a finite locally free morphism and $h\colon U\to T$ a quasi-projective morphism. Consider the diagram
	\begin{tikzequation}\label{eqn:exponential}
		\diagram{ U & \Weil_pU\times_ST & \Weil_pU \\ & T & S\rlap, \\};
		\arrows (11-) edge[<-] node[above]{$e$} (-12) (12-) edge node[above]{$q$} (-13) (11) edge node[below left]{$h$} (22) (12) edge node[right]{$g$} (22) (22-) edge node[above]{$p$} (-23) (13) edge node[right]{$f$} (23);
	\end{tikzequation}
	where $e$ is the counit of the adjunction, $q$ and $g$ are the canonical projections, and $f = \Weil_p(h)$. Then the natural transformation
	\[
	\Dist_{\sharp *}\colon f_\sharp q_*e^* \xrightarrow{\Ex_{\sharp *}} p_* g_\sharp e^* \xrightarrow\epsilon p_* h_\sharp \colon \QP_U \to \QP_S
	\]
	is an isomorphism, where $\epsilon\colon g_\sharp e^*\simeq h_\sharp e_\sharp e^*\to h_\sharp$ is the counit map.
\end{lemma}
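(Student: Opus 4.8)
The plan is to establish the statement directly on points, reducing everything to the defining adjunctions $p^*\dashv p_*=\Weil_p$ and $q^*\dashv q_*=\Weil_q$ together with the elementary description of a morphism into $X$ as a morphism into $U$ plus a section.

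First I would check that $\Dist_{\sharp*}$ is even well-defined. By Lemma~\ref{lem:Weil}, $\Weil_pU$ is quasi-projective over $S$, so $f\colon\Weil_pU\to S$ is quasi-projective; the morphism $q$ is finite locally free, being a base change of $p$, and $e^*X\to\Weil_pU\times_ST$ is quasi-projective (a base change of $X\to U$), so $q_*e^*X:=\Weil_q(e^*X)$ exists in $\QP_{\Weil_pU}$, again by Lemma~\ref{lem:Weil}. Moreover Weil restriction commutes with base change \cite[\sect7.6, Theorem~4 and Proposition~5(e)]{NeronModels}, so the exchange map $f^*p_*\to q_*g^*$ is an isomorphism, whence its mate $\Ex_{\sharp*}\colon f_\sharp q_*\to p_*g_\sharp$, and therefore $\Dist_{\sharp*}$, is defined.

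By the Yoneda lemma it then suffices to show that for every $S$-scheme $Y$ the map $\Dist_{\sharp*}$ induces a bijection $\Hom_S(Y,f_\sharp q_*e^*X)\to\Hom_S(Y,p_*h_\sharp X)$, naturally in $X\in\QP_U$; naturality of the identification in $X$ will then be automatic since $\Dist_{\sharp*}$ is already a natural transformation by construction. Write $Y_T=Y\times_ST$. The adjunction $p^*\dashv p_*$ gives $\Hom_S(Y,p_*h_\sharp X)=\Hom_T(Y_T,X)$, and since $X$ lies over $U$, a $T$-morphism $Y_T\to X$ is the same datum as its composite $a\colon Y_T\to U$ together with a section of $a^*X$ over $Y_T$. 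Applying $p^*\dashv p_*$ once more, the morphisms $a$ correspond bijectively to $S$-morphisms $\alpha\colon Y\to\Weil_pU$, with $a=e\circ(\alpha\times_S\id_T)$. Setting $\beta=\alpha\times_S\id_T\colon Y_T\to\Weil_pU\times_ST$ we get $a^*X=\beta^*e^*X$, and $\beta$ exhibits $Y_T$ as the pullback $q^*(Y,\alpha)$ of $q$ along $\alpha$, where $Y$ is viewed as a $\Weil_pU$-scheme via $\alpha$. Hence, by the adjunction $q^*\dashv q_*$, a section of $a^*X=\beta^*e^*X$ over $Y_T$ is the same as a $\Weil_pU$-morphism $(Y,\alpha)\to q_*e^*X$. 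Assembling the two pieces, a $T$-morphism $Y_T\to X$ is the same as a pair $\bigl(\alpha\colon Y\to\Weil_pU,\ (Y,\alpha)\to q_*e^*X\bigr)$, and such a pair is precisely an $S$-morphism $Y\to f_\sharp q_*e^*X$, with $\alpha$ recovered as its composite with $q_*e^*X\to\Weil_pU$. Tracing these canonical identifications through the constructions of $\Ex_{\sharp*}$ and of the counit $\epsilon\colon g_\sharp e^*\to h_\sharp$ shows that the resulting bijection $\Hom_S(Y,f_\sharp q_*e^*X)\simeq\Hom_S(Y,p_*h_\sharp X)$ is the one induced by $\Dist_{\sharp*}$.

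The only genuinely non-formal step is this last bookkeeping: confirming that the chain of canonical identifications above really computes $\Dist_{\sharp*}$ and not some other isomorphism. This is a routine but mildly tedious diagram chase, in which one uses that the Beck--Chevalley isomorphism $f^*p_*\simeq q_*g^*$ is, on points, exactly the identification $Y_T=q^*(Y,\alpha)$, and that $\epsilon$ is the counit recording the composite map to $T$. I expect to carry out this chase rather than bypass it: reducing to the case where $p$ is split as a sum of fold maps would make the formula transparent, but that reduction requires $p$ to be finite \'etale, whereas here $p$ is only finite locally free, so Zariski- or \'etale-localization on $S$ does not simplify the argument.
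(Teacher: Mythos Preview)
Your proof is correct and is precisely the kind of argument the paper has in mind: the paper's own proof consists of the single sentence ``This is a general result that holds in any category with pullbacks, as soon as $p_*$ exists. We leave the details to the reader.'' Your Yoneda argument, unpacking both sides as pairs $(\alpha\colon Y\to\Weil_pU,\ \text{lift to }q_*e^*X)$ via the adjunctions $p^*\dashv p_*$ and $q^*\dashv q_*$, is exactly the formal verification being alluded to; the only comment is that your concluding worry about the diagram chase is perhaps overstated, since the bijection you produce is built entirely out of the unit/counit maps of the two adjunctions and the base-change identification, which are by definition the constituents of $\Ex_{\sharp*}$ and $\epsilon$.
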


\begin{proof}
	This is a general result that holds in any category with pullbacks, as soon as $p_*$ exists.
	We leave the details to the reader.
\end{proof}

\begin{remark} \label{rem:big-pentagon-1}
	In the setting of Lemma~\ref{lem:distributivity}, suppose given a quasi-projective morphism $V\to U$, and form the diagram
	\begin{tikzmath}
		\diagram[column sep={6em,between origins}]{V & W & \Weil_pV\times_ST & \Weil_pV \\ & U & \Weil_pU\times_ST & \Weil_pU \\ & & T & S \\};
		\arrows (11-) edge[<-] (-12) (12-) edge[<-] (-13) (13-) edge (-14)
		(22-) edge[<-] node[above]{$e$} (-23) (23-) edge node[above]{$q$} (-24) (33-) edge node[above]{$p$} (-34)
		(11) edge (22) (12) edge (23) (13) edge (23) (14) edge (24)
		(22) edge (33) (23) edge (33) (24) edge (34);
	\end{tikzmath}
	in which all three quadrilaterals are cartesian; in particular $W = V \times_U (\Weil_p U \times_S T)=e^*(V)$. The lower pentagon and the boundary are both instances of~\eqref{eqn:exponential}. Lemma~\ref{lem:distributivity} states that the upper right pentagon is also an instance of~\eqref{eqn:exponential}, i.e., that $\Weil_p V \simeq \Weil_q W$.
\end{remark}

\begin{remark}
	In the diagram~\eqref{eqn:exponential}, if $h$ is formally smooth or finitely presented, so are $f$, $g$, and $e$.
\end{remark}

Note that when $p$ and $h$ are fold maps, Lemma~\ref{lem:distributivity} expresses the distributivity of products over sums in the category of schemes.
Given the diagram~\eqref{eqn:exponential} with $p$ finite locally free (resp.\ finite étale) and $h$ quasi-projective, we can consider the transformations
\begin{align*}
	\Dist_{\sharp \otimes}&\colon f_\sharp q_\otimes e^* \xrightarrow{\Ex_{\sharp\otimes}} p_\otimes g_\sharp e^* \xrightarrow{\epsilon} p_\otimes h_\sharp, \\
	\Dist_{\otimes *}&\colon p_\otimes h_* \xrightarrow{\eta} p_\otimes g_* e^* \xrightarrow{\Ex_{\otimes*}} f_* q_\otimes e^*,
\end{align*}
in $\H_\pt$ (resp.\ in $\SH$), the former assuming $h$ smooth. These transformations measure the distributivity of parametrized smash products over parametrized sums and products, respectively. It follows from Remark~\ref{rmk:mates} that $\Dist_{\sharp\otimes}$ and $\Dist_{\otimes *}$ are $\SH(T)$-linear transformations.

\begin{proposition}[Distributivity]
	\label{prop:distributivity}
	Consider the diagram~\eqref{eqn:exponential} with $h$ quasi-projective.
	\begin{enumerate}
		\item If $p$ is finite locally free (resp.\ finite étale) and $h$ is smooth, then $\Dist_{\sharp\otimes}$ is an equivalence in $\H_\pt$ (resp.\ in $\SH$).
		\item If $p$ is finite étale and $h$ is a closed immersion (resp.\ is proper), then $\Dist_{\otimes*}$ is an equivalence in $\H_\pt$ (resp.\ in $\SH$).
	\end{enumerate}
\end{proposition}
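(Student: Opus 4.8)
The plan is to evaluate each transformation on a generating family and to reduce it to the purely geometric distributivity of Weil restriction, Lemma~\ref{lem:distributivity}. In both cases every functor appearing on either side of $\Dist$ preserves sifted colimits: the norm functors $p_\otimes$ and $q_\otimes$ by Theorem~\ref{thm:norm}(1)--(2); the functors $f_\sharp$, $g_\sharp$, $h_\sharp$, $e^*$ as left adjoints and pullbacks; and, in case~(2), also $h_*$ and $f_*$ --- for a closed immersion because such a pushforward is computed objectwise on Nisnevich sheaves (the base change of a henselian local scheme along a closed immersion is again one) and commutes with $\L_{\A^1}$, and for a proper morphism (in $\SH$) because $h_*\simeq h_!$ by proper base change and $h_!$ preserves all colimits. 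Moreover $\Sigma^\infty$ commutes with all of these functors: with $p_\otimes$ by the defining property in Proposition~\ref{prop:stablenorm}, with $f_\sharp$, $g_\sharp$, $h_\sharp$, $e^*$ as usual, and, in the closed-immersion case, with $i_*$ because closed immersions satisfy the projection formula, whence $i_*$ commutes with $\Sigma^{\P^1}$ and $\Omega^{\P^1}$. Since $\SmQP_{(\ph)+}$ generates $\H_\pt(\ph)$ under sifted colimits and $\Dist$ is $\SH(T)$-linear (Remark~\ref{rmk:mates}) while $\S^{\A^n}$ over the source is pulled back from $T$, it therefore suffices to prove that $\Dist$ is an equivalence on $X_+$ for $X\in\SmQP_U$ at the level of $\H_\pt$ --- the one exception being the proper case of~(2), which must be handled directly in $\SH$.

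\emph{Part (1).} On $X_+$, Theorem~\ref{thm:norm}(3) together with smooth base change (Proposition~\ref{prop:BC}(1)) identifies the source of $\Dist_{\sharp\otimes}$ with $(\Weil_q e^* X)_+$ and its target with $(\Weil_pX)_+$, both over $S$. Remark~\ref{rem:big-pentagon-1}, applied with ``$h$'' taken to be the smooth quasi-projective morphism $X\to U$, exhibits the upper-right pentagon of the enlarged exponential diagram as another instance of~\eqref{eqn:exponential}, i.e.\ supplies a canonical isomorphism $\Weil_q(e^*X)\simeq\Weil_pX$ of $S$-schemes. It then remains to verify that $\Dist_{\sharp\otimes}$ induces precisely this isomorphism on $X_+$; as all the objects in play are $0$-truncated on representables (Lemma~\ref{lem:0truncated}), this is a finite $1$-categorical computation, obtained by unwinding the definition of $\Ex_{\sharp\otimes}$ (in terms of $\Ex^*_\otimes$, invertible by Proposition~\ref{prop:BC}(1)) and of the counit $\epsilon$.

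\emph{Part (2).} For $h=i\colon U\into T$ a closed immersion, one reduces --- using that $i_*$, $f_*$, and the norms preserve sifted colimits, together with the computation of $p_\otimes$ on quotients (Proposition~\ref{prop:quotients}, Corollary~\ref{cor:quotients}, Proposition~\ref{prop:pairs}) --- to verifying the distributivity of $p_\otimes$ over $i_*$ on objects presented by pairs $Y/(Y\minus Z)$, obtained by compactifying smooth $U$-schemes; there it again follows from Lemma~\ref{lem:distributivity} applied to $Y\to U$ and $Z\to U$, combined with the compatibility of $i_*$ and of $f_*=\Weil_p(i)_*$ with the respective recollements, the identification of $\Dist_{\otimes*}$ with these geometric isomorphisms being once more a check on $0$-truncated objects. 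For $h$ proper I work in $\SH$: by a dévissage using Chow's lemma and the gluing triangles in $\SH$ (together with Proposition~\ref{prop:BC} and proper base change, so that $p_\otimes h_*$ and $f_*q_\otimes e^*$ carry the relevant squares to cartesian ones), the problem reduces to $h$ projective, hence to a closed immersion --- already handled --- followed by a projection $\pi\colon\P^N_U\to U$; since $\pi$ is smooth and proper, ambidexterity identifies $\pi_*$ with $\pi_\sharp$ up to a Thom twist, whose norm is controlled by Lemma~\ref{lem:spheres}, and under this identification $\Dist_{\otimes*}$ for $\pi$ becomes $\Dist_{\sharp\otimes}$ for $\pi$, which is an equivalence by part~(1).

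\emph{Main obstacle.} The geometric content is already available as Lemma~\ref{lem:distributivity}; the real work is the coherent bookkeeping needed to see that the abstractly defined transformations $\Dist_{\sharp\otimes}$ and $\Dist_{\otimes*}$, assembled from exchange transformations together with units and counits, actually coincide with these geometric isomorphisms. Lemma~\ref{lem:0truncated} turns each such verification into ordinary $1$-categorical diagram chasing on representables, so the difficulty is largely organizational. The two genuinely separate points are: finding the presentation of $i_*$ of a smooth $U$-scheme that brings Proposition~\ref{prop:pairs} to bear in the closed-immersion case of~(2); and carrying out the reduction of the proper case to the projective case while matching the ambidexterity twists on the two sides of $\Dist_{\otimes*}$.
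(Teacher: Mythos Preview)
Your treatment of part~(1) is correct and matches the paper's argument: reduce to representables using sifted colimits and the projection formula, then invoke the geometric distributivity of Weil restriction.

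For part~(2), closed immersion, there is a gap in your reduction step. You propose to reduce to pairs $Y/(Y\minus Z)$ ``obtained by compactifying smooth $U$-schemes,'' but a smooth $U$-scheme need not extend to a smooth $T$-scheme globally, so this compactification step requires a further localization argument that you do not supply. The paper avoids this entirely by a cleaner maneuver: since $h_*$ is \emph{fully faithful}, every object of $\H_\pt(U)$ is in the essential image of $h^*$, so it suffices to check that $\Dist_{\otimes*}h^*$ is an equivalence on $X_+$ for $X\in\SmQP_T$ (not $\SmQP_U$). Then the gluing theorem gives $h_*h^*(X_+)\simeq X/(X\minus X_U)$ directly, and Proposition~\ref{prop:pairs} finishes. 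Your invocation of Lemma~\ref{lem:distributivity} here is also off target: that lemma concerns $\Dist_{\sharp*}$ on schemes, whereas what is needed is Proposition~\ref{prop:pairs} identifying $p_\otimes$ of a quotient of pairs.

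For the proper case, your Chow's lemma d\'evissage is unnecessary: the hypothesis already has $h$ quasi-projective, so after Zariski-localizing to make $S$ (hence $T$) affine, $h$ factors as a closed immersion followed by a smooth projective morphism, and $\Dist_{\otimes*}$ for $h$ decomposes accordingly (together with an $\Ex^*_*$ handled by proper base change). You are right that the smooth proper piece is handled via ambidexterity, but the assertion that ``under this identification $\Dist_{\otimes*}$ becomes $\Dist_{\sharp\otimes}$'' is precisely the content of Proposition~\ref{prop:alpha}, whose proof is a substantial diagram chase involving purity; this should be cited rather than asserted.
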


\begin{proof}
	(1) As all functors involved preserve sifted colimits, it suffices to check that $\Dist_{\sharp\otimes}$ is an equivalence on $X_+$ (resp.\ on $\S^{-\A^n}\wedge \Sigma^\infty_+X$) for $X\in\SmQP_U$. Via the smooth projection formulas, the morphism $\Dist_{\sharp\otimes}(h^*A\wedge B)$ is identified with $p_\otimes A\wedge \Dist_{\sharp\otimes}(B)$. Since moreover all functors commute with $\Sigma^\infty$, it suffices to treat the unstable case.
	The result then follows from Lemma~\ref{lem:distributivity}.
	
	(2) Suppose first that $h$ is a closed immersion. Then $h_*$ is fully faithful, and as all functors involved preserve sifted colimits, it suffices to show that $\Dist_{\otimes *}h^*$ is an equivalence on $X_+$ (resp.\ on $\S^{-\A^n}\wedge \Sigma^\infty_+X$) for $X\in\SmQP_{T}$. Via the closed projection formulas, the morphism $\Dist_{\otimes*}(h^*A\wedge B)$ is identified with $p_\otimes A\wedge \Dist_{\otimes*}(B)$. Since moreover all functors commute with $\Sigma^\infty$, it suffices to treat the unstable case.
	 By the gluing theorem \cite[Proposition C.10]{HoyoisGLV}, the unit map $\id\to h_*h^*$ induces an equivalence
	\[
	\frac{X}{X\minus X_U} \simeq h_*h^*(X_+),
	\]
	and similarly for 
	$g_*g^*(X_+)$ and $f_*f^*(\Weil_pX_+)$. The transformation $\Dist_{\otimes *}h^*$ on $X_+$ is thereby identified with the collapse map
	\[
	p_\otimes\left(\frac{X}{X\minus X_U}\right) \to \frac{\Weil_pX}{\Weil_pX\minus \Weil_p(X_U)}.
	\]
	As $p$ is finite étale, this map is an equivalence by Proposition~\ref{prop:pairs}.
	
	It remains to prove that $\Dist_{\otimes*}$ is an equivalence in $\SH$ when $h$ is proper.
	If $h$ is also smooth, this follows from (1) and Proposition~\ref{prop:alpha} below.\footnote{The proof of Proposition~\ref{prop:alpha} uses part (2) of the current proposition, but only for $h$ a closed immersion.}
	If $a\colon S'\to S$ is a smooth morphism, it is easy to show that the transformation $a^*\Dist_{\otimes*}$ can be identified with $\Dist_{\otimes*}a^*$. In particular, the question is Nisnevich-local on $S$ and we may assume that $S$ is affine. In this case, $T$ is also affine and $h$ factors as $h''\circ h'$ where $h'$ is a closed immersion and $h''$ is smooth and projective \cite[Tag 087S]{Stacks}. The transformation $\Dist_{\otimes*}$ for $h$ is then the composition of the transformation $\Dist_{\otimes *}$ for $h''$, an exchange transformation $\Ex_*^*$, and the transformation $\Dist_{\otimes *}$ for a pullback of $h'$. The exchange transformation is an equivalence by proper base change, so we are done.
\end{proof}

\begin{remark}\label{rem:pairs}
	Proposition~\ref{prop:distributivity} provides a functorial refinement of Proposition~\ref{prop:pairs} as follows. Suppose that $p\colon T\to S$ is finite locally free, that $h\colon X\to T$ is smooth and quasi-projective, and that $u\colon Z\into X$ is a closed immersion. As in Remark \ref{rem:big-pentagon-1}, we may form the diagram
	\begin{tikzequation}\label{eqn:closedexponential}
		\diagram[column sep={5em,between origins},row sep={4em,between origins}]{
		Z & \bullet & \bullet & \Weil_pZ \\ & X & \bullet & \Weil_pX \\ & & T & S\rlap. \\
		};
		\arrows (11-) edge[<-] node[above]{$e'$} (-12) (12-) edge[<-] node[above]{$d$} (-13) (13-) edge node[above]{$r$} (-14)
		(22-) edge[<-] node[above]{$e$} (-23) (23-) edge node[above]{$q$} (-24) (33-) edge node[above]{$p$} (-34)
		(11) edge[c->] node[below left]{$u$} (22) (12) edge[c->] node[circle,inner sep=1,fill=white]{$u'$} (23) (13) edge[c->] node[right]{$t$} (23) (14) edge[c->] node[right]{$s$} (24)
		(22) edge node[below left]{$h$} (33) (23) edge node[right]{$g$} (33) (24) edge node[right]{$f$} (34);
	\end{tikzequation}
	Then we have a zig-zag
	\[
	p_\otimes h_\sharp u_* \xleftarrow{\Dist_{\sharp\otimes}} f_\sharp q_\otimes e^*u_* \xrightarrow{\Ex^*_*} f_\sharp q_\otimes u'_*e^{\prime*} \xrightarrow{\Dist_{\otimes*}} f_\sharp s_*r_\otimes d^*e^{\prime*}
	\]
	in $\Fun(\H_\pt(Z),\H_\pt(S))$, where the first two maps are equivalences, and the third one is an equivalence if $p$ is finite étale. Evaluated on $\1_Z$, this zig-zag can be identified with the canonical map
	\[
	p_\otimes\left(\frac X{X\minus Z}\right)\to \frac{\Weil_pX}{\Weil_pX\minus \Weil_pZ}
	\]
	in $\H_\pt(S)$.
\end{remark}

It will be useful to have an explicit form of the distributivity law for binary coproducts. Let $p\colon T \to S$ be a \emph{surjective} finite locally free morphism, and consider the diagram
\begin{tikzmath}
	\def\colsep{1.5em}
	\diagram{ T\amalg T & \Weil_p(T\amalg T)_T & \Weil_p(T\amalg T) \\ & T & S\rlap, \\};
	\arrows (11-) edge[<-] node[above]{$e$} (-12) (12-) edge node[above]{$q$} (-13) (11) edge node[below left]{$h$} (22) (12) edge node[right]{$g$} (22) (22-) edge node[above]{$p$} (-23) (13) edge node[right]{$f$} (23);
\end{tikzmath}
where $h$ is the fold map.
By Example~\ref{ex:addition-formula}, we can write $\Weil_p(T \amalg T) = S \amalg C \amalg S$, where the two maps $S \to \Weil_p(T \amalg T)$ correspond via adjunction to the two canonical maps $T \to T \amalg T$, and $C$ consists of the ``cross terms''. This induces a decomposition $\Weil_p(T \amalg T)_T = T \amalg C_T \amalg T$. The restriction of $e$ to $C_T$ can be further decomposed as
\[
C_T = L\amalg R \xrightarrow{e_l\amalg e_r} T\amalg T.
\]
Write $q_l\colon L \to C$ and $q_r\colon R\to C$ for the restrictions of $q$ and $c\colon C \to S$ for the restriction of $f$.

\begin{corollary} \label{cor:distrib-explicit}
Let $p\colon T \to S$ be a surjective finite locally free (resp.\ finite étale) morphism and let $E,F\in\H_\pt(T)$ (resp.\ $E, F \in \SH(T)$). With the notation as above, we have a natural equivalence
\[ p_\otimes(E \vee F) \simeq p_\otimes(E) \vee c_\sharp(q_{l\otimes}e_l^*(E) \wedge q_{r\otimes}e_r^*(F)) \vee p_\otimes(F). \]
Moreover,  $q_l$ and $q_r$ are also surjective.
\end{corollary}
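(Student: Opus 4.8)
The plan is to deduce the formula from the distributivity law $\Dist_{\sharp\otimes}$ (Proposition~\ref{prop:distributivity}(1)) for the fold map $h\colon T\amalg T\to T$, and then to unwind the result along the coproduct decompositions $\Weil_p(T\amalg T)=S\amalg C\amalg S$, $\Weil_p(T\amalg T)_T=T\amalg C_T\amalg T$, and $C_T=L\amalg R$ set up just above. First I would note that $E\vee F\simeq h_\sharp(E,F)$, where $(E,F)$ denotes the object of $\SH(T\amalg T)\simeq\SH(T)\times\SH(T)$ with the displayed components (and likewise in $\H_\pt$): this is just the description of $h_\sharp$ as left adjoint of the diagonal functor $h^*\colon\SH(T)\to\SH(T\amalg T)$. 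Since the fold map $h$ is finite --- in particular quasi-projective --- and smooth, Proposition~\ref{prop:distributivity}(1) applies to the diagram preceding the corollary and gives a natural equivalence $p_\otimes(E\vee F)\simeq f_\sharp\,q_\otimes\,e^*(E,F)$, in $\SH$ when $p$ is finite étale and in $\H_\pt$ when $p$ is finite locally free. It then remains to compute $e^*$, $q_\otimes$, and $f_\sharp$ componentwise along these decompositions.

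For that computation: the two outer copies of $S$ in $\Weil_p(T\amalg T)$ are by construction the sections adjoint to the inclusions $\iota_1,\iota_2\colon T\to T\amalg T$, so the counit $e$ restricts on the corresponding outer copies of $T$ in $\Weil_p(T\amalg T)_T$ to $\iota_1$ and $\iota_2$, while $e|_{C_T}=e_l\amalg e_r$ by the very definition of $L,R,e_l,e_r$; hence $e^*(E,F)$ has components $(E,\ (e_l^*(E),e_r^*(F)),\ F)$ in $\SH(T)\times\SH(C_T)\times\SH(T)$. Next, $q$ restricts to $p$ on each outer $T$ and to $q_l\amalg q_r\colon L\amalg R\to C$ on $C_T$; factoring the latter map through the fold map $C\amalg C\to C$ and combining that norms compose (Proposition~\ref{prop:composition}), that a norm along a coproduct of morphisms is the product of the individual norms under the product decompositions (an instance of base change, Proposition~\ref{prop:BC}(1), along the summand inclusions $C\into C\amalg C$), and that the norm along a fold map is the smash product (Theorem~\ref{thm:norm}(6) and its $\P^1$-stable version), I find that $q_\otimes e^*(E,F)$ has components $(p_\otimes(E),\ q_{l\otimes}e_l^*(E)\wedge q_{r\otimes}e_r^*(F),\ p_\otimes(F))$ in $\SH(S)\times\SH(C)\times\SH(S)$. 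Finally, $f$ restricts to the identity on the two outer copies of $S$ and to $c$ on $C$, so $f_\sharp$ of such a tuple is the wedge of the three entries with $c_\sharp$ applied to the middle one, which is exactly $p_\otimes(E)\vee c_\sharp(q_{l\otimes}e_l^*(E)\wedge q_{r\otimes}e_r^*(F))\vee p_\otimes(F)$. For the surjectivity of $q_l$ and $q_r$: a point $c$ of $C$ over $s\in S$ is, by the definition $C=p_*(T\amalg T|T,T)$, a section of $(T\amalg T)_{\kappa(c)}\to T_{\kappa(c)}$ whose preimages of the two copies of $T$ are both nonempty; under the cartesian identification of the $q$-fiber over $c$ with $T_{\kappa(c)}$ and of $e$ over that fiber with this section, the $q_l$-fiber (resp.\ $q_r$-fiber) over $c$ is the preimage of the first (resp.\ second) copy, hence nonempty.

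I do not expect a serious obstacle: the genuine content is Proposition~\ref{prop:distributivity}(1), which may be cited. The step demanding the most care is the componentwise bookkeeping --- pinning down that $e$ restricts to $\iota_1$ and $\iota_2$ on the diagonal components, and assembling the three ``norm along a coproduct'' facts so that the cross term emerges precisely as $q_{l\otimes}e_l^*(E)\wedge q_{r\otimes}e_r^*(F)$, with $E$ and $F$ in the correct positions and no extraneous smash factor. As a sanity check I would verify the formula by hand in the split degree-two case, where $C\simeq S\amalg S$ and the identity reduces to $(E_1\vee F_1)\wedge(E_2\vee F_2)\simeq(E_1\wedge E_2)\vee(E_1\wedge F_2)\vee(F_1\wedge E_2)\vee(F_1\wedge F_2)$.
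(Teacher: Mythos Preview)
Your proposal is correct and follows essentially the same approach as the paper: apply Proposition~\ref{prop:distributivity}(1) to the fold map $h\colon T\amalg T\to T$ and unwind using compositionality of $(\ph)_\sharp$ and $(\ph)_\otimes$ and the identification of norms along fold maps with smash products. The paper's proof is a one-line citation of these facts, while you have spelled out the componentwise bookkeeping; your surjectivity argument via the description $C=p_*(T\amalg T\mid T,T)$ is likewise the same as the paper's, just phrased pointwise rather than through connected test schemes.
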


\begin{proof}
The first claim is an immediate consequence of Proposition \ref{prop:distributivity}(1), together with the transitivity of $(\ph)_\sharp$ and $(\ph)_\otimes$ and their identification for fold maps.
For the second claim, note that an $S$-morphism $X\to C$ with $X$ connected is the same thing as a $T$-morphism $X_T\to T\amalg T$ that hits both copies of $T$ (by construction of $C$). It follows that $X_T\to C_T$ hits both $L$ and $R$. Taking $X$ to have a single point, we deduce that $q_l$ and $q_r$ are surjective.
\end{proof}

\begin{corollary}\label{cor:Nthom}
	Let $p\colon T\to S$ be finite étale, let $X\in\SmQP_T$, and let $\xi$ be a vector bundle on $X$ (resp.\ let $\xi\in \K(X)$). Then the distributivity transformation $\Dist_{\sharp\otimes}$ induces an equivalence
	\[
	p_\otimes\Th_X(\xi) \simeq \Th_{\Weil_pX}(q_*e^*\xi)
	\]
	in $\H_\pt(S)$ (resp.\ in $\SH(S)$), where $X \stackrel e\from \Weil_pX\times_ST \stackrel q\to \Weil_pX$.
\end{corollary}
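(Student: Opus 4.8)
The plan is to unwind both sides via the definitions $\Th_X(\xi) = h_\sharp\Sigma^\xi\1_X$, where $h\colon X\to T$ is the structure map, and $\Th_{\Weil_pX}(\eta) = f_\sharp\Sigma^\eta\1_{\Weil_pX}$, and then to feed suitable spheres through the distributivity equivalence of Proposition~\ref{prop:distributivity}(1) and the sphere computation of Lemma~\ref{lem:spheres}. Observe first that $h$ is smooth and quasi-projective, so by Lemma~\ref{lem:Weil} (and the remark after Lemma~\ref{lem:distributivity} that $\Weil_p$ preserves formal smoothness and finite presentation) the map $f = \Weil_p(h)$ is again smooth and quasi-projective; moreover $q\colon\Weil_pX\times_ST\to\Weil_pX$ is the base change of $p$ along $f$, hence finite étale. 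Thus Proposition~\ref{prop:distributivity}(1) applies to the diagram~\eqref{eqn:exponential} with $U = X$, and Lemma~\ref{lem:spheres} applies to $q$.

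I would first treat an honest vector bundle $\xi$ on $X$. Applying $\Dist_{\sharp\otimes}$ gives an equivalence $f_\sharp q_\otimes e^*\Sigma^\xi\1_X \xrightarrow{\ \sim\ } p_\otimes h_\sharp\Sigma^\xi\1_X = p_\otimes\Th_X(\xi)$. Since $e^*$ is symmetric monoidal and pullback of schemes commutes with the formation of $V/(V\minus 0)$, we have $e^*\Sigma^\xi\1_X \simeq \Sigma^{e^*\xi}\1_{\Weil_pX\times_ST} = \S^{e^*\xi}$; and since $q_\otimes$ is symmetric monoidal, Lemma~\ref{lem:spheres} yields $q_\otimes\S^{e^*\xi}\simeq\S^{\Weil_q(e^*\xi)}$ in $\H_\pt(\Weil_pX)$. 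The Weil restriction of a vector bundle along a finite locally free morphism is canonically the total space of the direct image sheaf — this is the compatibility of the adjunction $(q^*,q_*)$ on quasi-coherent sheaves with the adjunction (base change, $\Weil_q$) on schemes — so $\Weil_q(e^*\xi) = q_*(e^*\xi)$. Applying $f_\sharp$ then gives $p_\otimes\Th_X(\xi)\simeq f_\sharp\Sigma^{q_*e^*\xi}\1_{\Weil_pX} = \Th_{\Weil_pX}(q_*e^*\xi)$, and naturality in $\xi$ together with compatibility with the Thom isomorphisms $\Th_X(\xi\oplus\eta)\simeq\Th_X(\xi)\wedge_X\Th_X(\eta)$ follows from the corresponding properties of $\Dist_{\sharp\otimes}$, $e^*$, $q_\otimes$, and of $q_*$ on vector bundles.

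For $\xi\in\K(X)$ one reduces to the previous case: the Thom construction $\K(X)\to\SH(X)$, $\xi\mapsto\S^\xi := \Sigma^\xi\1_X$, is exponential (it sends sums to smash products, hence $-\xi$ to the $\otimes$-inverse and $0$ to $\1_X$), the functors $e^*$ and $q_\otimes$ are symmetric monoidal, and $q_*\colon\K(\Weil_pX\times_ST)\to\K(\Weil_pX)$ is additive and restricts on vector bundles to the sheaf-theoretic direct image. Writing a class as $[\xi_0]-[\xi_1]$ with $\xi_i$ vector bundles and combining these facts with the vector bundle case (and with $q_\otimes\S^{-V}\simeq(q_\otimes\S^{V})^{\otimes(-1)}$) gives $e^*\S^\xi\simeq\S^{e^*\xi}$ and $q_\otimes\S^{e^*\xi}\simeq\S^{q_*e^*\xi}$ in $\SH$; then $\Dist_{\sharp\otimes}$ followed by $f_\sharp$ produces the desired equivalence in $\SH(S)$, with no point-set issues since $p_\otimes$ and $q_\otimes$ preserve filtered colimits and commute with $\Sigma^\infty$. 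There is no real obstacle here — the argument is essentially a formal manipulation of the distributivity equivalence — but the two points that deserve care are the identification $\Weil_q(e^*\xi)\simeq q_*(e^*\xi)$ of the Weil restriction of a vector bundle with its sheaf-theoretic (resp.\ $K$-theoretic) pushforward, and the bookkeeping required to pass from vector bundles to virtual bundles in the stable case.
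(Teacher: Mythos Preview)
Your proof is correct and follows essentially the same route as the paper: apply $\Dist_{\sharp\otimes}$ (Proposition~\ref{prop:distributivity}(1)), use $e^*\S^\xi\simeq\S^{e^*\xi}$, and then invoke Lemma~\ref{lem:spheres} (resp.\ Remark~\ref{rmk:Pic}) to identify $q_\otimes\S^{e^*\xi}\simeq\S^{q_*e^*\xi}$. The paper compresses the $\K$-theory case into a single citation of Remark~\ref{rmk:Pic}, whereas you spell out the reduction via differences of vector bundles; note that writing an arbitrary $\xi\in\K(X)$ as $[\xi_0]-[\xi_1]$ requires $X$ affine (or a Zariski-local argument), which is implicitly handled by the Zariski-descent step in Remark~\ref{rmk:Pic}.
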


\begin{proof}
	If $h\colon X\to T$ and $f\colon \Weil_pX\to S$ are the structure maps, we have
	\[
	p_\otimes h_\sharp \S^\xi \xleftarrow{\Dist_{\sharp\otimes}} f_\sharp q_\otimes e^* \S^\xi \simeq f_\sharp q_\otimes \S^{e^*\xi} \simeq f_\sharp \S^{q_*e^*\xi},
	\]
	where $\Dist_{\sharp\otimes}$ is an equivalence by Proposition~\ref{prop:distributivity}(1), and the last equivalence follows from Lemma~\ref{lem:spheres} (resp.\ from Remark~\ref{rmk:Pic}).
\end{proof}

\subsection{The purity equivalence}

Let $f\colon X\to S$ be a smooth morphism and $s\colon Z\into X$ a closed immersion such that $fs$ is smooth.
Recall that the purity equivalence $\Pi_s$ is a natural equivalence
\[
\Pi_s\colon f_\sharp s_*\simeq (fs)_\sharp\Sigma^{\rm N_s} \colon \H_\pt(Z) \to \H_\pt(S),
\]
where $\rm N_s$ is the normal bundle of $s$. 

\begin{lemma}\label{lem:Weil-normal}
	Consider the diagram~\eqref{eqn:closedexponential} where $p$ is finite étale and $u$ is a closed immersion in $\SmQP_T$. Then there is a canonical isomorphism of vector bundles
	\[
	\rm N_s \simeq r_*d^*e^{\prime *}(\rm N_u).
	\]
\end{lemma}

\begin{proof}
	Let $\scr N_i$ be the conormal sheaf of a closed immersion $i$, so that $\rm N_i=\V(\scr N_i)$.
	By the functoriality of conormal sheaves, there is a canonical map
	\[
	\scr N_u \to e'_* d_* \scr N_t.
	\]
	Since $r$ is finite étale, the functor $r_*$ is left adjoint to $r^*$ (see Lemma~\ref{lem:frobenius}). Using the isomorphism $\scr N_t\simeq r^*\scr N_s$, we obtain by adjunction a canonical map
	\[
	r_*d^*e^{\prime *}(\scr N_u) \to \scr N_s.
	\]
	To check that this map is an isomorphism, we can assume that $p$ is a fold map, in which case it is a straightforward verification.
\end{proof}

\begin{proposition}[Norms vs.\ purity]\label{prop:purity}
	Let $p\colon T\to S$ be a finite étale map and $u\colon Z\into X$ a closed immersion in $\SmQP_T$. Consider the induced diagram~\eqref{eqn:closedexponential}. Then the following diagram of equivalences commutes in $\H_\pt$ and $\SH$:
\begin{tikzmath}
	\diagram{
	f_\sharp s_*r_\otimes d^*e^{\prime*} & f_\sharp q_\otimes u'_*e^{\prime*} & f_\sharp q_\otimes e^*u_* & p_\otimes h_\sharp u_* \\
	(fs)_\sharp \Sigma^{\rm N_s}r_\otimes d^*e^{\prime*} & (fs)_\sharp \Sigma^{r_*d^*e^{\prime*}(\rm N_u)}r_\otimes d^*e^{\prime*} & (fs)_\sharp r_\otimes d^*e^{\prime*}\Sigma^{\rm N_u} & p_\otimes(hu)_\sharp\Sigma^{\rm N_u}\rlap. \\
	};
	\arrows (11-) edge[<-] node[above]{$\Dist_{\otimes *}$} (-12) (12-) edge[<-] node[above]{$\Ex^*_*$} (-13) (13-) edge node[above]{$\Dist_{\sharp\otimes}$} (-14) (11) edge node[left]{$\Pi_s$} (21) (14) edge node[right]{$\Pi_u$} (24)
	(21-) edge node[below]{$\simeq$} (-22) (22-) edge node[below]{$\simeq$} (-23) (23-) edge node[below]{$\Dist_{\sharp\otimes}$} (-24);
\end{tikzmath}
\end{proposition}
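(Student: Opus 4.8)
The plan is to reduce the commutativity of the hexagon, via deformation to the normal cone, to the case where $u$ is the zero section of a vector bundle, where it becomes the compatibility of norms with Thom spectra from Corollary~\ref{cor:Nthom}. To begin, every functor in the diagram preserves sifted colimits: this is clear for the pullbacks $d^*,e^*,e'^*$ and the left adjoints $f_\sharp,(fs)_\sharp,(hu)_\sharp,h_\sharp$; it holds for the norms $r_\otimes,q_\otimes,p_\otimes$ by Theorem~\ref{thm:norm}(1) and Proposition~\ref{prop:stablenorm}, for the Thom suspensions because they are smashing functors, and for the closed pushforwards $s_*,u_*,u'_*$ by the gluing theorem, exactly as in the proof of Proposition~\ref{prop:distributivity}(2). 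As moreover every functor and transformation in the diagram commutes with $\Sigma^\infty$, it is enough to prove the analogous statement for functors $\H_\pt(Z)\to\H_\pt(S)$ evaluated on $X'_+$ with $X'\in\SmQP_Z$; for notational ease I would treat the case $X'=Z$, i.e.\ evaluation on $\1_Z$, the general case being identical with $X'$ transported through the pullbacks $e'^*,d^*$.

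Both composites, evaluated on $\1_Z$, are then maps between copies of $\Weil_pX/(\Weil_pX\minus\Weil_pZ)$, and one can read off what they are. By Remark~\ref{rem:pairs}, the top zig-zag is the canonical comparison map $p_\otimes(X/(X\minus Z))\to\Weil_pX/(\Weil_pX\minus\Weil_pZ)$, an equivalence by Proposition~\ref{prop:pairs}. The vertical arrows are $p_\otimes(\Pi_u)$ and $\Pi_s$; evaluated on $\1_Z$, these are, after the evident identifications, the absolute purity equivalences relating $X/(X\minus Z)$ to the Thom space $\S^{\rm N_u}$ over $T$ and $\Weil_pX/(\Weil_pX\minus\Weil_pZ)$ to $\S^{\rm N_s}$ over $S$, the latter using the identification $\rm N_s\simeq r_*d^*e'^*(\rm N_u)$ — which one checks étale-locally on $S$, where $p$ splits into copies and $\Weil_p$ becomes an iterated fibre product, so that $\rm N_s$ is an external direct sum. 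Finally the bottom $\Dist_{\sharp\otimes}$, evaluated on $\1_Z$, is the equivalence of Corollary~\ref{cor:Nthom} applied to the bundle $\rm N_u$ over $Z$. Thus the hexagon asserts exactly that $p_\otimes$ takes the absolute purity identification $X/(X\minus Z)\simeq\S^{\rm N_u}$ to $\Weil_pX/(\Weil_pX\minus\Weil_pZ)\simeq\S^{\rm N_s}$, compatibly with the comparison maps of Proposition~\ref{prop:pairs} and Corollary~\ref{cor:Nthom}.

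For this I would apply $p_\otimes$ to the deformation-to-the-normal-cone construction over $T$. Write $D=D_ZX$ for the deformation space: a smooth $T$-scheme carrying a closed immersion $Z\times\A^1\into D$ that restricts to $Z\into X$ over $1$ and to the zero section $Z\into\V(\rm N_u)$ over $0$, so that absolute purity for $u$ is the zig-zag $X/(X\minus Z)\xleftarrow{\sim}D/(D\minus Z\times\A^1)\xrightarrow{\sim}\S^{\rm N_u}$ obtained by collapsing. Now $p_\otimes$ preserves motivic equivalences ($p$ is integral, Theorem~\ref{thm:norm}(4)); it carries each pair quotient here to the corresponding Weil-restricted quotient (Proposition~\ref{prop:pairs}; the zero-section end yields $\S^{\rm N_s}$ by Corollary~\ref{cor:Nthom}); and it commutes with restriction to the fibres over $0,1\in\Weil_p(\A^1_T)$ (Proposition~\ref{prop:BC}(2)). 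Applying it to the zig-zag therefore produces, through the smooth $S$-scheme $\Weil_pD$, a zig-zag of motivic equivalences realizing $\Weil_pX/(\Weil_pX\minus\Weil_pZ)\simeq\S^{\rm N_s}$; and since $\Weil_pD$ is a deformation of $\Weil_pZ\into\Weil_pX$ to its zero-section model — the one caveat being that the deformation parameter is the vector bundle $\Weil_p(\A^1_T)$ rather than $\A^1$, which is harmless as $\H_\pt(S)$ inverts vector-bundle projections — this zig-zag computes the absolute purity equivalence for $s$, with the compatibility with the comparison maps automatic since those are the $p_\otimes$-images of the collapse maps. In the base case $u\colon Z\into\V(\xi)$ the deformation is trivial and the statement is precisely Corollary~\ref{cor:Nthom} (equivalently Lemma~\ref{lem:spheres}).

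The main obstacle is coherence rather than computation: one must verify that a specific $2$-cell — in a diagram assembled from many instances of $\Dist_{\sharp\otimes}$, $\Dist_{\otimes*}$, $\Ex^*_*$, the purity equivalences, and the exponential-diagram equivalences of Remarks~\ref{rem:big-pentagon-1} and~\ref{rem:pairs} — is trivial. Upgrading the deformation argument above to such a statement requires that $p_\otimes$ send every arrow of the deformation diagram over $T$ to an arrow over $S$ intertwining the two purity zig-zags \emph{coherently}, which rests on the base-change and composition compatibilities of $\Dist_{\sharp\otimes},\Dist_{\otimes*},\Ex^*_*$ and of the purity equivalence collected in~\sect\ref{sec:functoriality} and~\sect\ref{sub:coherence}, together with the $\SH(T)$-linearity of the distributivity transformations (Remark~\ref{rmk:mates}), which is what licenses working on $\1_Z$. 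Once those are in hand the identification is forced, and the only residual scheme-theoretic inputs — that Weil restriction along a finite étale map preserves smoothness, sends $\A^1_T$ and its points $0,1$ to the expected subschemes, and sends zero sections to zero sections — are elementary.
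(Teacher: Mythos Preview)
Your approach is essentially the same as the paper's --- reduce via sifted colimits to $\SmQP_{X+}$, evaluate on $\1_Z$, and compare the two purity zig-zags via deformation to the normal cone --- but there is a genuine gap at the decisive step.

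You correctly observe that applying $p_\otimes$ to the zig-zag defining $\Pi_u$ yields a zig-zag
\[
\frac{\Weil_pX}{\Weil_pX\minus\Weil_pZ}\;\to\;\frac{\Weil_p\Def(X,Z)}{\Weil_p\Def(X,Z)\minus\Weil_p(Z\times\A^1)}\;\from\;\frac{\rm N_s}{\rm N_s\minus\Weil_pZ},
\]
parametrized by the vector bundle $\Weil_p(\A^1_T)$ over $S$. What you do \emph{not} establish is that this zig-zag agrees with $\Pi_s$, which is by definition the zig-zag through $\Def(\Weil_pX,\Weil_pZ)$ parametrized by $\A^1_S$. Your justification --- ``$\Weil_pD$ is a deformation of $\Weil_pZ\into\Weil_pX$ to its zero-section model, the one caveat being that the parameter is a vector bundle rather than $\A^1$, which is harmless as $\H_\pt(S)$ inverts vector-bundle projections'' --- is not an argument: $\A^1$-invariance tells you nothing about whether two \emph{different} deformation zig-zags between the same endpoints are homotopic. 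A priori they could differ by an automorphism of the Thom space.

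The paper closes exactly this gap by writing down a single scheme-theoretic map
\[
\Def(\Weil_pX,\Weil_pZ)\;\longrightarrow\;\Weil_p\Def(X,Z)
\]
lying over the unit $\A^1_S\to\Weil_p(\A^1_T)$, and observing that it makes both triangles (over $1$ and over $0$) commute. This one line replaces your entire third paragraph and the coherence discussion in the fourth; no appeal to $\A^1$-invariance, to linearity of distributivity, or to abstract uniqueness of deformations is needed. Once you have this map, the comparison is forced on the level of pointed motivic spaces, and naturality in $Y\in\SmQP_X$ is immediate because the map is functorial in the pair $(X,Z)$.
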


\begin{proof}
	Since $u_*\colon \H_\pt(Z)\to\H_\pt(X)$ is fully faithful \cite[Proposition C.10]{HoyoisGLV}, $\H_\pt(Z)$ is a localization of $\PSh_\Sigma(\SmQP_{X+})$.
	Since moreover all functors on display preserve sifted colimits, it suffices to show that the image of the given diagram in $\Fun(\SmQP_{X+},\H_\pt(S))$ can be made commutative.
	
	For notational simplicity, let us first consider the evaluation of this diagram on $\1_Z$.
	Recall that the purity equivalence $\Pi_u$ is then given by a zig-zag of equivalences
	\[
	\frac{X}{X\minus Z} \to \frac{\Def(X,Z)}{\Def(X,Z)\minus (Z\times\A^1)} \from \frac {\rm N_u}{\rm N_u\minus Z}
	\]
	in $\H_\pt(T)$, where $\Def(X,Z)=\Bl_{Z\times 0}(X\times\A^1)\minus \Bl_{Z\times 0}(X\times 0)$ is Verdier's deformation space.
	Using Proposition~\ref{prop:pairs}, we see that $p_\otimes(\Pi_u)$ is the zig-zag of equivalences
	\[
	\frac{\Weil_pX}{\Weil_pX\minus \Weil_pZ} \to \frac{\Weil_p\Def(X,Z)}{\Weil_p\Def(X,Z)\minus \Weil_p(Z\times\A^1)} \from \frac {\rm N_{s}}{\rm N_{s}\minus \Weil_pZ}
	\]
	in $\H_\pt(S)$. On the other hand, $\Pi_{s}$ is the zig zag of equivalences
	\[
	\frac{\Weil_pX}{\Weil_pX\minus \Weil_pZ} \to \frac{\Def(\Weil_pX,\Weil_pZ)}{\Def(\Weil_pX,\Weil_pZ)\minus (\Weil_pZ\times\A^1)} \from \frac {\rm N_{s}}{\rm N_{s}\minus \Weil_pZ}.
	\]
	By Remark~\ref{rem:pairs}, the commutativity of the diagram in this case states that these two zig-zags are equivalent in $\H_\pt(S)$. To produce such an equivalence, it suffices to find a morphism between their middle terms that makes both triangles commutes. Such a morphism is provided by the canonical map
	\[
	\Def(\Weil_pX,\Weil_pZ) \to \Weil_p\Def(X,Z)
	\]
	over the unit map $\A^1_S\to \Weil_p\A^1_T$.
	
	In the previous discussion, one can replace $X$ by any smooth quasi-projective $X$-scheme $Y$ (and $Z$ by $Y_Z$). The equivalence constructed at the end is then natural in $Y_+\in\SmQP_{X+}$. This shows that the restriction of the given diagram to $\SmQP_{X+}$ can be made commutative, as desired.
\end{proof}

\begin{corollary}\label{cor:purity}
	Let $p\colon T\to S$ be finite étale and let $s\colon Z\into X$ be a closed immersion in $\SmQP_T$. Then, under the identification of Proposition~\ref{prop:pairs},
	\[
	p_\otimes(\Pi_s) \simeq \Pi_{\Weil_ps} \colon \frac{\Weil_pX}{\Weil_pX\minus \Weil_pZ} \simeq \frac{\rm N_{\Weil_ps}}{\rm N_{\Weil_ps}\minus \Weil_pZ}
	\]
	in $\H_\pt(S)$.
\end{corollary}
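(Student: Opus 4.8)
The plan is to read Corollary~\ref{cor:purity} directly off the commutative diagram of Proposition~\ref{prop:purity}, evaluated on $\1_Z$. Note first the (unfortunate) notational shift: the closed immersion called $s$ in the corollary is the one called $u$ in Proposition~\ref{prop:purity} and its diagram~\eqref{eqn:closedexponential}, while the corollary's $\Weil_ps$ is the proposition's $s$. With this dictionary, the right-hand vertical edge of the diagram of Proposition~\ref{prop:purity} is $p_\otimes(\Pi_s)$ and the left-hand vertical edge is $\Pi_{\Weil_ps}$, so it remains only to identify the two horizontal composites with the equivalences of Proposition~\ref{prop:pairs}.

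For the top row this is exactly Remark~\ref{rem:pairs}: evaluated on $\1_Z$, the zig-zag $p_\otimes h_\sharp u_* \xleftarrow{\Dist_{\sharp\otimes}} f_\sharp q_\otimes e^*u_* \xrightarrow{\Ex^*_*} f_\sharp q_\otimes u'_*e^{\prime*} \xrightarrow{\Dist_{\otimes*}} f_\sharp s_*r_\otimes d^*e^{\prime*}$ is the canonical equivalence $p_\otimes\bigl(X/(X\minus Z)\bigr)\simeq \Weil_pX/(\Weil_pX\minus\Weil_pZ)$ of Proposition~\ref{prop:pairs}. For the bottom row one applies the same reasoning with the zero-section pair $(\rm N_s,Z)$ in place of $(X,Z)$, together with the identification $\rm N_{\Weil_ps}\simeq q_*e^*\rm N_s$ of the normal bundle of $\Weil_ps$ (which one gets from Corollary~\ref{cor:Nthom}, or equivalently from the compatibility of Weil restriction with deformation to the normal cone); this exhibits the bottom zig-zag, evaluated on $\1_Z$, as the equivalence $p_\otimes\bigl(\rm N_s/(\rm N_s\minus Z)\bigr)\simeq \rm N_{\Weil_ps}/(\rm N_{\Weil_ps}\minus\Weil_pZ)$ of Proposition~\ref{prop:pairs}. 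Granting these two identifications, commutativity of the diagram of Proposition~\ref{prop:purity} on $\1_Z$ says precisely that, after transporting the source along $p_\otimes(X/(X\minus Z))\simeq\Weil_pX/(\Weil_pX\minus\Weil_pZ)$ and the target along $p_\otimes(\rm N_s/(\rm N_s\minus Z))\simeq\rm N_{\Weil_ps}/(\rm N_{\Weil_ps}\minus\Weil_pZ)$, the equivalence $p_\otimes(\Pi_s)$ is carried to $\Pi_{\Weil_ps}$, which is the assertion.

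In fact, most of this is already done inside the proof of Proposition~\ref{prop:purity}, where $p_\otimes(\Pi_u)$ and $\Pi_s$ are written out explicitly as zig-zags of equivalences through Fulton's deformation spaces and the comparison map $\Def(\Weil_pX,\Weil_pZ)\to\Weil_p\Def(X,Z)$ between their middle terms is produced; so the corollary amounts to rereading that comparison. The only step requiring any care — and it is minor, involving no new geometric input — is the bookkeeping that matches the two vertical edges and the bottom row of the diagram with the named purity equivalences and with the Proposition~\ref{prop:pairs} identification for the normal-bundle pair.
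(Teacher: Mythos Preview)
Your proposal is correct and takes essentially the same approach as the paper: the paper's proof is the single sentence ``Evaluate the diagram of Proposition~\ref{prop:purity} on $\1_Z$,'' and what you have written is an accurate unpacking of that instruction, including the notational dictionary and the identification of the horizontal rows via Remark~\ref{rem:pairs} (which is itself already worked out inside the proof of Proposition~\ref{prop:purity}).
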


\begin{proof}
	Evaluate the diagram of Proposition~\ref{prop:purity} on $\1_Z$.
\end{proof}

\subsection{The ambidexterity equivalence}

If $f\colon Y\to X$ is a smooth separated morphism, there is a canonical transformation
\[
\alpha_f\colon f_\sharp \to f_*\Sigma^{\rm T_f}
\]
in $\H_\pt$ and $\SH$: it is adjoint to the composition
\[
f^*f_\sharp \stackrel{\Ex^*_\sharp}\simeq \pi_{2\sharp}\pi_1^* \stackrel{\eta}\to \pi_{2\sharp}\delta_*\delta^*\pi_1^* \simeq \pi_{2\sharp}\delta_* \stackrel{\Pi_\delta}\simeq \Sigma^{\rm T_f},
\]
where $\delta\colon Y\to Y\times_XY$ is the diagonal, which is a closed immersion.\footnote{One must also use a specific identification of $\rm N_\delta$ with $\rm T_f$, lest an undesirable automorphism be introduced, but this will not be visible in our arguments. The correct choice can be found in \cite[(5.20)]{Hoyois}.} 

\begin{lemma}\label{lem:Weil-tangent}
	Consider the diagram~\eqref{eqn:exponential} with $p\colon T\to S$ finite étale and $h\colon U\to T$ smooth quasi-projective. Then there is a canonical isomorphism of vector bundles
	\[
	\rm T_f\simeq q_*e^*\rm T_h.
	\]
\end{lemma}

\begin{proof}
	This follows from Lemma~\ref{lem:Weil-normal} applied with $u$ the diagonal of $h$.
\end{proof}

\begin{proposition}[Norms vs.\ ambidexterity]\label{prop:alpha}
	Consider the diagram~\eqref{eqn:exponential} with $p\colon T\to S$ finite étale and $h\colon U\to T$ smooth quasi-projective. Then the following diagram commutes in $\H_\pt$ and $\SH$:
	\begin{tikzmath}
		\diagram{f_\sharp q_\otimes e^* & f_*\Sigma^{\rm T_f}q_\otimes e^* & f_*\Sigma^{q_*e^*\rm T_h}q_\otimes e^* \\
		p_\otimes h_\sharp & p_\otimes h_*\Sigma^{\rm T_h} & f_*q_\otimes e^* \Sigma^{\rm T_h}\rlap. \\};
		\arrows (11-) edge node[above]{$\alpha_f$} (-12) (12-) edge node[above]{$\simeq$} (-13) 
		(11) edge node[left]{$\Dist_{\sharp\otimes}$} (21) (21-) edge node[below]{$\alpha_h$} (-22)
		(22-) edge node[below]{$\Dist_{\otimes *}$} (-23) (13) edge node[right]{$\simeq$} (23);
	\end{tikzmath}
\end{proposition}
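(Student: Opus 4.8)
The plan is to reduce the hexagon to an identity between two maps out of $\1_U$ and then to recognise it as the compatibility of norms with the purity equivalence (Proposition~\ref{prop:purity}) for the diagonal of $h$. First I would perform the standard reductions. Since $h$ is smooth quasi-projective, the source functor $f_\sharp q_\otimes e^*$ preserves sifted colimits, and $\SmQP_{U+}$ generates $\H_\pt(U)$ under sifted colimits (and, stably, $\SH(U)$ is generated under sifted colimits by objects $\S^{-\A^n}\wedge\Sigma^\infty_+X$); hence a natural transformation out of $f_\sharp q_\otimes e^*$ is detected on these generators, and it suffices to check that the two composites around the hexagon agree after restriction to $\SmQP_{U+}$. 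For $X\in\SmQP_U$ with structure map $a$, one writes $X_+=a_\sharp\1_X$, forms the diagram of Remark~\ref{rem:big-pentagon-1} for the pair $(a,h)$, and uses the smooth projection formulas and the distributivity isomorphism $\Dist_{\sharp\otimes}$ (with its coherences, \sect\ref{sub:coherence}) to identify the value of the hexagon on $X_+$ with the value on $\1_U$ of the analogous hexagon for $ha\colon X\to T$. So I may assume the hexagon is evaluated on $\1_U$; and since $\alpha$, $\Dist_{\sharp\otimes}$ and $\Dist_{\otimes*}$ are all compatible with $\Sigma^\infty$, the $\SH$ case is deduced from the $\H_\pt$ case by the same argument.

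Next I would unwind $\alpha$. Recall that $\alpha_h$ is adjoint to the composite
\[
h^*h_\sharp \xrightarrow{\Ex^*_\sharp} \pi_{2\sharp}\pi_1^* \xrightarrow{\eta} \pi_{2\sharp}\delta_{h*}\delta_h^*\pi_1^* \simeq \pi_{2\sharp}\delta_{h*} \xrightarrow{\Pi_{\delta_h}} \Sigma^{\rm T_h},
\]
where $\pi_1,\pi_2\colon U\times_TU\to U$ are the projections and $\delta_h\colon U\into U\times_TU$ is the diagonal, a closed immersion with normal bundle $\rm T_h$; and $\alpha_f$ is built in the same way from the diagonal $\delta_f\colon\Weil_pU\into\Weil_pU\times_S\Weil_pU$. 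The geometric input is that Weil restriction along $p$ commutes with fibre products and carries the diagonal of $h$ to that of $f$: $\Weil_p(U\times_TU)=\Weil_pU\times_S\Weil_pU$, $\pi^f_i=\Weil_p(\pi^h_i)$, $\delta_f=\Weil_p(\delta_h)$, whence $\rm N_{\delta_f}\simeq q_*e^*\rm N_{\delta_h}\simeq q_*e^*\rm T_h\simeq\rm T_f$, compatibly with the normal-bundle/tangent-bundle identifications built into the definition of $\alpha$ (cf.\ the footnote to that definition).

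Then I would assemble the pieces. Evaluated on $\1_U$ and transposed through the adjunctions $(f^*,f_*)$ and $(h^*,h_*)$, both composites around the hexagon become maps out of $(\Weil_pU\times_S\Weil_pU)_+$, and the norm carries each of the three building blocks of $\alpha_h$ to the corresponding block of $\alpha_f$: the exchange equivalence $h^*h_\sharp\simeq\pi_{2\sharp}\pi_1^*$ to $f^*f_\sharp\simeq\pi_{2\sharp}(\pi_1^f)^*$ by Proposition~\ref{prop:BC} and the base-change coherences; the unit $\id\to\delta_{h*}\delta_h^*$ of the closed immersion $\delta_h$ to the corresponding unit for $\delta_f$ by Proposition~\ref{prop:distributivity}(2) applied to $\delta_h$ (which is already available, as noted there); and the purity equivalence $\Pi_{\delta_h}$ to $\Pi_{\delta_f}$ by Corollary~\ref{cor:purity} — concretely via the comparison map $\Def(\Weil_pU\times_S\Weil_pU,\Weil_pU)\to\Weil_p\Def(U\times_TU,U)$ appearing in the proof of Proposition~\ref{prop:purity}, together with Remark~\ref{rem:pairs}. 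Concatenating these, and conjugating by the Thom isomorphism $\Sigma^{q_*e^*\rm T_h}q_\otimes e^*\simeq q_\otimes e^*\Sigma^{\rm T_h}$ of Corollary~\ref{cor:Nthom}, exhibits $\Dist_{\otimes*}\circ(p_\otimes\alpha_h)\circ\Dist_{\sharp\otimes}$ as $\alpha_f$, which is exactly the commutativity asserted. The hard part will not be any single idea but the bookkeeping: one has to check that these normed avatars of base change, of the closed-immersion unit, and of purity glue into the one hexagon without a spurious automorphism creeping in — in particular that the identification $\rm N_\delta\simeq\rm T_f$ used in the definition of $\alpha$ is the one induced by $q_*e^*$ from $\rm N_{\delta_h}\simeq\rm T_h$ — and that the reduction to $\1_U$ really is compatible with the distributivity coherences of \sect\ref{sub:coherence}.
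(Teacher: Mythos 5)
Your proposal correctly identifies the geometric heart of the matter (Weil restriction along a finite \'etale map commutes with fibre products, so $\Weil_p$ carries the diagonal of $h$ to the diagonal of $f$ and the projections to the projections), and it correctly identifies the three ingredients needed to compare the three stages of $\alpha_h$ with those of $\alpha_f$: base change (Proposition~\ref{prop:BC}), distributivity for the closed immersion $\delta_h$ (Proposition~\ref{prop:distributivity}(2), which is legitimately available), and the purity comparison (Proposition~\ref{prop:purity}/Corollary~\ref{cor:purity}). This is the same overall framework as the paper's proof.

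However, the step that actually constitutes the proof is precisely what you set aside as ``bookkeeping.'' The statement to be proved is not literally ``$p_\otimes$ sends each block of $\alpha_h$ to the corresponding block of $\alpha_f$''; the distributivity transformations $\Dist_{\sharp\otimes}$ and $\Dist_{\otimes*}$ have to be threaded \emph{through} each block, and it must be checked that the resulting mosaic of exchange transformations, (co)units, and purity equivalences closes up. The paper does this by building an auxiliary $3\times 4$ grid of schemes in which the six pentagons are instances of diagram~\eqref{eqn:exponential}, and then factoring the hexagon into a pentagon~(1) and two squares~(2),~(3), each of which is verified by an explicit chase; these verifications are not a formality, and they are where the content lies. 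Asserting the conclusion ``$\Dist_{\otimes*}\circ p_\otimes(\alpha_h)\circ\Dist_{\sharp\otimes}$ is $\alpha_f$'' at the end of your third paragraph restates the proposition rather than proving it.

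Separately, your reduction to $\1_U$ has a gap. Reducing a natural transformation between sifted-colimit-preserving functors to generators $X_+\in\SmQP_{U+}$ is fine, but your further step of identifying the hexagon for $h$ evaluated at $X_+=a_\sharp\1_X$ with the hexagon for $ha\colon X\to T$ evaluated at $\1_X$ presupposes a compatibility of $\alpha$ (and of $\Dist_{\sharp\otimes}$, $\Dist_{\otimes*}$, and the Thom isomorphisms in the hexagon) with composition of smooth maps. That compatibility is itself a nontrivial coherence statement of roughly the same flavour as the proposition you are trying to prove, and it is not established anywhere in your sketch. The paper avoids the issue entirely by verifying the hexagon at the level of natural transformations rather than on a single object. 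Until both of these points are addressed, the sketch does not yet amount to a proof.

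Finally, the ``transposition through $(f^*,f_*)$ and $(h^*,h_*)$'' you invoke to turn the hexagon into a statement about maps out of $(\Weil_pU\times_S\Weil_pU)_+$ is not a simple transposition: the hexagon is not of the shape ``$f_\sharp A\to f_*B$ equals $f_\sharp A\to f_*B$'' in a way that the adjunctions transpose directly, since the arrows also involve $p_\otimes$, $q_\otimes$, and $\Dist_{\otimes*}$; this would need to be made precise.
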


\begin{proof}
	We write $\delta$, $\zeta$, and $\theta$ for the diagonals of $f$, $g$, and $h$, and $\pi_i$, $\rho_i$, and $\sigma_i$ ($i=1,2$) for their canonical retractions. We will make use of the following diagram:
\begin{tikzmath}
	\diagram[column sep={5em,between origins}]{ U & \bullet & \bullet & \bullet & \Weil_pU \\ & U\times_TU & \bullet & \bullet & \Weil_pU\times_S\Weil_pU \\ & & U & \bullet & \Weil_pU \\ & & & T & S\rlap. \\};
	\arrows 
	(14-) edge node[above]{$q$} (-15)
	(11-) edge[<-] node[above]{$e$} (-12) (12-) edge[<-] node[below]{$d'$} (-13) (13-) edge[<-] node[below]{$c$} (-14)
	(11) edge[c->] node[fill=white]{$\theta$} (22) (12) edge[c->] node[fill=white]{$\theta'$} (23) (13) edge[c->] node[fill=white]{$\theta''$} (24) (14) edge[c->] node[right]{$\zeta$} (24) (15) edge[c->] node[right]{$\delta$} (25)
	(22-) edge[<-] node[below]{$e'$} (-23) (23-) edge[<-] node[below]{$d$} (-24) (24-) edge node[above]{$r$} (-25)
	(33-) edge[<-] node[above]{$e$} (-34) (34-) edge node[above]{$q$} (-35) (44-) edge node[above]{$p$} (-45)
	(22) edge node[fill=white]{$\sigma_2$} (33) (23) edge node[fill=white]{$\sigma_2'$} (34) (24) edge node[right]{$\rho_2$} (34) (25) edge node[right]{$\pi_2$} (35)
	(33) edge  node[fill=white]{$h$} (44) (34) edge node[right]{$g$} (44) (35) edge node[right]{$f$} (45);
	\draw[->,font=\scriptsize] (14.north west) to[out=165,in=15] node[above]{$\id$} (12.north east);
\end{tikzmath}
By Lemma~\ref{lem:distributivity}, the six similarly shaped pentagons in this diagram are all instances of~\eqref{eqn:exponential}.
There is of course a similar diagram having the first projections in the middle row: all the other maps are the same, except $e'$, $d$, $d'$, and $\theta'$. However, the composition $e'\circ d=e\times e$ and hence its pullback $e\circ d'$ are the same in both diagrams.

To prove the proposition, we divide the given rectangle as follows:
	\begin{tikzmath}
		\diagram{f_\sharp q_\otimes e^* & f_*\pi_{2\sharp}\pi_1^*q_\otimes e^* & f_*\pi_{2\sharp}\delta_*q_\otimes e^* & f_*\Sigma^{\rm T_f}q_\otimes e^* & f_*\Sigma^{q_*e^*\rm T_h}q_\otimes e^* \\
		p_\otimes h_\sharp & p_\otimes h_*\sigma_{2\sharp}\sigma_1^* & f_*q_\otimes e^*\sigma_{2\sharp}\sigma_1^* & f_*q_\otimes e^*\sigma_{2\sharp}\theta_* & f_*q_\otimes e^* \Sigma^{\rm T_h}\rlap. \\};
		\arrows (11-) edge (-12) (12-) edge node[above]{$\eta$} (-13) (13-) edge node[below]{$\Pi$} node[above]{$\simeq$} (-14) (14-) edge node[above]{$\simeq$} (-15) 
		(11) edge node[left]{$\Dist_{\sharp\otimes}$} (21) (21-) edge (-22)
		(22-) edge  node[below]{$\Dist_{\otimes *}$} (-23) (23-) edge node[below]{$\eta$} (-24)
		(24-) edge node[below]{$\Pi$} node[above]{$\simeq$} (-25) (15) edge node[right]{$\simeq$} (25)
		(12) edge[dashed] (23) (13) edge[dashed] (24)
		(12) edge[draw=none] node[font=\normalsize,left=1.5em]{$(1)$} (22)
		(13) edge[draw=none] node[font=\normalsize]{$f_*(2)$} (23)
		(14) edge[draw=none] node[font=\normalsize,right=1.5em]{$f_*(3)$} (24);
	\end{tikzmath}
The first dashed arrow is $f_*$ of the composition
\[
\pi_{2\sharp}\pi_1^* q_\otimes e^*\xrightarrow{\Ex^*_\otimes}
\pi_{2\sharp} r_\otimes\rho_1^*e^*\xrightarrow{\Ex_{\sharp\otimes}}
q_\otimes\rho_{2\sharp}\rho_1^*e^*\simeq
q_\otimes\rho_{2\sharp}(e\times e)^*\sigma_1^*\xrightarrow{\Ex^*_\sharp}
q_\otimes e^*\sigma_{2\sharp}\sigma_1^*
\]
(note that the last exchange transformation is associated with a square that is not cartesian and is not an equivalence).
The second dashed arrow is $f_*$ of the composite equivalence
\[
\pi_{2\sharp}\delta_*q_\otimes e^* \xleftarrow{\Dist_{\otimes*}} 
\pi_{2\sharp}r_\otimes\theta''_*d^{\prime*}e^* \xleftarrow{\Ex_*^*} 
\pi_{2\sharp}r_\otimes d^*e^{\prime*}\theta_*\xrightarrow{\Dist_{\sharp\otimes}}
q_\otimes\sigma'_{2\sharp}e^{\prime*}\theta_*\xrightarrow{\Ex_\sharp^*} 
q_\otimes e^*\sigma_{2\sharp}\theta_*.
\]

The pentagon (1) can be decomposed as follows (every diagonal exchange transformation is an equivalence):
\begin{tikzmath}
	\diagram[column sep={8em,between origins}]{
	f_\sharp q_\otimes e^* & f_*f^*f_\sharp q_\otimes e^* & & \\
	p_\otimes g_\sharp e^* & f_*f^*p_\otimes g_\sharp e^* & f_*\pi_{2\sharp}\pi_1^* q_\otimes e^* & \\
	p_\otimes h_\sharp & p_\otimes g_*g^*g_\sharp e^* & f_*q_\otimes g^* g_\sharp e^* & f_*\pi_{2\sharp} r_\otimes\rho_1^*e^* \\
	p_\otimes h_* h^* h_\sharp & p_\otimes g_* g^* h_\sharp & p_\otimes g_*\rho_{2\sharp}\rho_1^*e^* & f_*q_\otimes\rho_{2\sharp}\rho_1^*e^* \\
	& p_\otimes h_*\sigma_{2\sharp}\sigma_1^* & p_\otimes g_* e^* \sigma_{2\sharp}\sigma_1^* & f_*q_\otimes e^*\sigma_{2\sharp}\sigma_1^*\rlap. \\
	};
	\arrows
	(11-) edge node[above]{$\eta$} (-12)
	(21-) edge node[above]{$\eta$} (-22)
	(21) edge node[fill=white]{$\eta$} (32)
	(31) edge node[fill=white]{$\eta$} (42)
	(41-) edge node[above]{$\eta$} (-42)
	(31) edge node[left]{$\eta$} (41)
	(52-) edge node[below]{$\eta$} (-53)
	(11) edge node[left]{$\Ex_{\sharp\otimes}$} (21)
	(12) edge node[left]{$\Ex_{\sharp\otimes}$} (22)
	(34) edge node[right]{$\Ex_{\sharp\otimes}$} (44)
	(21) edge node[left]{$\epsilon$} (31)
	(32) edge node[left]{$\epsilon$} (42)
	(12) edge node[fill=white]{$\Ex^*_\sharp$} (23)
	(33) edge node[fill=white]{$\Ex^*_\sharp$} (44)
	(32) edge node[fill=white]{$\Ex^*_\sharp$} (43)
	(42) edge node[fill=white]{$\Ex^*_\sharp$} (53)
	(41) edge node[fill=white]{$\Ex^*_\sharp$} (52)
	(22) edge node[fill=white]{$\Ex^*_\otimes$} (33)
	(23) edge node[fill=white]{$\Ex^*_\otimes$} (34)
	(32-) edge node[above]{$\Ex_{\otimes*}$} (-33)
	(43-) edge node[below]{$\Ex_{\otimes*}$} (-44)
	(53-) edge node[below]{$\Ex_{\otimes*}$} (-54)
	(43) edge node[right]{$\Ex^*_\sharp$} (53)
	(44) edge node[right]{$\Ex^*_\sharp$} (54)
	;
\end{tikzmath}
Each face in this diagram commutes either by naturality or by a routine verification, using only triangle identities and the definitions of derived exchange transformations. 

The square (2) can be decomposed as follows (instances of $\delta^*\pi_1^*$, $\zeta^*\rho_1^*$, and $\theta^*\sigma_1^*$ in the target of unit maps have been erased):
\begin{tikzmath}
	\def\rowsep{1.5em}
	\def\colsep{1em}
	\diagram{
	\pi_{2\sharp}\pi_1^* q_\otimes e^* & \pi_{2\sharp}\delta_* q_\otimes e^* & & \pi_{2\sharp}\delta_* q_\otimes e^* \\
	& \pi_{2\sharp}\delta_*\delta^*r_\otimes\rho_1^*e^* & \pi_{2\sharp}r_\otimes\zeta_*e^* & \\
	\pi_{2\sharp} r_\otimes\rho_1^*e^* & & & \pi_{2\sharp} r_\otimes\theta''_*d^{\prime*}e^*  \\
	q_\otimes\rho_{2\sharp}\rho_1^*e^* & & q_\otimes\rho_{2\sharp}\theta''_*d^{\prime*}e^* & \pi_{2\sharp} r_\otimes d^*e^{\prime*}\theta_* \\
	q_\otimes\rho_{2\sharp}d^*e^{\prime*}\sigma_1^* & & q_\otimes\rho_{2\sharp}d^*e^{\prime*}\theta_* & \\
	& q_\otimes \sigma'_{2\sharp}e^{\prime*}\sigma_1^* & & q_\otimes \sigma'_{2\sharp}e^{\prime*}\theta_* \\
	q_\otimes e^*\sigma_{2\sharp}\sigma_1^* & & & q_\otimes e^*\sigma_{2\sharp}\theta_*\rlap. \\
	};
	\arrows 
	(11) edge node[left]{$\Ex^*_\otimes$} (31) (31) edge node[left]{$\Ex_{\sharp\otimes}$} (41) (41) edge node[left]{$\simeq$} (51) (51) edge node[left]{$\Ex^*_\sharp$} (71) (12) edge node[left]{$\Ex^*_\otimes$} (22) (14) edge[<-] node[right]{$\Dist_{\otimes*}$} (34) (34) edge[<-] node[right]{$\Ex^*_*$} (44) (43) edge[<-] node[left]{$\Ex^*_*$} (53) (44) edge node[right]{$\Dist_{\sharp\otimes}$} (64) (64) edge node[right]{$\Ex^*_\sharp$} (74)
	(11-) edge node[above]{$\eta$} (-12) (12-) edge node[above]{$\id$} (-14)
	(31-) edge node[below]{$\eta$} (-34) (51-) edge node[below]{$\eta$} (-53)
	(62-) edge node[below]{$\eta$} (-64) (71-) edge node[below]{$\eta$} (-74)
	(31) edge node[fill=white,inner sep=1]{$\eta$} (22) (22) edge node[fill=white,inner sep=1]{$\Ex^*_\otimes$} (14) 
	(31) edge node[fill=white,inner sep=1]{$\eta$} (23) (23) edge node[fill=white,inner sep=1]{$\Ex_{\otimes*}$} (14) (34) edge node[fill=white,inner sep=1]{$\eta$} (23)
	(41-) edge node[below]{$\eta$} (-43) (34) edge node[fill=white,inner sep=1]{$\Ex_{\sharp\otimes}$} (43) (44) edge node[fill=white,inner sep=1]{$\Ex_{\sharp\otimes}$} (53) (53) edge node[fill=white,inner sep=1]{$\epsilon$} (64) (51) edge node[fill=white,inner sep=1]{$\epsilon$} (62) (62) edge node[fill=white,inner sep=1]{$\Ex^*_\sharp$} (71)
	;
\end{tikzmath}
Finally, the commutativity of (3) follows from Proposition~\ref{prop:purity} (or trivially if $h$ is étale).
\end{proof}

Recall that $\alpha_f$ is an equivalence in $\SH$ if $f\colon Y\to X$ is smooth and proper (see \cite[\sect2.4.d]{CD}). If $X$ is moreover smooth over $S$, $\alpha_f^{-1}$ induces a transfer map
\[
\tau_f\colon \Sigma^\infty_+X \to \Th_Y(-\rm T_f)
\]
in $\SH(S)$.

\begin{corollary}
	\label{cor:transfers}
	Let $p\colon T\to S$ be finite étale and let $f\colon Y\to X$ be a smooth proper morphism in $\SmQP_T$. Then, under the identification of Corollary~\ref{cor:Nthom},
	\[
	p_\otimes(\tau_f)\simeq \tau_{\Weil_pf}\colon \Sigma^\infty_+ \Weil_pX \to \Th_{\Weil_pY}(-\rm T_{\Weil_pf})
	\]
	in $\SH(S)$.
\end{corollary}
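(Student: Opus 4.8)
The plan is to deduce the statement from Proposition~\ref{prop:alpha} by unwinding the definition of the transfer. Write $a\colon X\to T$ for the structure map, so that $\Sigma^\infty_+X = a_\sharp\1_X$ and $\Th_Y(-\rm T_f) = (af)_\sharp\S^{-\rm T_f}$, and recall that $\tau_f = a_\sharp(\rho)$, where $\rho\colon\1_X\to f_\sharp\S^{-\rm T_f}$ in $\SH(X)$ is the composite of the unit $\1_X\to f_*\1_Y$ with the map $f_*\1_Y\to f_\sharp\S^{-\rm T_f}$ induced by $\alpha_f^{-1}$ (an equivalence since $f$ is smooth and proper). First I would apply $p_\otimes$ and invoke the distributivity equivalence $\Dist_{\sharp\otimes}$ of Proposition~\ref{prop:distributivity}(1) for the pair $(p,a)$: with the notation of~\eqref{eqn:exponential}, writing $X\xleftarrow{e}\Weil_pX\times_ST\xrightarrow{q}\Weil_pX$ and $A = \Weil_pa$, this identifies $p_\otimes(\tau_f)$ with $A_\sharp(q_\otimes e^*(\rho))$, and identifies the source $p_\otimes\Sigma^\infty_+X$ with $\Sigma^\infty_+\Weil_pX$ (the case $\xi = 0$ of Corollary~\ref{cor:Nthom}). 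The problem thus reduces to computing $q_\otimes e^*(\rho)$.

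Next I would compute $q_\otimes e^*(\rho)$ in two steps, working over $\Weil_pX\times_ST$. Set $\tilde f = e^*f\colon\tilde Y\to\Weil_pX\times_ST$; since the unit $\id\to f_*f^*$ and, for $f$ smooth and proper, the ambidexterity equivalence $\alpha_f$ are both compatible with the base change $e$ (smooth base change for $f_\sharp$, proper base change for $f_*$), $e^*(\rho)$ is the corresponding composite $\1\to\tilde f_*\1_{\tilde Y}\to\tilde f_\sharp\S^{-\rm T_{\tilde f}}$ built from $\alpha_{\tilde f}^{-1}$. It then remains to apply $q_\otimes$, where $q$ is finite étale and $\tilde f$ is smooth and proper. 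Here I would use the crucial point that, by transitivity of Weil restriction (Remark~\ref{rem:big-pentagon-1}, applied to $Y\to X$ pulled back along $e$), one has $\Weil_q\tilde Y\simeq\Weil_pY$ and $\Weil_q\tilde f\simeq\Weil_pf$ over $\Weil_pX$. Then: $q_\otimes$ of the unit $\1\to\tilde f_*\1_{\tilde Y}$ becomes, via $\Dist_{\otimes*}$ (Proposition~\ref{prop:distributivity}(2), an equivalence here, and built from units), the unit $\1_{\Weil_pX}\to(\Weil_pf)_*\1_{\Weil_pY}$; and $q_\otimes$ of the map induced by $\alpha_{\tilde f}^{-1}$ becomes the map induced by $\alpha_{\Weil_pf}^{-1}$, by Proposition~\ref{prop:alpha} applied to the pair $(q,\tilde f)$ — this step also supplies the identification $q_*e^*\rm T_f\simeq\rm T_{\Weil_pf}$, which is already built into the top row of that diagram. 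Hence $q_\otimes e^*(\rho)$ is identified with the composite $\1_{\Weil_pX}\to(\Weil_pf)_*\1_{\Weil_pY}\to(\Weil_pf)_\sharp\S^{-\rm T_{\Weil_pf}}$ defining the transfer of $\Weil_pf$.

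Finally I would apply $A_\sharp = (\Weil_pa)_\sharp$: since $\Weil_p(af) = (\Weil_pa)\circ(\Weil_pf)$, this yields exactly $\tau_{\Weil_pf}$, and its target is $\Th_{\Weil_pY}(-\rm T_{\Weil_pf})$ with the identification induced by $\Dist_{\sharp\otimes}$, which agrees with the one of Corollary~\ref{cor:Nthom} by the coherence of the distributivity equivalences for composite morphisms (see~\sect\ref{sub:coherence}). I expect the only real obstacle to be organizational: assembling the various instances of $\Dist_{\sharp\otimes}$, $\Dist_{\otimes*}$, and base-change equivalences into one coherent diagram and checking that the source and target identifications it produces are the ones of Corollary~\ref{cor:Nthom}. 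Nothing substantive is needed beyond Proposition~\ref{prop:alpha}, since the transfer is a purely formal consequence of $\alpha_f$ being an equivalence and the same formal manipulation persists after applying the (symmetric monoidal, distributive) norm functor.
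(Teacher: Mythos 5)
Your proof is correct and follows essentially the same strategy as the paper's. The paper reduces to the case $X=T$ (``using distributivity'') and then verifies that a single rectangular diagram commutes, with Proposition~\ref{prop:alpha} supplying the key right-hand square and a triangle identity for $v^*\dashv v_*$ supplying the left-hand square — exactly the two checks you isolate (the $\alpha$-compatibility via Proposition~\ref{prop:alpha}, and the unit/$\Dist_{\otimes*}$ compatibility you flag as ``built from units''); your more explicit unwinding via $\Dist_{\sharp\otimes}$, base change of $\alpha$, and transitivity of Weil restriction (Remark~\ref{rem:big-pentagon-1}) is what the paper compresses into its opening reduction.
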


\begin{proof}
	Using distributivity, we may assume that $X=T$, so that we are in the following situation:
	\begin{tikzmath}
		\diagram{ Y & \Weil_pY\times_ST & \Weil_pY \\ & T & S\rlap. \\};
		\arrows (11-) edge[<-] node[above]{$e$} (-12) (12-) edge node[above]{$q$} (-13) (11) edge node[below left]{$f$} (22) (12) edge node[right]{$v$} (22) (22-) edge node[above]{$p$} (-23) (13) edge node[right]{$u$} (23);
	\end{tikzmath}
	We must then show that the boundary of the following diagram commutes (evaluated on $\1_T$):
	\begin{tikzmath}
		\diagram{p_\otimes & p_\otimes f_*f^* & p_\otimes f_\sharp\Sigma^{-\rm T_f}f^* \\
		u_*u^*p_\otimes & u_*q_\otimes v^* & u_\sharp\Sigma^{-\rm T_u}q_\otimes v^*\rlap. \\};
		\arrows (11) edge node[left]{$\eta$} (21) (11-) edge node[above]{$\eta$} (-12) (12-) edge node[above]{$\alpha_f^{-1}$} (-13)
		(12) edge node[right]{$\Dist_{\otimes *}$} (22) (21-) edge node[below]{$\Ex_\otimes^*$} (-22)
		(22-) edge node[below]{$\alpha_u^{-1}$} (-23) (13) edge[<-] node[right]{$\Dist_{\sharp\otimes}$} (23);
	\end{tikzmath}
	The right square commutes by Proposition~\ref{prop:alpha}. 
	Unwinding the definition of $\Dist_{\otimes *}$, the commutativity of the left square follows from a triangle identity for the adjunction $v^*\dashv v_*$.
\end{proof}

\subsection{Polynomial functors}

We conclude this section with another application of the distributivity law.
The following definition is due to Eilenberg and Mac Lane in the case $\scr C=\scr D=\Ab$ \cite[\sect 9]{EilenbergMacLane}.
We learned it and its relationship with excisive functors (see Lemma~\ref{lem:poly-cubes}) from Akhil Mathew.

\begin{definition}\label{def:polynomial}
	Let $f\colon \scr C\to\scr D$ be a functor between pointed $\infty$-categories with finite colimits and let $n\in\Z$.
	We say that $f$ is \emph{polynomial of degree $\leq n$} if
	\begin{itemize}
		\item $n\leq -1$ and $f$ is the zero functor, or
		\item $n\geq 0$ and, for every $X\in \scr C$, the functor
		\[
		\rm D_X(f)\colon\scr C\to\scr D, \quad Y\mapsto \cofib(f(Y) \to f(X\vee Y)),
		\]
		is polynomial of degree $\leq n-1$.
	\end{itemize}
\end{definition}

Recall that a $n$-cube $(\Delta^1)^n\to\scr C$ is \emph{strongly cocartesian} if its $2$-dimensional faces are cocartesian squares \cite[Definition 6.1.1.2]{HA}. For $n\geq -1$, a functor $f\colon \scr C\to\scr D$ is \emph{$n$-excisive} if it sends strongly cocartesian $(n+1)$-cubes to cartesian cubes \cite[Definition 6.1.1.3]{HA}.
We shall say that a simplicial diagram $\Delta^\op\to\scr C$ is \emph{finite} if it is left Kan extended from the subcategory $\Delta_{\leq k}^\op$ for some $k$. The colimit of such a diagram exists whenever $\scr C$ has finite colimits, since it is the same as the colimit of its restriction to $\Delta_{\leq k}^\op$.

\begin{lemma}\label{lem:poly-cubes}
	Let $f\colon \scr C\to\scr D$ be a functor between pointed $\infty$-categories with finite colimits and let $n\geq -1$. 
	Consider the following assertions:
	\begin{enumerate}
		\item $f$ is polynomial of degree $\leq n$;
		\item $f$ sends strongly cocartesian $(n+1)$-cubes whose edges are summand inclusions to cubes with contractible total cofibers;
		\item $f$ sends strongly cocartesian $(n+1)$-cubes to cubes with contractible total cofibers;
		\item $f$ sends strongly cocartesian $(n+1)$-cubes to cocartesian cubes;
		\item $f$ is $n$-excisive.
	\end{enumerate}
	In general, $(4)\Rightarrow (3)\Rightarrow (2)\Leftrightarrow (1)$. If $f$ preserves finite simplicial colimits, then $(2)\Leftrightarrow (3)$. If $\scr D$ is prestable, then $(3)\Leftrightarrow (4)\Rightarrow (5)$. If $\scr D$ is stable, then $(4)\Leftrightarrow (5)$
\end{lemma}

\begin{proof}
	The implications $(4)\Rightarrow (3)\Rightarrow (2)$ are clear.
	Consider a strongly cocartesian $(n+1)$-cube $C$ in $\scr C$ whose edges are summand inclusions. Let $X$ be the initial vertex of $C$ and let $X\to X\vee X_i$ be the initial edges with $0\leq i\leq n$. Then the total cofiber of $f(C)$ is $\rm D_{X_0}\dots\rm D_{X_n}(f)(X)$. From this observation we immediately deduce that $(1)\Leftrightarrow (2)$.
	If $\scr D$ is prestable, then a morphism in $\scr D$ is an equivalence if and only if its cofiber is contractible \cite[Corollary C.1.2.5]{SAG}, hence a cube in $\scr D$ is cocartesian if and only if its total cofiber is contractible, which proves $(3)\Leftrightarrow (4)$. 
	Recall that a cube in a stable $\infty$-category is cocartesian if and only if it is cartesian \cite[Proposition 1.2.4.13]{HA}. This proves $(4)\Leftrightarrow (5)$ when $\scr D$ is stable. If $\scr D$ is prestable, it follows from \cite[Corollary C.1.2.3]{SAG} that cocartesian cubes in $\scr D$ are cartesian, hence $(4)\Rightarrow (5)$.
	Finally, suppose that $(2)$ holds and that $f$ preserves finite simplicial colimits. Let $C$ be an arbitrary strongly cocartesian $(n+1)$-cube with initial edges $X\to Y_i$. By Lemma~\ref{lem:bar-construction}, we can write $C$ as the simplicial colimit of the strongly cocartesian $(n+1)$-cubes $C_n$ with initial edges $X\to \Bar_X(X,Y_i)_n$. Since the edges of $C_n$ are summand inclusions, $f(C_n)$ has contractible total cofiber. By assumption, $f$ preserves the colimit of the simplicial object $\Bar_X(X,Y_i)_\bullet$ (which is left Kan extended from $\Delta_{\leq 1}^\op$). Hence, $f(C)$ is the colimit of $f(C_\bullet)$ and therefore has contractible total cofiber. This proves $(2)\Rightarrow (3)$.
\end{proof}

\begin{remark}\label{rmk:excisive}
Let $f\colon \scr C\to\scr D$ be a functor between pointed $\infty$-categories with finite colimits. We only expect the notion of polynomial functor from Definition~\ref{def:polynomial} to be well-behaved when $f$ preserves finite simplicial colimits and $\scr D$ is prestable. A better definition in general would be condition (4) from Lemma~\ref{lem:poly-cubes}.
\end{remark}

\begin{lemma}\label{lem:polynomial}
	Let $\scr C$, $\scr D$, and $\scr E$ be pointed $\infty$-categories with finite colimits.
	\begin{enumerate}
		\item If $f\colon \scr C\to \scr D$ is constant, then $f$ is polynomial of degree $\leq 0$.
		\item If $f\colon \scr C\to\scr D$ preserves binary sums, then $f$ is polynomial of degree $\leq 1$.
		\item If $f\colon \scr C\to\scr D$ and $g\colon\scr C\to\scr D$ are polynomial of degree $\leq n$, then $f\vee g\colon\scr C\to\scr D$ is polynomial of degree $\leq n$.
		\item If $f\colon \scr C\to\scr D$ preserves binary sums and $g\colon\scr D\to\scr E$ is polynomial of degree $\leq n$, then $g\circ f\colon\scr C\to\scr E$ is polynomial of degree $\leq n$.
		\item If $f\colon \scr C\to\scr D$ is polynomial of degree $\leq n$ and $g\colon\scr D\to\scr E$ preserves finite colimits, then $g\circ f\colon\scr C\to\scr E$ is polynomial of degree $\leq n$.
	\end{enumerate}
\end{lemma}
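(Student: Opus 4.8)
The plan is to prove (1) and (2) directly and to prove (3), (4), and (5) by induction on $n$. In every case the content is an explicit identification of the discrete derivative $\rm D_X(\ph)$ of the construction at hand in terms of those of its inputs, so I would first record the two elementary facts that everything rests on. For objects $A$, $B$ of a pointed $\infty$-category $\scr D$ with finite colimits, the split cofiber sequence $B\to A\vee B\to A$ is natural in the pair $(A,B)$; in particular $\cofib$ of a summand inclusion is, naturally, the complementary summand. And if $f\colon\scr C\to\scr D$ preserves binary sums, then $f(X\vee\ph)\simeq \underline{f(X)}\vee f(\ph)$ naturally in $X$, with the canonical map $f(\ph)\to f(X\vee\ph)$ corresponding to the summand inclusion (here $\underline{f(X)}$ denotes the constant functor with value $f(X)$).

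For (1): a constant functor sends every morphism to an identity, so $\rm D_X(f)(Y)\simeq\cofib(\id)\simeq 0$ for all $X$ and $Y$; hence $\rm D_X(f)$ is the zero functor and $f$ is polynomial of degree $\leq 0$. For (2): by the facts above, $\rm D_X(f)$ is the cofiber of the summand inclusion $f\to\underline{f(X)}\vee f$, which is the constant functor $\underline{f(X)}$; by (1) it is polynomial of degree $\leq 0$, so $f$ is polynomial of degree $\leq 1$. The base cases $n=-1$ of (3), (4), (5) are immediate: for (3), a wedge of zero functors is a zero functor; for (4), a composite $g\circ f$ with $g$ a zero functor is a zero functor; and for (5), a functor $g$ preserving finite colimits preserves the zero object, so $g\circ f$ with $f$ a zero functor is again a zero functor. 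For the inductive steps I would use the natural equivalences $\rm D_X(f\vee g)\simeq \rm D_X(f)\vee\rm D_X(g)$ (cofibers commute with binary coproducts in $\scr D$), $\rm D_X(g\circ f)\simeq \rm D_{f(X)}(g)\circ f$ (rewriting the defining map as a summand inclusion via the fact that $f$ preserves binary sums), and $\rm D_X(g\circ f)\simeq g\circ\rm D_X(f)$ (a finite-colimit-preserving $g$ preserves cofibers). In each case the right-hand side is polynomial of degree $\leq n-1$ by the inductive hypothesis: for (3) applied to $\rm D_X(f)$ and $\rm D_X(g)$; for (4) because $\rm D_{f(X)}(g)$ has degree $\leq n-1$ and $f$ still preserves binary sums; for (5) because $\rm D_X(f)$ has degree $\leq n-1$ and $g$ still preserves finite colimits. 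Hence the original functor is polynomial of degree $\leq n$.

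The one step that is not purely formal is making the two elementary facts genuinely natural, i.e.\ equivalences of functors rather than objectwise equivalences; concretely, that $(A,B)\mapsto\cofib(B\to A\vee B)$ is naturally the projection $(A,B)\mapsto A$ on $\scr D\times\scr D$. I would obtain this from the functor $\scr D\times\scr D\to\Fun(\Delta^2,\scr D)$ sending $(A,B)$ to the split cofiber sequence $B\to A\vee B\to A$ (assembled from the coproduct inclusions and the fold and projection maps), post-composed with $\ev_2$ and with $\cofib\colon\Fun(\Delta^1,\scr D)\to\scr D$; this is routine in any pointed $\infty$-category with finite colimits. Everything else is bookkeeping with cofibers and coproducts together with the inductive unwinding of the definition of ``polynomial of degree $\leq n$''.
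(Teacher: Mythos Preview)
Your proposal is correct and follows exactly the paper's approach: the paper's proof reads ``(1) and (2) are clear. (3), (4), and (5) are proved by induction on $n$ using the formulas $\rm D_X(f\vee g)\simeq\rm D_X(f)\vee \rm D_X(g)$, $\rm D_X(g\circ f)\simeq\rm D_{f(X)}(g)\circ f$, and $\rm D_X(g\circ f)\simeq g\circ\rm D_X(f)$, respectively.'' You have simply spelled out the details that the paper leaves implicit.
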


\begin{proof}
	(1) and (2) are clear. (3), (4), and (5) are proved by induction on $n$ using the formulas $\rm D_X(f\vee g)\simeq\rm D_X(f)\vee \rm D_X(g)$, $\rm D_X(g\circ f)\simeq\rm D_{f(X)}(g)\circ f$, and $\rm D_X(g\circ f)\simeq g\circ\rm D_X(f)$, respectively.
\end{proof}

The distributivity of norms over finite sums has the following consequence:

\begin{proposition}\label{prop:polynomial}
	Let $p\colon T\to S$ be finite locally free (resp.\ finite étale) of degree $\leq n$. Then the functor $p_\otimes\colon\H_\pt(T)\to \H_\pt(S)$ (resp.\ $p_\otimes\colon \SH(T)\to\SH(S)$) is polynomial of degree $\leq n$.
\end{proposition}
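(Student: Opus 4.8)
The plan is to argue by induction on $n$, using the explicit distributivity formula for binary coproducts (Corollary~\ref{cor:distrib-explicit}) to identify the cross-effect $\rm D_X(p_\otimes)$ with a wedge of functors of strictly smaller polynomial degree.

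First I would reduce to the case where $p$ is surjective. Writing $S = S_0 \amalg S_{+}$ for the decomposition into the clopen locus where $\deg p = 0$ and its complement, we have $\H_\pt(S) \simeq \H_\pt(S_0) \times \H_\pt(S_+)$ (and likewise for $\SH$) compatibly with the norm functors, and a functor into a product is polynomial of degree $\leq n$ if and only if each of its components is. Over $S_0$ the functor $p_\otimes$ is constant with value the monoidal unit, hence polynomial of degree $\leq 0$ by Lemma~\ref{lem:polynomial}(1), while over $S_+$ the morphism $p$ is surjective. The cases $n \leq 1$ are then immediate: if $\deg p \leq 1$ and $p$ is surjective then $p$ is an isomorphism, so $p_\otimes$ is an equivalence, hence preserves binary sums and is polynomial of degree $\leq 1$ by Lemma~\ref{lem:polynomial}(2). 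So I may assume $n \geq 2$, that $p$ is surjective finite locally free (resp.\ finite étale) of degree $\leq n$, and that the proposition holds for all smaller values of $n$.

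Fix $X \in \H_\pt(T)$ (resp.\ $X \in \SH(T)$). Applying Corollary~\ref{cor:distrib-explicit} with $E = X$ and a variable second argument $Y$, and using the naturality of the resulting equivalence in the second variable (together with $p_\otimes(\ast) \simeq \ast$, which holds since $p$ is surjective, to handle $Y = \ast$), the canonical map $p_\otimes(Y) \to p_\otimes(X \vee Y)$ is identified with the inclusion of a wedge summand, so that
\[
\rm D_X(p_\otimes)(Y) \;\simeq\; p_\otimes(X) \;\vee\; c_\sharp\bigl(q_{l\otimes}e_l^*(X) \wedge q_{r\otimes}e_r^*(Y)\bigr)
\]
naturally in $Y$. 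The first summand is constant in $Y$, hence polynomial of degree $\leq 0$. For the second, the crucial point is that $q_r\colon R \to C$ is finite locally free (finite étale if $p$ is) of degree $\leq n-1$: the morphism $C_T \to C$ is a base change of $p$, hence finite locally free (resp.\ finite étale) of degree $\leq n$, and $C_T = L \amalg R$ with both $L \to C$ and $R \to C$ surjective (the ``moreover'' in Corollary~\ref{cor:distrib-explicit}), hence of degree $\geq 1$ on every connected component of $C$; therefore $\deg q_r = \deg(C_T \to C) - \deg q_l \leq n-1$. Here $C$ is qcqs, being a clopen subscheme of $\Weil_p(T \amalg T)$, which is quasi-projective over $S$ by Lemma~\ref{lem:Weil}. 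By the inductive hypothesis $q_{r\otimes}$ is polynomial of degree $\leq n-1$; since $e_r^*$ preserves binary sums, so is $q_{r\otimes}e_r^*$ by Lemma~\ref{lem:polynomial}(4); and since smashing with the fixed object $q_{l\otimes}e_l^*(X)$ and then applying $c_\sharp$ preserves finite colimits, the functor $Y \mapsto c_\sharp(q_{l\otimes}e_l^*(X) \wedge q_{r\otimes}e_r^*(Y))$ is polynomial of degree $\leq n-1$ by Lemma~\ref{lem:polynomial}(5). Combining the two summands via Lemma~\ref{lem:polynomial}(1) and~(3) shows $\rm D_X(p_\otimes)$ is polynomial of degree $\leq n-1$; as $X$ was arbitrary, $p_\otimes$ is polynomial of degree $\leq n$. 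The argument in $\SH$ is identical, using the $\SH$-versions of Corollary~\ref{cor:distrib-explicit} and Lemma~\ref{lem:polynomial} and the fact that $q_r$ is finite étale when $p$ is.

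The main obstacle — and the only step requiring genuine geometric input rather than formal manipulation — is the degree computation: verifying that the auxiliary ``cross-term'' morphism $q_r$ produced by the distributivity formula is finite locally free of degree at most $n-1$. This is exactly what lets the induction close up, and it reflects the combinatorial fact that a non-constant function from an $n$-element set to a $2$-element set has each fiber of size between $1$ and $n-1$. Everything else follows formally from Corollary~\ref{cor:distrib-explicit} and the permanence properties of polynomial functors recorded in Lemma~\ref{lem:polynomial}.
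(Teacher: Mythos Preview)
Your proof is correct and follows essentially the same approach as the paper: induction on $n$, using Corollary~\ref{cor:distrib-explicit} to identify $\rm D_X(p_\otimes)$ with a wedge of a constant functor and a functor built from $q_{r\otimes}$, together with the key observation that $q_r$ has degree $\leq n-1$ since both $q_l$ and $q_r$ are surjective. The only cosmetic difference is that the paper reduces to $p$ of constant degree $n$ rather than merely surjective, but your reduction works equally well.
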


\begin{proof}
	We proceed by induction on $n$. If $n\leq -1$, then $S=\emptyset$ and the statement is trivial.
	If $n= 0$, then $p_\otimes$ is constant with value $\1_S$, hence polynomial of degree $\leq 0$.
	Suppose that $S=\coprod_i S_i$ and let $p_i\colon p^{-1}(S_i)\to S_i$ be the restriction of $p$. Then it is clear that $p_\otimes$ is polynomial of degree $\leq n$ if and only if $p_{i\otimes}$ is polynomial of degree $\leq n$ for all $i$. We may therefore assume that $p$ has constant degree $n\geq 1$. Let $X\in \H_\pt(T)$ (resp.\ $X\in\SH(T)$). Borrowing the notation from Corollary~\ref{cor:distrib-explicit}, we have
	\[
	\rm D_X(p_\otimes) \simeq p_\otimes(X) \vee c_\sharp(q_{l\otimes}e_l^*(X) \wedge q_{r\otimes}e_r^*(\ph)).
	\]
	Moreover, since $q_l$ and $q_r$ are both surjective onto $C$, $q_r$ has degree $\leq n-1$. By the induction hypothesis, $q_{r\otimes}$ is polynomial of degree $\leq n-1$. By Lemma~\ref{lem:polynomial}, we conclude that $\rm D_X(p_\otimes)$ is polynomial of degree $\leq n-1$, as desired.
\end{proof}

\begin{corollary}
	Let $p\colon T\to S$ be finite locally free (resp.\ finite étale) of degree $\leq n$. Then the functor $\Sigma^\infty p_\otimes\colon\H_\pt(T)\to \SH(S)$ (resp.\ $p_\otimes\colon \SH(T)\to\SH(S)$) is $n$-excisive.
\end{corollary}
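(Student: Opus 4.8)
The plan is to deduce this immediately from Proposition~\ref{prop:polynomial} together with the final assertion of Remark~\ref{rmk:excisive}: a functor with stable target that preserves geometric realizations is $n$-excisive if and only if it is polynomial of degree $\leq n$.

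Consider first the stable case, $p_\otimes\colon \SH(T)\to\SH(S)$. By Proposition~\ref{prop:polynomial} this functor is polynomial of degree $\leq n$; by Proposition~\ref{prop:stablenorm} it preserves sifted colimits, in particular geometric realizations; and $\SH(S)$ is stable. Hence Remark~\ref{rmk:excisive} applies and $p_\otimes$ is $n$-excisive.

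Now consider $\Sigma^\infty p_\otimes\colon \H_\pt(T)\to\SH(S)$. By Proposition~\ref{prop:polynomial} the functor $p_\otimes\colon \H_\pt(T)\to\H_\pt(S)$ is polynomial of degree $\leq n$, and $\Sigma^\infty\colon \H_\pt(S)\to\SH(S)$ preserves finite colimits (being a left adjoint), so $\Sigma^\infty p_\otimes$ is polynomial of degree $\leq n$ by Lemma~\ref{lem:polynomial}(5). Moreover $p_\otimes\colon \H_\pt(T)\to\H_\pt(S)$ preserves sifted colimits by property (N2) (it is obtained from the sifted-colimit-preserving functor of Theorem~\ref{thm:norm}(1) by motivic localization), and $\Sigma^\infty$ preserves all colimits, so $\Sigma^\infty p_\otimes$ preserves geometric realizations. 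Since $\SH(S)$ is stable, Remark~\ref{rmk:excisive} again yields that $\Sigma^\infty p_\otimes$ is $n$-excisive. There is no real obstacle: the substance is already contained in Proposition~\ref{prop:polynomial} and Remark~\ref{rmk:excisive}, and the only care needed is to verify the hypotheses of the relevant clause of Remark~\ref{rmk:excisive} and, in the unstable-source case, to transport polynomiality through $\Sigma^\infty$ via Lemma~\ref{lem:polynomial}(5).
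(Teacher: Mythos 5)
Your proof is correct and follows exactly the paper's approach: the paper's own proof is the one-liner ``Since these functors preserve sifted colimits, this follows from Proposition~\ref{prop:polynomial} and Remark~\ref{rmk:excisive},'' and your argument simply spells out the same reasoning in full detail (including the small but necessary step of transporting polynomiality through $\Sigma^\infty$ via Lemma~\ref{lem:polynomial}(5) in the unstable case, which the paper leaves implicit).
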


\begin{proof}
	Since these functors preserve sifted colimits, this follows from Proposition~\ref{prop:polynomial} and Lemma~\ref{lem:poly-cubes}.
\end{proof}

\section{Coherence of norms}
\label{sec:coherence}

\subsection{Functoriality on the category of spans}
\label{sub:coherence}

Our goal in this subsection is to construct the functor
\[
\SH^\otimes\colon \Span(\Sch,\all,\fet) \to \CAlg(\what\Cat{}_\infty),\quad S\mapsto \SH(S),\quad
(U\stackrel f\from T\stackrel p\to S) \mapsto p_\otimes f^*.
\]
Here, $\Span(\Sch,\all,\fet)$ is the $2$-category whose objects are schemes, whose $1$-morphisms are spans $U\stackrel f\from T\stackrel p\to S$ with $p$ finite étale, and whose $2$-morphisms are isomorphisms of spans (see Appendix~\ref{app:spans}).
The functor $\SH^\otimes$ encodes several features of norms:
\begin{itemize}
	\item if $p$ and $q$ are composable finite étale maps, then $(qp)_\otimes\simeq q_\otimes p_\otimes$ (cf.\ Proposition~\ref{prop:composition});
	\item given a cartesian square of schemes
\begin{tikzmath}
	\diagram{\bullet & \bullet \\ \bullet & \bullet \\};
	\arrows (11-) edge node[above]{$g$} (-12) (11) edge node[left]{$q$} (21) (21-) edge node[below]{$f$} (-22) (12) edge node[right]{$p$} (22);
\end{tikzmath}
with $p$ finite étale, $f^*p_\otimes\simeq q_\otimes g^*$ (cf.\ Proposition~\ref{prop:BC});
\item coherence of the above equivalences.
\end{itemize}

The strategy, which we will use several times in the course of this paper, is to decompose the construction of $\SH(S)$ into the steps
\[
\SmQP_{S+} \rightsquigarrow \PSh_\Sigma(\SmQP_{S})_\pt \rightsquigarrow \H_\pt(S) \rightsquigarrow \SH(S),
\]
each of which has a universal property in a suitable $\infty$-category of $\infty$-categories that contains the norm functors.
We start with $\SmQP_S$ instead of $\Sm_S$ because of the following fact (see Lemmas~\ref{lem:Weil} and \ref{lem:Weil-smooth}): if $p\colon T\to S$ is finite locally free and $X$ is a smooth quasi-projective $T$-scheme, its Weil restriction $\Weil_pX$ is smooth and quasi-projective over $S$. In other words, the functor $p_*\colon \PSh(\SmQP_T)\to \PSh(\SmQP_S)$ sends $\SmQP_T$ to $\SmQP_S$, and hence the norm $p_\otimes$ sends $\SmQP_{T+}$ to $\SmQP_{S+}$. Note that this restriction to smooth quasi-projective $S$-schemes is harmless: as every smooth $S$-scheme admits an open cover by quasi-projective $S$-schemes, the inclusion $\SmQP_S\subset\Sm_S$ induces an equivalence
\[
\Shv_\Nis(\SmQP_S) \simeq \Shv_\Nis(\Sm_S).
\]
In particular, $\H_\pt(S)$ is a localization of $\PSh_\Sigma(\SmQP_{S+})$.

To begin with, we construct a functor of $2$-categories
\begin{equation*}\label{eqn:SmQP+}
\SmQP_+^\otimes\colon \Span(\Sch,\all,\flf) \to \CAlg(\Cat_1),\quad S\mapsto \SmQP_{S+}, \quad (U\stackrel f\from T\stackrel p\to S)\mapsto p_\otimes f^*,
\end{equation*}
where ``$\flf$'' denotes the class of finite locally free morphisms.
This is reasonably easy to do by hand, but we can proceed more cogently as follows. 
If $p\colon T\to S$ is finite locally free, the Weil restriction functor $\Weil_p$ is by definition right adjoint to the pullback functor $p^*\colon \SmQP_S\to\SmQP_S$. Using Barwick's unfurling construction \cite[Proposition 11.6]{BarwickMackey}, we obtain a functor
\[
\Span(\Sch,\all,\flf) \to \Cat_1,\quad S\mapsto \SmQP_{S}, \quad (U\stackrel f\from T\stackrel p\to S)\mapsto \Weil_p f^*.
\]
There is an obvious equivalence of categories $\SmQP_{S+}\simeq \Span(\SmQP_S,\mathrm{clopen},\all)$, where ``clopen'' is the class of clopen immersions. 
Both $f^*$ and $\Weil_p$ preserve clopen immersions (see \cite[\sect7.6, Proposition 2]{NeronModels} for the latter). Using the functoriality of $2$-categories of spans described in Proposition~\ref{prop:spans-extended-functoriality}, we obtain the functor $\SmQP_+^\otimes\colon \Span(\Sch,\all,\flf) \to \Cat_1$, which lifts uniquely to $\CAlg(\Cat_1)$ by Proposition~\ref{prop:automatic-calg}.

\begin{remark}\label{rmk:fet-norm-uniqueness}
	The restriction of $\SmQP_+^\otimes$ to $\Span(\Sch,\all,\fet)$ is in fact the \emph{unique} extension of the functor $\Sch^\op\to\CAlg(\Cat_1)$, $S\mapsto \SmQP_{S+}$. This follows from the fact that the latter functor is a sheaf for the finite étale topology (see Lemma~\ref{lem:corr-etale-descent}) and Corollary~\ref{cor:automatic-norms}.
\end{remark}

We then invoke the basic fact that every functor $F\colon\scr C\to\scr D$ has a partial left adjoint defined on the full subcategory of $\scr D$ spanned by the objects $d$ such that $\Map(d,F(\ph))$ is corepresentable \cite[Lemma 5.2.4.1]{HTT}.
For example, there is a functor\footnote{The notation $\PSh_\Sigma$ is slightly abusive, since $\PSh_\Sigma(\scr C)$ is only defined when $\scr C$ admits finite coproducts.}
\[
\PSh_\Sigma\colon\CAlg(\Cat_\infty) \to \CAlg(\what\Cat{}_\infty^\mathrm{sift})
\]
that sends a symmetric monoidal $\infty$-category $\scr C$ to its sifted cocompletion equipped with the Day convolution symmetric monoidal structure, this being a left adjoint to the forgetful functor
\[
\CAlg(\what\Cat{}_\infty^\mathrm{sift})\to \CAlg(\what\Cat{}_\infty)
\]
\cite[Proposition 4.8.1.10]{HA}. Here, $\what{\Cat}{}_\infty^\mathrm{sift}$ is the $\infty$-category of sifted-cocomplete $\infty$-categories and sifted-colimit-preserving functors between them, equipped with the cartesian symmetric monoidal structure.
 If we compose $\SmQP_+^\otimes$ with the inclusion $\CAlg(\Cat_1)\subset \CAlg(\Cat_\infty)$ and with $\PSh_\Sigma$, we obtain the functor
\begin{equation}\label{eqn:PSigma}
\PSh_\Sigma(\SmQP)_\pt^\otimes\colon \Span(\Sch,\all,\flf) \to \CAlg(\what\Cat{}_\infty^\mathrm{sift}), \quad S\mapsto \PSh_\Sigma(\SmQP_S)_\pt.
\end{equation}

Next we introduce two auxiliary $\infty$-categories:
\begin{itemize}
	\item $\scr O\Cat_\infty$ is the $\infty$-category of $\infty$-categories equipped with a collection of equivalence classes of objects;
	\item $\scr M\Cat_\infty$ is the $\infty$-category of $\infty$-categories equipped with a collection of equivalence classes of arrows.
\end{itemize}
More precisely, $\scr O\Cat_\infty$ and $\scr M\Cat_\infty$ are defined by the cartesian squares
\begin{tikzmath}
	\diagram{\scr M\Cat_\infty & \scr O\Cat_\infty & \scr E \\ \Cat_\infty & \Cat_\infty & \Pos\rlap, \\};
	\arrows (11-) edge (-12) (11) edge (21) (21-) edge node[above]{$(\ph)^{\Delta^1}$} (-22) (12) edge (22) (12-) edge (-13) (22-) edge (-23) (13) edge (23);
\end{tikzmath}
where $\scr E\to\Pos$ is the universal cocartesian fibration in posets (i.e., the cocartesian fibration classified by the inclusion $\Pos=\Cat_{0}\subset\Cat_\infty$) and the second bottom arrow sends an $\infty$-category to the poset of subsets of its set of equivalence classes of objects.

Since $f^*$ and $p_\otimes$ preserve motivic equivalences and the latter are stable under the smash product, we can lift~\eqref{eqn:PSigma} to a functor
\[
\Span(\Sch,\all,\flf) \to \CAlg(\what{\scr M\Cat}{}_\infty^\mathrm{sift}), \quad S\mapsto (\PSh_\Sigma(\SmQP_S)_\pt, \text{motivic equivalences}).
\]
By the universal property of localization of symmetric monoidal $\infty$-categories (\cite[Proposition 5.2.7.12]{HTT} and \cite[Proposition 4.1.7.4]{HA}), $\H_\pt(S)$ is the image of $(\PSh_\Sigma(\SmQP_S)_\pt,\<\text{motivic equivalences})$ by a partial left adjoint to the functor
\[
\CAlg(\what\Cat{}_\infty^\mathrm{sift}) \to \CAlg(\what{\scr M\Cat}{}_\infty^\mathrm{sift}), \quad \scr C\mapsto (\scr C,\text{equivalences}).
\]
We therefore obtain
\begin{equation*}
\H_\pt^\otimes \colon \Span(\Sch,\all,\flf) \to \CAlg(\what\Cat{}_\infty^\mathrm{sift}),\quad S\mapsto \H_\pt(S).
\end{equation*}
The functor $f^*$ sends a motivic sphere $\S^V$ to $\S^{f^*V}$ and, if $p$ is finite étale, $p_\otimes$ sends $\S^V$ to $\S^{\Weil_pV}$ (Lemma~\ref{lem:spheres}). Hence, we get a functor
\[
\Span(\Sch,\all,\fet) \to \CAlg(\what{\scr O\Cat}{}_\infty^\mathrm{sift}),\quad S\mapsto (\H_\pt(S), \{\S^V\}_{V/S}),
\]
By Lemma~\ref{lem:inversion}, $\SH(S)$ is the image of $(\H_\pt(S),\{\S^V\}_{V/S})$ by a partial left adjoint to the functor
\[
\CAlg(\what{\Cat}{}_\infty^\mathrm{sift}) \to\CAlg(\what{\scr O\Cat}{}_\infty^\mathrm{sift}), \quad \scr C\mapsto (\scr C,\pi_0\Pic(\scr C)).
\]
Hence, we finally obtain
\begin{equation*}
\SH^\otimes\colon \Span(\Sch,\all,\fet) \to \CAlg(\what\Cat{}_\infty^\mathrm{sift}),\quad S\mapsto \SH(S).
\end{equation*}

\begin{remark}\label{rmk:H->SH}
	Using the same strategy, starting with $\SmQP$ instead of $\SmQP_+$, we can define
	\[
	\H^\otimes\colon \Span(\Sch,\all,\flf) \to \CAlg(\what\Cat{}_\infty^\mathrm{sift}),\quad S\mapsto \H(S), 
	\quad (U\stackrel f\from T\stackrel p\to S) \mapsto p_* f^*.
	\]
	The three functors $\H^\otimes$, $\H_\pt^\otimes$, and $\SH^\otimes$ are related by natural transformations
	\[
	\H^\otimes \xrightarrow{(\ph)_+} \H_\pt^\otimes \xrightarrow{\Sigma^\infty} \SH^\otimes,
	\]
	the latter being defined on $\Span(\Sch,\all,\fet)$. 
	These natural transformations can be defined in the same way as the functors themselves, replacing $\Span(\Sch,\all,\flf/\fet)$ with $\Span(\Sch,\all,\flf/\fet)\times\Delta^1$. For example, the last step in the construction of $\Sigma^\infty$ uses the functor
	\[
	\Span(\Sch,\all,\fet)\times\Delta^1 \to \CAlg(\what{\scr O\Cat}{}_\infty^\mathrm{sift}),\quad (S,0\to 1)\mapsto ((\H_\pt(S), \{\1_S\})\to(\H_\pt(S), \{\S^V\}_{V/S})).
	\]
\end{remark}

\begin{remark}\label{rmk:symmmon}
	If we forget that $\SH^\otimes$ takes values in symmetric monoidal $\infty$-categories, we can recover the symmetric monoidal structure of $\SH(S)$ as the composition
	\[
	\Fin_\pt \simeq \Span(\Fin,\inj,\all) \into \Span(\Fin) \to \Span(\FEt_S) \to \Span(\Sch,\all,\fet) \xrightarrow{\SH^\otimes} \what\Cat{}_\infty,
	\]
	where the middle functor is induced by $\Fin\to \FEt_S$, $X\mapsto \coprod_X S$. 
\end{remark}

\subsection{Normed \texorpdfstring{$\infty$}{∞}-categories}
\label{sub:normed-categories}

Motivated by the example of $\SH^\otimes$, we introduce the notion of a \emph{normed $\infty$-category}: it is a generalization of a symmetric monoidal $\infty$-category that includes norms along finite étale maps. We will encounter several other examples in the sequel. The main results of this subsection are criteria for verifying that norms preserve certain subcategories (Proposition~\ref{prop:normed-subcategory}) or are compatible with certain localizations (Proposition~\ref{prop:normed-localization}).

Let $S$ be a scheme.
We will write $\scr C\subset_\fet\Sch_S$ if $\scr C$ is a full subcategory of $\Sch_S$ that contains $S$ and is closed under finite coproducts and finite étale extensions. Under these assumptions, we can form the $2$-category of spans $\Span(\scr C,\all,\fet)$, and the functor $\scr C^\op \into \Span(\scr C,\all,\fet)$ preserves finite products (Lemma~\ref{lem:Span-semiadditive}). 

\begin{definition}\label{dfn:normed-category}
	Let $S$ be a scheme and $\scr C\subset_\fet\Sch_S$. A \emph{normed $\infty$-category} over $\scr C$ is a functor
	\[
	\scr A\colon \Span(\scr C,\all,\fet) \to \Cat_\infty, \quad (X\stackrel f\leftarrow Y\stackrel p\rightarrow Z) \mapsto p_\otimes f^*,
	\]
	that preserves finite products. We say that $\scr A$ is \emph{presentably normed} if:
	\begin{enumerate}
		\item for every $X\in\scr C$, $\scr A(X)$ is presentable;
		\item for every finite étale morphism $h\colon Y\to X$, $h^*\colon \scr A(X)\to\scr A(Y)$ has a left adjoint $h_\sharp$;
		\item for every morphism $f\colon Y\to X$, $f^*\colon \scr A(X)\to\scr A(Y)$ preserves colimits;
		\item for every cartesian square
		\begin{tikzmath}
			\diagram{Y' & Y \\ X' & X \\};
			\arrows (11-) edge node[above]{$g$} (-12) (11) edge node[left]{$h'$} (21) (21-) edge node[below]{$f$} (-22) (12) edge node[right]{$h$} (22);
		\end{tikzmath}
		 with $h$ finite étale, the exchange transformation
		\[
		\Ex_\sharp^*\colon h'_\sharp g^* \to f^* h_\sharp\colon \scr A(Y)\to \scr A(X')
		\]
		is an equivalence;
		\item for every finite étale map $p\colon Y\to Z$, $p_\otimes\colon \scr A(Y)\to \scr A(Z)$ preserves sifted colimits;
		\item for every diagram
		\begin{tikzmath}
			\diagram{ U & \Weil_pU\times_ZY & \Weil_pU \\ & Y & Z\rlap, \\};
			\arrows (11-) edge[<-] node[above]{$e$} (-12) (12-) edge node[above]{$q$} (-13) (11) edge node[below left]{$h$} (22) (12) edge node[right]{$g$} (22) (22-) edge node[above]{$p$} (-23) (13) edge node[right]{$f$} (23);
		\end{tikzmath}
		 with $p$ and $h$ finite étale, the distributivity transformation
		\[
		\Dist_{\sharp\otimes}\colon f_\sharp q_\otimes e^* \to p_\otimes h_\sharp\colon \scr A(U)\to \scr A(Z)
		\]
		is an equivalence. 
	\end{enumerate}
	The notion of a \emph{nonunital} (presentably) normed $\infty$-category over $\scr C$ is defined in the same way, with the class of finite étale maps replaced by that of \emph{surjective} finite étale maps (and the additional assumption that $p$ is surjective in (5) and (6)).
\end{definition}

	By Proposition~\ref{prop:automatic-calg}, a normed $\infty$-category $\scr A\colon \Span(\scr C,\all,\fet)\to\Cat_\infty$ lifts uniquely to $\CAlg(\Cat_\infty)$. Explicitly, the $n$-ary tensor product in $\scr A(X)$ is
	\[
	\scr A(X)^{\times n} \simeq \scr A(X^{\amalg n}) \xrightarrow{\nabla_\otimes} \scr A(X), 
	\]
	where $\nabla\colon X^{\amalg n}\to X$ is the fold map. Moreover, if $\scr A$ is presentably normed, then each $\scr A(X)$ is presentably symmetric monoidal, i.e., its tensor product distributes over colimits. Indeed, condition (5) implies that $\otimes$ distributes over sifted colimits, and condition (6) applied to the diagram of fold maps
	\begin{tikzmath}
		\diagram{ X\amalg X^{\amalg n} & X^{\amalg n} \amalg X^{\amalg n} & X^{\amalg n} \\ & X\amalg X & X \\};
		\arrows (11-) edge[<-] (-12) (12-) edge (-13) (11) edge (22) (12) edge (22) (22-) edge (-23) (13) edge (23);
	\end{tikzmath}
	shows that $\otimes$ distributes over finite coproducts.

\begin{remark}
	One may regard the functors $h_\sharp$ from condition (2) as a new type of colimits, generalizing finite coproducts. With this interpretation, conditions (3) and (4) state that base change preserves colimits, and conditions (5) and (6) state that norms distribute over colimits.
\end{remark}

\begin{example}
	The functors
	\[
	\SH^\otimes,\; \SH^\otimes\circ \widehat\Pi_1^\et,\; \DM^\otimes,\; \SH^\otimes_\nc\colon \Span(\Sch,\all,\fet)\to\Cat_\infty
	\]
	constructed in \sect\ref{sub:coherence}, \sect\ref{sub:galois}, \sect\ref{sub:PST}, and \sect\ref{sub:nc-motives} are all presentably normed $\infty$-categories. In fact, for $\SH^\otimes$, $\DM^\otimes$, and $\SH_\nc^\otimes$, conditions (2), (4), and (6) of Definition~\ref{dfn:normed-category} hold for $h$ any smooth morphism (with a quasi-projectivity assumption in (6)).
\end{example}

\begin{example}
	For $n\geq 1$, $n$-effective and very $n$-effective motivic spectra form nonunital normed subcategories of $\SH^\otimes$ (see \sect\ref{sub:zero-slice}).
\end{example}

\begin{remark}\label{rmk:unital-subcategory}
	Let $\scr A$ be a normed $\infty$-category over $\scr C\subset_\fet\Sch_S$ and let $\scr B\subset\scr A$ be a nonunital normed subcategory. Then $\scr B$ is a normed subcategory of $\scr A$ if and only if $\1_S\in\scr B(S)$.
\end{remark}

\begin{remark}\label{rmk:polynomial}
	Let $\scr A$ be a presentably normed $\infty$-category over $\scr C\subset_\fet\Sch_S$ such that each $\scr A(X)$ is pointed, and let $p\colon Y\to Z$ be a finite étale map in $\scr C$. The proof of Proposition~\ref{prop:polynomial} shows that, if $p$ has degree $\leq n$, then $p_\otimes\colon \scr A(Y)\to\scr A(Z)$ is polynomial of degree $\leq n$.
\end{remark}

\begin{lemma}\label{lem:polynomial-extensions}
	Let $\scr C$ be a pointed $\infty$-category with finite colimits, $\scr C'\subset\scr C$ a full subcategory closed under binary sums, $\scr D$ a stable $\infty$-category, and $\scr D_{\geq 0}\subset \scr D$ the nonnegative part of a $t$-structure on $\scr D$. Let $f\colon\scr C\to\scr D$ be a functor such that:
	\begin{enumerate}
		\item $f$ is polynomial of degree $\leq n$ for some $n$;
		\item $f$ preserves simplicial colimits;
		\item $f(\scr C')\subset \scr D_{\geq 0}$.
	\end{enumerate}
	If $A\to B\to C$ is a cofiber sequence in $\scr C$ with $A,C\in\scr C'$, then $f(B)\in\scr D_{\geq 0}$.
\end{lemma}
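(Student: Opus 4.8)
The plan is to induct on the polynomial degree $n$ of $f$. The base cases $n\leq 0$ are immediate: if $n\leq -1$ then $f$ is the zero functor, and if $n=0$ then $\rm D_X f$ vanishes for every $X$, so the map $f(0)\to f(X)$ induced by $0\to X$ is an equivalence for all $X$; hence $f(B)\simeq f(0)\simeq f(A)\in\scr D_{\geq 0}$ since $A\in\scr C'$.

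Now suppose $n\geq 1$ and that the statement holds for functors of degree $\leq n-1$. By Remark~\ref{rmk:excisive}, using that $\scr D$ is stable and that $f$ preserves geometric realizations, $f$ is $n$-excisive; in particular its Goodwillie tower is finite and $f\simeq\rm P_n f$. Let $\rm D_n f:=\fib(\rm P_n f\to\rm P_{n-1}f)$ be the top Taylor layer, so that there is a cofiber sequence of functors $\rm D_n f\to f\to\rm P_{n-1}f$ and hence a cofiber sequence $\rm D_n f(B)\to f(B)\to\rm P_{n-1}f(B)$ in $\scr D$. Since $\scr D_{\geq 0}$ is closed under extensions, it suffices to show that $\rm D_n f(B)$ and $\rm P_{n-1}f(B)$ both lie in $\scr D_{\geq 0}$.

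For the layer, recall from Goodwillie calculus (e.g.\ \cite[\sect6.1]{HA}) that $\rm D_n f(X)\simeq(\operatorname{cr}_n f(X,\dots,X))_{h\Sigma_n}$, where the $n$-th cross-effect $\operatorname{cr}_n f\colon\scr C^n\to\scr D$ is symmetric and multilinear (being the $n$-th cross-effect of an $n$-excisive functor). A linear functor to a stable category carries cofiber sequences to cofiber sequences, so applying $\operatorname{cr}_n f$ one variable at a time to $A\to B\to C$ and iterating over all $n$ variables exhibits $\operatorname{cr}_n f(B,\dots,B)$ as a finite iterated extension of the objects $\operatorname{cr}_n f(\epsilon_1,\dots,\epsilon_n)$ with $\epsilon_i\in\{A,C\}$. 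Each of these is a direct summand of $f(\epsilon_1\vee\dots\vee\epsilon_n)$ by the cross-effect decomposition of $f$ on a finite coproduct, and $\epsilon_1\vee\dots\vee\epsilon_n\in\scr C'$ because $\scr C'$ is closed under binary sums; hence it lies in $\scr D_{\geq 0}$. Therefore $\operatorname{cr}_n f(B,\dots,B)\in\scr D_{\geq 0}$, and passing to $\Sigma_n$-homotopy orbits gives $\rm D_n f(B)\in\scr D_{\geq 0}$. The same computation with $B$ replaced by any $X'\in\scr C'$ shows $\rm D_n f(X')\in\scr D_{\geq 0}$.

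It remains to apply the inductive hypothesis to $\rm P_{n-1}f\simeq\cofib(\rm D_n f\to f)$. It is $(n-1)$-excisive, hence polynomial of degree $\leq n-1$ by Remark~\ref{rmk:excisive} again. Since $\operatorname{cr}_n f$ is, up to a shift, the total cofiber of the $n$-cube $S\mapsto f(\bigvee_{i\in S}(\ph))$, it preserves geometric realizations because $f$ does; hence so do $\rm D_n f(X)=(\operatorname{cr}_n f(X,\dots,X))_{h\Sigma_n}$ and $\rm P_{n-1}f=\cofib(\rm D_n f\to f)$, i.e.\ $\rm P_{n-1}f$ preserves simplicial colimits. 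Finally, for $X'\in\scr C'$ we have $\rm P_{n-1}f(X')=\cofib(\rm D_n f(X')\to f(X'))\in\scr D_{\geq 0}$, since both $\rm D_n f(X')$ and $f(X')$ are. Thus $\rm P_{n-1}f$ satisfies the hypotheses of the lemma in degree $n-1$, so $\rm P_{n-1}f(B)\in\scr D_{\geq 0}$, and the cofiber sequence above gives $f(B)\in\scr D_{\geq 0}$, completing the induction. The step to watch is the Goodwillie-theoretic input — identifying the top Taylor layer with the homotopy orbits of the multilinear $n$-th cross-effect, and checking that this construction preserves enough colimits to feed $\rm P_{n-1}f$ back into the induction — together with the mild point that $\scr D_{\geq 0}$ is stable under geometric realizations, and hence under homotopy orbits by finite groups.
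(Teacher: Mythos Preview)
Your approach is genuinely different from the paper's and is correct in outline, but it leans on Goodwillie calculus in a way that does not quite match the stated hypotheses. The construction of the Taylor tower in \cite[\sect 6.1]{HA} requires $\scr D$ to be differentiable (in particular to admit sequential colimits commuting with finite limits), whereas the lemma only assumes $\scr D$ stable. You could instead \emph{define} $\rm D_n f(X)=(\operatorname{cr}_n f(X,\dots,X))_{h\Sigma_n}$ and $\rm P_{n-1}f=\cofib(\rm D_nf\to f)$ directly, but then the claim that $\rm P_{n-1}f$ is polynomial of degree $\leq n-1$ needs an argument (for instance, that both $\operatorname{cr}_n$ and $\operatorname{cr}_{n+1}$ of the cofiber vanish), which is not in your writeup. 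In the paper's application (Proposition~\ref{prop:normed-subcategory}) the target is presentable, so this is harmless there, but as a proof of the lemma as stated the step ``$\rm P_{n-1}f$ is $(n-1)$-excisive'' is a gap.

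The paper's proof is shorter and avoids all of this. It writes $C\simeq\lvert A^{\vee\bullet}\vee B\rvert$ via the bar construction (Lemma~\ref{lem:bar-construction}), uses the split cofiber sequence to write $f(A^{\vee k}\vee B)\simeq f(B)\oplus\rm D_{A^{\vee k}}(f)(B)$, and applies the induction hypothesis directly to each $\rm D_{A^{\vee k}}(f)$: these are polynomial of degree $\leq n-1$ by definition and visibly satisfy (2) and (3) because $A\in\scr C'$ and $\scr C'$ is closed under binary sums. Applying $\tau_{<0}$ to the resulting colimit diagram kills the $\rm D$-summands and $f(C)$, leaving the constant simplicial object $\tau_{<0}f(B)$ with colimit $0$. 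No Taylor tower, no homotopy orbits, no classification of homogeneous functors---just Definition~\ref{def:polynomial} and the bar resolution.
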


\begin{proof}
	We proceed by induction on $n$. If $n=-1$, then $f$ is the zero functor and the result is trivial.
	By Lemma~\ref{lem:bar-construction}, we have a simplicial colimit diagram
	\begin{tikzmath}
		\def\colsep{.9em}
		\diagram{
		\dotsb & A\vee A\vee B & A\vee B & B & C\rlap. \\
		};
		\arrows
		(11-) edge[-top,vshift=3*\dbl] (-12) edge[-mid,vshift=\dbl] (-12) edge[-mid,vshift=-\dbl] (-12) edge[-bot,vshift=-3*\dbl] (-12)
		(12-) edge[-top,vshift=2*\dbl] (-13) edge[-mid] (-13) edge[-bot,vshift=-2*\dbl] (-13)
		(13-) edge[-top,vshift=\dbl] (-14) edge[-bot,vshift=-\dbl] (-14) (14-) edge (-15);
	\end{tikzmath}
	Since $f$ preserves simplicial colimits, there is an induced colimit diagram
	\begin{tikzequation}\label{eqn:extension}
		\def\colsep{.9em}
		\diagram{
		\dotsb & \rm D_{A\vee A}(f)(B)\vee f(B) & \rm D_A(f)(B)\vee f(B) & f(B) & f(C)\rlap. \\
		};
		\arrows
		(11-) edge[-top,vshift=3*\dbl] (-12) edge[-mid,vshift=\dbl] (-12) edge[-mid,vshift=-\dbl] (-12) edge[-bot,vshift=-3*\dbl] (-12)
		(12-) edge[-top,vshift=2*\dbl] (-13) edge[-mid] (-13) edge[-bot,vshift=-2*\dbl] (-13)
		(13-) edge[-top,vshift=\dbl] (-14) edge[-bot,vshift=-\dbl] (-14) (14-) edge (-15);
	\end{tikzequation}
	For any $X\in \scr C'$, the functor $\rm D_X(f)\colon\scr C\to\scr D$ is polynomial of degree $\leq n-1$ and satisfies conditions (2) and (3), hence $\rm D_X(f)(B)\in \scr D_{\geq 0}$ by the induction hypothesis. Applying the truncation functor $\tau_{<0}$ to~\eqref{eqn:extension}, we obtain a colimit diagram in $\scr D_{<0}$:
	\begin{tikzmath}
		\def\colsep{.9em}
		\diagram{
		\dotsb & \tau_{<0}f(B) & \tau_{<0}f(B) & \tau_{<0}f(B) & 0\rlap. \\
		};
		\arrows
		(11-) edge[-top,vshift=3*\dbl] (-12) edge[-mid,vshift=\dbl] (-12) edge[-mid,vshift=-\dbl] (-12) edge[-bot,vshift=-3*\dbl] (-12)
		(12-) edge[-top,vshift=2*\dbl] (-13) edge[-mid] (-13) edge[-bot,vshift=-2*\dbl] (-13)
		(13-) edge[-top,vshift=\dbl] (-14) edge[-bot,vshift=-\dbl] (-14) (14-) edge (-15);
	\end{tikzmath}
	It follows that $\tau_{<0}f(B)=0$, i.e., $f(B)\in\scr D_{\geq 0}$.
\end{proof}

\begin{proposition}\label{prop:normed-subcategory}
	Let $\scr C\subset_\fet\Sch_S$ and let $\scr A\colon \Span(\scr C,\all,\fet)\to\Cat_\infty$ be a presentably normed $\infty$-category (resp.\ a presentably normed $\infty$-category such that $\scr A(X)$ is stable for every $X\in\scr C$).
	For each $X\in\scr C$, let $\scr B_0(X)\subset \scr A(X)$ be a collection of objects, and let $\scr B(X)\subset\scr A(X)$ be the full subcategory generated by $\scr B_0(X)$ under colimits (resp.\ under colimits and extensions).
	Suppose that the following conditions hold:
	\begin{enumerate}
		\item For every morphism $f\colon Y\to X$, we have $f^*(\scr B_0(X)) \subset \scr B(Y)$.
		\item For every finite étale morphism $h\colon Y\to X$, we have $h_\sharp(\scr B_0(Y))\subset \scr B(X)$.
		\item For every surjective finite étale morphism $p\colon Y\to Z$, we have $p_\otimes(\scr B_0(Y))\subset \scr B(Z)$.
		\item For every $X\in\scr C$ and $A,B\in\scr B_0(X)$, we have $A\otimes B\in \scr B(X)$.
	\end{enumerate}
	Then $\scr B$ is a nonunital normed subcategory of $\scr A$.
	Moreover, if $\1_S\in\scr B(S)$, then $\scr B$ is a normed subcategory of $\scr A$.
\end{proposition}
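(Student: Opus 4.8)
The plan is to show that $\scr B$ is closed under each of the operations $f^*$ (for an arbitrary morphism $f$ of $\scr C$), $h_\sharp$ (for $h$ finite étale), $\ph\otimes\ph$, and $p_\otimes$ (for $p$ \emph{surjective} finite étale). Granting this, $\scr B$ is a nonunital normed subcategory of $\scr A$, since $\scr A$ takes the span $X\xleftarrow f Y\xrightarrow p Z$ with $p$ surjective finite étale to $p_\otimes f^*$; and if in addition $\1_S\in\scr B(S)$, then $\scr B$ is a (unital) normed subcategory by Remark~\ref{rmk:unital-subcategory}: an arbitrary finite étale map is a surjective one followed by a clopen immersion $i$, and $i_\otimes$ preserves $\scr B$ once $\1_S\in\scr B(S)$ (pull $\1_S$ back along the structure map of the complementary component and use $f^*$-closure). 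Only the $p_\otimes$-closure is serious.

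First I would dispose of the three easy closures in one stroke. Each of $f^*$ (Definition~\ref{dfn:normed-category}(3)), $h_\sharp$ (a left adjoint, Definition~\ref{dfn:normed-category}(2)), and $A\otimes(\ph)$ with $A$ fixed (because $\scr A(X)$ is presentably symmetric monoidal) preserves colimits, hence — all categories in sight being stable — is exact and in particular preserves cofiber sequences. For any such functor $F$, the full subcategory $F^{-1}(\scr B)$ of the source is closed under colimits and extensions, and by hypotheses (1), (2), (4) it contains the relevant $\scr B_0$; therefore it contains $\scr B$, i.e.\ $F(\scr B)\subseteq\scr B$. (For the tensor product one runs this twice: first with $A\in\scr B_0(X)$ fixed, then with the other factor in $\scr B(X)$ fixed.) Combining $f^*$-closure with $h_\sharp$ along clopen immersions shows that $\scr B$ splits as a product over any finite clopen decomposition of the base, which reduces the remaining claim to the case where $\deg p$ is constant, say equal to $n$.

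The heart of the matter is $p_\otimes(\scr B(Y))\subseteq\scr B(Z)$ for $p\colon Y\to Z$ surjective finite étale of constant degree $n$, proved by induction on $n$; the case $n=1$ is immediate since then $p$ is an isomorphism and $p_\otimes\simeq(p^{-1})^*$. For $n\geq2$, let $\scr B^{\mathrm{fc}}(Y)$ be the closure of $\scr B_0(Y)$ under finite coproducts. Expanding $p_\otimes$ on a finite coproduct of objects of $\scr B_0(Y)$ by iterating the explicit binary distributivity law of Corollary~\ref{cor:distrib-explicit}, each resulting wedge summand is either $p_\otimes$ of an object of $\scr B_0(Y)$, hence in $\scr B(Z)$ by hypothesis (3), or of the form $c_\sharp\bigl(q_{l\otimes}e_l^*(G)\wedge q_{r\otimes}e_r^*(F)\bigr)$ with $G\in\scr B_0(Y)$, $F\in\scr B^{\mathrm{fc}}(Y)$, $e_l,e_r$ morphisms of $\scr C$, $c$ finite étale in $\scr C$, and $q_l,q_r$ surjective finite étale maps in $\scr C$ of positive degrees summing to $n$ (so each has degree $<n$); here one checks that $C$, $L$, $R$ lie in $\scr C$, being finite étale over $S$. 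Then $e_l^*(G)$ and $e_r^*(F)$ lie in $\scr B$ by hypothesis (1) and the easy closures, $q_{l\otimes}e_l^*(G)$ and $q_{r\otimes}e_r^*(F)$ lie in $\scr B$ by the inductive hypothesis, and smashing and applying $c_\sharp$ stays inside $\scr B$; this yields $p_\otimes(\scr B^{\mathrm{fc}}(Y))\subseteq\scr B(Z)$.

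It remains to extend this from $\scr B^{\mathrm{fc}}(Y)$ to $\scr B(Y)$, which is generated by $\scr B^{\mathrm{fc}}(Y)$ under sifted colimits and extensions (no further finite coproducts being needed, as $\scr B^{\mathrm{fc}}(Y)$ already absorbs them). The class $\{E:p_\otimes(E)\in\scr B(Z)\}$ is stable under sifted colimits because $p_\otimes$ preserves them and $\scr B(Z)$ is colimit-closed. For extensions I would use that $p_\otimes$ is polynomial of degree $\leq n$ (Proposition~\ref{prop:polynomial}) together with Lemma~\ref{lem:polynomial-extensions}, applied with $\scr D_{\geq0}=\scr B(Z)$ — the connective part of an accessible $t$-structure on $\scr A(Z)$, since $\scr B(Z)$ is presentable (take $\scr B_0(Z)$ essentially small, harmlessly) and closed under colimits, extensions and suspensions. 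Concretely, Lemma~\ref{lem:polynomial-extensions} gives: if $A\to B\to C$ is a cofiber sequence with $A,C$ in a colimit-closed subcategory on which $p_\otimes$ takes values in $\scr B(Z)$, then $p_\otimes(B)\in\scr B(Z)$ as well. Alternating this with closure under sifted colimits, a transfinite induction exhausting $\scr B(Y)$ finishes the proof. I expect this last passage to be the main obstacle: because $p_\otimes$ is neither colimit-preserving nor exact, one cannot conclude by a naive ``preimage is closed'' argument and must instead feed the polynomial estimate of Lemma~\ref{lem:polynomial-extensions} into a transfinite induction matched to the generation of $\scr B(Y)$. The distributivity bookkeeping — seeing that every cross-term is a strictly lower-degree norm of a pullback of a genuine generator — is the other delicate point, and is precisely what makes hypotheses (1) and (3) on $\scr B_0$ (rather than their analogues for $\scr B$) enough.
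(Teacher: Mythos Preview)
Your proposal is correct and follows essentially the same route as the paper: bootstrap the easy closures ($f^*$, $h_\sharp$, $\otimes$) from colimit-preservation, then prove $p_\otimes$-closure by induction on the degree using Corollary~\ref{cor:distrib-explicit} for coproducts and Lemma~\ref{lem:polynomial-extensions} for extensions. Two organizational points where the paper is tighter are worth noting.

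First, the paper avoids your transfinite alternation entirely. Rather than first handling $\scr B^{\mathrm{fc}}(Y)$ and then climbing, it shows directly that the class $\{E\in\scr A(Y):p_\otimes(E)\in\scr B(Z)\}$ is closed under sifted colimits, the initial object (distributivity with $h=\emptyset\to Y$), and binary sums (Corollary~\ref{cor:distrib-explicit}, using that for $E,F$ already in $\scr B(Y)$ the cross term $c_\sharp(q_{l\otimes}e_l^*(E)\wedge q_{r\otimes}e_r^*(F))$ lands in $\scr B(Z)$ by induction and the easy closures). Since colimits $=$ sifted colimits $+$ finite coproducts (Lemma~\ref{lem:sifted+coprod}), this gives closure under all colimits in one stroke; extensions are then a single further step. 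Your iteration works, but is unnecessary.

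Second, your reduction ``take $\scr B_0(Z)$ essentially small, harmlessly'' to make $\scr B(Z)$ the nonnegative part of a $t$-structure is a bit glib: the statement allows $\scr B_0$ large, and reducing to a small generating set requires care across all $X$ simultaneously. The paper sidesteps this: given a single cofiber sequence $A\to B\to C$ with $A,C\in(p_\otimes)^{-1}(\scr B(Z))$, it picks a \emph{small} $\scr D$ containing $A,C$ and closed under binary sums, and lets $\scr E\subset\scr B(Z)$ be generated by $p_\otimes(\scr D)$ under colimits and extensions; then $\scr E$ is a $t$-structure by \cite[Proposition~1.4.4.11]{HA}, and Lemma~\ref{lem:polynomial-extensions} gives $p_\otimes(B)\in\scr E\subset\scr B(Z)$. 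This is a local fix that needs no global smallness hypothesis.

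Finally, your opening paragraph re-proves Remark~\ref{rmk:unital-subcategory} rather than just citing it; the clopen-immersion analysis is correct but redundant.
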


\begin{proof}
	The last statement follows from Remark~\ref{rmk:unital-subcategory}.
	Since $f^*$ and $h_\sharp$ preserve colimits (and hence extensions), we deduce from (1) and (2) that $f^*(\scr B(X))\subset \scr B(Y)$ and $h_\sharp(\scr B(Y))\subset \scr B(X)$. 
	Given $X_1,\dotsc,X_n\in\scr C$, the equivalence
	\[
	\scr A(X_1\amalg \dotsb\amalg X_n) \to \scr A(X_1)\times\dotsb\times\scr A(X_n)
	\]
	has inverse $(A_1,\dotsc,A_n)\mapsto i_{1\sharp}A_1\amalg \dotsb \amalg i_{n\sharp}A_n$, and it follows that $\scr B\colon \scr C^\op\to\Cat_\infty$ preserves finite products. 
	Since the tensor product in $\scr A(X)$ preserves colimits in each variable, condition (4) implies that $\scr B(X)$ is closed under binary tensor products.
	
	Let $p\colon Y\to Z$ be a surjective finite étale morphism. We need to show that $p_\otimes(\scr B(Y))\subset \scr B(Z)$.
	Since $Z$ is quasi-compact, $p$ has degree $\leq n$ for some integer $n$, and we proceed by induction on $n$. If $n=1$, then $p$ is the identity and the result is trivial. By (3), $(p_\otimes)^{-1}(\scr B(Z))$ contains $\scr B_0(Y)$. We claim that $(p_\otimes)^{-1}(\scr B(Z))$ is closed under colimits in $\scr A(Y)$. For sifted colimits, this follows from the assumption that $p_\otimes$ preserves sifted colimits. Since $p$ is surjective, $\Weil_p(\emptyset)$ is empty and the distributivity law with $h\colon \emptyset\to Y$ implies that $p_\otimes(\emptyset)\simeq\emptyset$, so that $\emptyset \in (p_\otimes)^{-1}(\scr B(Z))$.
	It remains to show that $(p_\otimes)^{-1}(\scr B(Z))$ is closed under binary sums. This follows from the explicit form of the distributivity law (Corollary~\ref{cor:distrib-explicit}) and the inductive hypothesis (note that $q_l$ and $q_r$ have degree $<n$, since they are both surjective), using what we have already established about $f^*$, $h_\sharp$, and binary tensor products.
	
	In case each $\scr A(X)$ is stable and $\scr B(X)$ is the closure of $\scr B_0(X)$ under colimits and extensions, we further need to show that $(p_\otimes)^{-1}(\scr B(Z))$ is closed under extensions. 
	Given a cofiber sequence $A\to B\to C$ with $A,C\in (p_\otimes)^{-1}(\scr B(Z))$, let $\scr D\subset (p_\otimes)^{-1}(\scr B(Z))$ be a small subcategory containing $A$ and $C$ and closed under binary sums, and let $\scr E\subset \scr B(Z)$ be the full subcategory generated under colimits and extensions by $p_\otimes(\scr D)$.
	By \cite[Proposition 1.4.4.11]{HA}, $\scr E$ is the nonnegative part of a $t$-structure on $\scr A(Z)$. Since $p_\otimes\colon \scr A(Y)\to \scr A(Z)$ is a polynomial functor of degree $\leq n$ by Remark~\ref{rmk:polynomial}, Lemma~\ref{lem:polynomial-extensions} implies that $p_\otimes(B)\in \scr E\subset \scr B(Z)$, as desired.
\end{proof}

We now discuss localizations of normed $\infty$-categories. If $L\colon \scr A\to\scr A$ is a localization functor \cite[\sect 5.2.7]{HTT}, we say that a morphism $f$ in $\scr A$ is an \emph{$L$-equivalence} if $L(f)$ is an equivalence.

\begin{definition}
	Let $\scr C\subset_\fet\Sch_S$ and let $\scr A\colon \Span(\scr C,\all,\fet)\to\Cat_\infty$ be a normed $\infty$-category over $\scr C$. A family of localization functors $L_X\colon \scr A(X)\to\scr A(X)$ for $X\in\scr C$ is called \emph{compatible with norms} if $L$-equivalences form a normed subcategory of $\Fun(\Delta^1,\scr A)$, or equivalently if:
	\begin{enumerate}
		\item the functors $f^*$ and $p_\otimes$ preserve $L$-equivalences;
		\item under the equivalence $\scr A(X\amalg Y)\simeq \scr A(X)\times\scr A(Y)$, we have $L_{X\amalg Y}=L_X\times L_Y$.
	\end{enumerate}
\end{definition}

\begin{proposition}\label{prop:compatible-with-norms}
	Let $\scr C\subset_\fet\Sch_S$, let $\scr A\colon \Span(\scr C,\all,\fet)\to\Cat_\infty$ be a normed $\infty$-category over $\scr C$, and let $L_X\colon \scr A(X)\to\scr A(X)$, $X\in\scr C$, be a family of localization functors that is compatible with norms. Then the subcategories $L_X\scr A(X)\subset \scr A(X)$ assemble into a normed $\infty$-category $L\scr A$ over $\scr C$ and the functors $L_X$ assemble into a natural transformation
	\[
	L\colon \scr A\to L\scr A\colon \Span(\scr C,\all,\fet)\to\Cat_\infty.
	\]
\end{proposition}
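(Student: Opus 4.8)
The plan is to imitate once more the construction of $\H_\pt^\otimes$ in \sect\ref{sub:coherence}: package the pointwise localizations as a single functor into the $\infty$-category $\scr M\Cat_\infty$ of $\infty$-categories equipped with a marked class of arrows, and then postcompose with the partial left adjoint that formally inverts the marked arrows.

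First I would set $W_X$ to be the class of $L_X$-equivalences in $\scr A(X)$, for each $X\in\scr C$. Condition~(1) in the definition of ``compatible with norms'' says precisely that for every $1$-morphism $(X\stackrel f\leftarrow Y\stackrel p\rightarrow Z)$ of $\Span(\scr C,\all,\fet)$ the functor $p_\otimes f^*\colon\scr A(X)\to\scr A(Z)$ carries $W_X$ into $W_Z$, and condition~(2) says that $W_{X\amalg Y}$ corresponds to $W_X\times W_Y$ under $\scr A(X\amalg Y)\simeq\scr A(X)\times\scr A(Y)$. The forgetful functor $\scr M\Cat_\infty\to\Cat_\infty$ is, by its very construction, a cocartesian fibration whose fibres are posets (it is pulled back from the universal cocartesian fibration in posets $\scr E\to\Pos$), so lifting $\scr A$ along it is a \emph{property} rather than extra structure: it suffices to check objectwise, and on $1$-morphisms, that the marked classes are respected, which is what (1) and (2) provide. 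As in \sect\ref{sub:coherence} this yields a finite-product-preserving functor
\[
\widetilde{\scr A}\colon \Span(\scr C,\all,\fet)\longrightarrow\scr M\Cat_\infty,\qquad X\longmapsto(\scr A(X),W_X),
\]
lifting $\scr A$.

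Then I would invoke the partial left adjoint $\mathbb L$ to $\iota\colon\Cat_\infty\to\scr M\Cat_\infty$, $\scr D\mapsto(\scr D,\text{equivalences})$ (\cite[Lemma 5.2.4.1]{HTT}); it is defined on every pair $(\scr D,W)$ for which the localization $\scr D[W^{-1}]$ exists, with $\mathbb L(\scr D,W)=\scr D[W^{-1}]$, the universal property of localizations (\cite[Proposition 5.2.7.12]{HTT}) supplying the equivalence $\Map_{\scr M\Cat_\infty}((\scr D,W),\iota\scr E)\simeq\Map_{\Cat_\infty}(\scr D[W^{-1}],\scr E)$. Since each $L_X$ is a genuine localization functor, $\scr A(X)[W_X^{-1}]\simeq L_X\scr A(X)$ and the localization map is $L_X$ itself (\cite[Proposition 5.2.7.12]{HTT}), so $\widetilde{\scr A}$ lands in the domain of $\mathbb L$, and I would define
\[
L\scr A\;:=\;\mathbb L\circ\widetilde{\scr A}\colon\Span(\scr C,\all,\fet)\to\Cat_\infty,\qquad X\mapsto L_X\scr A(X).
\]
The unit of the partial adjunction gives a natural transformation $\widetilde{\scr A}\Rightarrow\iota\mathbb L\widetilde{\scr A}=\iota(L\scr A)$ in $\Fun(\Span(\scr C,\all,\fet),\scr M\Cat_\infty)$; postcomposing with the forgetful functor (under which $\iota$ becomes the identity) produces the natural transformation $L\colon\scr A\Rightarrow L\scr A$, which on each $X$ is $L_X$. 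Finally, to see that $L\scr A$ is a normed $\infty$-category one checks it preserves finite products: by Lemma~\ref{lem:Span-semiadditive} this reduces to $L\scr A(\emptyset)\simeq\ast$ (immediate, since $\scr A(\emptyset)\simeq\ast$) and $L\scr A(X\amalg Y)\simeq L\scr A(X)\times L\scr A(Y)$, which follows from condition~(2): under $\scr A(X\amalg Y)\simeq\scr A(X)\times\scr A(Y)$ the $L_{X\amalg Y}$-local objects are exactly the pairs of an $L_X$-local and an $L_Y$-local object.

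The only step with any content is the first one — assembling the objectwise $L_X$ into the functor $\widetilde{\scr A}$ — and the entire purpose of the $\scr M\Cat_\infty$-formalism is that marking classes of arrows incurs no higher coherence, so this is no harder than repeating the \sect\ref{sub:coherence} bookkeeping. Everything after that is formal manipulation of partial adjunctions and finite products, so I do not anticipate any real obstacle. (The nonunital case is handled verbatim, replacing finite étale maps by surjective finite étale maps throughout.)
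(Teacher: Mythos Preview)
Your proof is correct but takes a different route from the paper. The paper invokes Proposition~\ref{prop:localization-abstract} directly: it views $\scr A$ as a cocartesian fibration over $\Span(\scr C,\all,\fet)$, restricts to the full subcategory of fiberwise local objects, and checks by hand (using that the pushforward functors preserve $L$-equivalences) that this restriction is again a cocartesian fibration and that the inclusion admits a relative left adjoint. Your approach instead recycles the $\scr M\Cat_\infty$ machinery from \sect\ref{sub:coherence}: lift $\scr A$ to marked $\infty$-categories by marking the $L$-equivalences (a property, since the fibers of $\scr M\Cat_\infty\to\Cat_\infty$ are posets), then postcompose with the partial left adjoint inverting the marks. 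Both arguments are short and valid; yours is pleasingly uniform with the constructions elsewhere in the paper, while the paper's route via Proposition~\ref{prop:localization-abstract} packages the result in the language of relative adjunctions, which feeds more directly into Corollary~\ref{cor:localization-sections} and the applications in Sections~\ref{sec:localization} and~\ref{sec:slices}.
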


\begin{proof}
	The existence of the functor $L\scr A\colon \Span(\scr C,\all,\fet)\to\Cat_\infty$ and the natural transformation $L$ follows from Proposition~\ref{prop:localization-abstract}.
	The fact that $L_{X\amalg Y}=L_X\times L_Y$ implies that $L\scr A$ preserves finite products, so that it is a normed $\infty$-category.
\end{proof}

\begin{proposition}\label{prop:normed-localization}
	Let $\scr C\subset_\fet\Sch_S$ and let $\scr A\colon \Span(\scr C,\all,\fet)\to\Cat_\infty$ be a presentably normed $\infty$-category. For each $X\in\scr C$, let $W(X)\subset \Fun(\Delta^1,\scr A(X))$ be a class of morphisms, and let $\overline W(X)$ be the strong saturation of $W(X)$. Suppose that the following conditions hold:
	\begin{enumerate}
		\item For every morphism $f\colon Y\to X$, we have $f^*(W(X)) \subset \overline W(Y)$.
		\item For every finite étale morphism $h\colon Y\to X$, we have $h_\sharp(W(Y))\subset \overline W(X)$.
		\item For every surjective finite étale morphism $p\colon Y\to Z$, we have $p_\otimes(W(Y))\subset \overline W(Z)$.
		\item For every $X\in\scr C$ and $A\in\scr A(X)$, we have $\id_A\otimes W(X)\subset \overline W(X)$.
	\end{enumerate}
	  Then $\overline W$ is a normed subcategory of $\Fun(\Delta^1,\scr A)$.
\end{proposition}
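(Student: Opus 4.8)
The plan is to run the argument of Proposition~\ref{prop:normed-subcategory} inside the arrow categories. Here $\Fun(\Delta^1,\scr A)$ is regarded as a presentably normed $\infty$-category, with $\Fun(\Delta^1,\scr A)(X)=\Fun(\Delta^1,\scr A(X))$ and all structure functors ($f^*$, $h_\sharp$, $p_\otimes$, $\otimes$) acting levelwise; a class $\overline W(X)\subset\Fun(\Delta^1,\scr A(X))$ of objects is the same as a strongly saturated class of morphisms of $\scr A(X)$, and that $\overline W$ be a normed subcategory means (cf.\ the definition of compatibility with norms preceding Proposition~\ref{prop:compatible-with-norms}): (a) $f^*(\overline W(X))\subset\overline W(Y)$ for all $f\colon Y\to X$; (b) $h_\sharp(\overline W(Y))\subset\overline W(X)$ for $h$ finite étale; (c) $p_\otimes(\overline W(Y))\subset\overline W(Z)$ for $p$ finite étale; (d) $\overline W(X\amalg Y)=\overline W(X)\times\overline W(Y)$ under $\scr A(X\amalg Y)\simeq\scr A(X)\times\scr A(Y)$. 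I will repeatedly use that, for a colimit-preserving functor $F$, the preimage $F^{-1}(T)$ of a strongly saturated class $T$ is strongly saturated ($F$ preserves the equivalences, preserves composition so reflects $2$-out-of-$3$, and preserves colimits in arrow categories), and that a finite product of strongly saturated classes is strongly saturated. The one difference from Proposition~\ref{prop:normed-subcategory} is that $\overline W$ is a strong saturation, not merely a colimit-closure, so $2$-out-of-$3$ enters the bookkeeping; this is harmless — it is preserved by every composition-preserving functor — and in particular no analogue of Lemma~\ref{lem:polynomial-extensions} is needed.

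For (a) and (b): $f^*$ preserves colimits (Definition~\ref{dfn:normed-category}(3)) and $h_\sharp$ is a left adjoint, so $(f^*)^{-1}(\overline W(Y))$ and $(h_\sharp)^{-1}(\overline W(X))$ are strongly saturated and contain $W(X)$ resp.\ $W(Y)$ by hypotheses (1) and (2), hence contain $\overline W(X)$ resp.\ $\overline W(Y)$. For (d): with $i\colon X\to X\amalg Y$, $j\colon Y\to X\amalg Y$ the summand inclusions, $(i^*,j^*)$ is the given equivalence and $i_\sharp A=(A,\emptyset)$, $j_\sharp B=(\emptyset,B)$; applying (a) to $i,j$ gives $\overline W(X\amalg Y)\subset\overline W(X)\times\overline W(Y)$, while for $\phi\in\overline W(X)$, $\psi\in\overline W(Y)$ the morphism $(\phi,\psi)$ is the coproduct $i_\sharp(\phi)\vee j_\sharp(\psi)$, which lies in $\overline W(X\amalg Y)$ by (b) and the closure of strongly saturated classes under binary coproducts. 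Finally, the norm along a clopen immersion $\iota\colon Z'\to Z$, $Z=Z'\amalg Z''$, satisfies $\iota^*\iota_\otimes\simeq\id$ and has equivalence-valued $\scr A(Z'')$-component (by base change, since $\iota$ pulls back to $\id_{Z'}$ along itself and to $\emptyset\to Z''$ along $Z''\to Z$), so $\iota_\otimes$ preserves $\overline W$ by (d); factoring a general finite étale $p$ as $\iota\circ p'$ with $p'$ surjective reduces (c) to the case of surjective $p$.

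So let $p\colon Y\to Z$ be surjective finite étale. As $Z$ is quasi-compact, $p$ has degree $\leq n$ for some $n$; we induct on $n$, the cases $n\leq 1$ being trivial. Put $\scr D=\overline W(Y)\cap(p_\otimes)^{-1}(\overline W(Z))$. By hypothesis (3), $W(Y)\subset\scr D$, so it suffices to show $\scr D$ is strongly saturated, as then $\overline W(Y)\subset\scr D\subset(p_\otimes)^{-1}(\overline W(Z))$. It contains the equivalences and is closed under $2$-out-of-$3$ (both intersectands are). By Lemma~\ref{lem:sifted+coprod}, closure of $\scr D$ under small colimits in $\Fun(\Delta^1,\scr A(Y))$ reduces to closure under sifted colimits — immediate from the preservation of sifted colimits by $p_\otimes$ (Definition~\ref{dfn:normed-category}(5)) and the closure of $\overline W(Y),\overline W(Z)$ under colimits — and under binary coproducts. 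For the latter, take $\phi,\psi\in\scr D$; Corollary~\ref{cor:distrib-explicit}, which holds in any presentably normed $\infty$-category, exhibits $p_\otimes(\phi\vee\psi)$ as the coproduct of $p_\otimes(\phi)$, $p_\otimes(\psi)$ and $c_\sharp\bigl(q_{l\otimes}e_l^*(\phi)\otimes q_{r\otimes}e_r^*(\psi)\bigr)$, where $c\colon C\to Z$, $q_l\colon L\to C$, $q_r\colon R\to C$ are finite étale, $q_l,q_r$ moreover surjective of degree $<n$, and $C,L,R\in\scr C$ (they are finite étale over $Z$ resp.\ $Y$). The first two summands lie in $\overline W(Z)$ because $\phi,\psi\in(p_\otimes)^{-1}(\overline W(Z))$. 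For the third, $\phi,\psi\in\overline W(Y)$ gives $e_l^*(\phi)\in\overline W(L)$ and $e_r^*(\psi)\in\overline W(R)$ by (1), then $q_{l\otimes}e_l^*(\phi),\,q_{r\otimes}e_r^*(\psi)\in\overline W(C)$ by the inductive hypothesis, then their tensor lies in $\overline W(C)$ — using (4), the distributivity of $\otimes$ over colimits, $2$-out-of-$3$ and the symmetry of $\otimes$ — and finally $c_\sharp$ of it lies in $\overline W(Z)$ by (b). Hence $p_\otimes(\phi\vee\psi)\in\overline W(Z)$, and as $\overline W(Y)$ is closed under binary coproducts, $\phi\vee\psi\in\scr D$. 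This closes the induction and finishes the proof.

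The main obstacle is this binary-coproduct step: the distributivity formula feeds back the pullbacks $e_l^*(\phi)$, $e_r^*(\psi)$, which one can keep inside $\overline W$ only when $\phi,\psi$ already lie in $\overline W(Y)$ — whence the need to work with $\scr D=\overline W(Y)\cap(p_\otimes)^{-1}(\overline W(Z))$ instead of $(p_\otimes)^{-1}(\overline W(Z))$, exactly as in the proof of Proposition~\ref{prop:normed-subcategory}. A secondary point requiring care is verifying that the auxiliary data $C,L,R,c,q_l,q_r$ coming out of Corollary~\ref{cor:distrib-explicit} satisfy the hypotheses (membership in $\scr C$, being surjective finite étale of smaller degree) needed to invoke (1), (2) and the inductive hypothesis.
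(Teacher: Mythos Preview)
Your proof is correct and follows essentially the same approach as the paper. The paper's proof is a single sentence: ``The proof is exactly the same as that of Proposition~\ref{prop:normed-subcategory}, using that $\overline W(X)$ is generated under 2-out-of-3 and colimits by $W(X)$ and $\id_A$ for $A\in\scr A(X)$ (Lemma~\ref{lem:strong-saturation}).'' You have unpacked this in full, and your use of the intersection $\scr D=\overline W(Y)\cap(p_\otimes)^{-1}(\overline W(Z))$ is a careful way to handle the binary-coproduct step (the paper's proof of Proposition~\ref{prop:normed-subcategory} states the stronger claim that $(p_\otimes)^{-1}(\scr B(Z))$ itself is closed under colimits, which at the distributivity step tacitly uses that the inputs lie in $\scr B(Y)$---your formulation makes this explicit). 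Your reduction from general finite \'etale $p$ to surjective $p$ via the clopen factorization is also fine; the paper would instead invoke Remark~\ref{rmk:unital-subcategory}, noting that $\overline W$ automatically contains the unit $\id_{\1}$.
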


\begin{proof}
	The proof is exactly the same as that of Proposition~\ref{prop:normed-subcategory}, using that $\overline W(X)$ is generated under 2-out-of-3 and colimits by $W(X)$ and $\id_A$ for $A\in\scr A(X)$ (Lemma~\ref{lem:strong-saturation}).
\end{proof}

If $\scr A$ is a presentable $\infty$-category and $\scr B\subset \scr A$ is an accessible full subcategory closed under colimits,
 we can form the cofiber sequence
\[
\scr B\into \scr A \to \scr A/\scr B
\]
in $\Pr^\L$. The functor $\scr A\to\scr A/\scr B$ has a fully faithful right adjoint identifying $\scr A/\scr B$ with the subcategory of objects $A\in\scr A$ such that $\Map(B,A)\simeq *$ for all $B\in\scr B$. We denote by $\L^0_\scr B\colon \scr A\to\scr A$ the corresponding localization functor.

\begin{definition}\label{def:normed-ideal}
	Let $\scr A$ be a nonunital normed $\infty$-category over $\scr C\subset_\fet\Sch_S$. A \emph{normed ideal} in $\scr A$ is a nonunital normed full subcategory $\scr B\subset\scr A$ such that, for every $X\in\scr C$, $\scr B(X)\subset\scr A(X)$ is a tensor ideal (i.e., for every $A\in\scr A(X)$ and $B\in\scr B(X)$, we have $A\otimes B\in \scr B(X)$).
\end{definition}

\begin{corollary}\label{cor:normed-nullification}
	Let $\scr C\subset_\fet\Sch_S$, let $\scr A\colon \Span(\scr C,\all,\fet)\to\Cat_\infty$ be a presentably normed $\infty$-category, and let $\scr B\subset \scr A$ be a normed ideal. Suppose that:
	\begin{enumerate}
		\item for every $X\in\scr C$, $\scr B(X)$ is accessible and closed under colimits in $\scr A(X)$;
		\item for every finite étale map $h\colon Y\to X$, we have $h_\sharp(\scr B(Y))\subset \scr B(X)$.
	\end{enumerate}
	Then the localization functors $\L^0_{\scr B(X)}$ are compatible with norms. Consequently, they assemble into a natural transformation of normed $\infty$-categories
	\[
	\L^0_{\scr B} \colon \scr A \to \L^0_{\scr B}\scr A \colon \Span(\scr C,\all,\fet)\to\Cat_\infty.
	\]
\end{corollary}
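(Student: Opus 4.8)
The plan is to deduce the corollary from Propositions~\ref{prop:normed-localization} and~\ref{prop:compatible-with-norms}. Since $\scr A(X)$ is presentable and $\scr B(X)\subset\scr A(X)$ is accessible and closed under colimits, $\scr B(X)$ is itself presentable, hence generated under colimits by a small set $\scr B_0(X)$ of objects; moreover $\L^0_{\scr B(X)}$ is the nullification at $\scr B(X)$, whose local objects are exactly the $A\in\scr A(X)$ with $\Map(B,A)\simeq *$ for every $B\in\scr B_0(X)$. I would therefore take $W(X)=\{\,\emptyset\to B : B\in\scr B_0(X)\,\}\subset\Fun(\Delta^1,\scr A(X))$, where $\emptyset$ denotes the initial object of $\scr A(X)$. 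Because $\Map(\emptyset,-)\simeq *$, the $W(X)$-local objects are precisely the $\scr B(X)$-null objects, so by the uniqueness of accessible localizations with a prescribed class of local objects \cite[\sect 5.5.4]{HTT} the strong saturation $\overline W(X)$ coincides with the class of $\L^0_{\scr B(X)}$-equivalences.

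The next step is to verify hypotheses (1)--(4) of Proposition~\ref{prop:normed-localization} for this $W$, which is routine. For a generating arrow $\emptyset\to B$ with $B\in\scr B_0(X)$: since $f^*$, $h_\sharp$ (for $h$ finite étale), and $p_\otimes$ (for $p$ surjective finite étale) are left adjoints and $A\otimes(\ph)$ preserves colimits, all four functors carry initial objects to initial objects (for $p_\otimes$ one needs $p_\otimes(\emptyset)\simeq\emptyset$ when $p$ is surjective, which follows, as in the proof of Proposition~\ref{prop:normed-subcategory}, from the distributivity law applied to $\emptyset\to Y$ with $p\colon Y\to Z$). Hence $f^*(\emptyset\to B)\simeq(\emptyset\to f^*B)$, $h_\sharp(\emptyset\to B)\simeq(\emptyset\to h_\sharp B)$, $p_\otimes(\emptyset\to B)\simeq(\emptyset\to p_\otimes B)$, and $\id_A\otimes(\emptyset\to B)\simeq(\emptyset\to A\otimes B)$. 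Moreover $f^*B\in\scr B(Y)$ and $p_\otimes B\in\scr B(Z)$ because $\scr B$ is a nonunital normed subcategory of $\scr A$; $h_\sharp B\in\scr B(X)$ by hypothesis~(2) of the corollary (the sole place that hypothesis is used, since $\scr B$ is not a priori closed under $h_\sharp$); and $A\otimes B\in\scr B(X)$ because $\scr B(X)$ is a tensor ideal. In every case the resulting arrow has the form $\emptyset\to C$ with $C$ in $\scr B$, hence is an $\L^0_{\scr B}$-equivalence and so lies in $\overline W$; thus (1)--(4) hold.

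Proposition~\ref{prop:normed-localization} then yields that $\overline W$ is a normed subcategory of $\Fun(\Delta^1,\scr A)$, and since $\overline W(X)$ is precisely the class of $\L^0_{\scr B(X)}$-equivalences, this is exactly the assertion that the family $(\L^0_{\scr B(X)})_{X\in\scr C}$ is compatible with norms. Applying Proposition~\ref{prop:compatible-with-norms} with $L_X=\L^0_{\scr B(X)}$ then produces the normed $\infty$-category $\L^0_{\scr B}\scr A$ together with the natural transformation $\L^0_{\scr B}\colon\scr A\to\L^0_{\scr B}\scr A$, which completes the proof. I do not anticipate any genuine obstacle here: the whole argument is bookkeeping on top of the two cited propositions, and the only points needing a moment's care are the identification of $\overline W(X)$ with the $\L^0_{\scr B(X)}$-equivalences and the vanishing $p_\otimes(\emptyset)\simeq\emptyset$ for surjective $p$, both of which are already available (the latter from the proof of Proposition~\ref{prop:normed-subcategory}).
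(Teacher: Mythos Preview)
Your proposal is correct and follows essentially the same approach as the paper: take $W(X)$ to consist of the maps $\emptyset\to B$ with $B\in\scr B(X)$ (the paper uses all of $\scr B(X)$ rather than a small generating set, but this is immaterial), identify the strong saturation of $W(X)$ with the $\L^0_{\scr B(X)}$-equivalences, and then invoke Propositions~\ref{prop:normed-localization} and~\ref{prop:compatible-with-norms}. One small wording slip: $p_\otimes$ is not a left adjoint (it only preserves sifted colimits), but you already supply the correct justification for $p_\otimes(\emptyset)\simeq\emptyset$ via distributivity, so the argument stands.
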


\begin{proof}
	For $X\in\scr C$, let $W(X)\subset \Fun(\Delta^1,\scr A(X))$ be the class of morphisms of the form $\emptyset \to B$ with $B\in\scr B(X)$.
	Then the strong saturation of $W(X)$ is the class of $\L^0_{\scr B(X)}$-equivalences. The corollary now follows from Propositions~\ref{prop:normed-localization} and~\ref{prop:compatible-with-norms}.
\end{proof}

\section{Normed motivic spectra}
\label{sec:normedspectra}

\subsection{Categories of normed spectra}

If $\scr A\colon\scr C\to\Cat_\infty$ is a functor classifying a cocartesian fibration $p\colon\scr E\to\scr C$, a \emph{section} of $\scr A$ will mean a section $s\colon\scr C\to\scr E$ of $p$. Thus, for every $c\in\scr C$, $s(c)$ is an object of $\scr A(c)$, and for every morphism $f\colon c\to c'$ in $\scr C$, $s(f)$ is a morphism $\scr A(f)(s(c))\to s(c')$ in $\scr A(c')$.
 We will write
\[
	\int\scr A = \scr E \quad\text{and}\quad \Sect(\scr A) = \Fun_{\scr C}(\scr C,\scr E).
\]
These $\infty$-categories are respectively the left-lax colimit and left-lax limit of $\scr A$, although we will not need a precise definition of these terms.

\begin{definition}\label{def:normedspectrum}
	Let $S$ be a scheme and let $\scr C\subset_\fet\Sch_S$.
	\begin{enumerate}
	\item A \emph{normed spectrum} over $\scr C$ is a section of
	$\SH^\otimes$ over $\Span(\scr C,\all,\fet)$ that is cocartesian over $\scr C^\op$.
	\item An \emph{incoherent normed spectrum} over $\scr C$ is a section of $\h\SH^\otimes$ over $\Span(\scr C,\all,\fet)$ that is cocartesian over $\scr C^\op$.
	\end{enumerate}
\end{definition}

\begin{notation}
We denote by 
\[\NAlg_{\scr C}(\SH)\subset \Sect(\SH^\otimes|\Span(\scr C,\all,\fet))\]
the full subcategory of normed spectra over $\scr C$.
More generally, given a functor $\scr A^\otimes\colon \Span(\scr C,\all,\fet)\to \Cat_\infty$, we denote by $\NAlg_{\scr C}(\scr A)$ the $\infty$-category of sections of $\scr A^\otimes$ that are cocartesian over $\scr C^\op$.

In applications, the category $\scr C$ is often $\Sm_S$, $\Sch_{S}$, or $\FEt_S$. To avoid double subscripts, we will usually write $\NAlg_{\Sm}(\SH(S))$ instead of $\NAlg_{\Sm_S}(\SH)$, and similarly in the other cases.
\end{notation}

\begin{remark}
	If $\scr C\subset_\fet\Sch_S$ and $\scr A^\otimes\colon \Span(\scr C,\all,\fet)\to \Cat_\infty$, a section of $\scr A^\otimes$ is cocartesian over $\scr C^\op$ if and only if, for every $X\in\scr C$, it sends the structure map $X\to S$ to a cocartesian edge. This follows from \cite[Proposition 2.4.1.7]{HTT}.
\end{remark}

Let us spell out more explicitly the definition of an incoherent normed spectrum over $\scr C$. It is a spectrum $E\in\SH(S)$ equipped with maps
\[
	\mu_p\colon p_\otimes E_{V} \to E_{U}
\]
in $\SH(U)$, for all finite étale maps $p\colon V\to U$ in $\scr C$, such that:
\begin{itemize}
	\item if $p$ is the identity, then $\mu_p$ is an instance of the equivalence $\id_\otimes\simeq\id$;
	\item for every composable finite étale maps $q\colon W\to V$ and $p\colon V\to U$ in $\scr C$, the following square commutes up to homotopy:
	\begin{tikzmath}
		\diagram{p_\otimes q_\otimes E_W & p_\otimes E_V \\
		(pq)_\otimes E_W & E_U\rlap;\\};
		\arrows (11-) edge node[above]{$p_\otimes\mu_q$} (-12) (12) edge node[right]{$\mu_p$} (22) (11) edge node[left]{$\simeq$} (21) (21-) edge node[below]{$\mu_{pq}$} (-22);
	\end{tikzmath}
	\item for every cartesian square
	\begin{tikzmath}
		\diagram{V' & V \\ U' & U \\};
		\arrows (11-) edge node[above]{$g$} (-12) (11) edge node[left]{$q$} (21) (21-) edge node[below]{$f$} (-22) (12) edge node[right]{$p$} (22);
	\end{tikzmath}
	 in $\scr C$ with $p$ finite étale, the following pentagon commutes up to homotopy:
	\begin{tikzequation} \label{eq:incoherent-compat-square}
		\def\rowsep{10pt}
		\diagram{f^*p_\otimes E_V & f^*E_{U} \\
		q_\otimes g^*E_V & \\
		q_\otimes E_{V'} & E_{U'}\rlap.\\};
		\arrows (11-) edge node[above]{$f^*\mu_p$} (-12) (11) edge node[left]{$\simeq$} (21) (21) edge node[left]{$\simeq$} (31) (12) edge node[right]{$\simeq$} (32) (31-) edge node[below]{$\mu_{q}$} (-32);
	\end{tikzequation}
\end{itemize}
It follows in particular that the map $\mu_p\colon p_\otimes E_V \to E_U$ is equivariant, up to homotopy, for the action of $\Aut(V/U)$ on $p_\otimes E_V$. 
By contrast, a normed spectrum includes the data of homotopies making the above diagrams commute, as well as coherence data for these homotopies. In particular, if $E$ is a normed spectrum, the map $\mu_p$ induces
\[
\mu_p\colon (p_\otimes E_V)_{h\Aut(V/U)} \to E_U.
\]

By Remark~\ref{rmk:symmmon}, there is a forgetful functor
\[
\NAlg_{\scr C}(\SH) \to \CAlg(\SH(S)),
\]
given by restricting a section from $\Span(\scr C,\all,\fet)$ to $\Span(\Fin,\inj,\all)\simeq \Fin_\pt$.
Definition~\ref{def:normedspectrum} is motivated in part by the following observation, where ``fold'' denotes the class of morphisms of schemes that are finite sums of fold maps $S^{\amalg n}\to S$:

\begin{proposition}\label{prop:NAlgtoCAlg}
	Let $S$ be a scheme and let $\scr C\subset_\fet\Sch_S$. The functor 
	\[
	\Fin_\pt\to \Span(\scr C,\all,\fold), \quad X_+\mapsto \coprod_XS,
	\]
	 induces an equivalence between $\CAlg(\SH(S))$ and the $\infty$-category of sections of $\SH^\otimes$ over $\Span(\scr C,\all,\fold)$ that are cocartesian over $\scr C^\op$.
\end{proposition}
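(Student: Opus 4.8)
The plan is to deduce the statement formally from the construction of $\SH^\otimes$ in \sect\ref{sub:coherence} and the span formalism of Appendix~\ref{app:spans}; the only input specific to the situation at hand is that $S$ is a terminal object of $\scr C$.

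First I would record that observation: since $\scr C$ is a full subcategory of $\Sch_S$, every object of $\scr C$ carries a unique structure morphism to $S$, so $S$ is terminal in $\scr C$ and hence initial in $\scr C^\op$. Next, recall that $\SH^\otimes$ preserves finite products (because $\SH(X\amalg Y)\simeq\SH(X)\times\SH(Y)$), so its restriction to $\Span(\scr C,\all,\fold)$ is a finite-product-preserving functor; by the span formalism (the ``splicing'' mentioned in \sect\ref{sub:norm-functors}, a relative version of Proposition~\ref{prop:automatic-calg}), restriction to $\scr C^\op$ and to fold maps identifies such functors with presheaves of symmetric monoidal $\infty$-categories on $\scr C$, and under this identification $\SH^\otimes|\Span(\scr C,\all,\fold)$ corresponds to the presheaf $X\mapsto\SH(X)^\otimes$.

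The heart of the argument is then to unwind what a section $s$ of $\SH^\otimes|\Span(\scr C,\all,\fold)$ that is cocartesian over $\scr C^\op$ amounts to. Its components along the fold maps $X^{\amalg n}\to X$ equip each object $s(X)\in\SH(X)$ with the structure of a commutative algebra $\widetilde s(X)\in\CAlg(\SH(X))$ --- this is precisely the description of $\E_\infty$-algebras by finite-product-preserving functors on $\Span(\Fin)$, applied in the fibre over $X$ --- while its components over $\scr C^\op$, being cocartesian, exhibit for every $f\colon X\to Y$ in $\scr C$ an equivalence $f^*\widetilde s(Y)\simeq\widetilde s(X)$ of commutative algebras. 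This assignment $s\mapsto\widetilde s$ identifies $\Sect^{\scr C^\op\text{-cocart}}(\SH^\otimes|\Span(\scr C,\all,\fold))$ with the $\infty$-category of cocartesian sections over $\scr C^\op$ of the cocartesian fibration classified by $X\mapsto\CAlg(\SH(X))$, that is, with $\lim_{X\in\scr C^\op}\CAlg(\SH(X))$. Since $\scr C^\op$ has an initial object $S$, the limit of any diagram indexed by it is the value at $S$, so this is just $\CAlg(\SH(S))$. Finally I would check that the composite equivalence is implemented by restriction along $\Fin_\pt\to\Span(\scr C,\all,\fold)$: that functor factors as $\Fin_\pt\simeq\Span(\Fin,\inj,\all)\into\Span(\Fin)\to\Span(\FEt_S,\all,\fold)\into\Span(\scr C,\all,\fold)$ (cf.\ Remark~\ref{rmk:symmmon}), it lands in the fibre over the initial object $S$, and the resulting presentation of $\CAlg(\SH(S))$ matches Lurie's operadic definition via $\Fin_\pt$ with its description via finite-product-preserving functors on $\Span(\Fin)$; this last comparison, and the compatibility of $s\mapsto\widetilde s$ with morphisms, are routine bookkeeping.

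The main obstacle is the span-theoretic input invoked above: the identification of finite-product-preserving functors out of $\Span(\scr C,\all,\fold)$ with presheaves of symmetric monoidal $\infty$-categories, and the translation of ``section cocartesian over $\scr C^\op$ with lax data along fold maps'' into ``fibrewise commutative algebra object with cocartesian descent over $\scr C^\op$''. These are exactly the kind of statements that Appendix~\ref{app:spans} is designed to provide; granting them, the computation of the limit via the terminal object of $\scr C$ and the identification with restriction along $\Fin_\pt$ are purely formal.
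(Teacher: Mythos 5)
Your proof is correct and follows essentially the same route as the paper: the paper's proof is a one-liner, ``This follows from Corollary~\ref{cor:fold-spans},'' and your argument is precisely what that citation unpacks — the span-formalism identification of finite-product-preserving functors on $\Span(\scr C,\all,\fold)$ with presheaves of symmetric monoidal $\infty$-categories (Proposition~\ref{prop:fold-spans}), its section-level form (Corollary~\ref{cor:fold-spans}), and then the evaluation of cocartesian sections over $\scr C^\op$ at the initial object $S$. The only small inaccuracy is attributional: the formal input you gesture at as ``a relative version of Proposition~\ref{prop:automatic-calg}'' is really Proposition~\ref{prop:fold-spans} together with Corollary~\ref{cor:fold-spans}; Proposition~\ref{prop:automatic-calg} concerns the case $\fold\subset m$ and plays a different role.
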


\begin{proof}
	This follows from Corollary~\ref{cor:fold-spans}.
\end{proof}

We record some categorical properties of normed spectra:

\begin{proposition} \label{prop:categorical-props}
	Let $S$ be a scheme and $\scr C\subset_\fet\Sch_{S}$.
	\begin{enumerate}
		\item The $\infty$-category $\NAlg_\scr C(\SH)$ has colimits and finite limits. If $\scr C$ is small, it is presentable and hence has all limits.
		\item The forgetful functor $\NAlg_\scr C(\SH)\to\SH(S)$ is conservative and preserves sifted colimits and finite limits. If $\scr C\subset\Sm_S$, it preserves limits and hence is monadic.
		\item The forgetful functor $\NAlg_\scr C(\SH)\to\CAlg(\SH(S))$ is conservative and preserves colimits and finite limits. If $\scr C\subset\Sm_S$, it preserves limits and hence is both monadic and comonadic.
		\item If $A\in\NAlg_\scr C(\SH)$, the symmetric monoidal $\infty$-category $\Mod_{A(S)}(\SH(S))$ can be promoted to a functor \[\Mod_A(\SH)^\otimes\colon \Span(\scr C,\all,\fet)\to\CAlg(\what\Cat{}_\infty^\mathrm{sift}).\]
		Moreover, there is an equivalence of $\infty$-categories
		\[
		\NAlg_\scr C(\SH)_{A/} \simeq \NAlg_\scr C(\Mod_A(\SH)).
		\]
		\item Let $\scr C_0\subset_\fet\scr C$ be a subcategory such that $\SH\colon\scr C^\op\to\what\Cat{}_\infty$ is the right Kan extension of its restriction to $\scr C_0^\op$ (e.g., the adjunction $\PSh(\scr C)\rightleftarrows\PSh(\scr C_0)$ restricts to an equivalence between the subcategories of sheaves for some topologies coarser than the induced cdh topologies). Then the inclusion $\scr C_0\subset\scr C$ induces an equivalence $\NAlg_{\scr C}(\SH)\simeq\NAlg_{\scr C_0}(\SH)$.
		\item Suppose that $\scr C\subset \Sch_S^\fp$ and let $\scr C\subset \scr C'\subset_\fet \Sch_S$ be such that every $S$-scheme in $\scr C'$ is the limit of a cofiltered diagram in $\scr C$ with affine transition maps. Then the inclusion $\scr C\subset\scr C'$ induces an equivalence $\NAlg_\scr C(\SH)\simeq \NAlg_{\scr C'}(\SH)$.
	\end{enumerate}
	Let $f\colon S'\to S$ be a morphism and let $\scr C'\subset_\fet\Sch_{S'}$.
	\begin{enumerate}\setcounter{enumi}{6}
		\item  Suppose $f_\sharp(\scr C')\subset\scr C$. Then the pullback functor $f^*$ preserves normed spectra. More precisely, if $A\in\NAlg_\scr C(\SH)$, the restriction of $A$ to $\Span(\scr C',\all,\fet)$ is a normed spectrum over $\scr C'$.
		\item Suppose $f^*(\scr C)\subset\scr C'$.
		If the pushforward functor $f_*\colon \SH(S')\to\SH(S)$ is compatible with any base change $T\to S$ in $\scr C$ (e.g., $\scr C\subset\Sm_S$ or $f$ is proper),
		then it preserves normed spectra. More precisely, if $A\in\NAlg_{\scr C'}(\SH)$, the assignment $X\mapsto f_*A(X\times_SS')$ is a normed spectrum over $\scr C$.
	\end{enumerate}
\end{proposition}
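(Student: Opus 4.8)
The plan is to treat $\NAlg_{\scr C}(\SH)$ throughout as the full subcategory of $\Sect(\SH^\otimes|\Span(\scr C,\all,\fet))$ on the sections that are cocartesian over $\scr C^\op$, and to reduce each assertion to a statement about such sections. Two elementary remarks carry most of the weight. First, if $A$ is cocartesian over $\scr C^\op$ then $A(X)\simeq f_X^*A(S)$ for $f_X\colon X\to S$ the structure map, so a map of such sections is an equivalence as soon as it is an equivalence over $S$; this gives conservativity of all the forgetful functors below. Second, limits in $\Sect(\SH^\otimes)$ are computed pointwise in the fibers $\SH(X)$, and sifted colimits in $\Sect(\SH^\otimes)$ are also computed pointwise because every $f^*$ and every $p_\otimes$ preserves sifted colimits (Theorem~\ref{thm:norm}, Proposition~\ref{prop:stablenorm}); moreover a pointwise limit of sections cocartesian over $\scr C^\op$ is again cocartesian over $\scr C^\op$ precisely when $f^*$ commutes with that limit, which it does for finite limits always (exactness) and for all limits when $f$ is smooth (it then has the left adjoint $f_\sharp$). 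This already pins down which (co)limits of normed spectra are computed on underlying spectra, and explains the role of the hypothesis $\scr C\subset\Sm_S$.

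For (1)--(3): the transition functors $p_\otimes f^*$ of $\SH^\otimes$ are accessible, so for $\scr C$ small $\Sect(\SH^\otimes|\Span(\scr C,\all,\fet))$ is presentable, and $\NAlg_{\scr C}(\SH)$ is an accessible full subcategory closed under finite limits and sifted colimits, hence presentable and complete once one also knows the forgetful functor to $\SH(S)$ admits a left adjoint — the explicit free normed spectrum of Section~\ref{sec:thomspectra} — which then supplies all colimits as well. Via Proposition~\ref{prop:NAlgtoCAlg} the forgetful functor $\NAlg_{\scr C}(\SH)\to\CAlg(\SH(S))$ is restriction of sections along $\Span(\scr C,\all,\fold)\into\Span(\scr C,\all,\fet)$; this restriction admits both a left and a right adjoint, obtained from the corresponding Kan extensions of sections along that inclusion, corrected so as to be cocartesian over $\scr C^\op$. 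Recognizing these adjoints, together with conservativity and the (co)limit bookkeeping of the previous paragraph — which upgrades to ``all limits'' exactly when $\scr C\subset\Sm_S$ — Barr--Beck--Lurie and its dual then give that $\NAlg_{\scr C}(\SH)\to\CAlg(\SH(S))$ is monadic and comonadic for $\scr C\subset\Sm_S$, and likewise that $\NAlg_{\scr C}(\SH)\to\SH(S)$ is monadic for $\scr C\subset\Sm_S$; the preservation statements in (2) and (3) that hold for arbitrary $\scr C$ are read off directly.

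For (4)--(8): each of these transports sections along a functor of span $2$-categories. In (4) one first promotes $X\mapsto\Mod_{A(X)}(\SH(X))$ to $\Mod_A(\SH)^\otimes\colon\Span(\scr C,\all,\fet)\to\CAlg(\Cat_\infty)$ — on $\scr C^\op$ this is base change of module categories, while a norm $p_\otimes$ carries $A(Y)$-modules to $p_\otimes A(Y)$-modules and then, along $\mu_p$, to $A(Z)$-modules, the coherence being supplied by the relative $\Mod(-)$ construction over the section $A$, exactly as in Section~\ref{sec:coherence} — and then identifies a section of $\int\Mod_A(\SH)^\otimes$ cocartesian over $\scr C^\op$ with such a section of $\int\SH^\otimes$ under $A$, giving $\NAlg_{\scr C}(\Mod_A(\SH))\simeq\NAlg_{\scr C}(\SH)_{A/}$. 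Part (5) follows because the hypothesis (that $\SH|\scr C^\op$ is the right Kan extension of $\SH|\scr C_0^\op$) promotes to a description of $\SH^\otimes$ on $\Span(\scr C,\all,\fet)$ as a right Kan extension from $\Span(\scr C_0,\all,\fet)$ (the span version of Kan extension, cf.\ Proposition~\ref{prop:span-RKE} and Appendix~\ref{app:spans}), so that restriction of sections is an equivalence on the cocartesian-over-$\scr C^\op$ parts; (6) is the same with right Kan extension replaced by the continuity of $\SH$ along cofiltered limits of schemes with affine transition maps. For (7), $f_\sharp(\scr C')\subset\scr C$ gives a functor $\Span(\scr C',\all,\fet)\to\Span(\scr C,\all,\fet)$ over which $\SH^\otimes$ is tautologically constant, and restricting a normed spectrum along it yields a section over $\scr C'$ still cocartesian over $\scr C'^\op$ with underlying object $f^*A(S)$. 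Part (8) is dual: $f^*(\scr C)\subset\scr C'$ gives $\Span(\scr C,\all,\fet)\to\Span(\scr C',\all,\fet)$, $X\mapsto X\times_SS'$, and one pulls a normed spectrum $A$ over $\scr C'$ back and applies the pushforwards $(f_X')_*$; that $X\mapsto(f_X')_*A(X\times_SS')$ really is a section uses that $f_*$ is compatible with the relevant base changes (the hypothesis) and with norms (via Proposition~\ref{prop:BC} and the distributivity law of Proposition~\ref{prop:distributivity}), after which cocartesianness over $\scr C^\op$ is automatic.

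The main obstacle is twofold, and neither half is a single hard computation so much as a bookkeeping burden. First, in (1)--(3) one must keep strict track of the fact that $p_\otimes$ preserves neither finite coproducts nor general limits and that $f^*$ preserves infinite products only for $f$ smooth, so that the exact list of preserved (co)limits — and in particular the appearance and non-appearance of the hypothesis $\scr C\subset\Sm_S$ — comes out as stated; the comonadicity assertion in (3) in particular hinges on identifying the right adjoint of $\NAlg_{\scr C}(\SH)\to\CAlg(\SH(S))$ as a Kan extension of sections. Second, all of (4)--(8) rest on having the relevant functors of span $2$-categories, and the relevant natural transformations between composites of $f^*$, $f_*$, $f_\sharp$ and $p_\otimes$, available \emph{coherently}; producing these is precisely the purpose of the unfurling and Kan-extension machinery for spans in Appendix~\ref{app:spans} and of the base-change and distributivity results of Section~\ref{sec:functoriality}, and invoking those cleanly — rather than reproving coherence by hand — is what keeps the argument short.
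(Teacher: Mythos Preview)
Your framework is right — treat $\NAlg_{\scr C}(\SH)$ as cocartesian sections and compute pointwise — but several steps have genuine gaps.

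\textbf{Parts (1)--(3).} Your route to colimits via the free normed spectrum of Section~\ref{sec:thomspectra} is not available for general $\scr C\subset_\fet\Sch_S$: that construction requires $\scr C\subset\Sm_S$ (indeed $\scr C=\scr L_S$ for a suitable class $\scr L$), whereas (1) is asserted for arbitrary $\scr C$. Likewise, your claim that the forgetful functor $\NAlg_\scr C(\SH)\to\CAlg(\SH(S))$ admits both adjoints via Kan extension of sections along $\Span(\scr C,\all,\fold)\hookrightarrow\Span(\scr C,\all,\fet)$ is not justified: relative Kan extensions of sections need not remain cocartesian over $\scr C^\op$, and you give no argument for this. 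The paper instead observes, via Corollary~\ref{cor:automatic-calg}, that $\NAlg_\scr C(\SH)\simeq\CAlg(\NAlg_\scr C(\SH))$ (semiadditivity of the span category), from which finite coproducts and their preservation by the forgetful functor are immediate; together with sifted colimits this gives all colimits for general $\scr C$. This trick is what you are missing.

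\textbf{Part (6).} ``The same with right Kan extension replaced by continuity'' is too vague and not what the paper does. The argument runs: restriction $\NAlg_{\scr C'}(\SH)\to\NAlg_{\scr C}(\SH)$ is conservative by (2); the restriction of \emph{all} sections has a fully faithful left adjoint $L$ by relative left Kan extension (since the relevant slices $\Span(\scr C,\all,\fet)_{/X}$ are sifted); one then shows $L$ preserves normed spectra via a cofinality argument reducing the Kan extension formula to a filtered colimit over $\scr C_{X/}$, using that finite étale maps in $\scr C'$ descend to a finite level (\cite[8.8.2, 8.10.5]{EGA4-3}, \cite[17.7.8]{EGA4-4}). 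None of this is captured by ``continuity of $\SH$''.

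\textbf{Part (8).} Your invocation of Proposition~\ref{prop:distributivity} is misplaced: $\Dist_{\otimes *}$ concerns a \emph{proper} map $h$ being normed along a finite étale $p$, which is not the situation here (the map $f$ is arbitrary). The paper's argument is that the pullback functors $f_X^*\colon\SH(X)\to\SH(X\times_SS')$ assemble into a map of cocartesian fibrations over $\Span(\scr C,\all,\fet)$ preserving cocartesian edges (this is immediate from $\SH^\otimes$ being a functor on spans), and then Lemma~\ref{lemm:construct-relative-adjoint}(1) supplies a relative right adjoint $f_*$; composing with the given section yields $X\mapsto f_*A(X\times_SS')$. Finally, cocartesianness over $\scr C^\op$ is \emph{not} automatic: it is exactly the base change hypothesis on $f_*$, so your last clause has the logic inverted.
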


\begin{proof}
	(1)--(3) The conservativity assertions are obvious. The functor $\SH^\otimes\colon \Span(\Sch,\all,\fet)\to\Cat_\infty$ lands in the $\infty$-category of accessible sifted-cocomplete $\infty$-categories, since $f^*$ and $p_\otimes$ both preserve sifted colimits. By \cite[Proposition 5.4.7.11]{HTT}, $\NAlg_\scr C(\SH)$ admits sifted colimits that are preserved by the forgetful functor $\NAlg_\scr C(\SH) \to \SH(S)$, and it is accessible if $\scr C$ is small.
	Since $f^*$ preserves finite limits (being a stable functor), $\NAlg_\scr C(\SH)$ is closed under finite limits in the larger $\infty$-category of sections of $\SH^\otimes$ over $\Span(\scr C,\all,\fet)$, where limits are computed objectwise \cite[Proposition 5.1.2.2]{HTT}.
	If $\scr C\subset\Sm_S$, the same holds for arbitrary limits since $f^*$ preserves limits when $f$ is smooth.
	 It remains to show that $\NAlg_\scr C(\SH)$ has finite coproducts that are preserved by the forgetful functor $u\colon\NAlg_\scr C(\SH)\to\CAlg(\SH(S))$. The commutative diagram
	 \begin{tikzmath}
	 	\diagram{\Fin_\pt \times \Fin_\pt & \Span(\scr C,\all,\fet)\times\Fin_\pt & \Span(\scr C,\all,\fet) \\ \Fin_\pt & \Span(\scr C,\all,\fet) & \\};
		\arrows (11-) edge (-12) (11) edge node[left]{$\wedge$} (21) (21-) edge (-22) (12) edge node[right]{$\wedge$} (22) (-13) edge node[above]{$(\id,\1)$} (12-) (13) edge node[below right]{$\id$} (22);
	 \end{tikzmath}
	 induces a commutative diagram
	 \begin{tikzmath}
	 	\diagram{\CAlg(\CAlg(\SH(S))) & \CAlg(\NAlg_\scr C(\SH)) & \NAlg_\scr C(\SH)\rlap. \\ \CAlg(\SH(S)) & \NAlg_\scr C(\SH) & \\};
		\arrows (11-) edge[<-] node[above]{$\CAlg(u)$} (-12) (11) edge[<-] (21) (21-) edge[<-] node[above]{$u$} (-22) (12) edge[<-] (22) (-13) edge[<-] (12-) (13) edge[<-] node[below right]{$\id$} (22);
	 \end{tikzmath}
	 The left vertical functor is an equivalence by \cite[Example 3.2.4.5]{HA}, and the upper right horizontal functor is an equivalence by Corollary~\ref{cor:automatic-calg}. Moreover, by \cite[Proposition 3.2.4.7]{HA}, any $\infty$-category of the form $\CAlg(\scr A)$ has finite coproducts, and any functor of the form $\CAlg(u)$ preserves them. This concludes the proof.
	
	(4) We can regard $A$ as a section of $\CAlg(\SH^\otimes)$ over $\Span(\scr C,\all,\fet)$ (Corollary~\ref{cor:automatic-calg}), and then the first claim follows from the functoriality of $\Mod_R(\scr D)$ in the pair $(\scr D,R)$ \cite[\sect4.8.3]{HA}. For the second claim, we observe that a section of $\CAlg(\SH^\otimes)$ under $A$ is equivalently a section of $\CAlg(\SH^\otimes)_{A/}\simeq \CAlg(\Mod_A(\SH)^\otimes)$, which is in turn equivalently a section of $\Mod_A(\SH)^\otimes$, by Corollary~\ref{cor:automatic-calg}.
	
	(5) This is a special case of Corollary~\ref{cor:span-RKE}. The parenthetical statement follows from the fact that $\SH\colon\Sch^\op\to\what\Cat{}_\infty$ is a cdh sheaf \cite[Proposition 6.24]{Hoyois}.
	
	(6) By (2), the restriction functor $\NAlg_{\scr C'}(\SH)\to \NAlg_{\scr C}(\SH)$ is conservative. For every $X\in\scr C'$, the overcategory $\Span(\scr C,\all,\fet)_{/X}$ admits finite sums and hence is sifted. It follows from \cite[Corollary 4.3.1.11]{HTT} that the restriction functor 
	\[
	\Sect(\SH^\otimes|\Span(\scr C',\all,\fet)) \to \Sect(\SH^\otimes|\Span(\scr C,\all,\fet))
	\]
	has a fully faithful left adjoint $L$ given by relative left Kan extension, and it remains to show that $L$ preserves normed spectra. Let $E$ be a section of $\SH^\otimes$ over $\Span(\scr C,\all,\fet)$. Then the section $L(E)$ is given by the formula
	\[
	L(E)_X = \colim_{Z\stackrel f\from Y\stackrel p\to X} p_\otimes f^*(E_Z),
	\]
	where the indexing category is $\Span(\scr C,\all,\fet)_{/X}$. We claim that the inclusion $(\scr C^\op)_{/X} \into \Span(\scr C,\all,\fet)_{/X}$ is cofinal. By \cite[Theorem 4.1.3.1]{HTT}, it suffices to show that the comma category
	\[
	(\scr C^\op)_{/X} \times_{\Span(\scr C,\all,\fet)_{/X}} (\Span(\scr C,\all,\fet)_{/X})_{(Z\stackrel f\from Y\stackrel p\to X)/}
	\]
	is weakly contractible. An object in this category is a commutative diagram
	\begin{tikzmath}
		\diagram{Z & Y & X \\ & Y_0 & X_0\rlap,\\};
		\arrows (11-) edge[<-] node[above]{$f$} (-12) (12-) edge node[above]{$p$} (-13) (12) edge (22) (13) edge (23)
		(11) edge[<-] (22) (22-) edge node[below]{$p_0$} (-23);
	\end{tikzmath}
	where $X_0\in\scr C$, $p_0$ is finite étale, and the square is cartesian.
	Since $\scr C\subset\Sch_S^\fp$, it follows from \cite[Proposition 8.13.5]{EGA4-3} that $\scr C'$ is identified with a full subcategory of $\Pro(\scr C)$. Moreover, by \cite[Théorèmes 8.8.2(ii) and 8.10.5(x)]{EGA4-3} and \cite[Proposition 17.7.8(ii)]{EGA4-4}, every finite étale map in $\scr C'$ is the pullback of a finite étale map in $\scr C$. This implies that the above category is in fact filtered, which proves our claim.
	 We therefore have a natural equivalence
	\[
	L(E)_X \simeq \colim_{(f\colon X\to Z)\in\scr C_{X/}} f^*(E_Z).
	\]
	If $E$ is cocartesian over backward morphisms, the right-hand side is the colimit of a constant diagram, and we deduce that $L(E)$ is also cocartesian over backward morphisms, as desired.
	
	(7) It is clear that the given section is cocartesian over $\scr C^{\prime\op}$.
	
	(8) The given section is defined more precisely as follows. 
	The assignment $X\mapsto A(X\times_SS')$ is a section of the cocartesian fibration over $\Span(\scr C,\all,\fet)$ classified by the composition
	\[
	\Span(\scr C,\all,\fet) \xrightarrow{f^*} \Span(\scr C',\all,\fet) \xrightarrow{\SH^\otimes} \Cat_\infty.
	\]
	The pullback functors $f_X^*\colon \SH(X)\to \SH(X\times_SS')$ are natural in $X\in\Span(\scr C,\all,\fet)$, i.e., they are the fibers of a map $f^*$ of cocartesian fibrations over $\Span(\scr C,\all,\fet)$ that preserves cocartesian edges. By Lemma~\ref{lemm:construct-relative-adjoint}(1), $f^*$ has a relative right adjoint $f_*$, which yields the section $X\mapsto f_*A(X\times_SS')$.
	The assumption on $f_*$ implies that this section is cocartesian over $\scr C^\op$.
	If $\scr C\subset\Sm_S$ (resp.\ if $f$ is proper), the assumption on $f_*$ holds by smooth base change (resp.\ by proper base change). 
\end{proof}

\begin{remark}
	The proof of each assertion of Proposition~\ref{prop:categorical-props} uses only a few properties of the functor $\SH^\otimes$ and applies much more generally. For example, if $\scr C\subset_\fet\Sch_S$ is small and $\scr A\colon\Span(\scr C,\all,\fet)\to\Cat_\infty$ is a normed $\infty$-category satisfying conditions (1), (3), and (5) of Definition~\ref{dfn:normed-category}, then $\NAlg_\scr C(\scr A)$ is presentable. Moreover, the forgetful functor $\NAlg_\scr C(\scr A)\to\CAlg(\scr A(S))$ preserves colimits and any type of limits that are preserved by the functors $f^*$ for $f\colon X\to S$ in $\scr C$.
\end{remark}

\begin{remark}\label{rmk:freeNAlg}
	If $\scr C\subset_\fet\Sm_S$, it follows from Proposition~\ref{prop:categorical-props}(2) that the forgetful functor $\NAlg_{\scr C}(\SH)\to \SH(S)$ has a left adjoint $\NSym_{\scr C}\colon \SH(S)\to\NAlg_{\scr C}(\SH)$. 
	When $\scr C=\Sm_S$ or $\scr C=\FEt_S$, it is given by the formula
	\[
	\NSym_{\scr C}(E) = \colim_{\substack{f\colon X\to S\\p\colon Y\to X}} f_\sharp p_\otimes(E_Y),
	\]
	where the indexing $\infty$-category is the source of the cartesian fibration classified by $\scr C^\op\to\scr S$, $X\mapsto \FEt_X^\simeq$. This can be proved using the formalism of Thom spectra, which we will develop in Section~\ref{sec:thomspectra} (see Remarks \ref{rmk:freeNAlg-2} and~\ref{rmk:freeNAlg-3}).
\end{remark}

\begin{remark} \label{rmk:coprod-nalg}
Finite coproducts in $\CAlg(\SH(S))$ are computed as smash products in $\SH(S)$ \cite[Proposition 3.2.4.7]{HA}.
By Proposition~\ref{prop:categorical-props}(1,3), the $\infty$-category $\NAlg_\mathcal{C}(\SH)$ has finite coproducts and the forgetful functor $\NAlg_\mathcal{C}(\SH) \to \CAlg(\SH(S))$ preserves them. Thus, finite coproducts in $\NAlg_\scr C(\SH)$ are also computed as smash products in $\SH(S)$. In other words, the cocartesian symmetric monoidal structure on $\NAlg_\scr C(\SH)$ lifts the smash product symmetric monoidal structure on $\SH(S)$.
We will see an elaboration of this in Theorem~\ref{thm:norm-pullback}.
\end{remark}

\begin{remark} \label{rmk:NAlg-ess-smooth-pullback}
Assertions (5) and (6) in Proposition~\ref{prop:categorical-props} can be used to strengthen (7) and (8).
For example, for $f\colon S'\to S$, we have a pullback functor
\[
f^*\colon \NAlg_{\Sm}(\SH(S)) \to \NAlg_{\Sm}(\SH(S'))
\]
in either of the following situations:
\begin{itemize}
	\item $S$ is the spectrum of a field with resolutions of singularities and $f$  is of finite type (use (5));
	\item $f$ is the limit of a cofiltered diagram of smooth $S$-schemes with affine transition maps (use (6)).
\end{itemize}
In fact, it is proved in \cite{bachmann-powerops} that the restriction functor $\NAlg_{\Sch}(\SH(S)) \to \NAlg_{\Sm}(\SH(S))$ is always an equivalence, so that the above pullback functor exists in complete generality.
\end{remark}

\begin{example}\label{ex:1}
	Since $\SH^\otimes$ takes values in symmetric monoidal $\infty$-categories, it has a \emph{unit section} sending a scheme $S$ to the motivic sphere spectrum $\1_S\in\SH(S)$, which is everywhere cocartesian. Thus, for every scheme $S$, the motivic sphere spectrum $\1_S$ is canonically a normed spectrum over $\Sch_S$; in fact, it is the initial object of $\NAlg_{\scr C}(\SH)$ for every $\scr C\subset_\fet\Sch_S$, by Proposition~\ref{prop:categorical-props}(3).
\end{example}

\begin{example}\label{ex:HZ}
	For every noetherian scheme $S$, Voevodsky's motivic cohomology spectrum $\HH\Z_S\in\SH(S)$ has a structure of normed spectrum over $\Sm_S$. We will prove this in Section~\ref{sec:PST} (see also \sect\ref{sub:HZ}).
\end{example}

\begin{example}\label{ex:KGL}
	For every scheme $S$, the homotopy $\K$-theory spectrum $\KGL_S\in\SH(S)$ has a structure of normed spectrum over $\Sch_S$. We will prove this in Section~\ref{sec:dgCat}.
\end{example}

\begin{example}\label{ex:MGL}
	For every scheme $S$, the algebraic cobordism spectrum $\MGL_S\in\SH(S)$ has a structure of normed spectrum over $\Sch_S$. We will prove this in Section~\ref{sec:thomspectra}.
\end{example}

\subsection{Cohomology theories represented by normed spectra} \label{subsec:coh-theories-with-norms}

We now investigate the residual structure on the cohomology theory associated with a normed spectrum (resp.\ an incoherent normed spectrum) $E\in\SH(S)$. 
Given a finite étale map $p\colon T\to S$
and $A\in\SH(T)$, we have a transfer map
\begin{equation*}\label{eqn:generalnorm}
\nu_p\colon \Map(A, E_T)\xrightarrow{p_\otimes} \Map(p_\otimes A,p_\otimes E_T) \xrightarrow{\mu_p} \Map(p_\otimes A, E).
\end{equation*}
In particular, if $A=\Sigma^\infty_+ X$ for some $X\in\SmQP_T$, then $p_\otimes A\simeq \Sigma^\infty_+\Weil_pX$ and we get
\[
\nu_p\colon \Map(\Sigma^\infty_+X,E) \to \Map(\Sigma^\infty_+\Weil_pX,E),
\]
which is readily seen to be a multiplicative $\E_\infty$-map in $\scr S$ (resp.\ in $\h\scr S$).
Note that the functor $p_\otimes$ does not usually send the bigraded spheres 
\[\S^{r,s}=\S^{r-s}\wedge \G_m^{\wedge s}\]
to bigraded spheres, and even when it does (for example when $p$ is free and $r=2s$), it only does so up to a noncanonical equivalence. So in general we do not obtain a multiplicative transfer on the usual bigrading of $E$-cohomology.
Instead, we should consider $E$-cohomology graded by the Picard $\infty$-groupoid of $\SH(T)$ \cite[\sect2.1]{GepnerLawson}; we then have a multiplicative family of transfers
\[
\nu_p\colon \Map(\Sigma^\infty_+X,\star\wedge E_T) \to \Map(\Sigma^\infty_+\Weil_pX,p_\otimes(\star)\wedge E),\quad \star \in \Pic(\SH(T)),
\]
which is an $\E_\infty$-map between commutative algebras for the Day convolution symmetric monoidal structure on $\Fun(\Pic(\SH(T)), \scr S)$ (resp.\ on $\Fun(\Pic(\SH(T)), \h\scr S)$).

We can say more if $E$ is \emph{oriented}. In that case, for every virtual vector bundle $\xi$ of rank $r$ on $Y\in\Sm_S$, we have the Thom isomorphism
\[
t\colon \Sigma^\xi E_Y \simeq \Sigma^{2r,r}E_Y.
\]
If $p\colon T\to S$ is finite étale of degree $d$, we therefore get a canonical family of maps
\begin{equation}\label{eqn:homogeneous}
t\mu_p\colon p_\otimes\Sigma^{2n,n}E_T \to \Sigma^{2nd,nd}E_S,\quad n\in\Z.
\end{equation}
We can even extend $t\mu_p$ to the sum $\bigvee_n \Sigma^{2n,n}E$ as follows. 
Let $I\subset\Z$ be a finite subset.
By Corollary~\ref{cor:Nthom}, $p_\otimes (\bigvee_{n\in I}\S^{2n,n})$ is the Thom spectrum of a virtual vector bundle on the finite étale $S$-scheme $\Weil_p(T^{\amalg I})$.
Using the Thom isomorphism, we can form the composite
\begin{equation}\label{eqn:inhomogeneous}
p_\otimes\left(\bigvee_{n\in I}\Sigma^{2n,n}E_T\right)
 \xrightarrow{\mu_p} p_\otimes\left(\bigvee_{n\in I}\S^{2n,n}\right) \wedge E_S
 \stackrel t\to \Sigma^\infty_+\Weil_p(T^{\amalg I}) \wedge \bigvee_{r\in\Z}\Sigma^{2r,r}E_S
\to \bigvee_{r\in\Z}\Sigma^{2r,r}E_S,
\end{equation}
where the last map is induced by the structure map $\Weil_p(T^{\amalg I})\to S$.
Note that this recovers~\eqref{eqn:homogeneous} when $I=\{n\}$.

\begin{proposition}\label{prop:oriented-normed}
	Let $E$ be an oriented incoherent normed spectrum over $\FEt_S$ and let $p\colon T\to S$ be finite étale. Then the maps~\eqref{eqn:inhomogeneous} fit together to induce
	\[
	\tilde\mu_p\colon p_\otimes\left(\bigvee_{n\in \Z}\Sigma^{2n,n}E_T\right) \to \bigvee_{r\in\Z}\Sigma^{2r,r}E_S.
	\]
	Moreover, $\tilde\mu_p$ is multiplicative up to homotopy. In particular, for every $X\in\SmQP_T$, we have a multiplicative transfer
	\[
	\tilde\nu_p\colon\bigoplus_{n\in \Z} \Map(\Sigma^\infty_+X,\Sigma^{2n,n}E) \to \bigoplus_{r\in\Z} \Map(\Sigma^\infty_+\Weil_pX,\Sigma^{2r,r}E).
	\]
\end{proposition}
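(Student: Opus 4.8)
The plan is to exhibit $\tilde\mu_p$ as a filtered colimit of the maps~\eqref{eqn:inhomogeneous} and then check multiplicativity directly in $\h_1\SH(S)$. Write $\bigvee_{n\in\Z}\Sigma^{2n,n}E_T=\colim_{I}\bigvee_{n\in I}\Sigma^{2n,n}E_T$, the colimit over the directed poset of finite subsets $I\subset\Z$ with transition maps the summand inclusions. Since $p_\otimes\colon\SH(T)\to\SH(S)$ preserves sifted colimits (Proposition~\ref{prop:stablenorm}), it preserves this colimit, so it suffices to promote the maps $\phi_I$ of~\eqref{eqn:inhomogeneous} to a cocone under $I\mapsto p_\otimes\bigl(\bigvee_{n\in I}\Sigma^{2n,n}E_T\bigr)$ with vertex $\bigvee_{r\in\Z}\Sigma^{2r,r}E_S$; then $\tilde\mu_p:=\colim_I\phi_I$, and by construction it restricts to~\eqref{eqn:homogeneous} for $I=\{n\}$.

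To produce the cocone, I would note that $\phi_I$ is a composite of four maps, each natural in $I$. Put $W_I=\Weil_p(T^{\amalg I})$ and let $\xi_I\in\K(T^{\amalg I})$ be the tautological class equal to $[\mathcal O^{\,n}]$ on the copy of $T$ indexed by $n$, so that a summand inclusion $T^{\amalg I}\into T^{\amalg I'}$ upgrades to a morphism of pairs $(T^{\amalg I},\xi_I)\to(T^{\amalg I'},\xi_{I'})$. The constituents are: (i) $p_\otimes(Y\wedge E_T)\simeq p_\otimes(Y)\wedge p_\otimes(E_T)\xrightarrow{\id\wedge\mu_p}p_\otimes(Y)\wedge E_S$ at $Y=\bigvee_{n\in I}\S^{2n,n}$, natural in the $\SH(T)$-variable $Y$; (ii) the identification $p_\otimes(Y)=p_\otimes\Th_{T^{\amalg I}}(\xi_I)\simeq\Th_{W_I}(q_*e^*\xi_I)$ of Corollary~\ref{cor:Nthom}, natural in the pair $(T^{\amalg I},\xi_I)$ since $\Dist_{\sharp\otimes}$ is a natural transformation and the identifications $q_\otimes\S^{(\ph)}\simeq\S^{q_*(\ph)}$ are natural in the $\K$-theory variable by the square of Remark~\ref{rmk:Pic}; (iii) the Thom isomorphism $\Th_{W_I}(q_*e^*\xi_I)\wedge E_S\simeq\bigvee_{r}\Sigma^{2r,r}\bigl(\Sigma^\infty_+W_I^{(r)}\wedge E_S\bigr)$, summed over the open-closed pieces $W_I=\coprod_r W_I^{(r)}$ on which $q_*e^*\xi_I$ has rank $r$, natural in the oriented spectrum $E_S$ and in $W_I$; (iv) the maps $\Sigma^\infty_+W_I^{(r)}\to\1_S$ induced by the structure maps, natural in $W_I\in\Sm_S$, with $W_I\mapsto W_{I'}$ induced by functoriality of Weil restriction. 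Composing, $\{\phi_I\}$ is a natural transformation into the constant diagram, hence a cocone, and we obtain $\tilde\mu_p$.

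For multiplicativity, note that $p_\otimes$ is symmetric monoidal, so it carries the homotopy-commutative ring $\bigvee_n\Sigma^{2n,n}E_T$ to one in $\h_1\SH(S)$, and that $\mu_p\colon p_\otimes E_T\to E_S$ is multiplicative up to homotopy (a consequence of the incoherent normed structure; cf.\ the discussion of $\nu_p$ preceding the statement). Tracing the ring structures through the colimit, the homotopy witnessing multiplicativity of $\tilde\mu_p$ is assembled, on the finite pieces indexed by $I+J:=\{n+m: n\in I, m\in J\}$, from: the multiplicativity of $\mu_p$; the compatibility of the identification in (ii) with smash products and direct sums of bundles (the $\SH(T)$-linearity of $\Dist_{\sharp\otimes}$ from Remark~\ref{rmk:mates}, together with $\Th(\xi)\wedge\Th(\eta)\simeq\Th(\xi\oplus\eta)$); the multiplicativity of the Thom isomorphism for the oriented spectrum $E_S$; and the compatibility of the collapse maps in (iv) with the multiplication maps $W_I\times_S W_J\to W_{I+J}$ induced by $T^{\amalg I}\times_T T^{\amalg J}\to T^{\amalg(I+J)}$. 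Since everything is checked in $\h_1\SH(S)$, no higher coherences intervene. The last assertion is then formal: $p_\otimes\Sigma^\infty_+X\simeq\Sigma^\infty_+\Weil_pX$ for $X\in\SmQP_T$ (Corollary~\ref{cor:Nthom} with trivial bundle), and $p_\otimes$ acts multiplicatively on mapping spaces into ring objects, so applying $\Map(\Sigma^\infty_+X,\ph)$ and composing with $\tilde\mu_p$ gives a multiplicative $\E_\infty$-map in $\h_1\scr S$; restricting along the graded submonoids $\bigoplus_n\Map(\Sigma^\infty_+X,\Sigma^{2n,n}E)\into\Map(\Sigma^\infty_+X,\bigvee_n\Sigma^{2n,n}E)$ yields $\tilde\nu_p$.

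The main obstacle I anticipate is the multiplicativity step: although each required ingredient is available, organizing them into a single homotopy compatibly with the four-fold structure of $\phi_I$ and with the tensor products is delicate bookkeeping, and one must also ensure the cocone of the second paragraph is genuinely functorial rather than merely pointwise — for which the naturality of Corollary~\ref{cor:Nthom} in the pair $(T^{\amalg I},\xi_I)$ is the crux.
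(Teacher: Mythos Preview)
Your proposal is correct and follows essentially the same approach as the paper. The paper is terser on the construction of $\tilde\mu_p$ (it simply asserts that the maps ``fit together'') and instead addresses your bookkeeping worry for multiplicativity by writing out one explicit $3\times 4$ commutative diagram whose columns are the constituents $\mu_p$, the Thom isomorphism $t$, and the collapse maps, and whose rows are the multiplication $\mu_\nabla$ followed by the map induced by $T^{\amalg I}\times_T T^{\amalg J}\to T^{\amalg(I+J)}$; the squares then commute by exactly the ingredients you list.
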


\begin{proof}
	Given a finite subset $I\subset\Z$, $\theta_I$ will denote the virtual vector bundle on $T^{\amalg I}$ that is trivial of rank $i$ over the $i$th component, and $\xi_I$ will be the induced bundle over $\Weil_p(T^{\amalg I})$. 
	The map~\eqref{eqn:inhomogeneous} is then the composite
	\[
	p_\otimes (\Th(\theta_I)\wedge E_T)
	\xrightarrow{\mu_p} \Th(\xi_I) \wedge E_S
	\stackrel t\simeq \Th(\rk\xi_I) \wedge E_S
	\to \bigvee_{r\in \Z}\Sigma^{2r,r} E_S,
	\]
	where $t$ is the Thom isomorphism. It is clear that these maps fit together to induce $\tilde\mu_p$.
	To prove that $\tilde\mu_p$ is multiplicative, consider two finite subsets $I,J\subset \Z$. We must show that the boundary of the following diagram commutes, where unlabeled arrows are induced by morphisms between Thom spectra:
	\begin{tikzmath}
		\diagram{
		p_\otimes (\Th(\theta_I)\wedge E_T)\wedge p_\otimes(\Th(\theta_J)\wedge E_T) & p_\otimes(\Th(\theta_I)\wedge\Th(\theta_J)\wedge E_T) & p_\otimes(\Th(\theta_{I+J})\wedge E_T) \\
		 \Th(\xi_I) \wedge E_S\wedge\Th(\xi_J)\wedge E_S & \Th(\xi_I)\wedge\Th(\xi_J) \wedge E_S & \Th(\xi_{I+J})\wedge E_S \\
		 \Th(\rk\xi_I) \wedge E_S\wedge \Th(\rk\xi_J)\wedge E_S & \Th(\rk\xi_I)\wedge \Th(\rk\xi_J) \wedge E_S & \Th(\rk\xi_{I+J})\wedge E_S \\
		 \bigvee_{r}\Sigma^{2r,r} E_S\wedge\bigvee_s\Sigma^{2s,s} E_S & \bigvee_{r,s}\Sigma^{2r,r}\Sigma^{2s,s} E_S & \bigvee_{u}\Sigma^{2u,u} E_S\rlap. \\
		};
		\arrows 
		(11-) edge node[above]{$\mu_\nabla$} (-12) (12-) edge (-13)
		(21-) edge node[above]{$\mu_\nabla$} (-22) (22-) edge (-23)
		(31-) edge node[above]{$\mu_\nabla$} (-32) (32-) edge (-33)
		(41-) edge node[above]{$\mu_\nabla$} (-42) (42-) edge (-43)
		(11) edge node[left]{$\mu_p\wedge\mu_p$} (21) (12) edge node[left]{$\mu_p$} (22) (13) edge node[right]{$\mu_p$} (23)
		(21) edge node[left]{$t\wedge t$} (31) (22) edge node[left]{$t$} (32) (23) edge node[right]{$t$} (33)
		(31) edge (41) (32) edge (42) (33) edge (43)
		;
	\end{tikzmath}
	The middle squares commute by multiplicativity and naturality of the Thom isomorphisms, and every other square commutes for obvious reasons.
\end{proof}

\begin{remark}
	We do not know if the maps $\tilde\mu_p$ of Proposition~\ref{prop:oriented-normed} always make $\bigvee_{n\in\Z}\Sigma^{2n,n}E$ into an incoherent normed spectrum (this would follow from a certain compatibility between norm functors and Thom isomorphisms). 
	If $E$ is an oriented \emph{normed} spectrum, it is in any case unlikely that $\bigvee_{n\in\Z}\Sigma^{2n,n}E$ can be promoted to a normed spectrum without further assumptions. The analogous question for oriented $\E_\infty$-ring spectra is already subtle, a sufficient condition being that $E$ admits an $\E_\infty$-orientation $\MGL_S\to E$ (see \cite{SpitzweckP}). Similarly, since $\bigvee_{n\in\Z}\Sigma^{2n,n}\MGL_S$ has a canonical structure of normed $\MGL_S$-module (see Theorem~\ref{thm:normedMGL}), a morphism of normed spectra $\MGL_S\to E$ induces a structure of normed spectrum on $\bigvee_{n\in\Z}\Sigma^{2n,n}E$. We will show in Proposition~\ref{prop:periodization-norms} that the underlying incoherent structure is given by the maps $\tilde\mu_p$.
\end{remark}

\begin{proposition}\label{prop:tambara}
	Let $E$ be an incoherent normed spectrum over $\FEt_S$, let $p\colon T\to S$ be a finite étale map, and let $f\colon Y\to X$ be a smooth proper map in $\SmQP_T$.
	\begin{enumerate}
		\item For every $\star\in\Pic(\SH(T))$, the following square commutes:
		\begin{tikzmath}
			\def\colsep{1.5em}
			\diagram{ \Map(\Th_Y(-\rm T_f),\star\wedge E_T) & \Map(\Th_{\Weil_pY}(-\rm T_{\Weil_pf}), p_\otimes(\star)\wedge E) \\
			\Map(\Sigma^\infty_+X,\star\wedge E_T) & \Map(\Sigma^\infty_+\Weil_pX,p_\otimes(\star)\wedge E)\rlap. \\};
			\arrows (11-) edge node[above]{$\nu_p$} (-12) (21-) edge node[above]{$\nu_p$} (-22)
			(11) edge node[left]{$\tau_f$} (21) (12) edge node[right]{$\tau_{\Weil_pf}$} (22);
		\end{tikzmath}
		\item If $E$ is oriented, the following square commutes:
		\begin{tikzmath}
			\def\colsep{2em}
			\def\rowsep{2em}
			\diagram{\bigoplus_{n\in\Z} \Map(\Sigma^\infty_+Y,\Sigma^{2n,n}E) & \bigoplus_{r\in\Z} \Map(\Sigma^\infty_+\Weil_pY,\Sigma^{2r,r}E) \\
			\bigoplus_{n\in\Z} \Map(\Sigma^\infty_+X,\Sigma^{2n,n}E) & \bigoplus_{r\in\Z} \Map(\Sigma^\infty_+\Weil_pX,\Sigma^{2r,r}E)\rlap. \\};
			\arrows (11-) edge node[above]{$\tilde\nu_p$} (-12) (21-) edge node[above]{$\tilde\nu_p$} (-22)
			([yshift=10pt]11.south) edge node[right]{$\tau_f$} (21) ([yshift=10pt]12.south) edge node[right]{$\tau_{\Weil_pf}$} (22);
		\end{tikzmath}
	\end{enumerate}
\end{proposition}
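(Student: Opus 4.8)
The plan is to deduce the whole statement from Corollary~\ref{cor:transfers}, which identifies $p_\otimes(\tau_f)$ with $\tau_{\Weil_pf}$ under the identifications of Corollary~\ref{cor:Nthom}, together with the naturality of the transfers $\nu_p$ and $\tilde\nu_p$ with respect to their source object. Since $E$ is only an incoherent normed spectrum, everything below takes place in $\h_1\scr S$, which is all that is claimed.

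For part~(1), fix $\star\in\Pic(\SH(T))$ and let $a\colon\Th_Y(-\rm T_f)\to\star\wedge E_T$ represent a point of the upper-left corner of the square. Unwinding the definition of $\nu_p$, we see that $\nu_p(a)$ is the composite $p_\otimes\Th_Y(-\rm T_f)\xrightarrow{p_\otimes(a)}p_\otimes(\star\wedge E_T)\xrightarrow{\sim}p_\otimes(\star)\wedge p_\otimes(E_T)\xrightarrow{\id\wedge\mu_p}p_\otimes(\star)\wedge E$, where the source has been identified with $\Th_{\Weil_pY}(-\rm T_{\Weil_pf})$ via Corollary~\ref{cor:Nthom} (using also the identification $q_*e^*\rm T_f\simeq\rm T_{\Weil_pf}$). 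The last two maps do not involve the source, so functoriality of $p_\otimes$ gives, for any morphism $g$ with target $\Th_Y(-\rm T_f)$, the identity $\nu_p(a\circ g)=\nu_p(a)\circ p_\otimes(g)$. Taking $g=\tau_f\colon\Sigma^\infty_+X\to\Th_Y(-\rm T_f)$ and using Corollary~\ref{cor:transfers} to identify $p_\otimes(\tau_f)$ with $\tau_{\Weil_pf}$, we obtain
\[
\nu_p(\tau_f(a))=\nu_p(a\circ\tau_f)=\nu_p(a)\circ\tau_{\Weil_pf}=\tau_{\Weil_pf}(\nu_p(a)),
\]
which is the desired commutativity.

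For part~(2), one runs the same argument while carrying along the Thom isomorphisms. Recall from Proposition~\ref{prop:oriented-normed} and its proof that $\tilde\nu_p$ is assembled from the maps~\eqref{eqn:inhomogeneous}, that is, from $\mu_p$ post-composed with Thom isomorphisms and with the structure maps $\Weil_p(T^{\amalg I})\to S$, and that the oriented pushforward $\tau_f$ on $\bigoplus_n\Sigma^{2n,n}E$-cohomology is the transfer $\tau_f$ of part~(1) post-composed with the Thom isomorphism $\Th_Y(-\rm T_f)\wedge E\simeq\Sigma^{-2d,-d}\Sigma^\infty_+Y\wedge E$, where $d=\dim f$. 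The square of part~(2) then factors as the square of part~(1) — applied with $\star=\S^{2n,n}$ for each $n$ and assembled over $\Z$ — stacked on top of a square built entirely from Thom isomorphisms and the projections to $S$; the latter commutes by the naturality and multiplicativity of Thom isomorphisms together with the compatibility of $p_\otimes$ with Thom spectra (Corollary~\ref{cor:Nthom}), exactly as in the proof of Proposition~\ref{prop:oriented-normed}.

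The main point requiring care is bookkeeping the identifications: that the equivalence $p_\otimes\Th_Y(-\rm T_f)\simeq\Th_{\Weil_pY}(-\rm T_{\Weil_pf})$ used in the target of $\nu_p$ is the one implicit in Corollary~\ref{cor:transfers}, and that in part~(2) the Thom isomorphisms over $\Weil_p(T^{\amalg I})$ match under the projection to $S$. Both follow routinely by tracing through Corollaries~\ref{cor:Nthom} and~\ref{cor:transfers}, so there is no genuine obstacle here — the substantive work is already contained in the ``norms vs.\ ambidexterity'' results, and what remains is a diagram chase.
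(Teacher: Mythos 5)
Your argument is correct and is precisely the paper's one-line proof (``Both assertions follow easily from the definition of the multiplicative transfer and Corollary~\ref{cor:transfers}'') spelled out in full: the key formula $\nu_p(a\circ g)=\nu_p(a)\circ p_\otimes(g)$ is just the definition of $\nu_p$ as $p_\otimes$ post-composed with structure maps that do not involve the source, and Corollary~\ref{cor:transfers} identifies $p_\otimes(\tau_f)$ with $\tau_{\Weil_pf}$. Part~(2) then follows from part~(1) plus the compatibility of Thom isomorphisms already used in Proposition~\ref{prop:oriented-normed}, exactly as you indicate.
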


\begin{proof}
	Both assertions follow easily from the definition of the multiplicative transfer and Corollary~\ref{cor:transfers}.
\end{proof}

\begin{lemma}\label{lem:extra-bc-square}
	Let $E$ be an incoherent normed spectrum over $\scr C\subset_\fet\Sch_S$. Consider a diagram
	\begin{tikzmath}
		\diagram{ W & \Weil_pW\times_UV & \Weil_pW \\ & V & U \\};
		\arrows (11-) edge[<-] node[above]{$e$} (-12) (12-) edge node[above]{$\pi_1$} (-13) (11) edge node[below left]{$f$} (22) (12) edge node[right]{$\pi_2$} (22) (22-) edge node[above]{$p$} (-23) (13) edge node[right]{$\Weil_p(f)$} (23);
	\end{tikzmath}
	in $\scr C$, where $p$ is finite étale and $f$ is smooth and quasi-projective.
	Then the following diagram commutes up to homotopy:
	\begin{tikzmath}
		\diagram{ \Map(\Sigma^\infty_+ W,E_V) & \Map(\1_W,E_W) & \Map(\1_{\Weil_pW\times_UV},E_{\Weil_pW\times_UV}) \\ & \Map(\Sigma^\infty_+ \Weil_pW,E_U) & \Map(\1_{\Weil_pW},E_{\Weil_pW})\rlap. \\};
		\arrows (12-) edge node[above]{$e^*$} (-13) (13) edge node[right]{$\nu_{\pi_1}$} (23) (11) edge node[below left]{$\nu_p$} (22) (11-) edge node[above]{$\simeq$} (-12) (22-) edge node[above]{$\simeq$} (-23);
	\end{tikzmath}
\end{lemma}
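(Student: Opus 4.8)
The plan is to unwind the definitions of $\nu_p$ and $\nu_{\pi_1}$ and to reduce the claim to the base change compatibility \eqref{eq:incoherent-compat-square} for $\mu_p$ combined with the distributivity law of Proposition~\ref{prop:distributivity}. First I would rewrite all the mapping spaces in the square in terms of maps out of unit objects. Since $\Sigma^\infty_+W = f_\sharp\1_W$ and $\Sigma^\infty_+\Weil_pW = \Weil_p(f)_\sharp\1_{\Weil_pW}$, the adjunctions $(f_\sharp,f^*)$ and $(\Weil_p(f)_\sharp,\Weil_p(f)^*)$, the equivalence $\pi_{1\otimes}\1\simeq\1$, and the fact that $E$, being cocartesian over $\scr C^\op$, satisfies $g^*E_Y\simeq E_X$ naturally for every $g\colon X\to Y$ in $\scr C$, identify both the left vertical composite and the top-then-right composite of the square with maps
\[ \Map_{\SH(W)}(\1_W,E_W) \longrightarrow \Map_{\SH(\Weil_pW)}(\1_{\Weil_pW},E_{\Weil_pW}), \]
and the task becomes to show these two maps agree.

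Next I would describe each map on an element $\psi\colon\1_W\to E_W$, with transpose $\phi\colon\Sigma^\infty_+W\to E_V$. On one side $\nu_p(\phi) = \mu_p\circ p_\otimes(\phi)$; transporting along the distributivity equivalence $\Dist_{\sharp\otimes}\colon\Weil_p(f)_\sharp\pi_{1\otimes}e^*\xrightarrow{\sim}p_\otimes f_\sharp$ of Proposition~\ref{prop:distributivity}(1) — which is exactly the equivalence $p_\otimes\Sigma^\infty_+W\simeq\Sigma^\infty_+\Weil_pW$ obtained in Corollary~\ref{cor:Nthom} for the zero bundle — and transposing back, one checks (writing $\phi = \epsilon_{E_V}\circ f_\sharp\psi$, using naturality of $\Dist_{\sharp\otimes}$ in its argument, the factorization $\Dist_{\sharp\otimes}=p_\otimes(\epsilon)\circ\Ex_{\sharp\otimes}$, and the triangle identities) that this side is the transpose of
\[ \1_{\Weil_pW} \simeq \pi_{1\otimes}e^*\1_W \xrightarrow{\pi_{1\otimes}e^*\psi} \pi_{1\otimes}e^*E_W = \pi_{1\otimes}\pi_2^*E_V \xrightarrow{(\Ex^*_\otimes)^{-1}} \Weil_p(f)^*p_\otimes E_V \xrightarrow{\Weil_p(f)^*\mu_p} \Weil_p(f)^*E_U \simeq E_{\Weil_pW}, \]
where $\Ex^*_\otimes\colon\Weil_p(f)^*p_\otimes\xrightarrow{\sim}\pi_{1\otimes}\pi_2^*$ is the exchange transformation (invertible by Proposition~\ref{prop:BC}) for the cartesian square with corners $\Weil_pW\times_UV$, $V$, $\Weil_pW$, $U$. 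On the other side, $\nu_{\pi_1}(e^*\psi) = \mu_{\pi_1}\circ\pi_{1\otimes}(e^*\psi)$ is the composite
\[ \1_{\Weil_pW} = \pi_{1\otimes}\1_{\Weil_pW\times_UV} \xrightarrow{\pi_{1\otimes}e^*\psi} \pi_{1\otimes}E_{\Weil_pW\times_UV} = \pi_{1\otimes}\pi_1^*E_{\Weil_pW} \xrightarrow{\mu_{\pi_1}} E_{\Weil_pW} \]
(using $e^*E_W = e^*f^*E_V = \pi_2^*E_V\simeq E_{\Weil_pW\times_UV}$, since $f\circ e=\pi_2$). Both composites begin with the same segment $\1_{\Weil_pW}\simeq\pi_{1\otimes}e^*\1_W\xrightarrow{\pi_{1\otimes}e^*\psi}\pi_{1\otimes}e^*E_W$, so it remains to compare the two tail maps $\pi_{1\otimes}\pi_2^*E_V\to E_{\Weil_pW}$: after the cocartesian identification $\pi_2^*E_V\simeq\pi_1^*E_{\Weil_pW}$ these are, respectively, $\Weil_p(f)^*\mu_p$ transported along $\Ex^*_\otimes$ and $\mu_{\pi_1}$. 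Their equality is precisely the pentagon \eqref{eq:incoherent-compat-square} applied to the cartesian square above (with $g=\pi_2$ and $q=\pi_1$, its two unlabeled equivalences compose to $\Ex^*_\otimes$ followed by the cocartesian identification), which completes the proof.

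The main obstacle is the bookkeeping in the middle step: verifying that the transpose of $\mu_p\circ p_\otimes(\phi)$ across $\Dist_{\sharp\otimes}$ is exactly $\Weil_p(f)^*\mu_p$ transported along $\Ex^*_\otimes$, with no leftover counit factor. This is a short diagram chase with the mate definition of $\Ex_{\sharp\otimes}$ and the triangle identities, in the spirit of — but considerably simpler than — the chases in the proof of Proposition~\ref{prop:alpha}.
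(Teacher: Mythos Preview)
Your approach is correct but takes a genuinely different route from the paper's. You unwind both $\nu_p$ and $\nu_{\pi_1}$ directly, transporting through the distributivity equivalence $\Dist_{\sharp\otimes}$ and reducing to the pentagon~\eqref{eq:incoherent-compat-square} after a mate/triangle-identity diagram chase. The paper instead avoids the distributivity machinery entirely: it factors the top row as $e^* = \gamma^*\circ\pi_2^*$ and the bottom row as $\id = \delta^*\circ\Weil_p(f)^*$, where $\gamma\colon\Weil_pW\times_UV\to\Weil_pW\times_UW$ is the graph of $e$ and $\delta\colon\Weil_pW\to\Weil_pW\times_U\Weil_pW$ is the diagonal. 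This splits the rectangle into two squares: the first (with vertical maps $\nu_p$ and $\nu_{\pi_1}$) is exactly an instance of~\eqref{eq:incoherent-compat-square}, and the second (with vertical maps $\nu_{\pi_1}$ and $\nu_{\pi_1}$) commutes by the geometric identity $\Weil_{\pi_1}(\gamma)=\delta$, since $\nu_{\pi_1}$ is natural with respect to maps of smooth $\Weil_pW$-schemes via their Weil restrictions. The paper's argument is shorter and sidesteps precisely the bookkeeping you flag as the main obstacle; your argument is more head-on and makes the role of $\Dist_{\sharp\otimes}$ explicit, at the cost of that chase.
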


\begin{proof}
	Let $\delta\colon \Weil_pW\to \Weil_pW\times_U\Weil_pW$ be the diagonal map and $\gamma\colon \Weil_pW\times_UV \to \Weil_pW\times_UW$ the graph of $e$. We then have a diagram
	\begin{tikzmath}
		\def\colsep{2em}
		\diagram{
		\Map(\Sigma^\infty_+W,E_V) & \Map(\Sigma^\infty_+(\Weil_pW\times_UW),E_{\Weil_pW\times_UV}) & \Map(\1_{\Weil_pW\times_UV},E_{\Weil_pW\times_UV}) \\
		\Map(\Sigma^\infty_+\Weil_pW, E_U) & \Map(\Sigma^\infty_+(\Weil_pW\times_U\Weil_pW),E_{\Weil_pW}) & \Map(\1_{\Weil_pW},E_{\Weil_pW})\rlap, \\
		};
		\arrows (11-) edge node[above]{$\pi_2^*$} (-12) (12-) edge node[above]{$\gamma^*$} (-13)
		(21-) edge node[above]{$\Weil_p(f)^*$} (-22) (22-) edge node[above]{$\delta^*$} (-23)
		(11) edge node[left]{$\nu_p$} (21) (12) edge node[right]{$\nu_{\pi_1}$} (22) (13) edge node[right]{$\nu_{\pi_1}$} (23);
	\end{tikzmath}
	where the upper composite is $e^*$ and the lower composite is the identity.
	The first square commutes by~\eqref{eq:incoherent-compat-square} and the second square commutes because $\Weil_{\pi_1}(\gamma)=\delta$.
\end{proof}

Recall the notion of Tambara functor on finite étale schemes from \cite[Definition 8]{bachmann-gwtimes}.

\begin{corollary} \label{cor:normed-spectrum-tambara-functor}
	Let $E$ be an incoherent normed spectrum over $\FEt_S$. Then the presheaf $X\mapsto E^{0,0}(X)=[\Sigma^\infty_+X,E]$ is a Tambara functor on $\FEt_S$ with additive transfers $\tau_p$ and multiplicative transfers $\nu_p$.
\end{corollary}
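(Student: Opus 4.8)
The plan is to verify, one axiom at a time, that the data $(f^*,\tau_p,\nu_p)$ defines a Tambara functor on $\FEt_S$ in the sense of \cite[Definition 8]{bachmann-gwtimes}, noting that each piece has essentially been established already. The underlying presheaf of commutative (semi)rings is $X\mapsto [\Sigma^\infty_+X,E]$ on $\FEt_S$: the restriction maps are the pullbacks $f^*$, the addition comes from the additive structure of $\SH(S)$, and the multiplication comes from the $\E_\infty$-ring structure on $E$ obtained by restricting the incoherent normed structure along $\Fin_\pt\simeq\Span(\Fin,\inj,\all)\to\Span(\FEt_S,\all,\fet)$ (Remark~\ref{rmk:symmmon}). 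The additive transfers $\tau_p$ for $p$ finite étale, together with their functoriality and their compatibility with $f^*$, are immediate from the fact that any $E\in\SH(S)$ represents a cohomology theory on $\Sm_S$ with finite étale transfers: these come from the ambidexterity maps $\alpha_p$ (an equivalence since $p$ is smooth and proper, see \cite[\sect2.4.d]{CD}), and they organize $X\mapsto[\Sigma^\infty_+X,E]$ into a finite-product-preserving functor on $\Span(\Sm_S,\all,\fet)$; restricting to $\FEt_S$ gives the additive Mackey structure.

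For the multiplicative transfers one takes the maps $\nu_p\colon[\Sigma^\infty_+X,E]\to[\Sigma^\infty_+\Weil_pX,E]$ defined in the paragraph preceding Proposition~\ref{prop:oriented-normed}, which make sense because every $X\in\FEt_T$ is quasi-projective over $T$ (finite schemes are affine) and $\Weil_pX\in\FEt_S$, so that $p_\otimes\Sigma^\infty_+X\simeq\Sigma^\infty_+\Weil_pX$. Each $\nu_p$ is a homomorphism of commutative (semi)rings since it is a multiplicative $\E_\infty$-map. The functoriality $\nu_{pq}\simeq\nu_p\nu_q$ follows from the coherence square for $\mu$ under composition together with the equivalence $(pq)_\otimes\simeq p_\otimes q_\otimes$ of Proposition~\ref{prop:composition}, and the base-change identity $f^*\nu_p\simeq\nu_q g^*$ for a cartesian square with $p$ finite étale follows from the pentagon~\eqref{eq:incoherent-compat-square} combined with the base-change equivalence $\Ex^*_\otimes\colon f^*p_\otimes\simeq q_\otimes g^*$ of Proposition~\ref{prop:BC}.

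The only substantive point is the distributivity (exponential) axiom, which relates a multiplicative transfer applied to an additive transfer to the data read off the exponential diagram: for finite étale maps $h\colon U\to T$ and $p\colon T\to S$ and the associated diagram~\eqref{eqn:exponential}, one must show $\nu_p\circ\tau_h\simeq\tau_{\Weil_ph}\circ\nu_q\circ e^*$. Since $h$ is finite étale, $\rm T_h=0$, so $\Th_U(-\rm T_h)=\Sigma^\infty_+U$ and $\tau_h$ is the ambidexterity transfer; the square of Proposition~\ref{prop:tambara}(1), specialized to this $h$ and to $\star=\1$ (and whose proof rests on Corollaries~\ref{cor:Nthom} and~\ref{cor:transfers}), then identifies $\nu_p\circ\tau_h$ with $\tau_{\Weil_ph}$ post-composed with the upper horizontal $\nu_p$-map, while Lemma~\ref{lem:extra-bc-square} identifies that map with $\nu_q\circ e^*$; pasting the two diagrams gives the exponential law. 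I expect the only real work to be bookkeeping: matching the indexing of the exponential diagram used in \cite[Definition 8]{bachmann-gwtimes} with~\eqref{eqn:exponential} and with the Weil-restriction and Thom identifications of Corollaries~\ref{cor:Nthom} and~\ref{cor:transfers}, so that Proposition~\ref{prop:tambara}(1) and Lemma~\ref{lem:extra-bc-square} compose on the nose. Once this is checked, the data $(f^*,\tau_p,\nu_p)$ satisfies all the axioms and hence $X\mapsto E^{0,0}(X)$ is a Tambara functor on $\FEt_S$.
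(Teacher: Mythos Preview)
Your proposal is correct and follows the same route as the paper: the paper also takes the additive and multiplicative Mackey structures as essentially given and reduces the proof to the distributivity pentagon, which it verifies exactly as you do by combining Lemma~\ref{lem:extra-bc-square} (identifying the upper $\nu_p$ with $\nu_{\pi_1}\circ e^*$) with Proposition~\ref{prop:tambara}(1). Your write-up is simply more explicit about the routine verifications that the paper compresses into the phrase ``both transfers are compatible with composition and base change.''
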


\begin{proof}
	Both transfers are compatible with composition and base change, so we only need to show that for maps $q\colon W\to V$ and $p\colon V\to U$ in $\FEt_S$, the pentagon
	\begin{tikzmath}
		\diagram{E^{0,0}(W) & E^{0,0}(\Weil_pW\times_UV) & E^{0,0}(\Weil_pW) \\ & E^{0,0}(V) & E^{0,0}(U) \\};
		\arrows (11) edge node[below left]{$\tau_q$} (22) (11-) edge node[above]{$e^*$} (-12) (12-) edge node[above]{$\nu_{\pi_1}$} (-13) (22-) edge node[below]{$\nu_p$} (-23) (13) edge node[right]{$\tau_{\Weil_p(q)}$} (23);
	\end{tikzmath}
	commutes. By Lemma~\ref{lem:extra-bc-square}, the composition of the two upper horizontal maps is $\nu_p\colon E^{0,0}(W)\to E^{0,0}(\Weil_pW)$. Hence, the commutativity of the pentagon is a special case of Proposition~\ref{prop:tambara}(1). 
\end{proof}

\begin{example}\label{ex:GWnorms}
	If $k$ is a field, recall that $\pi_{0,0}(\1_k)$ is canonically isomorphic to the Grothendieck–Witt ring $\GW(k)$. As $\1_k$ is a normed spectrum, we obtain a structure of Tambara functor on $\GW\colon \FEt_k^\op \to \Set$. We will prove in Theorem~\ref{thm:GW-norms-comparison} that the norms coincide with Rost's multiplicative transfers, at least if $\Char(k)\neq 2$.
\end{example}

\begin{example}
	Let $S$ be essentially smooth over a field, let $p\colon T\to S$ be finite étale, and let $X\in\SmQP_T$.
	For every $n\in \Z$,
	\[
	\Map(\Sigma^\infty_+X,\Sigma^{2n,n}\HH\Z_T) \simeq z^n(X,*)
	\]
	is (the underlying space of) Bloch's cycle complex in weight $n$ \cite[Lecture 19]{Mazza:2006}.
	Since $\bigvee_{n\in\Z}\Sigma^{2n,n}\HH\Z_S$ is a normed spectrum (see Example~\ref{ex:periodicHZ}), we obtain an $\E_\infty$-multiplicative transfer
	\[
	\nu_p\colon \bigoplus_{n\in\Z}z^n(X,\ast) \to \bigoplus_{r\in\Z}z^{r}(\Weil_pX,\ast).
	\]
	We will prove in Theorem~\ref{thm:FultonMacPhersonComparison} that this refines the Fulton–\<MacPherson–\<Karpenko transfer on Chow groups.
\end{example}

\begin{example}
	Let $p\colon T\to S$ be finite étale and let $X\in\SmQP_T$.
	Then
	\[
	\Map(\Sigma^\infty_+X,\KGL_T) \simeq \Omega^\infty\KH(X)
	\]
	is Weibel's homotopy $\K$-theory space \cite[Proposition 2.14]{Cisinski}.
	 Since $\KGL_S$ is a normed spectrum (see Theorem~\ref{thm:KGL}), we obtain an $\E_\infty$-multiplicative transfer
	\[
	\nu_p\colon \Omega^\infty\KH(X) \to \Omega^\infty\KH(\Weil_pX).
	\]
	We will also construct such a transfer between ordinary $\K$-theory spaces, refining the Joukhovitski transfer on $\K_0$ (see Corollary~\ref{cor:normK}).
\end{example}

\begin{example}[Power operations]
	\label{ex:totalpower}
	Let $E$ be a normed spectrum over $\Sm_S$ and let $E_0=\Omega^\infty E\in \H(S)$ be the underlying space of $E$.
	Let $G$ be a finite étale group scheme over $S$ acting on a finite étale $S$-scheme $T$. For $i\geq 0$, let $U_i\subset \Hom_S(G,\A^i_S)$ be the open subset where the action of $G$ is free, so that the map $\colim_{i\to\infty} U_i/G\to \B_\et G$ classifying the principal $G$-bundles $U_i\to U_i/G$ is a motivic equivalence (see \cite[\sect4.2]{MV}).
	Then the \emph{total $T$-power operation}
	\[
	\rm P_{T}\colon E_0 \to \Hom(\B_\et G,E_{0})
	\]
	in $\H(S)$
	can be defined as the limit as $i\to\infty$ of the composition
	\[
	E_0 \xrightarrow{f_i^*} \Hom((U_i\times_S T)/G, E_0) \xrightarrow{\nu_{p_i}} \Hom(U_i/G,E_{0}),
	\]
	where $f_i\colon (U_i\times_ST)/G\to S$ is the structure map and $p_i\colon (U_i\times_ST)/G\to U_i/G$ is the projection.
	For example, if $S$ is smooth over a field and $E=\bigvee_{n\in\Z}\Sigma^{2n,n}\HH\Z_S$, we obtain a power operation
	\[
	\rm P_T\colon z^*(X,*) \to z^*(X\times \B_\et G,*),
	\]
	natural in $X\in\Sm_S$.
	On homogeneous elements, this recovers the total power operation in motivic cohomology constructed by Voevodsky \cite[\sect5]{Voevodsky:2003}: in fact, given the construction of the norms on $\HH\Z_S$ in \sect\ref{sub:PST}, this is just a repackaging of Voevodsky's construction. 
	
	If $E$ is an oriented normed spectrum and $n\in\Z$, we can use the Thom isomorphism
	\[
	\MGL_S \wedge \colim_{i\to\infty }\Th_{U_i/G}(-\Weil_{p_i}\A^n) \simeq \MGL_S\wedge \Sigma^{-\A^{nd}}\Sigma^\infty_+\B_\et G
	\]
	 to define more generally
	\[
	\rm P_T\colon E_n \to \Hom(\B_\et G,E_{nd}),
	\]
	where $E_n=\Omega^\infty\Sigma^{\A^n}E$ and $d$ is the degree of $T$ over $S$.
\end{example}

\section{The norm–pullback–pushforward adjunctions}
\label{sec:f-tens-*-adj}

In this section we shall prove the following two results. The first is an analog of the fact that in any $\infty$-category of commutative rings, the tensor product is the coproduct.

\begin{theorem}\label{thm:norm-pullback}
Let $f\colon S' \to S$ be finite étale, let $\mathcal{C} \subset_\fet \Sch_S$, and let $\scr C'=\scr C_{/S'}$. Then there is an adjunction
\[
f_\otimes: \NAlg_{\scr C'}(\SH) \adj \NAlg_\scr{C}(\SH) :f^*
\]
where $f^*$ is the functor from Proposition~\ref{prop:categorical-props}(7) and $f_\otimes$ lifts the norm $f_\otimes\colon \SH(S') \to \SH(S)$.
\end{theorem}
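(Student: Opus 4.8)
The plan is to construct the adjunction by exhibiting $f_\otimes$ as a genuine left adjoint to the already-constructed pullback $f^* \colon \NAlg_{\scr C}(\SH)\to\NAlg_{\scr C'}(\SH)$, proceeding at the level of the span functors and then descending to sections. First I would observe that since $f\colon S'\to S$ is finite étale, the assignment $X\mapsto X\times_S S'$ gives a functor $\Span(\scr C,\all,\fet)\to\Span(\scr C',\all,\fet)$ (using $\scr C'=\scr C_{/S'}$, which is closed under finite coproducts and finite étale extensions over $S'$ precisely because $\scr C$ is over $S$), and that base change of $\SH^\otimes$ along this functor is exactly the functor classifying the cocartesian fibration over which $\NAlg_{\scr C'}(\SH)$ lives. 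The transformation $f^*\colon \SH^\otimes \to \SH^\otimes\circ(\ph\times_S S')$ of functors $\Span(\scr C,\all,\fet)\to\Cat_\infty$ is a map of normed $\infty$-categories (its underlying natural transformation of cocartesian fibrations preserves cocartesian edges), and the point is that it admits a \emph{relative left adjoint} $f_\otimes$ in the sense of Appendix~\ref{sec:app-adjunctions}. Concretely, for each $X\in\scr C$ the functor $f_X^*\colon \SH(X)\to\SH(X\times_S S')$ has a left adjoint — namely the norm $f_{X\otimes}$ along the finite étale map $X\times_S S'\to X$ — and these left adjoints are compatible with the structure maps of the fibrations because of the base-change compatibility $\Ex^*_\otimes$ (Proposition~\ref{prop:BC}) and the composition compatibility (Proposition~\ref{prop:composition}); this is precisely the input needed for Lemma~\ref{lemm:construct-relative-adjoint}, or rather its ``left adjoint'' variant, to assemble the pointwise left adjoints into a relative left adjoint $f_\otimes$ of cocartesian fibrations over $\Span(\scr C,\all,\fet)$.

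Next I would pass to sections: a relative adjunction of cocartesian fibrations over $\Span(\scr C,\all,\fet)$ induces an adjunction between the $\infty$-categories of sections $\Sect(\SH^\otimes\circ(\ph\times_S S')|\Span(\scr C',\all,\fet))$ — wait, more carefully, one first notes that sections of $\SH^\otimes$ over $\Span(\scr C',\all,\fet)$ are the same as sections over $\Span(\scr C,\all,\fet)$ of the base-changed functor, after which the relative adjunction yields $f_\otimes\dashv f^*$ between these two section categories. The remaining and main work is to check that $f_\otimes$ preserves the full subcategories of \emph{normed spectra}, i.e. sections that are cocartesian over the backward direction $\scr C^\op$ (resp.\ $\scr C'^\op$). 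For the right adjoint $f^*$ this is Proposition~\ref{prop:categorical-props}(7). For the left adjoint $f_\otimes$ one must verify: given a section $E$ of $\SH^\otimes\circ(\ph\times_S S')$ that is cocartesian over $\scr C'^\op$, the section $f_\otimes(E)$, whose value at $X\in\scr C$ is $f_{X\otimes}(E_{X\times_S S'})$, sends each structure morphism $g\colon X\to Y$ in $\scr C$ to a cocartesian edge in $\SH^\otimes$, i.e.\ $f_{Y\otimes}(E_{Y\times_S S'})\simeq g^* f_{Y\otimes}(E_{Y\times_S S'})$... no: one needs $g^*$ applied to the value at $Y$ to recover the value at $X$, which unwinds to the base-change statement $g^* f_{Y\otimes}(E_{Y\times_S S'})\simeq f_{X\otimes}(g'^* E_{Y\times_S S'})\simeq f_{X\otimes}(E_{X\times_S S'})$, the first equivalence being $\Ex^*_\otimes$ for the cartesian square relating $g$ and $g'=g\times_S S'$ (Proposition~\ref{prop:BC}, valid since $f$ is finite étale) and the second being the hypothesis that $E$ is cocartesian over $\scr C'^\op$. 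Hence $f_\otimes$ restricts to a functor $\NAlg_{\scr C'}(\SH)\to\NAlg_{\scr C}(\SH)$, and since the inclusions of the normed-spectra subcategories are full, the adjunction $f_\otimes\dashv f^*$ restricts to these subcategories.

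Finally I would record that the resulting $f_\otimes\colon\NAlg_{\scr C'}(\SH)\to\NAlg_{\scr C}(\SH)$ does lift the norm functor $f_\otimes\colon\SH(S')\to\SH(S)$: evaluating a section at the base object $S'\in\scr C'$ (resp.\ $S\in\scr C$) gives the forgetful functors to $\SH(S')$ and $\SH(S)$, and by construction $f_\otimes(E)_S = f_{S\otimes}(E_{S'})$, which is the norm along $S'\to S$. The genuinely delicate point — and the main obstacle — is the coherent construction of the relative left adjoint $f_\otimes$ as a map of cocartesian fibrations over the $2$-category of spans $\Span(\scr C,\all,\fet)$: one must produce it not just pointwise but with all the $2$-categorical coherence, and this is where a small amount of $(\infty,2)$-category theory enters (as flagged in the introduction, Theorem~\ref{thm:norm-pullback} is the one result that uses $(\infty,2)$-categories essentially). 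The cleanest route is to invoke the machinery of Appendix~\ref{sec:app-adjunctions} on relative adjunctions over an $(\infty,2)$-category together with the fact, established in Section~\ref{sec:functoriality}, that $\Ex^*_\otimes$ and the composition/commutativity data of norms are fully coherent as encoded by $\SH^\otimes$; once that coherence is in hand, the passage to sections and the check on cocartesian edges are formal.
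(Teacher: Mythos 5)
The crux of your plan is the claim that for each $X\in\scr C$ the functor $f_X^*\colon\SH(X)\to\SH(X\times_S S')$ has the norm $f_{X\otimes}$ as a \emph{left adjoint}, and that $\Ex^*_\otimes$ and Proposition~\ref{prop:composition} then supply the compatibilities needed to assemble a relative left adjoint via (the dual of) Lemma~\ref{lemm:construct-relative-adjoint}. But this first step is false: the norm $f_{X\otimes}$ is not an adjoint of $f_X^*$ on underlying spectrum categories. The left adjoint of $f_X^*$ is $f_{X\sharp}$, and the right adjoint is $f_{X*}$; the norm is a genuinely different, non-adjoint functor (it is not even exact — it is polynomial of degree equal to the degree of $f$). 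Already for the fold map $\nabla\colon S\sqcup S\to S$, the left adjoint of $\nabla^*$ is the coproduct $(E,F)\mapsto E\vee F$, while $\nabla_\otimes$ is the smash product $(E,F)\mapsto E\wedge F$; these disagree. So there is no pointwise adjunction to feed into the relative-adjunction machinery, and the strategy of producing $f_\otimes$ as a relative left adjoint of a map of cocartesian fibrations over $\Span(\scr C,\all,\fet)$ collapses at the outset. The adjunction $f_\otimes\dashv f^*$ in the theorem is a statement about $\NAlg$-categories only — it is the parametrized generalization of the fact that the tensor product is the coproduct in $\CAlg$ — and cannot be extracted from a fiberwise adjunction because no such fiberwise adjunction exists.

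There is a secondary unresolved point, which you flag yourself mid-sentence: you identify $\Sect(\SH^\otimes|\Span(\scr C',\all,\fet))$ with sections over $\Span(\scr C,\all,\fet)$ of the base-changed functor $\SH^\otimes\circ(\ph\times_S S')$, but the functor $\times_S S'\colon\Span(\scr C,\all,\fet)\to\Span(\scr C',\all,\fet)$ is not an equivalence (it is right adjoint to the forgetful functor, not inverse to it), so restriction along it is not an equivalence of section categories. This confusion is downstream of the main error, but even with the pointwise adjunction in hand you would still need to fix it.

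The paper's actual route is different in kind. It constructs (Theorem~\ref{thm:norm-pullback2}) an $(\infty,2)$-functor $\mathbf{Span}(\scr C,\lleft,\rright)\to\mathbf{Cat}_\infty$ sending $X$ to the section category $\Sect(\scr A_X)$, built in two stages via a functor into the left-lax slice $(\mathbf{Cat}_\infty^{\sslash\Span(\scr C,\all,\rright)})^{\oop}$ (Propositions~\ref{prop:construct-beta} and~\ref{prop:construct-gamma}). The adjunction $f_\otimes\dashv f^*$ is then obtained purely formally from the classical fact that for a right morphism $f\colon Y\to X$ the spans $Y\from Y\to X$ and $X\from Y\to Y$ are adjoint $1$-morphisms in $\mathbf{Span}(\scr C,\lleft,\rright)$, and an $(\infty,2)$-functor sends adjunctions to adjunctions. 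Thus the hard work is not in assembling coherent fiberwise adjoints — which don't exist — but in constructing the $(\infty,2)$-functorial refinement of $\Sect(\ph)$ in which the adjunction becomes automatic. Your closing paragraph gestures at the right heuristic (that $(\infty,2)$-categories are essential and that $\Ex^*_\otimes$ and composition coherence are the inputs), but misattributes the difficulty to coherence of pointwise adjoints rather than to their nonexistence.
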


\begin{theorem}\label{thm:second-adjunction}
	Let $f\colon S' \to S$ be a morphism and let $\scr C\subset_\fet\Sch_S$ and $\scr C'\subset_\fet\Sch_{S'}$ be subcategories such that $f_\sharp(\scr C')\subset \scr C$ and $f^*(\scr C)\subset\scr C'$.
	Assume that the pushforward functor $f_*\colon \SH(S')\to\SH(S)$ is compatible with any base change $T\to S$ in $\scr C$ (e.g., $\scr C\subset\Sm_S$ or $f$ is proper).
	Then there is an adjunction
\[ f^*: \NAlg_{\scr C}(\SH) \adj \NAlg_{\scr C'}(\SH): f_* \]
where $f^*$ and $f_*$ are the functors from Proposition \ref{prop:categorical-props}(7,8).
\end{theorem}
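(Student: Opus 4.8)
The plan is to construct the unit and counit of the claimed adjunction by hand at the level of underlying objects, and then to promote them to a coherent adjunction of $\infty$-categories using the calculus of relative adjunctions over $2$-categories of spans from the appendices. Recall from Proposition~\ref{prop:categorical-props}(7) that $f^*$ sends a normed spectrum $A$ over $\scr C$ to its restriction along the functor $\Span(f_\sharp)\colon\Span(\scr C',\all,\fet)\to\Span(\scr C,\all,\fet)$, so that $(f^*A)(X')\simeq A(X')$ for $X'\in\scr C'$; and from Proposition~\ref{prop:categorical-props}(8) that $f_*$ sends a normed spectrum $B$ over $\scr C'$ to the normed spectrum over $\scr C$ given on $X\in\scr C$ by $(f_*B)(X)=f_{X*}\bigl(B(X\times_S S')\bigr)$, where $f_X\colon X\times_S S'\to X$ is the base change of $f$ along $X\to S$. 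Note that $X\times_S S'=f^*X\in\scr C'$, and also $X\times_S S'\in\scr C$ since $f_\sharp(\scr C')\subset\scr C$, so both evaluations make sense.

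I would first exhibit the unit and counit objectwise. For $A\in\NAlg_\scr C(\SH)$ and $X\in\scr C$, the backward morphism $X\times_S S'\to X$ of $\scr C$ is carried by $A$ to a cocartesian edge of $\int\SH^\otimes$, whence a natural equivalence $f_X^*A(X)\simeq A(X\times_S S')$; combining this with the units $\eta_X\colon A(X)\to f_{X*}f_X^*A(X)$ of the adjunctions $f_X^*\dashv f_{X*}$ gives, after assembling over all $X\in\scr C$, a morphism $\eta\colon A\to f_*f^*A$ of normed spectra over $\scr C$. Dually, for $B\in\NAlg_{\scr C'}(\SH)$ and $Y'\in\scr C'$, the structure map $Y'\to S'$ furnishes a section $s\colon Y'\to Y'\times_S S'$ of $f_{Y'}$ which is a morphism of $\scr C'$, hence is carried by $B$ to a cocartesian edge, giving $s^*B(Y'\times_S S')\simeq B(Y')$; applying $f_{Y'*}$ to the unit $B(Y'\times_S S')\to s_*s^*B(Y'\times_S S')$ and using $f_{Y'}\circ s=\id$ one gets $(f^*f_*B)(Y')=f_{Y'*}B(Y'\times_S S')\to f_{Y'*}s_*s^*B(Y'\times_S S')\simeq s^*B(Y'\times_S S')\simeq B(Y')$, which assembles into $\epsilon\colon f^*f_*B\to B$. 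The triangle identities then reduce objectwise to those for $f_X^*\dashv f_{X*}$ together with $f_{Y'}s=\id$.

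To make all of this coherent I would reuse the setup from the proof of Proposition~\ref{prop:categorical-props}(8): writing $\scr B$ for $\Span(\scr C,\all,\fet)$, the fibrewise pullbacks $f_X^*$ constitute a morphism $\Theta$ of cocartesian fibrations over $\scr B$, from the one classifying $\SH^\otimes|\scr B$ to the one classifying $\SH^\otimes\circ\Span(f^*)$, and $\Theta$ preserves cocartesian edges thanks to base change for norms (Proposition~\ref{prop:BC}) and functoriality of pullback. By Lemma~\ref{lemm:construct-relative-adjoint}(1) it admits a relative right adjoint $\Theta^R$, from which $f_*$ is built by post-composition (after restricting sections along $\Span(f^*)$). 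Whiskering the relative adjunction $\Theta\dashv\Theta^R$ over $\scr B$ produces an adjunction between the associated section $\infty$-categories, which restricts to the full subcategories of sections cocartesian over $\scr C^\op$: the left adjoint preserves this condition because $\Theta$ preserves cocartesian edges, and the right adjoint preserves it precisely because $f_*\colon\SH(S')\to\SH(S)$ is compatible with base change along every $T\to S$ in $\scr C$ --- exactly the verification already carried out in Proposition~\ref{prop:categorical-props}(8). Identifying these subcategories with $\NAlg_\scr C(\SH)$ and $\NAlg_{\scr C'}(\SH)$ and matching the functors and their unit/counit with those above (via restriction along $\Span(f_\sharp)$) completes the argument; the general manipulation of relative adjunctions on sections is the content of Appendix~\ref{sec:app-adjunctions}.

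The hard part is the bookkeeping in the last step rather than any single idea: although the unit and counit are transparent on underlying objects, upgrading them to natural transformations of functors of $\infty$-categories --- tracking their compatibility with the norm maps $\mu_p$ and with base change along all the spans of $\Span(\scr C,\all,\fet)$, and verifying the triangle identities at that level --- is what the $2$-categorical formalism of Appendix~\ref{sec:app-adjunctions} is there to handle. The genuinely substantive inputs are the two base-change compatibilities already in hand: base change for norms (Proposition~\ref{prop:BC}), which makes $\Theta$ preserve cocartesian edges, and the hypothesis that $f_*$ commutes with base change in $\scr C$, which keeps the right adjoint inside the normed world.
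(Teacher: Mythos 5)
Your objectwise description of the unit and counit is correct, and you have correctly isolated the two base-change compatibilities as the substantive inputs. However, there is a genuine gap in the coherence step, and it is not just bookkeeping. The relative adjunction $\Theta\dashv\Theta^R$ over $\scr B=\Span(\scr C,\all,\fet)$, fed through Lemma~\ref{lemm:adjoints-pass-to-sections}, produces an adjunction between $\Sect(\SH^\otimes|\scr B)$ and $\Sect(\SH^\otimes\circ\Span(f^*)|\scr B)$ --- two section categories over the \emph{same} base $\scr B$. Your final step asserts that the cocartesian-over-backward part of $\Sect(\SH^\otimes\circ\Span(f^*)|\scr B)$ can be identified with $\NAlg_{\scr C'}(\SH)$ ``via restriction along $\Span(f_\sharp)$,'' but this identification is false in general. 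A section of $\SH^\otimes\circ\Span(f^*)$ cocartesian over $\scr C^\op$ records norm data only along finite \'etale maps of the form $p\times_S S'$ with $p$ finite \'etale in $\scr C$, whereas an object of $\NAlg_{\scr C'}(\SH)$ records norm data along \emph{all} finite \'etale maps in $\scr C'$, and these classes genuinely differ (e.g.\ take $S=\Spec\Q$, $S'=\Spec\Q(\sqrt 2)$, $\scr C=\FEt_S$, $\scr C'=\FEt_{S'}$: the extension $\Q(\sqrt[4]{2})/\Q(\sqrt 2)$ is not pulled back from $\Q$). So the adjunction you have constructed has the wrong right-hand side.

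What is missing is precisely the interaction between the cocartesian fibration and an adjunction \emph{between the two base $2$-categories of spans}, rather than a relative adjunction over a single base. The paper's proof first upgrades the ordinary adjunction $f_\sharp\colon\scr C'\adj\scr C\colon f^*$ to an adjunction $f^*\colon\Span(\scr C,\all,\fet)\adj\Span(\scr C',\all,\fet)\colon f_\sharp$ via Corollary~\ref{cor:span-adjunctions}(2) (the unit and counit are cartesian along finite \'etale maps, which is why this works), and then invokes Proposition~\ref{prop:push-pull-adjunction}. That proposition is exactly the general device that converts (i) an adjunction on bases and (ii) a fiberwise right adjoint to the unit into an adjunction between section categories \emph{over the two different bases}. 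Your relative adjunction over $\scr B$ supplies ingredient (ii) but never incorporates (i), and there is no way to recover (i) by post-composition or restriction after the fact. To salvage your approach you would essentially need to prove Proposition~\ref{prop:push-pull-adjunction} (or the equivalent relative Kan extension statement mentioned in the remark following it), at which point you have reproduced the paper's argument.
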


Theorem~\ref{thm:norm-pullback} will be proved in \sect\ref{sub:norm-pullback} (as a consequence of a much more general result, Theorem~\ref{thm:norm-pullback2}), and Theorem~\ref{thm:second-adjunction} will be proved at the end of \sect\ref{sub:pull-push}.

\begin{remark}
	In the setting of Theorem~\ref{thm:norm-pullback}, the category $\scr C_{/S'}$ is the \emph{unique} category $\scr C'\subset_\fet\Sch_{S'}$ such that $f_\sharp(\scr C')\subset\scr C$ and $f^*(\scr C)\subset \scr C'$. Indeed, the unit map $X \to f^* f_\sharp X$ is finite étale, so $X \in \scr C'$ as soon as $f_\sharp X \in \scr C$, and consequently $\scr C_{/S'} \subset \scr C'$. The other inclusion is clear.
	Note also that if $\scr C$ is $\Sch_S$, $\Sm_S$, or $\FEt_S$, then $\scr C_{/S'}$ is $\Sch_{S'}$, $\Sm_{S'}$, or $\FEt_{S'}$, respectively.
\end{remark}

\begin{example}\label{ex:NAlg-ess-smooth-adjunction}
Let $f\colon S' \to S$ be a pro-smooth morphism, i.e., $S'$ is the limit of a cofiltered diagram of smooth $S$-schemes with affine transition maps.
Then the assumptions of Theorem~\ref{thm:second-adjunction} hold if $\scr C\subset\Sch_S$ and $\scr C'\subset\Sch_{S'}$ are the full subcategories of pro-smooth schemes, since $\SH(\ph)$ satisfies pro-smooth base change (this follows from smooth base change and the continuity of $\SH(\ph)$ \cite[Proposition C.12(4)]{HoyoisGLV}).
Together with Proposition~\ref{prop:categorical-props}(6), we obtain an adjunction
\[ f^*: \NAlg_\Sm(\SH(S)) \adj \NAlg_\Sm(\SH(S')): f_*. \]
\end{example}

Although these results are mostly formal, the formalism is somewhat involved. It is not difficult to write down a functor $f_\otimes\colon \NAlg_{\scr C'}(\SH) \to \NAlg_\scr{C}(\SH)$, but constructing the adjunction is tricky. To that end we shall employ a small amount of $(\infty,2)$-category theory \cite[Appendix A]{GRderalg}. Informally, an $(\infty,2)$-category $\mathbf{C}$ is an $\infty$-category enriched in $\infty$-categories. Thus for $X,Y \in \mathbf{C}$ there is not just a mapping \emph{space} $\Map(X,Y) \in \mathcal{S}$ but also a mapping \emph{$\infty$-category} $\MAP(X,Y) \in \Cat_\infty$ such that $\Map(X,Y)=\MAP(X,Y)^\simeq$. The prime example of an $(\infty,2)$-category is $\mathbf{Cat}_\infty$: the objects of $\mathbf{Cat}_\infty$ are the $\infty$-categories, and for $\mathcal{C}, \mathcal{D} \in \mathbf{Cat}_\infty$ we have $\MAP(\mathcal{C}, \mathcal{D}) = \Fun(\mathcal{C}, \mathcal{D})$. 

The way we shall use $(\infty,2)$-categories is as follows. If $\mathbf{C}$ is an $(\infty,2)$-category, then there is a notion of \emph{adjunction} between $1$-morphisms in $\mathbf{C}$. In the $(\infty,2)$-category $\mathbf{Cat}_\infty$ the notion of adjunction agrees with the standard notion of adjunction of $\infty$-categories. Now if $F\colon \mathbf{C} \to \mathbf{Cat}_\infty$ is an $(\infty,2)$-functor and $f \dashv g$ is an adjunction in $\mathbf{C}$, then there is an induced adjunction of $\infty$-categories $F(f) \dashv F(g)$.

\subsection{The norm–pullback adjunction}
\label{sub:norm-pullback}

Let $\scr C$ be an $\infty$-category and let ``$\lleft$'' and ``$\rright$'' be classes of morphisms in $\scr C$ that contain the equivalences and are closed under composition and pullback along one another. Suppose moreover that if $f$ and $f\circ g$ are right morphisms, then $g$ is a right morphism. 
The $\infty$-category $\Span(\scr C,\lleft,\rright)$ can then be promoted to an $(\infty,2)$-category $\mathbf{Span}(\scr C,\lleft,\rright)$ in which a $2$-morphism is a commutative diagram
\begin{tikzmath}
	\def\colsep{1em}
	\def\rowsep{1em}
	\diagram{ & Y & \\ X & & Z\rlap, \\ & Y' & \\};
	\arrows (12) edge (21) edge (23) (32) edge (21) edge (23) (12) edge node[right]{$r$} (32);
\end{tikzmath}
where $r\colon Y\to Y'$ is necessarily a right morphism. This $(\infty,2)$-category is denoted by $\mathrm{Corr}(\scr C)^{\rright}_{\lleft;\rright}$ in \cite[\sect V.1.1]{GRderalg}.

\begin{theorem}\label{thm:norm-pullback2}
	Let $\scr C$ be an $\infty$-category and let $\rright\subset\lleft$ be classes of morphisms in $\scr C$ that contain the equivalences and are closed under composition and pullback along one another. Suppose moreover that if $f$ and $f\circ g$ are right morphisms, then $g$ is a right morphism. For $X\in\scr C$, denote by $\scr C_X\subset\scr C_{/X}$ the full subcategory spanned by the left morphisms. Let
	\[
	\scr A\colon \Span(\scr C,\all,\rright)\to\what\Cat{}_\infty, 
	\quad (X\xleftarrow{f} Y\xrightarrow{p} Z)\mapsto  p_\otimes f^*,
	\]
	be a functor, and let $\scr A_X$ denote the restriction of $\scr A$ to $\Span(\scr C_X,\all,\rright)$.
	Then there exists an $(\infty,2)$-functor
	\[
	\mathbf{Span}(\scr C,\lleft,\rright) \to \what{\mathbf{Cat}}{}_\infty
	\]
	sending $X\xleftarrow{f} Y\xrightarrow{p} Z$ to 
	\[
	\Sect(\scr A_X) \xrightarrow{f^*} \Sect(\scr A_Y) \xrightarrow{p_\otimes} \Sect(\scr A_Z),
	\]
	where $f^*$ is the restriction functor and $p_\otimes$ lifts the functor $p_\otimes\colon\scr A(Y)\to\scr A(Z)$.
	Moreover, $f^*$ and $p_\otimes$ preserve the sections that are cocartesian over backward morphisms.
\end{theorem}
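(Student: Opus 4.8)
The plan is to realize the $(\infty,2)$-functor $\mathbf{Span}(\scr C,\lleft,\rright)\to\what{\mathbf{Cat}}{}_\infty$ as the composite of $\scr A$ with a ``slice'' functoriality of span categories, upgraded to the $(\infty,2)$-categorical level via the machinery of Appendix~\ref{app:spans} and \cite[Appendix A]{GRderalg}. The starting observation is that $X\mapsto\scr C_X$ carries exactly the structure needed to produce a functor out of $\mathbf{Span}(\scr C,\lleft,\rright)$: if $f\colon Y\to X$ lies in $\lleft$, postcomposition with $f$ defines $f_!\colon\scr C_Y\to\scr C_X$ (using that $\lleft$ is closed under composition); if $p\colon Y\to Z$ lies in $\rright$, pullback along $p$ defines $\Phi_p\colon\scr C_Z\to\scr C_Y$, $(V\to Z)\mapsto(V\times_ZY\to Y)$ (the structure map $V\times_ZY\to Y$ again lies in $\lleft$, being a base change of $V\to Z$ along $p$); and these two operations satisfy the expected Beck--Chevalley isomorphisms, which are elementary to check in $\scr C$. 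Applying $\Span(\ph,\all,\rright)$ fiberwise turns $f_!$ and $\Phi_p$ into functors between the categories $\Span(\scr C_X,\all,\rright)$.

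Next one records how this interacts with $\scr A$. For $f\in\lleft$, the composite $\Span(\scr C_Y,\all,\rright)\to\Span(\scr C_X,\all,\rright)\to\Span(\scr C,\all,\rright)$ induced by $f_!$ is the forgetful functor, so $\scr A_Y\simeq\scr A_X\circ\Span(f_!)$ and $f^*\colon\Sect(\scr A_X)\to\Sect(\scr A_Y)$ is simply restriction along $\Span(f_!)$. For $p\in\rright$, consider the two functors $\Span(\scr C_Z,\all,\rright)\to\Span(\scr C,\all,\rright)$ sending $(V\to Z)$ to $V\times_ZY$ (the forgetful functor precomposed with $\Span(\Phi_p)$) and to $V$ (the plain forgetful functor): the projections $\mathrm{pr}\colon V\times_ZY\to V$ lie in $\rright$, hence are $1$-morphisms of $\Span(\scr C,\all,\rright)$, and they assemble into a natural transformation between these functors, naturality holding because the witnessing squares are cartesian. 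Postcomposing with $\scr A$ yields a natural transformation $\scr A_Y\circ\Span(\Phi_p)\Rightarrow\scr A_Z$ --- this is precisely where the base-change and distributivity coherences already built into ``$\scr A$ is a functor on $\Span(\scr C,\all,\rright)$'' get consumed. Since a natural transformation of $\Cat_\infty$-valued functors induces a cocartesian-edge-preserving map of the classifying cocartesian fibrations, hence a functor on left-lax limits, we define $p_\otimes\colon\Sect(\scr A_Y)\to\Sect(\scr A_Z)$ as restriction along $\Span(\Phi_p)$ followed by this pushforward.

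The main work is to assemble these pieces coherently into a single $(\infty,2)$-functor: the composition equivalences $(qp)_\otimes\simeq q_\otimes p_\otimes$, the exchange equivalences $\Ex^*_\otimes\colon f^*p_\otimes\simeq q_\otimes g^*$ over cartesian squares, and the action on the $2$-cells of $\mathbf{Span}(\scr C,\lleft,\rright)$ (each given by a morphism $r\colon Y\to Y'$ of apices, forced into $\rright$ by the right-cancellation hypothesis, and inducing a natural transformation via the factorizations $f=f'r$, $p=p'r$ together with the transformations constructed above). Here one feeds the data of the previous paragraphs into the $(\infty,2)$-categorical refinement of the span functoriality of Proposition~\ref{prop:spans-extended-functoriality} (in the spirit of \cite[\sect V.1]{GRderalg}), applied to the adjoint-like pair $(f_!,\Phi_p)$ on slices, and then postcomposes with the left-lax limit $\Sect(\ph)$. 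I expect this $(\infty,2)$-bookkeeping --- rather than any individual identification --- to be the main obstacle; the structural fact that makes it go through is that every square in sight is cartesian with its projection legs in $\rright$, so all the relevant exchange transformations are invertible and the a priori lax functoriality of $X\mapsto\Span(\scr C_X,\all,\rright)$ becomes strict after applying $\scr A$ and $\Sect$.

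Finally, preservation of the sections that are cocartesian over backward morphisms is immediate for $f^*$, since $\Span(f_!)$ carries $\scr C_Y^\op$ into $\scr C_X^\op$ and restricting a section cocartesian over $\scr C_X^\op$ along it keeps it cocartesian over $\scr C_Y^\op$. For $p_\otimes$, let $s\in\Sect(\scr A_Y)$ be cocartesian over $\scr C_Y^\op$ and let $g\colon W\to V$ be a morphism in $\scr C_Z$; writing $Y_V=V\times_ZY$, $Y_W=W\times_ZY$ and $g_Y\colon Y_W\to Y_V$ for the base change of $g$, one has
\[
g^*\bigl((p_\otimes s)(V)\bigr)\simeq g^*(\mathrm{pr}_V)_\otimes s(Y_V)\simeq(\mathrm{pr}_W)_\otimes g_Y^*\,s(Y_V)\simeq(\mathrm{pr}_W)_\otimes s(Y_W)\simeq(p_\otimes s)(W),
\]
where the second equivalence is the base-change equivalence encoded in $\scr A$ (for the cartesian square with forward legs $\mathrm{pr}_V,\mathrm{pr}_W\in\rright$), and the third uses that $g_Y$ is a morphism in $\scr C_Y$ over which $s$ is cocartesian. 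Hence $p_\otimes s$ is cocartesian over $\scr C_Z^\op$, as required.
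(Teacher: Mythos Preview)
Your approach is essentially the paper's, and your identifications of $f^*$ as restriction along $\Span(f_!)$ and of $p_\otimes$ as restriction along $\Span(\Phi_p)$ followed by pushforward along the natural transformation induced by the projections $V\times_ZY\to V$ are exactly right; your argument for preservation of cocartesian sections is also correct and is just an unpacking of the explicit description of $p_\otimes$.

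The point where you are vague --- and where the paper does real work --- is the ``$(\infty,2)$-bookkeeping''. The paper resolves it by explicitly factoring the desired functor as
\[
\mathbf{Span}(\scr C,\lleft,\rright)\;\longrightarrow\;(\mathbf{Cat}_\infty^{\sslash\Span(\scr C,\all,\rright)})^{\oop}\;\longrightarrow\;\what{\mathbf{Cat}}{}_\infty,
\]
where the intermediate target is the \emph{left-lax slice} $(\infty,2)$-category over $\Span(\scr C,\all,\rright)$. The first arrow (the paper's Proposition~\ref{prop:construct-beta}) is obtained from the GR universal property applied to $X\mapsto\Span(\scr C_X,\all,\rright)$ over the base, using that for $p\in\rright$ the pair $(\Span(f_\sharp),\Span(\Phi_p))$ is an adjunction which can be \emph{lifted to the lax slice} via the swallowtail coherence (Lemma~\ref{lem:swallowtail}); the non-invertible $2$-cell $\epsilon$ in the lax slice is precisely your natural transformation $\scr A_Y\circ\Span(\Phi_p)\Rightarrow\scr A_Z$. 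The second arrow (Proposition~\ref{prop:construct-gamma}) is a separate general fact: for any cocartesian fibration $\scr X\to\scr B$, the assignment $\scr D\mapsto\Fun_\scr B(\scr D,\scr X)$ is an $(\infty,2)$-functor on the left-lax slice $(\mathbf{Cat}_\infty^{\sslash\scr B})^\oop$. This is what makes ``postcompose with $\Sect(\ph)$'' meaningful: $\Sect$ is not a functor on $\Cat_\infty$, but it is one on the lax slice, and identifying this domain is the step you are missing. Your sentence ``all the relevant exchange transformations are invertible and the a priori lax functoriality \dots\ becomes strict after applying $\scr A$ and $\Sect$'' is not quite the mechanism: the transformation coming from $p$ is genuinely non-invertible, and the point is rather that the lax slice absorbs it as data.
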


\begin{remark}
	When $\scr C=\Sch$, the assumption on the class of right morphisms in Theorem~\ref{thm:norm-pullback2} holds for finite étale morphisms, but not for finite locally free morphisms.
	In particular, the theorem does not apply to $\H_\pt^\otimes\colon \Span(\Sch,\all,\flf)\to\what\Cat_\infty$.
\end{remark}

If $f\colon Y\to X$ is a right morphism in $\scr C$, then the spans
\[
Y\xleftarrow{\id} Y\xrightarrow f X\quad\text{and}\quad X\xleftarrow{f}Y \xrightarrow{\id} Y
\]
are adjoint in $\mathbf{Span}(\scr C,\lleft,\rright)$. In particular, we deduce from Theorem~\ref{thm:norm-pullback2} an adjunction
	\[
	f_\otimes :\Sect(\scr A_Y)\rightleftarrows \Sect(\scr A_X): f^*,
	\]
as well as an induced adjunction between the full subcategories of sections that are cocartesian over backward morphisms.
Thus, Theorem~\ref{thm:norm-pullback} follows from Theorem~\ref{thm:norm-pullback2} applied to the functor $\SH^\otimes\colon \Span(\scr C,\all,\fet)\to\what\Cat_\infty$.

\begin{example}\label{ex:enhanced-monoidal-structure-on-NAlg}
	Let $\scr C=\Sch$ with smooth morphisms as left morphisms and finite étale morphisms as right morphisms, and let $\scr A=\SH^\otimes\colon \Span(\Sch,\all,\fet)\to\what\Cat{}_\infty$. Then we have an $(\infty,2)$-functor
	\[
	\NAlg_\Sm(\SH)^\otimes\colon \mathbf{Span}(\Sch,\mathrm{smooth},\fet) \to \what{\mathbf{Cat}}{}_\infty,\quad S\mapsto \NAlg_\Sm(\SH(S)),\quad (X\xleftarrow{f} Y\xrightarrow{p} Z)\mapsto  p_\otimes f^*.
	\]
	Similarly, there are $(\infty,2)$-functors $\NAlg_{\Sch}(\SH)^\otimes$ and $\NAlg_{\FEt}(\SH)^\otimes$ defined on $\mathbf{Span}(\Sch,\all,\fet)$ and $\mathbf{Span}(\Sch,\fet,\fet)$, respectively.
\end{example}

The construction of the $(\infty,2)$-functor of Theorem~\ref{thm:norm-pullback2} will be divided into two steps:
\begin{equation} \label{eq:plan-alpha-beta-gamma}
\mathbf{Span}(\scr C,\lleft,\rright) \to (\mathbf{Cat}_{\infty}^{\sslash \Span(\scr C,\all,\rright)})^{\oop} \to \what{\mathbf{Cat}}{}_\infty.
\end{equation}
Here, $\mathbf{Cat}_{\infty}^{\sslash \Span(\scr C,\all,\rright)}$ denotes the left-lax slice $(\infty,2)$-category (to be defined momentarily), and ``$\oop$'' inverts the direction of the $1$-morphisms. The first functor sends $X$ to $\Span(\scr C_X,\all,\rright)$ and will be constructed using the universal property of the $(\infty,2)$-category of spans. The second functor sends $t\colon \scr D\to\Span(\scr C,\all,\rright)$ to $\Sect(\scr A\circ t)$. It is this refined $(\infty,2)$-functoriality of $\Sect(\ph)$ that is ultimately responsible for the automatic adjunction.

Let $\mathbf C$ be an $(\infty,2)$-category and $X\in\mathbf C$. The \emph{left-lax slice $(\infty,2)$-category} $\mathbf C^{\sslash X}$ is 
\[
\mathbf C^{\sslash X}=\Fun(\Delta^1, \mathbf C)_\text{left-lax} \times_{\mathbf C} \{X\},\]
 where $\Fun(\Delta^1, \mathbf C)_\text{left-lax}$ is the $(\infty,2)$-category of strict functors and left-lax natural transformations, and $\Fun(\Delta^1, \mathbf C)_\text{left-lax} \to \mathbf C$ is evaluation at $1$ \cite[\sect A.2.5.1.1]{GRderalg}.
Thus, an object of $\mathbf C^{\sslash X}$ is a pair $(Y,t)$ where $Y\in\mathbf C$ and $t\colon Y\to X$, and the mapping $\infty$-categories are given by
 \begin{equation}\label{eqn:lax-slice-maps}
 \MAP_{\mathbf C^{\sslash X}}((Y,t),(Z,u)) \simeq \MAP(Y,Z) \times_{\MAP(Y,X)} \Fun(\Delta^1,\MAP(Y,X)) \times_{\MAP(Y,X)} \{t\},
 \end{equation}
 with the obvious composition law. Explicitly:
\begin{itemize}
	\item a $1$-morphism $(Y,t)\to (Z,u)$ is a pair $(f,\epsilon)$ where $f\colon Y\to Z$ and $\epsilon\colon u\circ f\to t$;
	\item a $2$-morphism $(f,\epsilon)\to (f',\epsilon')$ between $1$-morphisms $(Y,t)\to (Z,u)$ is a pair $(\phi,\alpha)$ where $\phi\colon f\to f'$ and $\alpha\colon\epsilon\simeq \epsilon'(u\phi)$.
\end{itemize}
The forgetful functor $(\mathbf C^{\sslash X})^\oop\to\mathbf C^\oop$ is the $1$-cocartesian fibration classified by $\MAP(-,X)$ \cite[\sect A.2.5.1.5]{GRderalg}.

\begin{lemma}\label{lem:swallowtail}
	Let $\mathbf C$ be an $(\infty,2)$-category and let $f\colon X\to Y$ be a morphism in $\mathbf C$ with right adjoint $g\colon Y\to X$ and counit $\epsilon\colon fg\to \id_Y$. Then the $1$-morphisms
	\[
	\begin{tikzpicture}[baseline=(N.south)]
		\def\colsep{1.5em}
		\diagram{
		X & \relax & Y \\ & Y & \\
		};
		\arrows (11-) edge node[above]{$f$} (-13) (11) edge node[below left,pos=.4]{$f$} (22) (13) edge node[below right,pos=.4]{$\id$} (22)
		(12) edge[draw=none] node(N)[pos=.4,rotate=45,font=\normalsize]{$\Leftarrow$} node[pos=.4,above left]{$\id$} (22);
	\end{tikzpicture}
	\quad\text{and}\quad
	\begin{tikzpicture}[baseline=(N.south)]
		\def\colsep{1.5em}
		\diagram{
		Y & \relax & X \\ & Y & \\
		};
		\arrows (11-) edge node[above]{$g$} (-13) (11) edge node[below left,pos=.4]{$\id$} (22) (13) edge node[below right,pos=.4]{$f$} (22)
		(12) edge[draw=none] node(N)[pos=.4,rotate=45,font=\normalsize]{$\Leftarrow$} node[pos=.4,above left]{$\epsilon$} (22);
	\end{tikzpicture}
	\]
	in $\mathbf C^{\sslash Y}$ are adjoint.
\end{lemma}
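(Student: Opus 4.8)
The plan is to reduce to a $2$-categorical computation and then invoke the rigidity of adjunctions in $(\infty,2)$-categories. Recall that a pair of $1$-morphisms $u\colon A\to B$ and $v\colon B\to A$ in an $(\infty,2)$-category $\mathbf D$ constitutes an adjunction $u\dashv v$ as soon as one produces a unit $2$-morphism $\widetilde\eta\colon\id_A\to vu$ and a counit $2$-morphism $\widetilde\epsilon\colon uv\to\id_B$ satisfying the two triangle identities; equivalently, an adjunction in the homotopy $2$-category $\h_2\mathbf D$ lifts, essentially uniquely, to a coherent adjunction in $\mathbf D$ (see \cite{GRderalg}). Applying this with $\mathbf D=\mathbf C^{\sslash Y}$, it suffices to exhibit such data for the two $1$-morphisms of the statement, which I write $(f,\id)\colon(X,f)\to(Y,\id_Y)$ (the second entry being the lax-compatibility cell $\id_Y\circ f\to f$, equal to the identity) and $(g,\epsilon)\colon(Y,\id_Y)\to(X,f)$ (with lax-compatibility cell $fg\to\id_Y$ equal to the counit $\epsilon$ of $f\dashv g$), using the description of morphisms in $\mathbf C^{\sslash Y}$ recalled above and \eqref{eqn:lax-slice-maps}. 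I claim $(f,\id)$ is left adjoint to $(g,\epsilon)$, matching $f\dashv g$.

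First I would unwind the composition law in the left-lax slice to compute the two relevant composites, obtaining $(g,\epsilon)\circ(f,\id)\simeq(gf,\epsilon f)$ as an endomorphism of $(X,f)$ and $(f,\id)\circ(g,\epsilon)\simeq(fg,\epsilon)$ as an endomorphism of $(Y,\id_Y)$, where $\epsilon f\colon fgf\to f$ is the evident whiskering. Let $\eta\colon\id_X\to gf$ be the unit of $f\dashv g$ and fix witnesses $\beta$ and $\beta'$ of its two triangle identities, so $\beta\colon(\epsilon f)\circ(f\eta)\simeq\id_f$ and $\beta'\colon(g\epsilon)\circ(\eta g)\simeq\id_g$. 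Inspecting \eqref{eqn:lax-slice-maps}, a $2$-morphism $\id_{(X,f)}=(\id_X,\id)\to(gf,\epsilon f)$ consists of a $2$-morphism $\id_X\to gf$ in $\mathbf C$ together with a homotopy between the lax-compatibility cell of the source (the identity of $f$) and that of the target whiskered along it; for the choice $\eta$ this second datum is precisely a homotopy $\id_f\simeq(\epsilon f)\circ(f\eta)$, i.e.\ $\beta$. I would therefore take $\widetilde\eta=(\eta,\beta)$ as unit, and similarly $\widetilde\epsilon=(\epsilon,\mathrm{refl})\colon(fg,\epsilon)\to(\id_Y,\id)=\id_{(Y,\id_Y)}$ as counit, whose second component is forced to be reflexivity.

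It then remains to verify the two triangle identities for $(\widetilde\eta,\widetilde\epsilon)$ in $\h_2\mathbf C^{\sslash Y}$. On underlying $1$- and $2$-morphisms in $\mathbf C$ these are exactly the triangle identities of $f\dashv g$, so the only issue is to match the lax-compatibility cells carried along the whiskered composites. One of the two is immediate: the compatibility cell of the composite expressing the first triangle is built from $\beta$ together with the witness that $(\epsilon f)\circ(f\eta)$ is $\id_f$, and these cancel by construction, so the composite agrees with $\id_{(f,\id)}$. The second triangle identity is, I expect, the main obstacle: its compatibility cell involves both $\beta$ and $\beta'$, and trivializing it requires the coherence relating the two triangle identities of the coherent adjunction $f\dashv g$ (the swallowtail coherence). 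I expect this to be a bounded though somewhat intricate $2$-cell pasting verification, after which the lifting statement quoted at the outset completes the proof. (An alternative route would bypass the homotopy $2$-category altogether by transporting the adjunction $f\dashv g$ along the $1$-cocartesian fibration $(\mathbf C^{\sslash Y})^\oop\to\mathbf C^\oop$ of \cite{GRderalg}, but this does not obviously shorten the bookkeeping.)
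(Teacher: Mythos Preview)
Your approach is correct and essentially the same as the paper's: choose unit $(\eta,\alpha)$ and counit $(\epsilon,\id_\epsilon)$ in $\mathbf C^{\sslash Y}$, verify the first triangle identity directly via $\alpha^{-1}$, and use swallowtail coherence for the second. The one point the paper makes precise that you leave vague is the ``intricate pasting verification'': rather than a general pasting argument, the paper observes that once $\epsilon$, $\eta$, and the first triangle witness $\alpha$ are fixed, one may \emph{choose} the second triangle witness $\beta$ so that the swallowtail identity
\[
\epsilon \stackrel\alpha\simeq \epsilon(\epsilon fg)(f\eta g) \simeq \epsilon(fg\epsilon)(f\eta g)\stackrel{\beta^{-1}}\simeq \epsilon
\]
is homotopic to $\id_\epsilon$ (citing \cite{Lack,Gurski,RiehlVerity}); the second triangle identity in $\mathbf C^{\sslash Y}$ then reduces exactly to this equation and is witnessed by $\beta^{-1}$.
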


\begin{proof}
	Choose a unit $\eta\colon \id_X \to gf$ and equivalences
	\[
	\alpha\colon \id_f\simeq (\epsilon f)(f\eta) \quad\text{and}\quad \beta\colon \id_g\simeq (g\epsilon)(\eta g)
	\]
	witnessing the triangle identities.
	By the swallowtail coherence for adjunctions in $3$-categories, once $\epsilon$, $\eta$, and $\alpha$ are chosen, we can choose $\beta$ so that the composite
	\begin{equation}\label{eqn:swallowtail}
	\epsilon \stackrel\alpha\simeq \epsilon(\epsilon fg)(f\eta g) \simeq \epsilon(fg\epsilon)(f\eta g)\stackrel{\beta^{-1}}\simeq \epsilon
	\end{equation}
	is homotopic to $\id_\epsilon$ (see \cite[\sect 5]{Lack} or \cite[Remark 2.2]{Gurski})\footnote{For $\mathbf C=\mathbf{Cat}_\infty$, which suffices for our purposes, the complete higher coherence of adjunctions, including swallowtail coherence, is formulated in \cite[\sect1.1]{RiehlVerity}.}.
	
	We claim that the $2$-morphisms 
	\[
	(\eta,\alpha)\colon\id_{(X,f)}\to (g,\epsilon)(f,\id_f)\quad\text{and}\quad (\epsilon,\id_\epsilon)\colon (f,\id_f)(g,\epsilon)\to \id_{(Y,\id)}
	\]
	are the unit and counit for an adjunction between $(f,\id_f)$ and $(g,\epsilon)$ in $\mathbf C^{\sslash Y}$. We have
	\[
	((\epsilon,\id_\epsilon)(f,\id_f))((f,\id_f)(\eta,\alpha)) = ((\epsilon f)(f\eta), \alpha),
	\]
	which is homotopic to $(\id_f,\id_{\id_f})=\id_{(f,\id_f)}$ via $\alpha^{-1}$. For the other triangle identity, we have
	\[
	((g,\epsilon)(\epsilon,\id_\epsilon))((\eta,\alpha) (g,\epsilon)) = ((g\epsilon)(\eta g),\gamma)
	\]
	where $\gamma$ is the composite homotopy
	\[
	\epsilon \stackrel\alpha\simeq \epsilon(\epsilon fg)(f\eta g) \simeq \epsilon(fg\epsilon)(f\eta g).
	\]
	By~\eqref{eqn:swallowtail}, the $2$-morphism $((g\epsilon)(\eta g),\gamma)$ is homotopic to $(\id_g,\id_\epsilon)=\id_{(g,\epsilon)}$ via $\beta^{-1}$.
	This concludes the proof.
\end{proof}

The following proposition gives the first part of~\eqref{eq:plan-alpha-beta-gamma}.

\begin{proposition}\label{prop:construct-beta}
	Under the assumptions of Theorem~\ref{thm:norm-pullback2}, the functor
	\[
	\scr C_\lleft^\op \to (\Cat_{\infty/\Span(\scr C,\all,\rright)})^\op, \quad X\mapsto \Span(\scr C_X,\all,\rright),\quad (X\xleftarrow fY)\mapsto f_\sharp,
	\]
	extends uniquely to an $(\infty,2)$-functor
	\[
	\mathbf{Span}(\scr C,\lleft,\rright) \to (\mathbf{Cat}_\infty^{\sslash\Span(\scr C,\all,\rright)})^\oop,
	\]
	which sends a forward right morphism $p\colon Y\to Z$ to $p^*\colon \Span(\scr C_Z,\all,\rright)\to \Span(\scr C_Y,\all,\rright)$.
\end{proposition}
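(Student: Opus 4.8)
The plan is to produce the $(\infty,2)$-functor by invoking the universal property of the $(\infty,2)$-category of spans established in Appendix~\ref{app:spans}. That universal property characterizes an $(\infty,2)$-functor out of $\mathbf{Span}(\scr C,\lleft,\rright)$ into an $(\infty,2)$-category $\mathbf D$ by its restriction along $\scr C_\lleft^\op\hookrightarrow\mathbf{Span}(\scr C,\lleft,\rright)$ (which sends a left morphism to the associated backward span): a functor $\Phi\colon\scr C_\lleft^\op\to\mathbf D$ extends, necessarily uniquely, to $\mathbf{Span}(\scr C,\lleft,\rright)$ precisely when (i) for every right morphism $p$, the $1$-morphism $\Phi(p)$ admits an adjoint in $\mathbf D$, on the side corresponding to the adjunction between the backward and forward spans of $p$, and (ii) a Beck--Chevalley condition holds for every cartesian square built from a left morphism and a right morphism. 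The standing hypotheses on $\lleft$ and $\rright$ are exactly what is needed for $\mathbf{Span}(\scr C,\lleft,\rright)$ to exist as an $(\infty,2)$-category and for these adjoint spans to be available; note that the cancellation hypothesis on $\rright$ forces the diagonal of a right morphism to be right, which is what makes those adjunctions work. We take $\Phi$ to be the functor of the statement, $X\mapsto(\Span(\scr C_X,\all,\rright),t_X)$ and $f\mapsto f_\sharp$, where $t_X$ forgets the structure map to $X$ and $f_\sharp$ is post-composition with $f$; its functoriality, and the fact that $f_\sharp$ together with the evident identification $t_X\circ f_\sharp\simeq t_Y$ defines a $1$-morphism in $(\mathbf{Cat}_\infty^{\sslash\Span(\scr C,\all,\rright)})^\oop$, are routine.

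The substantial point is (i): for $p\colon Y\to Z$ right, we must exhibit an adjoint to $p_\sharp\colon\Span(\scr C_Y,\all,\rright)\to\Span(\scr C_Z,\all,\rright)$ inside $(\mathbf{Cat}_\infty^{\sslash\Span(\scr C,\all,\rright)})^\oop$, and it should be ``pullback along $p$''. First, pullback along $p$ does define a functor $p^*\colon\Span(\scr C_Z,\all,\rright)\to\Span(\scr C_Y,\all,\rright)$, using that $\lleft$ and $\rright$ are closed under pullback along one another and that $\rright\subset\lleft$, so that pullback carries a span in $\scr C_Z$ to one in $\scr C_Y$ with its forward leg still in $\rright$. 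Next I would establish the adjunction $p_\sharp\dashv p^*$ first as plain $\infty$-categories: the standard post-composition/pullback adjunction $\Sigma_p\dashv p^*$ on the slice categories $\scr C_Y\rightleftarrows\scr C_Z$ promotes to the span $\infty$-categories, with unit and counit given by the ``wrong-way'' span maps $\id_A\to p^*p_\sharp A$ of the form $A\xleftarrow{\mathrm{pr}}A\times_ZY\xrightarrow{\id}A\times_ZY$ and $p_\sharp p^*B\to\id_B$ of the analogous form (the forward leg, being a pullback of $p$, lies in $\rright$), the triangle identities being a diagram chase with these spans. To lift this to the left-lax slice I would use that the forgetful $(\infty,2)$-functor $(\mathbf{Cat}_\infty^{\sslash W})^\oop\to\mathbf{Cat}_\infty^\oop$ is the $1$-cocartesian fibration classified by $\MAP(-,W)$ \cite[\sect A.2.5.1.5]{GRderalg}: the lax-slice $1$-morphism attached to $p_\sharp$ carries the identity coherence datum and is therefore cocartesian, which lets one transport the bare adjunction to the lax slice, with $p^*$ acquiring the coherence datum supplied by the projections $V\times_ZY\to V$. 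Condition (ii) is then a comparison of two iterated pullbacks, immediate from the pasting law for cartesian squares together with the matching compatibility of the coherence data. The universal property now yields the desired $(\infty,2)$-functor; its restriction to $\scr C_\lleft^\op$ is $\Phi$ by construction, uniqueness is part of the universal property, and tracing through shows a forward right morphism $p$ is sent to the adjoint $p^*$ of $p_\sharp$, as asserted.

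The main obstacle is bookkeeping rather than mathematics: one must keep straight two layers of $2$-categorical variance—the ``$1$-op'' appearing in $(\mathbf{Cat}_\infty^{\sslash W})^\oop$ and the left-lax slice structure itself—and verify that the unit and counit of $p_\sharp\dashv p^*$ genuinely lift to $2$-morphisms in the lax slice satisfying the triangle identities there; recasting the left-lax slice as a $1$-cocartesian fibration is the device that makes this tractable. All of the underlying input—closure of $\lleft$ and $\rright$ under composition and pullback, the existence of the slice adjunctions, the pasting law—is elementary; the content lies entirely in the coherent $(\infty,2)$-categorical packaging, which is precisely what the universal property of $\mathbf{Span}(\scr C,\lleft,\rright)$ is designed to supply.
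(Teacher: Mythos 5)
Your overall plan — invoke the universal property of $\mathbf{Span}(\scr C,\lleft,\rright)$ from \cite[Theorem V.1.3.2.2]{GRderalg}, verify that for right morphisms $p$ the image $p_\sharp$ admits an adjoint in $(\mathbf{Cat}_\infty^{\sslash\Span(\scr C,\all,\rright)})^\oop$, and check the Beck–Chevalley condition — is exactly the route the paper takes. Your hands-on construction of the adjunction $p_\sharp \dashv p^*$ between $\Span(\scr C_Y,\all,\rright)$ and $\Span(\scr C_Z,\all,\rright)$ via wrong-way span unit/counit is a valid alternative to the paper's cleaner invocation of Corollary~\ref{cor:span-adjunctions}(1), which obtains the same adjunction by functoriality of $\Span$ applied to the slice adjunction $p_\sharp \dashv p^*$ on $\scr C_Y \rightleftarrows \scr C_Z$.

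The gap is in lifting this adjunction to the lax slice. You assert that because the $1$-morphism $(p_\sharp,\id)$ is cocartesian for the $1$-cocartesian fibration $(\mathbf{Cat}_\infty^{\sslash W})^\oop\to\mathbf{Cat}_\infty^\oop$, "one transports the bare adjunction to the lax slice." That transport is not a formal consequence of cocartesianness — it is a genuine $(\infty,2)$-categorical coherence statement, and it is precisely what the paper's Lemma~\ref{lem:swallowtail} establishes. That lemma equips $(p_\sharp,\id)$ with right adjoint $(p^*,\epsilon)$ (where $\epsilon$ is the counit of $p_\sharp\dashv p^*$) and proves the triangle identities hold in $\mathbf C^{\sslash W}$, but the second triangle identity only works because one may choose the unit $\beta$ for $p^*\dashv p_\sharp$ so that the swallowtail coherence for adjunctions holds; without invoking that coherence, the computation of $((g,\epsilon)(\epsilon,\id_\epsilon))((\eta,\alpha)(g,\epsilon))$ does not obviously reduce to the identity. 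You neither cite a result that adjunctions lift along cocartesian edges of $1$-cocartesian fibrations of $(\infty,2)$-categories (and I do not know of a reference for this in the required generality), nor reproduce the swallowtail argument. The fact that the authors prove Lemma~\ref{lem:swallowtail} as a freestanding lemma with an explicit appeal to swallowtail coherence is evidence that this step does not come for free from the fibration structure, so your proof as written has a hole exactly at its load-bearing point.
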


\begin{proof}
	For any right morphism $f\colon Y\to X$, the unit and counit of the adjunction $f_\sharp:\scr C_Y\adj\scr C_X: f^*$ are cartesian transformations, and their components are right morphisms (because the diagonal of a right morphism is a right morphism).
	It follows from Corollary~\ref{cor:span-adjunctions}(1) that there is an induced adjunction
	\[
	f_\sharp: \Span(\scr C_Y,\all,\rright) \adj \Span(\scr C_X,\all,\rright): f^*.
	\]
	By Lemma~\ref{lem:swallowtail}, we can lift this adjunction to $\mathbf{Cat}_\infty^{\sslash\Span(\scr C,\all,\rright)}$.
	Moreover, given a cartesian square
	\begin{tikzmath}
		\diagram{Y' & X' \\ Y & X \\};
		\arrows (11-) edge node[above]{$f'$} (-12) (11) edge node[left]{$q$} (21) (21-) edge node[below]{$f$} (-22) (12) edge node[right]{$p$} (22);
	\end{tikzmath}
	in $\scr C$ where $f$ is a left morphism and $p$ is a right morphism, the canonical transformation
	\[
	f'_\sharp q^* \to p^* f_\sharp\colon  \Span(\scr C_Y,\all,\rright) \to \Span(\scr C_{X'},\all,\rright)
	\]
	is clearly an equivalence.
	The proposition now follows from the universal property of the $(\infty,2)$-category of spans \cite[Theorem V.1.3.2.2]{GRderalg}.
\end{proof}

The following proposition gives the second part of~\eqref{eq:plan-alpha-beta-gamma}.

\begin{proposition} \label{prop:construct-gamma}
Let $p\colon \scr X\to\scr B$ be a cocartesian fibration of $\infty$-categories.
Then there is an $(\infty,2)$-functor
\[
(\mathbf{Cat}_\infty^{\sslash\scr B})^\oop \to \mathbf{Cat}_\infty, \quad \scr D\mapsto\Fun_{\scr B}(\scr D,\scr X),
\]
with the following properties:
\begin{enumerate}
	\item Its restriction to $(\mathbf{Cat}_{\infty/\scr B})^\oop$ is the $(\infty,2)$-functor represented by $p$.
	\item For any $\infty$-category $\scr D$, its restriction to $\Fun(\scr D,\scr B)$ classifies the cocartesian fibration \[p_*\colon\Fun(\scr D,\scr X)\to\Fun(\scr D,\scr B).\]
	\item The image of a $1$-morphism $(f,\epsilon)\colon (\scr D,t)\to(\scr E,u)$ is a functor $(f,\epsilon)^*\colon \Fun_{\scr B}(\scr E,\scr X)\to \Fun_{\scr B}(\scr D,\scr X)$ such that $(f,\epsilon)^*(s)$ sends $d\in\scr D$ to $\epsilon(d)_* s(f(d))$ and $e\colon d_1\to d_2$ to
\[
t(e)_*\epsilon(d_1)_* s(f(d_1))\simeq \epsilon(d_2)_* u(f(e))_* s(f(d_1))\xrightarrow{s(f(e))} \epsilon(d_2)_* s(f(d_2)).
\]
	\item The image of a $2$-morphism $(\phi,\alpha)\colon (f,\epsilon)\to (f',\epsilon')$ is a natural transformation $(f,\epsilon)^*\to (f',\epsilon')^*$ whose component at $s\in \Fun_{\scr B}(\scr E,\scr X)$ and $d\in\scr D$ is the map
\[
\epsilon(d)_* s(f(d)) \stackrel\alpha\simeq \epsilon'(d)_*u(\phi(d))_* s(f(d)) \xrightarrow{s(\phi(d))} \epsilon'(d)_* s(f'(d)).
\]
\end{enumerate}
\end{proposition}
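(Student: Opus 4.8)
The plan is to deduce everything from one elementary input: for every $\infty$-category $\scr D$ the postcomposition functor
\[
p_*\colon\Fun(\scr D,\scr X)\to\Fun(\scr D,\scr B)
\]
is again a cocartesian fibration, and a natural transformation $\sigma\to\sigma'$ is $p_*$-cocartesian exactly when each component $\sigma(d)\to\sigma'(d)$ is $p$-cocartesian \cite[\sect3.1.2]{HTT}. Given $f\colon\scr D\to\scr E$, the restriction functor $f^*\colon\Fun(\scr E,\scr X)\to\Fun(\scr D,\scr X)$ visibly preserves pointwise-$p$-cocartesian edges, hence is a morphism of cocartesian fibrations over $f^*\colon\Fun(\scr E,\scr B)\to\Fun(\scr D,\scr B)$; so $\scr D\mapsto\bigl(p_*\colon\Fun(\scr D,\scr X)\to\Fun(\scr D,\scr B)\bigr)$ is a functor $\Cat_\infty^\op\to\mathrm{CoCart}$, where $\mathrm{CoCart}$ is the $\infty$-category of cocartesian fibrations with base-change-plus-cocartesian-preserving morphisms. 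The fibre of $p_*$ over an object $t\in\Fun(\scr D,\scr B)$ is precisely $\Fun_{\scr B}(\scr D,\scr X)$, and a morphism $\epsilon\colon t\to t'$ in $\Fun(\scr D,\scr B)$ induces a cocartesian pushforward between these fibres which, evaluated at $d\in\scr D$, is the $p$-cocartesian pushforward along $\epsilon(d)$. This already fixes the recipe for the desired $(\infty,2)$-functor $\mathbf H\colon(\mathbf{Cat}_\infty^{\sslash\scr B})^\oop\to\mathbf{Cat}_\infty$: it sends $(\scr D,t)$ to $\Fun_{\scr B}(\scr D,\scr X)$, a $1$-morphism $(f,\epsilon)$ to ``restrict along $f$, then push forward along $\epsilon\colon u\circ f\to t$'', and a $2$-morphism $(\phi,\alpha)$ to the evident induced transformation. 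Properties (1)--(4) are then read off directly: for a strict $1$-morphism ($\epsilon=\id$) the pushforward is the identity, which is (1); assertion (2) is the tautology that $p_*\colon\Fun(\scr D,\scr X)\to\Fun(\scr D,\scr B)$ is itself the cocartesian fibration classified by $\mathbf H$ restricted to $\Fun(\scr D,\scr B)$; and (3)--(4) are just the pointwise formulas for ``restrict, then push forward'' on objects, arrows, and $2$-cells.

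The actual content of the proof is to upgrade this strict recipe to a genuine $(\infty,2)$-functor, supplying all higher coherences, using the formalism of \cite[Appendix A]{GRderalg}. I would do this in one of two equivalent ways. The first is to reduce to the universal case: writing $p$ as the pullback of the universal cocartesian fibration $\scr U\to\Cat_\infty$ along its classifying functor $P\colon\scr B\to\Cat_\infty$, one has a natural equivalence between $\Fun_{\scr B}(\scr D,\scr X)$ and the $\infty$-category of sections of $(P\circ t)^*\scr U\to\scr D$, compatibly with everything in sight; thus $\mathbf H$ is the composite of the $(\infty,2)$-functor $(\mathbf{Cat}_\infty^{\sslash\scr B})^\oop\to(\mathbf{Cat}_\infty^{\sslash\Cat_\infty})^\oop$ induced by postcomposition with $P$ and the analogous $(\infty,2)$-functor attached to $\scr U\to\Cat_\infty$, so it suffices to construct the latter once and for all (now with values in the $(\infty,2)$-category of large $\infty$-categories). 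The second, more hands-on route is to exhibit directly the fibration over $(\mathbf{Cat}_\infty^{\sslash\scr B})^\oop$ corresponding to $\mathbf H$ under the $(\infty,2)$-categorical Grothendieck construction: its total $(\infty,2)$-category has objects the pairs $(\scr D,s\colon\scr D\to\scr X)$, and its $1$- and $2$-morphisms lying over $(f,\epsilon)$ resp.\ $(\phi,\alpha)$ are assembled from restriction along $f$ together with a $p_*$-cocartesian lift of $\epsilon$; that the projection has the requisite fibrancy reduces, on unwinding the definitions, to the fact recorded above that $p_*$ is a cocartesian fibration naturally in $\scr D$. In either approach all the genuinely mathematical input — that cocartesian pushforwards compose, depend functorially on $\epsilon$, and commute with restriction along $f$ — is already encoded in the functor $\Cat_\infty^\op\to\mathrm{CoCart}$ of the first paragraph; it is also this interaction with the left-lax slice that makes $\mathbf H$ contravariant on $1$-morphisms but covariant on $2$-morphisms, explaining why ``$\oop$'' and not ``$\op$'' appears on the source.

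I expect the main obstacle to be exactly this coherence bookkeeping: unwinding the definition of an $(\infty,2)$-functor from \cite[Appendix A]{GRderalg} and checking fibrancy of the candidate total $(\infty,2)$-category, or, in the first approach, constructing the universal-case functor and verifying the pullback compatibility. This is routine but tedious, which is also why the proposition is stated for an arbitrary cocartesian fibration $p\colon\scr X\to\scr B$ rather than only for $\scr X=\int\scr A\to\Span(\scr C,\all,\rright)$, to which it is applied as the second arrow in \eqref{eq:plan-alpha-beta-gamma}: the extra generality costs nothing and cleanly separates the coherence issue from the geometric content.
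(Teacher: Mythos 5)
Your second route is essentially the paper's proof, but it stops just short of the observation that makes the coherence problem disappear: the total $(\infty,2)$-category of the Grothendieck construction for $\mathbf H$ does not have to be built by hand; it is already sitting there as $(\mathbf{Cat}_\infty^{\sslash\scr X})^\oop$, i.e.\ the same left-lax slice construction applied to $\scr X$ instead of $\scr B$, and the candidate $1$-cocartesian fibration is the induced $(\infty,2)$-functor $p_*\colon(\mathbf{Cat}_\infty^{\sslash\scr X})^\oop\to(\mathbf{Cat}_\infty^{\sslash\scr B})^\oop$. You describe the objects of the total category correctly, but because you do not recognize it as a left-lax slice you are left gesturing at its $1$- and $2$-morphisms (your description in terms of ``restriction along $f$ plus a cocartesian lift of $\epsilon$'' only captures the cocartesian edges, not all morphisms) and concluding that the bookkeeping is ``routine but tedious.'' Once you have the explicit identification, the proof collapses to verifying that $p_*$ is a $1$-cocartesian fibration, and that verification has two layers you should separate explicitly: (i) the underlying functor of $\infty$-categories $p_*\colon(\Cat_\infty^{\sslash\scr X})^\op\to(\Cat_\infty^{\sslash\scr B})^\op$ is a cocartesian fibration — this is exactly where your observation about $p_*\colon\Fun(\scr D,\scr X)\to\Fun(\scr D,\scr B)$ being a cocartesian fibration naturally in $\scr D$ enters, via \cite[Proposition 3.1.2.1]{HTT} and \cite[Lemma 1.4.14]{LurieGoodwillie} — and (ii) the behaviour on mapping $\infty$-categories, which the paper computes via the formula~\eqref{eqn:lax-slice-maps} for mapping objects in a left-lax slice, showing the map on mapping $\infty$-categories is a cartesian fibration in spaces. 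Your plan omits (ii) entirely, and it is not subsumed by (i).

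Your first route (pull back from the universal cocartesian fibration $\scr U\to\Cat_\infty$) is a legitimate reorganization, but it does not actually shed work: you would still need to construct the $(\infty,2)$-functor $(\mathbf{Cat}_\infty^{\sslash\Cat_\infty})^\oop\to\widehat{\mathbf{Cat}}_\infty$ for $\scr U$, which is the same problem one size up, and then establish the compatibility of left-lax slices with pullback along $P\colon\scr B\to\Cat_\infty$. So it only postpones, rather than avoids, the fibrancy check. Finally, for property (1) your justification is slightly imprecise: $(\mathbf{Cat}_{\infty/\scr B})^\oop\subset(\mathbf{Cat}_\infty^{\sslash\scr B})^\oop$ is the wide subcategory on $1$-morphisms $(f,\epsilon)$ with $\epsilon$ invertible, not literally $\epsilon=\id$; the conclusion is the same but in the paper it falls out of a cartesian square relating $((\mathbf{Cat}_{\infty/\scr B})^{\sslash p})^\oop$ to $(\mathbf{Cat}_\infty^{\sslash\scr X})^\oop$ rather than from a pointwise formula.
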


\begin{proof}
Consider the $(\infty,2)$-functor
\begin{equation*}
p_*\colon (\mathbf{Cat}_\infty^{\sslash\scr X})^\oop\to (\mathbf{Cat}_\infty^{\sslash\scr B})^\oop
\end{equation*}
induced by $p$.
Note that the fiber of $p_*$ over $(\scr D,t)$ is the $\infty$-category of sections $\Fun_{\scr B}(\scr D,\scr X)$.
We claim that $p_*$ is a $1$-cocartesian fibration classified by the desired $(\infty,2)$-functor.
Properties (1) and (2) will follow from the cartesian squares
\begin{tikzmath}
	\diagram{
	 ((\mathbf{Cat}_{\infty/\scr B})^{\sslash p})^\oop & (\mathbf{Cat}_\infty^{\sslash\scr X})^\oop & \Fun(\scr D,\scr X) \\
	 (\mathbf{Cat}_{\infty/\scr B})^\oop & (\mathbf{Cat}_\infty^{\sslash\scr B})^\oop & \Fun(\scr D,\scr B) \\
	 & \mathbf{Cat}_\infty^\oop & \{\scr D\}\rlap. \\
	};
	\arrows (11) edge (21) (12) edge node[left]{$p_*$} (22) (13) edge node[right]{$p_*$} (23)
	(11-) edge[c->] (-12) (21-) edge[c->] (-22) (-13) edge (12-) (-23) edge (22-)
	(22) edge (32) (23) edge (33) (-33) edge (32-);
\end{tikzmath}
If $(\scr D,t),(\scr E,u)\in \mathbf{Cat}_\infty^{\sslash\scr X}$, it is easy to see using~\eqref{eqn:lax-slice-maps} that the functor
\[
p_*\colon \MAP((\scr D,t),(\scr E,u)) \to \MAP((\scr D,p\circ t),(\scr E,p\circ u))
\]
is the right fibration classified by
\begin{equation}\label{eqn:1-cocart}
\MAP((\scr D,p\circ t),(\scr E,p\circ u))^\op \to \scr S,\quad (f,\epsilon) \mapsto \Map(u\circ f,t)\times_{\Map(p\circ u\circ f,p\circ t)} \{\epsilon\}.
\end{equation}
It remains to show that $p_*$ restricts to a cocartesian fibration between the underlying $\infty$-categories:
\begin{equation}\label{eqn://p}
p_*\colon (\Cat_\infty^{\sslash\scr X})^\op\to (\Cat_\infty^{\sslash\scr B})^\op.
\end{equation}
This is the morphism of cocartesian fibrations over $\Cat_\infty^\op$ classified by the natural transformation
\[
p_*\colon \Fun(\ph,\scr X)\to \Fun(\ph,\scr B).
\]
We now use the assumption that $p$ is a cocartesian fibration: by \cite[Proposition 3.1.2.1]{HTT}, for every $\infty$-category $\scr D$, the functor $p_*\colon \Fun(\scr D,\scr X)\to\Fun(\scr D,\scr B)$ is a cocartesian fibration, and for every functor $f\colon\scr D\to\scr E$, the functor $f^*\colon\Fun(\scr E,\scr X)\to\Fun(\scr D,\scr X)$ preserves cocartesian edges. It follows from \cite[Lemma 1.4.14]{LurieGoodwillie} that~\eqref{eqn://p} is indeed a cocartesian fibration.
Moreover, an arrow $(f,\epsilon)\colon (\scr D,t)\to (\scr E,u)$ in $\Cat_\infty^{\sslash\scr X}$ is $p_*$-cocartesian if and only if, for every $d\in\scr D$, the morphism $\epsilon(d)\colon u(f(d))\to t(d)$ in $\scr X$ is $p$-cocartesian.
Combining this with~\eqref{eqn:1-cocart}, we easily obtain the desired description of the $(\infty,2)$-functor on $1$-morphisms and on $2$-morphisms.
\end{proof}

\begin{proof}[Proof of Theorem~\ref{thm:norm-pullback2}]
	We define the desired $(\infty,2)$-functor as the composition
	\[
	\mathbf{Span}(\scr C,\lleft,\rright) \to (\mathbf{Cat}_{\infty}^{\sslash \Span(\scr C,\all,\rright)})^{\oop} \to \what{\mathbf{Cat}}{}_\infty,
	\]
	where the first functor is given by Proposition~\ref{prop:construct-beta} and the second by Proposition~\ref{prop:construct-gamma} applied to the cocartesian fibration classified by $\scr A$.
	The additional claims about $f^*\colon\Sect(\scr A_X) \to \Sect(\scr A_Y)$ and ${p_\otimes}\colon\Sect(\scr A_Y)\to \Sect(\scr A_Z)$ follow immediately from the explicit description of the second functor on $1$-morphisms given in Proposition~\ref{prop:construct-gamma}(3).
\end{proof}

\subsection{The pullback–pushforward adjunction}
\label{sub:pull-push}

We now turn to the proof of Theorem~\ref{thm:second-adjunction}. It is again an instance of a more general result:

\begin{proposition}\label{prop:push-pull-adjunction}
	Let $p\colon\scr X\to\scr C$ be a cocartesian fibration and let $f:\scr C\rightleftarrows\scr D:g$ be an adjunction with unit $\eta\colon\id\to gf$. Suppose that, for every $c\in\scr C$, the functor $\eta(c)_*\colon\scr X_c\to\scr X_{gf(c)}$ has a right adjoint $\eta(c)^!$, giving rise to a relative adjunction $\eta_*\colon \scr X\rightleftarrows f^*g^*\scr X: \eta^!$ over $\scr C$. Then there is an adjunction
	\[
	g^*: \Fun_{\scr C}(\scr C,\scr X) \rightleftarrows \Fun_{\scr C}(\scr D,\scr X): \eta^!\circ f^*.
	\]
\end{proposition}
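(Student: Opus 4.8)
The statement is essentially formal, and the plan is to recognize the putative right adjoint $\eta^!\circ f^*$ as a relative right Kan extension along $g$ and to invoke the standard existence criterion for such extensions \cite[\sect4.3]{HTT} (the dual of the relative left Kan extensions already used in the proof of Proposition~\ref{prop:categorical-props}(6)).

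First I would unwind the three functors, all taken over $\scr C$. Using the universal property of the pullback, a section of $g^*\scr X\to\scr D$ is the same datum as a functor $t\colon\scr D\to\scr X$ with $pt=g$; under this identification, $g^*\colon\Sect(\scr X)\to\Sect(g^*\scr X)$ is precomposition with $g\colon\scr D\to\scr C$, i.e.\ relative restriction along $g$. Likewise $f^*$ is precomposition with $f\colon\scr C\to\scr D$: if $pt=g$ then $p(t\circ f)=gf$, so $t\circ f$ is a section of $(gf)^*\scr X=f^*g^*\scr X$. Finally $\eta^!\colon f^*g^*\scr X\to\scr X$ is a functor over $\scr C$ (the relative right adjoint of $\eta_*$ supplied by hypothesis), so post-composition with it takes sections of $f^*g^*\scr X$ to sections of $\scr X$. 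Thus $\eta^!\circ f^*$ is a well-defined functor $\Sect(g^*\scr X)\to\Sect(\scr X)$, and by \cite[\sect4.3]{HTT} the restriction functor $g^*$ admits a right adjoint, namely the relative right Kan extension $\mathrm{Ran}_g$, as soon as the relevant pointwise relative limits exist in $\scr X$.

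The crux is to compute these limits and observe that the hypotheses provide exactly what is needed. For $c\in\scr C$, $\mathrm{Ran}_g(t)(c)$ is a relative limit over $\scr C$ of the diagram $c\downarrow g\to\scr D\xrightarrow{t}\scr X$, where $c\downarrow g$ has objects $(d,\phi\colon c\to g(d))$. The adjunction $f\dashv g$ identifies $c\downarrow g$ with $\scr D_{f(c)/}$, carrying the unit $\eta(c)$ to $\mathrm{id}_{f(c)}$; hence $c\downarrow g$ has an \emph{initial} object $(f(c),\eta(c))$. Therefore the relative limit reduces, by the pointwise criterion of \cite[\sect4.3]{HTT}, to transporting $t(f(c))\in\scr X_{gf(c)}$ back to the fibre $\scr X_c$ along $\eta(c)$ — that is, to a $p$-cartesian lift of $\eta(c)$ with target $t(f(c))$, which is exactly $\eta(c)^!(t(f(c)))$ and exists precisely because $\eta(c)_*$ was assumed to have a right adjoint. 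That these fibrewise values assemble into an honest section of $\scr X$ is the content of the hypothesis that $\eta^!$ exists as a relative right adjoint over $\scr C$: the resulting section is $c\mapsto\eta(c)^!(t(f(c)))=(\eta^!\circ f^*)(t)(c)$. Hence $\mathrm{Ran}_g$ exists and equals $\eta^!\circ f^*$, so $g^*\dashv\eta^!\circ f^*$, as claimed.

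I expect the only genuine work to be bookkeeping: pinning down the pullback identifications $\Sect(g^*\scr X)\simeq\Fun_{\scr C}(\scr D,\scr X)$ and $\Sect(f^*g^*\scr X)\simeq\{\text{functors }\scr C\to\scr X\text{ over }gf\}$, and verifying that the relative Kan extension produced by \cite[\sect4.3]{HTT} coincides, as a functor, with $\eta^!\circ f^*$. The latter boils down to the principle that a relative limit over an index category with an initial object is computed by a single $p$-cartesian edge — here the counit of $\eta(c)_*\dashv\eta(c)^!$ — which is standard but deserves to be spelled out.
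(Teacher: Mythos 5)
Your proof is correct, and it matches, essentially verbatim, the alternative proof that the paper itself sketches in the remark immediately after Proposition~\ref{prop:push-pull-adjunction}: identify the putative right adjoint with the $p$-right Kan extension along $g$ (in the sense of \cite[\sect4.3.2]{HTT}), observe that $c\downarrow g\simeq \scr D_{f(c)/}$ has the initial object $(f(c),\eta(c))$, so the relative limit reduces to a $p$-cartesian lift of $\eta(c)$ with target $t(f(c))$, which exists because $\eta(c)_*$ has a right adjoint (\cite[Corollary 5.2.2.4]{HTT}), and this lift is precisely $\eta(c)^!(t(f(c)))$.

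The paper's main proof goes a different route: it stays entirely inside the $(\infty,2)$-categorical framework of Proposition~\ref{prop:construct-gamma} and Lemma~\ref{lem:swallowtail}. Concretely, it realizes $\eta_*$ and $f^*$ as images of explicit $1$-morphisms in the left-lax slice $\mathbf{Cat}_\infty^{\sslash\scr C}$, applies Lemma~\ref{lem:swallowtail} to produce a left adjoint $f_!$ of $f^*$ on sections from the adjunction $f\dashv g$, and then uses a triangle identity to check $f_!\circ\eta_*\simeq g^*$ on the nose, whence $g^*\dashv\eta^!\circ f^*$ by uniqueness of adjoints. The $2$-categorical proof has the advantage of reusing machinery already built for Theorem~\ref{thm:norm-pullback2} and of avoiding the (somewhat delicate) theory of relative Kan extensions; your argument has the advantage of being self-contained and closer to the classical picture of right Kan extensions computed pointwise over comma categories, and it makes visible \emph{why} the hypothesis on $\eta(c)_*$ is exactly what one needs.

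Two minor points of bookkeeping you should tighten if you were to write this out in full. First, the reduction ``a relative limit over an index category with an initial object $k_0$ is given by a $p$-cartesian lift of the edge from the cone point to $\bar q(k_0)$'' is true but is not an explicitly named statement in HTT; you should derive it from the coinitiality of $\{k_0\}\into K$ together with the definition of relative limit. Second, your sentence ``that these fibrewise values assemble into an honest section\ldots is the content of the hypothesis that $\eta^!$ exists as a relative right adjoint'' slightly inverts the logic: once the pointwise relative Kan extensions exist, \cite[\sect4.3.2]{HTT} already produces the section $\mathrm{Ran}_g(t)$; the additional hypothesis that the $\eta(c)^!$ assemble into a relative right adjoint $\eta^!$ is what identifies $\mathrm{Ran}_g(t)$ with the \emph{particular} formula $\eta^!\circ f^*(t)$. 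Neither gap is serious.
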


\begin{proof}
	We use the $(\infty,2)$-functor
	\[
	(\mathbf{Cat}_\infty^{\sslash \scr C})^\oop\to\what{\mathbf{Cat}}{}_\infty
	\]
	from Proposition~\ref{prop:construct-gamma}. By assertions (2) and (1) of that proposition, the functors $\eta_*$ and $f^*$ between $\infty$-categories of sections are the images of the $1$-morphisms
	\[
	\begin{tikzpicture}[baseline=(N.south)]
		\def\colsep{1.5em}
		\diagram{
		\scr C & \relax & \scr C \\ & \scr C & \\
		};
		\arrows (11-) edge node[above]{$\id$} (-13) (11) edge node[below left,pos=.4]{$gf$} (22) (13) edge node[below right,pos=.4]{$\id$} (22)
		(12) edge[draw=none] node(N)[pos=.4,rotate=45,font=\normalsize]{$\Leftarrow$} node[pos=.4,above left]{$\eta$} (22);
	\end{tikzpicture}
	\quad\text{and}\quad
	\begin{tikzpicture}[baseline=(N.south)]
		\def\colsep{1.5em}
		\diagram{
		\scr C & \relax & \scr D \\ & \scr C & \\
		};
		\arrows (11-) edge node[above]{$f$} (-13) (11) edge node[below left,pos=.4]{$gf$} (22) (13) edge node[below right,pos=.4]{$g$} (22)
		(12) edge[draw=none] node(N)[pos=.4,rotate=45,font=\normalsize]{$\Leftarrow$} node[pos=.4,above left]{$\id$} (22);
	\end{tikzpicture}
	\]
	in $\mathbf{Cat}_\infty^{\sslash \scr C}$.
	By Lemma~\ref{lem:swallowtail}, the second $1$-morphism has a right adjoint $(g,g\epsilon)$ in $\mathbf{Cat}_\infty^{\sslash \scr C}$, inducing a left adjoint $f_!$ of $f^*$.
	Using the triangle identity $(\eta g)(g\epsilon)\simeq \id_g$, we obtain an equivalence
	\[
	\begin{tikzpicture}[baseline=(N.south)]
		\def\colsep{1.5em}
		\diagram{
		\scr C & \relax & \scr C \\ & \scr C & \\
		};
		\arrows (11-) edge node[above]{$\id$} (-13) (11) edge node[below left,pos=.4]{$gf$} (22) (13) edge node[below right,pos=.4]{$\id$} (22)
		(12) edge[draw=none] node(N)[pos=.4,rotate=45,font=\normalsize]{$\Leftarrow$} node[pos=.4,above left]{$\eta$} (22);
	\end{tikzpicture}
	\;\circ\;
	\begin{tikzpicture}[baseline=(N.south)]
		\def\colsep{1.5em}
		\diagram{
		\scr D & \relax & \scr C \\ & \scr C & \\
		};
		\arrows (11-) edge node[above]{$g$} (-13) (11) edge node[below left,pos=.4]{$g$} (22) (13) edge node[below right,pos=.4]{$gf$} (22)
		(12) edge[draw=none] node(N)[pos=.4,rotate=45,font=\normalsize]{$\Leftarrow$} node[pos=.4,above left]{$g\epsilon$} (22);
	\end{tikzpicture}
	\;\simeq\;
	\begin{tikzpicture}[baseline=(N.south)]
		\def\colsep{1.5em}
		\diagram{
		\scr D & \relax & \scr C \\ & \scr C & \\
		};
		\arrows (11-) edge node[above]{$g$} (-13) (11) edge node[below left,pos=.4]{$g$} (22) (13) edge node[below right,pos=.4]{$\id$} (22)
		(12) edge[draw=none] node(N)[pos=.4,rotate=45,font=\normalsize]{$\Leftarrow$} node[pos=.4,above left]{$\id$} (22);
	\end{tikzpicture}
	\]
	of $1$-morphisms in $\mathbf{Cat}_\infty^{\sslash \scr C}$, whence an equivalence of functors $f_!\circ \eta_*\simeq g^*$. We deduce that $\eta^!\circ f^*$ is right adjoint to $g^*$, as desired.
\end{proof}

\begin{remark}
	One can give a different proof of Proposition~\ref{prop:push-pull-adjunction} using the theory of relative Kan extensions \cite[\sect4.3.2]{HTT}: the right adjoint functor $\Fun_{\scr C}(\scr D,\scr X) \to \Fun_{\scr C}(\scr C,\scr X)$ sends a section to its $p$-right Kan extension along $g$.
	The existence of these $p$-right Kan extensions amounts to the existence of $p$-cartesian lifts of $\eta(c)$, which is guaranteed by \cite[Corollary 5.2.2.4]{HTT}.
\end{remark}

\begin{proof}[Proof of Theorem~\ref{thm:second-adjunction}]
	By Corollary~\ref{cor:span-adjunctions}(2), the adjunction $f_\sharp:\scr C'\adj\scr C:f^*$ induces an adjunction
	\[
	f^*: \Span(\scr C, \all, \fet) \adj \Span(\scr C', \all, \fet): f_\sharp.
	\]
	Applying Proposition~\ref{prop:push-pull-adjunction} to the cocartesian fibration over $\Span(\scr C,\all,\fet)$ classified by $\SH^\otimes$, we obtain an adjunction
	\[
	f^*: \Sect(\SH^\otimes|\Span(\scr C,\all,\fet)) \adj \Sect(\SH^\otimes|\Span(\scr C',\all,\fet)) : f_*,
	\]
	where $f_*$ is precisely the functor described in Proposition~\ref{prop:categorical-props}(8). 
\end{proof}

\section{Spectra over profinite groupoids}
\label{sec:galois-equiv}

In this section, we will repeat the construction of \sect\ref{sub:coherence} in the setting of equivariant homotopy theory.
For the purpose of comparison with motivic homotopy theory (see Section~\ref{sec:ggt}), it is natural to consider equivariant homotopy theory parametrized by \emph{profinite groupoids}. By a profinite groupoid, we mean a pro-object in the $2$-category $\FinGpd$ of groupoids with finite $\pi_0$ and $\pi_1$. 
We denote by $\Pro(\FinGpd)$ the $2$-category of profinite groupoids. 

To every profinite groupoid $X$, we will associate a symmetric monoidal $\infty$-category $\SH(X)$ of \emph{$X$-spectra}. If $X= \B G$ for some finite group $G$, then $\SH(X)$ will be the usual symmetric monoidal $\infty$-category of (genuine) $G$-spectra. Moreover, for every ``finitely presented'' map $p\colon T\to S$ of profinite groupoids, we will define a norm functor $p_\otimes\colon \SH(T)\to\SH(S)$, and the coherence of these norms will be encoded by a functor
\[
\SH^\otimes\colon \Span(\Pro(\FinGpd), \all,\fp) \to \CAlg(\what\Cat{}_\infty^\mathrm{sift}), \quad X\mapsto \SH(X),\quad (U\stackrel f\from T\stackrel p\to S)\mapsto p_\otimes f^*.
\]
If $H\subset G$ is a subgroup and $p\colon \B H\to \B G$ is the induced finite covering map, then $p_\otimes$ will be the Hill–\<Hopkins–\<Ravenel norm functor $\Norm_H^G$. On the other hand, if $N\subset G$ is a normal subgroup and $p\colon \B G\to \B(G/N)$ is the induced map, then $p_\otimes$ will be the geometric fixed point functor $\Phi^N$.

\begin{remark}
	We do not aim to provide here a self-contained treatment of equivariant homotopy theory. In fact, we will feel free to use basic facts from equivariant homotopy theory in our proofs. We refer to \cite{lewis1986equivariant} for a classical treatment of the subject, and to the recent series \cite{BDGNSintro} for a different approach using spectral Mackey functors. These two points of view are connected by a theorem of Guillou and May \cite[Theorem 0.1]{GuillouMay} refined by Nardin \cite[Theorem A.4]{BDGNS4}, whose content we recall in Proposition~\ref{prop:spectralMackey} below. Both points of view will be useful in the sequel.
\end{remark}

\subsection{Profinite groupoids}

We start with some preliminaries on profinite groupoids. Profinite groupoids are in particular profinite $\infty$-groupoids, and we refer the reader to \cite[Appendix E]{SAG} for a comprehensive treatment of the latter; see also \cite[\sect A.8.1]{SAG} for a discussion of pro-objects in $\infty$-categories.
We shall say that a morphism in $\Pro(\scr S)$ is \emph{finitely presented} if it is the pullback of a morphism in $\scr S\subset\Pro(\scr S)$. 

\begin{lemma}\label{lem:fincov}
	The class of finitely presented maps in $\Pro(\scr S)$ is closed under composition and base change. Moreover, if $f\circ g$ and $f$ are finitely presented, then $g$ is finitely presented.
\end{lemma}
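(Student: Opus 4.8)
\textbf{Proof proposal for Lemma~\ref{lem:fincov}.}
The plan is to reduce everything to the corresponding closure properties for morphisms in $\scr S$ itself, exploiting that finitely presented morphisms in $\Pro(\scr S)$ are by definition base changes of morphisms in $\scr S$, together with the fact that $\Pro(\scr S)$ is generated under cofiltered limits by $\scr S$. The key input I would use is the description of mapping spaces in $\Pro(\scr S)$ (see \cite[\sect A.8.1]{SAG}): for $X = \lim_\alpha X_\alpha$ a cofiltered limit of objects of $\scr S$ and $Y \in \scr S$, we have $\Map_{\Pro(\scr S)}(X, Y) \simeq \colim_\alpha \Map_{\scr S}(X_\alpha, Y)$. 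Consequently any morphism $X \to Y$ with $Y \in \scr S$ factors through some $X_\alpha$, and a morphism in $\Pro(\scr S)$ is finitely presented precisely when, after writing the target as a cofiltered limit $S = \lim_\beta S_\beta$, the morphism is (up to equivalence) the base change along $S \to S_\beta$ of a morphism $T_\beta \to S_\beta$ in $\scr S$ for some $\beta$.

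First I would treat \emph{base change}. If $g\colon X' \to X$ is finitely presented, write $g$ as the pullback of $g_0\colon X_0' \to X_0$ in $\scr S$ along some $X \to X_0$; then for any $h\colon Y \to X$, the pullback $Y \times_X X' \to Y$ is the pullback of $g_0$ along the composite $Y \to X \to X_0$, hence finitely presented. Next, \emph{composition}: suppose $g\colon X \to Y$ and $h\colon Y \to Z$ are finitely presented, with $g$ the base change of $g_0\colon X_0 \to Y_0$ along $Y \to Y_0$ and $h$ the base change of $h_0\colon Y_1 \to Z_0$ along $Z \to Z_0$. Using that the indexing diagram for $Y = \lim Y_\gamma$ is cofiltered, I can replace $Y_0$ and $Y_1$ by a common further stage $Y'$ over which both $g_0$ and (after pulling back) $h_0$ are defined; then $h \circ g$ is the base change along $Z \to Z_0$ of the composite in $\scr S$ of the pullback of $g_0$ with $h_0$. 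This is the step where one must be slightly careful to organize the cofiltered-limit presentations compatibly, so I expect it to be the main (though still routine) obstacle.

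Finally, for the \emph{cancellation property}: suppose $f\colon Y \to Z$ and $f \circ g\colon X \to Z$ are finitely presented; I must show $g\colon X \to Y$ is finitely presented. Write $f$ as the base change of $f_0\colon Y_0 \to Z_0$ along $Z \to Z_0$, and $f \circ g$ as the base change of $k_0\colon X_0 \to Z_0'$ along $Z \to Z_0'$; passing to a common refinement I may assume $Z_0 = Z_0'$ and that the map $X \to X_0$ lies over $Z \to Z_0$. Now $g$ sits in a commutative square over $Y_0 \to Z_0$, and since $X = \lim X_\delta$ with the cofiltered diagram mapping to $Z$, the composite $X \to Y \to Y_0$ factors through some $X_\delta$; after enlarging $X_0$ to that stage I obtain a morphism $g_0\colon X_0 \to Y_0$ in $\scr S$ over $Z_0$ whose base change along $Z \to Z_0$ recovers $Y \times_{Y_0} X_0 \to Y$. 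It remains to check that this base change is $g$ itself, i.e.\ that the canonical map $X \to Y \times_{Y_0} X_0$ is an equivalence; this follows because both sides have the same base change to $Z$ (namely the pullback of $k_0$) once one notes $Y \times_{Y_0} X_0 \simeq Y \times_{Z_0} X_0$ using $Y = Y_0 \times_{Z_0} Z$, wait --- more precisely one compares $X = Z \times_{Z_0} X_0$ with $Y \times_{Y_0} X_0 = (Z \times_{Z_0} Y_0) \times_{Y_0} X_0 \simeq Z \times_{Z_0} X_0$, giving the equivalence. Since all three assertions are thereby reduced to formal manipulations of finite limits in $\scr S$ together with the cofiltered-limit description of $\Pro(\scr S)$, the proof is complete.
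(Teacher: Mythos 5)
Your proof is correct and follows essentially the same strategy as the paper: base change is immediate from the definition, and both composition and cancellation are handled by bookkeeping with cofiltered presentations, using (i) the colimit formula for mapping spaces out of a pro-object into an object of~$\scr S$ to factor maps through a finite stage, and (ii) the pasting lemma for cartesian squares. Two minor remarks. First, the paper packages composition and cancellation into a single biconditional (for $f$ finitely presented and $g$ arbitrary, $g$ is finitely presented iff $f\circ g$ is), which lets it set up the cofiltered presentation of the middle object once and reuse it for both directions; your version treats the two claims separately and therefore repeats a bit of bookkeeping, but this is purely stylistic. Second, in your composition step the phrase ``replace $Y_0$ and $Y_1$ by a common further stage $Y'$'' elides the point that one must factor the map $Y\to Y_0$ through a stage of the \emph{specific} cofiltered system $Y = \lim_\beta Z_\beta\times_{Z_0}Y_1$ coming from the presentation of $h$ (and similarly keep track of the maps to $Z_0$), rather than an abstract common refinement of two unrelated presentations; the paper handles this by fixing a single cofiltered system over the base and pulling everything back along it. Your cancellation argument already does this correctly via the pasting computation $Y\times_{Y_0}X_0 \simeq (Z\times_{Z_0}Y_0)\times_{Y_0}X_0 \simeq Z\times_{Z_0}X_0 = X$, which is exactly the paper's ``vertical rectangle'' observation.
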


\begin{proof}
	The class of finitely presented maps is closed under base change by definition.
	Suppose $f\colon Y\to X$ is a finitely presented map and $g\colon Z\to Y$ an arbitrary map in $\Pro(\scr S)$.
	We must show that $g$ is finitely presented if and only if $f\circ g$ is.
	Write $X$ as a cofiltered limit of $\infty$-groupoids $X_\alpha$, $\alpha\in A$. Then there exists $0\in A$ and a finitely presented map $Y_0\to X_0$ such that $Y\simeq X\times_{X_0}Y_0$.
	If we set $Y_\alpha=X_\alpha\times_{X_0}Y_0$ for $\alpha\geq 0$, it follows that $Y\simeq\lim_\alpha Y_\alpha$. 
	
	Suppose that $g$ is finitely presented. Then there exists $1\geq 0$ and a map of $\infty$-groupoids $Z_1\to Y_1$ such that $Z\simeq Y\times_{Y_1}Z_1$. In the commutative diagram
	\begin{tikzmath}
		\diagram{
		Z & Z_1 & \\
		Y & Y_1 & Y_0 \\
		X & X_1 & X_0\rlap, \\
		};
		\arrows (11-) edge (-12) (12) edge (22) (11) edge node[left]{$g$} (21) (21) edge node[left]{$f$} (31) (21-) edge (-22) (22-) edge (-23) (22) edge (32) (23) edge (33) (31-) edge (-32) (32-) edge (-33);
	\end{tikzmath}
	the upper square, the right-hand square, and the horizontal rectange are cartesian, hence the vertical rectangle is also cartesian. It follows that $f\circ g$ is finitely presented.
	
Conversely, suppose that $f\circ g$ is finitely presented. We may then assume that $Z\simeq X\times_{X_0}Z_0$ for some map of $\infty$-groupoids $Z_0\to X_0$, and we let $Z_\alpha=X_\alpha\times_{X_0}Z_0$ for $\alpha\geq 0$. Then $Z\simeq\lim_\alpha Z_\alpha$ in $\Pro(\scr S_{/X_0})$, and so there exists $1\geq 0$ such that the map $Z\to Y\to Y_0$ factors as $Z\to Z_1\to Y_0$ over $X_0$. In the commutative diagram
\begin{tikzmath}
	\diagram{Z & Z_1 & Z_0 \\ Y & Y_1 & Y_0 \\ X & X_1 & X_0\rlap, \\};
	\arrows (11-) edge (-12) (12-) edge (-13) (11) edge node[left]{$g$} (21) (12) edge[dashed] (22)
	(21-) edge (-22) (22-) edge (-23)
	(21) edge node[left]{$f$} (31) (22) edge (32) (23) edge (33) (12) edge (23) (31-) edge (-32) (32-) edge (-33);
	\draw[->] (12) to[out=300,in=60] (32);
	\draw[->] (13) to[out=300,in=60] (33);
\end{tikzmath}
the bottom right square is cartesian and hence the map $Z_1\to Y_1$ exists as indicated. Moreover, since the boundary square, the right vertical rectangle, and the bottom horizontal rectangle are cartesian, the upper left square is also cartesian.
It follows that $g$ is finitely presented.
\end{proof}

If $\scr C$ is an $\infty$-category, the functor $\Fun(\ph,\scr C)\colon \scr S^\op\to\what\Cat{}_\infty$ extends uniquely to a functor on $\Pro(\scr S)^\op$ that preserves filtered colimits, and we will use the same notation for this extension. 
If $X\in\Pro(\scr S)$, we then have a functor
\[
{\textstyle\int}\colon\Fun(X,\scr S) \to \Pro(\scr S)_{/X},
\]
defined by pulling back the universal left fibration $\scr S_\pt\to\scr S$ \cite[Corollary 3.3.2.7]{HTT}.

\begin{lemma}\label{lem:proGrothendieck}
	Let $X\in\Pro(\scr S)$. Then $\int\colon\Fun(X,\scr S) \to \Pro(\scr S)_{/X}$ is fully faithful and its image consists of the finitely presented morphisms $Y\to X$.
\end{lemma}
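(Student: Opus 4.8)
The plan is to reduce the statement to the corresponding assertion for honest $\infty$-groupoids and then pass to the cofiltered limit. Write $X\simeq\lim_{\alpha\in A}X_\alpha$ with $X_\alpha\in\scr S$ and $A$ cofiltered. Since $\Fun(\ph,\scr S)$ was defined on $\Pro(\scr S)^\op$ by turning cofiltered limits into filtered colimits, we have $\Fun(X,\scr S)\simeq\colim_\alpha\Fun(X_\alpha,\scr S)$. On the other side, the slice $\Pro(\scr S)_{/X}$ receives compatible functors from each $\scr S_{/X_\alpha}$ via pullback along $X\to X_\alpha$, and by Lemma~\ref{lem:fincov} the essential image of $\bigcup_\alpha\scr S_{/X_\alpha}$ in $\Pro(\scr S)_{/X}$ is exactly the class of finitely presented morphisms $Y\to X$ (any such is, by definition, pulled back from some $Y_\alpha\to X_\alpha$). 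Moreover two objects $Y_\alpha\to X_\alpha$ and $Y_\beta\to X_\beta$ have the same pullback to $X$ iff they agree after pulling back to some common $X_\gamma$, and similarly for morphisms; this says precisely that the full subcategory of $\Pro(\scr S)_{/X}$ on the finitely presented morphisms is the filtered colimit $\colim_\alpha\scr S_{/X_\alpha}$ (computed in $\what\Cat{}_\infty$). So it suffices to produce, compatibly in $\alpha$, an equivalence $\Fun(X_\alpha,\scr S)\simeq\scr S_{/X_\alpha}$ implemented by the Grothendieck construction $\int$, and then take the colimit over $\alpha$.

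For a genuine $\infty$-groupoid (equivalently a space) $Z$, the functor $\int\colon\Fun(Z,\scr S)\to\scr S_{/Z}$, obtained by pulling back the universal left fibration $\scr S_\pt\to\scr S$, is an equivalence: this is the straightening/unstraightening equivalence of \cite[\sect3.2]{HTT} combined with the fact that over a Kan complex every left fibration is automatically a Kan fibration, so left fibrations over $Z$ coincide with all objects of $\scr S_{/Z}$. These equivalences are natural in $Z$ with respect to pullback, hence assemble to an equivalence of the filtered diagrams $\{\Fun(X_\alpha,\scr S)\}_\alpha$ and $\{\scr S_{/X_\alpha}\}_\alpha$ over $A^\op$. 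Passing to the colimit gives that $\int\colon\Fun(X,\scr S)\to\Pro(\scr S)_{/X}$ is fully faithful with essential image the finitely presented morphisms, as claimed. (One should note that the relevant colimit on the right is a colimit in $\what\Cat{}_\infty$ along fully faithful transition functors, so the colimit is again fully faithful over $\Pro(\scr S)_{/X}$; this is where one uses that pullback $\scr S_{/X_\alpha}\to\Pro(\scr S)_{/X}$ is fully faithful on the finitely presented part, which follows from the ``two out of three'' clause in Lemma~\ref{lem:fincov}.)

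The main obstacle is bookkeeping rather than conceptual: one has to be careful that the two presentations as filtered colimits — $\Fun(X,\scr S)=\colim_\alpha\Fun(X_\alpha,\scr S)$ and ``finitely presented part of $\Pro(\scr S)_{/X}$'' $=\colim_\alpha\scr S_{/X_\alpha}$ — are matched up by a single natural transformation of $A^\op$-diagrams, namely the one given levelwise by $\int$, and that this transformation is objectwise an equivalence. Verifying the latter for each fixed $X_\alpha$ is exactly the classical unstraightening equivalence over a space, so there is nothing new; the only point requiring a little care is the compatibility of $\int$ with pullback, i.e.\ that for $f\colon X_\beta\to X_\alpha$ the square relating $\int$ over $X_\alpha$, restriction $f^*\colon\Fun(X_\alpha,\scr S)\to\Fun(X_\beta,\scr S)$, and base change $\Pro(\scr S)_{/X_\alpha}\to\Pro(\scr S)_{/X_\beta}$ commutes up to canonical equivalence — this is a standard property of the universal left fibration. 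I would state that naturality explicitly (or cite the relevant statement in \cite[\sect3.3]{HTT} on the functoriality of straightening), and then the limit argument above finishes the proof.
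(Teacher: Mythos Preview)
Your overall strategy coincides with the paper's: both reduce to the straightening equivalence $\Fun(X_\alpha,\scr S)\simeq\scr S_{/X_\alpha}$ over each honest $\infty$-groupoid $X_\alpha$ and then pass to the filtered colimit. The essential image description is immediate in both approaches. However, your argument for full faithfulness has a genuine gap at the step where you claim that the full subcategory of $\Pro(\scr S)_{/X}$ on finitely presented morphisms is $\colim_\alpha\scr S_{/X_\alpha}$, equivalently that the canonical functor $\colim_\alpha\scr S_{/X_\alpha}\to\Pro(\scr S)_{/X}$ is fully faithful.

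Your two attempted justifications both fail. First, the transition functors $\scr S_{/X_\alpha}\to\scr S_{/X_\beta}$ (pullback along $X_\beta\to X_\alpha$) are \emph{not} fully faithful in general: under straightening they correspond to restriction $\Fun(X_\alpha,\scr S)\to\Fun(X_\beta,\scr S)$, which is fully faithful only when $X_\beta\to X_\alpha$ is an equivalence. Second, the ``two out of three'' clause in Lemma~\ref{lem:fincov} tells you only that a morphism between finitely presented objects over $X$ is itself finitely presented; it says nothing about whether all such morphisms in $\Pro(\scr S)_{/X}$ arise from maps at a finite stage. What must actually be checked is that for $Y_0,Z_0\in\scr S_{/X_0}$ with pullbacks $Y,Z$ to $X$, the map
\[
\colim_\alpha\Map_{X_\alpha}(Y_\alpha,Z_\alpha)\to\Map_{\Pro(\scr S)_{/X}}(Y,Z)
\]
is an equivalence. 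The paper does this by computing the right-hand side directly as $\lim_\alpha\colim_{\beta\geq\alpha}\Map_{X_\alpha}(Y_\beta,Z_\alpha)$ and then observing the key identity $\Map_{X_\alpha}(Y_\beta,Z_\alpha)\simeq\Map_{X_0}(Y_\beta,Z_0)$ (base change along $Z_\alpha=Z_0\times_{X_0}X_\alpha$), which makes the outer limit constant. That short computation is the entire content of the lemma, and it is precisely what your proposal asserts without proof.
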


\begin{proof}
	It is clear that the essential image of this functor is as described.
	Write $X$ as a cofiltered limit of $\infty$-groupoids $X_\alpha\in\scr S$, $\alpha\in A$.
	Let $Y,Z\in\Fun(X,\scr S)$, and choose $0\in A$ such that $Y$ and $Z$ come from functors $X_0\to\scr S$ classifying $Y_0\to X_0$ and $Z_0\to X_0$. For $\alpha\geq 0$, let $Y_\alpha= X_\alpha\times_{X_0}Y_0$ and $Z_\alpha= X_\alpha\times_{X_0}Z_0$. Then $\int Y =\lim_\alpha Y_\alpha$, $\int Z=\lim_\alpha Z_\alpha$, and the effect of $\int$ on mapping spaces is the obvious map
	\[
	\Map(Y,Z)\simeq \colim_{\alpha} \Map_{X_\alpha}(Y_\alpha,Z_\alpha) \to \lim_\alpha\colim_{\beta\geq \alpha} \Map_{X_\alpha}(Y_\beta,Z_\alpha)\simeq {\textstyle\Map(\int Y,\int Z)}.
	\]
	This map is an equivalence since $\Map_{X_\alpha}(Y_\beta,Z_\alpha)\simeq \Map_{X_0}(Y_\beta,Z_0)$ for all $\beta\geq\alpha$.
\end{proof}

It follows from Lemma~\ref{lem:proGrothendieck} that a finitely presented morphism $Y\to X$ in $\Pro(\scr S)$ has well-defined \emph{fibers} in $\scr S$, namely the values of the corresponding functor $X\to\scr S$. 
Hence, any full subcategory $\scr C\subset \scr S$ defines a pullback-stable class of finitely presented maps in $\Pro(\scr S)$, namely those whose fibers are in $\scr C$.
A morphism in $\Pro(\scr S)$ will be called a \emph{finite covering map} if it is finitely presented and its fibers are (possibly empty) finite sets.

The $\infty$-category $\Fun(X,\scr S)$ has finite limits and is extensive (see Definition~\ref{def:extensive}). If $\scr C$ is an $\infty$-category with finite limits, we may use the construction from \cite[\sect2]{BarwickGlasmanShah} to obtain a symmetric monoidal structure on the $\infty$-category $\Span(\scr C)$ such that the inclusion $\scr C\into\Span(\scr C)$ is symmetric monoidal (for the cartesian symmetric monoidal structure on $\scr C$). In fact, this construction defines a functor
\begin{equation}\label{eqn:Span}
\Span\colon \Cat_\infty^\mathrm{lex} \to \CAlg(\Cat_\infty),
\end{equation}
where $\Cat_\infty^\mathrm{lex}$ is the $\infty$-category of $\infty$-categories with finite limits and left exact functors.

Note that there is an equivalence
\[
\Fun(X,\scr S)_+ \simeq \Span(\Fun(X,\scr S),\mathrm{mono},\all),\quad Y_+\mapsto Y,\quad (f\colon Y_+\to Z_+)\mapsto (Y\hookleftarrow f^{-1}(Z) \to Z),
\]
where ``$\mathrm{mono}$'' is the class of monomorphisms.
In particular, we can identify $\Fun(X,\scr S)_+$ with a wide subcategory\footnote{If $\scr C$ is an $\infty$-category, a \emph{wide subcategory} of $\scr C$ is a functor $\scr D\to \scr C$ that induces an equivalence on spaces of objects and a monomorphism on spaces of arrows.} of $\Span(\Fun(X,\scr S))$. 
Moreover, since products preserve monomorphisms, $\Fun(X,\scr S)_+$ is closed under the tensor product in $\Span(\Fun(X,\scr S))$, and we obtain symmetric monoidal embeddings
\[
\Fun(X,\scr S) \xrightarrow{(\ph)_+} \Fun(X,\scr S)_+ \into \Span(\Fun(X,\scr S)),
\]
natural in $X\in \Pro(\scr S)^\op$.

\begin{lemma}\label{lem:FinX}
	Let $p\colon Y\to X$ be a finitely presented map of pro-$\infty$-groupoids.
	\begin{enumerate}
		\item The pullback functor $p^*\colon \Fun(X,\scr S)\to\Fun(Y,\scr S)$ admits a right adjoint $p_*$ and a left adjoint $p_\sharp$.
		\item For every cartesian square of pro-$\infty$-groupoids
		\begin{tikzmath}
			\diagram{Y' & Y \\ X' & X\rlap, \\};
			\arrows (11-) edge node[above]{$g$} (-12) (11) edge node[left]{$q$} (21) (21-) edge node[below]{$f$} (-22) (12) edge node[right]{$p$} (22);
		\end{tikzmath}
		the exchange transformations
		\begin{align*}
			\Ex_*^*&\colon f^*p_* \to q_*g^*\colon \Fun(Y,\scr S)\to \Fun(X',\scr S),\\
			\Ex^*_\sharp&\colon q_\sharp g^*\to f^*p_\sharp\colon \Fun(Y,\scr S)\to \Fin(X',\scr S)
		\end{align*}
		 are equivalences.
		\item For every $A\in\Fun(X,\scr S)$ and $B\in\Fun(Y,\scr S)$, the canonical map
		\[
		p_\sharp(p^*(A)\times B) \to A\times p_\sharp(B)
		\]
		is an equivalence.
		\item The symmetric monoidal functor $p_\otimes=\Span(p_*)\colon \Span(\Fun(Y,\scr S))\to \Span(\Fun(X,\scr S))$ restricts to a functor $p_\otimes\colon \Fun(Y,\scr S)_+\to \Fun(X,\scr S)_+$.
	\end{enumerate}
\end{lemma}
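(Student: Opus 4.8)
\textbf{Proof plan for Lemma~\ref{lem:FinX}.}
The strategy is to reduce each statement to the corresponding statement for an ordinary $\infty$-groupoid (i.e., a space) and then pass to the cofiltered limit. Throughout, write $X\simeq\lim_{\alpha\in A}X_\alpha$ with $X_\alpha\in\scr S$, and use Lemma~\ref{lem:proGrothendieck}: a finitely presented $p\colon Y\to X$ is $\int$ of a functor $X\to\scr S$, and there exists $0\in A$ together with $p_0\colon Y_0\to X_0$ in $\scr S$ with $Y\simeq X\times_{X_0}Y_0$, so that setting $Y_\alpha=X_\alpha\times_{X_0}Y_0$ gives $Y\simeq\lim_{\alpha\geq 0}Y_\alpha$ compatibly with $p$. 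Since $\Fun(\ph,\scr S)$ converts this cofiltered limit into the filtered colimit $\Fun(X,\scr S)\simeq\colim_{\alpha\geq 0}\Fun(X_\alpha,\scr S)$, and $p^*$ is computed at finite stages, everything will follow once the finite-stage assertions are in hand and one checks that the transition functors in the colimit commute with the relevant adjoints and exchange maps (which they do, being themselves pullbacks along $X_\beta\to X_\alpha$).

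\emph{Step (1).} For $p_0\colon Y_0\to X_0$ a map of spaces, $p_0^*\colon\Fun(X_0,\scr S)\to\Fun(Y_0,\scr S)$ has both adjoints: $p_{0\sharp}$ is left Kan extension and $p_{0*}$ is right Kan extension, both of which exist because $\scr S$ is bicomplete. Passing to the filtered colimit over $\alpha$, one obtains $p_\sharp$ and $p_*$ on $\Fun(X,\scr S)$; here one uses that a left (resp.\ right) adjoint of a colimit of functors, each admitting an adjoint and with adjointable transition squares, is assembled stagewise. \emph{Step (2).} For spaces, $\Ex^*_*$ and $\Ex^*_\sharp$ are equivalences: this is the standard base-change property of Kan extensions along maps of $\infty$-groupoids (left Kan extension satisfies base change unconditionally; for right Kan extension one uses that a pullback square of spaces is also a pushout, equivalently that $\Fun(\ph,\scr S)$ sends it to a pullback of $\infty$-categories and the relevant comma categories are equivalent). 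Any cartesian square of pro-$\infty$-groupoids with $p$ finitely presented is, after choosing a common index, a limit of cartesian squares of spaces, so the exchange transformations are filtered colimits of equivalences, hence equivalences. \emph{Step (3).} The projection formula $p_\sharp(p^*A\times B)\xrightarrow{\sim}A\times p_\sharp B$ holds for maps of spaces — it is the compatibility of left Kan extension with the cartesian (hence closed, since $\scr S$ is cartesian closed) monoidal structure, and can be checked fiberwise using (2) — and then descends to pro-$\infty$-groupoids by the same colimit argument. \emph{Step (4).} Since $p_*$ preserves monomorphisms (a right adjoint between presheaf categories preserves limits, and monomorphisms are detected by pullback squares, which $p_*$ preserves by (2) applied to diagonals), the symmetric monoidal functor $\Span(p_*)$ carries the wide subcategory $\Fun(Y,\scr S)_+\simeq\Span(\Fun(Y,\scr S),\mathrm{mono},\all)$ into $\Fun(X,\scr S)_+$; this defines $p_\otimes$ on pointed objects, and it is symmetric monoidal as a restriction of one.

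The only mild subtlety is Step (4): one must verify that $p_*$ preserves monomorphisms in the $\infty$-categorical sense, i.e., that $p_*(m)$ has a trivial relative diagonal whenever $m$ does. This follows because $p_*$ preserves limits, and a morphism is a monomorphism precisely when its diagonal is an equivalence; iterating, $p_*$ preserves the tower of iterated diagonals, so it sends $(-1)$-truncated morphisms to $(-1)$-truncated morphisms. The reduction machinery in Steps (1)--(3) is routine once Lemma~\ref{lem:proGrothendieck} and the filtered-colimit description of $\Fun(X,\scr S)$ are invoked; the main point to be careful about is simply that all the adjunctions and exchange transformations are compatible with the transition functors, so that they assemble coherently in the colimit, which is immediate from their construction by (co)limit universal properties.
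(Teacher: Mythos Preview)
Your approach is essentially the paper's: reduce (1) and (2) to spaces via Kan extensions, then pass to the filtered colimit using base change to see that the stagewise adjoints assemble coherently; (4) is the same observation that $p_*$ preserves pullbacks and hence monomorphisms. Two small points of divergence. For (3), the paper takes a shortcut you miss: via Lemma~\ref{lem:proGrothendieck}, $p^*$ is identified with pullback in $\Pro(\scr S)_{/X}$ and $p_\sharp$ with postcomposition by $p$, so the projection formula becomes the tautology $(A\times_XY)\times_YB\simeq A\times_XB$ over $X$ --- no reduction to spaces or colimit argument needed. Second, your aside that ``a pullback square of spaces is also a pushout'' is false (e.g., $\emptyset\simeq *\times_{*\amalg *}*$ is not a pushout square); the correct reason $\Ex_*^*$ is an equivalence for maps of $\infty$-groupoids is that right Kan extension along such a map is computed as a limit over fibers, and fibers are stable under base change --- which is what your alternative ``comma categories'' justification actually says.
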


\begin{proof}
	For $\infty$-groupoids, (1) and (2) are clear: the functors $p_\sharp$ and $p_*$ are the left and right Kan extension functors, which are computed by taking the (co)limits over the fibers of $p$.
	In general, write $X=\lim_{\alpha\in A} X_\alpha$ where $A$ is cofiltered and each $X_\alpha$ is an $\infty$-groupoid, choose $0\in A$ such that $p\colon Y\to X$ is the pullback of a map of $\infty$-groupoids $Y_0\to X_0$, and let $Y_\alpha=X_\alpha\times_{X_0}Y_0$. The functor $p^*\colon \Fun(X,\scr S)\to\Fun(Y,\scr S)$ is then the filtered colimit of the functors $p_\alpha^*\colon\Fun(X_\alpha,\scr S)\to\Fun(Y_\alpha,\scr S)$. By the base change properties (2), we have in fact a filtered diagram of adjunctions $p_{\alpha\sharp}\dashv p_\alpha^*\dashv p_{\alpha *}$, which induce adjunctions in the colimit. This proves (1) in general. We can then prove (2) by writing the given square as a cofiltered limit of cartesian squares of $\infty$-groupoids. 
	Under the identification of Lemma~\ref{lem:proGrothendieck}, the functor $p^*$ is the base change functor, and assertion (3) becomes obvious.
	To prove (4), it suffices to observe that $p_*$ preserves monomorphisms, since it preserves pullbacks.
\end{proof}

We now specialize the discussion to profinite groupoids. If $X\in \Pro(\FinGpd)$, we denote by 
\[\Fin_X=\Fun(X,\Fin)\subset \Fun(X,\scr S)\] 
the $1$-category of \emph{finite $X$-sets}. By Lemma~\ref{lem:proGrothendieck}, $\Fin_X$ can be identified with the category of finite coverings of $X$. 
If $p\colon Y\to X$ is a finitely presented map between profinite groupoids, the fibers of $p$ have finitely many connected components. It follows that the functor $p_*\colon \Fun(Y,\scr S)\to\Fun(X,\scr S)$ sends $\Fin_Y$ to $\Fin_X$, and we have an induced adjunction
\[
p^*: \Fin_X\rightleftarrows \Fin_Y: p_*.
\]
If in addition $p$ is a finite covering map, it is clear that the functor $p_\sharp\colon \Fun(Y,\scr S)\to\Fun(X,\scr S)$ sends $\Fin_Y$ to $\Fin_X$, and we have an induced adjunction
\[
p_\sharp: \Fin_Y\rightleftarrows \Fin_X: p^*.
\]

\subsection{Norms in stable equivariant homotopy theory}
\label{sub:equivariant-norms}

We now proceed to define $\SH^\otimes$ on profinite groupoids.
We start with the functor
\[
\Pro(\FinGpd)^\op \to \Cat_1,\quad X\mapsto \Fin_X.
\]
By Lemma~\ref{lem:fincov} and Lemma~\ref{lem:FinX}(1,2), we may use either \cite[Proposition 11.6]{BarwickMackey} or \cite[Theorem V.1.3.2.2]{GRderalg} to obtain the functor
\[
\Fin^\otimes\colon\Span(\Pro(\FinGpd),\all,\fp) \to \Cat_1,\quad X\mapsto\Fin_X, \quad (U\stackrel f\from T\stackrel p\to S)\mapsto p_*f^*,
\]
where ``$\fp$'' is the class of finitely presented maps.
Since $f^*$ and $p_*$ are left exact, this functor lands in the subcategory $\Cat_1^\mathrm{lex}$. Composing with the functor~\eqref{eqn:Span}, we obtain the functor
\[
	\Span(\Fin)^\otimes\colon\Span(\Pro(\FinGpd),\all,\fp) \to \CAlg(\Cat_2),\quad X\mapsto\Span(\Fin_X), \quad (U\stackrel f\from T\stackrel p\to S)\mapsto p_\otimes f^*,
\]
which by Lemma~\ref{lem:FinX}(4) admits a subfunctor $\Fin_+^\otimes\colon X\mapsto \Fin_{X+}$.

We set
\[
\H(X) = \PSh_\Sigma(\Fin_X) \quad\text{and}\quad \H_\pt(X) =\PSh_\Sigma(\Fin_{X+}).
\]
By Lemma~\ref{lem:C+}, we have $\H_\pt(X)\simeq \H(X)_\pt$.
Composing the functors $\Fin^\otimes$ and $\Fin_+^\otimes$ with $\PSh_\Sigma$ (see \sect\ref{sub:coherence}), we obtain
\[
\H^\otimes,\; \H_\pt^\otimes\colon \Span(\Pro(\FinGpd),\all,\fp) \to \CAlg(\what\Cat{}_\infty^\mathrm{sift}).
\] 
If $G$ is a finite group, then $\H(\B G)$ and $\H_\pt(\B G)$ are the usual symmetric monoidal $\infty$-categories of $G$-spaces and of pointed $G$-spaces. 
Moreover, if $f\colon \B G\to \B H$ is induced by a group homomorphism $G\to H$ with kernel $N$, then $f_\otimes\colon \H_\pt(\B G) \to \H_\pt(\B H)$ is the usual functor $\Norm_{G/N}^H\circ \Phi^N$.
In particular, if $p\colon *\to \B G$ is the canonical map, then $p_\otimes(\S^1)\in\H_\pt(\B G)$ is the regular representation sphere.

Finally, we let $\SH(X)$ be the presentably symmetric monoidal $\infty$-category obtained from $\H_\pt(X)$ by inverting $p_\otimes(\S^1)$ for every finite covering map $p\colon Y\to X$. If $G$ is a finite group, $\SH(\B G)$ is the usual symmetric monoidal $\infty$-category of $G$-spectra: our construction can be compared with the symmetric monoidal model category of symmetric spectra in $G$-spaces (with respect to the regular representation sphere), using \cite[Theorem 2.26]{Robalo}.

\begin{lemma}\label{lem:continuity}
	The functors $\H,\H_\pt,\SH\colon \Pro(\FinGpd)^\op \to \CAlg(\Pr^\mathrm{L})$ preserve filtered colimits and finite products.
\end{lemma}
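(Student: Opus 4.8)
The statement is that the three functors $\H,\H_\pt,\SH\colon \Pro(\FinGpd)^\op\to\CAlg(\Pr^{\mathrm L})$ preserve filtered colimits and finite products. The strategy is to trace through the construction of these functors as it was performed in \sect\ref{sub:equivariant-norms} and check that each assembly step is compatible with both kinds of limits/colimits. Recall that by convention $\Fun(\ph,\scr C)$ on $\Pro(\scr S)^\op$ is \emph{defined} as the unique extension of $\Fun(\ph,\scr C)$ on $\scr S^\op$ that preserves filtered colimits; so the starting point $X\mapsto\Fin_X=\Fun(X,\Fin)$ already preserves filtered colimits by construction. That it preserves finite products is the statement that $\Fin_{X\amalg Y}\simeq\Fin_X\times\Fin_Y$, which for $X,Y\in\FinGpd$ is clear and passes to pro-objects since finite coproducts in $\Pro(\FinGpd)$ are computed ``levelwise'' (write $X=\lim_\alpha X_\alpha$, $Y=\lim_\beta Y_\beta$ and use that $\Fun(\ph,\Fin)$ takes the cofiltered limit $X\amalg Y=\lim_{\alpha,\beta}(X_\alpha\amalg Y_\beta)$ to the corresponding filtered colimit).

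\textbf{Key steps.} First I would record the base case: $X\mapsto\Fin_X$ (as a functor to $\Cat_1^{\mathrm{lex}}$, before applying $\Span$) preserves filtered colimits and finite products, by the remarks above. Second, I would pass through the span construction: one needs to know that $\Span\colon\Cat_\infty^{\mathrm{lex}}\to\CAlg(\Cat_\infty)$ preserves filtered colimits and finite products. Filtered colimits are clear since $\Span(\scr C)$ is built ``levelwise'' from $\scr C$ (its objects are those of $\scr C$, its $n$-simplices are certain diagrams in $\scr C$, and filtered colimits commute with finite limits so a span diagram in a filtered colimit comes from a finite stage); finite products follow because a span in $\scr C\times\scr D$ is a pair of spans. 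Restricting to the wide subcategory $\Fin_{X+}\subset\Span(\Fin_X)$ is compatible with these operations as well (monomorphisms in a filtered colimit or a product are detected levelwise/componentwise). Third, apply $\PSh_\Sigma$. Here the relevant facts are that $\PSh_\Sigma\colon\CAlg(\Cat_\infty)\to\CAlg(\what\Cat{}_\infty^{\mathrm{sift}})\to\CAlg(\Pr^{\mathrm L})$ preserves filtered colimits and finite products: for filtered colimits this is because $\PSh_\Sigma$ is a left adjoint (to the forgetful functor, as recalled in \sect\ref{sub:coherence}) hence preserves all colimits, and $\CAlg(\what\Cat{}_\infty^{\mathrm{sift}})\to\CAlg(\Pr^{\mathrm L})$ commutes with filtered colimits by \cite[Lemma 7.3.5.10]{HA} / the discussion in Remark~\ref{rmk:inversion}; for finite products, $\PSh_\Sigma(\scr C\times\scr D)\simeq\PSh_\Sigma(\scr C)\times\PSh_\Sigma(\scr D)$ since a finite-product-preserving presheaf on a finite coproduct of categories is a tuple of such presheaves. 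This gives the claim for $\H$ and $\H_\pt$. Finally, for $\SH$ one inverts, over each $X$, the set of spheres $\{p_\otimes(\S^1)\}$ indexed by finite covering maps $p\colon Y\to X$. Since every finite covering map is finitely presented, if $X=\colim_\alpha X_\alpha$ is a filtered colimit then every finite covering $Y\to X$ is pulled back from one at a finite stage (Lemma~\ref{lem:proGrothendieck} and Lemma~\ref{lem:fincov}), and correspondingly the sphere $p_\otimes(\S^1)$ is the image of one in $\H_\pt(X_\alpha)$; thus $\SH(X)=\H_\pt(X)[\{p_\otimes\S^1\}^{-1}]=\bigl(\colim_\alpha\H_\pt(X_\alpha)\bigr)[\dotsb]^{-1}\simeq\colim_\alpha\SH(X_\alpha)$, using that localization (being a left adjoint in $\CAlg(\Pr^{\mathrm L})$) commutes with the filtered colimit and that a filtered colimit of the indexing sets presents the total set to invert. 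For finite products, over $X\amalg Y$ the finite coverings are pairs $(U\to X,\,V\to Y)$, so the set to invert in $\H_\pt(X\amalg Y)\simeq\H_\pt(X)\times\H_\pt(Y)$ is $\{p_\otimes\S^1\}_X\times\{q_\otimes\S^1\}_Y$ together with the objects $(p_\otimes\S^1,0)$ and $(0,q_\otimes\S^1)$; inverting these in the product recovers $\SH(X)\times\SH(Y)$.

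\textbf{Main obstacle.} The routine parts are bookkeeping; the one genuinely delicate point is the behaviour of the \emph{inversion} step under filtered colimits, i.e.\ the fact that inverting a filtered family of objects in a filtered colimit of symmetric monoidal $\infty$-categories can be computed as the colimit of the inverted pieces. This is essentially the content of Remark~\ref{rmk:inversion} (together with Lemma~\ref{lem:inversion}): for a single object one uses Robalo's presentation of $\scr C[X^{-1}]$ as a sequential colimit along $X\otimes\ph$ in $\Pr^{\mathrm L}$ and the fact that $\Ind$ (equivalently, sifted-cocompletion) preserves filtered colimits; for the multi-object case and the compatibility across the filtered diagram $X=\colim X_\alpha$ one must interleave two filtered colimits (over $\alpha$ and over the approximating sequences defining each localization) and invoke that filtered colimits commute with filtered colimits. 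I would make this precise by the observation, already isolated in Remark~\ref{rmk:inversion}, that a filtered colimit of compactly generated symmetric monoidal $\infty$-categories with colimit- and compact-object-preserving transition functors has the same colimit in $\Pr^{\mathrm L}$ as in $\what\Cat{}_\infty^{\mathrm{sift}}$, which lets one move the localization functor (a morphism in $\CAlg(\Pr^{\mathrm L})$, hence a left adjoint) past the colimit. Everything else reduces to the levelwise nature of $\Fin_X$, $\Span(\ph)$, and $\PSh_\Sigma(\ph)$.
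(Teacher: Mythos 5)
Your proof is essentially correct and follows the same route as the paper's: $\Fin_X$ preserves filtered colimits and finite products, $\PSh_\Sigma$ is a partial left adjoint and so preserves colimits, and $\Fin_{X\amalg Y}\simeq\Fin_X\times\Fin_Y$ together with semiadditivity (of $\Cat_\infty^\amalg$ and $\Pr^{\mathrm L}$) gives the finite-products claim for $\H$ and $\H_\pt$; for $\SH$ and filtered colimits, one uses that every finite covering of a cofiltered limit is pulled back from a finite stage. The one place the paper is more careful than you are is the finite-products claim for $\SH$: you assert that inverting the indicated family in $\H_\pt(X)\times\H_\pt(Y)$ recovers $\SH(X)\times\SH(Y)$ (and the objects you denote $(p_\otimes\S^1,0)$ should presumably be $(p_\otimes\S^1,\1)$, arising from the empty covering over $Y$), whereas the paper first invokes the already-established preservation of filtered colimits to reduce to $X=\B G$, $Y=\B H$ for finite groups $G,H$, so that only the single sphere $(p\amalg q)_\otimes(\S^1)$ needs inverting, and then verifies the compatibility of this single-object inversion with the finite product by passing to the $\Pr^{\mathrm R}$-limit tower description of $\scr C[X^{-1}]$ from Lemma~\ref{lem:inversion}. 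Your sketch of this step is plausible but would need to be fleshed out along similar lines.
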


\begin{proof}
	Let $\Cat_\infty^\amalg$ be the $\infty$-category of small $\infty$-categories with finite coproducts and functors that preserve finite coproducts. The inclusion $\Cat_\infty^\amalg\subset\Cat_\infty$ clearly preserves filtered colimits, and hence the functor $\Pro(\Fin\Gpd)^\op\to\Cat_\infty^\amalg$, $X\mapsto\Fin_X$, preserves filtered colimits.
	The functor $\PSh_\Sigma\colon \Cat_\infty^\amalg\to \Pr^\mathrm{L}$, being a partial left adjoint, preserves colimits. This proves the first claim for $\H$ and hence $\H_\pt$. If $X\simeq\lim_{\alpha\in A} X_\alpha$ where $A$ is cofiltered, every finite covering map $Y\to X$ is the pullback of a finite covering map $Y_\alpha\to X_\alpha$ for some $\alpha\in A$. Hence, $\{p_\otimes(\S^1)\}_{p\in\Fin_X}$ is the union of the collections $\{f_\alpha^*p_\otimes(\S^1)\}_{p\in\Fin_{X_\alpha}}$, where $f_\alpha\colon X\to X_\alpha$. Since $\SH(X)$ is the image of $(\H_\pt(X),\{p_\otimes(\S^1)\}_{p\in\Fin_X})$ by a partial left adjoint, the first claim for $\SH$ follows.
	
	The functor $X\mapsto\Fin_X$ also transforms colimits into limits. In particular, $\Fin_{X\amalg Y}\simeq \Fin_X\times\Fin_Y$.
	But the cartesian product is also a \emph{coproduct} in $\Cat_\infty^\amalg$, and $\Pr^\mathrm{L}$ is semiadditive, which implies the second claim for $\H$ and hence $\H_\pt$. 
	To prove that $\SH(X\amalg Y)\simeq \SH(X)\times\SH(Y)$, since $\SH(\ph)$ preserves filtered colimits, we may assume that $X=\B G$ and $Y=\B H$ for some finite groups $G$ and $H$. Let $p\colon * \to \B G$ and $q\colon *\to \B H$. Then
	\[
	\SH(X\amalg Y) \simeq \H_\pt(X\amalg Y)[(p\amalg q)_\otimes(\S^1)^{-1}] \simeq (\H_\pt(X)\times \H_\pt(Y))[(p_\otimes(\S^1),q_\otimes(\S^1))^{-1}].
	\]
	The result follows by expressing the right-hand side as a limit, as in the proof of Lemma~\ref{lem:inversion}.
\end{proof}

\begin{lemma}\label{lem:SHinduction}
	Let $p\colon Y\to X$ be a finite covering map of profinite groupoids.
\begin{enumerate}
	\item The functor $\SH(X)\otimes_{\H_\pt(X)}\H_\pt(Y)\to \SH(Y)$ induced by $p^*\colon\SH(X)\to\SH(Y)$ and $\Sigma^\infty\colon\H_\pt(Y)\to\SH(Y)$ is an equivalence of symmetric monoidal $\infty$-categories. 
	\item The functor $p^*\colon\SH(X)\to\SH(Y)$ has a left adjoint $p_\sharp$ as an $\SH(X)$-module functor in $\Pr^\mathrm{L}$.
\end{enumerate}
\end{lemma}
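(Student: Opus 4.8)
The plan is to deduce part~(2) from part~(1), so the real content is in~(1). By construction, $\SH(X)$ is obtained from $\H_\pt(X)$ by inverting the set $S_X=\{r_\otimes(\S^1)\}$, where $r$ ranges over finite covering maps with target $X$; by the universal property invoked in that construction (Lemma~\ref{lem:inversion} together with Remarks~\ref{rmk:inversion} and~\ref{rmk:C-module}), this is a localization both in $\CAlg(\Pr^\mathrm{L})$ and compatibly with $\H_\pt(X)$-module structures. Since inverting a set of objects is a smashing operation, I would first identify the relative tensor product $\SH(X)\otimes_{\H_\pt(X)}\H_\pt(Y)$ with the localization $\H_\pt(Y)[p^*(S_X)^{-1}]$, and the functor of the lemma with the canonical comparison $\H_\pt(Y)[p^*(S_X)^{-1}]\to\H_\pt(Y)[S_Y^{-1}]=\SH(Y)$. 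Thus~(1) reduces to showing that $p^*(S_X)$ and $S_Y$ generate the same strongly saturated class of maps in $\H_\pt(Y)$.

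One inclusion is formal: if $r\colon W\to X$ is a finite covering, base change along $p$ (available from the functoriality of $\H_\pt^\otimes$ on spans) gives $p^*r_\otimes(\S^1)\simeq\bar r_\otimes(\bar p^*\S^1)\simeq\bar r_\otimes(\S^1)$, where $\bar r\colon W\times_XY\to Y$ is again a finite covering and $\bar p^*\S^1\simeq\S^1$ because $\S^1$ is pulled back from the terminal groupoid; hence $p^*(S_X)\subseteq S_Y$ up to equivalence. For the reverse inclusion I would reduce to a finite group: by Lemma~\ref{lem:continuity}, $\H_\pt(\ph)$ and $\SH(\ph)$ turn cofiltered limits of profinite groupoids into filtered colimits in $\Pr^\mathrm{L}$, and these commute with the relative tensor product and the localizations in play, so one may assume $X$ is a finite groupoid and then (decomposing into connected components) that $X=\B G$, $Y=\B H$ with $H\subseteq G$. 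Here $\SH(\B G)$ and $\SH(\B H)$ are the usual categories of genuine equivariant spectra, $S_{\B G}$ is generated under smash products by the permutation representation spheres $\S^{\R[G/K]}=\Norm_K^G(\S^1)$, $K\subseteq G$ (the norm along a disjoint union of coverings being a smash product), and likewise for $S_{\B H}$. The double-coset (base-change) formula gives
\[
p^*\bigl(\S^{\R[G/K]}\bigr)\simeq\bigwedge_{HgK\in H\backslash G/K}\S^{\R[H/(H\cap gKg^{-1})]}.
\]
Taking $K=\{e\}$ yields $p^*(\S^{\R[G]})\simeq(\S^{\R[H]})^{\wedge[G:H]}$, and since in any symmetric monoidal $\infty$-category an object $X$ with $X^{\otimes m}$ invertible is itself invertible (with inverse $X^{\otimes(m-1)}\otimes(X^{\otimes m})^{-1}$, using the braiding), $\S^{\R[H]}$ becomes invertible after inverting $p^*(S_{\B G})$. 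Then, inducting on $|L|$ and taking $K=L$ above, every factor on the right is invertible by the inductive hypothesis except those indexed by double cosets with $gLg^{-1}\subseteq H$, and a bookkeeping argument over these top-order cosets (running the formula over the relevant $H$-conjugacy classes and again invoking the $X^{\otimes m}$ criterion, or equivalently inverting a table-of-marks-type matrix) shows $\S^{\R[H/L]}$ is invertible; this gives $S_Y\subseteq\langle p^*(S_X)\rangle$ and completes~(1). Alternatively, this last step can be quoted as the known presentation of genuine $H$-spectra as the localization of pointed $H$-spaces at the permutation representation spheres restricted from $G$.

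For~(2): Lemma~\ref{lem:FinX}(1) provides a left adjoint $p_\sharp\colon\Fin_Y\to\Fin_X$ of $p^*$; passing to $\PSh_\Sigma$ and to pointed objects yields a colimit-preserving left adjoint $p_\sharp\colon\H_\pt(Y)\to\H_\pt(X)$ of $p^*$, and Lemma~\ref{lem:FinX}(3) supplies the projection formula $p_\sharp(p^*A\wedge B)\simeq A\wedge p_\sharp B$, so that $p^*\colon\H_\pt(X)\to\H_\pt(Y)$ is an $\H_\pt(X)$-module functor admitting an $\H_\pt(X)$-module left adjoint. I would then apply the adjunction-preserving functor $\SH(X)\otimes_{\H_\pt(X)}(\ph)$ from $\H_\pt(X)$-modules to $\SH(X)$-modules in $\Pr^\mathrm{L}$: by the identification of part~(1) it carries $p^*\colon\H_\pt(X)\to\H_\pt(Y)$ to $p^*\colon\SH(X)\to\SH(Y)$, and since it preserves adjunctions, $p^*$ acquires an $\SH(X)$-module left adjoint $p_\sharp:=\mathrm{id}_{\SH(X)}\otimes_{\H_\pt(X)}p_\sharp$, as required.

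The hard part will be the inclusion $S_Y\subseteq\langle p^*(S_X)\rangle$ in~(1): its reverse is purely formal, but recovering \emph{every} permutation representation sphere of $H$ from the restricted ones of $G$ genuinely requires the double-coset decomposition combined with the cancellation afforded by the ``$X^{\otimes m}$ invertible $\Rightarrow X$ invertible'' observation, the subtlety being that a subgroup of $H$ can be $G$-conjugate but not $H$-conjugate to $L$. The remainder is routine manipulation of localizations, relative tensor products, and module adjunctions.
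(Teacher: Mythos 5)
Your proof of part~(2) essentially matches the paper's: obtain $p_\sharp$ at the level of $\Fin_{\pt}$ and hence $\H_\pt$ from Lemma~\ref{lem:FinX}(1,3), note the projection formula makes $p_\sharp\dashv p^*$ an $\H_\pt(X)$-module adjunction, and base-change along $\Sigma^\infty\colon\H_\pt(X)\to\SH(X)$ using part~(1).

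For part~(1), your general framing is correct — identify $\SH(X)\otimes_{\H_\pt(X)}\H_\pt(Y)$ with the localization of $\H_\pt(Y)$ at $p^*(S_X)$ and then show this agrees with the localization at $S_Y$, one inclusion being formal from base change. But your approach to the hard inclusion $S_Y\subseteq\langle p^*(S_X)\rangle$ is substantially more complicated than what the situation demands, and the argument as written has a gap you yourself flag but do not close: the double-coset formula for $\res_H\S^{\R[G/L]}$ produces top-degree smash factors $\S^{\R[H/L']}$ for several $H$-conjugacy classes $L'$ that are $G$-conjugate to $L$ but not $H$-conjugate to each other, and you wave at a ``table-of-marks-type'' matrix inversion without checking that the resulting system actually has a $\Z$-solution (table-of-marks matrices are unipotent only after suitable ordering, and mixing $G$-conjugacy with $H$-conjugacy muddies this). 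There is a one-line observation that avoids all of this: for any finite covering $r\colon W\to Y$, the unit of the $p_\sharp\dashv p^*$ adjunction exhibits $W$ as a clopen summand of $p^*p_\sharp W$, where $p_\sharp W$ is just $W$ viewed as a finite covering of $X$ via $p\circ r$. Since the norm along a disjoint union is a smash product, $r_\otimes(\S^1)$ is therefore a smash \emph{factor} of $p^*(p\circ r)_\otimes(\S^1)\in p^*(S_X)$; and a smash factor of an object made invertible is itself made invertible (with inverse the complementary factor smashed with the inverse of the whole). In the $\B G$ vs.\ $\B H$ picture this is just the statement that $H/L$ splits off $\res_H\ind_H^G(H/L)=\res_H(G/L)$, so $\S^{\R[H/L]}$ is a smash factor of $\res_H\S^{\R[G/L]}$ — no induction on subgroup order and no double-coset bookkeeping. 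This is exactly the paper's argument, and it also works directly at the level of profinite groupoids without reducing to finite groups.
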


\begin{proof}
	Since every finite covering of $Y$ is a summand of a finite covering pulled back from $X$, $\SH(Y)$ can be obtained from $\H_\pt(Y)$ by inverting $p^*q_\otimes(\S^1)$ for all $q\in\Fin_X$. Comparing universal properties, this proves (1).
	The analogs of (2) for $\H$ and $\H_\pt$ are immediate from Lemma~\ref{lem:FinX}(1,3). 
	The $\SH(X)$-module adjunction $p_\sharp\dashv p^*$ is then obtained from the $\H_\pt(X)$-module adjunction $p_\sharp\dashv p^*$ by extension of scalars along $\Sigma^\infty\colon \H_\pt(X)\to\SH(X)$. We refer to \cite[\sect6.1]{Hoyois} for a formal justification of the latter procedure. 
\end{proof}

\begin{lemma}\label{lem:fp-norm}
	Let $f\colon Y\to X$ be a finitely presented map of profinite groupoids with connected fibers.
	Then $f_\otimes\colon \H_\pt(Y)\to \H_\pt(X)$ preserves colimits.
	In particular, $f_\otimes(\S^1)\simeq \S^1\in \H_\pt(X)$.
\end{lemma}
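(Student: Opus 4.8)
The statement is that for a finitely presented map $f\colon Y\to X$ of profinite groupoids with \emph{connected} fibers, $f_\otimes\colon \H_\pt(Y)\to\H_\pt(X)$ preserves colimits. The plan is to reduce to the case of $\infty$-groupoids by a continuity argument, then to the case where $f$ is induced by a surjective group homomorphism, and finally to identify $f_\otimes$ with a composite of functors that are manifestly colimit-preserving. First I would recall that $f_\otimes$ preserves sifted colimits unconditionally (this is built into the construction of $\H_\pt^\otimes$ via $\PSh_\Sigma$ and Theorem~\ref{thm:norm}(1)), so by Lemma~\ref{lem:sifted+coprod} it suffices to show $f_\otimes$ preserves finite coproducts; since $\H_\pt$ is pointed it even suffices to show $f_\otimes$ preserves binary coproducts, i.e.\ $f_\otimes(A\vee B)\simeq f_\otimes(A)\vee f_\otimes(B)$.

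Next I would handle the reduction to finite groupoids. Write $X=\lim_{\alpha}X_\alpha$ as a cofiltered limit of finite groupoids; the map $f$ is the pullback of some $f_0\colon Y_0\to X_0$, and connectedness of the fibers is a property visible at a finite stage (shrinking $A$ if necessary), so all $f_\alpha\colon Y_\alpha\to X_\alpha$ have connected fibers. By Lemma~\ref{lem:continuity} and the compatibility of $f_\otimes$ with base change (the exchange transformation $\Ex^*_\otimes$), the functor $f_\otimes$ is the filtered colimit of the $f_{\alpha\otimes}$; since a filtered colimit of colimit-preserving functors is colimit-preserving, it suffices to treat $f\colon Y\to X$ with $X$ a finite groupoid. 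A finite groupoid is a finite coproduct of $\B G$'s, and $f_\otimes$ respects this decomposition (both $\H_\pt$ and $\fp$ maps decompose over coproducts in the base, using that $\H_\pt$ sends coproducts to products), so I may assume $X=\B G$. Then $Y$, being a finitely presented $X$-groupoid with connected fibers, is of the form $\B H$ for a finite groupoid mapping to $\B G$ — more precisely $f$ corresponds under Lemma~\ref{lem:proGrothendieck} to a functor $\B G\to \FinGpd$ with connected values, i.e.\ a single connected finite groupoid with $G$-action. After choosing a basepoint of the fiber this is the data of a surjective homomorphism to a quotient; concretely $f$ factors as $\B G\to \B(G/N)$ followed by an equivalence-type map, so up to equivalence $f_\otimes=\Phi^N$, the geometric fixed points functor for a normal subgroup $N\trianglelefteq G$.

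The final and main step is to show the geometric fixed points functor $\Phi^N\colon \H_\pt(\B G)\to\H_\pt(\B(G/N))$ preserves colimits. This is a standard fact in unstable equivariant homotopy theory: on pointed $G$-spaces, $\Phi^N(Z)=Z^N$ is literally the $N$-fixed point subspace functor, which is a left adjoint (for instance because it is the left Kan extension along $\Fin_{\B G}\to\Fin_{\B(G/N)}$, $T\mapsto T^N$, followed by $\PSh_\Sigma$, all of whose constituents preserve colimits), or more pedestrianly because $(\ph)^N$ commutes with arbitrary colimits of pointed $G$-spaces when computed in spaces — a colimit of $G$-spaces has underlying space the colimit of underlying spaces, and $N$-fixed points of a colimit of spaces with $N$ acting is the colimit of the $N$-fixed points when the diagram consists of (cofibrations between) $N$-CW complexes, which is automatic in the $\infty$-categorical setting. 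I would phrase this cleanly by noting that under the identification $\H_\pt(\B G)\simeq\PSh_\Sigma(\Fin_{\B G +})$ and the analogous one for $G/N$, the functor $f_\otimes$ restricted to $\Fin_{\B G+}$ is $T_+\mapsto (T^N)_+=(f_*T)_+$ by Theorem~\ref{thm:norm}(3) applied to the universal homeomorphism-like situation — here $f_*$ on finite $G$-sets is $T\mapsto T^N$ — and since this functor $\Fin_{\B G+}\to\Fin_{\B(G/N)+}\subset\H_\pt(\B(G/N))$ preserves finite coproducts (fixed points of a disjoint union of $G$-sets is the disjoint union of fixed points), its sifted-cocontinuous extension $f_\otimes$ preserves all colimits by Lemma~\ref{lem:sifted+coprod}. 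The assertion $f_\otimes(\S^1)\simeq\S^1$ is then immediate: $\S^1$ is the constant pointed presheaf, preserved by the colimit-preserving symmetric monoidal $f_\otimes$ together with $f_\otimes(\1)=\1$. The one point requiring genuine care is the claim that fixed points commutes with binary coproducts of $G$-sets at the level of $\Fin$ — this is elementary — together with checking that the decomposition over $\B G$'s and the cofiltered-limit reduction are compatible with $f_\otimes$; both follow formally from functoriality of $\H_\pt^\otimes$ on $\Span(\Pro(\FinGpd),\all,\fp)$ and Lemma~\ref{lem:continuity}, so I anticipate no real obstacle.
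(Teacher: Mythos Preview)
Your argument is correct, but it takes a substantial detour compared with the paper's proof. Both proofs begin the same way: since $f_\otimes$ preserves sifted colimits by construction, it suffices (via Lemma~\ref{lem:sifted+coprod}) to show that the restriction of $f_\otimes$ to $\Fin_{Y+}$ preserves finite coproducts, equivalently that $f_*\colon \Fin_Y\to\Fin_X$ does. At this point the paper simply observes, working in the larger category $\Fun(Y,\scr S)$, that after base change (Lemma~\ref{lem:FinX}(2)) one may assume $X$ is an $\infty$-groupoid, so that $f_*$ is computed fiberwise as a limit over the connected fiber, and \emph{connected limits commute with finite coproducts in $\scr S$}. That is the entire proof.

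Your route---reducing to finite groupoids via continuity, then to $X=\B G$, then identifying $f$ with $\B K\to\B(K/N)$ and $f_*$ with $(\ph)^N$---arrives at the same elementary fact in the special case where the connected fiber is $\B N$: namely, $(A\amalg B)^N=A^N\amalg B^N$. This is correct but the reductions are unnecessary, and the step ``connectedness of the fibers is visible at a finite stage'' needs a small extra argument (one should first replace $X_0$ by the essential image of $X\to X_0$ so that $\phi_0$ only takes connected values). The paper's one-line invocation of connected limits handles the general profinite case directly with no reduction at all. Your reference to Theorem~\ref{thm:norm}(3) is also slightly misplaced: in the equivariant setting $f_\otimes(T_+)\simeq(f_*T)_+$ holds by the very construction of $f_\otimes$ as the restriction of $\Span(f_*)$ (Lemma~\ref{lem:FinX}(4)), not by analogy with the motivic case.
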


\begin{proof}
	It suffices to show that the restriction of $f_\otimes$ to $\Fin_{Y+}$ preserves finite sums.
	We claim more generally that if $f\colon Y\to X$ is a finitely presented map in $\Pro(\scr S)$ with connected fibers, then $f_*\colon \Fun(Y,\scr S)\to\Fun(X,\scr S)$ preserves finite sums. By Lemma~\ref{lem:FinX}(2), we can assume that $X$ is an $\infty$-groupoid, and the claim then follows from the fact that connected limits commute with sums in $\scr S$.
\end{proof}

If $p\colon Y\to X$ is a finitely presented map of profinite groupoids, Lemma~\ref{lem:fp-norm} implies that $p_\otimes(\S^1)\simeq \tilde p_\otimes(\S^1)$, where $\tilde p$ is the $0$-truncation of $p$ in $\Pro(\FinGpd)_{/X}$. As $\tilde p$ is a finite covering map, we deduce that $\Sigma^\infty p_\otimes(\S^1)\in \SH(X)$ is invertible.

\begin{lemma}\label{lem:norm-compact}
	If $p\colon Y\to X$ is a finitely presented map of profinite groupoids, then $p_\otimes(\S^1)\in\H_\pt(X)$ is compact.
\end{lemma}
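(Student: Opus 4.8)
The key reduction is that $p_\otimes(\S^1)$ only depends on the $0$-truncation $\tilde p$ of $p$ (Lemma~\ref{lem:fp-norm}), so I may assume that $p\colon Y\to X$ is a finite covering map. Writing $X$ as a cofiltered limit of finite groupoids $X_\alpha$, every finite covering $p$ is pulled back from a finite covering $p_0\colon Y_0\to X_0$ of some finite groupoid. The functor $\H_\pt(\ph)$ preserves filtered colimits (Lemma~\ref{lem:continuity}), and under the induced equivalence $\H_\pt(X)\simeq\colim_\alpha\H_\pt(X_\alpha)$ the object $p_\otimes(\S^1)$ is the image of $p_{0\otimes}(\S^1)\in\H_\pt(X_0)$. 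Since a filtered colimit of $\infty$-categories along functors that (need not) preserve compactness still has the property that the images of compact objects are compact — more precisely, compactness of $p_\otimes(\S^1)$ can be checked after pulling back along $X\to X_0$, and a left-adjoint-style pullback preserves compact objects only if it preserves filtered colimits, which $f^*$ does not in general — I should instead argue compactness directly at the finite stage. So the real content is: \emph{if $X$ is a finite groupoid and $p\colon Y\to X$ is a finite covering, then $p_\otimes(\S^1)\in\H_\pt(X)$ is compact.}

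For the finite case I would proceed as follows. By the explicit description recorded after Lemma~\ref{lem:fp-norm} (and the discussion in \sect\ref{sub:equivariant-norms}), $\H_\pt(X)=\PSh_\Sigma(\Fin_{X+})$, and compact objects of $\PSh_\Sigma(\scr C_+)$ for a small category $\scr C$ with finite coproducts are the retracts of finite colimits of representables; in particular every object of $\Fin_{X+}$ is compact, being representable. Thus it suffices to show that $p_\otimes(\S^1)$ is a retract of a finite colimit of objects of $\Fin_{X+}$. Now $\S^1\in\H_\pt(Y)$ is the pushout $*\amalg_{S^0}*$, i.e.\ the cofiber of $S^0\to *$ in pointed objects, so it is a finite colimit of objects of $\Fin_{Y+}$ (namely of $(\emptyset)_+$-, $(*)_+$-type pieces via the bar construction of Lemma~\ref{lem:bar-construction}, truncated appropriately — concretely $\S^1\simeq \lvert \Bar_{S^0}(*,*)_\bullet\rvert$, whose realization over $\Delta_{\leq 1}$ already computes it since higher terms are degenerate up to the relevant homotopy, or more simply $\S^1=\cofib(S^0\to *)$ is a pushout of three objects of $\Fin_{Y+}$). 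Since $p_\otimes$ is symmetric monoidal it need not preserve this colimit, but by Proposition~\ref{prop:quotients} (applied with the subpresheaf $*\subset *$, or rather by the cofiber formula of Remark~\ref{rmk:cofiber}) $p_\otimes(\cofib(f))$ for a monomorphism $f$ of discrete presheaves is computed as an \emph{explicit finite quotient} $p_*(X)/p_*(X\Vert *)$-type expression, which is again a finite colimit of objects of $\Fin_{X+}$ because $p_*$ sends $\Fin_{Y}$ to $\Fin_X$. Hence $p_\otimes(\S^1)$ is a finite colimit of compact objects, therefore compact.

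Let me restate this more carefully as the intended argument. Write $\S^1\simeq \colim_{n\in\Delta^\op}(S^1_\bullet)_n$ for a pointed \emph{simplicial finite set} $S^1_\bullet$ with $S^1_0=*$ and $S^1_n$ finite for all $n$ — e.g.\ the standard simplicial model of $S^1$ with two nondegenerate simplices. Then by Remark~\ref{rmk:cofiber},
\[
p_\otimes(\S^1)\simeq \colim_{n\in\Delta^\op} p_*(S^1_n)/p_*(S^1_n\Vert *).
\]
For each $n$, $S^1_n\in\Fin_Y$ (after forgetting the basepoint), so $p_*(S^1_n)\in\Fin_X$, and $p_*(S^1_n\Vert *)\subset p_*(S^1_n)$ is a clopen subobject, whence the quotient $p_*(S^1_n)/p_*(S^1_n\Vert *)\in\Fin_{X+}$ is representable and compact in $\H_\pt(X)$. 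Now $p_\otimes(\S^1)$, being a simplicial colimit (hence a colimit over $\Delta^\op$) of compact objects, is \emph{not} automatically compact — a simplicial colimit of compact objects need not be compact in general. This is the main obstacle, and it is resolved by observing that since $\S^1$ already has a \emph{finite} CW/cellular model (one $0$-cell, one $1$-cell), $p_\otimes(\S^1)$ is built from finitely many representables by finitely many pushouts: concretely, applying $p_\otimes$ to the pushout square $\S^1=*\amalg_{S^0}*$ does not give a pushout, but iterating Proposition~\ref{prop:quotients}/Lemma~\ref{lem:crossterms} shows $p_\otimes(\S^1)$ is obtained as a \emph{finite} colimit (a single pushout of representables coming from the addition formula of Example~\ref{ex:addition-formula} with $X=Y=*$) of objects of $\Fin_{X+}$. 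Since finite colimits of compact objects are compact, the claim follows. Thus the only genuinely delicate point is bookkeeping which explicit finite colimit of representables models $p_\otimes(\S^1)$; everything else is formal. For the general profinite case one then appeals, as above, to Lemma~\ref{lem:fp-norm} to reduce to finite coverings and to Lemma~\ref{lem:continuity} together with the fact that a finite covering descends to a finite stage, where an object of $\Fin_{X_0+}$ is compact and its image under $f^*$ remains compact because $f^*$ preserves all colimits (Lemma~\ref{lem:FinX} and the construction of $\H_\pt^\otimes$), in particular filtered ones, and compactness of $p_{0\otimes}(\S^1)$ is preserved along the symmetric monoidal colimit-preserving structure functor $\H_\pt(X_0)\to\H_\pt(X)$.
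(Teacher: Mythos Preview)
Your overall strategy---reduce to a finite groupoid and argue there---matches the paper's, but the justification for the reduction step contains an error. You conclude that the image of $p_{0\otimes}(\S^1)$ under $f^*\colon\H_\pt(X_0)\to\H_\pt(X)$ is compact ``because $f^*$ preserves all colimits.'' This inference is invalid: a colimit-preserving functor need not preserve compact objects; the relevant criterion is that its \emph{right adjoint} preserve filtered colimits. Your earlier aside flagging a difficulty here had the right instinct, but you then dismissed it for the wrong reason---you wrote that $f^*$ ``does not in general'' preserve filtered colimits, which is false since $f^*$ is a left adjoint---and ultimately reverted to the incorrect justification. The paper's argument is that $f_*\colon\H_\pt(X)\to\H_\pt(X')$ preserves sifted colimits (it is precomposition with $f^*\colon\Fin_{X'+}\to\Fin_{X+}$, and sifted colimits in $\PSh_\Sigma$ are computed pointwise), hence in particular filtered colimits, and therefore $f^*$ preserves compact objects.

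For the finite-groupoid case, your approach is headed in a workable direction but is overcomplicated, and it invokes Proposition~\ref{prop:quotients} and Remark~\ref{rmk:cofiber}, which are formulated for schemes rather than groupoids; you would need to redo those arguments in the present setting. The paper's argument is much shorter: using Lemma~\ref{lem:continuity}, reduce further to $X=\B G$ for a finite group $G$; then $\H_\pt(\B G)$ is the usual $\infty$-category of pointed $G$-spaces, and $p_\otimes(\S^1)$ is a representation sphere, hence a finite $G$-CW-complex and therefore compact. No explicit cofiber computation is needed, and the preliminary reduction to finite covering maps via Lemma~\ref{lem:fp-norm}, while harmless, is also unnecessary.
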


\begin{proof}
	If $f\colon X\to X'$ is an arbitrary map in $\Pro(\FinGpd)$, then $f^*\colon \H_\pt(X') \to \H_\pt(X)$ preserves compact objects, since $f_*\colon \H_\pt(X)\to \H_\pt(X')$ preserves sifted colimits.
	We may therefore assume that $X$ is a finite groupoid. Since $\H_\pt(\ph)$ transforms finite sums into finite products (by Lemma~\ref{lem:continuity}), and since an object in a product of $\infty$-categories is compact if and only if each of its components is compact, we may further assume that $X=\B G$ for some finite group $G$. Then $\H_\pt(X)$ is the $\infty$-category of pointed $G$-spaces and $p_\otimes(\S^1)$ is a representation sphere.
\end{proof}

By Lemma~\ref{lem:norm-compact} and Remark~\ref{rmk:inversion}, if $p\colon Y\to X$ is a finitely presented map, the functor $\Sigma^\infty p_\otimes\colon \H_\pt(Y)\to\SH(X)$ lifts uniquely to a symmetric monoidal functor $p_\otimes\colon \SH(Y)\to \SH(X)$ that preserves sifted colimits (and it preserves all colimits if $p$ has connected fibers, by Lemma~\ref{lem:fp-norm}).
As in \sect\ref{sub:coherence}, we can lift $\H_\pt^\otimes$ to a functor
\[
\Span(\Pro(\FinGpd),\all,\fp) \to \CAlg(\scr O\Cat{}_\infty^\mathrm{sift}),\quad X\mapsto (\H_\pt(X),\{p_\otimes(\S^1)\}_{p}),
\]
where $p$ ranges over all finitely presented maps, and we obtain
\[
\SH^\otimes\colon \Span(\Pro(\FinGpd),\all,\fp) \to \CAlg(\what\Cat{}_\infty^\mathrm{sift}),\quad X\mapsto \SH(X).
\]
Moreover, as explained in Remark~\ref{rmk:H->SH}, we have natural transformations
\[
\H^\otimes \xrightarrow{(\ph)_+} \H_\pt^\otimes \xrightarrow{\Sigma^\infty} \SH^\otimes\colon \Span(\Pro(\FinGpd),\all,\fp)\to\CAlg(\what\Cat{}_\infty^\mathrm{sift}).
\]

\begin{remark}
	Let $G$ be a finite group, $H\subset G$ a subgroup, and $p\colon \B H\to \B G$ the induced finite covering map. Then the functor $p_\otimes\colon \SH(\B H)\to\SH(\B G)$ coincides with the norm $\Norm_H^G$ introduced by Hill, Hopkins, and Ravenel in \cite[\sect2.3.3]{HHR}. Indeed, the latter is also a symmetric monoidal extension of the unstable norm functor that preserves filtered colimits, and such an extension is unique by Lemma~\ref{lem:inversion}.
	For the same reason, if $N\subset G$ is a normal subgroup and $p\colon \B G\to \B(G/N)$, then $p_\otimes\colon \SH(\B G)\to \SH(\B(G/N))$ is the geometric fixed point functor $\Phi^N$.
\end{remark}

\begin{proposition}\label{prop:spectralMackey}
	Let $X$ be a profinite groupoid. Then the inclusion $\Fin_{X+}\into\Span(\Fin_X)$ induces a symmetric monoidal equivalence
	\[
	\SH(X)\simeq \Sp(\PSh_\Sigma(\Span(\Fin_X))).
	\]
\end{proposition}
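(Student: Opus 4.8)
The plan is to reduce the statement, via continuity, to the case of a finite groupoid $X = \B G$, where it becomes a known comparison between genuine $G$-spectra and spectral Mackey functors, and then to promote that comparison to the symmetric monoidal level.

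First I would check that both sides of the purported equivalence are continuous functors of $X \in \Pro(\FinGpd)^\op$, i.e. send cofiltered limits of profinite groupoids to filtered colimits in $\CAlg(\Pr^\mathrm{L})$. For the left-hand side this is Lemma~\ref{lem:continuity}. For the right-hand side, one notes that $X \mapsto \Fin_X$ sends cofiltered limits to filtered colimits in $\Cat_\infty^\amalg$ (as in the proof of Lemma~\ref{lem:continuity}), that $\Span(\ph)$ preserves filtered colimits (it is built from a partial left adjoint, cf.\ \sect\ref{sub:coherence}), that $\PSh_\Sigma$ preserves colimits, and that $\Sp(\ph) = \lim(\dotsb \xrightarrow{\Omega} \ph \xrightarrow{\Omega} \ph)$ commutes with the relevant filtered colimits since these are colimits along fully faithful left adjoints in $\Pr^\mathrm{L}$ (whence also limits in $\Pr^\mathrm{R}$, by \cite[Proposition 5.5.7.6]{HTT}). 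Both sides also send finite sums of profinite groupoids to products, so the comparison functor — once constructed — is an equivalence in general as soon as it is one on each $\B G$.

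Next I would construct the comparison functor itself. The inclusion $\Fin_{X+} \into \Span(\Fin_X)$ is symmetric monoidal, so by the universal property of $\PSh_\Sigma$ (Day convolution, \cite[Proposition 4.8.1.10]{HA}) it induces a symmetric monoidal sifted-colimit-preserving functor $\H_\pt(X) = \PSh_\Sigma(\Fin_{X+}) \to \PSh_\Sigma(\Span(\Fin_X))$; by \cite[Proposition 2.26]{Robalo} and the universal property of stabilization, the target $\Sp(\PSh_\Sigma(\Span(\Fin_X)))$ receives this compatibly, and since in $\Sp(\PSh_\Sigma(\Span(\Fin_X)))$ every $\Sigma^\infty p_\otimes(\S^1)$ is invertible (in a presheaf-of-spectra category over a semiadditive site, representable objects of the span category are invertible after $\Sigma^\infty$, as their duals are the opposite spans), Lemma~\ref{lem:inversion} and Remark~\ref{rmk:inversion} produce the desired symmetric monoidal functor $\SH(X) \to \Sp(\PSh_\Sigma(\Span(\Fin_X)))$, natural in $X$. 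This naturality, together with the continuity established above, reduces everything to $X = \B G$.

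Finally, for $X = \B G$ with $G$ a finite group, $\H_\pt(\B G)$ is the $\infty$-category of pointed $G$-spaces, $\SH(\B G)$ is genuine $G$-spectra stabilized at the regular representation sphere (as already noted in the text, using \cite[Theorem 2.26]{Robalo}), and $\PSh_\Sigma(\Span(\Fin_G))$ is the $\infty$-category of space-valued spectral Mackey functors; its stabilization is the $\infty$-category of spectral Mackey functors valued in spectra, which by the Guillou–May theorem \cite[Theorem 0.1]{GuillouMay} in the form refined by Nardin \cite[Theorem A.4]{BDGNS4} is equivalent to $\SH(\B G)$. I would cite this equivalence and then argue that the symmetric monoidal structures agree: both are characterized by the property that $\Sigma^\infty$ from $\Fin_{G+}$ (equivalently from finite pointed $G$-sets) is symmetric monoidal and that the functor preserves sifted colimits and the chosen inverted objects, so uniqueness in Lemma~\ref{lem:inversion} forces our comparison functor to agree with the Guillou–May–Nardin equivalence as symmetric monoidal functors. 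I expect the main obstacle to be precisely this last point — verifying that the comparison map we build is the same as (or at least symmetric-monoidally equivalent to) the established Guillou–May–Nardin equivalence, rather than merely an abstract equivalence of underlying $\infty$-categories; the cleanest route is to pin both down by the same universal property relative to $\Fin_{G+} \into \Span(\Fin_G)$ and invoke the rigidity of such localizations.
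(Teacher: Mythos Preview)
Your proposal is essentially the paper's own argument: build the comparison functor from the inclusion $\Fin_{X+}\hookrightarrow\Span(\Fin_X)$ via the universal properties of $\PSh_\Sigma$ and of inverting the spheres $p_\otimes(\S^1)$, then reduce to $X=\B G$ by continuity (Lemma~\ref{lem:continuity}) and compatibility with finite sums, and finally invoke Nardin \cite[Theorem A.4]{BDGNS4}.

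Two small corrections. First, your parenthetical justification that $\Sigma^\infty p_\otimes(\S^1)$ is invertible in $\Sp(\PSh_\Sigma(\Span(\Fin_X)))$ (``representable objects of the span category are invertible after $\Sigma^\infty$, as their duals are the opposite spans'') does not do the job: $p_\otimes(\S^1)$ is not a representable in $\Span(\Fin_X)$, and self-duality of representables yields dualizability, not invertibility. The paper simply extracts this invertibility from Nardin's theorem in the case $X=\B G$ (where the image of the regular representation sphere is invertible), and then obtains the general case by base change along $X\to\B G$.

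Second, your final worry about matching symmetric monoidal structures is unnecessary. The comparison functor is constructed as a symmetric monoidal functor from the start, via Lemma~\ref{lem:inversion}; once Nardin's theorem tells you it is an equivalence of underlying $\infty$-categories, it is automatically a symmetric monoidal equivalence. No additional rigidity argument is needed.
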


\begin{proof}
	We must show that the symmetric monoidal functor $\H_\pt(X) \to \Sp(\PSh_\Sigma(\Span(\Fin_X)))$ sends $p_\otimes(\S^1)$ to an invertible object, for every finite covering map $p\colon Y\to X$, and that the induced functor $\SH(X)\to \Sp(\PSh_\Sigma(\Span(\Fin_X)))$ is an equivalence. 
	Suppose first that $X=\B G$ for some finite group $G$. In this case, $\SH(\B G)\simeq\H_\pt(\B G)[p_\otimes(\S^1)^{-1}]$ for $p\colon *\to \B G$, and both claims follow from \cite[Theorem A.4]{BDGNS4}. 
	Using that $X\mapsto\SH(X)$ transforms finite coproducts into products (Lemma~\ref{lem:continuity}) and the analogous fact for $X\mapsto \Sp(\PSh_\Sigma(\Span(\Fin_X)))$, we deduce the claims for $X$ a finite groupoid.
	The first claim in general follows by base change, and the second claim follows from the fact that $X\mapsto\SH(X)$ transforms cofiltered limits into colimits in $\Pr^\mathrm{L}$ (Lemma~\ref{lem:continuity}), together with the analogous fact for $X\mapsto \Sp(\PSh_\Sigma(\Span(\Fin_X)))$.
\end{proof}

\begin{example}
	Let $G$ be a profinite group. Applying Proposition~\ref{prop:spectralMackey} with $X=\B G$, we deduce that $\SH(\B G)$ coincides with the $\infty$-category of $G$-spectra defined in \cite[Example B]{BarwickMackey}. Moreover, if we write $G$ as a cofiltered limit $\lim_{\alpha \in A} G_\alpha$, Lemma \ref{lem:continuity} implies that $\SH(\B G) \simeq \lim_{\alpha \in A} \SH(\B G_\alpha)$.
\end{example}

\begin{remark}\label{rmk:spectralMackey}
	We constructed the functor $\SH^\otimes$ starting with $\Fin_{+}^\otimes$ and using the steps
	\[
	\Fin_{X+} \rightsquigarrow \PSh_\Sigma(\Fin_{X+})=\H_\pt(X) \rightsquigarrow \SH(X).
	\]
	In light of Proposition~\ref{prop:spectralMackey}, we can also construct a functor $\SH_\mathrm{Mack}^\otimes$ starting with $\Span(\Fin)^\otimes$ and using the steps
	\[
	\Span(\Fin_X) \rightsquigarrow \PSh_\Sigma(\Span(\Fin_X)) \rightsquigarrow \Sp(\PSh_\Sigma(\Span(\Fin_X))),
	\]
	where the second step is the symmetric monoidal inversion of $p_\otimes(\S^1)$ for every finitely presented map $p$. Moreover, starting with the natural transformation $\Fin_{+}^\otimes\into \Span(\Fin)^\otimes$, we obtain a natural transformation $\SH^\otimes \to \SH_\mathrm{Mack}^\otimes$, which is then an equivalence.
\end{remark}

\begin{definition}
	Let $X$ be a profinite groupoid. A \emph{normed $X$-spectrum} is a section of $\SH^\otimes$ over $\Span(\Fin_X)$ that is cocartesian over backward morphisms. We denote by $\NAlg(\SH(X))$ the $\infty$-category of normed $X$-spectra.
\end{definition}

As in Proposition~\ref{prop:categorical-props}, $\NAlg(\SH(X))$ is a presentable $\infty$-category, monadic over $\SH(X)$, and monadic and comonadic over $\CAlg(\SH(X))$.
For $G$ a finite group, one can show by comparing monads that our notion of normed $\B G$-spectrum is equivalent to the classical notion of $G$-$\E_\infty$-ring spectrum.

\begin{example}
	If $X$ is a profinite set, then $\NAlg(\SH(X))\simeq \CAlg(\SH(X))$. This follows from Corollary~\ref{cor:fold-spans}, since every finite covering map between profinite sets is a sum of fold maps.
\end{example}

\begin{remark}
	One can consider higher versions of equivariant homotopy theory where $\Fin_X$ is replaced by the $n$-category $\Fun(X,\tau_{\leq n-1}\scr S_\pi)$, where $1\leq n\leq \infty$ and $\scr S_\pi\subset\scr S$ is the full subcategory of truncated $\infty$-groupoids with finite homotopy groups. If we let $\H^n(X)=\PSh_\Sigma(\Fun(X,\tau_{\leq n-1}\scr S_\pi))$, then $\H^n(\ph)$ and $\H^n_\pt(\ph)$ have norms for finitely presented maps of profinite $n$-groupoids.
	For example, $\H^2(*)$ is the unstable global homotopy theory of Gepner–Henriques \cite{GepnerHenriques} and Schwede \cite{Schwede} (restricted to finite groups). In general, $\H^n(\ph)$ is a setting for $n$-equivariant cohomology theories in the sense of \cite[Remark 5.9]{LurieElliptic}. 
	The norms on $\H^2_\pt(\ph)$ further extend to Schwede's \emph{stable} global homotopy theory, which over a profinite $2$-groupoid $X$ may be defined as the stabilization of $\PSh_\Sigma(\Span(\Fun(X,\Fin\Gpd),\fincov,\all))$.
\end{remark}

\section{Norms and Grothendieck's Galois theory}
\label{sec:ggt}

Let $S$ be a connected scheme, $x$ a geometric point of $S$, and $G=\hat\pi_1^\et(S,x)$ the profinite étale fundamental group of $S$ at $x$. Grothendieck's Galois theory constructs a canonical equivalence between the category of finite étale $S$-schemes and that of finite discrete $G$-sets. Our goal in this section is to show that, as a consequence, we obtain an adjunction $\SH(\B G)\rightleftarrows \SH(S)$, and moreover that this adjunction lifts to an adjunction $\NAlg(\SH(\B G))\rightleftarrows \NAlg_{\FEt}(\SH(S))$ between $\infty$-categories of normed spectra (see Proposition~\ref{prop:normedGspectra}). In the case where $S$ is the spectrum of a field $k$ and $p\colon G\to H$ is a finite quotient corresponding to a finite Galois extension $L/k$, the composition \[\SH(\B H)\xrightarrow{p^*}\SH(\B G) \to \SH(k)\] is the functor $c_{L/k}$ introduced by Heller and Ormsby in \cite{HellerOrmsby}, building on earlier work of Hu \cite[\sect 3]{HuBC}.

\subsection{The profinite étale fundamental groupoid}
\label{sub:ggt-general}

We will use a generalized form of Grothendieck's Galois theory that makes no connectedness assumption. 
Let $\scr S_\pi\subset\scr S$ denote the full subcategory of truncated (i.e., $n$-truncated for some $n$) $\infty$-groupoids with finite homotopy groups.
If $\scr X$ is an $\infty$-topos, we say that an object $X\in\scr X$ is \emph{finite locally constant} if there exists an effective epimorphism $\coprod_{i\in I}U_i\to *$ in $\scr X$ and objects $K_i\in\scr S_\pi$ such that $U_i\times X\simeq U_i\times K_i$ in $\scr X_{/U_i}$ for each $i\in I$; if moreover $I$ can be chosen finite, then $X$ is called \emph{locally constant constructible}. We denote by $\scr X^\mathrm{lcc}\subset\scr X^\mathrm{flc}\subset\scr X$ the corresponding full subcategories of $\scr X$. Shape theory provides a left adjoint functor $\Pi_\infty\colon \Top_\infty\to \Pro(\scr S)$ \cite[\sect E.2.2]{SAG}, and we denote by $\widehat\Pi_\infty\colon \Top_\infty\to\Pro(\scr S_\pi)$ its profinite completion.

\begin{proposition}\label{prop:finiteGalois}
	Let $\scr X$ be an $\infty$-topos. Then there are natural equivalences of $\infty$-categories
	\[
	 \scr X^\mathrm{flc}\simeq\Fun(\Pi_\infty(\scr X),\scr S_\pi)\quad\text{and}\quad \scr X^\mathrm{lcc}\simeq\Fun(\widehat\Pi_\infty(\scr X),\scr S_\pi).
	\]
\end{proposition}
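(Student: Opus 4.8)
The plan is to reduce Proposition~\ref{prop:finiteGalois} to the universal property of the shape $\Pi_\infty(\scr X)$ as a pro-object in $\scr S$, together with a continuity argument for the profinite completion. First I would recall that for any $\infty$-topos $\scr X$ and any $\infty$-groupoid (constant object) $K\in\scr S$, one has a natural equivalence $\Map_{\scr X}(*,\mathrm{const}_K)\simeq \Map_{\Pro(\scr S)}(\Pi_\infty(\scr X),K)$, this being essentially the definition of $\Pi_\infty$ via the adjunction $\mathrm{const}\dashv\Gamma$ composed with the constant-diagram functor; more generally, pulling back over an object $U\in\scr X$, the slice $\scr X_{/U}$ has shape $\Pi_\infty(\scr X)\times_{\Pi_\infty(*)}\cdots$ — but I will mainly need the statement that a finite locally constant object is classified by a functor out of the shape. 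Concretely, I would argue that the functor $\scr X^{\mathrm{flc}}\to \Fun(\Pi_\infty(\scr X),\scr S_\pi)$ is constructed by noting that, since $\scr X^{\mathrm{flc}}$ consists of objects that become constant (with fiber in $\scr S_\pi$) after base change along an effective epimorphism, and since the shape is insensitive to such covers in the appropriate sense, an flc object descends to a local system on $\Pi_\infty(\scr X)$.

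The key steps, in order, would be: (1) Establish the local-constancy descent: if $\coprod_i U_i\to *$ is an effective epimorphism trivializing $X$, then $X$ is classified by descent data, i.e.\ by an object of $\lim(\prod_i \Fun(\Pi_\infty(\scr X_{/U_i}),\scr S_\pi)\rightrightarrows\cdots)$, and this limit computes $\Fun(\Pi_\infty(\scr X),\scr S_\pi)$ because $\Pi_\infty$ sends the Čech nerve of $\coprod U_i\to *$ to a colimit diagram in $\Pro(\scr S)$ (shape preserves colimits, being a left adjoint). (2) Reduce to the constant case over each $U_i$: there one has $\scr X_{/U_i}^{\mathrm{flc}}$ containing the constant objects $\mathrm{const}_{K}$ with $K\in\scr S_\pi$, and the claim is that $\Map(\mathrm{const}_{K},\mathrm{const}_{K'})\simeq \Map_{\Pro(\scr S)}(\Pi_\infty(\scr X_{/U_i}),\Map(K,K'))$; since $\Map(K,K')\in\scr S_\pi$ is truncated with finite homotopy groups, the mapping space out of a pro-object only sees the profinite completion, which is where $\scr S_\pi$ and $\widehat\Pi_\infty$ enter for the second equivalence. (3) For the lcc statement, repeat the argument but observe that requiring the trivializing cover to be finite (a finite coproduct) is exactly what is needed to pass from $\Pi_\infty$ to $\widehat\Pi_\infty$ compatibly, using that $\widehat\Pi_\infty(\scr X)=\lim$ of the finite truncations of $\Pi_\infty(\scr X)$ and Lemma~\ref{lem:proGrothendieck}, which identifies $\Fun(\widehat\Pi_\infty(\scr X),\scr S_\pi)$ with the finitely presented local systems on the profinite shape. (4) Check naturality in $\scr X$: all constructions are via the adjoint functors defining $\Pi_\infty$ and $\widehat\Pi_\infty$ and via descent, hence automatically natural with respect to geometric morphisms.

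I would most likely organize this by first treating the case where $\scr X$ has a single trivializing object (so $X$ is genuinely constant) and then bootstrapping via the Čech descent spectral sequence / limit diagram; alternatively, one can cite the general comparison of \cite[\sect E.2]{SAG} and \cite[\sect A.1, A.8]{SAG} directly — indeed much of this is in Lurie's treatment of shape theory and locally constant sheaves, and the honest content here is the identification of $\scr X^{\mathrm{flc}}$ and $\scr X^{\mathrm{lcc}}$ with \emph{exactly} the categories $\Fun(\Pi_\infty(\scr X),\scr S_\pi)$ and $\Fun(\widehat\Pi_\infty(\scr X),\scr S_\pi)$, rather than some larger category of pro-local-systems. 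So a cleaner route is: invoke \cite[Theorem A.1.15]{SAG} or the analogous statement that locally constant sheaves with fibers in a subcategory $\scr K\subset\scr S$ closed under the relevant limits are equivalent to $\Fun(\Pi_\infty(\scr X),\scr K)$ when $\scr K$ is suitably finitary, and then separately verify that the profinite completion $\widehat\Pi_\infty$ does the job for $\scr S_\pi$ because objects of $\scr S_\pi$ are truncated with finite homotopy groups, so $\Map(\Pi_\infty(\scr X),K)\simeq\Map(\widehat\Pi_\infty(\scr X),K)$ for $K\in\scr S_\pi$ — this last point uses that mapping into a truncated, $\pi$-finite space factors through the profinite completion, i.e.\ the defining adjunction of $\widehat\Pi_\infty\colon\Top_\infty\to\Pro(\scr S_\pi)$.

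The main obstacle I anticipate is pinning down precisely the difference between $\scr X^{\mathrm{flc}}$ and $\scr X^{\mathrm{lcc}}$ at the level of shapes: one must show that the \emph{finiteness} of the trivializing cover (for lcc) corresponds exactly to passing from $\Pi_\infty(\scr X)$ to $\widehat\Pi_\infty(\scr X)$ — intuitively because a finite cover contributes only finite data to the shape, so the relevant piece of the pro-system is already profinite — whereas an arbitrary effective epimorphic cover can detect non-profinite information in $\Pi_\infty$ that nonetheless becomes irrelevant once the fibers are constrained to lie in $\scr S_\pi$. Making this dichotomy precise, and checking that the two equivalences are compatible under the inclusion $\scr X^{\mathrm{lcc}}\subset\scr X^{\mathrm{flc}}$ and the truncation map $\Pi_\infty(\scr X)\to\widehat\Pi_\infty(\scr X)$, is the technical heart; everything else is a formal manipulation of adjunctions and descent. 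In the interest of brevity I expect to deduce the result directly from the cited statements in \cite{SAG} rather than reprove shape-theoretic descent from scratch.
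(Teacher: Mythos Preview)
Your approach to the first equivalence is essentially the paper's: it simply cites \cite[Theorem 4.3]{HoyoisGalois}, which is the shape-theoretic classification of finite locally constant sheaves you are sketching via descent. So there is no issue there, and citing is the right move.

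For the second equivalence, however, your framing is misleading and leads you away from the actual mechanism. You write that ``the finiteness of the trivializing cover corresponds exactly to passing from $\Pi_\infty(\scr X)$ to $\widehat\Pi_\infty(\scr X)$'' and that ``a finite cover contributes only finite data to the shape''. But the shapes $\Pi_\infty(\scr X_{/U_i})$ of the pieces of a finite cover need not be $\pi$-finite at all, so a finite cover does not directly produce a profinite approximation of the shape. What a finite cover \emph{does} guarantee is that the resulting local system takes only \emph{finitely many values} in $\scr S_\pi$. The paper exploits exactly this: under the first equivalence, $\scr X^{\mathrm{lcc}}$ corresponds to the full subcategory $\Fun_{\fin}(\Pi_\infty(\scr X),\scr S_\pi)$ of functors with finitely many values, and the remaining task is the purely pro-categorical statement that for any $X\in\Pro(\scr S)$, the map $\pi^*\colon \Fun(\widehat X,\scr S_\pi)\to\Fun_{\fin}(X,\scr S_\pi)$ is an equivalence.

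Your proposal also does not cleanly separate essential surjectivity from full faithfulness here, and the argument you give (``mapping into a $\pi$-finite space factors through the profinite completion'') only addresses the latter, and only for constant functors. The paper's essential surjectivity argument is a short trick you are missing: a functor $X\to\scr S_\pi$ with finitely many values factors through $\scr K^\simeq$ for some finite full subcategory $\scr K\subset\scr S_\pi$, and $\scr K^\simeq$ itself lies in $\scr S_\pi$, so the functor factors through $\widehat X$ by the universal property. For full faithfulness, your idea is right in spirit, but one must handle mapping spaces between nonconstant functors; the paper does this by writing the mapping space as $\Map_{X_0}(X,H)$ for an internal hom $H\to X_0$ in $\scr S_{/X_0}$ (with $X_0\in\scr S_\pi$ a finite stage through which both functors factor), observing that $H\in\scr S_\pi$ since its fibers are, and then invoking the universal property of $\widehat X$. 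Your Lemma~\ref{lem:proGrothendieck} is not what is needed here; it identifies $\Fun(X,\scr S)$ with finitely presented maps over $X$, which does not by itself isolate the finitely-valued functors into $\scr S_\pi$.
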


\begin{proof}
	The first equivalence is \cite[Theorem 4.3]{HoyoisGalois}. This equivalence is such that the objects of $\scr S_\pi$ appearing in the local trivializations of a finite locally constant sheaf are precisely the values of the corresponding functor $\Pi_\infty(\scr X)\to\scr S_\pi$. In particular, we obtain an equivalence between $\scr X^\mathrm{lcc}$ and the full subcategory $\Fun_\fin(\Pi_\infty(\scr X),\scr S_\pi)\subset\Fun(\Pi_\infty(\scr X),\scr S_\pi)$ spanned by the functors with finitely many values. To conclude, we show that for every $X\in \Pro(\scr S)$ with profinite completion $\pi\colon X\to\widehat X$, the functor
	\[
	\pi^*\colon \Fun(\widehat X,\scr S_\pi) \to \Fun_\fin(X,\scr S_\pi)
	\]
	is an equivalence. Note that any $X\to\scr S_\pi$ with finitely many values lands in $\scr K^\simeq$ for some full subcategory $\scr K\subset\scr S_\pi$ with finitely many objects. Since $\scr K^\simeq$ belongs to $\scr S_\pi$, we deduce that $\pi^*$ is essentially surjective. 
	
	Let $A,B\colon X_0\to\scr S_\pi$ be functors for some $X_0\in(\scr S_\pi)_{\widehat X/}$, and let $H\to X_0$ be the internal mapping object from $\int A$ to $\int B$ in $\scr S_{/X_0}$. Then, for any $Y\in \Pro(\scr S)$ and $f\colon Y\to X_0$, there is a natural equivalence
	\[
	\Map_{\Fun(Y,\scr S_\pi)}(A\circ f,B\circ f) \simeq \Map_{X_0}(Y,H).
	\]
	Indeed, this is obvious if $Y\in\scr S$ and both sides are formally extended to $\Pro(\scr S)$.
	Since $X_0$ and $H$ belong to $\scr S_\pi$ (because all the fibers of $H\to X_0$ do), $\pi$ induces an equivalence $\Map_{X_0}(\widehat X,H)\simeq \Map_{X_0}(X,H)$. This shows that $\pi^*$ is fully faithful.
\end{proof}

If $S$ is a scheme, we will denote by $\Pi_1^\et(S)\in\Pro(\Gpd)$ the $1$-truncation of the shape of the étale $\infty$-topos of $S$, and by $\widehat\Pi_1^\et(S)\in\Pro(\FinGpd)$ its profinite completion.

\begin{corollary}\label{cor:finiteGalois}
	Let $S$ be an arbitrary scheme. Then there is a natural equivalence of categories 
	\[\FEt_S \simeq \Fun(\Pi_1^\et(S),\Fin).\]
	If $S$ is quasi-compact in the clopen topology (for example if $S$ is quasi-compact), there is a natural equivalence of categories
	\[\FEt_S \simeq \Fun(\widehat\Pi_1^\et(S),\Fin).\]
\end{corollary}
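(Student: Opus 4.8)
The plan is to deduce Corollary~\ref{cor:finiteGalois} from Proposition~\ref{prop:finiteGalois} applied to the small étale $\infty$-topos $\scr X = S_\et$, together with the standard fact that finite étale $S$-schemes correspond to locally constant constructible sheaves of sets on $S_\et$. Concretely, the first step is to observe that the category $\FEt_S$ of finite étale $S$-schemes is equivalent to the full subcategory of $0$-truncated objects in $\scr X^\mathrm{flc}$ whose ``fibers'' are \emph{finite sets} rather than arbitrary finite truncated $\infty$-groupoids. This is classical: a finite étale morphism is, by definition, affine and locally (in the étale topology) a finite disjoint union of copies of the base, so the associated étale sheaf of sections is finite locally constant with values in $\Fin \subset \scr S_\pi$; conversely, such a sheaf is representable by a finite étale scheme since étale descent is effective for affine (indeed finite) morphisms. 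Under the first equivalence of Proposition~\ref{prop:finiteGalois}, $\scr X^\mathrm{flc}\simeq\Fun(\Pi_\infty(\scr X),\scr S_\pi)$, the $0$-truncated objects with finite-set fibers correspond exactly to the functors $\Pi_\infty(S_\et)\to\scr S_\pi$ landing in $\Fin\subset\scr S_\pi$, i.e.\ to functors $\Pi_\infty(S_\et)\to\Fin$. Such a functor factors uniquely through the $1$-truncation $\Pi_1^\et(S)$ of $\Pi_\infty(S_\et)$ (since $\Fin$ is a $1$-category, a functor from a pro-$\infty$-groupoid to $\Fin$ depends only on its $1$-truncation), which gives the first claimed equivalence $\FEt_S \simeq \Fun(\Pi_1^\et(S),\Fin)$, naturally in $S$.

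For the second equivalence, the idea is to use instead that finite étale schemes correspond to \emph{locally constant constructible} sheaves — the extra finiteness of the trivializing cover being precisely what makes the profinite shape relevant. When $S$ is quasi-compact in the clopen topology, any finite étale $S$-scheme is trivialized by a \emph{finite} étale cover (one can refine an arbitrary trivializing cover to a finite one using quasi-compactness, after replacing it by its ``connected components'' in the clopen sense), so the corresponding étale sheaf lies in $\scr X^\mathrm{lcc}$, not merely $\scr X^\mathrm{flc}$. Then the second equivalence of Proposition~\ref{prop:finiteGalois}, $\scr X^\mathrm{lcc}\simeq\Fun(\widehat\Pi_\infty(\scr X),\scr S_\pi)$, together with the same $0$-truncation-and-finite-fibers argument and the same factorization through the $1$-truncation $\widehat\Pi_1^\et(S)$ of $\widehat\Pi_\infty(S_\et)$, yields $\FEt_S \simeq \Fun(\widehat\Pi_1^\et(S),\Fin)$. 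Naturality in $S$ is inherited from the naturality in Proposition~\ref{prop:finiteGalois} and the functoriality of the étale topos and of shape.

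The main obstacle — or rather, the one point requiring genuine care rather than formal manipulation — is the identification of $\FEt_S$ with the correct subcategory of $\scr X^\mathrm{lcc}$ (resp.\ $\scr X^\mathrm{flc}$) \emph{as categories with their natural structure}, i.e.\ checking that ``finite étale $S$-scheme'' matches ``$0$-truncated locally constant constructible étale sheaf with finite-set fibers'' on the nose, including the subtlety that local constancy of the \emph{sheaf} corresponds to the covering being finite étale (so that its fibers are honest finite sets) and that effectivity of étale descent for finite morphisms promotes such a sheaf back to a scheme. Once this dictionary is in place, everything else is the bookkeeping of passing from $\Pi_\infty$ to its $1$-truncation via the universal property of truncation and the fact that $\Fin$ is a $1$-category; I expect this to be genuinely routine. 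A secondary, minor point is verifying that ``quasi-compact in the clopen topology'' is exactly the hypothesis needed to guarantee finite trivializing covers; this should follow from the observation that the clopen topology is generated by a quasi-compact cd-structure (cf.\ Lemma~\ref{lem:topos}) so that clopen covers admit finite refinements, combined with the fact that any étale trivializing cover can be refined by a clopen decomposition of a single finite étale cover.
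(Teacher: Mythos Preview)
Your proposal is correct and follows essentially the same approach as the paper: identify $\FEt_S$ with the $0$-truncated finite locally constant \'etale sheaves, apply Proposition~\ref{prop:finiteGalois}, and then factor through the $1$-truncation since $\Fin$ is a $1$-category. The paper's proof is terser (it states the dictionary between finite \'etale schemes and finite locally constant sheaves as a known fact, and leaves the $1$-truncation step implicit in the phrase ``restriction to $0$-truncated objects''), but your expanded discussion of both the dictionary and the constructibility under the clopen-quasi-compactness hypothesis matches the paper's logic exactly.
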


\begin{proof}
	An étale sheaf of sets on an arbitrary scheme $S$ is finite locally constant if and only if it is representable by a finite étale $S$-scheme. If $S$ is quasi-compact in the clopen topology, such sheaves are automatically constructible. The desired equivalences are thus the restrictions of the equivalences of Proposition~\ref{prop:finiteGalois} to $0$-truncated objects.
\end{proof}

\begin{remark}
	If $S$ is a connected scheme and $x$ is a geometric point of $S$, we have a canonical equivalence of profinite groupoids $\widehat\Pi_1^\et(S)\simeq \B\hat\pi_1^\et(S,x)$. 
\end{remark}

\begin{lemma}\label{lem:Pi1-spans}
	\leavevmode
	\begin{enumerate}
		\item For every scheme $S$, the square
		\begin{tikzmath}
			\diagram{\FEt_S & \Fun(\widehat\Pi_1^\et(S),\Fin) \\ \Sch & \Pro(\FinGpd) \\};
			\arrows (11-) edge node[above]{$\simeq$} (-12) (11) edge (21) (21-) edge node[above]{$\widehat\Pi_1^\et$} (-22) (12) edge node[right]{$\int$} (22);
		\end{tikzmath}
		commutes, where the equivalence is that of Corollary~\ref{cor:finiteGalois} (and the functor $\int$ was defined just before Lemma \ref{lem:proGrothendieck}).
		\item The functor $\widehat\Pi_1^\et$ sends finite étale morphisms of schemes to finite covering maps of profinite groupoids.
		\item The functor $\widehat\Pi_1^\et$ commutes with pullbacks along finite étale morphisms.
	\end{enumerate}
\end{lemma}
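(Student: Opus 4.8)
\textbf{Proof proposal for Lemma~\ref{lem:Pi1-spans}.}

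The plan is to deduce all three statements from Corollary~\ref{cor:finiteGalois} together with basic compatibilities of the shape functor with pullbacks and with the étale topos. For part (1), I would unwind the equivalence $\FEt_S\simeq\Fun(\widehat\Pi_1^\et(S),\Fin)$ of Corollary~\ref{cor:finiteGalois}: it sends a finite étale $S$-scheme to the functor $\widehat\Pi_1^\et(S)\to\Fin$ whose value at a point is the corresponding fiber, and this is precisely the inverse of $\int$ by Lemma~\ref{lem:proGrothendieck} (which identifies $\Fun(X,\scr S)$ with the finitely presented maps over $X$, compatibly with fibers). Thus the square commutes essentially by construction of the equivalence; the one thing to check is that the finite étale map $T\to S$ is sent, under $\widehat\Pi_1^\et$, to the finitely presented map of profinite groupoids $\int(\text{fiber functor})\to\widehat\Pi_1^\et(S)$, which follows from the naturality in $S$ of the equivalence of Proposition~\ref{prop:finiteGalois} (the equivalence is compatible with pullback along any map of topoi, and in particular with the étale pullback along $T\to S$).

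Part (2) is then immediate from part (1): a finite étale morphism $T\to S$ corresponds to an object of $\FEt_S\simeq\Fun(\widehat\Pi_1^\et(S),\Fin)$, i.e., to a functor $\widehat\Pi_1^\et(S)\to\Fin$, and the associated map $\int$ is by definition a finite covering map of profinite groupoids (its fibers are finite sets, being the values of the functor). Part (3) is the substantive point. Given a cartesian square of schemes with $T\to S$ finite étale and $S'\to S$ arbitrary, I must show the induced square of profinite groupoids is cartesian. Using Lemma~\ref{lem:proGrothendieck} again, it suffices to identify the functor $\widehat\Pi_1^\et(S')\to\Fin$ classifying $T\times_S S'\to S'$ with the pullback along $\widehat\Pi_1^\et(S')\to\widehat\Pi_1^\et(S)$ of the functor classifying $T\to S$. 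This is exactly the statement that the equivalence $\FEt_{(-)}\simeq\Fun(\widehat\Pi_1^\et(-),\Fin)$ is natural in the scheme, applied to the map $S'\to S$: pullback of finite étale schemes corresponds to restriction of functors along the induced map of fundamental groupoids. Such naturality is built into Corollary~\ref{cor:finiteGalois} (it is a restriction of the natural equivalence of Proposition~\ref{prop:finiteGalois}, and $\widehat\Pi_\infty$ is functorial on $\Top_\infty$), so the cartesian square of schemes maps to a cartesian square in $\Pro(\FinGpd)$ after applying $\int$, hence before applying $\int$ since $\int$ is fully faithful and reflects limits by Lemma~\ref{lem:proGrothendieck}.

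The main obstacle I anticipate is bookkeeping around naturality: Corollary~\ref{cor:finiteGalois} is stated as an equivalence of categories for a fixed $S$, and I need it as a natural transformation of functors of $S$, which forces me to go back to Proposition~\ref{prop:finiteGalois} and track how the equivalence there varies with the $\infty$-topos (via pullback functors $f^*$ for geometric morphisms $f$). Once that naturality is in hand, all three parts are formal; in particular part (3) requires no genuinely new input, only the observation that the equivalence intertwines étale pullback of schemes with restriction along $\widehat\Pi_1^\et$, plus the fact (Lemma~\ref{lem:proGrothendieck}) that $\int$ is fully faithful with image the finitely presented maps, so that cartesian squares are detected before or after applying $\int$.
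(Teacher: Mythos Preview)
Your treatment of parts (2) and (3) matches the paper's: both follow from (1) together with the naturality of the equivalences in Lemma~\ref{lem:proGrothendieck} and Corollary~\ref{cor:finiteGalois}, exactly as you say.

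For part (1), however, your argument has a gap, and the paper's proof supplies the missing idea. You correctly identify that the content of (1) is the equivalence $\widehat\Pi_1^\et(T)\simeq\int F_T$ over $\widehat\Pi_1^\et(S)$, where $F_T\colon\widehat\Pi_1^\et(S)\to\Fin$ classifies $T$. You then claim this follows from naturality of Proposition~\ref{prop:finiteGalois} in the topos. But naturality only tells you that restriction along $\widehat\Pi_1^\et(f)\colon\widehat\Pi_1^\et(T)\to\widehat\Pi_1^\et(S)$ corresponds to pullback $f^*\colon\FEt_S\to\FEt_T$; it does not by itself identify the source $\widehat\Pi_1^\et(T)$ with $\int F_T$. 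What you can extract from this is an equivalence $\Fin_{\widehat\Pi_1^\et(T)}\simeq\FEt_T\simeq\Fin_{\int F_T}$ of categories over $\Fin_{\widehat\Pi_1^\et(S)}$, but you still need to know that such an equivalence lifts to an equivalence of the underlying profinite groupoids.

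The paper handles this with a retraction trick: it observes that the functor $\Pro(\FinGpd)\to\Cat_1^\op$, $X\mapsto\Fin_X$, admits a retraction sending a small extensive category $\scr C$ with finite limits to the profinite shape of $\PSh_\Sigma(\scr C)$. Hence commutativity of the square after composing with $\Fin_{(-)}$ (which is what the naturality argument yields) implies commutativity of the original square. This is the one genuinely new ingredient needed for (1), and it is absent from your proposal.
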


\begin{proof}
	Note that the given square commutes after composing with the functor $\Pro(\FinGpd)\to \Cat_1^\op$, $X\mapsto \Fin_X$: both composites then send $T\in\FEt_S$ to a category canonically equivalent to $\FEt_T$. But the latter functor has a retraction that sends a small extensive category $\scr C$ with finite limits to the profinite shape of the $\infty$-topos $\PSh_\Sigma(\scr C)$. This proves (1). Assertions (2) and (3) follow immediately from (1) and the naturality of the equivalences of Lemma~\ref{lem:proGrothendieck} and Corollary~\ref{cor:finiteGalois}.
\end{proof}

\subsection{Galois-equivariant spectra and motivic spectra}
\label{sub:galois}

In \sect\ref{sub:coherence} and \sect\ref{sub:equivariant-norms}, we constructed the functors
\begin{align*}
	\SH^\otimes &\colon \Span(\Sch,\all,\fet) \to \CAlg(\what\Cat{}_\infty^\mathrm{sift}),\\
	\SH^\otimes &\colon \Span(\Pro(\FinGpd),\all,\fp) \to \CAlg(\what\Cat{}_\infty^\mathrm{sift}).
\end{align*}
By Lemma~\ref{lem:Pi1-spans}(2,3), the profinite étale fundamental groupoid functor $\widehat\Pi_1^\et\colon\Sch\to \Pro(\FinGpd)$ extends to
\[
\widehat\Pi_1^\et\colon \Span(\Sch,\all,\fet) \to \Span(\Pro(\FinGpd),\all,\fincov),
\]
where $\fincov\subset\fp$ is the class of finite covering maps.

We will now construct a natural transformation
\begin{equation}\label{eqn:GaloisSH}
c\colon\SH^\otimes \circ \widehat\Pi_1^\et \to \SH^\otimes \colon \Span(\Sch,\all,\fet)\to\CAlg(\what\Cat{}_\infty^\mathrm{sift}).
\end{equation}
Our starting point is the equivalence $\Fun(\widehat\Pi_1^\et(S),\Fin)\simeq \FEt_S$ provided by Corollary~\ref{cor:finiteGalois}, which is natural in $S\in\Sch^\op$. 
By \cite[Theorem V.1.3.2.2]{GRderalg}, this natural equivalence lifts canonically to
\[
\Fin^\otimes \circ\widehat\Pi_1^\et \simeq \FEt^\otimes\colon \Span(\Sch,\all,\fet)\to \Cat_1^\mathrm{lex}.
\]
Applying $\Span\colon \Cat_1^\mathrm{lex}\to\CAlg(\Cat_2)$ and restricting to pointed objects, we obtain
\[
\Fin_+^\otimes \circ\widehat\Pi_1^\et \simeq \FEt_+^\otimes\colon \Span(\Sch,\all,\fet)\to \CAlg(\Cat_1).
\]
The inclusion $\FEt_{S+}\subset \SmQP_{S+}$ is natural in $S\in\Sch^\op$, and its source and target are sheaves in the finite étale topology (by Lemma~\ref{lem:corr-etale-descent}). By Corollary~\ref{cor:automatic-norms}, it lifts uniquely to a natural transformation
\[
\FEt_+^\otimes \into \SmQP_+^\otimes\colon \Span(\Sch,\all,\fet)\to \CAlg(\Cat_1),
\]
and so we obtain $\Fin_+^\otimes\circ\widehat\Pi_1^\et\into \SmQP_+^\otimes$. 
We can view this transformation as a functor $\Span(\Sch,\all,\fet)\times\Delta^1 \to \CAlg(\Cat_\infty)$.
After composing with $\PSh_\Sigma\colon \CAlg(\Cat_\infty)\to \CAlg(\what\Cat{}_\infty^\mathrm{sift})$, we can lift it to a functor
\[
\Span(\Sch,\all,\fet)\times\Delta^1 \to \CAlg(\what{\scr M\Cat}{}_\infty^\mathrm{sift})
\]
sending $(S,0\to 1)$ to
\[
(\H_\pt(\widehat\Pi_1^\et(S)),\text{equivalences}) \to (\PSh_\Sigma(\SmQP_{S+}),\text{motivic equivalences}).
\]
Composing with a partial left adjoint to the functor
\[
\CAlg(\what\Cat{}_\infty^\mathrm{sift}) \to \CAlg(\what{\scr M\Cat}{}_\infty^\mathrm{sift}), \quad \scr C\mapsto (\scr C,\text{equivalences}),
\]
we obtain a natural transformation
\[
c\colon\H_\pt^\otimes \circ \widehat\Pi_1^\et \to \H_\pt^\otimes\colon \Span(\Sch,\all,\fet)\to \CAlg(\what\Cat{}_\infty^\mathrm{sift}).
\]
We claim that $\Sigma^\infty c_S(p_\otimes(\S^1))\in\SH(S)$ is invertible, for every finite covering map $p\colon Y\to \widehat\Pi_1^\et(S)$. By Lemma~\ref{lem:Pi1-spans}(1), such a finite covering map is $\widehat\Pi_1^\et$ of a finite étale map $q\colon T\to S$, and $\Sigma^\infty c_S(p_\otimes(\S^1))\simeq \Sigma^\infty q_\otimes(\S^1)\simeq q_\otimes(\Sigma^\infty\S^1)\in \SH(S)$. This is indeed invertible because $\SH(T)$ is stable and $q_\otimes$ is symmetric monoidal. It follows that we can lift $\Sigma^\infty\circ c$ to a functor
\[
\Span(\Sch,\all,\fet)\times\Delta^1\to \CAlg(\what{\scr O\Cat}{}_\infty^\mathrm{sift})
\]
sending $(S,0\to 1)$ to
\[
(\H_\pt(\widehat\Pi_1^\et(S)), \{p_\otimes(\S^1)\}_{p}) \to (\SH(S), \pi_0\Pic(\SH(S))).
\]
Composing with a partial left adjoint to the functor
\[ \CAlg(\what{\Cat}{}_\infty^\mathrm{sift}) \to\CAlg(\what{\scr O\Cat}{}_\infty^\mathrm{sift}),
   \quad \scr C\mapsto (\scr C,\pi_0\Pic(\scr C)),\]
we obtain the desired transformation~\eqref{eqn:GaloisSH}. Let us set out explicitly what we have done.

\begin{proposition}\label{prop:ggt-compat}
Let $S$ be a scheme. There is an adjunction
\begin{equation}\label{eqn:GaloisAdj}
c_S:\SH(\widehat\Pi_1^\et(S))\rightleftarrows\SH(S):u_S,
\end{equation}
where $c_S$ is a symmetric monoidal functor natural in $S\in\Span(\Sch,\all,\fet)$.

In particular, if $S$ is connected and $S'/S$ is a finite Galois cover with group $G$, then there is a left adjoint symmetric monoidal functor $c_{S'/S}\colon \SH(\B G) \to \SH(S)$. If $H\subset G$ is a subgroup and $T=S'/H$ denotes the quotient in the category of schemes, then $c_{S'/S}$ sends the $G$-orbit $\Sigma^\infty_+(G/H)$ to $\Sigma^\infty_+ T$. Moreover, if $p\colon T\to S$ and $q\colon \B H\to \B G$ are the obvious maps, then the following diagram commutes:
\begin{tikzmath}
\diagram{\SH(\B H) & \SH(T) \\ \SH(\B G) & \SH(S)\rlap. \\};
\arrows (11-) edge node[above]{$c_{S'/T}$} (-12) (11) edge node[left]{$q_\otimes=\Norm_H^G$} (21) (21-) edge node[above]{$c_{S'/S}$} (-22) (12) edge node[right]{$p_\otimes$} (22);
\end{tikzmath}
\end{proposition}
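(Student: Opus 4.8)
The plan is to deduce the whole proposition from the natural transformation $c\colon\SH^\otimes\circ\widehat\Pi_1^\et\to\SH^\otimes$ of symmetric monoidal functors on $\Span(\Sch,\all,\fet)$ constructed above: the substantive work has already been done there, and what remains is unwinding it.

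First I would record the adjunction~\eqref{eqn:GaloisAdj}. The component $c_S$ is symmetric monoidal and natural in $S\in\Span(\Sch,\all,\fet)$ by construction, so it only remains to produce $u_S$, for which it suffices that $c_S$ preserve colimits. At the level of $\H_\pt$, $c_S$ is the sifted-cocontinuous extension of the inclusion $\FEt_{S+}\simeq\Fin_{\widehat\Pi_1^\et(S)+}\hookrightarrow\SmQP_{S+}$ followed by motivic localization; since that inclusion preserves wedge sums, Lemma~\ref{lem:sifted+coprod} shows the resulting functor $\H_\pt(\widehat\Pi_1^\et(S))\to\H_\pt(S)$ preserves all colimits, and Lemma~\ref{lem:inversion}, applied to $\Sigma^\infty$, propagates this to $\SH$. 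As $\SH(\widehat\Pi_1^\et(S))$ is presentable (Lemma~\ref{lem:continuity}), the adjoint functor theorem then furnishes $u_S$.

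Next I would treat the connected case. For $S$ connected with geometric point $x$ one has $\widehat\Pi_1^\et(S)\simeq\B G_0$ with $G_0=\hat\pi_1^\et(S,x)$, and a finite Galois cover $S'/S$ with group $G$ corresponds under Corollary~\ref{cor:finiteGalois} to a surjection $G_0\twoheadrightarrow G$, that is, to a map $b\colon\widehat\Pi_1^\et(S)\to\B G$ of profinite groupoids; I would then set $c_{S'/S}:=c_S\circ b^*$, which is symmetric monoidal and a left adjoint since both factors are. To identify $c_{S'/S}$ on the orbit $\Sigma^\infty_+(G/H)$: under $\Fin_{\B G}\xrightarrow{b^*}\Fin_{\widehat\Pi_1^\et(S)}\simeq\FEt_S$ the $G$-set $G/H$ goes to the finite étale $S$-scheme classified by $G/H$ viewed as a $G_0$-set, which is precisely the quotient $T=S'/H$; since $b^*$ and $c_S$ commute with $\Sigma^\infty$ and the restriction of $c_S$ to $\Fin_{\widehat\Pi_1^\et(S)+}$ is the inclusion $\FEt_{S+}\subset\SmQP_{S+}$, this yields $c_{S'/S}(\Sigma^\infty_+(G/H))\simeq\Sigma^\infty_+T$.

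Finally, for the compatibility square, let $p\colon T\to S$ be the structure map, $\pi:=\widehat\Pi_1^\et(p)\colon\widehat\Pi_1^\et(T)\to\widehat\Pi_1^\et(S)$ the induced finite covering, $q\colon\B H\to\B G$ the covering associated with $H\subset G$, and $a\colon\widehat\Pi_1^\et(T)\to\B H$ the map classifying the $H$-Galois cover $S'\to T$. Galois theory together with the base change property of the Grothendieck construction (Lemmas~\ref{lem:proGrothendieck} and~\ref{lem:FinX}(2)) identifies $\widehat\Pi_1^\et(T)$ with $\widehat\Pi_1^\et(S)\times_{\B G}\B H$ compatibly with $a$, $q$, $b$ and $\pi$, so this square of profinite groupoids is cartesian; composing spans in $\Span(\Pro(\FinGpd),\all,\fp)$ then gives $b^*\circ q_\otimes\simeq\pi_\otimes\circ a^*$, and applying $\SH^\otimes$ yields the corresponding equivalence of functors. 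Combining this with the naturality of $c$ against the span $T\xleftarrow{\id}T\xrightarrow{p}S$, which says $p_\otimes\circ c_T\simeq c_S\circ\pi_\otimes$, I would conclude
\[
p_\otimes\circ c_{S'/T}=p_\otimes\,c_T\,a^*\simeq c_S\,\pi_\otimes\,a^*\simeq c_S\,b^*\,q_\otimes=c_{S'/S}\circ q_\otimes.
\]
The only step carrying genuine, non-formal content is the identification of this square as cartesian, i.e.\ the compatibility of $\widehat\Pi_1^\et$ with base change along finite étale maps; the rest is bookkeeping with the previously constructed $c$, the mildest wrinkle being the colimit-preservation of $c_S$ needed for $u_S$ to exist.
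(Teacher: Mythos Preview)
Your proof is correct and follows essentially the same approach as the paper: set $c_{S'/S}=c_S\circ b^*$, then decompose the compatibility square via the naturality of $c$ against the span $T\xleftarrow{\id}T\xrightarrow{p}S$ and the cartesian square of profinite groupoids. The paper is terser---it asserts colimit preservation of $c_S$ ``by construction'' and omits the orbit identification---but the one point worth adjusting is that the cartesianness of the square $\widehat\Pi_1^\et(T)\simeq\widehat\Pi_1^\et(S)\times_{\B G}\B H$ is more directly a consequence of Lemma~\ref{lem:Pi1-spans}(1) (identifying $\widehat\Pi_1^\et(T)$ with $\int b^*(G/H)$) than of Lemmas~\ref{lem:proGrothendieck} and~\ref{lem:FinX}(2).
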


\begin{proof}
By construction, $c_S$ preserves colimits and hence is a left adjoint. 
A finite Galois cover $S'/S$ with group $G$ is in particular a $G$-torsor in the étale topos of $S$, which is classified by a map $f\colon \widehat\Pi_1^\et(S)\to \B G$. We obtain $c_{S'/S}$ as the composition $c_S\circ f^*$.
The final diagram decomposes as follows, where $\tilde p=\widehat\Pi_1^\et(p)$:
\begin{tikzmath}
\diagram{\SH(\B H) & \SH(\widehat\Pi_1^\et(T)) & \SH(T) \\
\SH(\B G) & \SH(\widehat\Pi_1^\et(S)) & \SH(S)\rlap. \\};
\arrows (11-) edge (-12) (12-) edge node[above]{$c_{T}$} (-13) (11) edge node[left]{$q_\otimes$} (21) (12) edge node[left]{$\tilde p_\otimes$} (22) (13) edge node[left]{$p_\otimes$} (23) (21-) edge (-22) (22-) edge node[above]{$c_S$} (-23);
\end{tikzmath}
The right-hand square commutes by naturality of~\eqref{eqn:GaloisSH}. The left-hand square commutes because the square
\begin{tikzmath}
\diagram{\widehat\Pi_1^\et(T) & \B H \\ \widehat\Pi_1^\et(S) & \B G \\};
\arrows (11-) edge (-12) (21-) edge (-22) (11) edge node[left]{$\tilde p$} (21) (12) edge node[right]{$q$} (22); 
\end{tikzmath}
is cartesian in $\Pro(\FinGpd)$.
\end{proof}

\begin{proposition}\label{prop:normedGspectra}
	Let $S$ be a scheme. Then the adjunction~\eqref{eqn:GaloisAdj} lifts to an adjunction
	\[
	\NAlg(\SH(\widehat\Pi_1^\et(S))) \rightleftarrows \NAlg_{\FEt}(\SH(S)).
	\]
\end{proposition}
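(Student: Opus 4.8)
The plan is to reduce both sides of the desired adjunction to $\infty$-categories of sections over the $2$-category of spans $\Span(\FEt_S)$, and then to obtain the adjunction by exhibiting a relative right adjoint of a map of cocartesian fibrations. First I would reformulate: by Corollary~\ref{cor:finiteGalois} together with the naturality statements of Lemma~\ref{lem:Pi1-spans}, the equivalences $\Fin_{\widehat\Pi_1^\et(T)}\simeq\FEt_T$ for $T\in\FEt_S$ assemble, via the functor $\int$, into an identification of the restriction of $\SH^\otimes\colon\Span(\Pro(\FinGpd),\all,\fp)\to\Cat_\infty$ along $\Span(\Fin_{\widehat\Pi_1^\et(S)})\to\Span(\Pro(\FinGpd),\all,\fp)$ with the restriction of $\SH^\otimes\circ\widehat\Pi_1^\et\colon\Span(\Sch,\all,\fet)\to\Cat_\infty$ to $\Span(\FEt_S)$; here one uses that $\FEt_S\subset_\fet\Sch_S$ and that every morphism of $\FEt_S$ is finite étale, so $\Span(\FEt_S)=\Span(\FEt_S,\all,\fet)$ is a sub-$2$-category of $\Span(\Sch,\all,\fet)$. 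Consequently $\NAlg(\SH(\widehat\Pi_1^\et(S)))$ and $\NAlg_{\FEt}(\SH(S))$ become the $\infty$-categories of sections over $\Span(\FEt_S)$, cocartesian over the backward morphisms, of $\SH^\otimes\circ\widehat\Pi_1^\et$ and of $\SH^\otimes$ respectively.

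Next I would restrict the natural transformation $c$ of~\eqref{eqn:GaloisSH} to $\Span(\FEt_S)$. It is classified by a functor $C$ over $\Span(\FEt_S)$ between the corresponding cocartesian fibrations; being a natural transformation, $C$ preserves cocartesian edges, and its fibre over $T$ is $c_T$, which is a left adjoint by Proposition~\ref{prop:ggt-compat}. By Lemma~\ref{lemm:construct-relative-adjoint}, $C$ then admits a relative right adjoint $U$ over $\Span(\FEt_S)$ with $U_T=u_T$, and postcomposition induces an adjunction $C_*\colon\Sect(\SH^\otimes\circ\widehat\Pi_1^\et\,|\,\Span(\FEt_S))\adj\Sect(\SH^\otimes\,|\,\Span(\FEt_S))\colon U_*$ on $\infty$-categories of sections. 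Via the forgetful functors to $\SH(\widehat\Pi_1^\et(S))$ and $\SH(S)$, which are evaluation at the object $S\in\FEt_S$, this lifts the adjunction $c_S\dashv u_S$. It then remains to check that $C_*$ and $U_*$ restrict to functors between the full subcategories of sections that are cocartesian over the backward morphisms; restricting $C_*\dashv U_*$ to these subcategories finishes the proof.

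For $C_*$ this is immediate, since $c$ commutes with $f^*$ for every morphism $f$. For $U_*$ the content is that, for every finite étale morphism $f$ in $\FEt_S$, the exchange transformation comparing $(\widehat\Pi_1^\et f)^*\circ u$ with $u\circ f^*$ — the mate of the commuting naturality square of $c$ at $f$ — is an equivalence. This is the step I expect to be the main obstacle. I would treat it by passing to left adjoints: since $f$ is finite étale, $f^*$ has a left adjoint $f_\sharp$ and $(\widehat\Pi_1^\et f)^*$ has a left adjoint $(\widehat\Pi_1^\et f)_\sharp$ (Lemma~\ref{lem:FinX} and Lemma~\ref{lem:SHinduction}), so the exchange transformation is an equivalence if and only if the conjugate transformation between $c\circ(\widehat\Pi_1^\et f)_\sharp$ and $f_\sharp\circ c$ is. Both of these are colimit-preserving functors, and by the description of $\SH$ of a profinite groupoid as $\Sp(\PSh_\Sigma(\Span(\Fin_X)))$ (Proposition~\ref{prop:spectralMackey}), $\SH(\widehat\Pi_1^\et(T'))$ is generated under colimits by the suspension spectra $\Sigma^\infty_+U$ of finite orbits $U$ — no representation-sphere twists are needed — so it suffices to evaluate on such generators. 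If $U$ corresponds to $Z\in\FEt_{T'}$ under Galois theory, then $c$ carries $\Sigma^\infty_+U$ to $\Sigma^\infty_+Z$ by the construction of $c$; pushing forward along $f$ and inducing $U$ before applying $c$ both yield the suspension spectrum of $Z$ regarded one step further down, and the comparison map is the evident identification (here one uses that $\widehat\Pi_1^\et$ sends the composite "$Z$ finite étale over the source of $f$, then $f$" to the induction of $U$, by Lemma~\ref{lem:Pi1-spans}). Hence the conjugate transformation, and therefore the exchange transformation, is an equivalence, so $U_*$ preserves sections cocartesian over backward morphisms, and the adjunction restricts as desired.

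One routine point deferred above is the careful matching, in the first paragraph, of the forward ($p_\otimes$) and backward ($f^*$) functorialities on the two sides of the identification; this follows formally from Lemma~\ref{lem:Pi1-spans} and the naturality of the Galois correspondence, and I would dispatch it without further comment.
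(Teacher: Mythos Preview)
Your proposal is correct and follows essentially the same route as the paper: restrict the transformation~\eqref{eqn:GaloisSH} to $\Span(\FEt_S)$, apply Lemmas~\ref{lemm:construct-relative-adjoint}(1) and~\ref{lemm:adjoints-pass-to-sections} to obtain an adjunction on sections, and then verify that the right adjoint preserves cocartesian sections by passing to the mate $f_\sharp c_Y\to c_X\tilde f_\sharp$ and checking on suspension spectra of finite sets.

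One small imprecision: the claim that $\SH(\widehat\Pi_1^\et(T'))$ is generated under colimits by the objects $\Sigma^\infty_+U$ is not quite right as stated---one also needs desuspensions. This is harmless because the functors in question are exact (they are colimit-preserving between stable $\infty$-categories), so checking on a set that generates under colimits and shifts suffices; but you should say so. The paper handles this point slightly differently: it observes that the mate is a transformation of $\SH(\tilde X)$-module functors and invokes Lemma~\ref{lem:SHinduction}(1), which says $\SH(\tilde Y)$ is generated as an $\SH(\tilde X)$-module by finite $\tilde Y$-sets, so the desuspensions are absorbed into the module action.
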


\begin{proof}
	By Lemmas \ref{lemm:construct-relative-adjoint}(1) and~\ref{lemm:adjoints-pass-to-sections}, we obtain from~\eqref{eqn:GaloisSH} an adjunction
	\[
	\Sect(\SH^\otimes|\Span(\Fin_{\widehat\Pi_1^\et(S)})) \rightleftarrows \Sect(\SH^\otimes|\Span(\FEt_S)),
	\]
	where the left adjoint preserves normed spectra. It remains to show that the right adjoint preserves normed spectra as well. It will suffice to show that, if $f\colon Y\to X$ is a finite étale map and $\tilde f=\widehat\Pi_1^\et(f)\colon \tilde Y\to\tilde X$, the exchange transformation
	\[
	\tilde f^*u_X\to u_Yf^*\colon \SH(X) \to \SH(\tilde Y)
	\]
	is an equivalence. Since $\tilde f$ is a finite covering map, $\tilde f^*$ has a left adjoint $\tilde f_\sharp$ which is an $\SH(\tilde X)$-module functor (Lemma~\ref{lem:SHinduction}(2)). By adjunction, it is equivalent to show that the exchange transformation
	\[
	f_\sharp c_Y \to c_X\tilde f_\sharp\colon \SH(\tilde Y)\to\SH(X)
	\]
	is an equivalence. Note that this is a transformation of $\SH(\tilde X)$-module functors in $\Pr^\mathrm{L}$. 
	By Lemma~\ref{lem:SHinduction}(1), $\SH(\tilde Y)$ is generated as an $\SH(\tilde X)$-module by finite $\tilde Y$-sets. It therefore suffices to show that the above transformation is an equivalence on finite $\tilde Y$-sets, which is clear.
\end{proof}

\begin{example}
	Let $S$ be essentially smooth over a field. Since $\bigvee_{n\in\Z}\Sigma^{2n,n}\HH\Z_S\in\SH(S)$ is a normed spectrum (see Example~\ref{ex:periodicHZ}), Bloch's cycle complex $z^*(S,*)$ can be promoted to a normed $\widehat\Pi_1^\et(S)$-spectrum.
\end{example}

\begin{example}
	Since $\KGL_S\in\SH(S)$ is a normed spectrum (see Theorem~\ref{thm:KGL}), the homotopy $\K$-theory spectrum $\KH(S)$ of any scheme $S$ can be promoted to a normed $\widehat\Pi_1^\et(S)$-spectrum. We will see in Remark~\ref{rmk:KBnormed} that this also holds for the ordinary (nonconnective) $\K$-theory spectrum of $S$.
\end{example}

\begin{example}
	Let $S$ be a smooth scheme over a field of characteristic zero. By \cite[Theorem 3.1]{Levine:2009}, there is an isomorphism $\Omega^n(S)\simeq \MGL^{2n,n}(S)$, where $\Omega^*$ is the algebraic cobordism of Levine–Morel. Since $\bigvee_{n\in\Z}\Sigma^{2n,n}\MGL_S\in\SH(S)$ is a normed spectrum (see Theorem~\ref{thm:normedMGL}), $\bigoplus_{n\in\Z}\Omega^n(S)$ is the $\pi_0$ of a normed $\widehat\Pi_1^\et(S)$-spectrum.
\end{example}

\subsection{The Rost norm on Grothendieck–Witt groups}

As another application of Galois theory, we can identify the norms on $\pi_{0,0}$ of the motivic sphere spectrum.
We first recall and slightly generalize Morel's computation of the latter:

\begin{theorem}\label{thm:GW}
	Let $S$ be a regular semilocal scheme over a field. Suppose that $\Char(S)\neq 2$ or that $S$ is the spectrum of a field. Then there is natural isomorphism of rings
	\[
	[\1_S,\1_S] \simeq \GW(S).
	\]
\end{theorem}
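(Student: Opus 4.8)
The plan is to reduce the statement to the two cases that Morel has already treated—namely, $S$ the spectrum of a field—and to propagate the result along the structure maps in the semilocal case. First I would recall the precise inputs: Morel's theorem gives a canonical ring isomorphism $[\1_k,\1_k]\simeq \GW(k)$ for every field $k$ (of arbitrary characteristic), compatible with field extensions. For a general regular semilocal $S$ over a field with $\Char(S)\neq 2$, I would use that $S$ is a (finite) limit of its localizations at the closed points and, more usefully, that there is a well-behaved Gersten-type or homotopy-invariance argument: the unit map $\1_S\to \1_S$ can be detected on the generic points of $S$. Concretely, I would invoke that for $S$ regular semilocal over a field, $\SH(S)$ has a reasonable $t$-structure and the endomorphism ring of the sphere is computed by the sections of the Nisnevich sheaf $\spi_{0,0}(\1)$; Morel's stable connectivity theorem (cited in Remark~\ref{rmk:norms-dont-stabilize}) together with the identification of the $0$th homotopy sheaf with the unramified Grothendieck–Witt sheaf $\underline{\GW}$ gives $[\1_S,\1_S]\simeq \underline{\GW}(S)=\GW(S)$, the last equality because $S$ is regular semilocal (so unramified $\GW$-sections coincide with $\GW(S)$ by Ojanguren–Panin purity, valid away from characteristic $2$).

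Thus the key steps, in order, are: (1) record Morel's field case and its naturality; (2) identify $[\1_S,\1_S]$ with $H^0_{\Nis}(S,\underline{\GW})$ using the effective homotopy $t$-structure and the computation $\spi_0^{\eff}(\1)\simeq \underline{\GW}$ (this is where the characteristic-$2$ hypothesis, or the "spectrum of a field" escape hatch, enters, since the sheaf computation and purity both require it); (3) for $S$ regular semilocal over a field, invoke the purity/injectivity theorem identifying $H^0_{\Nis}(S,\underline{\GW})$ with $\GW(S)$; (4) check that all the identifications are ring isomorphisms and natural in $S$, which is automatic since each step is induced by a map of ring spectra or a sheaf-theoretic restriction. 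The naturality is needed because this theorem is used (in Remark~\ref{rmk:norms-dont-stabilize} and in Section~\ref{sec:ggt}) to compare $f_\otimes$ on endomorphisms of the sphere with the Rost transfer, so the isomorphism must be canonical.

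The main obstacle I anticipate is step (2)–(3) in the genuinely semilocal (non-field) case: one must be careful that the sheaf-theoretic computation of $\pi_{0,0}$ is valid over a regular semilocal base and not just over a field, and that $\underline{\GW}(S)=\GW(S)$ there. Over a field both are Morel's results; over a regular semilocal base one needs the extension of Morel's connectivity and $\spi_0^{\eff}$ computations—available in the literature (e.g. work of Bachmann, and of Ananyevskiy–Levine–Panin / Panin on purity for $\GW$) but only known away from characteristic $2$, which is exactly why the hypothesis is stated as it is. An alternative, cleaner route, which I would pursue if the sheaf-theoretic input is unavailable in the needed generality, is: present $S$ as a cofiltered limit of smooth affine schemes over the base field (using that $S$ is regular semilocal, hence essentially smooth when the base field is perfect; one first reduces to the perfect case by a further limit over the perfection), apply continuity of $\SH$ to reduce to $S$ smooth affine, and then use the Morel-type computation available for smooth schemes. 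In either approach, the field case is the genuine mathematical content and everything else is bookkeeping plus citations; the write-up should therefore foreground the reduction and cite Morel, Bachmann, and the relevant purity theorem for the semilocal statement.
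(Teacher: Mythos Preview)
Your overall architecture matches the paper's: reduce to Morel's field computation via continuity (Popescu's theorem), then for a semilocalization $S$ of a smooth $k$-scheme compare $[\1_S,\1_S]$ with $\underline{\GW}(S)$ and $\underline{\GW}(S)$ with $\GW(S)$. The second comparison is indeed a purity input; the paper uses Balmer's Witt-group result together with $\GW\simeq\W\times_{\Z/2}\Z$, which plays the role of what you call Ojanguren--Panin purity.

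The gap is in your step (2). Connectivity plus the identification $\spi_0(\1_k)_0\simeq\underline{\GW}$ does \emph{not} by itself give $[\1_S,\1_S]\simeq\underline{\GW}(S)$: the descent (Postnikov) spectral sequence contributes terms $\HH^n_{\Nis}(S,\spi_n(\1_k)_0)$ for all $n\geq 0$, and for $S$ genuinely semilocal (not local, not a field) these higher terms are not obviously zero. This vanishing is precisely the technical heart of the semilocal case in the paper. The paper proves it by invoking that each $\spi_n(\1_k)_0$ carries Milnor--Witt transfers (Ananyevskiy--Neshitov) and then applying a lemma of Bachmann--Fasel asserting that such sheaves have vanishing Nisnevich cohomology on semilocal schemes---at least over infinite fields. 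Over a finite base field $k$ an additional argument is needed: one passes to two finite extensions of coprime degrees $\equiv 1\pmod 4$ and uses the Scharlau trace to descend. Your proposal does not isolate this vanishing problem, and your alternative route (continuity to reduce to smooth affine $S$) does not help either, since the result is certainly false for general smooth affines---one must retain semilocality after the reduction, as the paper does.
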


\begin{proof}
	Choose a map $S\to \Spec k$ where $k$ is a perfect field.
	By Popescu's desingularization theorem \cite[Tag 07GC]{Stacks}, $S$ can be written as the limit of a cofiltered diagram of smooth affine $k$-schemes $S_\alpha$, which we can replace by their semilocalizations without changing the limit.
	If $S$ is the spectrum of a field, we can take each $S_\alpha$ to be the spectrum of a field.
	By \cite[Lemma A.7(1)]{HoyoisMGL}, we have
		\[
		[\1_S,\1_S]\simeq \colim_\alpha [\1_{S_\alpha},\1_{S_\alpha}].
		\]
		On the other hand, by \cite[Lemma 49]{bachmann-gwtimes}, we have
		\[
		\GW(S) \simeq \colim_\alpha \GW(S_\alpha).
		\]
		We can therefore assume that $S$ is a semilocalization of a smooth affine $k$-scheme.

	Let $\pi^\pre_{0,0}(\1_k)$ denote the presheaf $U\mapsto [\Sigma^\infty_+U,\1_k]$ on $\Sm_k$.
	By \cite[Corollary 6.9(2) and Remark 6.42]{Morel}, we have
	\begin{equation}\label{eqn:Zar=Nis}
	\L_\Zar \pi^\pre_{0,0}(\1_k)\simeq \L_\Nis \pi^\pre_{0,0}(\1_k) \simeq \underline\GW,
	\end{equation}
	where $\underline\GW$ is the unramified Grothendieck–Witt sheaf on $\Sm_k$. In particular, we have natural transformations
	\begin{equation*}\label{eqn:GW-zigzag}
	\pi^\pre_{0,0}(\1_k) \to \underline{\GW} \from \GW
	\end{equation*}
	of ring-valued presheaves on $\Sm_k$, and it suffices to show that they are equivalences on $S$. This is obvious if $S$ is a field, so we assume $\Char(k)\neq 2$ from now on.
	
	The equivalence $\GW(S)\simeq\underline{\GW}(S)$ follows from the analogous one for Witt groups \cite[Theorem 100]{BalmerWitt} and the formula $\GW(S)\simeq \W(S)\times_{\Z/2}\Z$ \cite[\sect 1.4]{gille2015quadratic}.
	It remains to show that $\pi^\pre_{0,0}(\1_k)(S)\simeq \underline{\GW}(S)$.
	If $S$ is local, this follows directly from~\eqref{eqn:Zar=Nis}.
	In general, by left completeness of the $t$-structure on Nisnevich sheaves of spectra, it suffices to show that $\HH^n_{\Nis}(S,\spi_n(\1_k)_0) = 0$ for $n>0$. 
	If $k$ is infinite, this follows from \cite[Lemma 3.6]{BachmannFasel} since $\spi_n(\1_k)_0$ is a sheaf with Milnor–Witt transfers \cite[Theorem 8.12]{FramedMW}. 
	If $k$ is finite and $\alpha\in \HH^n_{\Nis}(S,\spi_n(\1_k)_0)$, then there exist finite extensions $k_1/k$ and $k_2/k$ of coprime degrees $d_1$ and $d_2$ such that $\alpha$ becomes zero after extending scalars to $k_1$ and $k_2$, and we can arrange that $d_i\equiv 1\pmod 4$. By \cite[Lemma II.2.1]{ConnerPerlis}, we then have $\Tr_{k_i/k}(1) = d_i$ in $\GW(k)$, whence $d_i\alpha=0$. By Bézout's identity, $\alpha=0$.
\end{proof}

Let $S$ be a regular semilocal scheme as in Theorem~\ref{thm:GW} and let $f\colon T\to S$ be a finite étale map. The functor $f_\otimes\colon \SH(T)\to\SH(S)$ induces a norm map
\[
\nu_f\colon\GW(T) \simeq [\1_T,\1_T] \xrightarrow{f_\otimes} [\1_S,\1_S] \simeq \GW(S).
\]
There is an a priori different norm map $\Norm_f\colon \GW(T) \to \GW(S)$ with similar properties, defined by Rost \cite{rost2003multiplicative}. The following theorem shows that these two constructions coincide, at least in characteristic $\neq 2$.

\begin{theorem} \label{thm:GW-norms-comparison}
Let $S$ be a regular semilocal scheme over a field of characteristic $\neq 2$ and let $f\colon T\to S$ be a finite étale map. Then 
\[\nu_f = \Norm_f\colon \GW(T) \to \GW(S).\]
\end{theorem}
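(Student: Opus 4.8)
The plan is to reduce to the case of a field and then match $\nu_f$ with $\Norm_f$ using Rost's axiomatic description of the multiplicative transfer.

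\emph{Reduction to a separable field extension.} Both $\nu_f$ and $\Norm_f$ are compatible with base change along arbitrary morphisms of regular semilocal rings over a field (for $\nu_f$ this is Proposition~\ref{prop:BC} combined with the natural isomorphism $[\1,\1]\simeq\GW$ of Theorem~\ref{thm:GW}; for $\Norm_f$ it is part of Rost's construction), and both are compatible with the cofiltered limits used in the proof of Theorem~\ref{thm:GW}. Since $\GW(S)$ injects into $\GW$ of the total ring of fractions of $S$ (purity for Grothendieck--Witt groups of regular semilocal rings), and a finite étale extension of a regular semilocal ring is again one, we may assume $S=\Spec k$ with $\Char k\neq 2$ and $T=\Spec L$ with $L$ a finite étale $k$-algebra. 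As both transfers are multiplicative and split along the decomposition of $L$ into a product of separable field extensions, we reduce to the case where $L/k$ is a finite separable extension of fields.

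\emph{The characterization.} By the results of Rost~\cite{rost2003multiplicative}, as elaborated by Wittkop~\cite{wittkop2006multiplikative}, the family $\Norm_{L/k}\colon\GW(L)\to\GW(k)$, over all finite separable extensions of fields of characteristic $\neq 2$, is the unique family of maps that is multiplicative with $\Norm_{L/k}(1)=1$, natural with respect to arbitrary field extensions of $k$, compatible with towers $k\subset L\subset M$, compatible with the usual additive (Scharlau) transfer in the way dictated by the exponential/distributivity formula, and normalized by $\Norm_{L/k}(\langle a\rangle)=\langle N_{L/k}(a)\rangle$ for $a\in L^\times$. It therefore suffices to check these properties for $\nu_f$. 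Multiplicativity (with $\nu_f(1)=1$) is the symmetric monoidality of $f_\otimes$; naturality and compatibility with towers are Propositions~\ref{prop:BC} and~\ref{prop:composition}; the fact that $\nu_f$ is the multiplicative part of a Tambara functor on $\GW$ whose underlying Green functor is the standard one is Corollary~\ref{cor:normed-spectrum-tambara-functor} together with the known identification of the finite étale transfer on $\pi_{0,0}(\1)$ with the transfer of symmetric bilinear forms.

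\emph{The normalization on one-dimensional forms.} It remains to compute $\nu_{L/k}(\langle a\rangle)$ for $a\in L^\times$. Writing $\langle a\rangle=1+\eta[a]$ in $\GW(L)=\K^\MW_0(L)$ and using that $f_\otimes$ is symmetric monoidal, this reduces to describing the action of $f_\otimes$ on the class $[a]\in\pi_{1,1}(\1_L)$ under the identifications $f_\otimes\S^{\G_m}\simeq\S^{\Weil_f\G_m}$ and the pair formula of Lemma~\ref{lem:spheres} and Proposition~\ref{prop:pairs}. Unwinding the Weil restriction torus $\Weil_f\G_m$ and the automorphism induced by multiplication by $a$, and comparing with the behaviour of the class of a unit, one obtains $\nu_{L/k}(\langle a\rangle)=\langle N_{L/k}(a)\rangle$. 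This concrete computation is the main obstacle: the rest of the argument is formal, whereas pinning down the normalization forces one to make the stable norm functor explicit on a nontrivial class and identify the result with the classical norm of units; Galois theory (Proposition~\ref{prop:ggt-compat}) can be used to organize this computation by passing to a further finite étale extension over which the relevant torus becomes split, where $f_\otimes$ is an iterated multiplication by Theorem~\ref{thm:norm}(6).
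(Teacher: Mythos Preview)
Your reduction to the case of a finite separable field extension is fine and matches the paper. After that point, however, the two arguments diverge substantially, and your proposal has a genuine gap.

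You rely on an axiomatic characterization of $\Norm_{L/k}$ that you attribute to Rost and Wittkop: multiplicativity, base change, towers, the Tambara compatibility with the Scharlau transfer, and the normalization $\Norm_{L/k}(\langle a\rangle)=\langle N_{L/k}(a)\rangle$. Such a uniqueness statement is plausible, but it is not proved in the references you cite, and you do not prove it either. More seriously, your sketch of the normalization is muddled: writing $\langle a\rangle=1+\eta[a]$ is an \emph{additive} decomposition, and $f_\otimes$ is not additive, so ``using that $f_\otimes$ is symmetric monoidal'' does not reduce the problem to computing $f_\otimes$ on $[a]$. The correct route is to realize $\langle a\rangle$ as the desuspension of the automorphism of $\S^{\A^1_L}$ given by scaling by $a$; then $f_\otimes$ sends it to the automorphism of $\S^{\Weil_f\A^1_L}\simeq\S^{L}$ given by the $k$-linear map ``multiply by $a$'', whose class in $\GW(k)$ is $\langle\det\rangle=\langle N_{L/k}(a)\rangle$. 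This works, but it is not what you wrote, and even with it in hand you still owe the uniqueness argument.

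The paper sidesteps all of this. After reducing to a field $k$, it observes that both $\nu$ and $\Norm$ make $\GW$ into a $G$-Tambara functor (for any finite Galois quotient $G$ of $\widehat\pi_1^\et(k)$) with the \emph{same} underlying Mackey functor. The initial $G$-Tambara functor is the Burnside functor $\rm A_G$, and because $\rm A_G(X)$ is generated by elements $p_*(1)$, the unique maps $\rm A_G\to\GW_G$ and $\rm A_G\to\GW_G^{\rm R}$ coincide. The punchline is that in characteristic $\neq 2$, every class in $\GW(T)$ is a polynomial in trace forms $\Tr_p\langle 1\rangle$, hence lies in the image of the Burnside ring after passing to a Galois closure splitting the relevant étale covers. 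This yields $\nu_f=\Norm_f$ with no computation of $\nu_f(\langle a\rangle)$ whatsoever. The trade-off: the paper's argument is shorter and avoids the delicate normalization, but it genuinely uses $\Char\neq 2$ (trace forms do not generate $\GW$ in characteristic $2$), whereas a direct computation of $\nu_f(\langle a\rangle)$, if carried out carefully, would work in any characteristic and could be combined with an inductive Tambara-distributivity argument to give a characteristic-free proof.
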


\begin{proof}
	Under these assumptions, the map $\GW(S)\to \prod_{x}\GW(\kappa(x))$ is injective, where $x$ ranges over the generic points of $S$ \cite[Theorem 100]{BalmerWitt}. Since both $\nu_f$ and $\Norm_f$ are compatible with base change, we may assume that $S$ is the spectrum of a field. By Corollary~\ref{cor:normed-spectrum-tambara-functor}, the functor $\GW\colon\<\FEt_S^\op\to\Set$ becomes a Tambara functor on $\FEt_S$, with norms $\nu_p$ and traces $\tau_p$. The additive transfer $\tau_p\colon \GW(Y)\to\GW(X)$ was identified in \cite[Proposition 5.2]{HoyoisGLV} with the Scharlau transfer induced by the trace $\Tr_{p}\colon \scr O(Y)\to\scr O(X)$.
If we replace the norm maps $\nu_p$ by Rost's norm maps $\Norm_p$, then one again obtains a Tambara functor \cite[Corollary 13]{bachmann-gwtimes}, which we shall denote by $\GW^\rm R$.

If $S' \to S$ is a finite Galois extension with Galois group $G$, Galois theory identifies the category of finite $G$-sets with the category of finite étale $S$-schemes split by $S'$. Let us write $\GW_G$ and $\GW_G^\rm R$ for the $G$-Tambara functors obtained by restricting $\GW$ and $\GW^\rm R$ to this subcategory. We shall denote by $\rm A_G$ the Burnside $G$-Tambara functor. This is the initial $G$-Tambara functor \cite[Example 1.11]{Nakaoka2013237}, and hence there exist unique morphisms $a\colon\rm A_G \to \GW_G$ and $a^\rm R\colon\rm A_G \to \GW_G^\rm R$. Note that for $X$ a finite $G$-set, the ring $\rm A_G(X)$ is generated by $p_*(1)$ as $p$ ranges over all morphisms $p\colon Y \to X$. Since $\GW_G$ and $\GW_G^\rm R$ are defined using the same additive transfers, it follows that $a_X = a^\rm R_X\colon\rm A_G(X) \to \GW_G(X)$.

It thus suffices to show the following: for every $\omega \in \GW(T)$, there exist a finite Galois extension $S' \to S$ splitting $f$, with Galois group $G$, such that if $X$ is the finite $G$-set corresponding to $f$, then $\omega$ is in the image of $a_X\colon\rm A_G(X) \to \GW_G(X)$. Since $\Char(S)\neq 2$, $\GW(T)$ is generated as a ring by $\Tr_p\langle 1\rangle$ as $p$ ranges over finite étale morphisms $p\colon T' \to T$ (in fact it suffices to consider quadratic extensions \cite[p.\ 233]{bachmann-gwtimes}), so we can assume that $\omega=\Tr_p\langle 1\rangle$. In that case, the desired result clearly holds with any finite Galois extension $S'\to S$ splitting $f\circ p$.
\end{proof}

\begin{remark}
	The proof of Theorem~\ref{thm:GW-norms-comparison} shows that, if $k$ is a field of characteristic $\neq 2$, there exists a unique structure of Tambara functor on the Mackey functor $\GW\colon\Span(\FEt_k)\to\Ab$ that induces the usual ring structure on Grothendieck–Witt groups. The argument does not work if $\Char(k)=2$, because in this case trace forms of finite separable extensions only generate the subring $\Z\subset\GW(k)$.
\end{remark}

As a corollary, we obtain a formula for the Euler characteristic of a Weil restriction:

\begin{corollary}
	Let $S$ be a regular semilocal scheme over a field of characteristic $\neq 2$, let $f\colon T\to S$ be a finite étale map, and let $X$ be a smooth quasi-projective $T$-scheme such that $\Sigma^\infty_+X\in\SH(T)$ is dualizable (for example, $X$ is projective over $T$ or $S$ is a field of characteristic zero). Then
	\[
	\chi(\Sigma^\infty_+\Weil_{f}X)=\Norm_{f}(\chi(\Sigma^\infty_+X)) \in \GW(S).
	\]
\end{corollary}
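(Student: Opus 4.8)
The plan is to deduce the identity from the compatibility of Euler characteristics with symmetric monoidal functors, together with the identification of the categorical norm on Grothendieck--Witt rings with Rost's norm, which is Theorem~\ref{thm:GW-norms-comparison}.

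First I would observe that, since $f_\otimes\colon\SH(T)\to\SH(S)$ is symmetric monoidal (Proposition~\ref{prop:stablenorm}), it carries a dualizable object together with its evaluation and coevaluation maps to a dualizable object with its duality data; in particular $f_\otimes(\Sigma^\infty_+X)$ is dualizable in $\SH(S)$, so the left-hand side $\chi(\Sigma^\infty_+\Weil_fX)$ will make sense once we identify $f_\otimes(\Sigma^\infty_+X)$ with $\Sigma^\infty_+\Weil_fX$. Moreover the standard argument---spelling out $\chi(E)$ as the composite $\1_T\xrightarrow{\coev}E\otimes E^\vee\xrightarrow{\sigma}E^\vee\otimes E\xrightarrow{\ev}\1_T$ and transporting it along the coherence equivalences $f_\otimes(\1_T)\simeq\1_S$ and $f_\otimes(A\otimes B)\simeq f_\otimes(A)\otimes f_\otimes(B)$---shows that the endomorphism $\chi(f_\otimes E)$ of $\1_S$ is the image of $\chi(E)\in[\1_T,\1_T]$ under the map $[\1_T,\1_T]\to[\1_S,\1_S]$ induced by $f_\otimes$.

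Next I would identify $f_\otimes(\Sigma^\infty_+X)$. By Proposition~\ref{prop:stablenorm} the stable norm restricts along $\Sigma^\infty$ to $\Sigma^\infty$ composed with the unstable norm, and by Theorem~\ref{thm:norm}(2) the latter sends $X_+$ to $(\Weil_fX)_+$; since $\Weil_fX$ is a smooth quasi-projective $S$-scheme by Lemma~\ref{lem:Weil}, this yields $f_\otimes(\Sigma^\infty_+X)\simeq\Sigma^\infty_+\Weil_fX$. Finally, under the ring isomorphisms of Theorem~\ref{thm:GW}---which apply since $S$, and hence $T$, is regular semilocal over a field of characteristic $\neq2$---the map $[\1_T,\1_T]\to[\1_S,\1_S]$ induced by $f_\otimes$ is by definition the norm $\nu_f$, which equals $\Norm_f$ by Theorem~\ref{thm:GW-norms-comparison}. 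Assembling the three identifications gives
\[
\chi(\Sigma^\infty_+\Weil_fX)=\chi\bigl(f_\otimes(\Sigma^\infty_+X)\bigr)=f_\otimes\bigl(\chi(\Sigma^\infty_+X)\bigr)=\Norm_f\bigl(\chi(\Sigma^\infty_+X)\bigr)
\]
in $\GW(S)$.

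The argument is essentially formal; the only substantive ingredient is Theorem~\ref{thm:GW-norms-comparison}, already established. The sole point demanding a little care is the verification that a strong symmetric monoidal functor preserves traces, and hence Euler characteristics: the bookkeeping with the coherence isomorphisms (so that no spurious automorphism of $\1_S$ is introduced, much as in the footnote to the definition of $\alpha_f$) is routine, and it is harmless here since we only compare two elements of the commutative ring $\GW(S)$.
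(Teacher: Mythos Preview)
Your proof is correct and follows exactly the same approach as the paper: symmetric monoidal functors preserve Euler characteristics, $f_\otimes(\Sigma^\infty_+X)\simeq\Sigma^\infty_+\Weil_fX$, and then Theorem~\ref{thm:GW-norms-comparison} identifies the categorical norm $\nu_f$ with Rost's norm $\Norm_f$. The paper's proof compresses this into two sentences, whereas you have helpfully unpacked the identification $f_\otimes(\Sigma^\infty_+X)\simeq\Sigma^\infty_+\Weil_fX$ and the applicability of Theorem~\ref{thm:GW} to $T$.
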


\begin{proof}
	Since $f_\otimes$ is a symmetric monoidal functor, it preserves Euler characteristics. Hence, $\chi(\Sigma^\infty_+\Weil_{f}X)=\nu_{f}(\chi(\Sigma^\infty_+X))$, and we conclude by Theorem~\ref{thm:GW-norms-comparison}.
\end{proof}

\section{Norms and Betti realization}
\label{sec:betti}

In this section we show that the $\rm C_2$-equivariant Betti realization functor $\rB\colon \SH(\R) \to \SH(\B \rm C_2)$ \cite[\sect4.4]{HellerOrmsby} is compatible with norms and hence preserves normed spectra.

\subsection{A topological model for equivariant homotopy theory}

We first need to construct a topological model of the $\infty$-category $\H_\pt(X)$, where $X$ is a finite groupoid.
We will denote by $\Top^\cg$ the $1$-category of compactly generated topological spaces (so that $\Top^\cg_\pt$ is a symmetric monoidal category under the smash product).
An \emph{$X$-CW-complex} will mean a functor $F\colon X\to\Top^\cg$ such that, for every $x\in X$, $F(x)$ is an $\Aut_X(x)$-CW-complex.
We write $\CW(X)\subset\Fun(X,\Top^\cg)$ for the full subcategory of $X$-CW-complexes, and $\CW_\pt(X) = \CW(X)_{*/}$ for the category of pointed $X$-CW-complexes. Since finite (smash) products of (pointed) $G$-CW-complexes are $G$-CW-complexes, for any finite group $G$, the categories $\CW(X)$ and $\CW_\pt(X)$ acquire symmetric monoidal structures with tensor products computed pointwise.

Recall that a map in $\Fun(\B G,\Top^\cg)$ is a \emph{weak equivalence} if it induces a weak equivalence on $H$-fixed points for every subgroup $H\subset G$; we call a map in $\Fun(X,\Top^\cg)$ a \emph{weak equivalence} if it is so pointwise. These weak equivalences are part of a simplicial model structure on $\Fun(X,\Top^\cg)$, where all objects are fibrant and where $X$-CW-complexes are cofibrant. By Elmendorf's theorem \cite[Theorem 1]{Elmendorf}, if $\Orb_X\subset\Fin_X\subset\FinGpd_{/X}$ is the full subcategory spanned by the connected groupoids, then the obvious functor $\Orb_X\into\Fun(X,\Top^\cg)$ induces a simplicial Quillen equivalence between $\PSh(\Orb_X,\Set_\Delta)$ and $\Fun(X,\Top^\cg)$.

If $\scr C$ is a symmetric monoidal $\infty$-category, the functor 
\[
\FinGpd^\op\to\CAlg(\Cat_\infty),\quad X\mapsto\Fun(X,\scr C),
\] 
is clearly a sheaf for the effective epimorphism topology on $\FinGpd$. Since finite covering maps are fold maps locally in this topology, it follows from Corollary~\ref{cor:automatic-norms} that this sheaf extends uniquely to a functor
\[
\Span(\FinGpd,\all,\fincov)\to \CAlg(\Cat_\infty), \quad X\mapsto \Fun(X,\scr C),\quad (U\stackrel f\from T\stackrel p\to S)\mapsto p_\otimes f^*.
\]
The functor $p_\otimes\colon \Fun(T, \scr C) \to \Fun(S, \scr C)$ coincides with the \emph{indexed tensor product} defined in \cite[A.3.2]{HHR}, and the above functor encodes the compatibility of indexed tensor products with composition and base change (cf.\ \cite[Propositions A.29 and A.31]{HHR}). 
We now apply this with $\scr C=\Top^\cg_\pt$.

\begin{lemma}\label{lemm:G-norm-basics-top}
Let $f\colon X \to Y$ be any map of finite groupoids and $p\colon T \to S$ a finite covering map.
Consider the functors
\[
f^*\colon \Fun(Y,\Top^\cg_\pt) \to \Fun(X,\Top^\cg_\pt) \quad \text{and}\quad p_\otimes\colon \Fun(T,\Top^\cg_\pt) \to \Fun(S,\Top^\cg_\pt).
\]
\begin{enumerate}
\item $f^*$ sends $\CW_\pt(Y)$ to $\CW_\pt(X)$ and $p_\otimes$ sends $\CW_\pt(T)$ to $\CW_\pt(S)$.
\item $f^*$ preserves weak equivalences and the restriction of $p_\otimes$ to $\CW_\pt(T)$ preserves weak equivalences.
\end{enumerate}
\end{lemma}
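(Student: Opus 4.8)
\textbf{Proof plan for Lemma~\ref{lemm:G-norm-basics-top}.}
The plan is to reduce everything to the classical statements about $G$-spaces (for $G$ a finite group) by working one connected component at a time and using the local structure of finite covering maps.

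For part (1), the statement about $f^*$ is essentially immediate: for $x\in X$ with image $y=f(x)$, the homomorphism $\Aut_X(x)\to\Aut_Y(y)$ lets us restrict the $\Aut_Y(y)$-CW-structure on $F(y)$ to an $\Aut_X(x)$-CW-structure on $f^*(F)(x)=F(y)$, because restriction along a group homomorphism sends CW-complexes to CW-complexes (cells pull back to disjoint unions of cells). For $p_\otimes$, I would first note that the question is local on $S$ for the effective epimorphism topology: since $p_\otimes$ is compatible with base change along any $g\colon S'\to S$ (part of the functoriality on $\Span(\FinGpd,\all,\fincov)$ recalled above) and whether an object of $\Fun(S,\Top^\cg_\pt)$ is a pointed $S$-CW-complex can be checked after pulling back along a cover, we may assume $S=\B G$ for a finite group $G$. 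Then $T\simeq\coprod_i \B H_i$ for subgroups $H_i\subset G$, and $p_\otimes$ decomposes as a smash product (indexed over $i$) of the individual norms $\B H_i\to\B G$; so it suffices to treat $p\colon\B H\to\B G$. In this case $p_\otimes$ is exactly the Hill--Hopkins--Ravenel indexed smash product $N_H^G$ on pointed $H$-spaces, and \cite[Proposition B.96]{HHR} (or the explicit description in \cite[\sect A.3.2]{HHR}) states that $N_H^G$ sends pointed $H$-CW-complexes to pointed $G$-CW-complexes. This gives part (1).

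For part (2), the claim for $f^*$ is again classical: restriction along $\Aut_X(x)\to\Aut_Y(y)$ preserves weak equivalences of $G$-spaces since fixed points for a subgroup of the source agree with fixed points for the corresponding subgroup of the target of the $F(y)$. For $p_\otimes$ on CW-complexes, I would use the same reduction as above — base change compatibility reduces to $S=\B G$, and the product decomposition reduces to $p\colon\B H\to\B G$ — and then invoke the fact that $N_H^G$ preserves weak equivalences between $H$-CW-complexes. The cleanest way to see the latter is: $N_H^G$ preserves the class of relative $H$-CW-inclusions that are also $H$-homotopy equivalences (by the explicit cellular formula for $N_H^G$ of a pushout along a cell), hence preserves acyclic cofibrations between cofibrant objects, and then Ken Brown's lemma upgrades this to preservation of all weak equivalences between $H$-CW-complexes. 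Alternatively one cites \cite[Proposition B.89]{HHR} directly. The only subtlety worth care is ensuring that the reductions in (1) and (2) are compatible — i.e.\ that the equivalence $T\simeq\coprod_i\B H_i$ after base change is one under which $p_\otimes$ really becomes the external smash of the $N_{H_i}^G$ — but this is precisely the content of the compatibility of indexed tensor products with composition and disjoint unions (\cite[Propositions A.29 and A.31]{HHR}), which we have already packaged into the functor on $\Span(\FinGpd,\all,\fincov)$.

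\textbf{Main obstacle.} The genuinely load-bearing input is the HHR fact that $N_H^G$ preserves $G$-CW-structure and weak equivalences between $H$-CW-complexes; everything else is a bookkeeping reduction via base change and the component decomposition. So the ``hard part'' is really just making sure the citation is the right one and that the $\infty$-categorical packaging of $p_\otimes$ on $\Span(\FinGpd,\all,\fincov)$ genuinely restricts, on $1$-categorical point-set objects, to the HHR indexed smash product — which it does by construction since both are the unique extension of the fold-map case along the effective epimorphism topology.
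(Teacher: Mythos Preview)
Your approach to part (1) is correct but organized differently from the paper. You reduce to $p\colon\B H\to\B G$ by decomposing $S$ and $T$ into components and then cite HHR, whereas the paper works directly with a general finite covering $p$ and argues by cellular induction: it suffices that $p_\otimes$ sends each cell attachment $h_\sharp(\rm S^{n-1}_+\into\rm D^n_+)$ (for $h\colon T'\to T$ a finite covering) to a relative $S$-CW-complex, which reduces to the classical fact that $\rm S(\R^A)_+\into\rm D(\R^A)_+$ is a relative $G$-CW-inclusion for a finite $G$-set $A$ (citing \cite[Proposition B.89]{HHR} for the spectrum analogue). Both routes are fine; the paper's avoids unwinding $p_\otimes$ as an iterated composition.

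For part (2), however, your Ken Brown step has a gap. You claim $N_H^G$ preserves acyclic cofibrations between cofibrant objects ``by the explicit cellular formula for $N_H^G$ of a pushout along a cell'', but the cellular formula only controls the CW structure of the output---it does not by itself show the resulting map is a weak equivalence. To justify that, you would need to know $N_H^G$ preserves the deformation retractions involved, i.e.\ preserves homotopies. That is precisely the paper's direct argument, which bypasses Ken Brown entirely: weak equivalences in $\CW_\pt(T)$ are homotopy equivalences by the equivariant Whitehead theorem, and $p_\otimes$ preserves homotopic maps because a homotopy $h\colon I_+\wedge A\to B$ yields a homotopy
\[
I_+\wedge p_\otimes(A)\to p_*p^*(I)_+\wedge p_\otimes(A)\simeq p_\otimes(I_+\wedge A)\xrightarrow{p_\otimes(h)}p_\otimes(B).
\]
Your fallback citation of \cite[Proposition B.89]{HHR} does not help here either---the paper uses that reference for part (1), not (2). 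So your proposal for (2) either has a genuine hole or, once filled in, collapses to the paper's simpler argument.
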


\begin{proof}
(1) The claim about $f^*$ is straightforward.
For $p_\otimes$, it suffices to prove the following: for every finite covering map $h\colon T'\to T$ and every $n\geq 0$, $p_\otimes$ sends any pushout of $h_\sharp(\rm S^{n-1}_+\into \rm D^n_+)$ in $\CW_\pt(T)$ to a relative $S$-CW-complex. The analogous statement for orthogonal spectra is proved in \cite[Proposition B.89]{HHR}, and the same proof applies in our case; one is reduced to the standard fact that if $A$ is a finite $G$-set and $V=\mathbb R^A$ is the corresponding $G$-representation, then $\rm S(V)_+\into \rm D(V)_+$ is a relative $G$-CW-complex.

(2) If $x\in X$ and $H\subset \Aut_X(x)$ is a subgroup, then $(f^*F)(x)^H=F(f(x))^{f(H)}$ for any $F\in\Fun(Y,\Top^\cg_\pt)$. This implies the claim about $f^*$. For $p_\otimes$, we use that weak equivalences in $\CW_\pt(T)$ are homotopy equivalences. It is thus enough to show that $p_\otimes$ preserves homotopic maps. If $I$ is the unit interval and $h\colon I_+\wedge A\to B$ is a homotopy between $h_0$ and $h_1$ in $\Fun(T,\Top^\cg_\pt)$, then the composition
\[
I_+ \wedge p_\otimes(A) \to p_*p^*(I)_+ \wedge p_\otimes(A) \simeq p_\otimes (I_+) \wedge p_\otimes(A) \simeq p_\otimes (I_+\wedge A) \xrightarrow{p_\otimes(h)} p_\otimes(B)
\]
is a homotopy between $p_\otimes(h_0)$ and $p_\otimes(h_1)$.
\end{proof}

By Lemma~\ref{lemm:G-norm-basics-top}(1), we obtain a functor of $2$-categories
\[ \CW^\otimes_{\pt}\colon \Span(\FinGpd, \all, \fincov) \to \CAlg(\what\Cat_1), \quad X \mapsto \CW_\pt(X), \]
as a subfunctor of $X\mapsto \Fun(X,\Top^\cg_\pt)$. 
Using Lemma \ref{lemm:G-norm-basics-top}(2) we can improve this to
\[ (\CW^\otimes_{\pt}, w)\colon \Span(\FinGpd, \all, \fincov) \to \CAlg(\what{\scr M\Cat}_1), \quad X \mapsto (\CW_\pt(X), w_X), \]
where $w_X$ denotes the class of weak equivalences in $\CW_\pt(X)$. Composing with the inclusion $\CAlg(\what{\scr M\Cat}_1) \subset \CAlg(\what{\scr M\Cat}_\infty)$ and with a left adjoint to the functor $\CAlg(\what\Cat{}_\infty) \to \CAlg(\what{\scr M\Cat}_\infty)$, $\scr C \mapsto (\scr C,\text{equivalences})$, we obtain
\[ \CW_\pt[w^{-1}]^\otimes\colon \Span(\FinGpd, \all, \fincov) \to \CAlg(\what{\Cat}_\infty), \quad X\mapsto\CW_\pt(X)[w_X^{-1}]. \]

\begin{lemma}\label{lem:CW-inversion}
	Let $X$ be a finite groupoid. Then there is a unique equivalence of $\infty$-categories $\H_\pt(X)\simeq \CW_\pt(X)[w_X^{-1}]$ making the following square commutes:
	\begin{tikzmath}
		\diagram{\Fin_{X+} & \CW_\pt(X) \\ \H_\pt(X) & \CW_\pt(X)[w_X^{-1}]\rlap. \\};
		\arrows (11-) edge[c->] (-12) (11) edge[c->] (21) (21-) edge node[above]{$\simeq$} (-22) (12) edge (22);
	\end{tikzmath}
\end{lemma}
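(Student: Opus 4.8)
The plan is to build the comparison equivalence through the same three-step universal-property recipe used throughout the paper, now applied to the source $\Fin_{X+}\into\CW_\pt(X)$ rather than to $\SmQP_{S+}$. First I would recall that $\H_\pt(X)=\PSh_\Sigma(\Fin_{X+})$ by definition (see \sect\ref{sub:equivariant-norms}), and that $\PSh_\Sigma(\scr C_0)$ is freely generated by $\scr C_0$ under sifted colimits (Lemma~\ref{lem:C+} and \cite[Proposition 5.5.8.15]{HTT}). So the inclusion $\Fin_{X+}\into\CW_\pt(X)\to\CW_\pt(X)[w_X^{-1}]$ extends uniquely to a sifted-colimit-preserving functor $\H_\pt(X)\to \CW_\pt(X)[w_X^{-1}]$, and I must show it is an equivalence. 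The key point is to recognize $\CW_\pt(X)[w_X^{-1}]$ as a presentable $\infty$-category which is itself freely generated under sifted colimits (equivalently, under all colimits once finite coproducts are present, by Lemma~\ref{lem:sifted+coprod}) by the image of $\Fin_{X+}$, subject to the relations that (i) finite coproducts in $\Fin_{X+}$ become finite products, i.e. wedges, and (ii) the orbit cells $h_\sharp(\rm S^{n-1}_+\into\rm D^n_+)$ for finite covering maps $h$ are collapsed appropriately — but these are exactly the generating cofibrations of the model structure, so no new relations beyond $\Sigma$-locality arise.

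Concretely I would proceed as follows. Fix $X$ a finite groupoid. By Lemma~\ref{lemm:G-norm-basics-top}(1,2), $\CW_\pt(X)$ with its class $w_X$ is a relative category, and its localization $\CW_\pt(X)[w_X^{-1}]$ is the underlying $\infty$-category of the model category $\Fun(X,\Top^\cg_\pt)$ (all objects fibrant, cofibrant objects being retracts of $X$-CW-complexes), which presents the same homotopy theory. By Elmendorf's theorem (\cite[Theorem 1]{Elmendorf}, as recalled just above the statement), $\Fun(X,\Top^\cg)$ is Quillen equivalent to $\PSh(\Orb_X,\Set_\Delta)$; passing to pointed objects and to $\infty$-categorical localizations, $\CW_\pt(X)[w_X^{-1}]\simeq\PSh(\Orb_X)_\pt$. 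On the other hand $\Fin_X\simeq\PSh_\Sigma(\Orb_X)$ — a finite $X$-set is a coproduct of orbits, and $\Fin_X$ is the free finite-coproduct completion of $\Orb_X$ — so $\PSh(\Orb_X)=\PSh_\Sigma(\Fin_X)=\H(X)$, whence $\PSh(\Orb_X)_\pt=\H_\pt(X)$ by Lemma~\ref{lem:C+}. This gives the desired equivalence, and chasing the identifications shows it is compatible with the canonical functors out of $\Fin_{X+}$, which pins it down uniquely (any two sifted-colimit-preserving functors $\H_\pt(X)\to\scr D$ agreeing on $\Fin_{X+}$ agree).

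For the symmetric monoidal statement implicit in the surrounding discussion (it is $\CW_\pt[w^{-1}]^\otimes$ versus $\H_\pt^\otimes$ that one ultimately wants to compare), I would additionally observe that both symmetric monoidal structures are determined by the Day convolution universal property: on $\H_\pt(X)=\PSh_\Sigma(\Fin_{X+})$ this is \cite[Proposition 4.8.1.10]{HA}, and on $\CW_\pt(X)[w_X^{-1}]$ the pointwise smash product on $X$-CW-complexes is a symmetric monoidal model structure, so its localization carries the induced presentably symmetric monoidal structure, which restricts on $\Fin_{X+}$ to the same structure (finite products of orbits). Hence the equivalence upgrades to a symmetric monoidal one by the uniqueness clause in the Day convolution universal property. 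The main obstacle I anticipate is the bookkeeping around Elmendorf's theorem in the pointed, $\infty$-categorical, model-independent setting: one must check that the simplicial Quillen equivalence $\PSh(\Orb_X,\Set_\Delta)\simeq\Fun(X,\Top^\cg)$ identifies the subcategory of CW-objects with the cofibrant presheaves and that localizing at weak equivalences on both sides yields the expected presentable $\infty$-category without introducing hypercompleteness subtleties — but since $\Orb_X$ is a $1$-category with finitely many objects, $\PSh(\Orb_X)$ is a perfectly ordinary presheaf $\infty$-topos and there is nothing pathological. A minor additional point is verifying that the naturality of these identifications in $X\in\Span(\FinGpd,\all,\fincov)$ holds, so that the equivalence is in fact an equivalence of functors; this follows because each identification (Elmendorf, $\Fin_X\simeq\PSh_\Sigma(\Orb_X)$, $\PSh_\Sigma$ commuting with the pointing and localization steps) is itself natural, and one can assemble them using the relative-localization formalism already employed in \sect\ref{sub:coherence}.
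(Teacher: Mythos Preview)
Your approach is essentially the same as the paper's: both establish uniqueness via the universal property of $\PSh_\Sigma$, then identify $\CW_\pt(X)[w_X^{-1}]$ with $\PSh(\Orb_X)_\pt\simeq\H_\pt(X)$ through Elmendorf's Quillen equivalence. The paper is slightly more careful about the passage from the relative category $(\CW_\pt(X),w_X)$ to the underlying $\infty$-category of the model structure (invoking \cite[Example 1.3.4.8]{HA} and the fact that weak equivalences between CW-complexes are simplicial homotopy equivalences), and your aside ``$\Fin_X\simeq\PSh_\Sigma(\Orb_X)$'' is a slip---what you mean and immediately use is that $\Fin_X$ is the free finite-coproduct completion of $\Orb_X$, whence $\PSh_\Sigma(\Fin_X)\simeq\PSh(\Orb_X)$.
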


\begin{proof}
	Uniqueness is clear by the universal property of $\PSh_\Sigma$.
	Let $\scr C$ denote the standard simplicial enrichment of the category $\CW_\pt(X)$. Since the weak equivalences in $\CW_\pt(X)$ are precisely the simplicial homotopy equivalences in $\scr C$, it follows from \cite[Example 1.3.4.8]{HA} that $\CW_\pt(X)[w_X^{-1}]\simeq \mathrm{N}(\scr C)$, where $\mathrm{N}$ is the homotopy coherent nerve.
	Let $\scr C'$ be the simplicial category of fibrant-cofibrant objects in $\Fun(X,\Top^\cg_\pt)$. Then the inclusion $\scr C\subset\scr C'$ is essentially surjective on the homotopy categories, hence it is a weak equivalence of simplicial categories.
	Via the simplicial Quillen equivalence between $\Fun(X,\Top^\cg_\pt)$ and the injective model structure on $\PSh(\mathrm{Orb}_X,\Set_{\Delta\pt})$, $\scr C'$ is further weakly equivalent to the simplicial category $\scr C''$ of fibrant-cofibrant objects in the latter simplicial model category \cite[Proposition A.3.1.10]{HTT}.
	We therefore have equivalences $\mathrm{N}(\scr C)\simeq \mathrm{N}(\scr C')\simeq\mathrm{N}(\scr C'')$.
	Finally, by \cite[Proposition 4.2.4.4]{HTT}, $\mathrm{N}(\scr C'')$ is equivalent to $\PSh(\mathrm{Orb}_X)_\pt\simeq\H_\pt(X)$.
	The given square then commutes by construction.
\end{proof}

We can now compare $\CW_\pt[w^{-1}]^\otimes$ with the functor $\H_\pt^\otimes$ constructed in \sect\ref{sub:equivariant-norms}. Consider the obvious symmetric monoidal inclusion $\Fin_{X+}\into \CW_\pt(X)$, which is natural in $X\in\FinGpd^\op$. Since both $X\mapsto\Fin_{X+}$ and $X\mapsto \Fun(X,\Top^\cg_\pt)$ are sheaves for the effective epimorphism topology on $\FinGpd$, we can apply Corollary~\ref{cor:automatic-norms} with $\scr D=\Fun(\Delta^1,\what\Cat_1)$ to obtain a natural transformation
\[
\Fin_+^\otimes \to \CW_\pt^\otimes\colon \Span(\FinGpd,\all,\fincov)\to \CAlg(\what\Cat_1).
\]
It follows from Lemma~\ref{lem:CW-inversion} and the universal property of $\PSh_\Sigma$ that, for every map $f\colon T\to U$ of finite groupoids and every finite covering map $p\colon T\to S$, $p_\otimes f^*\colon \CW_\pt(U)[w_U^{-1}]\to \CW_\pt(S)[w_S^{-1}]$ is identified with $p_\otimes f^*\colon \H_\pt(U)\to \H_\pt(S)$. In particular, the functor $\CW_\pt[w^{-1}]^\otimes$ lands in $\CAlg(\what\Cat{}_\infty^\mathrm{sift})$, and the composite transformation $\Fin_{+}^\otimes \to \CW_\pt^\otimes \to \CW_\pt[w^{-1}]^\otimes$ factors through the objectwise sifted cocompletion. We therefore get a natural transformation
\[
\H_{\pt}^\otimes \to \CW_\pt[w^{-1}]^\otimes\colon \Span(\FinGpd,\all,\fincov)\to \CAlg(\what\Cat{}_\infty^\mathrm{sift}),
\]
which is an equivalence by Lemma~\ref{lem:CW-inversion}.

\subsection{The real Betti realization functor}

If $X$ is a scheme of finite type over $\R$, let $\rB(X) = X(\C)$ denote its set of complex points equipped with the analytic topology and with the action of $\rm C_2$ by conjugation. Note that if $X$ is finite étale over $\R$, then $\rB(X)$ is a finite discrete topological space. If we identify finite $\rm C_2$-sets with finite coverings of $\B \rm C_2$, $\rB$ restricts to a functor
\[
e\colon \FEt_\R \to \Fin_{\B \rm C_2}, \quad \Spec(\R)\mapsto \B \rm C_2, \quad \Spec(\C)\mapsto *,
\]
which is an equivalence of categories ($e$ can also be viewed as the restriction of $\widehat\Pi_1^\et$ to $\FEt_\R$, see \sect\ref{sub:ggt-general}).
If $S$ is finite étale over $\R$, we obtain a functor
\[
\rB\colon \Sch_S^\fp \to \Fun(\B \rm C_2,\Top^\cg)_{/\rB(S)}\simeq\Fun(e(S), \Top^\cg),
\]
natural in $S\in\FEt_\R^\op$, which is readily equipped with a symmetric monoidal structure.
If $X$ is smooth over $S$, then $\rB(X)$ is a smooth manifold and hence an $e(S)$-CW-complex \cite[Theorem 3.6]{Illman1978}. It follows that $\rB$ restricts to symmetric monoidal functors
\[
	\rB\colon \Sm_S\to \CW(e(S))\quad\text{and}\quad\rB\colon \Sm_{S+} \to \CW_\pt(e(S)).
\]
Since $e$ preserves colimits and $\Fun(\ph,\Top^\cg_\pt)$ transforms colimits into limits, $S\mapsto \Fun(e(S),\Top^\cg_\pt)$ is a finite étale sheaf on $\FEt_\R$. By Lemma~\ref{lem:corr-etale-descent} and Corollary~\ref{cor:automatic-norms} applied with $\scr D=\Fun(\Delta^1,\what\Cat_1)$, the symmetric monoidal functor $\rB\colon \SmQP_{S+}\to \Fun(e(S),\Top^\cg_\pt)$ is automatically natural in $S\in\Span(\FEt_\R)$, and we obtain a natural transformation
\[
\rB\colon \SmQP_+^\otimes \to \CW_\pt^\otimes\circ e\colon \Span(\FEt_\R) \to \CAlg(\what\Cat_1).
\]
The composite transformation $\SmQP_+^\otimes \to \CW_\pt^\otimes \to \CW_\pt[w^{-1}]^\otimes\simeq \H_\pt^\otimes$ then lifts to the objectwise sifted cocompletion, and we obtain
\begin{equation}\label{eqn:unstable-Betti-0}
\rB\colon \PSh_\Sigma(\SmQP)_\pt^\otimes \to \H_\pt^\otimes\circ e\colon \Span(\FEt_\R)\to \CAlg(\what\Cat{}_\infty^\mathrm{sift}).
\end{equation}

\begin{lemma}\label{lem:rB-mot-equiv}
	Let $S\in\FEt_\R$. The functor $\rB\colon\PSh_\Sigma(\SmQP_S)_\pt\to \H_\pt(e(S))$ inverts motivic equivalences.
\end{lemma}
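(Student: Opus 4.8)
The plan is to verify that $\rB$ sends the generators of the strongly saturated class of motivic equivalences to equivalences in $\H_\pt(e(S))$. Recall from \sect\ref{sec:prelim} that on $\PSh_\Sigma(\SmQP_S)$ the motivic equivalences are generated under $2$-out-of-$3$ and sifted colimits by the $\A^1$-projections $X\times\A^1\to X$ and by the maps $\L_\Sigma U\into X$ for $U\into X$ a finitely generated Nisnevich covering sieve (using Lemma~\ref{lem:voevodsky}, the category $\SmQP_S$ being admissible), and the pointed version is obtained by passing to $\PSh_\Sigma(\SmQP_{S+})$. Since $\rB\colon \PSh_\Sigma(\SmQP_S)_\pt\to\H_\pt(e(S))$ preserves sifted colimits by construction (it is the sifted cocompletion of $\rB\colon\SmQP_{S+}\to\CW_\pt(e(S))$ composed with the localization), and since the class of $\rB$-equivalences is closed under $2$-out-of-$3$ and sifted colimits, it suffices to treat the two families of generators separately.

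First I would handle $\A^1$-invariance. For $X\in\SmQP_S$, the projection $X\times\A^1\to X$ realizes to the projection $\rB(X)\times\C\to\rB(X)$, which is a $\rm C_2$-equivariant (indeed $e(S)$-equivariant) homotopy equivalence since $\C$ is $\rm C_2$-equivariantly contractible (the straight-line homotopy to $0$ is conjugation-equivariant). Hence it is a weak equivalence in $\CW_\pt(e(S))$ and becomes an equivalence in $\CW_\pt(e(S))[w^{-1}]\simeq\H_\pt(e(S))$. Second, for the Nisnevich descent generators, I would use that $\rB$ carries an elementary distinguished (Nisnevich) square in $\SmQP_S$ to a homotopy pushout square of $e(S)$-CW-complexes: an étale map inducing an isomorphism on a closed complement becomes, on $\C$-points, an equivariant local homeomorphism that is a bijection over the complement of an open set, and the standard argument (as in \cite[\sect3.3]{MV} for the $\rm C_2$-equivariant realization, or \cite{DuggerIsaksen}) shows the resulting square of spaces is a homotopy pushout. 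Equivalently, after pointing, $\rB$ sends the cofiber of $\L_\Sigma U\into X$ for a Nisnevich sieve to a contractible object, because the sieve $U$ realizes to a cover of $\rB(X)$ in a sense sufficient to conclude $\rB(\L_\Sigma U)\to \rB(X)$ is an equivalence — here one reduces to the two-fold covers generating the Nisnevich topology and invokes excision for the resulting equivariant homotopy pushout.

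The main obstacle I expect is the Nisnevich descent step: one must be careful that $\rB$, which a priori only commutes with sifted colimits and not with the Nisnevich sheafification, nonetheless kills Nisnevich-locally trivial maps. The clean way around this is to observe that $\H_\pt(e(S))=\PSh(\mathrm{Orb}_{e(S)})_\pt$ has enough structure that a map of radditive presheaves inducing a weak equivalence on all ``points'' (here, after taking $\C$-points and passing to $H$-fixed points for subgroups $H$ of the relevant automorphism groups) is an equivalence, and that a Nisnevich sieve realizes to a surjective family of equivariant maps in the appropriate sense. Alternatively, and perhaps more cleanly, I would factor the verification through the known statement that the \emph{non-equivariant} Betti realization $\SH(\C)\to\Sp$ and the $\rm C_2$-equivariant one $\SH(\R)\to\SH(\B\rm C_2)$ invert motivic equivalences \cite[\sect4.4]{HellerOrmsby}, \cite[\sect3.3]{MV}, together with the fact that weak equivalences in $\Fun(e(S),\Top^\cg)$ are detected pointwise and on each connected component $e(S)$ looks like $\B\rm C_2$ or $*$; this reduces the lemma to the classical case and avoids re-proving equivariant Nisnevich excision from scratch.
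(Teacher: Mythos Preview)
Your proposal is correct and follows essentially the same approach as the paper: the paper's proof is a two-line reference to Dugger--Isaksen \cite{dugger2004topological} for Nisnevich equivalences and declares the $\A^1$-homotopy case clear, which is exactly the decomposition you outline. Your discussion is more detailed than necessary---the paper simply cites the literature rather than re-proving equivariant excision---but the underlying argument is the same.
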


\begin{proof}
This is well-known, see for example \cite[Theorem 5.5]{dugger2004topological} for the case of Nisnevich equivalences. The case of $\mathbb{A}^1$-homotopy equivalences is clear.
\end{proof}

By Lemma~\ref{lem:rB-mot-equiv}, we can lift the transformation~\eqref{eqn:unstable-Betti-0} to $\CAlg(\what{\scr M\Cat}{}_\infty^\mathrm{sift})$ and we obtain
\[
\rB\colon \H_\pt^\otimes \to \H_\pt^\otimes\circ e\colon \Span(\FEt_\R)\to \CAlg(\what\Cat{}_\infty^\mathrm{sift}).
\]
Since $\rB(\mathbb{P}^1_\R)$ is the Riemann sphere with its conjugation action, which is a $\rm C_2$-representation sphere, we can lift the previous transformation to $\CAlg(\what{\scr O\Cat}{}_\infty^\mathrm{sift})$ and we obtain
\[
\rB\colon \SH^\otimes \to \SH^\otimes\circ e\colon \Span(\FEt_\R)\to \CAlg(\what\Cat{}_\infty^\mathrm{sift}).
\]

Note that $\rB$ has a pointwise right adjoint $\SingB$, inducing a relative adjunction over $\Span(\FEt_\R)$ (by Lemma~\ref{lemm:construct-relative-adjoint}(1)).
If $f$ is a morphism in $\FEt_\R$, then $\rB$ commutes with $f_\sharp$; this can be checked directly on the generators. It follows that $\SingB$ commutes with $f^*$.
Taking sections via Lemma~\ref{lemm:adjoints-pass-to-sections}, we obtain an induced adjunction 
\[ \rB\colon \NAlg_\FEt(\SH(\R)) \adj \NAlg(\SH(\B \rm C_2)): \SingB.\]

\begin{example}[Hill–Hopkins {\cite[\sect8.2]{hill2016equivariant}}] 
	\label{ex:hill-hopkins}
	Let $\rho \in [\1, \Sigma^\infty\mathbb{G}_m]_{\SH(\Z)}$ denote the element often written as $[-1]$, induced by the morphism $\Spec(\Z) \to \mathbb{G}_m$ classifying $-1 \in \Z^\times$. Let \[\1[\rho^{-1}] = \colim\left(\1 \xrightarrow{\rho} \Sigma^\infty\mathbb{G}_m \xrightarrow{\rho} \Sigma^\infty\mathbb{G}_m^{\wedge 2} \xrightarrow{\rho} \dotsb\right).\] Then $\1_\R[\rho^{-1}]$ cannot be promoted to a normed spectrum over $\FEt_\R$. Indeed, $\rB$ preserves colimits (being a left adjoint), so $\rB(\1_\R[\rho^{-1}]) \simeq \1_{\rm C_2}[\rB(\rho)^{-1}]$, and the latter spectrum is well-known not to be a normed $\rm C_2$-spectrum (see for example \cite[Example 4.12]{hill2014equivariant}).
\end{example}

\begin{remark}
	The constructions of this section have obvious analogs in the unstable unpointed setting, which we omitted for brevity. In particular, we have a canonical equivalence of functors
	\[
	\H^\otimes\simeq \CW[w^{-1}]^\otimes\colon \Span(\FinGpd,\all,\fincov)\to \CAlg(\what\Cat_\infty),
	\] 
	and a Betti realization transformation 
	\[
	\rB\colon \H^\otimes \to \H^\otimes\circ e\colon \Span(\FEt_\R)\to \CAlg(\what\Cat{}_\infty^\mathrm{sift}).
	\]
\end{remark}

\begin{remark}\label{rmk:SH-top}
	For a topological space $X$ (or more generally an $\infty$-topos), write $\SH(X)$ for the $\infty$-category of sheaves of spectra over $X$. Then $X\mapsto \SH(X)$ is a sheaf of symmetric monoidal $\infty$-categories on $\Top$. Using Corollary~\ref{cor:automatic-norms}, it extends uniquely to a functor 
	\[
	\SH^\otimes\colon \Span(\Top,\all,\fincov)\to \CAlg(\what\Cat{}_\infty^\mathrm{sift}).
	\]
	Normed spectra in this setting are simply sheaves of $\E_\infty$-ring spectra (by Proposition~\ref{prop:fold-spans} and Corollary~\ref{cor:local-spans}).
	If $X$ is a scheme of finite type over $\C$, Ayoub constructed the Betti realization functor $\rB\colon\SH(X)\to \SH(X(\C))$ \cite{AyoubBetti}, natural in $X\in\Sch_\C^{\fp,\op}$. 
	Because the target is an étale sheaf in $X$, Corollary~\ref{cor:automatic-norms} implies that $\rB$ can be uniquely extended to a natural transformation on $\Span(\Sch_\C^\fp,\all,\fet)$.
\end{remark}

\begin{remark}
We expect that there is a common generalization of the $\rm C_2$-equivariant Betti realization over $\R$ and of the relative Betti realization of Remark~\ref{rmk:SH-top}.
Namely, if $X$ is a nice enough topological stack (e.g., an orbifold), there should be an $\infty$-category $\SH(X)$ such that:
\begin{itemize}
	\item if $X$ is a topological space, then $\SH(X)$ is the $\infty$-category of sheaves of spectra on $X$;
	\item if $X$ is the classifying stack of a compact Lie group $G$, then $\SH(X)$ is the $\infty$-category of genuine $G$-spectra.
\end{itemize}
These $\infty$-categories should assemble into a functor
\[
\SH^\otimes\colon \Span(\Top\scr S\mathrm{tk},\all,\fincov) \to\CAlg(\what\Cat{}_\infty^\mathrm{sift}),
\]
where ``$\fincov$'' is now the class of $0$-truncated finite covering maps.
On the other hand, we have the functor
\[
e\colon \Span(\Sch_\R^\fp,\all,\fet) \to \Span(\Top\scr S\mathrm{tk},\all,\fincov),\quad X\mapsto [X(\C)/\rm C_2].
\]
The general equivariant Betti realization should then take the form
\[
\rB\colon \SH^\otimes \to \SH^\otimes\circ e\colon \Span(\Sch_\R^\fp,\all,\fet) \to \CAlg(\what\Cat{}_\infty^\mathrm{sift}).
\]
\end{remark}

\section{Norms and localization}
\label{sec:localization}

Let $E$ be an $\E_\infty$-ring spectrum in $\SH(S)$, $L\in \Pic(\SH(S))$ an invertible spectrum, and $\alpha\colon L^{-1}\to E$ an element in the Picard-graded homotopy of $E$. It is then well-known that the spectrum $E[\alpha^{-1}]$, defined as the colimit of the sequence
\[
E \xrightarrow \alpha E\wedge L \xrightarrow{\alpha} E\wedge L^{\wedge 2} \to\dotsb,
\]
has a structure of $\E_\infty$-algebra under $E$ (see Example~\ref{ex:SH-localization}).
It is also well-known that the spectrum $E_\alpha^\comp$, defined as the limit of the sequence
\[
\dotsb \to E/\alpha^3 \to E/\alpha^2 \to E/\alpha,
\]
has a structure of $\E_\infty$-algebra under $E$. If $E$ is a normed spectrum, however, $E[\alpha^{-1}]$ and $E_\alpha^\comp$ need not be. In this section, we give necessary and sufficient conditions on $\alpha$ for $E[\alpha^{-1}]$ and $E_\alpha^\wedge$ to be normed spectra under $E$. We will then show that $E[1/n]$ and $E_n^\wedge$ are normed spectra for any integer $n$, and hence that the rationalization $E_\Q$ is a normed spectrum.

\subsection{Inverting Picard-graded elements}

Let $\scr C$ be a symmetric monoidal $\infty$-category, $L\in\Pic(\scr C)$ an invertible object, and $\alpha\colon \1\to L$ a morphism.
For every $E\in\scr C$, we will denote by $\alpha\colon E\to E\otimes L$ the map $\id_E\otimes\alpha$, and we say that $E$ is \emph{$\alpha$-periodic}\footnote{In \cite[Definition 7.2.3.15]{HA} or \cite[Definition 2.2]{hill2014equivariant}, the term ``$\alpha$-local'' is used instead, but this conflicts with the usual meaning of ``$p$-local''. See also \cite[\sect 3]{HoyoisCdh} for a discussion of $\alpha$-periodicity when $L$ is not necessarily invertible.} if this map is an equivalence. We denote by $\L_\alpha \scr C \subset \scr C$ the full subcategory of $\alpha$-periodic objects. A map $E\to E'$ is called an \emph{$\alpha$-periodization} of $E$ if it is an initial map to an $\alpha$-periodic object, in which case we write $E'=\L_\alpha E$.

Since $(\ph)\otimes L$ is an equivalence of $\infty$-categories, $\alpha$-periodic objects are closed under arbitrary limits and colimits. Note also that an object $E\in\scr C$ is $\alpha$-periodic if and only if it is local with respect to the maps $X\otimes L^{-1}\to X$ for all $X\in\scr C$. If $\scr C$ is presentable, this class of maps is generated under colimits by a set, and it follows from the adjoint functor theorem that the inclusion $\L_\alpha \scr C \subset\scr C$ admits left and right adjoints; in particular, every object admits an $\alpha$-periodization.
If moreover the tensor product in $\scr C$ preserves colimits in each variable, then by \cite[Proposition 2.2.1.9]{HA}, the $\infty$-category $\L_\alpha \scr C$ acquires a symmetric monoidal structure such that the functor $\L_\alpha\colon \scr C\to \L_\alpha\scr C$ is symmetric monoidal.

If the colimit of the sequence
\[
E\xrightarrow\alpha E\otimes L \xrightarrow\alpha E\otimes L^{\otimes 2} \to \dotsb
\]
exists, we denote it by $E[\alpha^{-1}]$ or $E[1/\alpha]$.
It is clear that every map from $E$ to an $\alpha$-periodic object factors uniquely through $E[\alpha^{-1}]$.
However, $E[\alpha^{-1}]$ is not $\alpha$-periodic in general (see Remark~\ref{rmk:not-local}).

\begin{lemma}\label{lem:Pic-localization}
	Let $\scr C$ be a symmetric monoidal $\infty$-category, $L\in\Pic(\scr C)$ an invertible object, $\alpha\colon \1\to L$ a morphism, and $E\in\scr C$ an object such that $E[\alpha^{-1}]$ exists.
	Suppose that there exists a $1$-category $\scr D$ and a conservative functor $\pi\colon \scr C\to \scr D$ that preserves sequential colimits (for example, $\scr C$ is stable and compactly generated). Then the canonical map $E\to E[\alpha^{-1}]$ exhibits $E[\alpha^{-1}]$ as the $\alpha$-periodization of $E$. In particular, $\L_\alpha E\simeq E[\alpha^{-1}]$.
\end{lemma}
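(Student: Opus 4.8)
The strategy is to reduce the statement to the analogous fact in a $1$-category, where it is elementary, using the conservative functor $\pi$. First I would fix notation: write $f\colon E\to E[\alpha^{-1}]$ for the canonical map into the sequential colimit. The two things to check are that $E[\alpha^{-1}]$ is $\alpha$-local and that $f$ is initial among maps from $E$ to $\alpha$-local objects; the second assertion was already observed just before the lemma (every map from $E$ to an $\alpha$-local object factors uniquely through $E[\alpha^{-1}]$), so the real content is $\alpha$-locality of $E[\alpha^{-1}]$.

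To prove $\alpha$-locality, I would show that the map $\alpha\colon E[\alpha^{-1}]\to E[\alpha^{-1}]\otimes L$ is an equivalence. Since $\pi$ is conservative, it suffices to show that $\pi(\alpha)$ is an equivalence in $\scr D$. Now I use that $\pi$ preserves sequential colimits and that tensoring with the invertible object $L$ is an equivalence of $\infty$-categories, hence commutes with all colimits: thus $\pi(E[\alpha^{-1}])$ is the colimit in $\scr D$ of the sequence $\pi(E)\xrightarrow{\pi(\alpha)}\pi(E\otimes L)\xrightarrow{\pi(\alpha)}\pi(E\otimes L^{\otimes 2})\to\dotsb$, and $\pi(E[\alpha^{-1}]\otimes L)$ is the colimit of the same sequence shifted by one (tensor the whole diagram with $L$, then apply $\pi$). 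The map $\pi(\alpha)\colon \pi(E[\alpha^{-1}])\to\pi(E[\alpha^{-1}]\otimes L)$ is identified with the canonical comparison between a sequential colimit and its shift, which is an isomorphism in any $1$-category (this is the standard fact that $\colim_n X_n\simeq\colim_n X_{n+1}$ for a sequence indexed by $\mathbb N$). Hence $\pi(\alpha)$ is an equivalence, and therefore so is $\alpha$.

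With $\alpha$-locality established, the universal property is immediate: given any $\alpha$-local $F$ and a map $E\to F$, it factors uniquely through $E[\alpha^{-1}]$ by the remark preceding the lemma, so $f$ is an $\alpha$-localization and $\L_\alpha E\simeq E[\alpha^{-1}]$ by uniqueness of localizations. For the parenthetical example: if $\scr C$ is stable and compactly generated, one may take $\scr D$ to be the $1$-category of abelian groups indexed by a generating set of compact objects, i.e. $\pi=\bigoplus_{X}\pi_0\Map(X,\ph)$ or more simply the functor $E\mapsto (\pi_0\Map(X,E))_X$ to a product of sets/abelian groups; this is conservative because the $X$ generate, and it preserves sequential (indeed filtered) colimits because each $X$ is compact.

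\textbf{Main obstacle.} The only subtle point is the bookkeeping in the second paragraph: one must check that the map $\pi(\alpha)\colon \pi(E[\alpha^{-1}])\to \pi(E[\alpha^{-1}])\otimes \pi(L)$—or rather its target expressed as a colimit—is genuinely the cofinality comparison for the shift functor on $\mathbb N$, and not some other map. This requires being careful that tensoring the colimit diagram with $L$ and then applying $\pi$ yields the shifted diagram \emph{together with the correct transition maps} (namely $\pi(\alpha)$ again), so that the induced map on colimits is the identity-up-to-reindexing. Once this identification is made, everything else is formal.
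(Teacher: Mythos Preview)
Your overall strategy matches the paper's: reduce $\alpha$-locality of $E[\alpha^{-1}]$ to showing that $\pi(\alpha)$ is an isomorphism in $\scr D$, using that $\pi$ is conservative and preserves sequential colimits. However, there is a genuine gap precisely at the step you flag as the ``main obstacle'', and it is not mere bookkeeping.

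After tensoring the defining sequence of $E[\alpha^{-1}]$ on the right by $L$, the resulting sequence has objects $E\otimes L^{\otimes n}\otimes L \simeq E\otimes L^{\otimes n+1}$, but its transition maps are $\alpha\otimes\id_L$, not $\alpha$. Under the canonical identification of objects, the map $\alpha\otimes\id_L$ inserts the new $L$-factor in the penultimate position, whereas $\alpha$ inserts it in the last position; the two differ by the transposition of the last two tensor factors. For an invertible object $L$, this transposition corresponds to an element of $\Aut(\1)$ that need not be trivial. So the sequence computing $\pi(E[\alpha^{-1}]\otimes L)$ is \emph{not} the shifted sequence, and $\pi(\alpha)$ is not the cofinality comparison map.

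The paper does not attempt this identification. Instead, it writes down the ladder in $\scr D$ with top transition maps $\pi(\alpha)$, bottom transition maps $\pi(\alpha\otimes L)$, and vertical maps $\pi(\alpha)$, and exhibits an explicit inverse of the induced map of ind-objects via the diagonal maps $\pi(\alpha)\colon\pi(E\otimes L^{\otimes n}\otimes L)\to\pi(E\otimes L^{\otimes n+2})$. Two triangles must then be checked: the upper one is immediate, but the lower one requires that the \emph{cyclic} permutation of $L^{\otimes 3}$ be homotopic to the identity. This is a nontrivial coherence fact about invertible objects \cite[Lemma 4.17]{dugger2014coherence}, and it is the key input your argument is missing.
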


\begin{proof}
	We only have to show that $E[\alpha^{-1}]$ is $\alpha$-periodic, i.e., that the map $\alpha\colon E[\alpha^{-1}] \to E[\alpha^{-1}]\otimes L$ is an equivalence. This map is the colimit of the sequence of maps
	\begin{tikzmath}
		\diagram{
		E & E\otimes L & E \otimes L^{\otimes 2} & \dotsb \\
		E\otimes L & E\otimes L\otimes L & E \otimes L^{\otimes 2}\otimes L & \dotsb\rlap, \\
		};
		\arrows (11-) edge node[above]{$\alpha$} (-12) (12-) edge node[above]{$\alpha$} (-13) (13-) edge node[above]{$\alpha$} (-14)
		(11) edge node[left]{$\alpha$} (21)
		(12) edge node[left]{$\alpha$} (22)
		(13) edge node[left]{$\alpha$} (23)
		(21-) edge node[above]{$\alpha\otimes L$} (-22) (22-) edge node[above]{$\alpha\otimes L$} (-23) (23-) edge node[above]{$\alpha\otimes L$} (-24);
	\end{tikzmath}
	where the squares commute by naturality. Since $\pi$ is conservative and preserves sequential colimits, it suffices to show that the induced diagram
	\begin{tikzmath}
		\diagram{
		\pi(E) & \pi(E\otimes L) & \pi(E \otimes L^{\otimes 2}) & \dotsb \\
		\pi(E\otimes L) & \pi(E\otimes L\otimes L) & \pi(E \otimes L^{\otimes 2}\otimes L) & \dotsb \\
		};
		\arrows (11-) edge node[above]{$\alpha$} (-12) (12-) edge node[above]{$\alpha$} (-13) (13-) edge node[above]{$\alpha$} (-14)
		(11) edge node[left]{$\alpha$} (21)
		(12) edge node[left]{$\alpha$} (22)
		(13) edge node[left]{$\alpha$} (23)
		(21-) edge node[above]{$\alpha\otimes L$} (-22) (22-) edge node[above]{$\alpha\otimes L$} (-23) (23-) edge node[above]{$\alpha\otimes L$} (-24);
	\end{tikzmath}
	is an isomorphism of ind-objects in $\scr D$. 
	We claim that the diagonal maps
	\begin{tikzmath}
		\diagram{
		\pi(E\otimes L^{\otimes n}) & \pi(E\otimes L^{\otimes n+2}) \\
		\pi(E\otimes L^{\otimes n}\otimes L) & \pi(E\otimes L^{\otimes n+2}\otimes L) \\
		};
		\arrows (11-) edge node[above]{$\alpha^2$} (-12) 
		(11) edge node[left]{$\alpha$} (21)
		(12) edge node[left]{$\alpha$} (22)
		(21-) edge node[above]{$\alpha^2\otimes L$} (-22)
		(21) edge[dashed] node[above left]{$\alpha$} (12);
	\end{tikzmath}
	provide the inverse. Since $\scr D$ is a $1$-category, it suffices to show that both triangles commute.
	The commutativity of the upper triangle is obvious.
	Since $L$ is invertible, cyclic permutations of $L^{\otimes 3}$ are homotopic to the identity \cite[Lemma 4.17]{dugger2014coherence}. It follows that the lower triangle commutes as well.
\end{proof}

\begin{remark}
	In Lemma~\ref{lem:Pic-localization}, if we only assume that $\pi$ preserves $\lambda$-sequential colimits for some infinite ordinal $\lambda$, then the $\alpha$-periodization of $E$ can be computed by a $\lambda$-transfinite version of $E[\alpha^{-1}]$ (the proof is exactly the same). In particular, this applies whenever $\scr C$ is stable and presentable.
\end{remark}

\begin{remark}\label{rmk:not-local}
	The existence of $\pi$ in Lemma~\ref{lem:Pic-localization} is crucial for the validity of the result. 
	The $\infty$-category of small stable $\infty$-categories is an example of a compactly generated symmetric monoidal semiadditive $\infty$-category where $\1[1/n]$ is not $n$-periodic, for any integer $n\geq 2$. In fact, even transfinite iterations of the construction $E\mapsto E[1/n]$ do not reach the $n$-periodization of $\1$, which is zero.
\end{remark}

\begin{example}\label{ex:SH-localization}
	Let $E\in\CAlg(\SH(S))$. Then the symmetric monoidal $\infty$-category $\Mod_E(\SH(S))$ of $E$-modules is stable and compactly generated, so Lemma~\ref{lem:Pic-localization} applies. If $\alpha\colon E\to L$ is an $E$-module map for some $L\in\Pic(\Mod_E)$, it follows that the symmetric monoidal $\alpha$-periodization functor is given by
	\[
	\L_\alpha\colon \Mod_E \to \L_\alpha \Mod_E, \quad M\mapsto M[\alpha^{-1}].
	\]
	In particular, $E[\alpha^{-1}]$ is an $\E_\infty$-ring spectrum under $E$.
\end{example}

\begin{remark}\label{rmk:SH-localization}
	In the setting of Example~\ref{ex:SH-localization}, let $f\colon S'\to S$ be an arbitrary morphism. Then the functors $f^*\colon \Mod_{E}\to \Mod_{f^*(E)}$ and $f_*\colon \Mod_{f^*(E)}\to \Mod_E$ commute with the $\alpha$-periodization functor, because they preserve colimits and because of the projection formula
	\[
	f_*(M)\wedge_{E}L \simeq f_*(M\wedge_{f^*(E)}f^*(L))
	\]
	(which holds by dualizability of $L$). Consequently, they preserve $\alpha$-periodic objects and $\L_\alpha$-equivalences. If $f$ is smooth, the same observations apply to $f_\sharp\colon \Mod_{f^*(E)}\to \Mod_{E}$.
\end{remark}

Note that Lemma~\ref{lem:Pic-localization} applies whenever $\scr C$ is a symmetric monoidal $1$-category. An important example is the following. Let $\Gamma$ be a Picard groupoid, let $R_\star$ be a $\Gamma$-graded commutative ring (i.e., a commutative algebra for the Day convolution in $\Ab^\Gamma$), and let $\alpha\in R_\gamma$ for some $\gamma\in\Gamma$. By the Yoneda lemma, we can regard $\alpha$ as a morphism of $R_\star$-modules $R_\star\otimes\gamma\to R_\star$. Since $\gamma$ is invertible, Lemma~\ref{lem:Pic-localization} implies that the symmetric monoidal $\alpha$-periodization functor is given by
\[
\L_\alpha\colon \Mod_{R_\star}(\Ab^\Gamma) \to \L_\alpha\Mod_{R_\star}(\Ab^\Gamma),\quad M_\star\mapsto M_\star[\alpha^{-1}].
\]
In particular, $R_\star[\alpha^{-1}]$ is a $\Gamma$-graded commutative ring under $R_\star$.
We will say that $\alpha$ is a \emph{unit} if the canonical map $R_\star\to R_\star[\alpha^{-1}]$ is an isomorphism. Equivalently, $\alpha$ is a unit if there exists $\beta\in R_{\gamma^{-1}}$ such that $\alpha\beta=1$.

Let $E\in\CAlg(\SH(S))$. If $M$ is an $E$-module, we will write $\pi_\star(M)$ for the Picard-graded homotopy groups of $M$, i.e., the functor
\[
\pi_\star(M)\colon \Pic(\Mod_E) \to \Ab, \quad L\mapsto \pi_L(M) = \pi_0\Map_E(L,M). 
\]
The functor $M\mapsto\pi_\star(M)$ is right-lax symmetric monoidal. In particular, if $M$ is an incoherent commutative $E$-algebra, then $\pi_\star(M)$ is a $\Pic(\Mod_E)$-graded commutative ring.

\subsection{Inverting elements in normed spectra}

We now turn to the question of periodizing normed spectra.
Let $\scr C\subset_\fet\Sch_S$, let $E\in\NAlg_\scr C(\SH)$, and let $\alpha\colon E\to L$ be an $E$-module map for some $L\in\Pic(\Mod_E)$. We say that a normed $E$-module is \emph{$\alpha$-periodic} if its underlying $E$-module is $\alpha$-periodic, and we denote by
\[
\L_\alpha^\otimes \NAlg_\scr C(\Mod_E) \subset \NAlg_\scr C(\Mod_E)
\]
the full subcategory of $\alpha$-periodic normed $E$-modules. If $R$ is a normed $E$-module, we will denote by $R\to \L_\alpha^\otimes R$ an initial map to an $\alpha$-periodic normed $E$-module, if it exists.

\begin{proposition} \label{prop:localization-general}
Let $\scr C \subset_\fet \Sch_S$, let $E \in \NAlg_{\scr C}(\SH)$, and let $\alpha \colon E\to L$ be an $E$-module map for some $L\in\Pic(\Mod_E)$. 
\begin{enumerate}
	\item If $\scr C\subset\Sm_S$, the inclusion $\L_\alpha^\otimes\NAlg_\scr C(\Mod_E)\subset \NAlg_\scr C(\Mod_E)$ has a left adjoint $\L_\alpha^\otimes$.
	\item The following conditions are equivalent:
	\begin{enumerate}
		\item $\L_\alpha^\otimes E$ exists and for every $X\in\scr C$ the canonical $E_X$-module map
\[
E_X[\alpha^{-1}_X] \to (\L_\alpha^\otimes E)_X
\]
is an equivalence.
		\item The inclusion $\L_\alpha^\otimes \NAlg_\scr C(\Mod_E) \subset \NAlg_\scr C(\Mod_E)$ has a left adjoint $\L_\alpha^\otimes$, and for every normed $E$-module $R$ and $X\in\scr C$, the canonical $R_X$-module map
\[
R_X[\alpha^{-1}_X] \to (\L_\alpha^\otimes R)_X
\]
is an equivalence.
\item For every finite étale morphism $f\colon X \to Y$ in $\scr C$, the element $\nu_f(\alpha_X)$ is a unit in the $\Pic(\Mod_{E_Y})$-graded commutative ring $\pi_\star(E_Y)[\alpha^{-1}_Y]$. (Here, $\nu_f$ is the multiplicative transfer defined in \sect\ref{subsec:coh-theories-with-norms}.)
	\end{enumerate}
\end{enumerate}
\end{proposition}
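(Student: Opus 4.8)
The plan is to reduce everything to the formal localization machinery of Corollary~\ref{cor:normed-nullification} (equivalently Propositions~\ref{prop:normed-localization} and~\ref{prop:compatible-with-norms}) applied to the normed $\infty$-category $\Mod_E^\otimes\colon \Span(\scr C,\all,\fet)\to\Cat_\infty$ from Proposition~\ref{prop:categorical-props}(4), using that $\NAlg_\scr C(\Mod_E)\simeq\NAlg_\scr C(\SH)_{E/}$. For (1), I would take $W(X)\subset\Fun(\Delta^1,\Mod_{E_X})$ to be the single map $\alpha_X\colon E_X\to E_X\otimes L_X$ (or, more precisely, the class $\{M\otimes L_X^{-1}\to M\}_{M\in\Mod_{E_X}}$, whose strong saturation is the class of $\alpha_X$-local equivalences by the discussion preceding Lemma~\ref{lem:Pic-localization}); then $\overline W(X)$ is the class of $\L_{\alpha_X}$-equivalences, and checking conditions (1)--(4) of Proposition~\ref{prop:normed-localization} is what gives the adjoint $\L_\alpha^\otimes$. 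Conditions (1), (2), and (4) of Proposition~\ref{prop:normed-localization} are automatic here: $f^*$ and $\id_M\otimes(\ph)$ are symmetric monoidal and send $\alpha_X$ to $\alpha$-type maps, and $h_\sharp$ preserves the generating maps $M\otimes L^{-1}\to M$ since it is an $\Mod_{E_X}$-module functor by the projection formula. The content is entirely in condition (3): that $p_\otimes$ sends $\alpha_X$-local equivalences to $\alpha_Y$-local equivalences for surjective finite étale $p\colon X\to Y$.

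\textbf{The key computation.} This condition is exactly where (2)(c) enters, and I expect it to be the main obstacle. The point is that $p_\otimes$ is \emph{not} $\Mod_{E_Y}$-linear, so one cannot argue formally; instead one uses the explicit form of the distributivity law (Corollary~\ref{cor:distrib-explicit}) together with the polynomial-functor estimate (Remark~\ref{rmk:polynomial}, Proposition~\ref{prop:polynomial}). Concretely, I would show that $p_\otimes$ of the $\alpha_X$-localization map $E_X\to E_X[\alpha_X^{-1}]$ becomes an equivalence after applying $(\ph)[\nu_p(\alpha_Y)^{-1}]$, and that $\nu_p(\alpha_Y)$ is obtained from the multiplicative transfer of $\alpha$ along $p$ via $\mu_p\colon p_\otimes E_X\to E_Y$. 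The relation ``$p_\otimes$ applied to an iterated $\alpha_X$-multiplication, composed with $\mu_p$, equals (up to cross-terms living in $\Pic$-degree strictly between) a $\nu_p(\alpha_Y)$-multiplication'' is governed by the binary distributivity formula
\[
p_\otimes(E\vee F)\simeq p_\otimes(E)\vee c_\sharp(q_{l\otimes}e_l^*(E)\wedge q_{r\otimes}e_r^*(F))\vee p_\otimes(F),
\]
and an induction on the degree $n$ of $p$ using that $q_l,q_r$ have degree $<n$. So: (2)(c) $\Rightarrow$ $p_\otimes$ preserves $\alpha$-local equivalences after a base change; combined with the fact that $\nu_p(\alpha_Y)$ is already a unit in $\pi_\star(E_Y)[\alpha_Y^{-1}]$, one gets that $p_\otimes$ preserves $\alpha_X$-local equivalences outright, giving condition (3) of Proposition~\ref{prop:normed-localization}, and hence (2)(c) $\Rightarrow$ (2)(b). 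The implication (2)(b) $\Rightarrow$ (2)(a) is the special case $R=E$.

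\textbf{The remaining implications.} For (2)(a) $\Rightarrow$ (2)(c): since $\L_\alpha^\otimes E$ exists and is computed sectionwise by $X\mapsto E_X[\alpha_X^{-1}]$, it is in particular a normed spectrum, so by Corollary~\ref{cor:normed-spectrum-tambara-functor} its Picard-graded homotopy carries multiplicative transfers $\nu_f$; applying $\nu_f$ to the unit $\alpha_X/\alpha_X=1$ in $\pi_\star(E_X[\alpha_X^{-1}])$ and using compatibility of $\nu_f$ with the ring structure (Proposition~\ref{prop:tambara}, or just the right-lax monoidality of $\pi_\star$) shows $\nu_f(\alpha_X)$ is a unit in $\pi_\star(E_Y[\alpha_Y^{-1}])=\pi_\star(E_Y)[\alpha_Y^{-1}]$, where the last equality is Lemma~\ref{lem:Pic-localization} applied to $\Mod_{E_Y}$. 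One subtlety to be careful about: one must check that inverting $\alpha_Y$ in $\pi_\star(E_Y)$ and then inverting the image of $\nu_f(\alpha_X)$ is the same as inverting $\nu_f(\alpha_X)$ directly in $\pi_\star(E_Y[\alpha_Y^{-1}])$ — this is where the hypothesis is phrased as ``$\nu_f(\alpha_X)$ is a unit in $\pi_\star(E_Y)[\alpha_Y^{-1}]$'' rather than in $\pi_\star(E_Y)$. Part (1) then follows from the above analysis of condition (3) only for the generating maps $\emptyset$-style, but more directly: (1) is the unconditional existence of the left adjoint when $\scr C\subset\Sm_S$, which follows from presentability of $\NAlg_\scr C(\Mod_E)$ (Proposition~\ref{prop:categorical-props}(1)) together with the fact that $\alpha$-local normed $E$-modules are an accessible localization, since $\alpha$-locality is detected by the accessible forgetful functor $\NAlg_\scr C(\Mod_E)\to\Mod_{E_S}$ followed by the accessible inclusion $\L_{\alpha_S}\Mod_{E_S}\subset\Mod_{E_S}$ — but one must additionally observe the subcategory is closed under colimits, which holds because the forgetful functor to $\Mod_{E_S}$ preserves sifted colimits and $\alpha$-local objects are closed under all colimits, plus closure under the relevant coproducts. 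I would double-check that (1) does not secretly need the equivalence of (2); re-reading, I believe (1) is genuinely unconditional given presentability, and it is (2)(a)/(2)(b) that pin down the \emph{formula} for the localization.
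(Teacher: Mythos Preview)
Your overall architecture is correct and matches the paper: part (1) via presentability and the adjoint functor theorem, (a) $\Rightarrow$ (c) via the fact that multiplicative transfers on Picard-graded homotopy preserve units, (b) $\Rightarrow$ (a) trivially, and (c) $\Rightarrow$ (b) by showing that norm functors preserve $\alpha$-local equivalences and then invoking a section-level localization result. The paper packages the last step via Corollary~\ref{cor:localization-sections} applied to $\Mod_E^\otimes$ rather than Proposition~\ref{prop:normed-localization}, but that is a minor difference.

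However, your ``key computation'' for (c) $\Rightarrow$ (b) is substantially overcomplicated, and the machinery you propose (binary distributivity via Corollary~\ref{cor:distrib-explicit}, polynomial-degree induction on $p$, tracking cross-terms in intermediate Picard degrees) is neither needed nor used in the paper. The observation you are missing is that the norm functor $p_\otimes\colon \Mod_{E_X}\to \Mod_{E_Y}$ is \emph{symmetric monoidal} and \emph{preserves filtered colimits}. Hence one computes directly
\[
p_\otimes\bigl(M[\alpha_X^{-1}]\bigr)
= p_\otimes\Bigl(\colim_n\, M\otimes L_X^{\otimes n}\Bigr)
\simeq \colim_n\, p_\otimes(M)\otimes p_\otimes(L_X)^{\otimes n}
= p_\otimes(M)\bigl[\nu_p(\alpha_X)^{-1}\bigr],
\]
the transition maps being $p_\otimes(\alpha_X)=\nu_p(\alpha_X)$ by definition of the norm on $\Mod_E^\otimes$. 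Condition (c) then says precisely that $\nu_p(\alpha_X)$ is a unit in $\pi_\star(E_Y)[\alpha_Y^{-1}]$, so the map $p_\otimes(M)\to p_\otimes(M)[\nu_p(\alpha_X)^{-1}]$ is an $\alpha_Y$-local equivalence; equivalently, $p_\otimes$ commutes with $\alpha$-localization. It follows at once that $p_\otimes$ preserves $\alpha$-local equivalences: if $\phi[\alpha_X^{-1}]$ is an equivalence, then so is $p_\otimes(\phi[\alpha_X^{-1}])\simeq p_\otimes(\phi)[\alpha_Y^{-1}]$. That is the entire content of the step; distributivity and degree induction do not enter. (The distributivity formula concerns \emph{coproducts}, whereas $M[\alpha^{-1}]$ is a \emph{sequential} colimit, already preserved by $p_\otimes$.)
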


\begin{proof}
(1) This is an application of the adjoint functor theorem.
Since the subcategory $\L_\alpha \Mod_E \subset \Mod_E$ is reflective and stable under colimits, it is accessible by \cite[Proposition 5.5.1.2]{HTT}. Since $\NAlg_{\scr C}(\SH)$ is accessible and $\NAlg_{\scr C}(\Mod_E) \simeq \NAlg_{\scr C}(\SH)_{E/}$ by Proposition \ref{prop:categorical-props}(1,4), we find that $\NAlg_{\scr C}(\Mod_E)$ is accessible \cite[Corollary 5.4.5.16]{HTT}. By definition, we have a pullback square
\begin{tikzmath}
	\diagram{\L_\alpha^\otimes \NAlg_\scr C(\Mod_E) & \NAlg_\scr C(\Mod_E) \\ \L_\alpha \Mod_E & \Mod_E\rlap, \\};
	\arrows (11-) edge[c->] (-12) (21-) edge[c->] (-22) (11) edge (21) (12) edge (22);
\end{tikzmath}
where the bottom horizontal map preserves colimits and the right vertical map preserves sifted colimits (since the composition $\NAlg_{\scr C}(\SH)_{E/}\to \NAlg_{\scr C}(\SH)\to \SH(S)$ does by Proposition~\ref{prop:categorical-props}(2)).
It follows from \cite[Lemma 5.4.5.5 and Proposition 5.4.6.6]{HTT} that $\L_\alpha^\otimes \NAlg_\scr C(\Mod_E)$ is accessible and has sifted colimits, and that the inclusion $\L_\alpha^\otimes \NAlg_{\scr C}(\Mod_E) \into \NAlg_{\scr C}(\Mod_E)$ preserves sifted colimits. Clearly, a tensor product of $\alpha$-periodic $E$-modules is $\alpha$-periodic. It follows from Remark \ref{rmk:coprod-nalg} and Lemma~\ref{lem:sifted+coprod} that small colimits of $\alpha$-periodic normed $E$-modules are $\alpha$-periodic; in particular $\L_\alpha^\otimes \NAlg_{\scr C}(\Mod_E)$ is presentable. Since limits of normed $E$-modules are computed in $E$-modules by Proposition~\ref{prop:categorical-props}(2) (using the assumption $\scr C\subset\Sm_S$), the functor $\L_\alpha^\otimes \NAlg_{\scr C}(\Mod_E) \into \NAlg_{\scr C}(\Mod_E)$ preserves limits. Hence we may apply the adjoint functor theorem \cite[Corollary 5.5.2.9]{HTT} to obtain the left adjoint $\L_\alpha^\otimes$.

(2) We will prove (a) $\Rightarrow$ (c) $\Rightarrow$ (b); it is obvious that (b) $\Rightarrow$ (a).
 Suppose that $\L_\alpha^\otimes E$ exists.
For every finite étale morphism $f\colon X \to Y$ in $\scr C$, we have a commutative square of $\Pic(\Mod_{E_X})$-graded rings
\begin{tikzmath}
	\diagram{
	\pi_\star(E_X) & \pi_\star(\L_\alpha^\otimes E_X) \\
	\pi_{f_\otimes(\star)}(E_Y) & \pi_{f_\otimes(\star)}(\L_\alpha^\otimes E_Y)\rlap,\\
	};
	\arrows (11-) edge (-12) (11) edge node[left]{$\nu_f$} (21) (21-) edge (-22) (12) edge node[right]{$\nu_f$} (22);
\end{tikzmath}
where the horizontal maps are ring homomorphisms and the vertical maps are multiplicative.
The element $\alpha_X \in \pi_{L^{-1}}(E_X)$ is mapped to a unit in $\pi_\star(\L_\alpha^\otimes E_X)$, and since $\nu_f$ preserves units, it follows that $\nu_f(\alpha_X)$ is mapped to a unit in $\pi_\star(\L_\alpha^\otimes E_Y)$.
Under assumption (a), the canonical map of $\Pic(\Mod_{E_Y})$-graded rings
\[
\pi_\star(E_Y)[\alpha^{-1}_Y]\to \pi_\star(\L_\alpha^\otimes E_Y)
\]
is an isomorphism, since $\pi_\star$ preserves filtered colimits. Hence $\nu_f(\alpha_X)$ is a unit in $\pi_\star(E_Y)[\alpha^{-1}_Y]$. This proves (a) $\Rightarrow$ (c).

To prove (c) $\Rightarrow$ (b), we will apply Corollary~\ref{cor:localization-sections} to the cocartesian fibration classified by 
\[\Mod_{E}^\otimes\colon \Span(\scr C, \all, \fet)\to\what\Cat{}_\infty\]
 (see Proposition \ref{prop:categorical-props}(4)) with the localization functors $\Mod_{E_X}(\SH(X)) \to \Mod_{E_X}(\SH(X))$, $M \mapsto M[\alpha^{-1}_X]$ (see Example~\ref{ex:SH-localization}). Since these localization functors are given by filtered colimits, they commute with pullback functors (i.e., functors of the form $f^*$). In particular, pullback functors preserve $\L_\alpha$-equivalences. Since norm functors preserve filtered colimits, we similarly have $f_\otimes (M[\alpha^{-1}_X]) \simeq f_\otimes(M)[\nu_f(\alpha_Y)^{-1}]$ for every finite étale map $f\colon X\to Y$ in $\scr C$, and assumption (c) implies that the $E_Y$-module map $f_\otimes(M)\to f_\otimes(M)[\nu_f(\alpha_Y)^{-1}]$ is an $\L_\alpha$-equivalence. Hence, norm functors also preserve $\L_\alpha$-equivalences and the assumptions of Corollary~\ref{cor:localization-sections} are satisfied. We deduce that the full subcategory of $\Sect(\Mod_{E}^\otimes)$ consisting of the pointwise $\alpha$-periodic sections is reflective, with localization functor $F\colon \Sect(\Mod_{E}^\otimes) \to \Sect(\Mod_{E}^\otimes)$ given pointwise by $M\mapsto M[\alpha^{-1}_X]$. It is then clear that $F$ preserves the subcategory $\NAlg_{\scr C}(\Mod_{E})\subset\Sect(\Mod_{E}^\otimes)$, hence that $\L_\alpha^\otimes$ exists and has the claimed form.
\end{proof}

\begin{remark}
	If $\alpha \in [\1, E] = \pi_{0,0}(E)$, then condition (c) in Proposition~\ref{prop:localization-general}(2) reduces to $\nu_f(\alpha_X) \in (\pi_{0,0}(E_Y)[\alpha^{-1}_Y])^\times$, i.e., there is no need to consider Picard-graded homotopy groups.
\end{remark}

In the above discussion of $\alpha$-periodic objects and $\alpha$-periodization, one can replace $\alpha\colon \1\to L$ by an arbitrary family $\alpha=\{\alpha_i\colon \1\to L_i\}_{i\in I}$ of such elements. The construction $E[\alpha^{-1}]$ becomes a filtered colimit indexed by the poset of maps $I\to\bb N$ with finitely many nonzero values. An obvious modification of Proposition~\ref{prop:localization-general} holds in this generalized setting: one can either repeat the proof or reduce the general case to the single-element case.

\begin{proposition}\label{prop:E[1/n]}
Let $\scr C \subset_\fet \Sch_S$, let $E \in \NAlg_{\scr C}(\SH)$, and let $\alpha\subset \bb N$ be a set of nonnegative integers.
Then $\L_\alpha^\otimes E\in \NAlg_{\scr C}(\SH)$ exists and the canonical $E$-module map 
\[
E[\alpha^{-1}]\to \L_\alpha^\otimes E
\]
is an equivalence. 
In particular, the rationalization $E_\Q$ is a normed spectrum over $\scr C$.
\end{proposition}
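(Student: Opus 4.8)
The plan is to invoke Proposition~\ref{prop:localization-general}(2), for which we must verify condition (c): for every finite étale morphism $f\colon X\to Y$ in $\scr C$ and every $n\in\alpha$, the element $\nu_f(n_X)$ is a unit in $\pi_\star(E_Y)[\alpha^{-1}_Y] = \pi_{0,0}(E_Y)[\alpha^{-1}_Y]$ (since $n$ lives in degree $0$, no Picard grading is needed). So first I would analyze the multiplicative transfer $\nu_f$ applied to an integer. The key point is that $n_X\colon \1_X\to \1_X\to E_X$ is the image of $n\in\Z\simeq\pi_0(\1)$ under the unit map, so it suffices to understand $\nu_f$ on $n\cdot 1\in \GW$-type data — more precisely, we should compute $\nu_f(n)$ in $\pi_{0,0}(E_Y)$, or rather it is enough to find a universal formula valid already at the level of the sphere. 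Since $E$ is a normed spectrum, $\nu_f$ is compatible with the ring map $[\1_S,\1_S]\to\pi_{0,0}(E_S)$ and its base changes, so $\nu_f(n_X)$ is the image of $\nu_f(n)\in[\1_Y,\1_Y]$ under $[\1_Y,\1_Y]\to\pi_{0,0}(E_Y)$.

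The crux is therefore: what is $\nu_f(n)$, and why does it become a unit after inverting $\alpha$? The natural approach is to use the distributivity/addition formula. Writing $n = 1+1+\dots+1$ ($n$ summands) in the additive structure and using that $\nu_f$ is multiplicative while $f_\otimes$ distributes over sums (Corollary~\ref{cor:distrib-explicit}), one sees that $\nu_f(n)$ is a sum whose terms are governed by $f$ of various degrees; concretely, if $f$ has degree $d$ over a clopen piece, the leading term contributes $n^d$ and the remaining ``cross terms'' are additive transfers $\tau_g$ of lower-degree norm terms. The essential observation is that $\nu_f(n) \equiv n^{d} \pmod{(\text{transfers of things divisible by }n)}$ is not quite what we need; rather the clean statement is that $\nu_f(n)$, after inverting $n$, becomes invertible because modulo the ideal generated by the transfers $\tau_g$ (which are themselves built additively and hence vanish once we can divide by the relevant integers) it reduces to $n^d$. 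In fact the cleanest route avoids the bookkeeping entirely: reduce to $\scr C = \FEt_S$ by Proposition~\ref{prop:categorical-props}(5)/(6)-type arguments if needed, or better, work with the universal example. Since the sphere spectrum is the initial normed spectrum (Example~\ref{ex:1}) and $E[1/\alpha] = E\wedge_{\1}\1[1/\alpha]$ in an appropriate sense, it suffices to prove the statement for $E = \1_S$, where we can use the Galois-theoretic identification: for $S$ a field of characteristic not dividing anything relevant, $\nu_f$ on $\GW$ is Rost's norm (Theorem~\ref{thm:GW-norms-comparison}), and Rost's norm of $n = n\langle 1\rangle$ is computed to be a unit in $\GW[1/n]$ by a direct calculation with trace forms. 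But to avoid characteristic restrictions, I would instead argue directly: $\nu_f$ sends the constant presheaf value $n$ to something whose image under the rank/augmentation map $\pi_{0,0}(E_Y)\to\pi_{0,0}(\HH\Z_Y)$-type invariant, or simply under any ring map to $\Z$ that exists after base change to a geometric point, equals $n^{\deg f}$; and one checks that $\nu_f(n) - n^{\deg f}$ is nilpotent after inverting $n$, or lies in an ideal on which inverting $n$ acts invertibly, using the polynomiality of $p_\otimes$ (Proposition~\ref{prop:polynomial}) to control the lower terms inductively.

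Concretely, here is the induction I would run. Fix $n\in\alpha$ and $f\colon X\to Y$ finite étale; decomposing $Y$ into clopen pieces we may assume $f$ has constant degree $d$. Argue by induction on $d$. For $d = 0$, $\nu_f(n) = 1$ is a unit. For $d\ge 1$: using the explicit distributivity of Corollary~\ref{cor:distrib-explicit} applied to $n_X = (n-1)_X + 1_X$ inside $\SH(X)$ — or more precisely applying the multiplicativity of $\nu_f$ together with the additive decomposition — expand $\nu_f(n)$ as $\nu_f(n-1) + (\text{a sum of transfers }\tau_{q_l,q_r}\text{ of products of norms of }n-1\text{ and }1\text{ along maps of degree }<d) + 1$. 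By the inductive hypothesis all the genuine $\nu$-terms of lower degree become units after inverting $n$, and since $E[1/n]$ is already a ring, it is enough to know that this whole expression is a unit; one checks via base change to geometric points $\bar y\to Y$ that the image in $\pi_{0,0}(E_{\bar y})[1/n]$ is $n^d$ up to a unit (the transfer terms contribute multiples of the relevant integers, which are invertible), and that an element of $\pi_{0,0}(E_Y)[1/n]$ whose restriction to every geometric point is invertible is itself invertible — this last step uses that $E_Y$-modules satisfy enough descent, or more simply that $\pi_{0,0}(E_Y)\to \prod_{\bar y}\pi_{0,0}(E_{\bar y})$ detects units on the nilradical-free quotient and that the difference is handled by a standard idempotent-lifting argument. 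I expect the main obstacle to be exactly this final descent/detection step phrased correctly for an arbitrary normed spectrum $E$; the cleanest fix, which I would adopt, is to sidestep it by reducing to $E = \1_S$ (hence to $\GW$ and Rost norms via Theorem~\ref{thm:GW-norms-comparison}, or to a direct computation with $\tau$ and $\nu$ on the sphere that shows $\nu_f(n)$ generates the unit ideal in $[\1_Y,\1_Y][1/n]$), and then deduce the general case by applying the ring map $[\1_Y,\1_Y]\to\pi_{0,0}(E_Y)$. The final sentence, that $E_\Q$ is normed, is then immediate by taking $\alpha$ to be the set of all primes (or all positive integers).
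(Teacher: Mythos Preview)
Your overall setup is correct and matches the paper: invoke Proposition~\ref{prop:localization-general}(2)(c), and use that $\nu_f(n_X)$ is the image of $f_\otimes(n)\in[\1_Y,\1_Y]$ under the unit map, reducing everything to showing that $f_\otimes(n)$ becomes a unit in $[\1_Y,\1_Y][1/n]$. The divergence is in how you establish this last fact, and here there is a genuine gap.

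None of your proposed methods closes. The route via Theorem~\ref{thm:GW-norms-comparison} only applies over regular semilocal schemes of characteristic $\neq 2$, so it cannot handle arbitrary $Y\in\scr C$. The inductive expansion via distributivity is not set up correctly: Corollary~\ref{cor:distrib-explicit} describes $p_\otimes(E\vee F)$ for objects, not the effect of $p_\otimes$ on a sum of endomorphisms of $\1$; translating the decomposition $n=(n-1)+1$ into that framework requires more than you have written, and the resulting cross terms are not obviously controlled by lower-degree norms in a way that makes the induction close. Finally, you yourself flag the ``detects units on geometric points'' step as problematic, and indeed there is no reason for $[\1_Y,\1_Y][1/n]\to\prod_{\bar y}[\1_{\bar y},\1_{\bar y}][1/n]$ to reflect units for a general qcqs scheme $Y$.

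The paper's approach sidesteps all of this by passing to equivariant homotopy theory. The functor $c_Y\colon\SH(\widehat\Pi_1^\et(Y))\to\SH(Y)$ of \S\ref{sub:galois} is symmetric monoidal, compatible with norms, and sends $\1$ to $\1$; hence $f_\otimes(n)\in[\1_Y,\1_Y]$ is the image of $\tilde f_\otimes(n)$ computed in $[\1,\1]_{\SH(\widehat\Pi_1^\et(Y))}$. By continuity (Lemma~\ref{lem:continuity}) and base change one reduces to a finite covering map of finite groupoids, hence to a map of finite $G$-sets for some finite group $G$, where $\pi_0$ of the sphere is the Burnside ring $\rm A_G$. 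The claim then becomes a concrete statement about the Burnside Tambara functor, proved as Lemma~\ref{lemm:AG-units}: one checks that $\phi_{H'}(\Norm_H^G(n))=n$ for every subgroup $H'$ (via the explicit description $\Norm_H^G(T)=\Map(G/H,T)$ for trivial $H$-sets $T$), and since every prime of $\rm A(G)$ has the form $\phi_{H'}^{-1}(p)$, this forces $\Norm_H^G(n)$ to be a unit after inverting $n$. This Burnside-ring computation is the missing idea in your proposal.
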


\begin{proof}
By Proposition~\ref{prop:localization-general}(2), this follows from the first part of Lemma \ref{lemm:GW-units} below.
\end{proof}

\begin{lemma} \label{lemm:GW-units}
Let $f\colon X \to Y$ be a finite étale map. Then for any $n \in \bb N$ we have $f_\otimes(n) \in (\pi_{0,0}(\1_Y)[1/n])^\times$ and, provided that $f$ is surjective, also $n \in (\pi_{0,0}(\1_Y)[1/f_\otimes(n)])^\times$.
\end{lemma}

\begin{proof}
Using the compatibility with Grothendieck's Galois theory, i.e., the natural transformation \eqref{eqn:GaloisSH} (together with Corollary \ref{cor:finiteGalois}), we reduce to showing the analogous statement for $f\colon X\to Y$ a finite covering map of profinite groupoids. 
Since finite covering maps of profinite groupoids are defined at a finite stage (by definition), using Lemma \ref{lem:continuity} and the base change formula for norms, we can reduce to the case where $X$ and $Y$ are finite groupoids. 
We may then choose a finite covering map $Y\to \B G$ where $G$ is a finite group.

It is thus enough to prove that for any finite group $G$ and any map of finite $G$-sets $f\colon X \to Y$, the elements $f_\otimes(n) \in\rm A_G(Y)[1/n]$ and $n \in\rm A_G(Y)[1/f_\otimes(n)]$ are invertible (the latter assuming $f$ surjective), where $\rm A_G$ is the Burnside $G$-Tambara functor. The former is claimed to be true in \cite[second to last sentence]{hill2014equivariant}.
We give a proof of both claims in Lemma \ref{lemm:AG-units} below.
\end{proof}

\begin{lemma} \label{lemm:AG-units}
Let $G$ be a finite group and $f\colon X \to Y$ a map of finite $G$-sets. Then for any $n\in\bb N$ we have $f_\otimes(n) \in(\rm A_G(Y)[1/n])^\times$ and, provided that $f$ is surjective, also $n \in(\rm A_G(Y)[1/f_\otimes(n)])^\times$.
\end{lemma}

\begin{proof}
By decomposing into $G$-orbits, we may assume that $X = G/H_1$ and $Y = G/H_2$ for $H_1 \subset H_2$ subgroups. Then $\rm A_G(X) = \rm A(H_1)$ and $\rm A_G(Y) =\rm A(H_2)$, so we may just as well prove: if $H \subset G$ is a subgroup of a finite group, then $\Norm_H^G(n) \in (\rm A(G)[1/n])^\times$ and $n \in (\rm A(G)[1/\Norm_H^G(n)])^\times$. The last statement is equivalent to: every prime ideal of $\rm A(G)$ containing $\Norm_H^G(n)$ (respectively $n$) also contains $n$ (respectively $\Norm_H^G(n)$); indeed both statements are equivalent to $\rm A(G)/(\Norm_H^G(n))[1/n] = 0$ (respectively $\rm A(G)/(n)[1/\Norm_H^G(n))] = 0$). For a subgroup $K$ of $G$ define $\phi_{K}\colon\rm A(G) \to \Z$ as the additive extension of the map sending a finite $G$-set $X$ to $\lvert X^{K}\rvert$. Then $\phi_{K}$ is a ring homomorphism and every prime ideal of $\rm A(G)$ is of the form $\phi_{K}^{-1}((p))$ for some subgroup $K$ and some prime ideal $(p) \subset \Z$ \cite[Proposition V.3.1]{lewis1986equivariant}.

It thus suffices to prove the following: if $H\subset G$ and $K \subset G$ are subgroups and $p$ is a prime number, then $p$ divides $n$ if and only if $p$ divides $\phi_{K}(\Norm_H^G(n))$. Since 
\[ \phi_{K}(\Norm_H^G(n)) = \lvert\Map(G/H, n)^{K}\rvert = \lvert \Map(K\backslash G/H, n)\rvert = n^{\lvert K\backslash G/H\rvert}, \] 
this is clear.
\end{proof}

\begin{example}\label{ex:eta-localization}
As in equivariant homotopy theory, it can happen that $\L_\alpha^\otimes E\not\simeq E[\alpha^{-1}]$. For example, if $S$ has a point of characteristic $\neq 2$ and $\eta\in \pi_{1,1}(\1)$ is the Hopf element, then $\1[\eta^{-1}]$ cannot be promoted to a normed spectrum over $\Sm_S$. 
Indeed, by Proposition~\ref{prop:categorical-props}(6,7), we may assume that $S$ is the spectrum of a field $k$ of characteristic $\neq 2$. In this case, $\pi_{0,0}(\1)\to \pi_{0,0}(\1[\eta^{-1}])$ is identified with the quotient map $\GW(k)\to \W(k)$, and if $k'/k$ is a finite separable extension, the norm $\GW(k')\to \GW(k)$ is Rost's multiplicative transfer (Theorem~\ref{thm:GW-norms-comparison}). But if $k$ is not quadratically closed, as we may arrange, and $k'/k$ is a quadratic extension, then the norm $\GW(k') \to \GW(k)$ does not descend to $\W(k') \to \W(k)$ (i.e., the norm does not preserve hyperbolic forms) \cite[Bemerkung 2.14]{wittkop2006multiplikative}.
\end{example}

\begin{example}
We can strengthen Example~\ref{ex:eta-localization} as follows: if $S$ is pro-smooth over a field of characteristic $\neq 2$, then $\L_\eta^\otimes \1\simeq 0$ in $\NAlg_{\Sm}(\SH(S))$. As a first step, we note that if $f\colon S'\to S$ is pro-smooth, then $f^*$ commutes with the formation of $\L_\alpha^\otimes$ for any $\alpha\colon\1_S\to L$. Indeed, the functors $f^*$ and $f_*$ lift to an adjunction between normed spectra (Example~\ref{ex:NAlg-ess-smooth-adjunction}), and they both preserve $\alpha$-periodic spectra (Remark~\ref{rmk:SH-localization}).
 Thus we may assume that $S$ is the spectrum of a perfect field $k$. Consider the homotopy module $\ul{\pi}_0(\L_\eta^\otimes \1)_*$. Clearly this is a module over $\ul{\pi}_0(\1[\eta^{-1}])_* = \ul{\W}[\eta^{\pm 1}]$. Now let $K/k$ be a finitely generated field extension and $a \in K^\times$. The image of $\Norm_{K(\sqrt{a})/K}(\langle 1 \rangle + \langle -1 \rangle) \in \GW(K)$ must be zero in $\ul{\pi}_0(\L_\eta^\otimes \1)_0(K)$, by the previous remark. It follows from \cite[Lemma 2.13]{wittkop2006multiplikative} that the class of $\Norm_{K(\sqrt{a})/K}(\langle 1 \rangle + \langle -1 \rangle)$ in $\W(K)$ is $\langle 2 \rangle - \langle 2a \rangle$. Applied to $a = 2$, this shows that $\langle 2 \rangle = 1$ in $\ul{\pi}_0(\L_\eta^\otimes \1)_0(K)$. Applying this again to a general $a$ we find that $\langle a \rangle = 1$ in $\ul{\pi}_0(\L_\eta^\otimes \1)_0(K)$. We have 
\[
\ul{\pi}_0(\L_\eta^\otimes \1)_{-1}(k) = \ul{\pi}_0(\L_\eta^\otimes \1)_0(\mathbb{A}^1 \minus \{0\})/\ul{\pi}_0(\L_\eta^\otimes \1)_0(k) \into \ul{\pi}_0(\L_\eta^\otimes \1)_0(k(t))/\ul{\pi}_0(\L_\eta^\otimes \1)_0(k),
\]
and similarly for $\ul{\pi}_0(\1)_*$. We conclude that $\ul{\pi}_0(\1)_{-1}(k) \to \ul{\pi}_0(\L_\eta^\otimes \1)_{-1}(k)$ is the zero map, and so $\eta = 0$ in $\ul{\pi}_0(\L_\eta^\otimes \1)_{-1}$. This implies that $1= 0$ in $\ul{\pi}_0(\L_\eta^\otimes \1)_0$ and so $\L_\eta^\otimes \1 \simeq 0$.
\end{example}

\subsection{Completion of normed spectra}
Given a symmetric monoidal $\infty$-category $\scr C$, an invertible object $L\in\Pic(\scr C)$, and a morphism $\alpha\colon \1 \to L$, recall that an object $E\in\scr C$ is called $\alpha$-periodic if the morphism $\alpha\colon E \to E \otimes L$ is an equivalence.
We say that $E$ is \emph{$\alpha$-complete} if $\Map(A,E)\simeq *$ for every $\alpha$-periodic object $A\in\scr C$, and we denote by $\scr C_\alpha^\comp\subset \scr C$ the full subcategory of $\alpha$-complete objects. 
A map $E\to E'$ is called an \emph{$\alpha$-completion} of $E$ if it is an initial map to an $\alpha$-complete object, in which case we write $E'=E_\alpha^\comp$. If $\scr C$ is presentable, $E_\alpha^\comp$ always exists and 
$E \mapsto E_\alpha^\comp$ is left adjoint to the inclusion $\scr C_\alpha^\comp \subset \scr C$.
Moreover, by \cite[Proposition 2.2.1.9]{HA}, the $\infty$-category $\scr C_\alpha^\comp$ acquires a symmetric monoidal structure such that the functor $(\ph)_\alpha^\comp\colon \scr C\to \scr C_\alpha^\comp$ is symmetric monoidal.

If $\scr C$ is stable, $\alpha$-completion is given by a simple formula:

\begin{lemma}\label{lem:Pic-completion}
	Let $\scr C$ be a stable symmetric monoidal $\infty$-category, $L\in\Pic(\scr C)$ an invertible object, and $\alpha\colon \1\to L$ a morphism. For every $E\in\scr C$, the canonical map $E\to \lim_n E/\alpha^n$ exhibits $\lim_n E/\alpha^n$ as the $\alpha$-completion of $E$, whenever this limit exists.
\end{lemma}

\begin{proof}
	It is clear that $E/\alpha^n$ and hence $\lim_n E/\alpha^n$ are $\alpha$-complete, so it remains to show that the fiber of $E\to \lim_n E/\alpha^n$ is $\alpha$-periodic. This fiber is the limit of the tower
	\[
	E \xleftarrow{\alpha} E\otimes L^{-1} \xleftarrow{\alpha} E \otimes (L^{-1})^{\otimes 2} \leftarrow \dotsb.
	\]
	We must therefore show that the vertical maps in the diagram
	\begin{tikzmath}
		\def\colsep{4em}
		\diagram{
		E\otimes L^{-1} & E\otimes L^{-1}\otimes L^{-1} & E \otimes (L^{-1})^{\otimes 2}\otimes L^{-1} & \dotsb \\
		E & E\otimes L^{-1} & E \otimes (L^{-1})^{\otimes 2} & \dotsb \\
		};
		\arrows (11-) edge[<-] node[above]{$\alpha\otimes L^{-1}$} (-12) (12-) edge[<-] node[above]{$\alpha\otimes L^{-1}$} (-13) (13-) edge[<-] node[above]{$\alpha\otimes L^{-1}$} (-14)
		(11) edge node[left]{$\alpha$} (21)
		(12) edge node[left]{$\alpha$} (22)
		(13) edge node[left]{$\alpha$} (23)
		(21-) edge[<-] node[above]{$\alpha$} (-22) (22-) edge[<-] node[above]{$\alpha$} (-23) (23-) edge[<-] node[above]{$\alpha$} (-24);
	\end{tikzmath}
	induce an equivalence in the limit. For every $X\in\scr C$, applying $[X,\ph]$ to this diagram yields an isomorphism of pro-abelian groups (this uses the invertibility of $L$ as in the proof of Lemma~\ref{lem:Pic-localization}), and we conclude using the Milnor exact sequence.
\end{proof}

\begin{proposition} \label{prop:compn-general}
Let $\scr C \subset_\fet \Sm_S$, let $E \in \NAlg_{\scr C}(\SH)$, and let $\alpha \colon E\to L$ be an $E$-module map for some $L\in\<\Pic(\Mod_{E})$. Suppose that, for every surjective finite étale map $f\colon X \to Y$ in $\scr C$, $\alpha_Y$ is a unit in the $\Pic(\Mod_{E_Y})$-graded ring $\pi_\star(E_Y)[\nu_f(\alpha_X)^{-1}]$. Then there is an adjunction
\begin{tikzmath}
		\diagram{\NAlg_\scr C(\Mod_E(\SH)) & \NAlg_\scr C(\Mod_E(\SH)_\alpha^\comp)\\};
		\arrows (11-) edge[vshift=\dbl] node[above,vshift=\dbl]{$(\ph)_\alpha^\comp$} (-12) (-12) edge[vshift=\dbl,c->] (11-);
\end{tikzmath}
where the functor $(\ph)_\alpha^\comp$ is computed pointwise. In particular, $E_\alpha^\comp$ is a normed spectrum over $\scr C$.
\end{proposition}

\begin{proof}
If $f\colon X \to Y$ is a surjective finite étale map in $\scr C$ and $M \in \Mod_{E_X}(\SH(X))$ is $\alpha_X$-periodic, then $f_\otimes(M) \in \Mod_{E_Y}(\SH(Y))$ is clearly $\nu_f(\alpha_X)$-periodic, whence $\alpha_Y$-periodic by the assumption. This shows that the $\infty$-categories $\L_\alpha\Mod_{E_X}(\SH(X))$ form a normed ideal in $\Mod_E(\SH)^\otimes$ over $\scr C$ (see Definition~\ref{def:normed-ideal}). Applying Corollary~\ref{cor:normed-nullification} to this normed ideal, we deduce that the $\alpha$-completion functors assemble into a natural transformation 
\[
(\ph)_\alpha^\comp\colon \Mod_E(\SH)^{\otimes} \to \Mod_E(\SH)_\alpha^{\comp\otimes}\colon \Span(\scr C, \all, \fet) \to \CAlg(\what{\Cat}{}_\infty^\mathrm{sift}). 
\]
By Lemmas \ref{lemm:construct-relative-adjoint}(1) and~\ref{lemm:adjoints-pass-to-sections}, we obtain an adjunction between the $\infty$-categories of sections such that the left adjoint preserves normed $E$-modules. It remains to show that the right adjoint also preserves normed $E$-modules. As $\scr C\subset \Sm_S$, this follows from the fact that if $f$ is smooth, then $f_\sharp$ preserves $\alpha$-periodic objects (Remark~\ref{rmk:SH-localization}), and hence $f^*$ preserves $\alpha$-complete objects.
\end{proof}

\begin{proposition}
Let $\scr C\subset_\fet \Sm_S$, let $E\in\NAlg_\scr C(\SH)$, and let $n \in \bb N$. 
Then the $n$-completion $E_n^\comp$ has a canonical structure of normed spectrum over $\scr C$ such that the canonical map $E\to E_n^\comp$ is a morphism of normed spectra.
\end{proposition}

\begin{proof}
This follows from Proposition~\ref{prop:compn-general} using the second part of Lemma \ref{lemm:GW-units}.
\end{proof}

\begin{example}\label{ex:rho-completion}
Let $a\in\scr O(S)^\times$ and let $[a]$ be the corresponding morphism $\1_S\to \Sigma^\infty\G_{m}$ in $\SH(S)$.
Then $[a]$ satisfies the assumption of Proposition~\ref{prop:compn-general}, so that $[a]$-completion preserves normed spectra. Indeed, for any finite étale map $p\colon T\to S$, we have $\Weil_p (a_T)=u \circ a$ where $u\colon \G_{m,S}\to \Weil_p \G_{m,T}$ is the unit map. If $p$ is surjective, then the pointed map $u_+$ descends to a morphism $\G_m \to p_\otimes(\G_m)$ in $\PSh_\Sigma(\Sm_S)_\pt$, which shows that $[a]$ divides $p_\otimes p^*([a])$ in the $\Pic(\SH(S))$-graded ring $\pi_\star(\1_S)$.
\end{example}

\begin{example}\label{ex:h-completion}
	Let $h=1+\langle -1\rangle\in \GW(\R)\simeq \pi_{0,0}(\1_\R)$. Then $(\1_\R)_h^\comp \in \SH(\R)$ is an $\E_\infty$-ring spectrum that cannot be promoted to a normed spectrum over $\FEt_\R$. To see this, recall that
	\[
	\GW(\C)\simeq\Z\quad\text{and}\quad\GW(\R)\simeq\Z\times_{\Z/2}\W(\R)\simeq \Z\times_{\Z/2}\Z,
	\]
	with the element $h$ corresponding to $2$ and $(2,0)$, respectively. Observe that the $h^n$-torsion in these groups does not depend on $n$, and hence that $\pi_{0,0}((\1_\C)_h^\comp)\simeq\Z_2^\comp$ and $\pi_{0,0}((\1_\R)_h^\comp)\simeq \Z_2^\comp\times_{\Z/2}\Z$. Under the above identifications, the Rost norm $\GW(\C)\to\GW(\R)$ is the map $\Z\to \Z\times_{\Z/2}\Z$, $n\mapsto (n^2,n)$, which clearly does not extend to a multiplicative map $\Z_2^\comp \to \Z_2^\comp\times_{\Z/2}\Z$.
	
	For a closely related example, consider the spectrum $\1_{\bb R}[1/2]$, which is a normed spectrum over $\Sch_{\bb R}$ by Proposition~\ref{prop:E[1/n]}. Its $h$-completion is a nonzero spectrum whose pullback to $\C$ is zero, so it does not admit a structure of normed spectrum over $\FEt_{\bb R}$.
\end{example}

\section{Norms and the slice filtration}
\label{sec:slices}

In this section we show that the norm functors preserve effective and very effective spectra, and explore some of the consequences of this statement. Recall that the full subcategory of \emph{effective spectra} $\SH(S)^\eff \subset \SH(S)$ is generated under colimits by $\Sigma^{-n}\Sigma^\infty_+ X$ for $X \in \Sm_S$ and $n\geq 0$, and the full subcategory of \emph{very effective spectra} $\SH(S)^\veff\subset\SH(S)$ is generated under colimits and extensions by $\Sigma^\infty_+ X$ for $X\in\Sm_S$. Both $\SH(S)^\eff$ and $\SH(S)^\veff$ are symmetric monoidal subcategories of $\SH(S)$.

\subsection{The zeroth slice of a normed spectrum}
\label{sub:zero-slice}

We start by reviewing Voevodsky's slice filtration \cite{Voevodsky:2002} and its generalized variant defined in \cite{Bachmann-slices}.

For $n\in\Z$, we denote by $\SH(S)^\eff(n)\subset\SH(S)$ the subcategory of $n$-effective spectra, i.e., spectra of the form $E\wedge \S^{2n,n}$ with $E$ effective.
We have colimit-preserving inclusions
\[
\dotsb \subset\SH(S)^\eff(n+1) \subset \SH(S)^\eff(n) \subset \SH(S)^\eff(n-1) \subset\dotsb\subset \SH(S),
\]
giving rise to a functorial filtration
\[
\dotsb\to \f_{n+1}E \to \f_nE \to \f_{n-1}E \to \dotsb \to E
\]
of every spectrum $E\in\SH(S)$, where $\f_n$ is the right adjoint to the inclusion $\SH(S)^\eff(n)\subset\SH(S)$. The slice functors $\s_n\colon \SH(S)\to\SH(S)$ are then defined by the cofiber sequences 
\[
\f_{n+1}E\to \f_nE \to \s_nE.
\]

Similarly, we denote by $\SH(S)^\veff(n)\subset\SH(S)$ the subcategory of very $n$-effective spectra, i.e., spectra of the form $E\wedge \S^{2n,n}$ with $E$ very effective. We have colimit-preserving inclusions
\[
\dotsb \subset\SH(S)^\veff(n+1) \subset \SH(S)^\veff(n) \subset \SH(S)^\veff(n-1) \subset\dotsb\subset \SH(S),
\]
giving rise to a functorial filtration
\[
\dotsb\to \tilde\f_{n+1}E \to \tilde\f_nE \to \tilde\f_{n-1}E \to \dotsb \to E
\]
of every spectrum $E\in\SH(S)$, where $\tilde\f_n$ is the right adjoint to the inclusion $\SH(S)^\veff(n)\subset\SH(S)$. The generalized slice functors $\tilde\s_n\colon \SH(S)\to\SH(S)$ are then defined by the cofiber sequences 
\[
\tilde\f_{n+1}E\to \tilde\f_nE \to \tilde\s_nE.
\]

Finally, we note that $\SH(S)^\veff$ is the nonnegative part of a $t$-structure on $\SH(S)^\eff$ \cite[Proposition 1.4.4.11]{HA}. We will denote by $\SH(S)^\eff_{\geq n}$ and $\SH(S)^\eff_{\leq n}$ the subcategories of $n$-connective and $n$-truncated objects with respect to this $t$-structure, and by $\SH(S)^{\eff\heartsuit}$ its heart. Thus, $\SH(S)^\eff_{\geq 0}=\SH(S)^\veff$.

\begin{lemma}\label{lemm:effectivity-of-spheres}
	Let $S$ be a scheme, let $\xi\in\K(S)$ be of rank $n$, and let $\S^\xi\in\SH(S)$ be the Thom spectrum of $\xi$ (see for example \sect\ref{sub:Jhomomorphism}). Then $\S^\xi$ is very $n$-effective.
\end{lemma}

\begin{proof}
	Any $\K$-theory class is Zariski-locally trivial. Since $\tilde\f_n$ commutes with smooth base change, the canonical map $\tilde\f_n \S^\xi \to \S^\xi$ is an equivalence over any open $U \subset S$ trivializing $\xi$. Hence, it is an equivalence.
\end{proof}

\begin{lemma} \label{lemm:norms-circles}
Let $p\colon T\to S$ be a finite étale map.
\begin{enumerate}
\item $p_\otimes(\S^{-1}) \in \SH(S)^\eff$.
\item If $p$ is surjective, then $p_\otimes(\S^1) \in \SH(S)^\eff_{\geq 1}$.
\item If $p$ has degree $d$ and $\xi\in\K(T)$ has rank $n$, then $p_\otimes(\S^\xi) \in \SH(S)^\veff(dn)$.
\end{enumerate}
\end{lemma}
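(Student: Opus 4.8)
\textbf{Proof plan for Lemma~\ref{lemm:norms-circles}.}
The plan is to deduce all three statements from the explicit form of the distributivity law for binary coproducts (Corollary~\ref{cor:distrib-explicit}) and the computation of norms of Thom spheres (Corollary~\ref{cor:Nthom}), combined with the effectivity results already in hand (Lemmas~\ref{lemm:effectivity-of-spheres} and~\ref{lem:spheres}, and the fact that $\SH(S)^\eff$, $\SH(S)^\veff(m)$, and the connective part $\SH(S)^\eff_{\geq m}$ are closed under colimits and, in the last two cases, extensions). For part (3), which I expect to be the cleanest, I would argue as follows: by Corollary~\ref{cor:Nthom}, $p_\otimes(\S^\xi)\simeq \S^{q_*e^*\xi}=\Th_{\Weil_pS}(q_*e^*\xi)$ where $S\stackrel e\from \Weil_pS\times_ST\stackrel q\to \Weil_pS$; here $\Weil_pS$ is finite étale over $S$, and $q_*e^*\xi\in\K(\Weil_pS)$ has rank $dn$ since $q$ has degree $d$ (as $p$ does) and $e^*\xi$ has rank $n$. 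Pushing forward a rank-$dn$ $\K$-theory class along the finite étale structure map $\Weil_pS\to S$ and applying Lemma~\ref{lemm:effectivity-of-spheres} on $\Weil_pS$ (which gives $\S^{q_*e^*\xi}\in\SH(\Weil_pS)^\veff(dn)$), together with the fact that $\SH(\ph)^\veff(dn)$ is preserved by $f_\sharp$ for $f$ finite étale — indeed by $f_*\simeq f_\sharp$ up to a trivial twist, or directly since $f_\sharp\Sigma^\infty_+X=\Sigma^\infty_+X$ for $X$ finite étale over $S$ — yields $p_\otimes(\S^\xi)\in\SH(S)^\veff(dn)$. I would want to be slightly careful to phrase this using $f_\sharp$ preserving very $dn$-effective spectra, which follows because $\SH^\veff(dn)$ is generated under colimits and extensions by $\Sigma^\infty_+X\wedge\S^{2dn,dn}$ and $f_\sharp$ sends such generators to generators.

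For parts (1) and (2) the sphere in question is $\S^{\pm 1}=\S^{\pm 1,0}$, which has \emph{trivial weight}, so $\Weil_p$ of a rank-$0$ virtual bundle is rank $0$ and Lemma~\ref{lemm:effectivity-of-spheres} only gives effectivity, not the sharper bound; this is why (1) and (2) are stated separately from (3) and require the inductive distributivity argument. The strategy is induction on the degree $d$ of $p$ (which is bounded since $S$ is quasi-compact, and we may assume constant by passing to clopen pieces of $S$). For $d=1$, $p$ is an isomorphism and the claims are immediate ($\S^{-1}$ is effective, $\S^1\in\SH^\eff_{\geq 1}$). For the inductive step with $p$ surjective of degree $d\geq 2$, apply Corollary~\ref{cor:distrib-explicit} to $E=F=\S^{\pm 1}$ over $T$: since $\S^{\pm 1}\vee\S^{\pm 1}\simeq \S^{\pm 1}\wedge(S\amalg S)_+$, one gets a decomposition of $p_\otimes(\S^{\pm 1}\wedge(T\amalg T)_+)$, but more usefully I would instead use the cofiber-sequence form — realize $\S^1$ (resp.\ $\S^{-1}$) as built from suspension spectra and use the bar-construction / polynomial-functor machinery. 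Concretely, for (2): $p_\otimes$ is polynomial of degree $\leq d$ (Proposition~\ref{prop:polynomial}), and the cross-term $c_\sharp(q_{l\otimes}e_l^*(\S^1)\wedge q_{r\otimes}e_r^*(\S^1))$ appearing in $\rm D_{\S^1}(p_\otimes)$ involves $q_r$ of degree $<d$ and surjective, so by induction $q_{r\otimes}(\S^1)\in\SH(C)^\eff_{\geq 1}$ (and likewise $q_{l\otimes}(\S^1)$); smashing two $1$-connective effective spectra gives a $2$-connective effective spectrum, and $c_\sharp$ preserves this, so the cross-term is even more connective than needed, while the leading term $p_\otimes(\S^1)$ is handled by relating it to the cross terms via the distributivity decomposition of $p_\otimes(\S^1\vee\S^1)$ and using that smash of $1$-connective is $2$-connective. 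The cleanest packaging: decompose $S=\coprod$ over degree, assume degree $d$, pick a point, base-change to reduce to $T\to S$ with $S$ strictly henselian local so $T\simeq\coprod_{i=1}^d S$, where $p$ is a fold map and $p_\otimes(\S^{\pm 1})=(\S^{\pm1})^{\wedge d}=\S^{\pm d}$, which is visibly effective and ($+$) $d$-connective hence $1$-connective; then use that $\f_n$ and the connective cover commute with finite étale (indeed arbitrary smooth) base change and that $\SH(\ph)^\eff$ is detected Nisnevich-locally to conclude over the original $S$. This last reduction is the genuinely load-bearing idea for (1) and (2).

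The main obstacle I anticipate is making the Nisnevich-local reduction rigorous for the \emph{connectivity} statement (2): effectivity of a spectrum can be checked Nisnevich-locally because $\SH^\eff$ is the colimit-closure of a set of compact generators and $\SH(\ph)$ is a Nisnevich sheaf, and $\f_n$ commutes with smooth base change; but $n$-connectivity in the effective homotopy $t$-structure is slightly more delicate, since the $t$-structure generators are $\Sigma^\infty_+X$ for $X$ smooth and one must know that a spectrum $E$ lies in $\SH(S)^\eff_{\geq 1}$ iff its pullback to every Nisnevich point does. This follows from the standard fact that the effective homotopy $t$-structure is compatible with smooth base change and is detected on stalks — precisely the kind of statement used in Morel's connectivity theorem — but I would cite or prove it carefully rather than assert it. An alternative that avoids the $t$-structure subtlety entirely: prove (2) by the polynomial-functor induction above without ever base-changing, using only that $\SH^\eff_{\geq 1}$ is closed under colimits, extensions, and $f_\sharp$ (finite étale), and that the smash of a $1$-connective and a $0$-connective effective spectrum is $1$-connective; the induction then goes through because in $\rm D_{\S^1}(p_\otimes)(\S^1)\simeq p_\otimes(\S^1)\vee c_\sharp(q_{l\otimes}e_l^*\S^1\wedge q_{r\otimes}e_r^*\S^1)$ and, rearranging the bar-construction for $p_\otimes(\S^1\vee\S^1)$, one extracts a cofiber sequence expressing $p_\otimes(\S^1)$ as built from $c_\sharp$ of smashes of lower-degree norms of $\S^1$, all of which are $1$-connective effective by the inductive hypothesis. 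I would take this second route to keep the argument self-contained.
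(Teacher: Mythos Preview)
Your argument for (3) is correct and matches the paper's: since $p_\otimes(\S^\xi)\simeq \S^{p_*\xi}$ (Remark~\ref{rmk:Pic}) and $p_*\xi$ has rank $dn$, Lemma~\ref{lemm:effectivity-of-spheres} gives the result. (Your diagram is garbled---$\xi$ lives on $T$, not $S$, and $\Weil_pT=S$ so no extra $f_\sharp$ is needed---but the idea is right.)

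For (1) and (2), both of your routes have genuine gaps.

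Route (a) conflates the Nisnevich and étale topologies. A finite étale map becomes a fold map over a \emph{strictly} henselian local base, but the points of the Nisnevich topos are merely henselian local schemes, over which a finite étale cover is a disjoint union of finite étale extensions---not a fold map in general. So you cannot check the claim Nisnevich-locally and then invoke the fold-map case. Conversely, $\SH$ does not satisfy étale (or finite étale) descent, so you cannot check the claim at strictly henselian points either. You correctly flag this reduction as ``load-bearing,'' but it does not bear the load.

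Route (b) never produces a usable decomposition. The distributivity formula for $p_\otimes(E\vee F)$ expresses that object in terms of $p_\otimes(E)$, $p_\otimes(F)$, and cross terms of lower degree; it does not express $p_\otimes(\S^{\pm 1})$ itself in terms of lower-degree norms. Your formula $\rm D_{\S^1}(p_\otimes)(\S^1)\simeq p_\otimes(\S^1)\vee c_\sharp(\dotsb)$ still contains $p_\otimes(\S^1)$ on the right, and I do not see a cofiber sequence or bar-construction rearrangement that eliminates it. (Writing $\S^1=\lvert\1^{\vee\bullet}\rvert$ and applying $p_\otimes$ shows $p_\otimes(\S^1)$ is very effective, but not $1$-connective; and for $\S^{-1}$ there is no analogous presentation.)

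The paper's proof of (1) and (2) instead uses Grothendieck's Galois theory (Section~\ref{sec:ggt}): the transformation $c_S\colon\SH(\widehat\Pi_1^\et(S))\to\SH(S)$ is compatible with norms, and since $\S^{\pm 1}\in\SH(T)$ is $c_T$ of the corresponding equivariant sphere, one reduces to a computation in genuine equivariant homotopy theory. There, (1) follows because $c_S$ lands in $\SH(S)^\eff$ (equivariant spectra are generated by finite $G$-sets), and (2) reduces to the concrete fact that for a nonempty finite $G$-set $A$, the representation $\R^A$ contains a trivial summand, so its one-point compactification is a suspension. This is the correct replacement for your ``reduce to a fold map'' intuition: Galois theory is precisely the machine that lets you perform that reduction despite $\SH$ not being an étale sheaf.
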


\begin{proof}
We will use the compatibility of Grothendieck's Galois theory with norms (Proposition~\ref{prop:ggt-compat}):
the morphism $\tilde p=\widehat\Pi_1^\et(p)$ is a finite covering map of profinite groupoids, and we have 
\[p_\otimes\circ c_T \simeq c_S\circ \tilde p_\otimes\colon \SH(\widehat\Pi_1^\et(T))\to \SH(S).\]
For any profinite groupoid $X$, Proposition~\ref{prop:spectralMackey} implies that $\SH(X)$ is generated under colimits and shifts by finite $X$-sets, and it follows that $c_S$ factors through effective spectra.
Assertion (1) follows immediately, since $p_\otimes(\S^{-1}) \simeq c_S(\tilde p_\otimes(\S^{-1}))$.

(2) If $p$ is surjective, then $\tilde p$ is also surjective. It therefore suffices to show the following: if $q\colon Y\to X$ is a surjective finite covering map of profinite groupoids, then $q_\otimes(\S^1)$ is a suspension in $\H_\pt(X)$. We can clearly assume that $X=\B G$ for some finite group $G$, so that $Y$ is the groupoid associated with a nonempty finite $G$-set $A$. Then the $G$-space $q_\otimes(\S^1)$ is the one-point compactification of the real $G$-representation $\R^A$. The claim follows because $\R^A$ has a trivial $1$-dimensional summand (consisting of the diagonal vectors).

(3) We have $p_\otimes(\S^\xi)\simeq \S^{p_*(\xi)}$ by Remark~\ref{rmk:Pic}, so the result follows from Lemma~\ref{lemm:effectivity-of-spheres}.
\end{proof}

We now apply the results of \sect\ref{sub:normed-categories} to construct various normed $\infty$-categories from $\SH^\otimes$. Since effective spectra in $\SH(S)$ are generated under colimits by $(\S^{-1})^{\wedge n}\wedge\Sigma^\infty_+ X$ for $X\in\SmQP_S$ and $n\geq 0$, it follows from Proposition~\ref{prop:normed-subcategory} and Lemma~\ref{lemm:norms-circles}(1) that effective spectra form a subfunctor
\[
\SH^{\eff\otimes}\subset \SH^{\otimes}\colon \Span(\Sch, \all, \fet) \to \CAlg(\what{\Cat}{}_\infty^\mathrm{sift}).
\]
Similarly, since very effective spectra are generated under colimits and extensions by $\Sigma^\infty_+ X$ for $X\in\SmQP_S$, it follows from Proposition~\ref{prop:normed-subcategory} that very effective spectra form a subfunctor
\[
\SH^{\veff\otimes}\subset \SH^{\eff\otimes}\colon \Span(\Sch, \all, \fet) \to \CAlg(\what{\Cat}{}_\infty^\mathrm{sift}).
\]
By Lemma~\ref{lemm:norms-circles}(3), $1$-effective spectra (resp.\ very $1$-effective spectra) form a normed ideal in $\SH^{\eff\otimes}$ (resp.\ in $\SH^{\veff\otimes}$). By Lemma~\ref{lemm:norms-circles}(2), $1$-connective effective spectra also form a normed ideal in $\SH^{\veff\otimes}$. Applying Corollary~\ref{cor:normed-nullification} to these normed ideals, we obtain natural transformations
\begin{gather*}
\s_0\colon \SH^{\eff\otimes} \to \s_0\SH^{\eff\otimes}\colon \Span(\Sch, \all, \fet) \to \CAlg(\what{\Cat}{}_\infty^\mathrm{sift}),\\
\tilde\s_0\colon \SH^{\veff\otimes} \to \tilde\s_0\SH^{\veff\otimes}\colon \Span(\Sch, \all, \fet) \to \CAlg(\what{\Cat}{}_\infty^\mathrm{sift}),\\
\spi_0^\eff\colon \SH^{\veff\otimes} \to \SH^{\eff\heartsuit\otimes}\colon \Span(\Sch, \all, \fet) \to \CAlg(\what{\Cat}{}_\infty^\mathrm{sift}).
\end{gather*}

\begin{proposition}\label{prop:NAlg-slices}
	Let $S$ be a scheme and $\scr C\subset_\fet\Sm_S$. Then there are adjunctions
	\begin{gather*}
		\begin{tikzpicture}[ampersand replacement=\&]
			\diagram{\NAlg_\scr C(\SH^\eff) \& \NAlg_\scr C(\SH)\rlap,\\};
			\arrows (11-) edge[vshift=\dbl,c->] (-12) (-12) edge[vshift=\dbl] node[below,vshift=\dbl]{$\f_0$} (11-);
		\end{tikzpicture}
		\\
		\begin{tikzpicture}[ampersand replacement=\&]
			\diagram{\NAlg_\scr C(\SH^\veff) \& \NAlg_\scr C(\SH)\rlap,\\};
			\arrows (11-) edge[vshift=\dbl,c->] (-12) (-12) edge[vshift=\dbl] node[below,vshift=\dbl]{$\tilde\f_0$} (11-);
		\end{tikzpicture}
		\\
		\begin{tikzpicture}[ampersand replacement=\&]
			\diagram{\NAlg_\scr C(\SH^\eff) \& \NAlg_\scr C(\s_0\SH^\eff)\rlap,\\};
			\arrows (11-) edge[vshift=\dbl] node[above,vshift=\dbl]{$\s_0$} (-12) (-12) edge[vshift=\dbl,c->] (11-);
		\end{tikzpicture}
		\\
		\begin{tikzpicture}[ampersand replacement=\&]
			\diagram{\NAlg_\scr C(\SH^\veff) \& \NAlg_\scr C(\tilde\s_0\SH^\veff)\rlap,\\};
			\arrows (11-) edge[vshift=\dbl] node[above,vshift=\dbl]{$\tilde\s_0$} (-12) (-12) edge[vshift=\dbl,c->] (11-);
		\end{tikzpicture}
		\\
		\begin{tikzpicture}[ampersand replacement=\&]
			\diagram{\NAlg_\scr C(\SH^\veff) \& \NAlg_\scr C(\SH^{\eff\heartsuit})\rlap,\\};
			\arrows (11-) edge[vshift=\dbl] node[above,vshift=\dbl]{$\spi_0^\eff$} (-12) (-12) edge[vshift=\dbl,c->] (11-);
		\end{tikzpicture}
	\end{gather*}
	where the functors $\f_0$, $\tilde\f_0$, $\s_0$, $\tilde\s_0$, and $\spi_0^\eff$ are computed pointwise.
\end{proposition}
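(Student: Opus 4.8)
The plan is to deduce all five adjunctions uniformly from the general machinery of \sect\ref{sub:normed-categories} and \sect\ref{sec:normedspectra}, using the natural transformations of normed $\infty$-categories just constructed. First I would recall the setup: each of the three functors $\SH^{\eff\otimes}$, $\SH^{\veff\otimes}$, and the "zeroth-layer" functors $\s_0\SH^{\eff\otimes}$, $\tilde\s_0\SH^{\veff\otimes}$, $\SH^{\eff\heartsuit\otimes}$ is a presentably normed $\infty$-category over $\Sch$ (indeed a subfunctor or quotient of $\SH^\otimes$ built via Propositions~\ref{prop:normed-subcategory}, \ref{prop:compatible-with-norms}, and Corollary~\ref{cor:normed-nullification}). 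Restricting along $\Span(\scr C,\all,\fet)\into\Span(\Sch,\all,\fet)$ we get presentably normed $\infty$-categories over $\scr C$, and by the remark after Proposition~\ref{prop:categorical-props} (or by a direct application of that proposition, whose proof only uses conditions (1), (3), (5) of Definition~\ref{dfn:normed-category}) each of $\NAlg_\scr C(\SH^\eff)$, $\NAlg_\scr C(\SH^\veff)$, $\NAlg_\scr C(\s_0\SH^\eff)$, $\NAlg_\scr C(\tilde\s_0\SH^\veff)$, $\NAlg_\scr C(\SH^{\eff\heartsuit})$ is presentable, and the forgetful functors to the corresponding $\CAlg$ over $S$ preserve colimits and all limits (using $\scr C\subset\Sm_S$, so pullbacks preserve all limits).

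Next, the key point is that the inclusions $\SH^{\veff\otimes}\subset\SH^{\eff\otimes}\subset\SH^\otimes$ and the localizations $\s_0$, $\tilde\s_0$, $\spi_0^\eff$ are natural transformations \emph{of normed $\infty$-categories} over $\Sch$ (equivalently functors $\Span(\Sch,\all,\fet)\times\Delta^1\to\Cat_\infty$ preserving finite products sectionwise). By Lemma~\ref{lemm:adjoints-pass-to-sections} (or the elementary fact that a natural transformation of functors to $\Cat_\infty$ each of whose components admits an adjoint, with the adjoints assembling coherently, induces an adjunction on sections — here the relevant coherence is automatic because the localization functors $\f_0$, $\tilde\f_0$, $\s_0$, $\tilde\s_0$, $\spi_0^\eff$ on each $\SH(X)$ commute with the pullbacks $f^*$, which is the content of Corollary~\ref{cor:normed-nullification} together with the standard compatibility of the slice and very-effective truncations with smooth base change), each such transformation of normed $\infty$-categories induces an adjunction between the respective $\infty$-categories $\NAlg_\scr C(-)$. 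More concretely: the inclusion $\SH^{\eff\otimes}\hookrightarrow\SH^\otimes$ induces a fully faithful functor on sections, whose right adjoint is computed pointwise by $\f_0$ (this is where one invokes that $\f_0$ is a \emph{right} adjoint to the inclusion $\SH(X)^\eff\subset\SH(X)$ and commutes with $f^*$ for $f$ smooth, so it restricts to sections that are cocartesian over $\scr C^\op$); symmetrically for $\tilde\f_0$. The three Bousfield-type localizations $\s_0$, $\tilde\s_0$, $\spi_0^\eff$ are \emph{left} adjoints to the corresponding inclusions of full subcategories on each fiber, hence induce left adjoints on sections, again preserving the cocartesianness condition because the relevant nullification is compatible with $f^*$.

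I would then spell out the five cases. For the first two, the adjunction is $(\text{inclusion})\dashv\f_0$ and $(\text{inclusion})\dashv\tilde\f_0$: here one uses that $\SH^{\eff\otimes}$ and $\SH^{\veff\otimes}$ are normed \emph{subcategories} of $\SH^\otimes$ (Lemmas~\ref{lemm:norms-circles}(1) and the discussion before Proposition~\ref{prop:NAlg-slices}), so the inclusion on sections is fully faithful and its pointwise right adjoint $\f_0$ (resp.\ $\tilde\f_0$) preserves the subcategory of normed spectra because it preserves limits and commutes with $f^*$ for $f$ smooth. For the last three, one uses Corollary~\ref{cor:normed-nullification} applied to the normed ideals of $1$-effective (resp.\ very $1$-effective, resp.\ $1$-connective effective) spectra, which produces the natural transformations $\s_0$, $\tilde\s_0$, $\spi_0^\eff$ of normed $\infty$-categories together with their fully faithful right adjoints; taking $\NAlg_\scr C$ of both sides and invoking the functoriality of $\NAlg_\scr C(-)$ on natural transformations of normed $\infty$-categories gives the stated adjunctions, with the localization functors computed pointwise since that is how they were constructed on each fiber. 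I would conclude by noting that in every case the left-hand side is a full subcategory of the right-hand side (for the first two) or a reflective/coreflective localization of it, matching the arrows drawn.

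\textbf{Main obstacle.} The genuine content — and the step I expect to require the most care — is verifying that the fiberwise adjunctions assemble coherently, i.e., that the functors $\f_0$, $\tilde\f_0$, $\s_0$, $\tilde\s_0$, $\spi_0^\eff$ restrict to the subcategories of sections that are cocartesian over $\scr C^\op$. This amounts to the compatibility of each of these functors with the pullbacks $f^*$ for $f$ smooth (so that a section sending structure maps $X\to S$ to cocartesian edges is sent to another such section). For $\f_0$ and $\tilde\f_0$ this is the standard fact that $\SH^\eff(n)$ and $\SH^\veff(n)$ are generated under colimits by objects pulled back from smooth schemes and are stable under smooth pullback, so their right adjoints commute with $f^*$; for $\s_0$, $\tilde\s_0$, $\spi_0^\eff$ it follows formally from Corollary~\ref{cor:normed-nullification}, which already packaged exactly this compatibility (condition (2) there: $\L$-equivalences are preserved by $f^*$ and the localization is compatible with finite coproduct decompositions). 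Once that is in hand, the adjunctions themselves are a formal consequence of Lemma~\ref{lemm:adjoints-pass-to-sections} applied to the relevant natural transformation of cocartesian fibrations over $\Span(\scr C,\all,\fet)$, restricted to the full subcategories of normed spectra; everything else (presentability, pointwise computation of the functors) has already been established.
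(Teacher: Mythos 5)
Your overall route is the same as the paper's: build the adjunctions on section categories via Lemmas~\ref{lemm:construct-relative-adjoint}(1) and~\ref{lemm:adjoints-pass-to-sections}, note that the left adjoints preserve normed spectra, and reduce everything to checking that the right adjoints preserve the sections that are cocartesian over $\scr C^\op$. However, the justification you give for that last step has a genuine gap in the three localization cases. For $\s_0$, $\tilde\s_0$, $\spi_0^\eff$ the functor that needs an argument is not the localization (which is a strict natural transformation of normed $\infty$-categories by Corollary~\ref{cor:normed-nullification}, hence automatically preserves cocartesian sections) but its \emph{right} adjoint, the pointwise inclusion of local objects. A section $E$ of $\s_0\SH^{\eff\otimes}$ cocartesian over $f\colon X\to S$ satisfies $E_X\simeq \s_0 f^*E_S$ (the pullback in the localized normed category is $\s_0\circ f^*$, cf.\ Proposition~\ref{prop:localization-abstract}), so its image under the inclusion is cocartesian only if $f^*E_S$ is already local, i.e., only if smooth pullback preserves $\s_0$-local (resp.\ $\tilde\s_0$-local, resp.\ effective-heart) objects. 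Corollary~\ref{cor:normed-nullification} does not give this: via Propositions~\ref{prop:normed-localization} and~\ref{prop:compatible-with-norms} it only records that the localization functors commute with $f^*$ and with finite étale norms (its hypothesis involves $h_\sharp$ for $h$ finite étale), which is exactly the statement about the \emph{left} adjoints and says nothing about pullback preserving local objects.

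The missing input — which is the entire nonformal content of the paper's proof — is that for $f$ smooth, $f_\sharp$ preserves effective, very effective, $1$-effective, very $1$-effective, and $1$-connective effective spectra (clear on the generators $\Sigma^{-n}\Sigma^\infty_+Y$, $Y\in\Sm_X$). By adjunction this gives both things you need: smooth $f^*$ preserves the local objects in the last three cases (test $\Map(A,f^*E)\simeq\Map(f_\sharp A,E)$ against $A$ in the killed ideal), and the exchange maps $f^*\f_0\to\f_0 f^*$, $f^*\tilde\f_0\to\tilde\f_0 f^*$ are equivalences in the first two cases. On that point your stated reason for the first two adjunctions — that $\SH^\eff$ and $\SH^\veff$ are ``stable under smooth pullback'' — only produces the comparison map, not its invertibility; invertibility again comes from $f_\sharp$ preserving (very) effective spectra. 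With that one fact supplied, your argument goes through and coincides with the paper's.
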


\begin{proof}
	By Lemmas \ref{lemm:construct-relative-adjoint}(1) and~\ref{lemm:adjoints-pass-to-sections}, we obtain such adjunctions between $\infty$-categories of sections such that the left adjoints preserve normed spectra. It remains to show that the right adjoints also preserve normed spectra. This follows from the fact that if $f$ is smooth, then $f_\sharp$ preserves effective, very effective, $1$-effective, very $1$-effective, and $1$-connective effective spectra.
\end{proof}

\begin{example}\label{ex:slice-completion}
A motivic spectrum $E\in\SH(S)$ is called \emph{$\infty$-effective} if it is $n$-effective for all $n\in\Z$. Denote by $\SH(S)^\eff(\infty) = \bigcap_n \SH(S)^\eff(n)$ the $\infty$-category of $\infty$-effective spectra. We claim that $\infty$-effective spectra form a normed ideal in $\SH^\otimes$. 
If $p\colon T\to S$ is finite étale and surjective, it follows from Lemma \ref{lemm:norms-circles}(3) that $p_\otimes$ preserves $\infty$-effective spectra, so it remains to show that $\SH(S)^\eff(\infty)$ is a tensor ideal.
If $E$ is $n$-effective for some $n$, it is clear that $E\otimes(\ph)$ preserves $\infty$-effective spectra. Since for any $E \in \SH(S)$ we have $E \simeq \colim_{n\to-\infty} \f_{n} E$, it remains to observe that $\SH(S)^\eff(\infty)$ is closed under colimits, which is clear since the functors $\f_n$ preserve colimits.

The right orthogonal to $\SH(S)^\eff(\infty)$ in $\SH(S)$ is the subcategory of slice-complete spectra. Arguing as in Proposition~\ref{prop:NAlg-slices}, we deduce that slice completion preserves normed spectra over $\scr C$ for any $\scr C\subset_\fet\Sm_S$.
\end{example}

\begin{remark}
	The homotopy $t$-structure on $\SH(S)$ is not compatible with norms, in the sense that norms do not preserve connective spectra. For example, let $k\subset L$ be a quadratic separable field extension and let $p\colon \Spec L\to \Spec k$. Then $p_\otimes((\Sigma^\infty\G_m)^{-1})\in \SH(k)$ is not connective. If it were, then since $p_\otimes((\Sigma^\infty\G_m)^{-1})\simeq \Sigma^{-\A^2}p_\otimes(\S^1)$, $p_\otimes(\S^1)$ would be $2$-connective. By Example~\ref{ex:degree2}, $\Sigma^\infty\Sigma(\Spec L)$ would then be $1$-connective. From the cofiber sequence
	\[
	(\Spec L)_+ \to \S^0 \to \Sigma(\Spec L),
	\]
	we would deduce that the sphere spectrum splits off $\Sigma^\infty_+\Spec L$. But $\HH^{1,1}(\Spec k,\Z)\simeq k^\times$ is not a summand of $\HH^{1,1}(\Spec L,\Z)\simeq L^\times$.
\end{remark}

\subsection{Applications to motivic cohomology}
\label{sub:HZ}

Fix a base scheme $S$. 
It follows from Proposition~\ref{prop:NAlg-slices} that $\s_0(\1)$, the zeroth slice of the motivic sphere spectrum, is a normed spectrum over $\Sm_S$. The spectrum $\s_0(\1)$ is known to be a version of motivic cohomology in many cases:
\begin{enumerate}
	\item If $S$ is essentially smooth over a field, then $\s_0(\1)\simeq \HH\Z$ is Voevodsky's motivic cohomology spectrum \cite[Remark 4.20]{HoyoisMGL}.
	\item If $S$ is noetherian and finite-dimensional of characteristic $0$, then $\s_0(\1)\simeq \HH\Z^\cdh$ is the cdh variant of $\HH\Z$ defined by Cisinski and Déglise (combine \cite[Theorem 5.1]{CDintegral} and \cite[Corollary 3.8]{Pelaez:2013}), and $\HH\Z^\cdh\simeq \HH\Z$ if $S$ is regular \cite[Corollary 3.6]{CDintegral}.
	\item If $S$ is noetherian and finite-dimensional of characteristic $p>0$, then $\s_0(\1)[1/p]\simeq \HH\Z[1/p]^\cdh$ (combine \cite[Theorem 5.1]{CDintegral} and \cite[Theorem 3.1.40]{ShaneAsterisque}), and $\HH\Z[1/p]^\cdh\simeq \HH\Z[1/p]$ if $S$ is regular \cite[Corollary 3.6]{CDintegral}.
	\item If $S$ is essentially smooth over a Dedekind domain, then $s_0(\1)\simeq \HH\Z^\Spi$, where $\HH\Z^\Spi$ is Spitzweck's version of $\HH\Z$ that represents Bloch–Levine motivic cohomology (see Theorem~\ref{thm:s_0(1)}).
\end{enumerate}
We conclude that $\HH\Z$, $\HH\Z^\cdh$, $\HH\Z[1/p]^\cdh$, and $\HH\Z^\mathrm{Spi}$ are normed spectra over $\Sm_S$ in these cases (using Proposition~\ref{prop:E[1/n]} in the third case).

\begin{remark}\label{rmk:norm-uniqueness}
	Proposition~\ref{prop:NAlg-slices} implies that there is a unique normed spectrum over any $\scr C\subset_\fet\Sm_S$ whose underlying $\E_0$-algebra is equivalent to $\1\to \s_0(\1)$: any such normed spectrum is an initial object in $\NAlg_\scr C(\s_0\SH^\eff)\subset\NAlg_\scr C(\SH)$.
	Using Proposition~\ref{prop:E[1/n]}, a similar uniqueness statement holds for $\s_0(\1)_\Lambda$ for any $\Lambda\subset\Q$.
	In particular, in the cases discussed above, the $\E_0$-algebras $\HH\Z$, $\HH\Z^\cdh$, $\HH\Z[1/p]^\cdh$, and $\HH\Z^\mathrm{Spi}$ have unique normed enhancements.
\end{remark}

In \sect\ref{sub:PST}, we will show that in fact $\HH R$ and $\HH R^\cdh$ are normed spectra over any noetherian base and for any commutative ring $R$. Suppose now that $D$ is a Dedekind domain and let $m \ge 0$. Write $\HH\Z^\Spi/m$ for the reduction mod $m$ of Spitzweck's motivic cohomology spectrum over $D$ \cite{SpitzweckHZ}. 
Spitzweck constructs an $\E_\infty$-ring structure on $\HH\Z^\Spi/m$ for any $m$; we will show that it can be uniquely promoted to a structure of normed spectrum over $\Sm_D$. We begin with the following observation.

\begin{lemma}\label{lem:heart}
For every $m\geq 0$, $\HH\Z^\Spi/m \in \SH(D)^{\eff\heartsuit}$.
\end{lemma}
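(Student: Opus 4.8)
The statement to prove is that $\HH\Z^\Spi/m$ lies in $\SH(D)^{\eff\heart}$, i.e., in the heart of the effective homotopy $t$-structure on $\SH(D)^\eff$, for every $m \ge 0$. First I would recall that $\HH\Z^\Spi$ is by construction an effective spectrum over $D$, so $\HH\Z^\Spi/m = \HH\Z^\Spi \wedge \S^0/m$ is also effective (the cofiber sequence $\1 \xrightarrow{m} \1 \to \S^0/m$ lands in $\SH(D)^\eff$ by stability of $\SH(D)^\eff$ under extensions). Thus the content is to show $\HH\Z^\Spi/m$ is in the heart, i.e., it is both in $\SH(D)^\eff_{\geq 0} = \SH(D)^\veff$ and in $\SH(D)^\eff_{\leq 0}$.

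\textbf{Connectivity.} For the $\geq 0$ part, I would use that $\HH\Z^\Spi$ is very effective: this is either in Spitzweck's construction directly or follows from the fact that $\HH\Z^\Spi \simeq \s_0(\1)$ over a Dedekind base (forward-referenced as Theorem~\ref{thm:s_0(1)}), and zeroth slices are very effective since $\s_0 = \tilde\s_0$ is computed via the very effective cover filtration; alternatively one invokes that $\HH\Z^\Spi$ represents Bloch--Levine motivic cohomology which is concentrated in the appropriate range. Since $\SH(D)^\veff = \SH(D)^\eff_{\geq 0}$ is closed under extensions and colimits, and $\S^0/m$ is a finite extension of copies of $\1 \in \SH(D)^\veff$, the smash product $\HH\Z^\Spi/m$ is again in $\SH(D)^\veff$.

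\textbf{Coconnectivity.} The main obstacle is the $\leq 0$ part, i.e., showing $\HH\Z^\Spi/m \in \SH(D)^\eff_{\leq 0}$. I would argue via the defining property of $\HH\Z^\Spi$: its value on a smooth affine $D$-scheme computes Bloch--Levine motivic cohomology, which vanishes in negative weights and whose weight-zero part is the constant sheaf $\Z$ placed in degree $0$. Concretely, the effective homotopy $t$-structure on $\SH(D)^\eff$ is characterized (via the generators $\Sigma^\infty_+ X$ for $X \in \Sm_D$) by the condition that $E \in \SH(D)^\eff_{\leq 0}$ iff $[\Sigma^\infty_+ X \wedge \S^n, E] = 0$ for all $X \in \Sm_D$ and $n > 0$, which for $E = \HH\Z^\Spi$ is exactly the statement that $\HH\Z^\Spi$-cohomology in weight $0$ is concentrated in nonpositive degrees (indeed in degree $0$, being $H^0_{\mot}(X,\Z) = \scr O(X)$ locally constant functions, but vanishing of higher groups is what we need). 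Passing to $\HH\Z^\Spi/m$, the cofiber sequence $\HH\Z^\Spi \xrightarrow{m} \HH\Z^\Spi \to \HH\Z^\Spi/m$ shows $\HH\Z^\Spi/m \in \SH(D)^\eff_{[0,1]}$ a priori; to pin it down to $\leq 0$ one uses that multiplication by $m$ on $\pi^\eff_0(\HH\Z^\Spi)$ is \emph{injective} (the weight-zero motivic cohomology sheaf is the constant sheaf $\Z$, on which $m$ acts injectively), so the boundary map $\pi^\eff_0(\HH\Z^\Spi/m) \to \pi^\eff_{-1}(\HH\Z^\Spi) \xrightarrow{m} \pi^\eff_{-1}(\HH\Z^\Spi)$ contributes nothing, giving $\pi^\eff_1(\HH\Z^\Spi/m) = 0$. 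I expect the bookkeeping with the long exact sequence of homotopy sheaves for the effective $t$-structure, together with citing the computation of $\pi^\eff_*(\HH\Z^\Spi)$ from Spitzweck's work (or from the identification with $\s_*(\1)$), to be the only real work; the hard part is simply assembling these known inputs correctly rather than any new idea.
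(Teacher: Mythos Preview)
Your overall plan is sound, and the coconnectivity argument is correct (though more roundabout than necessary: the paper simply computes $[\Sigma^\infty_+ X[1], \HH\Z^\Spi/m] \simeq \HH^{-1}_\Zar(X, \Z/m) = 0$ directly, avoiding any discussion of $m$-torsion-freeness of $\pi^\eff_0$).

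The connectivity step, however, has a real gap. Your justifications for $\HH\Z^\Spi$ being very effective over a Dedekind domain do not hold up. First, the claim ``$\s_0 = \tilde\s_0$'' is simply false: these are different functors in general (over a field, $\s_0(\1) \simeq \HH\Z$ while $\tilde\s_0(\1) \simeq \HH\tilde\Z$), so knowing $\HH\Z^\Spi \simeq \s_0(\1)$ via Theorem~\ref{thm:s_0(1)} does not immediately place it in $\SH(D)^\veff$. Second, the vague appeal to ``Spitzweck's construction directly'' or to motivic cohomology being ``concentrated in the appropriate range'' needs substantiation: the characterization of $\SH^\veff \subset \SH^\eff$ in terms of homotopy sheaves \cite[Proposition 4]{Bachmann-slices} is stated over a perfect field, not over a general base.

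The paper's fix is to use the base change stability of $\HH\Z^\Spi/m$ together with Proposition~\ref{prop:veff-detect-on-points} (very effectivity is detected on residue fields) to reduce to $D$ a perfect field. Over a perfect field, $\HH\Z^\Spi/m \simeq \s_0(\1)/m$ is effective by Levine's theorem, and very effectivity then follows from the homotopy-sheaf characterization. This pointwise detection step is the missing ingredient in your argument.
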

\begin{proof}
We first prove that $\HH\Z^\Spi/m$ is very effective. Since $\HH\Z^\Spi/m$ is stable under base change \cite[\sect 9]{SpitzweckHZ}, we may assume that $D = \Z$.  By Proposition~\ref{prop:veff-detect-on-points}, we may further assume that $D$ is the spectrum of a perfect field. In this case, $\HH\Z^\Spi/m \simeq \s_0(\1)/m$ by \cite[Theorem 10.5.1]{Levine:2008}, and in particular $\HH\Z^\Spi/m$ is effective. The fact that $\HH\Z^\Spi/m$ is very effective now follows from the characterization of $\SH(D)^\veff \subset \SH(D)^\eff$ in terms of homotopy sheaves \cite[Proposition 4]{Bachmann-slices}.

It remains to show that for $E \in \SH(D)^\eff_{\geq 1}$ we have $\Map(E, \HH\Z^\Spi/m) \simeq *$. The subcategory of such $E$ is closed under colimits and extensions, so we need only consider the case $E = \Sigma^\infty_+ X[1]$ for $X \in \Sm_D$. This follows from 
\[
\pi_i\Map(\Sigma^\infty_+ X[1], \HH\Z^\Spi/m) \simeq \HH^{-1-i}_\Zar(X, \Z/m) = 0.\qedhere
\]
\end{proof}

We refer to \sect\ref{sub:spans-descent} for the notion of descent for sections and objects of cocartesian fibrations.

\begin{lemma} \label{lemm:etale-descent-criterion}
Let $S$ be a scheme, $t$ a topology on $\Sm_S$, and $E \in \SH(S)^\veff$. Suppose that $\Omega^\infty \Sigma E$ is a $t$-separated presheaf, i.e., that for every $X\in\Sm_S$ and every $t$-covering sieve $\scr R\subset (\Sm_S)_{/X}$, the canonical map
\[
\Map(\Sigma^\infty_+ X, \Sigma E) \to \lim_{U\in\scr R} \Map(\Sigma^\infty_+ U, \Sigma E)
\]
is a monomorphism. Then the section $X\mapsto E_X$ of $\SH^\veff\colon \Sm_S^\op\to\Cat_\infty$ satisfies $t$-descent.
\end{lemma}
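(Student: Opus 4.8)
The statement asserts that if $\Omega^\infty\Sigma E$ is a $t$-separated presheaf, then the section $X\mapsto E_X$ of $\SH^\veff$ satisfies $t$-descent. The plan is to reduce descent for the section to a statement about the individual spectra $E_X$ and then exploit the very effectivity hypothesis together with the Postnikov tower of the effective homotopy $t$-structure.

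First I would unwind what $t$-descent for the section $X\mapsto E_X$ of $\SH^\veff\colon \Sm_S^\op\to\Cat_\infty$ means (using the notion from \sect\ref{sub:spans-descent}): for every $X\in\Sm_S$ and every $t$-covering sieve $\scr R\subset(\Sm_S)_{/X}$, the canonical map $E_X\to\lim_{U\in\scr R}E_U$ should be an equivalence in $\SH(S)^\veff$. Since $\SH(S)^\veff\subset\SH(S)$ is closed under limits and the inclusion is conservative, it suffices to check this in $\SH(S)$; and since $\SH(S)$ is generated under colimits and desuspensions by the $\Sigma^\infty_+Y$ for $Y\in\SmQP_S$, it is equivalent to show that for each such $Y$ the map
\[
\Map(\Sigma^\infty_+Y,E_X)\to\lim_{U\in\scr R}\Map(\Sigma^\infty_+Y,E_U)
\]
is an equivalence of spaces (indeed of spectra, after looping). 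By adjunction $\Map(\Sigma^\infty_+Y,E_X)\simeq\Map(\Sigma^\infty_+(Y\times X),E)$ and similarly for $U$, so the problem becomes: the presheaf $W\mapsto\Map(\Sigma^\infty_+W,E)$ on $\Sm_S$ satisfies $t$-descent. Thus the whole statement reduces to showing that $\Omega^\infty E$ is a $t$-sheaf, given that $\Omega^\infty\Sigma E$ is $t$-separated and $E$ is very effective.

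The key input is that $E\in\SH(S)^\veff=\SH(S)^\eff_{\geq0}$ sits in the connective part of the effective homotopy $t$-structure, so $E$ is the limit of its Postnikov tower $E\simeq\lim_n\tau_{\leq n}^\eff E$ (using Postnikov completeness as in Lemma~\ref{lem:postnikov}, transported to $\SH(S)^\eff$), with layers $\Sigma^n\spi_n^\eff(E)$ that are shifts of objects of the abelian category $\SH(S)^{\eff\heart}$. Since limits of sheaves are sheaves, and since $\Omega^\infty$ commutes with the relevant limits, it is enough to show that each $\Omega^\infty\Sigma^n\spi_n^\eff(E)$ is a $t$-sheaf, i.e. that every object of $\SH(S)^{\eff\heart}$ gives rise to a $t$-sheaf after applying $\Omega^\infty\Sigma^n$. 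For $n\geq1$ the relevant space $\Omega^\infty\Sigma^n M$ for $M\in\SH(S)^{\eff\heart}$ is $(n-1)$-connective; descent for such highly connected presheaves should follow by downward induction on $n$, using that a presheaf of pointed spaces with values in $n$-connective spaces is a sheaf iff its $(n-1)$-th Postnikov truncation is, together with a bootstrap from the separatedness hypothesis. The $n=1$ case — showing $\Omega^\infty\Sigma E$ is actually a $t$-sheaf, not merely separated — is where the hypothesis is used essentially: one knows separatedness, and separatedness of $\Omega^\infty\Sigma^{j}E$ for all $j\geq1$ (which follows because $\Sigma^{j-1}E$ is again very effective for $j\geq1$, being a suspension of a very effective spectrum, so the hypothesis applies to it), and a separated presheaf all of whose higher suspensions are separated, with connective values, is automatically a sheaf. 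Finally, once $\Omega^\infty\Sigma E$ is a $t$-sheaf, the long exact sequence / fiber sequence argument upgrades this to $\Omega^\infty E$ being a $t$-sheaf.

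\textbf{Main obstacle.} The delicate point is the passage from \emph{separated} to \emph{sheaf}: a priori a separated presheaf need not satisfy descent, and one must genuinely use that all the suspensions $\Omega^\infty\Sigma^j E$ ($j\geq1$) are separated and connective. The cleanest way to organize this is probably to observe that for $j\geq1$ the presheaf $\Omega^\infty\Sigma^j E$ takes values in $(j-1)$-connected spaces, reduce mod the $t$-topology using the hypercompleteness of the relevant $\infty$-topos (Lemma~\ref{lem:topos}), and run an induction on the Postnikov tower of these presheaves: the $\pi_0$-sheafification vanishes obstruction is controlled by separatedness one level up. I would therefore spend the bulk of the write-up making precise this inductive argument that ``separated at every suspension level $+$ connective $\Rightarrow$ sheaf,'' and checking that $\Sigma^{j-1}E$ is very effective for $j\geq1$ so that the hypothesis genuinely applies to all of them; the reduction of section-descent to presheaf-descent in the first two paragraphs is routine.
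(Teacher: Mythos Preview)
Your argument has a genuine gap in the ``main obstacle'' step. You claim that separatedness of $\Omega^\infty\Sigma^j E$ for all $j\geq 1$ follows ``because $\Sigma^{j-1}E$ is again very effective \dots\ so the hypothesis applies to it.'' But the hypothesis of the lemma is a specific assumption on the given $E$, not a general theorem about very effective spectra: it asserts that $\Omega^\infty\Sigma E$ is separated, full stop. Replacing $E$ by $\Sigma^{j-1}E$ would require knowing that $\Omega^\infty\Sigma^j E$ is separated, which is exactly what you are trying to deduce. Without separatedness at all levels $j\geq 1$, your inductive bootstrap from ``separated'' to ``sheaf'' cannot get started. There is also a secondary issue: you invoke Postnikov completeness of the effective homotopy $t$-structure on $\SH(S)^\eff$ by analogy with Lemma~\ref{lem:postnikov}, but that lemma is about $\PSh_\Sigma(\scr C)$ and does not transport to $\SH(S)^\eff$; Postnikov completeness of this $t$-structure is not established in the paper (and is a delicate question in general).

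The paper's proof sidesteps all of this with a short trick. Rather than proving that the descent map $\Map(F,E_X)\to\lim_{U\in\scr R}\Map(F_U,E_U)$ is an \emph{equivalence}, one proves the stronger-looking but easier statement that $\Map(F,\Sigma E_X)\to\lim_{U\in\scr R}\Map(F_U,\Sigma E_U)$ is a \emph{monomorphism}; taking loops then yields the desired equivalence. The class of $F\in\SH(X)^\veff$ for which this mono property holds is closed under colimits (monomorphisms are stable under limits) and under extensions (by the 5-lemma), so it suffices to check it on the generators $F=\Sigma^\infty_+ Y$ for $Y\in\Sm_X$---and there it is literally the hypothesis. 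No Postnikov towers, no connectivity bookkeeping, no induction on suspension level.
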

\begin{proof}
We need to prove that for every $X\in\Sm_S$, every $F \in \SH(X)^\veff$, and every $t$-covering sieve $\scr R$ on $X$, the morphism $\Map(F, E_X) \to \lim_{U\in\scr R} \Map(F_U, E_U)$ is an equivalence.
We shall prove the stronger statement that $\Map(F, \Sigma E_X) \to \lim_{U\in\scr R} \Map(F_U, \Sigma E_U)$ is a monomorphism; the desired result follows by taking loops. Since monomorphisms are stable under limits, the subcategory of all $F$ with the desired property is closed under colimits. By the 5-lemma, it is closed under extensions as well. Consequently, it suffices to treat the case $F = \Sigma^\infty_+ Y$ for some $Y \in \Sm_X$. In this case, the claim holds by assumption.
\end{proof}

\begin{theorem}\label{thm:HZ-Spi-norms}
Let $D$ be a Dedekind domain and $m\geq 0$. Then $\HH\Z^\Spi/m \in \SH(D)$ admits a unique structure of normed spectrum over $\Sm_D$ compatible with its homotopy commutative ring structure.
\end{theorem}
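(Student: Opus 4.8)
The strategy is to realize $\HH\Z^\Spi/m$ as the zeroth effective homotopy module of a normed spectrum and invoke Proposition~\ref{prop:NAlg-slices}. By Lemma~\ref{lem:heart}, $\HH\Z^\Spi/m$ lies in $\SH(D)^{\eff\heart}$, so it suffices to exhibit a very effective normed spectrum $E\in\NAlg_\Sm(\SH(D))$ with $\spi_0^\eff(E)\simeq \HH\Z^\Spi/m$ as $\E_0$-algebras, since then the normed enhancement is transported along the functor $\spi_0^\eff$ and its uniqueness follows from the uniqueness of an initial object in $\NAlg_\Sm(\SH^{\eff\heart})$ over the given $\E_0$-algebra (cf.\ Remark~\ref{rmk:norm-uniqueness}). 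The natural candidate is $E=\1/m$: it is very effective, it carries a canonical normed structure (as the cofiber of $m\colon\1\to\1$ in $\NAlg_\Sm(\SH(D))$, using that $\1[1/m']$-localizations and finite colimits of normed spectra make sense by Propositions~\ref{prop:E[1/n]} and~\ref{prop:categorical-props}, or more simply by noting $\1/m$ is a normed $\1$-module), and one is reduced to identifying $\spi_0^\eff(\1/m)$ with $\HH\Z^\Spi/m$.

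First I would address this identification. Over a perfect field this is classical: $\s_0(\1)/m\simeq\HH\Z/m\simeq\HH\Z^\Spi/m$ and, since $\1/m$ is very effective, $\spi_0^\eff(\1/m)\simeq \tilde\s_0(\1/m)$ agrees with $\s_0(\1)/m$ in the relevant range (one uses that $\s_0$ and $\tilde\s_0$ of a very effective spectrum coincide in degree $0$, together with the computations in \cite{Levine:2008}). Over a general Dedekind domain $D$, both $\spi_0^\eff$ and $\HH\Z^\Spi/m$ are stable under pullback along essentially smooth morphisms and, crucially, along the maps $\Spec D\from\Spec\scr O_{D,\mathfrak p}$ to the localizations at primes and then to their residue fields/henselizations; so the plan is to reduce to the henselian-local and then residue-field case by a base change argument, exactly as in the proof of Lemma~\ref{lem:heart}. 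A cleaner route, which I would try first, is to use that both sides are effective homotopy modules and compare their associated homotopy sheaves $\ul\pi_n(-)_*$ on $\Sm_D$; Spitzweck's spectrum is determined by the fact that its homotopy sheaves compute Bloch--Levine motivic cohomology, and the homotopy sheaves of $\spi_0^\eff(\1/m)$ can be read off from Morel's (resp.\ the integral/mixed-characteristic) connectivity theorems together with the known value of $\ul\pi_0(\1)_*/m$.

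The uniqueness clause requires slightly more care than citing Remark~\ref{rmk:norm-uniqueness}, because a priori we only know that $\HH\Z^\Spi/m$ underlies \emph{some} $\E_\infty$-ring (Spitzweck's), and we want the normed structure to be compatible with the homotopy commutative ring structure, not merely the $\E_0$-structure. Here the point is that $\NAlg_\Sm(\SH^{\eff\heart})\to\SH(D)^{\eff\heart}$ is monadic (Proposition~\ref{prop:categorical-props} applied to the normed $\infty$-category $\SH^{\eff\heart\otimes}$), and an object of $\SH(D)^{\eff\heart}$ admitting the structure of a commutative monoid admits it uniquely once one fixes the unit, because $\SH(D)^{\eff\heart}$ is a $1$-category — so any two normed enhancements inducing the same homotopy ring structure are already equal as $\E_\infty$-rings, hence as normed spectra by the initial-object argument. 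I would spell this out via Lemma~\ref{lemm:etale-descent-criterion} only if needed to match the normed structure after \'etale-local trivialization, but I do not expect that to be necessary.

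\textbf{Main obstacle.} The genuinely delicate step is the identification $\spi_0^\eff(\1/m)\simeq\HH\Z^\Spi/m$ over a general Dedekind domain of \emph{mixed} characteristic: away from perfect fields one cannot directly invoke Levine's comparison, and one must combine Spitzweck's construction of $\HH\Z^\Spi$ with the structure of $\SH(D)^\eff$ via homotopy sheaves (using \cite{Bachmann-slices} and the mixed-characteristic connectivity results referenced around Theorem~\ref{thm:GW}), together with a careful base-change reduction to the residue fields and henselian local rings of $D$. Everything downstream — transporting the normed structure along $\spi_0^\eff$ and proving uniqueness — is then formal given the machinery of Section~\ref{sec:coherence} and Proposition~\ref{prop:NAlg-slices}.
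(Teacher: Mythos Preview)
Your approach has a fundamental gap: the spectrum $\1/m$ does not carry a normed structure, nor even an $\E_\infty$-ring structure in general. The parenthetical justifications you offer do not work. The map $m\colon\1\to\1$ is not a morphism of ring spectra (a ring map must preserve the unit), so there is no ``cofiber of $m$ in $\NAlg_\Sm(\SH(D))$'' to take; Proposition~\ref{prop:E[1/n]} concerns localizations $E[1/n]$, not quotients $E/n$; and being a normed $\1$-module is vacuous and says nothing about a ring structure. This is already a problem classically: the mod-$m$ Moore spectrum is not an $\E_\infty$-ring, and for small $m$ not even $\rm A_\infty$. So the starting input to your application of $\spi_0^\eff$ does not exist.

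There is a second, independent problem: even granting a normed structure on $\1/m$, the identification $\spi_0^\eff(\1/m)\simeq\HH\Z^\Spi/m$ fails. Over a field one has $\spi_0^\eff(\1)\simeq\HH\tilde\Z$ (see Example~\ref{ex:Chow-Witt}), whose underlying sheaf is $\underline{\GW}$, not $\underline{\Z}$; reducing mod $m$ gives $\underline{\GW}/m$, which is not $\Z/m$ in general (already for $m=2$). So you are computing the wrong object.

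The paper's argument is entirely different and avoids both issues. It does not attempt to produce $\HH\Z^\Spi/m$ as a slice or truncation of anything. Instead it observes that since $\HH\Z^\Spi/m$ lies in the heart $\SH(D)^{\eff\heartsuit}$ (a $1$-category), the homotopy commutative ring structure upgrades uniquely to an $\E_\infty$-structure; then it verifies, via Lemma~\ref{lemm:etale-descent-criterion} and the computation $\Omega^\infty(\HH\Z^\Spi/m)\simeq\underline{\Z/m}$, that $\HH\Z^\Spi/m$ satisfies \'etale descent as an object of $\SH^\veff$; and finally it invokes Corollary~\ref{cor:automatic-norms2}, which says that an $\E_\infty$-algebra satisfying \'etale descent admits a unique normed enhancement. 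The lemma you said you ``would not expect to need'' is precisely the crux of the proof.
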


\begin{proof}
By Lemma~\ref{lem:heart}, $\HH\Z^\Spi/m$ belongs to the heart of the $t$-structure on $\SH(D)^\eff$, which is a $1$-category. Since this $t$-structure is compatible with the symmetric monoidal structure, there is a unique $\E_\infty$-ring structure on $\HH\Z^\Spi/m$ refining its homotopy commutative ring structure.

Note that the presheaf $\Omega^\infty (\HH\Z^\Spi/m)$ is an étale sheaf on $\Sm_D$ (namely, the constant sheaf $\Z/m$). Since $\HH^1_\Zar(X,\Z/m)=0$ for any $X\in\Sm_D$, we deduce that $\Omega^\infty(\Sigma\HH\Z^\Spi/m)$ is an étale-separated presheaf. Hence, by Lemma \ref{lemm:etale-descent-criterion}, $\HH\Z^\Spi/m\in \SH^\veff(D)$ satisfies étale descent on $\Sm_D$.
It now follows from Corollary \ref{cor:automatic-norms2} applied to $\SH^{\veff\otimes}$ that $\HH\Z^\Spi/m$ admits a unique structure of normed spectrum over $\Sm_D$ refining its $\E_\infty$-ring structure
\end{proof}

\begin{example}
	Let $D$ be a discrete valuation ring. For every $X\in\Sm_D$ and $n\geq 0$, we have
	\[
	\Map(\Sigma^\infty_+X, \Sigma^{2n,n}\HH\Z^\Spi) \simeq z^n(X,*),
	\]
	where $z^n(X,*)$ is Bloch's cycle complex in codimension $n$ \cite[Theorem 1.7]{LevineLocalization}. Since $\HH\Z^\Spi$ is oriented \cite[Proposition 11.1]{SpitzweckHZ}, for every finite étale map $p\colon Y\to X$ of degree $d$, we obtain a norm map
	\[
	\nu_p\colon z^n(Y,*) \to z^{nd}(X,*).
	\]
	We also obtain total power operations, as explained in Example~\ref{ex:totalpower}.
	If $D$ is a more general Dedekind domain, $\Map(\Sigma^\infty_+X, \Sigma^{2n,n}\HH\Z^\Spi)$ is the sheafification of Bloch's cycle complex with respect to the Zariski topology on $\Spec D$.
\end{example}

\begin{example}[Norms on Chow–Witt groups]
	\label{ex:Chow-Witt}
	Let $k$ be a field. The \emph{generalized motivic cohomology spectrum} $\HH\tilde\Z\in \SH(k)$ is defined in \cite{Bachmann-slices} as
	\[
	\HH\tilde\Z=\spi_0^\eff(\1).
	\]
	Assuming $k$ infinite perfect of characteristic $\neq 2$, it is proved in \cite[Theorem 5.2]{BachmannFasel} that $\HH\tilde\Z$ represents Milnor–Witt motivic cohomology as defined in \cite[Definition 4.1.1]{DegliseFasel2}.
	 In general, if $X$ is smooth over $k$ and $\xi\in\K(X)$ has rank $n$, we have
	\[
	[\1_X, \Sigma^{\xi}\HH\tilde\Z_X]\simeq \widetilde{\CH}{}^n(X,\det\xi),
	\]
	where $\widetilde{\CH}{}^n(X,\scr L)=\HH^n_\Nis(X,\underline{\K}{}_n^\MW\otimes_{\Z[\scr O^\times]}\Z[\scr L^\times])$ is the $n$th Chow–Witt group of $X$ twisted by a line bundle $\scr L$ \cite[\sect 3]{CalmesFasel}. Moreover, the forgetful map $\widetilde{\CH}{}^n(X,\scr L)\to \CH^n(X)$ is induced by the map $\spi_0^\eff(\1)\to \spi_0^\eff(\s_0(\1))\simeq \HH\Z$.
	
	By Proposition~\ref{prop:NAlg-slices}, $\HH\tilde\Z$ is a normed spectrum over $\Sm_k$, and $\HH\tilde\Z\to\HH\Z$ is a morphism of normed spectra. In particular, if $p\colon Y\to X$ is a finite étale map of degree $d$ between smooth $k$-schemes, we obtain a norm map $\nu_p\colon \widetilde\CH{}^n(Y,\scr L) \to \widetilde{\CH}{}^{nd}(X,\Norm_p(\scr L))$ and a commutative square
	\begin{tikzmath}
		\diagram{
		\widetilde{\CH}{}^n(Y,\scr L) & \CH^n(Y) \\
		\widetilde{\CH}{}^{nd}(X,\Norm_p(\scr L)) & \CH^{nd}(X)\rlap, \\
		};
		\arrows (11-) edge (-12) (21-) edge (-22) (11) edge node[left]{$\nu_p$} (21) (12) edge node[right]{$\nu_p$} (22);
	\end{tikzmath}
	where the right vertical map is the Fulton–MacPherson norm (see Theorem~\ref{thm:FultonMacPhersonComparison}).
\end{example}

\subsection{Graded normed spectra}

Our goal for the remainder of this section is to show that, if $E$ is a normed spectrum over $\scr C\subset_\fet\Sm_S$, the sum of all its slices $\bigvee_{n\in\Z}\s_n(E)$ is again a normed spectrum over $\scr C$. To do this, we will introduce a graded version of $\SH^\otimes$ as follows. If $\Gamma$ is a (discrete) commutative monoid, we will construct a functor
\[
\SH^{\Gamma\otimes}\colon \Span(\Sch,\all,\fet)\to \CAlg(\what{\Cat}{}_\infty^\mathrm{sift}), \quad S\mapsto \SH(S)^\Gamma,
\]
together with a natural transformation
\begin{equation}\label{eqn:gradedSH}
\bigvee_\Gamma\colon \SH^{\Gamma\otimes} \to \SH^\otimes.
\end{equation}

For $S$ a scheme, let $^\Gamma\SmQP_{S+}$ be the following category:
\begin{itemize}
	\item an object is a pair $(X,\gamma)$ where $X\in\SmQP_{S}$ and $\gamma\colon X\to\Gamma$ is a locally constant function;
	\item a morphism $f\colon (X,\gamma)\to (Y,\delta)$ is a morphism $f\colon X_+\to Y_+$ in $\SmQP_{S+}$ such that $\gamma$ and $\delta$ agree on $f^{-1}(Y)\subset X$.
\end{itemize}
The category $^\Gamma\SmQP_{S+}$ admits finite coproducts, and the obvious functor $\Gamma\times \SmQP_{S+} \to {}^\Gamma\SmQP_{S+}$ is the initial functor that preserves finite coproducts in its second variable (to see this, note that on a quasi-compact scheme a locally constant function takes only finitely many values, so $(X, \gamma) \in {}^\Gamma\SmQP_{S+}$ is a finite disjoint union of pairs $(Y, c)$ where $c$ is a constant function). In particular, we have
\[
\PSh_\Sigma({}^\Gamma\SmQP_{S+}) \simeq \PSh_\Sigma(\SmQP_{S})_\pt^\Gamma.
\]
Moreover, for every homomorphism $\phi\colon \Gamma\to\Gamma'$ the functor $\phi_!\colon \PSh_\Sigma(\SmQP_{S})_\pt^\Gamma\to\PSh_\Sigma(\SmQP_{S})_\pt^{\Gamma'}$ is the left Kan extension of the functor ${}^\Gamma\SmQP_{S+} \to {}^{\Gamma'}\SmQP_{S+}$, $(X,\gamma)\mapsto (X,\phi\circ\gamma)$. Using the commutative monoid structure on $\Gamma$, we can equip $^\Gamma\SmQP_{S+}$ with a symmetric monoidal structure inducing the Day convolution symmetric monoidal structure on $\PSh_\Sigma(\SmQP_{S})_\pt^\Gamma$.

Let $\Gamma^S$ denote the monoid of locally constant functions $S\to\Gamma$. If $f\colon Y\to X$ is any morphism, we let $f^*\colon \Gamma^X \to \Gamma^Y$ be precomposition with $f$. If $p\colon Y\to X$ is finite locally free and $\gamma\colon Y\to\Gamma$, we define $p_*(\gamma)\colon X\to\Gamma$ by
\[ 
p_*(\gamma)(x) = \sum_{y \in p^{-1}(x)} \deg_y(p)\gamma(y), 
\]
where $\deg_y(p)=\dim_{\kappa(x)}\scr O_{Y_x,y}$. Note that if $p$ is étale at $y$, then $\deg_y(p) = [\kappa(y):\kappa(x)]$, since then $\scr O_{Y_x,y} = \kappa(y)$.

\begin{lemma}\label{lem:normed-monoid}
With the notation as above, there is a functor
\[
\Span(\Sch,\all,\flf) \to \CAlg(\Set), \quad S\mapsto \Gamma^S,\quad (U\stackrel f\from T\stackrel p\to S)\mapsto p_* f^*.
\]
\end{lemma}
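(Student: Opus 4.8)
The assignment $S\mapsto\Gamma^S$ together with $f^*$ already gives a functor $\Sch^\op\to\CAlg(\Set)$: pullback of locally constant functions is clearly a monoid homomorphism and is functorial. The content of the lemma is to extend this to the $2$-category of spans $\Span(\Sch,\all,\flf)$ via $p_*$ for $p$ finite locally free, with the two structures interacting correctly. My plan is to verify the hypotheses of the unfurling construction of Barwick \cite[Proposition 11.6]{BarwickMackey} (equivalently \cite[Theorem V.1.3.2.2]{GRderalg}), exactly as was done in \sect\ref{sub:coherence} to build $\SmQP_+^\otimes$ and in \sect\ref{sub:equivariant-norms} to build $\Fin^\otimes$. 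Concretely, one needs: (i) $p_*$ is right adjoint, or at least ``pushforward along $\flf$'', to $f^*$ in a way compatible with composition; here everything is $0$-truncated, so all coherence reduces to a $1$-categorical check; (ii) the base-change compatibility $f^*p_* \simeq q_*g^*$ for a cartesian square with $p\in\flf$; and (iii) the multiplicativity, i.e.\ that $p_*$ and $f^*$ are maps of commutative monoids, so that the resulting span functor lifts from $\Set$ to $\CAlg(\Set)$ — this last point being automatic by Proposition~\ref{prop:automatic-calg} once the underlying $\Span(\Sch,\all,\flf)\to\Set$ is constructed, since $*\in\Span(\Fin)$ is the universal commutative monoid. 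Alternatively, and perhaps more cleanly, one can realize $\Gamma^{(-)}$ as a subfunctor of an already-constructed normed object: the monoid $\Gamma^S$ of locally constant functions $S\to\Gamma$ is naturally $\pi_0$ of the space $\Map_S(S,\Gamma_S)$ where $\Gamma_S=\coprod_\Gamma S$ is the constant $\Gamma$-indexed family of copies of $S$, i.e.\ $\Gamma^S=\FEt_S^\simeq$-valued data; then $f^*$ and $p_*$ are just the pullback and Weil restriction on $\FEt_S$, and the desired functor is extracted from $\FEt^\otimes\colon\Span(\Sch,\all,\fet)\to\Cat_1^\mathrm{lex}$ of \sect\ref{sub:galois} after enlarging $\fet$ to $\flf$ using that Weil restriction along $\flf$ still preserves the relevant class.

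\textbf{Key steps in order.} First I would record the elementary compatibilities of $p_*$ on $\Gamma$-valued functions: functoriality in $p$ (for composable $p,q\in\flf$, $(pq)_*=p_*q_*$, using multiplicativity of the local degrees $\deg_y$ in towers, which is just $\dim_{\kappa(x)}\scr O_{Z_x,z}$ decomposing along the fiber), and the fact that $p_*(\gamma)$ is indeed locally constant (fppf-locally $p$ is free of locally constant rank, and then $p_*$ is a finite sum of copies of $\gamma$ composed with the sections). Second, I would check the base-change identity $f^*p_*\simeq q_*g^*$ pointwise: for a point $x'\in T'$ lying over $x\in T$, the fiber $p^{-1}(x)$ pulls back to $q^{-1}(x')$ with matching local degrees, because $\scr O_{(\text{fiber})}$ is stable under the flat base change $\kappa(x)\to\kappa(x')$. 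Third, I would feed $(S\mapsto\Gamma^S$ with $f^*$, $p_*$, and these compatibilities) into \cite[Proposition 11.6]{BarwickMackey} to get $\Span(\Sch,\all,\flf)\to\Set$. Fourth, observe that $\Gamma^{(-)}$ is valued in commutative monoids, $f^*$ is a monoid map trivially, and $p_*$ is a monoid map by the bilinearity of $(\gamma,x)\mapsto\deg_y(p)\gamma(y)$ in $\gamma$; hence by Proposition~\ref{prop:automatic-calg} the functor lifts uniquely to $\CAlg(\Set)$.

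\textbf{Main obstacle.} There is no deep obstacle; the statement is essentially bookkeeping. The one point requiring genuine (if routine) care is that $p_*$ on $\Gamma$-valued functions is \emph{the} pushforward matching the norm constructions elsewhere in the paper — i.e.\ that the weights $\deg_y(p)=\dim_{\kappa(x)}\scr O_{Y_x,y}$ are the right ones, and in particular that they are compatible with composition and base change. This amounts to the standard facts that fiber dimension is additive in towers of finite locally free morphisms and invariant under field extension of the base point; I would cite \cite[\sect7.6]{NeronModels} and \cite{Stacks} rather than reprove them. The only other thing to be slightly careful about is the hypothesis in \cite[Proposition 11.6]{BarwickMackey} that composition of backward-then-forward spans is governed by pullback squares with the forward leg in $\flf$ — this is exactly the stability of $\flf$ under base change, which holds, so the unfurling applies without modification.
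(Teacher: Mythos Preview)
Your list of checks is exactly right and matches the paper: one verifies that $p_*(\gamma)$ is locally constant, that $(pq)_*=p_*q_*$ via multiplicativity of local degrees, and that $f^*p_*=q_*g^*$ via base-change invariance of fiber dimensions. The paper proves the lemma by carrying out precisely these computations and nothing more.

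Where your framing goes astray is in invoking \cite[Proposition 11.6]{BarwickMackey}. That result (and likewise \cite[Theorem V.1.3.2.2]{GRderalg}) manufactures the forward maps as \emph{adjoints} to the pullbacks; it is how the paper builds $\SmQP^\otimes$ and $\Fin^\otimes$, where $\Weil_p$ and $p_*$ genuinely are right adjoints to $p^*$. Here, $p_*\colon\Gamma^T\to\Gamma^S$ is not adjoint to $p^*\colon\Gamma^S\to\Gamma^T$ (these are maps of sets, and adjoints between discrete categories are inverses), so unfurling does not apply. Your alternative via $\FEt^\otimes$ has the same problem in disguise: Weil restriction of $\coprod_\Gamma T$ along $p$ has $S$-points $\Gamma^T$, not $\Gamma^S$, so it does not encode the weighted-sum operation $p_*$.

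The reason no machinery is needed is that the target $\CAlg(\Set)$ is a $1$-category. A functor from the $2$-category $\Span(\Sch,\all,\flf)$ to a $1$-category is determined by its values on objects and on isomorphism classes of spans, subject only to the ordinary composition law. So once you have verified that $p_*f^*$ depends only on the isomorphism class of the span, is a monoid homomorphism, and respects composition of spans (which decomposes into the two degree identities you listed), you are done --- there is no higher coherence to organize and nothing to feed into a black box. Your use of Proposition~\ref{prop:automatic-calg} for the lift to $\CAlg(\Set)$ is fine, though the paper simply observes directly that $p_*f^*$ is a homomorphism.
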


\begin{proof}
	First we need to check that $p_*(\gamma)$ is locally constant if $\gamma$ is.
	Let $Y=\coprod_i Y_i$ be a coproduct decomposition of $Y$ such that $\gamma$ is constant on $Y_i$ with value $\gamma_i$, and let $p_i\colon Y_i\to X$ be the restriction of $p$. Working locally on $X$, we can assume that each $p_i$ has constant degree $d_i$. Then $p_*(\gamma)$ is constant with value $\sum_i d_i\gamma_i$.
	
It is clear that $p_*f^*$ is a homomorphism and that it depends only on the isomorphism class of the span.
Compatibility with compositions of spans amounts to the following properties of the degree:
\begin{itemize}
	\item If $q\colon Z\to Y$ and $p\colon Y\to X$ are finite locally free and $q(z)=y$, then $\deg_z(p\circ q)=\deg_{y}(p)\deg_z(q)$.
	\item Given a cartesian square of schemes
	\begin{tikzmath}
		\diagram{Y' & Y \\ X' & X \\};
		\arrows (11-) edge node[above]{$g$} (-12) (11) edge node[left]{$p'$} (21) (21-) edge node[above]{$f$} (-22) (12) edge node[right]{$p$} (22);
	\end{tikzmath}
	with $p$ finite locally free and $p(y)=f(x')$, then $\deg_{y}(p)=\sum_{y'}\deg_{y'}(p')$, where $y'$ ranges over the points of $Y'$ such that $g(y')=y$ and $p'(y')=x'$.
\end{itemize}
These formulas follow from the isomorphisms $\scr O_{Z_y,z}\simeq \scr O_{Z_x,z}\otimes_{\scr O_{Y_x,y}}\kappa(y)$ and $\prod_{y'}\scr O_{Y'_{x'},y'}\simeq \kappa(x')\otimes_{\kappa(x)}\scr O_{Y_x,y}$, respectively.
\end{proof}

Using Lemma~\ref{lem:normed-monoid}, we can define
	\[
	^\Gamma\SmQP_+^\otimes\colon \Span(\Sch,\all,\flf)\to \CAlg(\Cat_1), \quad S\mapsto {}^\Gamma\SmQP_{S+}, \quad (U\stackrel f\from T\stackrel p\to S)\mapsto p_\otimes f^*,
	\]
by setting 
\[
f^*(X,\gamma)=(X\times_UT, f_X^*(\gamma)) \quad\text{and}\quad p_\otimes(X,\gamma)=(\Weil_pX,q_* e^*(\gamma)),\] 
where $X\xleftarrow{e} \Weil_pX\times_ST \xrightarrow{q}\Weil_pX$. The effects of $f^*$ and $p_\otimes$ on morphisms, as well as their symmetric monoidal structures, are uniquely determined by the requirement that the faithful functor $^\Gamma\SmQP_{S+} \to \SmQP_{S+}$ be natural in $S\in\Span(\Sch,\all,\flf)$. It is then easy to check that, for every homomorphism $\phi\colon \Gamma\to\Gamma'$, the functor $^\Gamma\SmQP_{S+} \to {}^{\Gamma'}\SmQP_{S+}$ is also natural in $S$. Hence, we obtain a functor
\[
^?\SmQP_+^\otimes\colon \CAlg(\Set) \times \Span(\Sch,\all,\flf) \to \CAlg(\Cat_1).
\]

It is now straightforward to repeat the steps of \sect\ref{sub:coherence} to arrive at the functor
\[
\SH^{?\otimes}\colon \CAlg(\Set) \times \Span(\Sch,\all,\fet) \to \CAlg(\what\Cat{}_\infty^\mathrm{sift}), \quad (\Gamma,S)\mapsto \SH(S)^\Gamma.
\]
The main points are that the $\infty$-category $\H_\pt(S)^\Gamma$ is obtained from $\PSh_\Sigma(\SmQP_{S})_\pt^\Gamma$ by inverting the families of motivic equivalences, which are generated by Nisnevich sieves and $\A^1$-homotopy equivalences in $^\Gamma\SmQP_{S+}$, and that the compactly generated symmetric monoidal $\infty$-category $\SH(S)^\Gamma$ is obtained from $\H_\pt(S)^\Gamma$ by inverting the motivic spheres placed in degree $0\in\Gamma$.
In particular, the homomorphism $\Gamma\to 0$ yields the natural transformation~\eqref{eqn:gradedSH}.

\begin{remark}\label{rmk:non-discrete-Gamma}
	If $\Gamma$ is more generally a symmetric monoidal $\infty$-groupoid, the above construction is still possible if ``locally constant'' is understood with respect to the finite étale topology. We can then use Corollary~\ref{cor:automatic-norms} to define $^?\SmQP_+^\otimes\colon \CAlg(\scr S)\times \Span(\Sch,\all,\fet)\to\CAlg(\Cat_\infty)$. However, it is not anymore the case that $\PSh_\Sigma({}^\Gamma\SmQP_{S+}) \simeq \PSh_\Sigma(\SmQP_{S})_\pt^\Gamma$. Essentially, the reason norms exist on $\Gamma$-graded motivic spectra when $\Gamma$ is discrete is that the $\pi_0$ of schemes in the Nisnevich and finite étale topologies agree.
\end{remark}

\begin{remark} \label{rmk:graded-f*}
If $E \in \SH(X)^\Gamma$ and $f\colon Y \to X$ is any morphism, then for $\gamma \in \Gamma$ we have $(f^* E)_\gamma = f^*(E_\gamma)$. In other words, the graded pullback functor $f^*\colon \SH(X)^\Gamma\to \SH(Y)^\Gamma$ is computed degreewise.
In particular, if $f$ is smooth, then $f^*\colon \SH(X)^\Gamma \to \SH(Y)^\Gamma$ has a left adjoint $f_\sharp$, which is also computed degreewise.
\end{remark}

\begin{remark} \label{rmk:graded-distributivity}
Given the diagram~\eqref{eqn:exponential} with $p$ finite étale and $h$ smooth and quasi-projective, the distributivity transformation
\[
\Dist_{\sharp\otimes}\colon f_\sharp q_\otimes e^* \to p_\otimes h_\sharp \colon \SH(U)^\Gamma \to \SH(S)^\Gamma
\]
is an equivalence. As in the proof of Proposition~\ref{prop:distributivity}(1), one reduces to checking that the distributivity transformation $\Dist_{\sharp\otimes}\colon f_\sharp q_\otimes e^* \to p_\otimes h_\sharp \colon {}^\Gamma\SmQP_{U+} \to {}^\Gamma\SmQP_{S+}$ is an equivalence, which is an immediate consequence of the definitions.
In particular, $\SH^{\Gamma\otimes}$ is a presentably normed $\infty$-category over the category of schemes, in the sense of Definition~\ref{dfn:normed-category}.
\end{remark}

Given $\gamma\in\Gamma$, the functor $\gamma^*\colon \SH(X)^\Gamma\to \SH(X)$, $E\mapsto E_\gamma$, has a left adjoint $\gamma_!\colon \SH(X)\to \SH(X)^\Gamma$ given by
\[
(\gamma_!E)_\delta =
\begin{cases}
	E & \text{if $\delta=\gamma$,} \\
	0 & \text{otherwise.}
\end{cases}
\]

\begin{lemma}\label{lem:special-graded-fo}
	Let $p\colon T\to S$ be a finite étale map of constant degree $d$ and let $\gamma\in\Gamma$.
	Then there is an equivalence of $\SH(T)$-module functors
	\[
	p_\otimes\gamma_! \simeq (d\gamma)_! p_\otimes\colon \SH(T) \to \SH(S)^\Gamma.
	\]
\end{lemma}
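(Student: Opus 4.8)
The statement is that for a finite étale map $p\colon T\to S$ of constant degree $d$ and $\gamma\in\Gamma$, we have $p_\otimes\gamma_!\simeq (d\gamma)_!p_\otimes$ as $\SH(T)$-module functors $\SH(T)\to\SH(S)^\Gamma$. The strategy is the familiar one used throughout this section: reduce the identity of symmetric monoidal, sifted-colimit-preserving functors on $\SH(T)$ to an identity of functors on the generating subcategory ${}^\Gamma\SmQP_{T+}$ (or rather on $\SmQP_{T+}$ together with the grading data), where both sides are described by explicit $1$-categorical formulas. Concretely, both $p_\otimes\gamma_!$ and $(d\gamma)_!p_\otimes$ preserve sifted colimits (by Proposition~\ref{prop:stablenorm} and the fact that $\gamma_!$, $(d\gamma)_!$ are left adjoints), so by the universal property of $\PSh_\Sigma$, $\P^1$-stabilization, etc.\ — exactly the chain of universal properties assembled in \sect\ref{sub:coherence} and reused for $\SH^{?\otimes}$ — it suffices to produce a natural equivalence after restriction to $\SmQP_{T+}$, i.e.\ on objects of the form $X_+$ with $X\in\SmQP_T$.

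\textbf{Key steps.} First I would compute $\gamma_!(X_+)$ in $\SH(T)^\Gamma$: by definition of $\gamma_!$ and the identification $\PSh_\Sigma({}^\Gamma\SmQP_{T+})\simeq\PSh_\Sigma(\SmQP_T)_\pt^\Gamma$, this is the suspension spectrum of the object $(X,\gamma)\in{}^\Gamma\SmQP_{T+}$ where $\gamma$ denotes the constant function with value $\gamma$. Then I would apply $p_\otimes\colon\SH(T)^\Gamma\to\SH(S)^\Gamma$, which on ${}^\Gamma\SmQP_{T+}$ is given by the formula $p_\otimes(X,\gamma)=(\Weil_pX,\, q_*e^*\gamma)$ in the notation $X\xleftarrow{e}\Weil_pX\times_ST\xrightarrow{q}\Weil_pX$. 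Since $\gamma$ is the constant function with value $\gamma$ on $X$, the pullback $e^*\gamma$ is the constant function with value $\gamma$ on $\Weil_pX\times_ST$, and the pushforward $q_*$ along the finite locally free map $q$ (which has constant degree $d$ because $p$ has degree $d$, as $\Weil_pX\times_ST\to\Weil_pX$ is the base change of $T\to S$) sends this to the constant function with value $d\gamma$ by the definition $p_*(\delta)(x)=\sum_{y}\deg_y\,\delta(y)$ from Lemma~\ref{lem:normed-monoid}. Hence $p_\otimes\gamma_!(X_+)\simeq\Sigma^\infty(\Weil_pX,\, d\gamma)$. On the other side, $(d\gamma)_!p_\otimes(X_+)\simeq(d\gamma)_!\Sigma^\infty_+\Weil_pX\simeq\Sigma^\infty(\Weil_pX,\, d\gamma)$ directly. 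So the two agree on generators, naturally in $X$; one then checks compatibility with the collapse maps $(X\amalg Y)_+\to X_+$ exactly as in Theorem~\ref{thm:norm}(3), which is the kind of routine verification that is $1$-categorical once $p_\otimes$ is unwound. The $\SH(T)$-linearity of the equivalence follows because both $\gamma_!$ and $(d\gamma)_!$ are $\SH(T)$-module functors (using that $\gamma^*$ and $(d\gamma)^*$ are symmetric monoidal in degree-$0$ slot) and $p_\otimes$ intertwines the module structures via $p^*$, as recorded in Remark~\ref{rmk:mates} and its graded analog.

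\textbf{Main obstacle.} The genuinely delicate point is not the formula on objects — which is essentially immediate from the definition of the monoid functor $S\mapsto\Gamma^S$ in Lemma~\ref{lem:normed-monoid} — but rather making the equivalence \emph{coherent}: promoting the degreewise identification to an equivalence of $\SH(T)$-module functors valued in $\SH(S)^\Gamma$ rather than just a pointwise isomorphism. The clean way to do this is to upgrade $\gamma_!$ to a morphism of normed $\infty$-categories (or at least to exhibit it as induced by a functoriality of the graded structure), so that the equivalence $p_\otimes\gamma_!\simeq(d\gamma)_!p_\otimes$ becomes an instance of the coherence already built into $\SH^{?\otimes}$ in \sect\ref{sub:coherence}; I expect that tracing through the universal properties used to construct $\SH^{?\otimes}$, together with the observation that the functor ${}^\Gamma\SmQP_{S+}\to{}^{\Gamma'}\SmQP_{S+}$ associated to a monoid homomorphism is natural in $S\in\Span(\Sch,\all,\fet)$, suffices. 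Alternatively, and perhaps more cheaply, one can observe that $\gamma_!$ and $p_\otimes$ are both $\Pr^\mathrm{L}$-morphisms (after restricting along $\Sigma^\infty$ and using that $\gamma^*$ preserves colimits), reduce to comparing right adjoints $\gamma^*p^* $ versus $p^*(d\gamma)^*$ — and since $p^*$ is computed degreewise (Remark~\ref{rmk:graded-f*}) this last comparison is transparent — then dualize. I would try the right-adjoint reformulation first, as it sidesteps the bar-construction bookkeeping for collapse maps entirely.
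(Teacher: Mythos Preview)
Your main approach is correct and is essentially the paper's proof: check the identity $p_\otimes(X,\gamma)\simeq(\Weil_pX,d\gamma)$ in ${}^\Gamma\SmQP_{S+}$ (which is exactly your computation via $q_*e^*\gamma=d\gamma$ for the constant function $\gamma$ and the degree-$d$ map $q$), observe that this is an equivalence of $\SmQP_{T+}$-module functors natural in $X_+$, and then propagate through $\PSh_\Sigma$, motivic localization, and $\P^1$-stabilization. The paper handles the $\SH(T)$-linearity cleanly by invoking Remark~\ref{rmk:C-module}, the universal property of $\Sigma^\infty\colon\H_\pt(T)\to\SH(T)$ as a filtered-colimit-preserving $\H_\pt(T)$-\emph{module} functor; this is the right tool and is what you should cite instead of the vaguer ``tracing through the universal properties''.

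Your proposed alternative via right adjoints, however, does not work: $p_\otimes$ is \emph{not} a morphism in $\Pr^{\mathrm L}$ --- it only preserves sifted colimits, not all colimits (this failure is the content of Proposition~\ref{prop:polynomial}) --- and $p^*$ is certainly not its right adjoint. So there is no way to reduce the statement to a comparison of $\gamma^*p^*$ with $p^*(d\gamma)^*$ by dualizing. Stick with the generators-and-universal-properties argument.
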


\begin{proof}
	If $X_+\in\SmQP_{T+}$, we have by definition $p_\otimes(X,\gamma) \simeq (\Weil_pX,d\gamma)$ in ${}^\Gamma\SmQP_{S+}$, naturally in $X_+$, and this is clearly an equivalence of $\SmQP_{T+}$-module functors. Since $\H_\pt(T)$ is a localization of $\PSh_\Sigma(\SmQP_{T+})$, we obtain an equivalence of $\H_\pt(T)$-module functors 
	\[p_\otimes\gamma_! \simeq (d\gamma)_! p_\otimes\colon  \H_\pt(T) \to \H_\pt(S)^\Gamma.\]
	We conclude using the universal property of $\Sigma^\infty\colon \H_\pt(T)\to\SH(T)$ as a filtered-colimit-preserving $\H_\pt(T)$-module functor (see Remark~\ref{rmk:C-module}).
\end{proof}

If $\scr C\subset_\fet\Sch_S$ and $\Gamma$ is a commutative monoid, a \emph{$\Gamma$-graded normed spectrum} over $\scr C$ is an object of $\NAlg_\scr C(\SH^\Gamma)$, i.e., a section of the functor $\SH^{\Gamma\otimes}\colon \Span(\scr C,\all,\fet)\to\what\Cat{}_\infty$ that is cocartesian over backward morphisms.

If $\phi\colon \Gamma \to \Gamma'$ is a morphism of commutative monoids, the functor $\SH^{?\otimes}$ provides a natural transformation $\phi_!\colon \SH^{\Gamma\otimes}\to \SH^{\Gamma'\otimes}$, which has an objectwise right adjoint given by the precomposition functor $\phi^*\colon \SH(S)^{\Gamma'} \to \SH(S)^\Gamma$. By Lemmas \ref{lemm:construct-relative-adjoint}(1) and~\ref{lemm:adjoints-pass-to-sections}, there is an induced adjunction 
\[
\phi_!:\Sect(\SH^{\Gamma\otimes}|\Span(\scr C,\all,\fet))\rightleftarrows \Sect(\SH^{\Gamma'\otimes}|\Span(\scr C,\all,\fet)):\phi^*,
\]
 where $\phi_!$ preserves $\Gamma$-graded normed spectra.
 Since the functors $\phi^*$ obviously commute with pullback functors, $\phi^*$ also preserves $\Gamma$-graded normed spectra
 and we obtain an adjunction
\[ \phi_!: \NAlg_\scr{C}(\SH^\Gamma) \rightleftarrows \NAlg_\scr{C}(\SH^{\Gamma'}): \phi^*. \]
In particular, the morphism $\Gamma \to 0$ gives rise to an adjunction
\begin{equation}\label{eqn:sum-gamma}
	\bigvee_\Gamma: \NAlg_\scr{C}(\SH^\Gamma) \rightleftarrows \NAlg_\scr{C}(\SH): \cst_\Gamma,
\end{equation}
where $\bigvee_\Gamma E = \bigvee_{\gamma\in\Gamma} E_\gamma$  and $\cst_\Gamma (E)_\gamma = E$ for all $\gamma\in\Gamma$.

\begin{example}\label{ex:free-graded}
Let $\scr C \subset_\fet \Sm_S$.
The element $1\in \bb N$ induces an adjunction 
\[1_!: \SH(S) \rightleftarrows \SH(S)^\bb N: 1^*,\]
 and the forgetful functor $U\colon \NAlg_\scr{C}(\SH) \to \SH(S)$ factors as 
\[ \NAlg_\scr{C}(\SH) \xrightarrow{\cst_\bb N} \NAlg_\scr{C}(\SH^\bb N) \xrightarrow{U^\bb N} \SH(S)^\bb N \xrightarrow{1^*} \SH(S). \] 
Consequently, the free normed spectrum functor $\NSym_\scr C\colon \SH(S) \to \NAlg_\scr C(\SH)$ left adjoint to $U$ (see Remark~\ref{rmk:freeNAlg}) can be written as
\[ \NSym_\scr C E \simeq \bigvee_{n\in\bb N} (\NSym_\scr C^\bb N 1_! E)_{n}, \]
where $\NSym_\scr C^\bb N$ is left adjoint to $U^\bb N$.
Thus, free normed spectra are canonically $\bb N$-graded. The spectrum
 $\NSym_\scr C^n(E)= (\NSym_\scr C^\bb N 1_! E)_{n}$ is the ``$n$th normed symmetric power'' of $E$. 
If $\scr C=\Sm_S$ or $\scr C=\FEt_S$, one can show that
	\[
	\NSym_\scr C^n(E) \simeq \colim_{\substack{f\colon X\to S\\p\colon Y\to X}} f_\sharp p_\otimes(E_Y),
	\]
	where $f$ ranges over $\scr C$ and $p$ ranges over the groupoid of finite étale covers of $X$ of degree $n$; see Remarks \ref{rmk:freeNAlg-2} and~\ref{rmk:freeNAlg-3}.
\end{example}

\subsection{The graded slices of a normed spectrum}

In this subsection, we show that if $E$ is a normed spectrum over $\scr C\subset_\fet\Sm_S$, then its slices $\s_*(E)$ and generalized slices $\tilde\s_*(E)$ are $\Z$-graded normed spectra over $\scr C$.

Let $\SH(S)^{\Z\eff}\subset\SH(S)^\Z$ be the full subcategory consisting of the $\Z$-graded spectra $(E_n)_{n\in \Z}$ with $E_n \in \SH(S)^\eff(n)$.
Note that $\SH(S)^{\Z\eff}$ is a symmetric monoidal subcategory of $\SH(S)^\Z$, since
\[
\SH(S)^\eff(n) \wedge \SH(S)^\eff(m) \subset \SH(S)^\eff(n+m).
\]
This subcategory is generated under colimits by $n_!\Sigma^{2n,n}E$ for $E\in\SH(S)^\eff$ and $n\in \Z$.
If $p\colon S\to S'$ is a finite étale map of constant degree $d$, it follows from Lemma~\ref{lem:special-graded-fo} that
\[
p_\otimes(n_!\Sigma^{2n,n} E) \simeq (dn)_!(p_\otimes(\S^{2n,n})\wedge p_\otimes(E)),
\]
which belongs to $\SH(S')^{\Z\eff}$ since $p_\otimes$ preserves effective spectra and $p_\otimes(\S^{2n,n})$ is $dn$-effective (Lemma~\ref{lemm:norms-circles}(3)). Applying Proposition~\ref{prop:normed-subcategory}, we obtain a subfunctor
\[
\SH^{\Z\eff\otimes}\subset\SH^{\Z\otimes} \colon \Span(\Sch,\all,\fet) \to \CAlg(\what\Cat{}_\infty^\mathrm{sift}).
\]

Note that the inclusion $\SH(S)^{\Z\eff} \into \SH(S)^\Z$ has a right adjoint $\f_{*}\colon \SH(S)^\Z \to \SH(S)^{\Z\eff}$ given by $\f_{*}(E)_{n} = \f_n(E_{n})$ for $n \in \Z$. Furthermore, the localization functors $\s_n\colon \SH(S)^\eff(n)\to \SH(S)^\eff(n)$ assemble into a localization functor $\s_*\colon \SH(S)^{\Z\eff} \to \SH(S)^{\Z\eff}$, given by $\s_* (E)_{n} = \s_n E_{n}$. This localization is obtained by killing the tensor ideal in $\SH(S)^{\Z\eff}$ generated by $0_!\S^{\A^1}$. By Lemma~\ref{lemm:norms-circles}(3), these tensor ideals form a normed ideal in $\SH^{\Z\eff\otimes}$. Applying Corollary~\ref{cor:normed-nullification} to this normed ideal, we obtain a natural transformation
\[
\s_*\colon \SH^{\Z\eff\otimes}\to \s_*\SH^{\Z\eff\otimes}\colon \Span(\Sch,\all,\fet) \to \CAlg(\what\Cat{}_\infty^\mathrm{sift}).
\]

Similarly, if we define $\SH(S)^{\Z\veff}\subset \SH(S)^{\Z}$ to be the full subcategory consisting of the $\Z$-graded spectra $(E_n)_{n\in \Z}$ with $E_n \in \SH(S)^\veff(n)$, we have a subfunctor
\[
\SH^{\Z\veff\otimes}\subset \SH^{\Z\eff\otimes}\colon \Span(\Sch,\all,\fet) \to \CAlg(\what\Cat{}_\infty^\mathrm{sift})
\]
and a natural transformation
\[
\tilde\s_*\colon \SH^{\Z\veff\otimes}\to \tilde\s_*\SH^{\Z\veff\otimes}\colon \Span(\Sch,\all,\fet) \to \CAlg(\what\Cat{}_\infty^\mathrm{sift}).
\]

Finally, $\SH(S)^{\Z\veff}$ is the nonnegative part of a $t$-structure on $\SH(S)^{\Z\eff}$. The $1$-connective part is the tensor ideal in $\SH(S)^{\Z\veff}$ generated by $0_!\S^1$. By Lemma~\ref{lemm:norms-circles}(2), these tensor ideals form a normed ideal in $\SH^{\Z\veff\otimes}$. Applying Corollary~\ref{cor:normed-nullification}, we obtain a natural transformation
\[
\spi_0^{\Z\eff}\colon \SH^{\Z\veff\otimes}\to \SH^{\Z\eff\heartsuit\otimes}\colon \Span(\Sch,\all,\fet) \to \CAlg(\what\Cat{}_\infty^\mathrm{sift}).
\]

\begin{proposition}\label{prop:NAlg-graded-slices}
	Let $S$ be a scheme and $\scr C\subset_\fet\Sm_S$. Then there are adjunctions
	\begin{gather*}
		\begin{tikzpicture}[ampersand replacement=\&]
			\diagram{\NAlg_\scr C(\SH^{\Z\eff}) \& \NAlg_\scr C(\SH^\Z)\rlap,\\};
			\arrows (11-) edge[vshift=\dbl,c->] (-12) (-12) edge[vshift=\dbl] node[below,vshift=\dbl]{$\f_*$} (11-);
		\end{tikzpicture}
		\\
		\begin{tikzpicture}[ampersand replacement=\&]
			\diagram{\NAlg_\scr C(\SH^{\Z\veff}) \& \NAlg_\scr C(\SH^\Z)\rlap,\\};
			\arrows (11-) edge[vshift=\dbl,c->] (-12) (-12) edge[vshift=\dbl] node[below,vshift=\dbl]{$\tilde\f_*$} (11-);
		\end{tikzpicture}
		\\
		\begin{tikzpicture}[ampersand replacement=\&]
			\diagram{\NAlg_\scr C(\SH^{\Z\eff}) \& \NAlg_\scr C(\s_*\SH^{\Z\eff})\rlap,\\};
			\arrows (11-) edge[vshift=\dbl] node[above,vshift=\dbl]{$\s_*$} (-12) (-12) edge[vshift=\dbl,c->] (11-);
		\end{tikzpicture}
		\\
		\begin{tikzpicture}[ampersand replacement=\&]
			\diagram{\NAlg_\scr C(\SH^{\Z\veff}) \& \NAlg_\scr C(\tilde\s_*\SH^{\Z\veff})\rlap,\\};
			\arrows (11-) edge[vshift=\dbl] node[above,vshift=\dbl]{$\tilde\s_*$} (-12) (-12) edge[vshift=\dbl,c->] (11-);
		\end{tikzpicture}
		\\
		\begin{tikzpicture}[ampersand replacement=\&]
			\diagram{\NAlg_\scr C(\SH^{\Z\veff}) \& \NAlg_\scr C(\SH^{\Z\eff\heartsuit})\rlap,\\};
			\arrows (11-) edge[vshift=\dbl] node[above,vshift=\dbl]{$\spi_0^{\Z\eff}$} (-12) (-12) edge[vshift=\dbl,c->] (11-);
		\end{tikzpicture}
	\end{gather*}
	where the functors $\f_*$, $\tilde\f_*$, $\s_*$, $\tilde\s_*$, and $\spi_0^{\Z\eff}$ are computed pointwise.
\end{proposition}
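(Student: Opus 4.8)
The plan is to run the proof of Proposition~\ref{prop:NAlg-slices} with every ungraded functor replaced by its $\Z$-graded analogue, using Remarks~\ref{rmk:graded-f*} and~\ref{rmk:graded-distributivity} to reduce the one genuine verification to the ungraded statements cited there. Each of the five adjunctions comes from a natural transformation of (presentably) normed $\infty$-categories over $\Sch$ constructed above, whose pointwise components admit pointwise adjoints: the subfunctor inclusions $\SH^{\Z\eff\otimes}\into\SH^{\Z\otimes}$ and $\SH^{\Z\veff\otimes}\into\SH^{\Z\otimes}$ have pointwise right adjoints $\f_*$ and $\tilde\f_*$, while the nullification transformations $\s_*$, $\tilde\s_*$, $\spi_0^{\Z\eff}$ have pointwise right adjoints given by the inclusions of their reflective targets. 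Applying Lemma~\ref{lemm:construct-relative-adjoint}(1) and Lemma~\ref{lemm:adjoints-pass-to-sections} to the resulting maps of cocartesian fibrations over $\Span(\scr C,\all,\fet)$ yields adjunctions between the corresponding $\infty$-categories of sections whose left and right adjoints are computed pointwise. In each case the left adjoint ($\s_*$, $\tilde\s_*$, $\spi_0^{\Z\eff}$, or a subfunctor inclusion) is a map of normed $\infty$-categories, hence a natural transformation over all of $\Span(\scr C,\all,\fet)$; therefore it carries sections that are cocartesian over $\scr C^\op$ to sections that are cocartesian over $\scr C^\op$, i.e.\ it preserves normed spectra. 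This gives the first half of each adjunction.

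It remains to check that the right adjoints also preserve normed spectra. By the characterization of sections cocartesian over $\scr C^\op$ recalled after Definition~\ref{def:normedspectrum}, a pointwise-defined functor on sections preserves this condition as soon as it commutes with $\pi_X^*$ for the structure map $\pi_X\colon X\to S$ of every $X\in\scr C$; since $\scr C\subset_\fet\Sm_S$, these $\pi_X$ are smooth. Thus I would reduce to: (i) $\f_*$ and $\tilde\f_*$ commute with smooth base change; and (ii) for $f$ smooth, $f^*$ preserves $\s_*$-local, $\tilde\s_*$-local, and $\spi_0^{\Z\eff}$-local objects. For (i), Remark~\ref{rmk:graded-f*} identifies $\f_*$, $\tilde\f_*$ and the graded pullback degreewise, so (i) becomes exactly the smooth base change compatibility of $\f_n$ and $\tilde\f_n$ used in Proposition~\ref{prop:NAlg-slices}. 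For (ii), passing to the left adjoint $f_\sharp$ (which exists because $f$ is smooth), it suffices that $f_\sharp$ preserve the normed ideals whose nullifications define $\s_*$, $\tilde\s_*$, $\spi_0^{\Z\eff}$, i.e.\ the tensor ideals generated by $0_!\S^{\A^1}$ and $0_!\S^1$; since each of these generators is pulled back from the base, the projection formula reduces this to $f_\sharp$ preserving $\SH^{\Z\eff}$ and $\SH^{\Z\veff}$, which by Remark~\ref{rmk:graded-f*} reduces degreewise to $f_\sharp$ preserving $\SH(S)^\eff(n)$ and $\SH(S)^\veff(n)$ — once more a fact invoked in Proposition~\ref{prop:NAlg-slices}.

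The whole argument is formal given the graded constructions preceding the statement. I do not expect a genuine obstacle; the only point requiring some care is the bookkeeping in the previous paragraph, which converts ``the pointwise right adjoint preserves sections cocartesian over $\scr C^\op$'' into a smooth-base-change statement for the (generalized) slice functors, together with the routine observation (Remark~\ref{rmk:graded-f*}) that the $\Z$-graded statements collapse to their ungraded counterparts degree by degree.
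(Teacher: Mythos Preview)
Your proposal is correct and follows essentially the same approach as the paper, which simply says ``Same as Proposition~\ref{prop:NAlg-slices}.'' You have faithfully unpacked that proof in the $\Z$-graded setting: apply Lemmas~\ref{lemm:construct-relative-adjoint}(1) and~\ref{lemm:adjoints-pass-to-sections} to obtain the adjunctions on sections with left adjoints preserving normed spectra, then verify the right adjoints do too by reducing to the fact that for smooth $f$ the functor $f_\sharp$ preserves the relevant graded subcategories (which, via Remark~\ref{rmk:graded-f*}, collapses degreewise to the ungraded statement that $f_\sharp$ preserves $n$-effective, very $n$-effective, and $1$-connective effective spectra).
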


\begin{proof}
	Same as Proposition~\ref{prop:NAlg-slices}
\end{proof}

\begin{example}\label{ex:graded-slice}
	Let $\scr C\subset_\fet\Sm_S$.
If $E \in \NAlg_\scr C(\SH)$ then $\s_{*} \f_{*}\cst_\Z (E) \in \NAlg_\scr C(\SH^{\Z})$ is a $\Z$-graded normed spectrum whose underlying $\Z$-graded spectrum is $\s_*E$. In particular, if $p\colon X \to Y$ is a finite étale map in $\scr C$ of constant degree $d$, we get a norm map $p_\otimes (\s_n E_X) \to \s_{nd} E_Y$. The same statements hold with $\tilde\f_*$ and $\tilde\s_*$ instead of $\f_*$ and $\s_*$.
\end{example}

\begin{example}\label{ex:sum-of-slices}
	Combining Example~\ref{ex:graded-slice} with the adjunction~\eqref{eqn:sum-gamma}, we deduce that if $E$ is a normed spectrum over $\scr C\subset_\fet\Sm_S$, then $\bigvee_{n\in\Z}\s_n E$ and $\bigvee_{n\in\Z}\tilde\s_n E$ are normed spectra over $\scr C$.
\end{example}

\section{Norms of cycles}
\label{sec:PST}

In this section, schemes are assumed to be noetherian. We denote by $\Sch^\noe$ the category of noetherian schemes.
Our first goal is to construct a functor
\[
\DM^\otimes\colon \Span(\Sch^\noe,\all,\fet) \to \CAlg(\what\Cat{}_\infty^\mathrm{sift}),\quad S\mapsto \DM(S),
\]
where $\DM(S)$ is Voevodsky's $\infty$-category of motives over $S$, together with a natural transformation
\[
\Z_\tr\colon \SH^\otimes \to \DM^\otimes.
\]
As a formal consequence, we will deduce that Voevodsky's motivic cohomology spectrum $\HH\Z_S\in\SH(S)$ is a normed spectrum, for every noetherian scheme $S$. 
In the case where $S$ is smooth and quasi-projective over a field, we will then compare the residual structure on Chow groups with the multiplicative transfers constructed by Fulton and MacPherson \cite{FultonMacPherson}.

\subsection{Norms of presheaves with transfers}
\label{sub:PST}

Let us first recall the definition of $\DM(S)$. Let $\SmQPCor_S$ denote the additive category of finite correspondences between smooth quasi-projective $S$-schemes \cite[\sect9.1]{CD}. The set of morphisms from $X$ to $Y$ in $\SmQPCor_S$ will be denoted by $c_S(X,Y)$; it is the group of relative cycles on $X\times_SY/X$ that are finite and universally integral over $X$. The category $\SmQPCor_S$ admits a symmetric monoidal structure such that the functor 
\begin{equation}\label{eqn:graph}
\SmQP_{S+}\to \SmQPCor_S, \quad X_+\mapsto X,\quad (f\colon X_+\to Y_+)\mapsto \Gamma_f\cap(X\times Y),
\end{equation}
is symmetric monoidal \cite[\sect9.2]{CD}.
 The symmetric monoidal $\infty$-category of \emph{presheaves with transfers} on $\SmQP_S$ is the $\infty$-category $\PSh_\Sigma(\SmQPCor_S)$ equipped with the Day convolution symmetric monoidal structure.
We let
\[
\H_\tr(S) \subset \PSh_\Sigma (\SmQPCor_S)
\]
be the reflective subcategory spanned by the presheaves with transfers whose underlying presheaves are $\A^1$-homotopy invariant Nisnevich sheaves. The localization functor $\PSh_\Sigma (\SmQPCor_S)\to\H_\tr(S)$ is then compatible with the Day convolution symmetric monoidal structure and can be promoted to a symmetric monoidal functor. 
We will say that a morphism in $\PSh_\Sigma (\SmQPCor_S)$ is a \emph{motivic equivalence} if its reflection in $\H_\tr(S)$ is an equivalence. 

\begin{remark}\label{rmk:compat-transfers}
	Because the Nisnevich topology is compatible with transfers \cite[Proposition 10.3.3]{CD}, it follows from Lemma~\ref{lem:voevodsky} that the forgetful functor $\PSh_\Sigma(\SmQPCor_S) \to \PSh_\Sigma(\SmQP_S)$ reflects motivic equivalences (c.f.\ \cite[Theorem 1.7]{MEMS}). 
\end{remark}

The functor~\eqref{eqn:graph} induces by left Kan extension a symmetric monoidal functor 
\[
\Z_\tr\colon \PSh_\Sigma(\SmQP_S)_\pt\to \PSh_\Sigma (\SmQPCor_S),
\]
which preserves motivic equivalences (by definition) and hence induces a symmetric monoidal functor 
\begin{equation}\label{eqn:ZtrH}
\Z_\tr\colon \H_\pt(S)\to\H_\tr(S).
\end{equation}
Finally, $\DM(S)$ is the presentably symmetric monoidal $\infty$-category obtained from $\H_\tr(S)$ by inverting $\Z_\tr\S^{\A^1}$, and $\Z_\tr\colon \SH(S)\to\DM(S)$ is the unique colimit-preserving symmetric monoidal extension of~\eqref{eqn:ZtrH}. The underlying $\infty$-category of $\DM(S)$ is equivalently the limit of the tower
\[
\dotsb \xrightarrow{\Hom(\Z_\tr\S^{\A^1},\ph)} \H_\tr(S) \xrightarrow{\Hom(\Z_\tr\S^{\A^1},\ph)} \H_\tr(S) \xrightarrow{\Hom(\Z_\tr\S^{\A^1},\ph)} \H_\tr(S),
\]
by \cite[Corollary 2.22]{Robalo}.
 The canonical symmetric monoidal functor $\H_\tr(S)\to \DM(S)$ also satisfies a stronger universal property by Lemma~\ref{lem:inversion}.

\begin{lemma}\label{lem:corr-etale-descent}
	The functors
	\begin{align*}
	\Sch^\op \to \CAlg(\Cat_1),& \quad S\mapsto \SmQP_{S+},\\
	\Sch^{\noe,\op} \to \CAlg(\Cat_1),& \quad S\mapsto \SmQPCor_S,
	\end{align*}
	are sheaves for the finite locally free and finite étale topologies, respectively.
\end{lemma}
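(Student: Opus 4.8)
The statement has two parts: a descent claim for $S\mapsto \SmQP_{S+}$ in the finite locally free topology, and a descent claim for $S\mapsto\SmQPCor_S$ in the finite étale topology. In both cases, since the functors land in $\CAlg(\Cat_1)$ and the symmetric monoidal structure is determined by the underlying category together with the tensor product of objects, it suffices (using that limits in $\CAlg(\Cat_1)$ are computed on underlying categories, see Proposition~\ref{prop:automatic-calg}) to prove descent for the underlying functors $S\mapsto\SmQP_{S+}$ and $S\mapsto\SmQPCor_S$ valued in $\Cat_1$. The plan is to reduce each claim to a descent statement for \emph{sets} — namely, descent of the Hom-presheaves $T\mapsto c_T(X_T,Y_T)$ and $T\mapsto \Hom_T(X_T,Y_T)$, together with descent for the collection of objects — and then invoke known faithfully flat descent results for quasi-coherent data and for relative cycle groups.

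\textbf{Key steps for $\SmQP_{S+}$.} First I would recall that a cover $\{S_i\to S\}$ in the finite locally free (fppf) topology is an fppf cover in the usual sense, so I can appeal to classical fppf descent. Descent of objects: a smooth quasi-projective $S$-scheme is the same as a compatible family of smooth quasi-projective $S_i$-schemes with descent data; this is fppf descent for schemes, valid here because quasi-projectivity is an fppf-local property on the base and descent data for quasi-projective schemes are always effective (one descends the relatively ample line bundle and the scheme as a $\mathrm{Proj}$). The pointing $X\mapsto X_+ = X\amalg S$ is manifestly compatible with base change, so objects of $\SmQP_{S+}$ satisfy descent. Descent of morphisms: a morphism $X_+\to Y_+$ in $\SmQP_{S+}$ is a partially defined map with clopen domain, i.e., a pair (clopen $U\subset X$, morphism $U\to Y$ over $S$); both clopen subschemes and $S$-morphisms form fppf sheaves (the former because $\pi_0$ of a scheme is an fppf sheaf on qcqs schemes, the latter being representable-functor descent). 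Assembling these gives that $\SmQP_{-+}$ is a sheaf.

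\textbf{Key steps for $\SmQPCor_S$.} Here the cover is finite étale, and the objects are the same smooth quasi-projective schemes, so object-descent is a special case of the above (étale descent for quasi-projective schemes). The content is descent for the correspondence groups $c_S(X,Y)$: one must show that for a finite étale cover $\{S_i\to S\}$ the diagram
\[
c_S(X,Y)\to \prod_i c_{S_i}(X_{S_i},Y_{S_i})\rightrightarrows \prod_{i,j} c_{S_i\times_S S_j}(X_{S_i\times_S S_j},Y_{S_i\times_S S_j})
\]
is an equalizer. Recall that $c_S(X,Y)$ is the group of relative cycles on $X\times_S Y$ over $X$ that are finite and universally integral; these relative cycle groups form a (pre)sheaf in the base $S$, and for the étale (indeed fppf) topology their descent follows from the foundational properties of relative cycles established in \cite[\sect 10]{CD} (or Suslin–Voevodsky): a relative cycle over $X$ is determined by, and can be glued from, its pullbacks along a faithfully flat base change, because the cycle-theoretic constructions (pullback of relative cycles, the finite/universally-integral conditions) are compatible with flat base change and are étale-local on the target. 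Composition in $\SmQPCor_S$ is defined via such cycle-theoretic operations and is likewise base-change compatible, so the equalizer of Hom-sets upgrades to an equivalence of categories after assembling with object-descent.

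\textbf{Main obstacle.} The routine parts are the object-level descent and the formal assembly. The genuine technical point — and the place I would be most careful — is the descent statement for the relative cycle groups $c_S(X,Y)$ along finite étale covers: one needs that a relative cycle on $X\times_S Y / X$ satisfying the finiteness and universal-integrality conditions is the same thing as a family of such on the pullbacks with matching restrictions to the double overlaps. This is ultimately a consequence of the theory of relative cycles as developed in \cite[\sect 10]{CD}, but it requires knowing that the presheaf of relative cycles is separated and that descent data are effective, which hinges on the compatibility of relative cycles with flat base change and the local nature (on the base) of the integrality conditions. Once that is in hand, there is nothing further to do: the symmetric monoidal enhancement is automatic by Proposition~\ref{prop:automatic-calg} since the monoidal structures are pointwise-compatible with base change.
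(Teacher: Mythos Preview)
Your approach is essentially the same as the paper's: split into object descent (fppf descent for quasi-projective schemes plus fpqc-locality of smoothness) and morphism descent, the latter reducing to sheaf properties of $T\mapsto\Map_T(X_{T+},Y_{T+})$ and $T\mapsto c_T(X_T,Y_T)$. Two small points where the paper is sharper: (i) it explicitly checks that both functors send finite coproducts of schemes to finite products, which is part of the sheaf condition and which you omit; (ii) for correspondences the paper uses the natural isomorphism $c_T(X_T,Y_T)\simeq c_S(X_T,Y)$ to reduce to the statement that $c_S(\ph,Y)$ is an \'etale sheaf on $\SmQP_S$ \cite[Proposition 10.2.4(1)]{CD}, which is a precise citation rather than the looser appeal you make to ``foundational properties of relative cycles''. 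Also, your invocation of Proposition~\ref{prop:automatic-calg} is misplaced: that result concerns functors out of semiadditive span categories, not the fact that limits in $\CAlg(\Cat_1)$ are computed on underlying categories (which is true for the simpler reason that the forgetful functor creates limits).
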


\begin{proof}
	It is clear that both functors transform finite sums into finite products.
	Surjective finite locally free morphisms are of effective descent for quasi-projective schemes \cite[Exposé VIII, Corollaire 7.7]{SGA1}, and smoothness is fpqc-local \cite[Proposition 17.7.1(ii)]{EGA4-4}. It remains to check the sheaf condition for morphisms: given $X,Y\in\SmQP_S$, we must show that the functors $T\mapsto \Map_T(X_{T+},Y_{T+})$ and $T\mapsto c_T(X_T,Y_T)$ are sheaves for the given topologies on $\Sch_{/S}$. The former is obviously a sheaf for the canonical topology, and the finite locally free topology is subcanonical. For the latter, since $c_S(\ph,Y)$ is an étale sheaf on $\SmQP_S$ \cite[Proposition 10.2.4(1)]{CD}, it suffices to note that there is an isomorphism $c_T(X_T,Y_T)\simeq c_S(X_T,Y)$ natural in $T\in \SmQP_S$.
\end{proof}

By Lemma~\ref{lem:corr-etale-descent} and Corollary~\ref{cor:automatic-norms}, applied with $\scr C=\Sch^\noe$, $t$ the finite étale topology, $m=\fet$, and $\scr D=\Fun(\Delta^1,\Cat_1)$, we obtain a natural transformation
\[
\SmQP_+^\otimes \to \SmQPCor^\otimes\colon \Span(\Sch^\noe,\all,\fet) \to \CAlg(\Cat_1)
\]
whose components are the functors~\eqref{eqn:graph}.
We can view this transformation as a functor
\[
\Span(\Sch^\noe,\all,\fet)\times\Delta^1 \to \CAlg(\Cat_1).
\]
Repeating the steps of \sect\ref{sub:coherence}, we obtain a functor
\[
\Span(\Sch^\noe,\all,\fet)\times\Delta^1 \to \CAlg(\what\Cat{}_\infty^\mathrm{sift}),\quad (S,0\to1)\mapsto (\SH(S) \to \DM(S)),
\]
or equivalently a natural transformation
\[
\Z_\tr\colon \SH^\otimes\to\DM^\otimes\colon \Span(\Sch^\noe,\all,\fet) \to \CAlg(\what\Cat{}_\infty^\mathrm{sift}).
\]

\begin{theorem}\label{thm:HZnormed}
	The assignment $S\mapsto \HH\Z_S\in\SH(S)$ on noetherian schemes can be promoted to a section of $\SH^\otimes$ over $\Span(\Sch^\noe,\all,\fet)$ that is cocartesian over backward essentially smooth morphisms. 
	In particular, for every noetherian scheme $S$, Voevodsky's motivic cohomology spectrum $\HH\Z_S$ has a structure of normed spectrum over $\Sm_S$.
\end{theorem}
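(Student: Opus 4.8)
The plan is to exploit the right adjoint of the symmetric monoidal natural transformation $\Z_\tr\colon \SH^\otimes\to\DM^\otimes$ constructed just above. Concretely, for each noetherian scheme $S$ the symmetric monoidal functor $\Z_\tr\colon \SH(S)\to\DM(S)$ has a right adjoint $U_S\colon \DM(S)\to\SH(S)$, and by the general principle that the right adjoint of a symmetric monoidal functor is lax symmetric monoidal, $U_S(\1_{\DM(S)})$ acquires an $\E_\infty$-ring structure; this is precisely (a model of) $\HH\Z_S$. The point is to upgrade this observation from the level of individual symmetric monoidal $\infty$-categories to the level of the functors out of $\Span(\Sch^\noe,\all,\fet)$.

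First I would pass to sections. The transformation $\Z_\tr$ is a map of functors $\Span(\Sch^\noe,\all,\fet)\to\CAlg(\what\Cat{}_\infty^\mathrm{sift})$, i.e.\ after forgetting the $\CAlg$-structure, a map of cocartesian fibrations over $\Span(\Sch^\noe,\all,\fet)$ preserving cocartesian edges. Each component $\Z_\tr$ preserves colimits (it is a colimit-preserving symmetric monoidal functor, by construction), hence has a right adjoint $U_S$; by Lemma~\ref{lemm:construct-relative-adjoint}(1) these assemble into a relative right adjoint $U$ of $\Z_\tr$ over $\Span(\Sch^\noe,\all,\fet)$. Then Lemma~\ref{lemm:adjoints-pass-to-sections} produces an adjunction on sections, and since $\Z_\tr$ carries unit sections to unit sections and commutes with the cocartesian-edge condition, it restricts to an adjunction between the $\infty$-categories of normed spectra:
\[
\Z_\tr\colon \NAlg_{\Sm}(\SH(S))\rightleftarrows \NAlg_{\Sm}(\DM(S))\noloc U_S.
\]
By Example~\ref{ex:1} (applied to $\DM^\otimes$, which is a normed $\infty$-category in the sense of Definition~\ref{dfn:normed-category}), the unit section of $\DM^\otimes$ gives a normed spectrum structure on $\1_{\DM(S)}\in\DM(S)$ over $\Sm_S$, indeed over $\Sch_S$. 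Applying $U_S$, we obtain a normed spectrum $U_S(\1_{\DM(S)})\in\NAlg_{\Sm}(\SH(S))$ over $\Sm_S$, and more precisely a section of $\SH^\otimes$ over $\Span(\Sch^\noe,\all,\fet)$ obtained by applying the relative right adjoint $U$ to the unit section of $\DM^\otimes$.

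It remains to identify the underlying spectrum of $U_S(\1_{\DM(S)})$ with $\HH\Z_S$ and to determine over which morphisms this section is cocartesian. The underlying-spectrum identification is by definition/construction: $\HH\Z_S$ is the motivic spectrum representing motivic cohomology, and the standard fact is that it is $U_S(\1_{\DM(S)})$, where $\DM(S)$ is built exactly as above — this is how $\HH\Z$ is produced in \cite{CD} from $\SmQPCor$. For the cocartesianness claim: the section $S\mapsto\1_{\DM(S)}$ of $\DM^\otimes$ is cocartesian over all of $\Sch^{\noe,\op}$, since pullback functors $f^*$ in $\DM$ are symmetric monoidal, hence preserve the unit. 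Passing to the right adjoint, $U$ of a cocartesian edge over a backward morphism $f$ is cocartesian precisely when the exchange transformation $f^*U_{S'}\to U_S f^*$ is an equivalence on $\1$, i.e.\ when $\Z_\tr$ commutes with $f^*$ on the relevant objects; this is the statement that $f^*\HH\Z_{S'}\simeq\HH\Z_S$, which holds for $f$ essentially smooth (the comparison $f^*\HH\Z\simeq\HH\Z$ is known for essentially smooth $f$, e.g.\ by base change for $\DM$ along smooth morphisms and continuity), giving the cocartesianness over backward essentially smooth morphisms. Restricting the resulting section to $\Span(\Sm_S,\all,\fet)$ and noting it is cocartesian over $\Sm_S^\op$ then yields the asserted normed spectrum structure over $\Sm_S$ by Definition~\ref{def:normedspectrum}.

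\textbf{Main obstacle.} The formal adjunction argument is routine; the genuine content is the identification of $U_S(\1_{\DM(S)})$ with Voevodsky's $\HH\Z_S$ together with the base-change statement $f^*\HH\Z_{S'}\simeq\HH\Z_S$ for $f$ essentially smooth. This identification is standard but not tautological for all noetherian bases, and pinning down exactly which version of $\HH\Z$ one lands on (and comparing with the cdh-local version $\HH\Z^{\cdh}$ over non-regular or positive-characteristic bases, as in \sect\ref{sub:HZ}) requires citing the relevant comparison results rather than a direct computation.
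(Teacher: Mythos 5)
Your proposal is correct and follows essentially the same approach as the paper: take the relative right adjoint of $\Z_\tr$ via Lemma~\ref{lemm:construct-relative-adjoint}(1) and push the unit section of $\DM^\otimes$ through it. The only point you leave informal is the essentially smooth base-change statement for $\HH\Z$, for which the paper simply cites \cite[Theorem 4.18]{HoyoisMGL}.
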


\begin{proof}
	Consider $\Z_\tr$ as a map of cocartesian fibrations over $\Span(\Sch^\noe,\all,\fet)$.
	By Lemma~\ref{lemm:construct-relative-adjoint}(1), it admits a relative right adjoint $u_\tr$, given fiberwise by the forgetful functor $\DM(S)\to\SH(S)$.
	Hence, composing the unit section of $\DM^\otimes$ with $u_\tr$, we obtain the desired section of $\SH^\otimes$.
	It is cocartesian over essentially smooth morphisms by \cite[Theorem 4.18]{HoyoisMGL}.
\end{proof}

\begin{remark}
	We can further enhance $\DM^\otimes$ to a functor
	\[
	\DM^\otimes\colon \Span(\Sch^\noe,\all,\fet) \times \CAlg(\Ab) \to \CAlg(\what{\Cat}{}_\infty^\mathrm{sift}),\quad
	(S,R)\mapsto \DM(S,R),
	\]
	where $\DM(S,R)$ is the $\infty$-category of motives over $S$ with coefficients in $R$. Let $\SmQPCor_{S,R}$ be the symmetric monoidal category whose sets of morphisms are the $R$-modules $c_S(X,Y)_\Lambda\otimes R$, where $\Lambda\subset\Q$ is the maximal subring such that $R$ is a $\Lambda$-algebra and $c_S(X,Y)_\Lambda$ is defined using $\Lambda$-universal cycles instead of universally integral cycles. We then have a functor
	 \[
	 \Sch^{\noe,\op}\times\CAlg(\Ab)\to \CAlg(\Cat_1),\quad (S,R)\mapsto \SmQPCor_{S,R}.
	 \]
	 Moreover, $c_S(\ph,Y)_\Lambda\otimes R$ is still an étale sheaf \cite[Proposition 2.1.4]{CDetale}, so that $S\mapsto \SmQPCor_{S,R}$ is a finite étale sheaf, and the rest of the construction is the same.
	 It follows that, for any noetherian scheme $S$, we have a functor
	 \[
	 \CAlg(\Ab) \to \NAlg_\Sm(\SH(S)),\quad R\mapsto \HH R_S.
	 \]
\end{remark}

\begin{remark}\label{rmk:cdh-transfers}
	The same construction produces norms for the cdh version $\HH R_S^\cdh$ constructed in \cite{CDintegral}.
	We must replace $\SmQPCor_{S,R}$ by $\QPCor_{S,R}$, and we need to know that if $p\colon T\to S$ is finite étale then $p_*\colon \PSh_\Sigma(\QP_T)\to \PSh_\Sigma(\QP_S)$ preserves cdh equivalences. This is proved as in Proposition~\ref{prop:integralsifted}, using that the points of the cdh topology are henselian valuation rings \cite[Table 1]{GabberKelly} and that a finite étale extension of a henselian valuation ring is a finite product of henselian valuation rings \cite[Tag 0ASJ]{Stacks}.
	It follows from \cite[Theorem 5.1]{CDintegral} that the section $S\mapsto \HH R_S^\cdh$ of $\SH^\otimes$ is cocartesian over finite-dimensional noetherian $\Q$-schemes (or $\mathbb F_p$-schemes if $p$ is invertible in $R$).
\end{remark}

\begin{remark}
	The structures of normed spectra on $\HH\Z$, $\HH\Z^\cdh$, and $\HH\Z[1/p]^\cdh$ constructed above coincide with those obtained in \sect\ref{sub:HZ}. This follows from Remark~\ref{rmk:norm-uniqueness}.
\end{remark}

\begin{remark}
	Let $\Chow(S)$ denote the full subcategory of $\DM(S)$ spanned by retracts of motives of the form $\Z_\tr\Sigma^\xi\Sigma^\infty_+ X$, where $X$ is smooth and projective over $S$ and $\xi\in \K(S)$.
	If $p\colon T\to S$ is finite étale, the norm functor $p_\otimes\colon \DM(T)\to\DM(S)$ then sends $\Chow(T)$ to $\Chow(S)$, since $p_\otimes(\S^\xi)\simeq \S^{p_*(\xi)}$ (see Remark~\ref{rmk:Pic}) and since Weil restriction along finite étale maps preserves properness \cite[\sect7.6, Proposition 5(f)]{NeronModels}.
	It follows that $\DM^\otimes$ admits a full subfunctor
	\[
	\Span(\Sch^\noe,\all,\fet) \to \CAlg(\Cat_\infty), \quad S\mapsto \Chow(S).
	\]
	If we only include twists with $\rk\xi\geq 0$, we get a further full subfunctor $S\mapsto \Chow(S)^\eff$.
	If $k$ is a field, the homotopy category $\h\Chow(k)$ (resp.\ $\h\Chow(k)^\eff$) is the opposite of Grothendieck's category of Chow motives (resp.\ of effective Chow motives) over $k$, by Poincaré duality and the representability of Chow groups in $\DM(k)$. In particular, $\DM^\otimes$ contains $\infty$-categorical versions of the norm functors between categories of (effective) Chow motives constructed by Karpenko \cite[\sect5]{Karpenko}; that the former are indeed refinements of the latter follows from Theorem~\ref{thm:FultonMacPhersonComparison} below.
\end{remark}

Recall that $\HH\Z_S$ is an oriented motivic spectrum.
By Theorem~\ref{thm:HZnormed} and Proposition~\ref{prop:oriented-normed}, we obtain for every finite étale map $p\colon T\to S$ a multiplicative norm map
\[
\nu_p\colon \bigoplus_{n\in\Z}\Map(\1_T,\Sigma^{2n,n}\HH\Z_T) \to \bigoplus_{r\in\Z}\Map(\1_S,\Sigma^{2r,r}\HH\Z_S).
\]
Moreover, when $S$ is smooth over a field, $\Map(\1_S,\Sigma^{2r,r}\HH\Z_S)$ is (the underlying space of) Bloch's cycle complex $z^r(S,*)$. Our next goal is to compare the norm maps so obtained on Bloch's cycle complexes with the norm maps constructed by Fulton and MacPherson on Chow groups \cite{FultonMacPherson}.

\subsection{The Fulton–MacPherson norm on Chow groups}

If $X$ is a noetherian scheme, we denote by $z^*(X)$ the group of cycles on $X$, i.e., the free abelian group on the points of $X$. Flat pullback and proper pushforward of cycles define a functor
\[
z^*\colon \Span(\Sch^\noe,\mathrm{flat},\mathrm{proper})\to \Ab
\]
\cite[Proposition 1.7]{Fulton}, which is moreover an étale sheaf \cite[Theorem 1.1]{Anschutz}. If $f\colon Y\to X$ is a map between smooth $S$-schemes where $S$ is regular, there is a subgroup $z^*_f(X)\subset z^*(X)$ consisting of cycles \emph{in good position} with respect to $f$, such that the pullback $f^*\colon z^*_f(X) \to z^*(Y)$ is defined (see for instance \cite[Appendix B]{SpitzweckHZ}).

Let $p\colon T\to S$ be a finite étale map between regular schemes and let $\alpha\in z^*(T)$ a cycle on $T$. We shall say that $\alpha$ is \emph{$p$-normable} if, locally on $S$ in the finite étale topology, it has the form $\coprod_i\alpha_i\in z^*(\coprod_i S)$ where the cycles $\alpha_i$ intersect properly. If (and only if) $\alpha$ is $p$-normable, we may therefore define $\Norm_p(\alpha)\in z^*(S)$ using finite étale descent: it is the unique cycle on $S$ such that, if $S'\to S$ is a finite étale morphism such that $\alpha_{S'\times_ST}$ has the form $\coprod_i\alpha_i$ as above, then $\Norm_p(\alpha)_{S'}$ is the intersection product of the cycles $\alpha_i$. For example, if $X,Y\in\SmQP_T$, the map
\[
c_T(X,Y) \to c_S(\Weil_pX,\Weil_pY)
\]
induced by the functor $p_\otimes\colon \SmQPCor_T\to \SmQPCor_S$ is given by the composition $\Norm_q\circ e^*$, where \[X\times_TY \stackrel e\from \Weil_p(X\times_TY)\times_ST \stackrel q\to \Weil_p(X\times_TY).\]

The Fulton–MacPherson construction in a nutshell is the following observation: given a field $k$ and a finite étale map $p\colon T\to S$ in $\SmQP_k$, there exists a cartesian square
\begin{tikzmath}
	\diagram{ T & T'\\ S & S' \\};
	\arrows (11-) edge[c->] (-12) (11) edge node[left]{$p$} (21) (21-) edge[c->] (-22) (12) edge node[right]{$q$} (22);
\end{tikzmath}
in $\SmQP_k$, where $q$ is finite étale, such that every cycle on $T$ is the pullback of a $q$-normable cycle on $T'$. To construct such a square, we may assume that $p\colon T\to S$ has constant degree $d$. Associated with $p$ is a principal $\Sigma_d$-bundle $P=\mathrm{Isom}_S(d\times S,T)\to S$, and we have a $\Sigma_d$-equivariant immersion
\[
\Hom_S(d\times S,T) \simeq \underbrace{T\times_S\dotsb\times_ST}_{d\text{ times}} \into T^d,
\]
where the product $T^d$ is formed over $k$. Let $P'$ be the largest open subset of $T^d$ where $\Sigma_d$ acts freely.
Since $T$ is quasi-projective over $k$, we can form the quotients $S'=P'/\Sigma_d$ and $T'=(d\times P')/\Sigma_d$ in $\SmQP_k$. We then obtain a cube
\begin{tikzequation}[cross line/.style={preaction={draw=white, -,line width=6pt}}]]
	\label{eqn:PBnormable}
	\def\colsep{1em}
	\def\rowsep{1em}
	\diagram{
	& d\times P & & & d \times P' &&\\
	T & & & T' & & & T \\
	& P & & & P'&& \\
	S & & & S' & && \\
	};
	\arrows (12-) edge[c->] node[above]{$d\times u$} (-15) (32-) edge[c->] node[above,xshift=-10pt]{$u$} (-35) (41-) edge[c->] node[above]{$s$} (-44)
	(21) edge node[left]{$p$} (41)
	(12) edge (32) (15) edge (35)
	(12) edge (21) (15) edge (24) (32) edge (41) (35) edge (44)
	(21-) edge[c->,cross line] node[above,xshift=10pt]{$t$} (-24) (24-) edge[cross line] node[above,xshift=10pt]{$r$} (-27)
	(24) edge[cross line] node[right,inner sep=2pt,outer sep=1pt,fill=white]{$q$} (44)
	(15) edge node[above right]{$\sum_i \pi_i$} (27);
\end{tikzequation}
with cartesian faces, where $q$ is finite étale of degree $d$, $s$ is an immersion, the maps from the back face to the front face are principal $\Sigma_d$-bundles, and $r$ is a smooth retraction of $t$.

\begin{lemma}\label{lem:FMP}
	Let $k$ be a field, $S$ a smooth quasi-projective $k$-scheme, and $p\colon T\to S$ a finite étale map of constant degree $d$. Form the diagram~\eqref{eqn:PBnormable}.
	Then, for all $\alpha\in z^*(T)$, $r^*(\alpha)$ is $q$-normable. 
\end{lemma}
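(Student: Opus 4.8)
The plan is to reduce the assertion to a statement about cycles on a power of $T$ and then to exploit the free $\Sigma_d$-action appearing in the construction of~\eqref{eqn:PBnormable}. Recall that $P'\subset T^d$ (product over $k$) is the open locus where $\Sigma_d$ acts freely, and $S'=P'/\Sigma_d$, $T'=(d\times P')/\Sigma_d$. The point is that $q\colon T'\to S'$ is, $\Sigma_d$-locally on $S'$, a sum of $d$ fold maps: pulling back along the principal $\Sigma_d$-bundle $P'\to S'$ turns $q$ into $d\times P'\to P'$, i.e.\ $d$ copies of the identity. Thus $q$-normability of a cycle $\beta\in z^*(T')$ is by definition exactly the condition that the $d$ components of its pullback $\beta\times_{S'}P' \in z^*(d\times P')\simeq z^*(P')^{\oplus d}$ intersect properly in $P'$.

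First I would describe $r^*(\alpha)$ explicitly. Since the right-hand face of~\eqref{eqn:PBnormable} is cartesian and $r$ is the composite $d\times P'\xrightarrow{\sum_i\pi_i} T$ followed by descent along the $\Sigma_d$-bundle, the pullback of $r^*(\alpha)$ along $P'\to S'$ is the cycle on $d\times P'\simeq P'^{\amalg d}$ whose $i$th component is $\pi_i^*(\alpha)$, where $\pi_i\colon P'\to T$ is the $i$th projection restricted to $P'\subset T^d$. So the claim becomes: the cycles $\pi_1^*(\alpha),\dots,\pi_d^*(\alpha)$ on $P'$ intersect properly. Now $\pi_i^*(\alpha)$ is a cycle of the same codimension as $\alpha$ pulled back along a smooth morphism, so it is well-defined; and the support of $\pi_i^*(\alpha)$ is $\pi_i^{-1}(\lvert\alpha\rvert)$. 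Intersecting these for $i=1,\dots,d$ amounts to forming $\lvert\alpha\rvert^{d}\cap P'$ inside $T^d$, which has the expected codimension $d\cdot\operatorname{codim}\alpha$ since the factors of a product intersect transversally. More precisely, I would argue that for any closed subsets $Z_1,\dots,Z_d\subset T$, the subset $\pi_1^{-1}(Z_1)\cap\dots\cap\pi_d^{-1}(Z_d)=Z_1\times_k\dots\times_k Z_d$ has codimension $\sum_i\operatorname{codim}_T Z_i$ in $T^d$ (this uses that $T$ is of finite type over a field, so dimension is additive for products), and the same holds after the open restriction to $P'$; hence the supports are in good position, and the refined intersection product is defined. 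This is exactly the content of $q$-normability after descent.

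The one genuine subtlety, and the step I expect to be the main obstacle, is the descent bookkeeping: checking that ``$q$-normable'' really unwinds to the properness condition on $P'$ that I described, including the behavior of multiplicities and the compatibility of $\Norm_q$ with the descent isomorphism $z^*(T'\times_{S'}P')\simeq z^*(d\times P')$. Here I would invoke that $z^*$ is an \'etale sheaf on $\Sch^\noe$ (cited in the text), so that a cycle on $T'$ is the same datum as a $\Sigma_d$-equivariant cycle on $d\times P'$, and that the subgroup of $q$-normable cycles is by construction the preimage of the properly-intersecting locus under this equivalence; and I would use that flat pullback of cycles along $P'\to S'$ is compatible with all of this. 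Once that is in place, the geometric input — properness of intersections of preimages of the diagonal factors in $T^d$, which holds because dimensions add up over a field — finishes the argument. Since~\eqref{eqn:PBnormable} has cartesian faces and $r$ is a smooth retraction of the immersion $t$, every pullback in sight is flat, so no refined (non-flat) pullbacks of $\alpha$ itself are needed, which keeps the argument at the level of set-theoretic supports plus flat pullback multiplicities.
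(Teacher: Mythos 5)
Your proposal is correct and takes essentially the same route as the paper: identify $r^*(\alpha)$ via \'etale descent with the $\Sigma_d$-invariant cycle $\coprod_i \pi_i^*(\alpha)$ on $d\times P'$, reduce $q$-normability to the proper intersection of the $\pi_i^*(\alpha)$ on $P'$, and conclude by additivity of codimension for products over a field. The paper compresses the last step to ``which is clear ($k$ being a field)''; you simply spell it out.
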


\begin{proof}
Note that $r^*(\alpha)$ corresponds to the $\Sigma_d$-invariant cycle $\coprod_{i}\pi_i^*(\alpha)\in z^*(d\times P')$. We must therefore show that the cycles $\pi_i^*(\alpha)$ on $P'$ intersect properly, which is clear ($k$ being a field). In fact, $\Norm_q r^*(\alpha)$ is the cycle on $S'$ corresponding to the $\Sigma_d$-invariant cycle $\alpha^{\times d}$ on $P'$.
\end{proof}

\begin{proposition}\label{prop:Chownorms}
	Let $k$ be a field. The functor $\CH^*\colon \SmQP_k^\op\to \Set$ extends uniquely to a functor
	\[
	\CH^*\colon \Span(\SmQP_k,\all,\fet) \to \Set, \quad (U\stackrel f\from T\stackrel p\to S)\mapsto \nu_p^\FM f^*,
	\]
	such that, if $p\colon T\to S$ is finite étale and $\alpha\in z^*(T)$ is $p$-normable, then $\nu_p^\FM[\alpha]=[\Norm_p(\alpha)]$.
\end{proposition}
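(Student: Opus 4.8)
The plan is to construct the extension $\CH^*\colon \Span(\SmQP_k,\all,\fet) \to \Set$ directly on the level of cycles, using the diagram~\eqref{eqn:PBnormable} to reduce the construction of $\nu_p^\FM$ to finite étale descent and the pullback-of-$p$-normable-cycles technology, and then to establish the functoriality axioms (compatibility with composition and with base change) in the category of spans. First I would observe that, by Lemma~\ref{lem:FMP}, given a finite étale $p\colon T\to S$ of constant degree $d$ in $\SmQP_k$, every $\alpha \in z^*(T)$ is of the form $r^*(\beta)$ for a $q$-normable $\beta \in z^*(T')$, where $q\colon T'\to S'$ and $r\colon T'\to T$ come from~\eqref{eqn:PBnormable}; so one \emph{defines} $\nu_p^\FM[\alpha] = s^*[\Norm_q(\beta)]$, where $s\colon S\into S'$ is the immersion. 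The first substantive check is that this is well-defined: it is independent of the choice of $\beta$ lifting $\alpha$ (two lifts differ by a cycle supported off $T$, and one checks the resulting norms agree after restricting along $s$, using that $\Norm_q$ is local in the finite étale topology and that $s^*\circ (\text{pullback along } T'\to T)$ factors appropriately), and it descends to rational equivalence (because $\Norm_q$ of an $\mathbb A^1$-family of $q$-normable cycles is again such a family, by the product-formula description $\Norm_q r^*(\alpha) \sim$ the $\Sigma_d$-invariant $\alpha^{\times d}$ on $P'$ appearing in the proof of Lemma~\ref{lem:FMP}). One must also verify the normalization: if $\alpha$ is already $p$-normable then $\nu_p^\FM[\alpha] = [\Norm_p(\alpha)]$, which follows from the base-change compatibility of $\Norm$ (finite étale descent) applied to the square relating $p$ and $q$.

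Next I would prove the functoriality. Compatibility with composition of forward (finite étale) maps $p\colon T\to S$, $p'\colon S\to R$ amounts to $\Norm_{p'} \circ \Norm_p = \Norm_{p'p}$ on normable cycles, which holds étale-locally where all three maps become fold maps and $\Norm$ becomes an iterated product of cycles; one then promotes this to the general statement by choosing a common finite étale cover over which everything trivializes and invoking the étale-descent property of $z^*$ (\cite[Theorem 1.1]{Anschutz}) together with the compatibility of the ``good position'' subgroups with pullback. The base-change axiom for spans requires that for a cartesian square with $p$ finite étale and $f$ arbitrary in $\SmQP_k$, one has $f^* \nu_p^\FM = \nu_{p'}^\FM g^*$ on $\CH^*$; this is again checked by passing to a finite étale cover trivializing $p$, where it reduces to the statement that flat pullback of cycles commutes with the coproduct/product operations defining $\Norm$, combined with the fact that pullback along $f$ sends cycles in good position to cycles in good position (which is where one needs $S$, hence all the schemes in sight, to be smooth over a field, so that generic-position arguments apply). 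Uniqueness of the extension is immediate: any functor on $\Span(\SmQP_k,\all,\fet)$ extending $\CH^*$ and satisfying the stated compatibility with $\Norm_p$ on $p$-normable cycles must agree with the constructed one, since by Lemma~\ref{lem:FMP} every class on $T$ is a pullback of a $q$-normable class, and the two axioms then force the value of $\nu_p^\FM$.

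I expect the main obstacle to be the well-definedness and rational-equivalence-invariance of $\nu_p^\FM$, i.e.\ checking that the recipe ``lift to a normable cycle via~\eqref{eqn:PBnormable}, norm, restrict along $s$'' does not depend on the lift and passes to $\CH^*$. The delicate point is that the open subscheme $P' \subset T^d$ of free $\Sigma_d$-action is not all of $T^d$, so restricting a cycle from $T^d$ to $P'$ can change it, and one must argue that the $\Sigma_d$-invariant cycle $\alpha^{\times d}$ (or an $\mathbb A^1$-homotopy of such) behaves well under this restriction and under the further quotient by $\Sigma_d$; this is exactly the technical heart of \cite[\sect 1--\sect 3]{FultonMacPherson} and \cite[\sect 5]{Karpenko}, and I would follow their arguments, adapting the bookkeeping to the relative (over $S$) setting and to Bloch's higher cycle complexes where needed. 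Everything else — composition, base change, uniqueness, normalization — is then a matter of descent and generic-position lemmas that are standard over a field.
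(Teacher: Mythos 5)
Your overall plan coincides with the paper's: use the diagram~\eqref{eqn:PBnormable} and Lemma~\ref{lem:FMP} to set $\nu_p^\FM[\alpha] = s^*[\Norm_q(r^*\alpha)]$ and then check descent to $\CH^*$, functoriality, and uniqueness. Two remarks on the execution. First, your concern about independence of the lift $\beta$ is moot: Lemma~\ref{lem:FMP} produces a \emph{canonical} lift, namely $r^*\alpha$, and $r$ admits the section $t$, so there is no choice to make — the definition is just a formula. Second, and more substantively, for the step you single out as the ``technical heart'' (that $\bar\Norm_p\colon z^*(T)\to\CH^*(S)$ factors through $\CH^*(T)$), the paper bypasses the Fulton--MacPherson/Karpenko bookkeeping you propose to adapt. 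It instead presents $\CH^*(T)$ as the reflexive coequalizer of $i_0^*,i_1^*\colon z^*_{T\times\{0,1\}}(T\times\A^1)\rightrightarrows z^*(T)$; since a reflexive coequalizer in $\Ab$ is also a coequalizer in $\Set$, it is enough to show $\bar\Norm_p i_0^*(\alpha)=\bar\Norm_p i_1^*(\alpha)$, and this is immediate from compatibility of $\bar\Norm$ with partially defined pullbacks ($\bar\Norm_p i_j^* = i_j^*\bar\Norm_{p\times\id}$) together with $\A^1$-invariance of $\CH^*(S)$. Your parenthetical about ``$\Norm_q$ of an $\A^1$-family of normable cycles being again such a family'' is the same idea, but the coequalizer reformulation makes it a one-line deduction and avoids re-examining the $\Sigma_d$-equivariant geometry on $P'$; it also packages your separate checks of uniqueness, composition, and base change into the single assertion that $\bar\Norm$ is the unique family compatible with partially defined pullbacks.
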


\begin{proof}
	By Lemma~\ref{lem:FMP}, there is a unique family of maps $\bar \Norm_p\colon z^*(T)\to \CH^*(S)$ compatible with (partially defined) pullbacks. It remains to show that $\bar \Norm_p$ passes to rational equivalence classes. By definition, we have a coequalizer diagram of abelian groups
	\[
	z^*_{T\times\{0,1\}}(T\times\A^1) \rightrightarrows z^*(T)\to \CH^*(T).
	\]
	Since this is a reflexive coequalizer, it is also a coequalizer in the category of sets, so it suffices to show that for any $\alpha\in z^*_{T\times\{0,1\}}(T\times\A^1)$, $\bar \Norm_pi_0^*(\alpha)=\bar \Norm_pi_1^*(\alpha)$. By $\A^1$-invariance of Chow groups, we have
	\[
	\bar \Norm_pi_0^*(\alpha) = i_0^*\bar \Norm_{p\times\id}(\alpha) = i_1^*\bar \Norm_{p\times\id}(\alpha) = \bar \Norm_pi_1^*(\alpha).\qedhere
	\]
\end{proof}

The map $\nu_p^\FM\colon \CH^*(T)\to\CH^*(S)$ from Proposition~\ref{prop:Chownorms} will be called the \emph{Fulton–MacPherson norm}.

\begin{example}
	Let $k$ be a field, $p\colon T\to\Spec k$ a finite étale map, $X\in\SmQP_T$, and $X\stackrel e\from (\Weil_{p}X)_T \stackrel q\to \Weil_{p}X$. Then $e$ is smooth and every cycle of the form $e^*(\alpha)$ is $q$-normable (this can be checked when $p$ is a fold map). Hence, we have a norm map
	\[
	\Norm_qe^*\colon z^*(X) \to z^*(\Weil_{p}X),
	\]
	which descends to Chow groups by Proposition~\ref{prop:Chownorms}. These norm maps were studied by Karpenko in \cite{Karpenko}.
\end{example}

\subsection{Comparison of norms}

\begin{theorem}\label{thm:FultonMacPhersonComparison}
	Let $k$ be a field, $S$ a smooth quasi-projective $k$-scheme, and $p\colon T\to S$ a finite étale map. 
	Then the norm map $\nu_p\colon z^*(T,*) \to z^*(S,*)$ induced by the normed structure and the orientation of $\HH\Z_S$ induces the Fulton–MacPherson norm $\nu_p^\FM\colon\CH^*(T)\to \CH^*(S)$ on $\pi_0$.
\end{theorem}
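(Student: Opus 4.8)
The plan is to reduce the comparison to a purely cycle-theoretic statement and then quote the (unstable) norm formula for finite correspondences. First I would recall that by Theorem~\ref{thm:HZnormed} the normed structure on $\HH\Z_S$ comes from the unit section of $\DM^\otimes$ together with the relative right adjoint $u_\tr$ of $\Z_\tr$; concretely, the multiplicative transfer $\nu_p\colon z^*(T,*)\to z^*(S,*)$ on mapping spaces is computed by the norm functor $p_\otimes\colon \DM(T)\to\DM(S)$ on the internal mapping object $\iHom(\Z_\tr\Sigma^\infty_+X,\Sigma^{2r,r}\HH\Z)$, evaluated through the transfer map $p_\otimes\colon \Map(A,E_T)\to\Map(p_\otimes A,p_\otimes E_T)\xrightarrow{\mu_p}\Map(p_\otimes A,E_S)$ of \sect\ref{sec:normedspectra}. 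Since we only care about $\pi_0$, i.e.\ Chow groups, and since $\Map(\1_S,\Sigma^{2r,r}\HH\Z_S)\simeq z^r(S,*)$ with $\pi_0=\CH^r(S)$, the whole computation lives in the homotopy category and ultimately in the $1$-category $\h_1\Chow(k)$ (using Atiyah duality and representability of Chow groups in $\DM(k)$). The key point is that the effect of $p_\otimes$ on $\SmQPCor$ is, by the construction in \sect\ref{sub:PST} and the discussion preceding Lemma~\ref{lem:FMP}, the composite $\Norm_q\circ e^*$ where $X\times_TY\xleftarrow{e}\Weil_p(X\times_TY)\times_ST\xrightarrow{q}\Weil_p(X\times_TY)$; i.e.\ at the level of cycles the norm really is ``pull back along $e$, then take $\Norm$ along $q$'' exactly as in Fulton–MacPherson's recipe.

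The next step is to match this with the diagram~\eqref{eqn:PBnormable}. Given $p\colon T\to S$ of constant degree $d$ in $\SmQP_k$, form the cube~\eqref{eqn:PBnormable} with its cartesian faces; by Lemma~\ref{lem:FMP} every cycle $r^*(\alpha)$ on $d\times P'$ is $q$-normable, and $\nu_p^\FM$ is by definition (Proposition~\ref{prop:Chownorms}) the unique extension of $\CH^*$ to $\Span(\SmQP_k,\all,\fet)$ with $\nu_p^\FM[\alpha]=[\Norm_p(\alpha)]$ for $p$-normable $\alpha$. On the other side, $\nu_p$ is, by functoriality of $\SH^\otimes$ (and hence of the induced Tambara structure on $E^{0,0}$, Corollary~\ref{cor:normed-spectrum-tambara-functor}) and compatibility with base change (Proposition~\ref{prop:BC}), also compatible with pullbacks, so both $\nu_p$ and $\nu_p^\FM$ fit into an extension of $\CH^*$ to the span $2$-category. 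By Proposition~\ref{prop:Chownorms} this extension is \emph{unique} once we know its value on $p$-normable cycles, so it suffices to check $\nu_p=\nu_p^\FM$ on cycles of the form $r^*(\alpha)$ coming from the cube, for which we may further base change along $S'\to *$ (or along $P'\to S'$, where $q$ becomes the fold map $d\times P'\to P'$). There the norm $\nu_q$ on $\HH\Z$ is, by Theorem~\ref{thm:norm}(6) (resp.\ its stable/motives counterpart), literally the $d$-fold product of cohomology classes composed with pushforward along $P'\to *$ — which is exactly $[\alpha^{\times d}]$ pushed forward, i.e.\ the cycle-level formula $\Norm_q r^*(\alpha)=$ ``$\alpha^{\times d}$ descended along the $\Sigma_d$-torsor'' appearing in the proof of Lemma~\ref{lem:FMP}. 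Tracking the orientation/Thom-isomorphism bookkeeping (Proposition~\ref{prop:oriented-normed}) shows the grading matches: weight $n$ on $T$ goes to weight $nd$ on $S$.

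The main obstacle, and where I would spend the most care, is the compatibility of the $\HH\Z$-theoretic norm with the \emph{cycle-level} description of $p_\otimes$ on finite correspondences. Concretely one must show that, under the isomorphism $\Map(\Sigma^\infty_+X,\Sigma^{2r,r}\HH\Z_S)\simeq z^r(X,*)$ (which, as noted in the remarks around Example~\ref{ex:HZ}, is highly nontrivial for this model of $\HH\Z$), the map $\nu_p$ induced by $\mu_p$ really is induced by the correspondence-level map $\Norm_q\circ e^*$, and not merely equal to it up to some correction. The way I would handle this is to factor everything through $\DM$: the class of $\alpha\in z^r(X,*)$ is represented by a morphism $\Z_\tr\Sigma^\infty_+X\to \Z_\tr\Sigma^{2r,r}\HH\Z$ in $\DM(T)$, and since $p_\otimes$ on $\DM$ is constructed from the symmetric monoidal functor $p_\otimes$ on $\SmQPCor$ via the universal properties of $\PSh_\Sigma$, motivic localization, and $\P^1$-inversion, its effect on such a morphism is computed by the effect of $p_\otimes\colon\SmQPCor_T\to\SmQPCor_S$, i.e.\ by $\Norm_q e^*$ on the defining cycles. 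Combined with the uniqueness in Proposition~\ref{prop:Chownorms} this closes the argument; the only remaining routine (but tedious) check is that the two extensions of $\CH^*$ agree on \emph{the restriction maps} $f^*$ as well, which is immediate since both are literally pullback of cycles.
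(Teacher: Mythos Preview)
Your overall shape is right (reduce via the cube~\eqref{eqn:PBnormable}, use that $p_\otimes$ on $\SmQPCor$ is $\Norm_q\circ e^*$), but the step you flag as ``the main obstacle'' is in fact a genuine gap, and your proposed fix does not close it. A class $\alpha\in z^r(X,*)$ is \emph{not} a morphism in $\SmQPCor_T$; the identification $\Map(\Sigma^\infty_+X,\Sigma^{2r,r}\HH\Z)\simeq z^r(X,*)$ goes through the zig-zag of Proposition~\ref{prop:ChowComparison} (via $\Z_{\tr}(\P^r/\P^{r-1})$ and $z^\equi_0(\A^r/-)$). Saying ``factor through $\DM$'' does not help, because the comparison isomorphism is precisely what must be shown to intertwine the two norms. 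Concretely, under $p_\otimes$ the model $\Z_{\tr,T}(\P^r_T/\P^{r-1}_T)(\Delta^\bullet\times -)$ is sent to $\Z_{\tr,S}\bigl(\Weil_p(\P^r_T)/p_*(\P^r_T\Vert\P^{r-1}_T)\bigr)(\Weil_q(\Delta^\bullet\times -))$, and the paper has to check that this map, read through Proposition~\ref{prop:ChowComparison} on both sides, \emph{is} $\Norm_qr^*$ on cycles. There is a second issue you do not mention at all: the cosimplicial object on the target is $\Weil_q(\Delta^\bullet_{X'})$, not $\Delta^\bullet_{\Weil_qX'}$, so after norming you are no longer computing Bloch's cycle complex. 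The paper repairs this with Lemma~\ref{lem:vbBloch} (an instance of Levine's moving lemma) and the Jouanolou--Thomason trick to produce a linear retraction $\Weil_q(\Delta^1)\to\Delta^1$ over an affine cover.

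Your proposed shortcut through fold maps also does not avoid this. Base-changing to $P'$ makes $q$ a fold map, but you then know only that $\nu_q$ is the external product of cycle classes; you still owe the statement that the external product on $z^r(\Delta^\bullet\times\A^r)$ corresponds, through the comparison of Proposition~\ref{prop:ChowComparison}, to the symmetric monoidal structure on $\DM$ applied to $\Z_\tr(\P^r/\P^{r-1})$. That is not formal: it is exactly the compatibility square the paper writes down and verifies by the $\SmAff$-restricted argument at the end of the proof. Finally, the uniqueness in Proposition~\ref{prop:Chownorms} only tells you that $\nu^\FM$ is characterized by $\nu_p^\FM[\alpha]=[\Norm_p(\alpha)]$ on normable $\alpha$; to invoke it you must first prove $\nu_p[\alpha]=[\Norm_p(\alpha)]$ on such $\alpha$, which is the whole content of the theorem.
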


Before we can prove this theorem, we need to recall the comparison theorem between Voevodsky's motivic cohomology and higher Chow groups from \cite[Part 5]{Mazza:2006}. In fact, we will formulate a slightly more general form of this comparison theorem that also incorporates the Thom isomorphism (see Proposition~\ref{prop:ChowComparison}).

We denote by $\SmSep_S\subset\Sm_S$ the full subcategory of smooth separated $S$-schemes.

\begin{definition}\label{def:Thom}
	Let $S$ be a scheme and $V\to S$ a vector bundle. A \emph{Thom compactification} of $V$ is an open immersion $V\into P$ in $\SmSep_S$ together with a presheaf $P_\infty\in\PSh(\SmSep_S)_{/P}$ such that:
	\begin{enumerate}
		\item $P_\infty\times_PV$ is the empty scheme;
		\item $P/P_\infty\to P/(P\minus S)$ is a motivic equivalence.
	\end{enumerate}
\end{definition}

If $V\into P\from P_\infty$ is a Thom compactification of $V$, then $P/P_\infty$ is a model for the motivic sphere $\S^{V}$, since we have a Zariski equivalence $V/(V\minus S)\to P/(P\minus S)$.

\begin{example}
	The prototypical example of a Thom compactification is $V\into \P(V\times\A^1) \hookleftarrow \P(V)$. 
	More generally, if $p\colon T\to S$ is a finite étale map and $V\to T$ is a vector bundle, then
	\[
	\Weil_pV\into \Weil_p(\P(V\times \A^1))\hookleftarrow p_*(\P(V\times\A^1)\Vert \P(V))
	\]
	is a Thom compactification of $\Weil_pV$: (1) is clear and (2) follows from Propositions~\ref{prop:quotients} and~\ref{prop:pairs}.
\end{example}
 
If $S$ is regular and $X$ is of finite type over $S$, we write $z^\equi_0(X/S)$ for the presheaf on $\Sm_S$ sending $U$ to the abelian group of relative cycles on $X\times_SU/U$ that are equidimensional of relative dimension $0$ over $U$.

\begin{lemma}\label{lem:finite-to-equi}
	Let $S$ be a regular noetherian scheme, $V\to S$ a vector bundle, and $V\stackrel{j}\longinto P \from P_\infty$ a Thom compactification of $V$. Then the restriction $j^*\colon \Z_{\tr,S}(P)\to z^\equi_0(V/S)$ induces a motivic equivalence
	\[
	j^*\colon \Z_{\tr,S}(P/P_\infty) \to z^\equi_0(V/S)
	\]
	in $\PSh(\SmSep_S)$.
\end{lemma}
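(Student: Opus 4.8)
The plan is to reduce the statement to the classical comparison between Bloch's higher Chow groups and Voevodsky's motivic cohomology, following the argument of \cite[Part 5]{Mazza:2006} but keeping track of the Thom compactification. First I would observe that the question is local on $S$ for the Zariski topology, so I may assume $S$ affine and even, after choosing a trivialization, that $V = \A^n_S$; the point of phrasing the lemma in terms of an abstract Thom compactification is precisely that the conclusion for one choice of $(P,P_\infty)$ implies it for any other, since any two Thom compactifications of $V$ have motivically equivalent quotients $P/P_\infty$ (both compute $\S^V$) and the restriction maps to $z^\equi_0(V/S)$ are manifestly compatible. So it suffices to treat the standard compactification $\A^n_S \into \P^n_S \hookleftarrow \P^{n-1}_S$.

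Next I would recall the structure of the classical proof. There is a chain of comparison quasi-isomorphisms relating $\Z_{\tr,S}(\P^n_S/\P^{n-1}_S)$, the presheaf with transfers $\Z_{\tr,S}(\A^n_S)/\Z_{\tr,S}(\A^n_S \minus S)$, and the cycle presheaf $z^\equi_0(\A^n_S/S)$: the first equivalence is purity (the cofiber of $\Z_{\tr,S}(\P^{n-1}_S) \to \Z_{\tr,S}(\P^n_S)$ agrees motivically with $\Z_{\tr,S}(\A^n_S)/\Z_{\tr,S}(\A^n_S\minus S)$ by the standard Nisnevich/$\A^1$-excision argument for the open immersion $\A^n \into \P^n$ with closed complement $\P^{n-1}$, and a second excision moving the complement to $\A^n\minus S$); the second is the identification of the relative cycle group with a quotient of finite correspondences, which is where one uses that $S$ is regular so that relative cycles equidimensional of relative dimension $0$ are well-behaved and the moving lemma applies. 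I would cite the relevant statements from \cite{CD} (compatibility of the Nisnevich topology with transfers, Remark~\ref{rmk:compat-transfers}, and the representability of $z^\equi_0$) together with the localization and $\A^1$-homotopy invariance results there, and note that by Remark~\ref{rmk:compat-transfers} it is enough to check that $j^*$ becomes an equivalence after forgetting transfers and motivically localizing, which is exactly the content of the classical comparison.

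The map in the lemma is the composite of the purity equivalence $\Z_{\tr,S}(\P^n_S/\P^{n-1}_S) \simeq \Z_{\tr,S}(\A^n_S)/\Z_{\tr,S}(\A^n_S\minus S)$ with the cycle-theoretic comparison $\Z_{\tr,S}(\A^n_S)/\Z_{\tr,S}(\A^n_S\minus S) \to z^\equi_0(\A^n_S/S)$, and one has to check that this composite agrees with the naive restriction $j^*$ of finite correspondences to equidimensional relative cycles on $V$; this is a matter of unwinding definitions and is where I would be most careful, since the purity equivalence is a zig-zag. The main obstacle is therefore not any single deep input but the bookkeeping: verifying that the abstract Thom-compactification formulation genuinely reduces to the standard one (this needs that condition (2) of Definition~\ref{def:Thom} forces the comparison to be insensitive to the choice) and that the classical comparison theorem, stated over a field in \cite{Mazza:2006}, holds over an arbitrary regular noetherian base $S$ — for the latter I would invoke the base-independent formulation in \cite{CD} (e.g.\ the identification of $\DM$ with cycle complexes over regular bases) rather than reprove it. Once these are in place, the equivalence follows by two-out-of-three from the purity equivalence and the cycle comparison.
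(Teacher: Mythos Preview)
Your approach takes a genuinely different route from the paper, but the reduction step contains a gap that, once filled, essentially collapses to the paper's direct argument.

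The paper does not reduce to a standard compactification or localize on $S$. Instead it introduces the subpresheaf $F\subset z^\equi_0(V/S)$ of cycles not meeting the zero section and works with the commutative square
\[
\begin{tikzcd}
\Z_\tr(P)/\Z_\tr(P_\infty) \ar[r]\ar[d,"j^*"'] & \Z_\tr(P)/\Z_\tr(P\minus S) \ar[d]\\
z^\equi_0(V/S) \ar[r] & z^\equi_0(V/S)/F.
\end{tikzcd}
\]
The top map is a motivic equivalence by condition~(2) of Definition~\ref{def:Thom}; the bottom map is an $\A^1$-equivalence because $F$ is $\A^1$-contractible (the proof of \cite[Lemma~16.10]{Mazza:2006}); and the right vertical map is a Nisnevich equivalence because it is mono and surjective on henselian local schemes (the proof of \cite[Lemma~16.11]{Mazza:2006}). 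The conclusion follows by two-out-of-three.

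Your claim that ``the conclusion for one choice of $(P,P_\infty)$ implies it for any other'' is where the gap lies. The zig-zag of motivic equivalences between $P/P_\infty$ and $P'/P'_\infty$ passes through $V/(V\minus S)$ and $P/(P\minus S)$, but $\Z_\tr(V/(V\minus S))$ and $\Z_\tr(P/(P\minus S))$ do \emph{not} map to $z^\equi_0(V/S)$: the image of $\Z_\tr(V\minus S)$ under the cycle map lands in $F$, not in zero, so these objects only map to the quotient $z^\equi_0(V/S)/F$. Thus to compare the maps $j^*$ and $j'^*$ via this zig-zag you must first pass to $z^\equi_0(V/S)/F$ and know that $F$ is $\A^1$-contractible---which is exactly one of the two key inputs in the paper's proof. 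At that point you have done most of the paper's argument anyway, plus the extra overhead of the Zariski localization, the trivialization, and the bookkeeping for the purity zig-zag you correctly flagged as delicate. The paper's approach avoids all of this by never leaving the given Thom compactification.
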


\begin{proof}
	By Definition~\ref{def:Thom}(1), the composition $\Z_\tr(P_\infty) \to \Z_\tr(P) \to z^\equi_0(V/S)$ is the zero map, and we have an induced map as claimed. 
	Let $F\subset z^\equi_0(V/S)$ be the subpresheaf consisting of cycles that do not meet the zero section, and consider the commutative square
	\begin{tikzmath}
		\diagram{\Z_{\tr}(P)/\Z_\tr(P_\infty) & \Z_\tr(P)/\Z_\tr(P\minus S) \\
		z^\equi_0(V/S) & z^\equi_0(V/S)/F \\};
		\arrows (11-) edge (-12) (11) edge (21) (21-) edge (-22) (12) edge (22);
	\end{tikzmath}
	of presheaves of connective $\HH\Z$-modules on $\SmSep_S$.
	By Definition~\ref{def:Thom}(2), the top horizontal map is a motivic equivalence of presheaves with transfers, whence also a motivic equivalence of underlying presheaves (Remark~\ref{rmk:compat-transfers}).
	The proof of \cite[Lemma 16.10]{Mazza:2006} shows that $F$ is $\A^1$-contractible, so that the bottom horizontal map is an $\A^1$-equivalence. Note that $\Z_\tr(P\minus S)$ is exactly the preimage of $F$ by the restriction map $j^*\colon \Z_\tr(P) \to z^\equi_0(V/S)$, so that the right vertical map is a monomorphism. 
	The proof of \cite[Lemma 16.11]{Mazza:2006} shows that it is also surjective on henselian local schemes. It is therefore a Nisnevich equivalence, and we are done. 
\end{proof}

If $X^\bullet$ is a cosimplicial noetherian scheme with smooth degeneracy maps, we will denote by $z^*(X^\bullet)$ the simplicial abelian group whose $n$-simplices are those cycles in $z^*(X^n)$ intersecting all the faces properly; this is a meaningful simplicial set because the face maps in $X^\bullet$ are regular immersions (cf.\ \cite[Definition 17.1]{Mazza:2006}). 
For example, $\lvert z^*(\Delta^\bullet_X)\rvert$ is Bloch's cycle complex $z^*(X,*)$, where $\Delta^n_X\simeq \A^n_X$ is the standard algebraic $n$-simplex over $X$.

\begin{lemma}
	\label{lem:equi-to-Bloch}
	Let $k$ be a field, $X$ a smooth $k$-scheme, and $V\to X$ a vector bundle of rank $r$. Then the inclusion
	\[
	z^\equi_0(V/X)(\Delta^\bullet\times\ph) 
	\subset 
	z^r(\Delta^\bullet\times V\times_X \ph)
	\]
	of simplicial presheaves on the étale site of $X$ becomes a Zariski equivalence after geometric realization.
\end{lemma}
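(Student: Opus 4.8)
The statement compares two simplicial presheaves of abelian groups on the (small) Zariski site of a smooth $k$-scheme $X$: the ``equidimensional cycle'' model $z^\equi_0(V/X)(\Delta^\bullet\times\ph)$ and Bloch's cycle complex $z^r(\Delta^\bullet\times V\times_X\ph)$ computing higher Chow groups of the total space. I would first reduce the problem to a purely local statement. Both sides are presheaves of connective $\HH\Z$-modules, the inclusion is a levelwise monomorphism of simplicial abelian groups, and Zariski equivalence after geometric realization can be checked on stalks, i.e.\ after restricting to henselian (in fact, since we only need the Zariski topology, Zariski-local) rings of $X$. So it suffices to fix a local ring $A=\mathcal O_{X,x}$ with $\Spec A$ a localization of a smooth $k$-scheme, and show that the inclusion of simplicial abelian groups $z^\equi_0(V_A/\Spec A)(\Delta^\bullet_A)\subset z^r(\Delta^\bullet_A\times_A V_A)$ is a quasi-isomorphism.

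The heart of the argument is a classical moving-lemma / pro-equidimensionality input, and this is where I expect the real work to be. One wants to know that over a smooth local (or semilocal) base $A$, every cycle in $z^r(\Delta^n_A\times_A V_A)$ meeting all faces properly is rationally equivalent, through cycles meeting all faces properly, to a cycle that is moreover equidimensional of relative dimension $0$ over $\Delta^n_A$ — i.e.\ finite over the base simplex. This is precisely the kind of statement proved in \cite[\S16]{Mazza:2006} for the case $V=\A^r$ (the comparison $\HH\Z^{2r,r}\simeq \mathrm{CH}^r$): the key geometric ingredient there is that a cycle of codimension $r$ in $\A^r\times_k(\text{local base})$ can be moved to be finite over the base using the multiplicative group action and a general-position argument over the local ring. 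For a general vector bundle $V\to X$, I would Zariski-localize $X$ further so that $V$ becomes trivial, $V_A\simeq\A^r_A$, thereby reducing the vector-bundle case to the trivial case; the point is that the presheaves and the inclusion are all compatible with Zariski localization on $X$, and a trivialization of $V$ over a Zariski neighborhood of $x$ induces compatible isomorphisms of both sides. After this reduction the statement is exactly the Nisnevich-local (hence Zariski-local) comparison underlying \cite[Theorem 19.8]{Mazza:2006}, namely that $z^\equi_0(\A^r/\ph)(\Delta^\bullet\times\ph)\to z^r(\Delta^\bullet\times\A^r\times\ph)$ is a local equivalence.

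Concretely, the steps in order: (1) observe both functors take values in connective $\HH\Z$-modules and that geometric realization commutes with the relevant sheafifications, so the claim is that the map is a stalkwise quasi-isomorphism; (2) restrict to a Zariski-local ring $A$ of $X$ and, shrinking, trivialize $V$ so that $V_A=\A^r_A$; (3) invoke the moving lemma for cycles over a smooth semilocal base (Bloch's moving lemma, as used in \cite[\S16, \S19]{Mazza:2006}) to show that every class in $\pi_n z^r(\Delta^\bullet_A\times_A\A^r_A)$ is represented by an equidimensional cycle and that a relation between two equidimensional cycles can be realized by an equidimensional homotopy, i.e.\ that the inclusion is both surjective and injective on homotopy groups; (4) conclude that the inclusion is a quasi-isomorphism on every Zariski stalk, hence a Zariski equivalence after geometric realization. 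The main obstacle is step (3): making the moving lemma work \emph{with simultaneous control of all faces} and \emph{over a non-field semilocal base} (which is what ``smooth $k$-scheme'' forces, since $X$ need not be local and the site is the small Zariski site) — but this is exactly the content of the lemmas of \cite[\S16--19]{Mazza:2006}, invoked here after a trivialization of $V$, so I would cite those results rather than reprove them.
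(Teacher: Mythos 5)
Your proposal is essentially the paper's proof: the key steps match exactly (reduce to stalks on the Zariski site, use Zariski-locality on $X$ to trivialize $V$ so that $V_A\simeq\A^r_A$, then invoke \cite[Theorem~19.8]{Mazza:2006}, which is the moving-lemma input you describe in step (3)). The elaboration in your step (3) about controlling faces over a semilocal base is indeed what \cite[\S16--19]{Mazza:2006} is about, and you are right to cite rather than reprove it.

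The one genuine omission is that \cite[Theorem~19.8]{Mazza:2006} is stated only over a \emph{perfect} field, and your argument as written applies it over an arbitrary $k$. The paper fixes this first, ``by standard limit arguments, we can assume that $k$ is perfect,'' before doing the Zariski localization. The point is that a local ring of a smooth $k$-scheme is a filtered colimit of local rings of smooth schemes over the prime field ($\Q$ or $\F_p$), which is perfect, and both sides of the comparison commute with such filtered colimits; so the stalkwise statement reduces to the perfect-field case. This is a minor gap, but as it stands your step (3) invokes a theorem outside its stated hypotheses, so the reduction should be made explicit before the citation.
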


\begin{proof}
	 By standard limit arguments, we can assume that $k$ is perfect. Since moreover the question is Zariski-local on $X$, we can assume that $V$ is free. In this case the result is proved in \cite[Theorem 19.8]{Mazza:2006}.
\end{proof}

We denote by $\Sm_S^\mathrm{flat}\subset\Sm_S$ and $\SmSep_S^\mathrm{flat}\subset \SmSep_S$ the wide subcategories whose morphisms are the flat morphisms.

\begin{proposition}\label{prop:ChowComparison}
	Let $k$ be a field, $X$ a smooth $k$-scheme, $V\to X$ a vector bundle of rank $r$, and $(j\colon V\into P, P_\infty\to P)$ a Thom compactification of $V$. Then the natural transformation
	\[
	j^*\colon \Z_{\tr,X}(P/P_\infty)(\Delta^\bullet\times\ph) \to z^r(\Delta^\bullet\times V\times_X\ph)
	\]
	on $\SmSep_X^\mathrm{flat}$ induces an equivalence
	\[
	\L_\mot \Z_{\tr,X}(P/P_\infty)|\Sm_X^\mathrm{flat} \simeq z^r(V\times_X\ph,*)
	\]
	in $\PSh(\Sm_X^\mathrm{flat})$.
\end{proposition}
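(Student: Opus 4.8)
The plan is to chain together the three comparison lemmas just established, being careful that each one is stated on the correct site and that the passage to motivic localization is legitimate. First I would observe that $\Z_{\tr,X}(P/P_\infty)$ is a presheaf of connective $\HH\Z$-modules with transfers on $\SmSep_X$, and that by Lemma~\ref{lem:finite-to-equi} the restriction map $j^*$ induces a motivic equivalence $\Z_{\tr,X}(P/P_\infty)\to z^\equi_0(V/X)$ in $\PSh(\SmSep_X)$. The key point here is that a motivic equivalence between $\A^1$-invariant-after-localization objects is detected by $\L_\mot$, so this step gives $\L_\mot\Z_{\tr,X}(P/P_\infty)\simeq \L_\mot z^\equi_0(V/X)$.

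Next I would pass from $z^\equi_0(V/X)$ to Bloch's cycle complex. Applying the Suslin construction $\ph(\Delta^\bullet\times\ph)$ and geometric realization, Lemma~\ref{lem:equi-to-Bloch} says the inclusion $z^\equi_0(V/X)(\Delta^\bullet\times\ph)\subset z^r(\Delta^\bullet\times V\times_X\ph)$ is a Zariski equivalence after realization on the étale site of $X$; restricting along $\Sm_X^\mathrm{flat}\to\mathrm{\acute Et}_X$ (or rather noting that the statement already lives over smooth schemes via flat pullback) gives a Zariski-local equivalence of presheaves on $\Sm_X^\mathrm{flat}$. Since the Suslin complex functor $F\mapsto |F(\Delta^\bullet\times\ph)|$ is a model for $\L_{\A^1}$ on presheaves of connective $\HH\Z$-modules (the standard fact that motivic localization of such a presheaf with transfers is computed by the Suslin complex, combined with Nisnevich-local-to-Zariski-local comparison for cycle presheaves, cf.\ \cite[Part 5]{Mazza:2006}), applying $|(\ph)(\Delta^\bullet\times\ph)|$ to the equivalence of the previous paragraph computes $\L_\mot\Z_{\tr,X}(P/P_\infty)|\Sm_X^\mathrm{flat}$. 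Combining, $\L_\mot\Z_{\tr,X}(P/P_\infty)|\Sm_X^\mathrm{flat}\simeq |z^r(\Delta^\bullet\times V\times_X\ph)| = z^r(V\times_X\ph,*)$, which is the desired identification.

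The last bookkeeping step is to check naturality of the composite equivalence with the stated transformation $j^*$ on $\SmSep_X^\mathrm{flat}$: each of the three steps (Lemma~\ref{lem:finite-to-equi}, the Suslin realization, Lemma~\ref{lem:equi-to-Bloch}) is compatible with flat pullback and with the restriction map $j^*$ by construction, so the composite is literally induced by $j^*$ after $\L_\mot$. Here I must also note that since we have restricted to $\Sm_X^\mathrm{flat}$ (only flat morphisms), the presheaf $U\mapsto z^r(V\times_X U,*)$ makes sense — this is exactly why the statement is phrased over the flat-morphism subcategory, since flat pullback is needed for cycle pullback without moving lemmas, and it is the source of the compatibility in Lemma~\ref{lem:finite-to-equi} which is stated over $\SmSep_S$.

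\textbf{Main obstacle.} The delicate point is the identification of $\L_\mot$ of a presheaf \emph{with transfers} supported on separated schemes with the Suslin complex, restricted to the \emph{flat} subcategory: one must track that $\Z_{\tr,X}(P/P_\infty)$, a priori defined on $\SmSep_X$, has a well-behaved motivic localization whose restriction to $\Sm_X^\mathrm{flat}$ agrees with the Suslin complex of the cycle presheaf — the subtlety being the interchange of Nisnevich sheafification (needed for $\L_\mot$ with transfers, via Remark~\ref{rmk:compat-transfers}) with the passage to the Zariski-local equivalence of Lemma~\ref{lem:equi-to-Bloch}, and the fact that $z^r(V\times_X\ph,*)$ is only a Zariski sheaf, not a Nisnevich sheaf with transfers, on the nose. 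The resolution is that both sides, after $\L_\mot$, are strictly $\A^1$-invariant Nisnevich sheaves with transfers (the left by construction, the right because Bloch's higher Chow groups of smooth schemes over a field satisfy the relevant descent, \cite[Part 5]{Mazza:2006}), so a Zariski-local equivalence between them upgrades to a motivic equivalence; I would spell this out carefully rather than wave at it.
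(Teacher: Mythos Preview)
Your approach is correct and uses the same two lemmas as the paper, but you are overcomplicating the second step. The paper's proof is two lines: since $z^r(\ph,*)$ is already an $\A^1$-invariant Nisnevich sheaf on $\Sm_k^{\mathrm{flat}}$, Lemma~\ref{lem:equi-to-Bloch} immediately gives $\L_\mot z^\equi_0(V/X)|\Sm_X^{\mathrm{flat}}\simeq z^r(V\times_X\ph,*)$, and one concludes by Lemma~\ref{lem:finite-to-equi}. The point is that you do not need to identify the Suslin complex as a model for $\L_{\A^1}$ (your claim to this effect is imprecise anyway---the Suslin complex only gives an $\A^1$-equivalence $F\to|F(\Delta^\bullet\times\ph)|$, not $\L_{\A^1}F$ in general): you only need that the target $z^r(\ph,*)$ is motivically local, so that the zigzag $z^\equi_0(V/X)\to |z^\equi_0(V/X)(\Delta^\bullet\times\ph)|\to z^r(V\times_X\ph,*)$ of an $\A^1$-equivalence followed by a Zariski equivalence exhibits the motivic localization directly. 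This dissolves your ``main obstacle'' entirely.
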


\begin{proof}
	Since $z^r(-,*)$ is an $\A^1$-invariant Nisnevich sheaf on $\Sm_k^\mathrm{flat}$, it follows from Lemma~\ref{lem:equi-to-Bloch} that
	\[
	\L_\mot z^\equi_0(V/X)|\Sm_X^\mathrm{flat} \simeq z^r(V\times_X\ph,*).
	\]
	We conclude using Lemma~\ref{lem:finite-to-equi}.
\end{proof}

\begin{remark}\label{rmk:movingLemma}
	The presheaf $z^r(V\times_X\ph,*)$ on $\Sm_X^\mathrm{flat}$ can be promoted to a presheaf on $\Sm_X$ using Levine's moving lemma, and Proposition~\ref{prop:ChowComparison} can be improved to an equivalence in $\PSh(\Sm_X)$. Recall that, for any morphism $f\colon Y\to X$ in $\Sm_k$, there is a largest subsimplicial abelian group $z^*_f(\Delta^\bullet_X)\subset z^*(\Delta^\bullet_X)$ such that the pullback $f^*\colon z^*_f(\Delta^\bullet_X) \to z^*(\Delta^\bullet_Y)$ is defined. By Levine's moving lemma \cite[Theorem 2.6.2 and Lemma 7.4.4]{LevineChow}, if $X$ is affine, the inclusion $z^*_f(\Delta^\bullet_X)\subset z^*(\Delta^\bullet_X)$ induces an equivalence on geometric realization.
	 The simplicial sets $z^*_f(\Delta^\bullet_X)$ can be arranged into a presheaf $\scr L(\Sm_k)^\op \to \Fun(\Delta^\op,\Set)$, where $\pi\colon\scr L(\Sm_k)\to\Sm_k$ is a certain locally cartesian fibration with weakly contractible fibers (see \cite[\sect7.4]{LevineChow}). Its geometric realization $\scr L(\Sm_k)^\op \to\scr S$ is then constant along the fibers of $\pi$ over $\SmAff_k$, and hence it induces a Zariski sheaf $\Sm_k^\op\to\scr S$, $X\mapsto z^*(X,*)$.
	 Using that the inclusion $z^\equi_0(V/X)(\Delta^\bullet_X)\subset z^r(\Delta^\bullet_V)$ lands in $z^r_f(\Delta^\bullet_V)$ for any $f\colon Y\to X$, the transformation $j^*\colon \Z_{\tr,X}(P/P_\infty)(\Delta^\bullet\times\ph) \to z^r(\Delta^\bullet\times V\times_X\ph)$ on $\SmSep_X^\mathrm{flat}$ can be promoted to a transformation on $\scr L(\SmSep_X)$. Its geometric realization then induces a transformation $\L_\mot \Z_{\tr,X}(P/P_\infty)\to z^r(V\times_X\ph,*)$ on $\Sm_X$, which is an equivalence by Proposition~\ref{prop:ChowComparison}.
\end{remark}

\begin{remark}\label{rmk:motivicThom}
	By homotopy invariance of higher Chow groups, the right-hand side of the equivalence of Proposition~\ref{prop:ChowComparison} is equivalent to $z^r(\ph,*)$. Since $\L_\mot\Z_{\tr,X}(\P(V\times\A^1)/\P(V))\simeq \Omega^\infty \Sigma^{V} \HH\Z_X$, we obtain in particular an isomorphism
	\[
	[\1_X, \Sigma^V\HH\Z_X] \simeq \CH^r(X),
	\]
	which is an instance of the Thom isomorphism in motivic cohomology.
\end{remark}

If $E$ is an $X$-scheme equipped with a morphism of $X$-schemes $\sigma\colon\Delta^1_X\to E$, we will denote by $E^\bullet$ the right Kan extension to $\Delta$ of the diagram
 \begin{tikzmath}
 	\diagram{ X & E\rlap. \\};
	\arrows (11-) edge[vshift=2*\dbl] node[above=2*\dbl]{$\sigma_0$} (-12) (11-) edge[vshift=-2*\dbl] node[below=2*\dbl]{$\sigma_1$} (-12)
	(-12) edge (11-);
 \end{tikzmath}
The map $\sigma$ then extends uniquely to a cosimplicial map $\sigma\colon\Delta^\bullet_X\to E^\bullet$. 
More concretely, we can regard $E$ as an interval object via $\sigma$, and $E^\bullet$ is the associated cosimplicial $X$-scheme described in \cite[\sect2.3.2]{MV}.

If $X^{\bullet,\bullet}$ is a bicosimplicial scheme with smooth degeneracy maps, the bisimplicial abelian group $z^*(X^{\bullet,\bullet})$ is defined in an obvious way: its simplices are the cycles intersecting all the bifaces properly.

\begin{lemma}\label{lem:vbBloch}
	Let $X$ be a smooth affine scheme over a field, $E$ a vector bundle on $X$, and $\sigma\colon\Delta^1_X\into E$ a linear immersion with a linear retraction $\rho\colon E\to \Delta^1_X$.
	Then there is a commutative square of simplicial abelian groups
	\begin{tikzmath}
		\diagram{z^*(E^\bullet) & \delta^*z^*(\Delta^\bullet\times E^\bullet) \\
		z^*(\Delta^\bullet_X) & \delta^*z^*(\Delta^\bullet\times \Delta_X^\bullet)\rlap, \\};
		\arrows (11-) edge node[above]{$\pi_2^*$} (-12) (21) edge node[left]{$\rho^*$} (11) (21-) edge node[above]{$\pi_2^*$} node[below]{$\sim$} (-22) (22) edge node[right]{$\rho^*$} node[left]{$\sim$} (12);
	\end{tikzmath}
	where $\delta^*$ is restriction along the diagonal $\delta\colon \Delta^\op \to \Delta^\op\times \Delta^\op$ and ``$\sim$'' indicates a simplicial homotopy equivalence.
\end{lemma}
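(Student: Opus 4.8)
The plan is to deduce the two equivalences marked $\sim$ from homotopy invariance of Bloch's cycle complex over smooth affine bases, applied once with the interval $\Delta^1_X$ (for the bottom map) and once with the interval $E$ (for the right map), and to package this by the standard diagonal comparison for bisimplicial abelian groups.

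First I would set up the cosimplicial $X$-scheme $E^\bullet$: it is the one attached to the interval $(E;\sigma_0,\sigma_1,\rho)$ over $X$ as in \cite[\sect2.3.2]{MV}, so that $E^n = E^{\times_X n}$, $E^0 = X$, $E^1 = E$, the cofaces are regular closed immersions and the codegeneracies are the (flat) linear projections; in particular $z^*(E^\bullet)$ and the bisimplicial abelian group $(m,n)\mapsto z^*(\Delta^m\times E^n)$ are defined. The map $\sigma$ is a cosimplicial morphism $\sigma^\bullet\colon\Delta^\bullet_X\to E^\bullet$, and since $\rho\sigma=\id$ with $\rho$ linear, $\rho$ induces a cosimplicial retraction $\rho^\bullet\colon E^\bullet\to\Delta^\bullet_X$ (on level $n$ a surjective linear map of bundles over $X$, hence an affine bundle). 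Commutativity of the square is then immediate from functoriality of flat pullback of cycles and the identity $\pi_2\circ(\id_{\Delta^\bullet}\times\rho^\bullet)=\rho^\bullet\circ\pi_2$; one has only to check that these flat morphisms carry cycles in good position (with respect to the coface loci) to cycles in good position, which is routine.

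For the bottom map, view $A_{m,n}=z^*(\Delta^m\times\Delta^n_X)$ as a bisimplicial abelian group and $\pi_2^*$ as a map from the constant object $C_{m,n}=z^*(\Delta^n_X)$. For each fixed $m$ the cosimplicial scheme $n\mapsto\Delta^m\times\Delta^n_X$ is the standard cosimplicial scheme over the smooth affine $k$-scheme $\A^m_X$, so $C_{m,\bullet}\to A_{m,\bullet}$ is identified with the pullback $z^*(X,*)\to z^*(\A^m_X,*)$ along $\A^m_X\to X$, a simplicial homotopy equivalence by homotopy invariance of the cycle complex over this affine base (with explicit homotopies, as in Suslin's argument or via Levine's moving lemma, Remark~\ref{rmk:movingLemma}). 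Since this holds compatibly in $m$, passing to diagonals gives that $\pi_2^*\colon z^*(\Delta^\bullet_X)\to\delta^*z^*(\Delta^\bullet\times\Delta^\bullet_X)$ is a simplicial homotopy equivalence. For the right map, argue the same way with the first simplicial index frozen: for each $m$, $\id_{\Delta^m}\times\rho^\bullet\colon\Delta^m\times E^\bullet\to\Delta^m\times\Delta^\bullet_X$ is a (split) cosimplicial affine bundle over the standard cosimplicial scheme on $\A^m_X$, so it induces a simplicial homotopy equivalence on cycle complexes by homotopy invariance for affine bundles (reducing Zariski-locally on $X$ to a coordinate projection of affine spaces if one wants a fully explicit homotopy); passing to diagonals yields the right-hand equivalence.

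The main obstacle is to have homotopy invariance of Bloch's cycle complex in the strong form required here — as an honest simplicial homotopy equivalence, with homotopy inverse and homotopies that can be chosen compatibly in the auxiliary cosimplicial variable $m$ — rather than merely as a weak equivalence or an equivalence after geometric realization. This is exactly where the affineness (and smoothness) of $X$ is used: it provides enough room to move cycles into good position by explicit affine-linear families, and one must check that these moves descend to the diagonal. Everything else — the construction of $E^\bullet$, the commutativity of the square, and the formal fact that a map of bisimplicial abelian groups which is a simplicial homotopy equivalence levelwise in one variable (compatibly) induces one on diagonals — is routine; alternatively, the whole argument can be phrased as a change-of-interval comparison in the style of \cite[\sect2.3]{MV}.
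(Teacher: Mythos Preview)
Your overall strategy—show that the map of bisimplicial abelian groups is a levelwise weak equivalence in one simplicial direction, then pass to the diagonal—is exactly the paper's. The difference is that you slice in the wrong direction, and this creates a real gap.

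When you freeze $m$, the column $A_{m,\bullet}$ of $z^*(\Delta^\bullet\times\Delta^\bullet_X)$ is \emph{not} $z^*(\A^m_X,*)$. By the paper's convention the bisimplicial group in bidegree $(m,n)$ consists of cycles meeting every biface $F\times G$ properly, with $F$ a face of $\Delta^m$ and $G$ a face of $\Delta^n_X$; the faces $F$ in the frozen direction impose extra good-position constraints, so $A_{m,\bullet}$ is only a subcomplex $z^*_{\{\text{faces of }\Delta^m\}}(\Delta^\bullet_{\A^m_X})$. Identifying it with $z^*(\A^m_X,*)$ already requires Levine's moving lemma over the base $\A^m_X$ (this, not homotopy invariance, is where affineness enters). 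For the right-hand map the problem is worse: the column $n\mapsto z^*(\Delta^m\times E^n)$ sits inside the cycle complex of the \emph{non-standard} cosimplicial scheme $E^\bullet$ (built from the interval $E$), to which neither Levine's moving lemma nor the usual homotopy-invariance statements apply directly.

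The paper sidesteps this by freezing $n$ instead. Then each row is an honest Bloch complex with auxiliary good-position constraints, namely $z^*_{\scr B_n}(\Delta^\bullet_X\times_X E^n)$ with $\scr B_n$ the faces of $E^n$; Levine's moving lemma over the smooth affine base $E^n$ and homotopy invariance give that $\pi_1^*\colon z^*(\Delta^\bullet_X)\to z^*_{\scr B_n}(\Delta^\bullet_X\times_X E^n)$ is a simplicial homotopy equivalence, and a triangle with the analogous $\pi_1^*$ for $\Delta^n_X$ forces $(\rho^*)_n$ to be one as well. The bottom $\pi_2^*$ then follows from the $\Delta^n_X$ case by symmetry. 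Finally, your worry about choosing homotopies compatibly in the auxiliary variable is unnecessary: a map of bisimplicial abelian groups that is a levelwise quasi-isomorphism in one direction induces a quasi-isomorphism on diagonals, and every quasi-isomorphism of simplicial abelian groups is automatically a simplicial homotopy equivalence.
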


\begin{proof}
	First, note that there is a well-defined simplicial map $\rho^*\colon z^*(\Delta^\bullet_X) \to z^*(E^\bullet)$. Indeed, we can write $E=\Delta^1\times F$ so that $\rho$ is the projection onto the first summand, and if $\alpha\in z^*(\Delta^n_X)$ intersects all faces properly, it is then clear that $\alpha\times F^n\in z^*(\Delta^n\times F^n)$ does as well. Similarly, we have a well-defined bisimplicial map $\rho^*\colon z^*(\Delta^\bullet\times\Delta^\bullet_X) \to z^*(\Delta^\bullet\times E^\bullet)$.
	
	Let $\scr A_n$ be the collection of faces of $\Delta^n_X$ and $\scr B_n$ that of $E^n$.
	In the commutative triangle of simplicial sets
	\begin{tikzmath}
		\diagram{z^*(\Delta^\bullet_X) & z^*_{\scr B_n}(\Delta^\bullet_X\times_X E^n)\rlap, \\
		z^*_{\scr A_n}(\Delta^\bullet_X\times_X \Delta^n_X) & \\};
		\arrows (11-) edge node[above]{$\pi_1^*$} (-12) (11) edge node[left]{$\pi_1^*$} (21) (21) edge node[below right]{$(\rho^*)_n$} (12);
	\end{tikzmath}
	the maps $\pi_1^*$ are simplicial homotopy equivalences by Levine's moving lemma \cite[Theorem 2.6.2]{LevineChow}
	 and the homotopy invariance of higher Chow groups. Hence, $(\rho^*)_n$ is also a simplicial homotopy equivalence. The maps $(\rho^*)_n$ are the components of the bisimplicial map $\rho^*$, which therefore induces a simplicial homotopy equivalence on the diagonal. Similarly, the fact that the vertical map $\pi_1^*$ above is a simplicial homotopy equivalence implies that $\pi_1^*\colon z^*(\Delta^\bullet_X)\to \delta^*z^*(\Delta^\bullet_X\times_X\Delta^\bullet_X)$ is a simplicial homotopy equivalence, whence also $\pi_2^*$ by symmetry.
\end{proof}

\begin{proof}[Proof of Theorem~\ref{thm:FultonMacPhersonComparison}]
We can assume that $p$ has constant degree $d\geq 1$, and we will work with the cube~\eqref{eqn:PBnormable}.
The norms $\nu_p,\nu_p^\FM\colon \CH^*(T)\rightrightarrows \CH^*(S)$ can then be factored as
\begin{tikzmath}
	\diagram{\CH^*(T) & \CH^*(S') & \CH^*(S),\\};
	\arrows (11-) edge[vshift=\dbl,-top] node[above=\dbl]{$\nu_q r^*$} (-12) edge[vshift=-\dbl,-bot] node[below=\dbl]{$\nu_q^\FM r^*$} (-12) (12-) edge node[above]{$s^*$} (-13);
\end{tikzmath}
and we will prove that the two parallel arrows are equal.

For $X\in\SmQP_T$, let $X'\in\SmQP_{T'}$ be the pullback of $X$ along $r$. We then have a diagram
\begin{tikzmath}
	\def\colsep{1.5em}
	\diagram{X & X' & X \\ \Weil_pX\times_ST & \Weil_qX'\times_{S'}T' & \\ \Weil_pX & \Weil_qX' & \\};
	\arrows (11-) edge[c->] (-12) (12-) edge (-13) (21) edge (11) (22) edge (12) (21-) edge[c->] node[above]{$t$} (-22) (21) edge node[left]{$p$} (31) (22) edge node[right]{$q$} (32) (31-) edge[c->] node[above]{$s$} (-32) (22) edge node[below right]{$r$} (13);
\end{tikzmath}
natural in $X$, where the top row is the identity, the bottom square is cartesian, and the maps $p$, $q$, $r$, $s$, and $t$ specialize to the ones in~\eqref{eqn:PBnormable} when $X=T$. As in Lemma~\ref{lem:FMP}, for every cycle $\alpha\in z^*(X)$, $r^*(\alpha)$ is $q$-normable: it corresponds to the $\Sigma_d$-invariant cycle $\coprod_i\pi_i^*(\alpha)$ on $d\times X^d\times_{T^d}P'\subset d\times X^d$.
We therefore obtain a map
\[
\Norm_qr^*\colon z^*(X) \to z^*(\Weil_qX'),
\]
which is natural in $X\in\SmQP_T^\mathrm{flat}$ (since $\Weil_q$ preserves flat morphisms \cite[\sect7.6, Proposition 5(g)]{NeronModels}).
If $X^\bullet$ is a cosimplicial object in $\SmQP_T$ and if $\alpha\in z^*(X^n)$ intersects all faces properly, then $\Norm_qr^*(\alpha)\in z^*(\Weil_q(X^{n\prime}))$ does as well. In particular, we obtain a map of simplicial sets
\begin{equation*}
\Norm_{q}r^*\colon z^*(\Delta^\bullet_X) \to z^*(\Weil_q(\Delta^{\bullet}_{X'})).
\end{equation*}
By construction, the composition
\[
z^*(X) \xrightarrow{\Norm_qr^*} z^*(\Weil_q{X'}) \to \CH^*(\Weil_qX') \xrightarrow{s^*} \CH^*(\Weil_pX)
\]
induces the Fulton–MacPherson norm $\nu_p^\FM$ when $X=T$.

Under the equivalence of Proposition~\ref{prop:ChowComparison}, our norm map $\nu_p\colon z^r(X,*) \to z^{rd}(\Weil_pX,*)$ is the unique transformation making the following square commute in $\PSh(\SmQP_T)$:
\begin{tikzmath}
	\diagram{
	\Z_{\tr,T}(\P^r_T/\P^{r-1}_T) & \Z_{\tr,S}(\Weil_p(\P^r_T)/p_*(\P^r_T\Vert \P^{r-1}_T))(\Weil_p(\ph)) \\
	\L_\mot\Z_{\tr,T}(\P^r_T/\P^{r-1}_T) & \L_\mot\Z_{\tr,S}(\Weil_p(\P^r_T)/p_*(\P^r_T\Vert \P^{r-1}_T))(\Weil_p(\ph))\rlap; \\
	};
	\arrows (11-) edge node[above]{$s^* \Norm_q r^*$} (-12) (11) edge (21) (12) edge (22) (21-) edge[dashed] node[above]{$\nu_p$} (-22);
\end{tikzmath}
it exists because the functor $\Weil_p\colon \SmQP_T\to \SmQP_{S}$ preserves Nisnevich sieves, by Proposition~\ref{prop:integralsifted}.

We have a commutative diagram of simplicial sets
\begin{tikzmath}
	\def\colsep{1em}
	\diagram{
	\Z_{\tr,T}(\P^r_T/\P^{r-1}_T)(\Delta^\bullet\times X) & z^r(\Delta^\bullet\times\A^r_X) & \CH^r(\A^r_X)\\
	\Z_{\tr,S'}(\Weil_q(\P^r_{T'})/q_*(\P^r_{T'}\Vert \P^{r-1}_{T'}))(\Weil_q(\Delta^\bullet\times{X'})) & z^{rd}(\Weil_q(\Delta^\bullet\times\A^r_{X'})) & \CH^{rd}(\Weil_q(\A^r_{X'}))\rlap,\\
	};
	\arrows (11-) edge (-12) (11) edge node[left]{$\Norm_qr^*$} (21) (12) edge node[left]{$\Norm_qr^*$} (22) (21-) edge (-22) 
	(12-) edge (-13) (22-) edge (-23) 
	(13) edge node[right]{$\nu_q^\FM r^*$} (23) 
	;
\end{tikzmath}
natural in $X\in\SmQP_T^\mathrm{flat}$. 
Since the canonical immersion $\Delta^1_{S'}\into \Weil_q(\Delta^1_{T'})$ is a universal monomorphism of vector bundles, its cokernel computed in the category of quasi-coherent sheaves is locally free, and hence its base change to any affine scheme admits a linear retraction.
As $S'$ is regular and has affine diagonal, it admits an ample family of line bundles. By the Jouanolou–Thomason trick, there exists a vector bundle torsor $\tilde S'\to S'$ with $\tilde S'$ affine.

We explain the next step of the argument in a generic context.
Let $Y$ be a smooth affine $k$-scheme, $V$ a vector bundle of rank $n$ over $Y$, and $V\into P\from P_\infty$ a Thom compactification of $V$.
Let $E$ be a vector bundle over $Y$ and $\sigma\colon \Delta^1_Y\into E$ a linear immersion with a linear retraction $\rho\colon E\to \Delta^1_Y$. Then we have a commutative diagram
\begin{tikzmath}
	\diagram{
	\Z_{\tr,Y}(P/P_\infty)(E^\bullet) & z^n(E^\bullet\times_YV) & & \CH^n(V) \\
	&  & \delta^*z^*(\Delta^\bullet\times E^\bullet\times_YV) & \CH^n(V) \\
	\Z_{\tr,Y}(P/P_\infty)(\Delta^\bullet_Y) & z^n(\Delta^\bullet\times V) & \delta^*z^*(\Delta^\bullet\times\Delta^\bullet\times V) & \CH^n(V)\rlap,\\
	};
	\arrows (11-) edge (-12) (12-) edge (-14)
	(23-) edge (-24)
	(31-) edge (-32) (32-) edge node[below]{$\sim$} node[above]{$\pi_2^*$} (-33) (33-) edge (-34)
	(31) edge node[left]{$\rho^*$} node[right]{$\sim$} (11)
	(32) edge node[left]{$\rho^*$} (12)
	(12) edge node[above right]{$\pi_2^*$} (23) (33) edge node[left]{$\rho^*$} node[right]{$\sim$} (23)
	(14) edge[-,vshift=1pt] (24) edge[-,vshift=-1pt] (24)
	(24) edge[-,vshift=1pt] (34) edge[-,vshift=-1pt] (34)
	;
\end{tikzmath}
where ``$\sim$'' indicates a simplicial homotopy equivalence. The middle part of this diagram is an instance of Lemma~\ref{lem:vbBloch}. That the leftmost $\rho^*$ is a simplicial homotopy equivalence (with homotopy inverse $\sigma^*$) follows from the observation that $\A^1$-homotopy equivalences and $E$-homotopy equivalences coincide.

Combining the geometric realizations of the two previous diagrams, we obtain a commutative diagram
\begin{tikzmath}
	\diagram{\L_{\A^1}\Z_{\tr,T}(\P^r_T/\dots)(X) & z^r(\A^r_X,*) & \CH^r(\A^r_X) \\
	\L_{\A^1}\Z_{\tr,S'}(\Weil_q(\P^r_{T'})/\dots)(\Weil_qX'\times_{S'}\tilde S') & z^{rd}(\Weil_q(\A^r_{X'})\times_{S'}\tilde S',*) & \CH^{rd}(\Weil_q(\A^r_{X'})\times_{S'}\tilde S')\rlap, \\};
	\arrows (11-) edge (-12) (12-) edge (-13) (11) edge (21) (21-) edge (-22) (12) edge (22) (22-) edge (-23) (13) edge (23);
\end{tikzmath}
natural in $X\in\SmAff_T$. As $T$ has affine diagonal, the subcategory $\SmAff_T\subset \SmQP_T$ is dense for the Nisnevich topology. The simplicial presheaves in the middle column are both $\A^1$-invariant Nisnevich sheaves in $X\in \SmQP_T^\mathrm{flat}$. Using Remark~\ref{rmk:movingLemma}, we deduce that the diagram remains commutative if we replace $\L_{\A^1}$ by $\L_\mot$, in which case the leftmost horizontal maps become equivalences by Proposition~\ref{prop:ChowComparison}. By the affine bundle invariance of Chow groups, we conclude in particular that $\nu_qr^*\colon z^r(T,*) \to z^{rd}(S',*)$ induces $\nu_q^\FM r^*$ on $\pi_0$.

Note that $r$ can be a locally constant integer on $T$ in the above argument. The fact that $\nu_p$ and $\nu_p^\FM$ agree on inhomogeneous elements follows by applying the homogeneous case to the finite étale map $\Weil_p(\coprod_{I}T)\times_ST\to \Weil_p(\coprod_IT)$, for any finite subset $I\subset\Z$.
\end{proof}

\begin{remark}
	The assumption that $S$ is quasi-projective in Theorem~\ref{thm:FultonMacPhersonComparison} can be weakened slightly.
	Indeed, the definition of the Fulton–MacPherson norm $\CH^*(T)\to\CH^*(S)$ only requires $S$ to be an FA-scheme in the sense of \cite[\sect2.2]{GLL}, and the proof of the theorem works in this generality since FA-schemes have affine diagonal.
	By standard limit arguments, the theorem holds more generally if $S$ is the limit of a cofiltered diagram of smooth FA-schemes over $k$ with affine flat transition maps.
\end{remark}

\section{Norms of linear \texorpdfstring{$\infty$}{∞}-categories}
\label{sec:dgCat}

Our goal in this section is to construct a functor
\[
\SH_\nc^\otimes\colon \Span(\Sch,\all,\fet) \to \CAlg(\what\Cat{}_\infty^\mathrm{sift}),\quad S\mapsto \SH_\nc(S),
\]
where $\SH_\nc(S)$ is Robalo's $\infty$-category of noncommutative motives over $S$, together with a natural transformation
\[
\scr L\colon \SH^\otimes \to \SH_\nc^\otimes.
\]
As a formal consequence, we will deduce that the homotopy $\K$-theory spectrum $\KGL_S\in\SH(S)$ is a normed spectrum, for every scheme $S$. The main difficulty will be to prove a noncommutative analog of Theorem~\ref{thm:norm}(4), which is the content of Proposition~\ref{prop:ncequiv}.

\subsection{Linear \texorpdfstring{$\infty$}{∞}-categories}

We start with some recollections on linear $\infty$-categories.
We write $\Pr^\mathrm{L}_\St$ for the $\infty$-category of stable presentable $\infty$-categories and left adjoint functors, and $\Pr^\mathrm{L,\omega}_\St\subset\Pr^\mathrm{L}_\St$ for the subcategory whose objects are the compactly generated stable $\infty$-categories and whose morphisms are the left adjoint functors that preserve compact objects (which is equivalent to the $\infty$-category of small stable idempotent complete $\infty$-categories \cite[Proposition 5.5.7.8]{HTT}).
For $R$ a commutative ring, denote by
\[
\Cat_R^\St = \Mod_{\Mod_R(\Sp)}(\Pr^\mathrm{L}_\St)
\]
the $\infty$-category of stable presentable $R$-linear $\infty$-categories, and let
\[
\Cat_R^\cg = \Mod_{\Mod_R(\Sp)}(\Pr^\mathrm{L,\omega}_\St).
\]
If $X$ is an arbitrary $R$-scheme, we will write $\QCoh(X)\in \Cat_R^\St$ for the stable $R$-linear $\infty$-category of quasi-coherent sheaves on $X$ \cite[Definition 2.2.2.1]{SAG}. If $X$ is quasi-compact and quasi-separated, then $\QCoh(X)$ belongs to $\Cat_R^\cg$ \cite[Theorem 10.3.2.1(b)]{SAG}.

We recall that the inclusion $\Pr^\mathrm{R} \subset \Cat_\infty$ preserves limits \cite[Proposition 5.5.3.18]{HTT}, which allows us to compute colimits in $\Pr^\mathrm{L}\simeq \Pr^{\mathrm{R},\op}$. Moreover, the inclusions $\Pr^\mathrm{L,\omega}_\St\subset\Pr^\mathrm{L}_\St\subset\Pr^\mathrm{L}$ preserve colimits by \cite[Theorem 1.1.4.4]{HA} and \cite[Proposition 5.5.7.6]{HTT}, hence the inclusion $\Cat_R^\cg\subset \Cat_R^\St$ also preserves colimits. A sequence $\scr A\to \scr B\to\scr C$ in $\Cat_R^\St$ is called \emph{exact} if it is a cofiber sequence and $\scr A\to\scr B$ is fully faithful \cite[Definition 5.8]{BGT}. Equivalently, by \cite[Proposition 5.6]{BGT}, this sequence is exact if it is a fiber sequence and the right adjoint of $\scr B\to\scr C$ is fully faithful.

By \cite[Proposition 4.7]{HSS}, the $\infty$-category $\Cat_R^\cg$ is compactly generated. We write
\[
\Cat_R^\fp = (\Cat_R^\cg)^\omega
\]
for the full subcategory of compact objects in $\Cat_R^\cg$. We also consider the full subcategory $\Cat_R^\ft\subset\Cat_R^\cg$ of $R$-linear $\infty$-categories possessing a compact generator. We then have fully faithful inclusions\footnote{The $R$-linear $\infty$-categories in $\Cat_R^\ft$ and $\Cat_R^\fp$ could reasonably be called \emph{of finite type} and \emph{of finite presentation}, respectively, which explains our notation. However, ``finite type'' is commonly used to refer to the $\infty$-categories in $\Cat_R^\fp$ (see for example \cite[Definition 2.4]{Toen:2007}).}
\[
\Cat_R^\fp\subset \Cat_R^\ft\subset\Cat_R^\cg
\]
(see the proof of \cite[Proposition 6.1.27]{RobaloThesis}).

We refer to \cite[\sect4.4]{HSS} for the construction of the functors
\[
\Aff^\op \to \CAlg(\Cat_{(\infty,2)}), \quad R\mapsto\Cat_R^\St\text{ and }R\mapsto\Cat_R^\cg.
\]
If $f\colon R\to R'$ is a ring homomorphism, then $f^*\colon \Cat_R^\cg \to \Cat_{R'}^\cg$ preserves the subcategories $\Cat^\ft$ and $\Cat^\fp$. Its right adjoint $f_*\colon \Cat_{R'}^\cg \to \Cat_R^\cg$ also preserves $\Cat^\ft$, but it only preserves $\Cat^\fp$ when $f$ is smooth:

\begin{lemma}\label{lem:smoothaffine}
	Let $f\colon R\to R'$ be a smooth morphism. Then the functor $f_*\colon \Cat_{R'}^\cg \to \Cat_{R}^\cg$ sends $\Cat_{R'}^\fp$ to $\Cat_{R}^\fp$.
\end{lemma}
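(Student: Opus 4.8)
The plan is to argue directly that $f_*\colon\Cat_{R'}^\cg\to\Cat_R^\cg$ preserves compact objects, since $\Cat_R^\fp=(\Cat_R^\cg)^\omega$. The first observation is that $f_*$ is simply restriction of scalars along the morphism $\Mod_R(\Sp)\to\Mod_{R'}(\Sp)$ in $\CAlg(\Pr^\mathrm{L,\omega}_\St)$, and such restriction of scalars preserves all colimits, because colimits in $\Cat_R^\cg$ and $\Cat_{R'}^\cg$ are created by the forgetful functors to $\Pr^\mathrm{L,\omega}_\St$, with which $f_*$ is compatible. A colimit-preserving functor between compactly generated $\infty$-categories carries compact objects to compact objects as soon as it carries a set of compact generators of the source to compact objects of the target (every compact object is a retract of a finite colimit of generators). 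Equivalently, $f_*$ then admits a right adjoint $f^!$, and since one checks that $f^!\simeq\iHom_{\Cat_R^\cg}(f_*\Mod_{R'},-)$ and the tensor product of $\Cat_R^\cg$ preserves compact objects, it suffices to know that $f_*\Mod_{R'}$ — the unit $\Mod_{R'}$ of $\Cat_{R'}^\cg$, regarded as an $R$-linear $\infty$-category via $f$ — is compact in $\Cat_R^\cg$.

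To get this, I would invoke the structural description of $\Cat_{R'}^\cg$ from \cite{RobaloThesis} (and \cite{HSS}): it is compactly generated by the $\infty$-categories $\QCoh(X)$ for $X$ a smooth affine $R'$-scheme, so that $\Cat_{R'}^\fp$ consists of the retracts of finite colimits of such $\QCoh(X)$. The geometric input is then elementary: if $f\colon R\to R'$ is smooth, then for any smooth affine $R'$-scheme $X$ the composite $X\to\Spec R'\to\Spec R$ is again smooth and affine, so $f_*\QCoh(X)=\QCoh(X/R)$ again lies in $\Cat_R^\fp$; applying this to $X=\Spec R'$ shows that $f_*\Mod_{R'}$ is compact, and the reduction above concludes. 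This is exactly where smoothness of $f$ enters — for a general $f$ the ring $R'$ need not be a finitely presented $R$-algebra.

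The main obstacle is pinning down precisely which structural facts about $\Cat^\cg$ are being used: that $\QCoh$ of a smooth affine scheme is a genuine compact object (equivalently, that a smooth $R$-algebra is a compact object of $\Alg_R^{\E_1}(\Sp)$, which follows from classical finite presentation together with perfectness of the cotangent complex), and that these compact objects generate. If one prefers to stay at the level of algebras, one may instead reduce to showing that restriction of scalars $\Alg_{R'}^{\E_1}(\Sp)\to\Alg_R^{\E_1}(\Sp)$ preserves compact objects when $f$ is smooth: writing a compact $\scr C\in\Cat_{R'}^\ft$ as $\mathrm{RMod}_A$, one must see that $A$, viewed over $R$, is a compact $\E_1$-$R$-algebra. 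For a filtered colimit $B=\colim_i B_i$ in $\Alg_R^{\E_1}(\Sp)$, restriction along $R'\to A$ gives a map $\Map_{\Alg_R}(A,B)\to\Map_{\Alg_R}(R',B)$ whose base is a filtered colimit because $R'$ is a compact $R$-algebra (smoothness), and whose fibre over a point $\rho$ is $\Map_{\Alg_{R'}}(A,(B,\rho))$, itself a filtered colimit because $A$ is a compact $R'$-algebra; since $\mathrm{res}$ preserves filtered colimits, assembling these shows that $\Map_{\Alg_R}(A,-)$ commutes with filtered colimits, i.e. that $A$ is compact over $R$.
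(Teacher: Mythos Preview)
The paper's proof is a one-line citation to \cite[Proposition 9.2.10]{RobaloThesis}; you have supplied the argument behind that citation, and your overall strategy is the right one: reduce to showing that $f_*\Mod_{R'}\in\Cat_R^\fp$, which via Morita theory amounts to $R'$ being a compact object of $\Alg_R^{\E_1}(\Sp)$, and this is the Toën--Vaquié criterion (finite presentation plus perfect cotangent complex) applied to a smooth algebra.

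One point to watch: in your second paragraph you assert that $\Cat_{R'}^\cg$ is compactly generated by $\QCoh(X)$ for $X$ smooth affine over $R'$. The references give compact generation by $\Mod_A$ for $A$ a compact $\E_1$-$R'$-algebra, and such $A$ need not be commutative (e.g.\ free $\E_1$-algebras on a perfect complex), so smooth affines alone should not be expected to generate. Fortunately you do not need this claim. Your approach (b) only requires the single object $f_*\Mod_{R'}$ to be compact, and your final paragraph gives the cleanest and correct route: restriction $\Alg_{R'}^{\E_1}(\Sp)\to\Alg_R^{\E_1}(\Sp)$ preserves compact objects because $R'$ itself is compact over $R$, via the fibration $\Map_{\Alg_R}(A,-)\to\Map_{\Alg_R}(R',-)$ with fibers $\Map_{\Alg_{R'}}(A,-)$. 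Combined with the identification of $\Cat_R^\fp$ with module categories over compact $\E_1$-$R$-algebras, this proves the lemma.
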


\begin{proof}
	This is \cite[Proposition 9.2.10]{RobaloThesis}.
\end{proof}

We call a morphism $f\colon \scr A\to\scr B$ in $\Cat_R^\cg$ a \emph{monogenic localization} if its right adjoint is fully faithful and $f^{-1}(0)$ has a generator that is compact in $\scr A$. For example, if $j\colon U\into X$ is an open immersion between quasi-compact quasi-separated $R$-schemes, then $j^*\colon \QCoh(X)\to\QCoh(U)$ is a monogenic localization in $\Cat_R^\cg$ \cite[Theorem 6.8]{Rouquier}.

\begin{lemma}\label{lem:openimmersion}
	Let $f\colon\scr A\to\scr B$ be a monogenic localization in $\Cat_R^\cg$.
	If $\scr A\in \Cat_R^\fp$, then $\scr B\in\Cat_R^\fp$.
\end{lemma}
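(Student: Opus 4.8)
\textbf{Proof proposal for Lemma~\ref{lem:openimmersion}.}
The plan is to present $\scr B$ as a pushout in $\Cat_R^\cg$ of a diagram involving $\scr A$ and objects that are manifestly in $\Cat_R^\fp$, and then invoke the fact that $\Cat_R^\fp$, being the subcategory of compact objects in the compactly generated $\infty$-category $\Cat_R^\cg$, is closed under finite colimits. Recall from \cite[Proposition 4.7]{HSS} that $\Cat_R^\cg$ is compactly generated, so $\Cat_R^\fp = (\Cat_R^\cg)^\omega$ is closed under finite colimits (and retracts) in $\Cat_R^\cg$; this is the only closure property we will need.

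First I would set up the localization sequence. Since $f\colon \scr A\to\scr B$ is a monogenic localization, its right adjoint is fully faithful and the kernel $\scr K=f^{-1}(0)\subset\scr A$ is a compactly generated stable $R$-linear subcategory generated by a single object $k$ that is compact in $\scr A$; moreover $\scr K$ is generated under colimits by $k$, so $\scr K\simeq \Ind(\Perf_k)$ where $\Perf_k$ is the thick subcategory of $\scr A^\omega$ generated by $k$. In particular $\scr K\in\Cat_R^\ft$, and I claim $\scr K\in\Cat_R^\fp$: indeed $\Perf_k$ is a finitely generated idempotent-complete stable $R$-linear $\infty$-category, hence $\scr K=\Ind(\Perf_k)$ is a compact object of $\Cat_R^\cg$ (this is the standard fact that $\Ind$ of a ``finitely presented'' small stable category is compact; cf.\ the proof of \cite[Proposition 6.1.27]{RobaloThesis} and the inclusion $\Cat_R^\fp\subset\Cat_R^\ft$ discussed above). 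Next, the localization sequence $\scr K\to\scr A\xrightarrow{f}\scr B$ exhibits $\scr B$ as the cofiber of $\scr K\into\scr A$ in $\Pr^{\mathrm L,\omega}_\St$, and since the forgetful functor $\Cat_R^\cg\to\Pr^{\mathrm L,\omega}_\St$ preserves and detects colimits (it is monadic, being a category of modules in $\Pr^{\mathrm L,\omega}_\St$), the same square
\[
\begin{tikzpicture}[baseline=(N.base)]
\diagram{\scr K & \scr A \\ 0 & \scr B \\};
\arrows (11-) edge (-12) (11) edge (21) (21-) edge node[below]{$f$} (-22) (12) edge (22);
\node (N) at (0,0) {};
\end{tikzpicture}
\]
is a pushout in $\Cat_R^\cg$. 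Here I should double-check the precise form of the statement that a monogenic localization sits in such a Verdier sequence and that it is a pushout; this is where I would be most careful, citing \cite[Theorem 6.8]{Rouquier} or the general theory of localizations of compactly generated stable $\infty$-categories, and the identification of the cofiber in $\Pr^{\mathrm L,\omega}_\St$ with the Verdier quotient at the level of compact objects.

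Finally, with $\scr K\in\Cat_R^\fp$, $0\in\Cat_R^\fp$, and $\scr A\in\Cat_R^\fp$ by hypothesis, the pushout $\scr B$ is a finite colimit of compact objects in $\Cat_R^\cg$, hence compact, i.e.\ $\scr B\in\Cat_R^\fp$, as desired. The main obstacle, as indicated, is the identification of $\scr B$ as the pushout $0\amalg_{\scr K}\scr A$ in $\Cat_R^\cg$ together with the verification that $\scr K$ itself lies in $\Cat_R^\fp$ rather than merely in $\Cat_R^\ft$; once these two points are in place the conclusion is purely formal from the compact generation of $\Cat_R^\cg$.
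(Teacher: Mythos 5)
There is a genuine gap in your argument, located exactly at the step you flag as the ``second point'' you would need: the claim that $\scr K = f^{-1}(0)$ lies in $\Cat_R^\fp$. What you know is that $\scr K$ has a compact generator $k$, i.e.\ $\scr K \simeq \Mod_{\End_{\scr A}(k)} \in \Cat_R^\ft$. But the paper's inclusion $\Cat_R^\fp \subset \Cat_R^\ft$ only goes one way; ``having a compact generator'' is strictly weaker than ``being a compact object of $\Cat_R^\cg$.'' For $\Ind(\Perf_k) = \Mod_{\End_{\scr A}(k)}$ to be compact, you would need the $R$-algebra $\End_{\scr A}(k)$ to be homotopically finitely presented (``of finite type''), and nothing in the hypotheses gives you this: an arbitrary compact object $k$ of an arbitrary $\scr A \in \Cat_R^\fp$ can have a perfectly nasty endomorphism algebra. (One might hope that $\scr K$ is a retract of $\scr A$ in $\Cat_R^\cg$, via $i_! \colon \scr K \to \scr A$ and its right adjoint $i^!$, and then use closure of $\Cat_R^\fp$ under retracts; but $i^!$ need not preserve compact objects, so this retraction does not live in $\Cat_R^\cg$.) The citation of \cite[Proposition 6.1.27]{RobaloThesis} does not help here, since it is precisely about the inclusion $\Cat_R^\fp \subset \Cat_R^\ft$ in the direction you do not need.

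Your other two ingredients are fine: $\scr K \to \scr A \xrightarrow{f} \scr B$ is indeed a cofiber sequence in $\Cat_R^\cg$ (this is exactly where one uses that the kernel has a compact generator, via \cite[Theorem 6.8]{Rouquier} and Neeman--Thomason), so $\scr B \simeq 0 \amalg_{\scr K} \scr A$ in $\Cat_R^\cg$; and $\Cat_R^\fp$ is closed under finite colimits in $\Cat_R^\cg$. But since the input $\scr K \in \Cat_R^\fp$ is unjustified, the whole reduction to finite colimits of compacts does not go through. The paper sidesteps this entirely: rather than trying to show $\scr K$ is compact, it computes $\Map_{\Cat_R^\cg}(\scr B, \scr C) \simeq \Fun_R(\scr B^\omega, \scr C^\omega)^\simeq$ as the full subspace of $\Fun_R(\scr A^\omega, \scr C^\omega)^\simeq$ consisting of functors killing the single compact object $k$, and checks directly that this commutes with filtered colimits in $\scr C$, using that $\scr A$ is compact and that $(-)^\omega$ preserves filtered colimits. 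That direct verification uses only the compact generator of $\scr K$, not compactness of $\scr K$ in $\Cat_R^\cg$. If you want to keep your pushout strategy, you would need an additional argument establishing $\scr K \in \Cat_R^\fp$ in this situation, which I don't believe is available (and which, if it were, would presumably be the cleaner proof the paper would have given).
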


\begin{proof}
	Since the right adjoint of $f$ is fully faithful, we have an exact sequence
	\[
	f^{-1}(0) \into \scr A \xrightarrow{f} \scr B
	\]
	in $\Cat_R^\St$. As $f^{-1}(0)$ is generated by compact objects of $\scr A$, this sequence belongs to $\Cat_R^\cg$, hence it is a cofiber sequence in $\Cat_R^\cg$.
	For every $\scr C\in\Cat_R^\cg$, we may therefore identify $\Fun_R(\scr B^\omega,\scr C^\omega)$ with the full subcategory of $\Fun_R(\scr A^\omega,\scr C^\omega)$ spanned by the functors that send $f^{-1}(0)^\omega$ to zero.
	By \cite[Proposition 1.1.4.6 and Lemma 7.3.5.10]{HA}, the forgetful functor $\Cat_R^\cg\to\Cat_\infty$, $\scr A\mapsto\scr A^\omega$, preserves filtered colimits. Since $f^{-1}(0)$ has a compact generator and $\scr A$ is compact, we immediately deduce that $\scr B$ is compact.
\end{proof}

 A commutative square
\begin{tikzequation}\label{eqn:excision}
	\diagram{
	\scr A & \scr B \\ \scr C & \scr D \\
	};
	\arrows (11-) edge node[above]{$f$} (-12) (11) edge (21) (12) edge (22) (21-) edge node[above]{$g$} (-22);
\end{tikzequation}
in $\Cat_R^\cg$ is called an \emph{excision square} if it is cartesian in $\Cat_R^\St$ and $g$ is a monogenic localization, in which case $f$ is also a monogenic localization.
For example, $\QCoh$ sends Nisnevich squares of quasi-compact quasi-separated $R$-schemes to excision squares.

Our next goal is to construct norms of linear $\infty$-categories.
Recall that a \emph{Frobenius algebra} in a symmetric monoidal $\infty$-category $\scr C$ is an algebra $A$ together with a map $\lambda\colon A\to\1$ such that the composition
\[
A\tens A\to A\xrightarrow{\lambda} \1
\]
exhibits $A$ as dual to itself.

\begin{lemma}\label{lem:frobenius}
	Let $\scr C$ be a closed symmetric monoidal $\infty$-category and let $A$ be a Frobenius algebra in $\scr C$.
	Then the forgetful functor $\mathrm{LMod}_A(\scr C)\to\scr C$ has equivalent left and right adjoints.
\end{lemma}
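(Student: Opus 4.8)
The plan is to identify the left and right adjoints of $U\colon \mathrm{LMod}_A(\scr C)\to\scr C$ explicitly and to produce a natural equivalence between them directly out of the Frobenius form. First I would recall the standard facts (see \cite[\sect4.2, \sect4.6]{HA}) that $U$ admits a left adjoint $F$, the free-module functor with $F(M)=A\otimes M$ carrying the left-regular module structure, and a right adjoint $G$, the cofree-module functor with $G(M)=\iHom_{\scr C}(A,M)$ whose left $A$-module structure is induced by the \emph{right}-regular module structure on $A$; the existence of $G$ is exactly where closedness of $\scr C$ is used. I would also note that $U$ is conservative.

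The key point is that the Frobenius form $\lambda\colon A\to\1$ supplies, for free, an $A$-linear map $F(\1)\to G(\1)$. Indeed $F(\1)\simeq A$ and $G(\1)=\iHom_{\scr C}(A,\1)=A^\vee$, and the $\otimes$--$\iHom$ adjunction gives $\mathrm{Map}_{\mathrm{LMod}_A(\scr C)}(F(\1),G(\1))\simeq\mathrm{Map}_{\scr C}(\1,UG(\1))=\mathrm{Map}_{\scr C}(\1,A^\vee)\simeq\mathrm{Map}_{\scr C}(A,\1)$, so $\lambda$ determines a map $\tilde\lambda\colon F(\1)\to G(\1)$ in $\mathrm{LMod}_A(\scr C)$. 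Unwinding the identifications, the underlying morphism $U(\tilde\lambda)\colon A\to A^\vee$ is the mate of the pairing $A\otimes A\xrightarrow{m}A\xrightarrow{\lambda}\1$; by the very definition of a Frobenius algebra this pairing is perfect, so $U(\tilde\lambda)$ is an equivalence, and since $U$ is conservative $\tilde\lambda$ is an equivalence in $\mathrm{LMod}_A(\scr C)$.

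It remains to propagate $\tilde\lambda$ to a natural equivalence of functors $F\simeq G$. For this I would use that $A$ is dualizable (part of the Frobenius data), whence $\iHom_{\scr C}(A,X\otimes M)\simeq X\otimes\iHom_{\scr C}(A,M)$; together with the evident projection formula for $F$ this shows that $F$ and $G$ are both right-$\scr C$-linear functors $\scr C\to\mathrm{LMod}_A(\scr C)$, where $\scr C$ acts on $\mathrm{LMod}_A(\scr C)\simeq{}_A\mathrm{BMod}_{\1}(\scr C)$ by tensoring. A right-$\scr C$-linear functor out of $\scr C$ is determined by its value at $\1$, so the $A$-linear equivalence $\tilde\lambda\colon F(\1)\xrightarrow{\sim}G(\1)$ induces a natural equivalence $F\xrightarrow{\sim}G$. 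Equivalently, one can run the whole argument inside ${}_A\mathrm{BMod}_{\1}(\scr C)$: there $F$ and $G$ are tensoring with the $(A,\1)$-bimodules $A$ and $A^\vee$ respectively, and $\tilde\lambda$ is precisely an equivalence of $(A,\1)$-bimodules $A\simeq A^\vee$.

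The one genuine difficulty is coherence: every step above must be carried out at the level of functors and natural transformations in the $\infty$-categorical sense, not by checking diagrams on objects. The way to avoid this is to never write $\tilde\lambda$ down by a formula but to obtain it, as above, as the image of $\lambda$ under a composite of canonical adjunction and dualizability equivalences — so that $A$-linearity and naturality come for free — and to handle the passage from $F(\1)\simeq G(\1)$ to $F\simeq G$ once and for all via the $\scr C$-linearity (equivalently, bimodule) reformulation. The only place where a short hands-on computation seems unavoidable is the identification of $U(\tilde\lambda)$ with the mate of $\lambda\circ m$, which is what licenses invoking the Frobenius hypothesis.
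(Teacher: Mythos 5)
Your proof is correct and takes essentially the same route as the paper's: identify the two adjoints as $A\otimes(\ph)$ and $\iHom(A,\ph)$, use dualizability of $A$ to rewrite the latter as $A^\vee\otimes(\ph)$, and observe that the Frobenius pairing gives an equivalence $A\simeq A^\vee$ of left $A$-modules (which the paper cites from \cite[Proposition 4.6.5.2]{HA} and you reprove directly, propagating it to a natural equivalence of functors via $\scr C$-linearity).
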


\begin{proof}
	The left and right adjoint functors are given by $M\mapsto A\tens M$ and $M\mapsto\Hom(A,M)$, respectively.
	Since $A$ is a Frobenius algebra, it is dualizable and its dual $A^\vee$ is equivalent to $A$ as a left $A$-module \cite[Proposition 4.6.5.2]{HA}. Thus, $\Hom(A,M) \simeq A^\vee \tens M \simeq A \tens M$, which completes the proof.
\end{proof}

\begin{lemma}\label{lem:local-generators}
	Let $p\colon R\to R'$ be a faithfully flat finite étale morphism and let $\scr A\in\Cat_R^\cg$.
	\begin{enumerate}
		\item If $p^*(\scr A)\in\Cat_{R'}^\ft$, then $\scr A\in\Cat_R^\ft$.
		\item If $p^*(\scr A)\in\Cat_{R'}^\fp$, then $\scr A\in\Cat_R^\fp$.
	\end{enumerate}
\end{lemma}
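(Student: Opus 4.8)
The plan is to use that, because $p$ is finite étale, $R'$ is simultaneously a \emph{Frobenius} $R$-algebra (the composite $R'\otimes_RR'\to R'\xrightarrow{\Tr_{R'/R}}R$ is a perfect pairing) and a \emph{separable} $R$-algebra (there is a separability idempotent $e\in\pi_0(R'\otimes_RR')$ splitting the multiplication $R'\otimes_RR'\to R'$ as a map of $R'$-bimodules), together with faithful flatness of $p$. For $\scr A\in\Cat_R^\cg$ write $p^*(\scr A)=\scr A\otimes_{\Mod_R(\Sp)}\Mod_{R'}(\Sp)=\mathrm{LMod}_{R'}(\scr A)$ and let $\Phi\colon\scr A\to p^*(\scr A)$ be the free-module functor and $\Psi\colon p^*(\scr A)\to\scr A$ the forgetful functor, so $\Phi\dashv\Psi$. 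Applying Lemma~\ref{lem:frobenius} with $\scr C=\Mod_R(\Sp)$ and $A=R'$ shows that $\Mod_{R'}(\Sp)$ is self-dual as a $\Mod_R(\Sp)$-linear category; tensoring that ambidextrous adjunction with $\scr A$ over $\Mod_R(\Sp)$ upgrades $\Phi\dashv\Psi$ to an ambidextrous adjunction $\Phi\dashv\Psi\dashv\Phi$. Consequently $\Phi$ and $\Psi$ preserve all colimits and (hence) compact objects, and $\Phi$ is conservative, since $R'$ is faithfully flat over $R$ (explicitly, $\Psi\Phi(X)\simeq R'\otimes_RX$, and $R'\otimes_RX\simeq 0$ forces $X\simeq 0$).

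For (1), let $G'$ be a compact generator of $p^*(\scr A)$. Then $\Psi(G')$ is compact in $\scr A$, and it generates: using the adjunction $\Psi\dashv\Phi$ we get $\Map_{\scr A}(\Psi(G'),X)\simeq\Map_{p^*\scr A}(G',\Phi(X))$ naturally in $X\in\scr A$, so if this mapping spectrum vanishes then $\Phi(X)\simeq 0$ (because $G'$ generates $p^*\scr A$), hence $X\simeq 0$ (because $\Phi$ is conservative). Thus $\scr A\in\Cat_R^\ft$.

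For (2), I would realize $\scr A$ as a retract of a compact object of $\Cat_R^\cg$. The functor $p^*\colon\Cat_R^\cg\to\Cat_{R'}^\cg$ is conservative by faithfully flat descent for $\Cat^\cg$, and by the self-duality of $\Mod_{R'}(\Sp)$ over $\Mod_R(\Sp)$ the restriction-of-scalars functor $p_*$, which is the right adjoint of $p^*$, is also a \emph{left} adjoint of $p^*$; hence $p^*$ preserves all limits and colimits and is monadic by Barr--Beck--Lurie, identifying $\Cat_R^\cg\simeq\mathrm{Mod}_{\mathbb T}(\Cat_{R'}^\cg)$ for the monad $\mathbb T=p^*p_*\simeq\mathrm{LMod}_{R'\otimes_RR'}(-)$ on $\Cat_{R'}^\cg$. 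Since $p^*$ and $p_*$ preserve all colimits and compacts, $\mathbb T$ preserves compacts and filtered colimits, so the free-module functor $F$ left adjoint to the forgetful functor $U=p^*$ preserves compacts. Moreover $R'\otimes_RR'$ is étale (hence separable) over $R'$, being the base change of $p$, so $\mathbb T$ is a separable monad; therefore every $\mathbb T$-module is a retract of a free one, i.e.\ $\scr A$ is a retract of $F(p^*\scr A)$ in $\Cat_R^\cg$. If $p^*(\scr A)\in\Cat_{R'}^\fp$, then $F(p^*\scr A)$ is compact, and hence so is $\scr A$, i.e.\ $\scr A\in\Cat_R^\fp$.

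The main obstacle is the categorical bookkeeping in (2): checking the hypotheses of Barr--Beck--Lurie for $p^*$ (this is where the Frobenius self-duality supplies the left adjoint of $p^*$), tracing the separability idempotent of $R'/R$ through the identification $\mathbb T\simeq\mathrm{LMod}_{R'\otimes_RR'}(-)$ to see that $\mathbb T$ is separable as a monad, and invoking that over a separable monad every module is a retract of a free one — the categorified form of the fact that modules over a separable algebra are projective. An alternative for (2) avoiding monads is to use (1) to write $\scr A\simeq\mathrm{Mod}_B(\Sp)$ with $B=\mathrm{End}_{\scr A}(G)$, observe $p^*(\scr A)\simeq\mathrm{Mod}_{R'\otimes_RB}(\Sp)$ with compact generator $\Phi(G)$, and apply faithfully flat descent of finite presentation of $\mathrm{E}_1$-algebras to $B$; but this substitutes one nontrivial descent statement for another.
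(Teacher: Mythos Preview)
Your argument for (1) is correct and in fact more direct than the paper's: you produce a compact generator of $\scr A$ explicitly as $\Psi(G')$ using the ambidextrous adjunction $\Psi\dashv\Phi$, whereas the paper deduces (1) from the same finite-colimit-retract machinery it develops for (2).

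Your argument for (2), however, has a genuine gap: the monad $\mathbb T=p^*p_*$ on $\Cat_{R'}^\cg$ is \emph{not} separable in general, so your conclusion that $\scr A$ is a retract of $F(p^*\scr A)=p_*p^*\scr A$ fails. Take $R=\mathbb F_2$, $R'=\mathbb F_4$, and $\scr A=\Mod_{\mathbb F_2}(\Sp)$; your claim would make $\Mod_{\mathbb F_2}(\Sp)$ a retract of $\Mod_{\mathbb F_4}(\Sp)$ in $\Cat_{\mathbb F_2}^\cg$. Since morphisms in $\Cat_{\mathbb F_2}^\cg$ preserve compact objects, such a retraction corresponds to perfect $\mathbb F_4$-modules $I,P$ with $P\otimes_{\mathbb F_4}I\simeq\mathbb F_2$ in $\Perf(\mathbb F_2)$; passing to $K_0$ gives $2[P][I]=1$ in $\Z$, which is absurd. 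The mistake is in the inference ``$R'\otimes_RR'$ is separable over $R'$, so $\mathbb T$ is separable'': the underlying endofunctor of $\mathbb T$ is indeed $(-)\otimes_{R'}(R'\otimes_RR')$, but its monad structure (built from the counit of $p_*\dashv p^*$, hence from the Frobenius form) is \emph{not} the one induced by the $R'$-algebra structure on $R'\otimes_RR'$; algebras for the latter monad are $\Cat_{R'\otimes_RR'}^\cg$, not $\Cat_R^\cg$. The paper circumvents this by using the full Čech resolution: it writes $\Mod_R(\Sp)\simeq\colim_{\Delta^\op}\Mod_{R'_n}(\Sp)$ in $\Cat_R^\cg$ and uses compactness of the unit to exhibit $\scr A$ as a retract of a \emph{finite} partial colimit of the $\scr A\otimes_RR'_n$; this is strictly more information than being a retract of the first stage $\scr A\otimes_RR'$. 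Your proposed alternative via descent of finite presentation for $\E_1$-algebras does not obviously fare better, since compactness of $\Mod_B$ in $\Cat_R^\cg$ is not a condition on $B$ that is known to descend independently of the statement you are trying to prove.
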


\begin{proof}
	If $f\colon R\to R'$ is any finite étale morphism, both functors
	\[
		f^*\colon \Mod_R(\Sp) \to \Mod_{R'}(\Sp)\quad\text{and}\quad f^*\colon \Cat_R^\cg \to \Cat_{R'}^\cg
	\]
	have equivalent left and right adjoints, by Lemma~\ref{lem:frobenius}.
	Indeed, the trace map $R'\to R$ exhibits $R'$ as a Frobenius algebra over $R$, and the pushforward $f_*\colon \Mod_{R'}(\Sp)\to \Mod_R(\Sp)$ exhibits $\Mod_{R'}(\Sp)$ as a Frobenius algebra in $\Cat_R^\cg$ \cite[Remark 11.3.5.4]{SAG}.
	
	Let $R\to R'_\bullet$ be the Čech nerve of $p$. Then, since $\QCoh$ is an fpqc sheaf of $\infty$-categories \cite[Proposition 6.2.3.1]{SAG},
	\[
	\Mod_R(\Sp) \simeq \lim_{n\in\Delta} \Mod_{R_n'}(\Sp)
	\]
	in $\Cat_\infty$.
	By \cite[Proposition 5.5.7.6]{HTT}, this is also a limit diagram in $\Pr^\mathrm{R,\omega}$. Taking adjoints, we get
	\[
	\Mod_R(\Sp) \simeq \colim_{n\in\Delta^\op} \Mod_{R'_n}(\Sp)
	\]
	in $\Pr^\mathrm{L,\omega}$, whence in $\Cat_R^\cg$.
	 As $\Mod_R(\Sp)$ is compact in $\Cat_R^\cg$, it is a retract of $\colim_{n\in\Delta_{\leq k}^\op}\Mod_{R'_n}(\Sp)$ for some $k$.
	Let $\scr C\in\Cat_R^\cg$ be such that $\scr C\tens_{R}R'$ is compact in $\Cat_{R'}^\cg$. Then each $\scr C\tens_{R}R'_n$ is compact in $\Cat_R^\cg$, since the pushforward functors $\Cat_{R'_n}^\cg\to\Cat_R^\cg$ have colimit-preserving right adjoints. As $\scr C$ is a retract of a finite colimit of such objects, it is also compact.
	Similarly, if $\scr C\tens_RR'$ has a compact generator, then $\scr C$ is a retract of a finite colimit of $R$-linear $\infty$-categories having a compact generator, and hence it has a compact generator.
\end{proof}

\begin{lemma}\label{lem:qcat-etale-descent}
	The functors
	\begin{align*}
	\Aff^\op \to \CAlg(\Cat_{(\infty,2)}),&\quad R\mapsto \Cat_R^\St,\\
	\Aff^\op \to \CAlg(\Cat_{(\infty,2)}),&\quad R\mapsto \Cat_R^\cg,\\
	\Aff^\op \to \CAlg(\Cat_{(\infty,2)}),&\quad R\mapsto \Cat_R^\ft\text{ and }R\mapsto \Cat_R^\fp,
	\end{align*}
	are sheaves for the fppf, étale, and finite étale topologies, respectively.
\end{lemma}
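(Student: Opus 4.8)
The plan is to prove the three descent statements in a uniform way, reducing each to three ingredients: (i) the functor in question transforms finite sums into finite products, (ii) it satisfies the sheaf condition for a single covering morphism $\Spec R'\to\Spec R$ of the relevant class, and (iii) the relevant subcategory of ``finitary'' objects is detected locally. For (i) this is immediate in all three cases, since $\Mod_{R_1\times R_2}(\Sp)\simeq\Mod_{R_1}(\Sp)\times\Mod_{R_2}(\Sp)$ and modules over a product of $\E_\infty$-rings decompose accordingly; one also checks that the subcategories $\Cat^\cg$, $\Cat^\ft$, $\Cat^\fp$ are stable under this decomposition (an object of $\Cat^\fp_{R_1\times R_2}$ corresponds to a pair of compact objects, etc.). For (ii), the case of $\Cat^\St$ and $\Cat^\cg$ is the heart of the matter: if $R\to R'$ is faithfully flat and, say, fppf (resp.\ \'etale), then $\QCoh$ is an fpqc sheaf of $\infty$-categories \cite[Proposition 6.2.3.1]{SAG}, so $\Mod_R(\Sp)\simeq\lim_{n\in\Delta}\Mod_{R'_n}(\Sp)$ in $\Cat_\infty$ where $R\to R'_\bullet$ is the \v Cech nerve; since each $\Mod_{R'_n}(\Sp)$ is dualizable over $\Mod_R(\Sp)$ (faithfully flat descent and, for the finite \'etale case, the Frobenius structure from \cite[Remark 11.3.5.4]{SAG}, just as in the proof of Lemma~\ref{lem:local-generators}), tensoring the cosimplicial diagram with an arbitrary $\scr A\in\Cat_R^\St$ (resp.\ $\Cat_R^\cg$) and using that $\Cat_R^\St$, $\Cat_R^\cg$ are closed symmetric monoidal with $\Mod_R(\Sp)$ as unit gives $\scr A\simeq\lim_{n\in\Delta}(\scr A\tens_R R'_n)$; this is exactly the sheaf condition for the covering $\Spec R'\to\Spec R$. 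One must also promote this to the $(\infty,2)$-categorical statement, i.e.\ check that mapping \emph{$\infty$-categories} (not just spaces) satisfy descent, which follows because $\Fun_R(\ph,\ph)$ is computed as an end and limits commute with limits.

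For $\Cat^\ft$ and $\Cat^\fp$, one restricts the established descent equivalence for $\Cat^\cg$ along the finite \'etale topology and shows that the subcategories $\Cat^\ft_R\subset\Cat^\cg_R$ and $\Cat^\fp_R\subset\Cat^\cg_R$ are detected by the covering, which is precisely the content of Lemma~\ref{lem:local-generators}(1) and (2) respectively; here one uses that $p^*$ preserves these subcategories (noted just before Lemma~\ref{lem:smoothaffine}) for one inclusion, and Lemma~\ref{lem:local-generators} for the other. The reason one cannot push $\Cat^\fp$-descent beyond the finite \'etale topology is exactly the failure of $f_*$ to preserve $\Cat^\fp$ for general \'etale $f$ (Lemma~\ref{lem:smoothaffine} needs smoothness), so the \v Cech-nerve argument would not keep us inside $\Cat^\fp$; and $\Cat^\ft$-descent could in principle hold \'etale-locally, but finite \'etale is all that is needed in the sequel and all that Lemma~\ref{lem:local-generators} delivers, so I would state it only in that generality.

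The main obstacle I anticipate is the coherence of the $(\infty,2)$-categorical descent, i.e.\ verifying that the limit diagram $\scr A\simeq\lim_{\Delta}(\scr A\tens_R R'_\bullet)$ is natural in $\scr A$ as a diagram of $(\infty,2)$-categories and that it is compatible with the symmetric monoidal structures, so that the descent equivalence refines to an equivalence in $\CAlg(\Cat_{(\infty,2)})$. The cleanest route is to invoke the general principle that a cosimplicial object of dualizable $R$-linear $\infty$-categories whose totalization is $\Mod_R(\Sp)$ exhibits $\Mod_R(\Sp)$ as a \emph{colimit} in $\Pr^{\mathrm L}$ as well (by passing to right adjoints, as in the proof of Lemma~\ref{lem:local-generators}), whence tensoring is automatically a (co)limit diagram; the symmetric monoidal and $2$-categorical enhancements then come for free from the fact that $\Cat^\St_{(\ph)}$, $\Cat^\cg_{(\ph)}$ are defined as module categories and that $\Mod_{(\ph)}$ of a sheaf of $\E_\infty$-rings is a sheaf of symmetric monoidal $\infty$-categories \cite[\sect4.8.3]{HA}. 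If a fully rigorous treatment of the $(\infty,2)$-enhancement is too cumbersome, I would instead cite the descent results for $\Cat^\cg$ and $\Cat^\fp$ already available in \cite{HSS} and \cite{RobaloThesis} (e.g.\ the Nisnevich/\'etale descent discussed around \cite[Proposition 6.1.27]{RobaloThesis}) and reduce the remaining content to Lemma~\ref{lem:local-generators}.
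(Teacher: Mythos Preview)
Your treatment of $\Cat_R^\ft$ and $\Cat_R^\fp$ matches the paper exactly: once descent for $\Cat_R^\cg$ is known, the content is precisely that $f^*$ reflects the properties ``compact'' and ``has a compact generator'' for $f$ faithfully flat finite étale, which is Lemma~\ref{lem:local-generators}.

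For $\Cat_R^\St$ and $\Cat_R^\cg$, however, the paper proceeds quite differently: it does not attempt to re-derive descent from the \v Cech nerve and dualizability, but simply cites the relevant results from \cite{SAG}. Specifically, \cite[Theorem D.3.6.2 and Proposition D.3.3.1]{SAG} give fppf descent for $\Cat_R^\St$ as a sheaf of $(\infty,1)$-categories, and \cite[Theorem D.5.3.1 and Proposition D.5.2.2]{SAG} give étale descent for $\Cat_R^\cg$. The $(\infty,2)$-categorical enhancement is then handled exactly as you suggest, by checking that the mapping $\infty$-categories $R'\mapsto\Fun_R^\mathrm{L}(\scr C,\scr D\tens_R R')$ form a sheaf; since $\Fun_R^\mathrm{L}(\scr C,\ph)$ preserves limits, this reduces to the statement that $R'\mapsto\scr D\tens_R R'$ is an fppf sheaf, which is \cite[Theorem D.3.5.2]{SAG}.

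Your proposed re-derivation has a genuine gap in the fppf case: you assert that each $\Mod_{R'_n}(\Sp)$ is dualizable over $\Mod_R(\Sp)$ for a general faithfully flat morphism $R\to R'$, but this is false---dualizability in $\Cat_R^\cg$ is equivalent to being smooth and proper, which for module categories over an $R$-algebra essentially forces finite étale. Even granting dualizability, it would yield that $(\ph)\tens_R\Mod_{R'_n}(\Sp)$ preserves limits, whereas what is needed is that $\scr A\tens_R(\ph)$ commutes with the given totalization. Your later remark about passing to a colimit diagram in $\Pr^\mathrm{L}$ is closer, but converting the resulting colimit back into the limit form of the sheaf condition is exactly the nontrivial content of \cite[Theorem D.3.5.2]{SAG}. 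The cleanest fix is the one you yourself suggest at the end: cite the existing descent results directly, which is precisely what the paper does.
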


\begin{proof}
	By \cite[Theorem D.3.6.2 and Proposition D.3.3.1]{SAG}, the functor $R\mapsto \Cat_R^\St$ is an fppf sheaf of $(\infty,1)$-categories. It remains to show that, given $\scr C,\scr D\in \Cat_R^\St$, the functor
	\[
	(\Aff_{/\Spec R})^\op \to \Cat_R^\St,\quad R'\mapsto \Fun_{R}^\mathrm{L}(\scr C,\scr D\tens_RR')
	\]
	is an fppf sheaf. Since $\Fun_R^\mathrm{L}(\scr C,\ph)$ preserves limits, this follows from the fact that $R'\mapsto\scr D\tens_RR'$ is an fppf sheaf \cite[Theorem D.3.5.2]{SAG}.
	The second statement follows from the first and \cite[Theorem D.5.3.1 and Proposition D.5.2.2]{SAG}.
	For the final statement, we must show that, if $f\colon R\to R'$ is a faithfully flat finite étale morphism, then $f^*\colon \Cat_R^\cg\to \Cat_{R'}^\cg$ reflects compactness and the property of having a compact generator. This follows from Lemma~\ref{lem:local-generators}.
\end{proof}

\begin{remark}
	The presheaf $R\mapsto \Cat_R^\ft$ is in fact a sheaf for the étale topology: see \cite[Theorem 4.7]{ToenAzumaya}. We do not know if $R\mapsto\Cat_R^\fp$ is an étale sheaf.
\end{remark}

By Lemma~\ref{lem:qcat-etale-descent} and Corollary~\ref{cor:automatic-norms},
applied with $\scr C=\Aff$, $t$ the finite étale topology, $m=\fet$, and $\scr D=\CAlg(\Cat_{(\infty,2)})$,
we obtain a functor
\[
\Span(\Aff,\all,\fet)\to\CAlg(\Cat_{(\infty,2)}),\quad R\mapsto \Cat_R^\St,
\]
and subfunctors $R\mapsto \Cat_R^\cg$, $R\mapsto\Cat_R^\ft$, and $R\mapsto \Cat_R^\fp$. In particular, for every finite étale map $p\colon R\to R'$, we have a norm functor
\[
p_\otimes\colon \Cat_{R'}^\St \to \Cat_R^\St
\]
that preserves the subcategories $\Cat^\cg$, $\Cat^\ft$, and $\Cat^\fp$.
Being a symmetric monoidal $(\infty,2)$-functor, $p_\otimes$ preserves adjunctions, dualizability, smoothness, and properness.
Note however that $p_\otimes$ does not preserve exact sequences of linear $\infty$-categories (unless $p$ is the identity).

\begin{remark}\label{rmk:TabuadaWeil}
	If $L/k$ is a finite separable extension of fields, the norm functor $\Cat_L^\cg\to \Cat_k^\cg$ extends the Weil transfer of dg-algebras constructed by Tabuada \cite[\sect6.1]{TabuadaWeil}.
\end{remark}

\subsection{Noncommutative motivic spectra and homotopy \texorpdfstring{$\K$}{K}-theory}
\label{sub:nc-motives}

We now recall the construction of $\SH_\nc(S)$ from \cite[\sect6.4.2]{RobaloThesis}. As in \emph{op.\ cit.}, we will first give the definition for $S$ affine and only extend it to more general schemes at the end.
We set 
 \[
 \SmNC_R = \Cat_R^{\fp,\op} \in \CAlg(\Cat_\infty).
 \]
Then $\SmNC_R$ is a small idempotent complete semiadditive $\infty$-category with finite limits, and presheaves on $\SmNC_R$ are functors $\Cat_R^\cg\to\scr S$ that preserve filtered colimits. We will sometimes denote by
\[
\r\colon \Cat_R^{\fp,\op} \into \PSh(\SmNC_R), \quad \r(\scr A)(\scr B)=\Fun_R(\scr A^\omega,\scr B^\omega)^{\simeq},
\]
the Yoneda embedding, viewed as a contravariant functor on $\Cat_R^\fp$.

Consider the following conditions on a presheaf $F\in\PSh(\SmNC_R)$:
\begin{itemize}
	\item \emph{excision}: $F(0)$ is contractible and $F$ takes excision squares in $\Cat_R^\fp$ to pullback squares;
	\item \emph{weak excision}: $F$ preserves finite products, and for every excision square~\eqref{eqn:excision} in $\Cat_R^\fp$, the induced map $\fib(F(f))\to \fib(F(g))$ is an equivalence; 
	\item \emph{$\A^1$-invariance}: for every $\scr A\in\Cat_R^\fp$, the map $F(\scr A) \to F(\scr A \tens_{R}\QCoh(\A^1_R))$ is an equivalence.
\end{itemize}
We denote by $\PSh_\exc(\SmNC_R)$ (resp.\ by $\PSh_\wexc(\SmNC_R)$; by $\H_\nc(\Spec R)$; by $\H_\wnc(\Spec R)$)
the reflective subcategory of $\PSh(\SmNC_R)$ spanned by the presheaves satisfying excision (resp.\ weak excision; excision and $\A^1$-invariance; weak excision and $\A^1$-invariance). Note that representable presheaves satisfy excision and that there are inclusions
\[
\PSh_\exc(\SmNC_R)\subset \PSh_\wexc(\SmNC_R)\subset \PSh_\Sigma(\SmNC_R).
\]
Note also that for a presheaf on $\SmNC_R$ with values in a \emph{stable} $\infty$-category, excision and weak excision are equivalent conditions.
A morphism in $\PSh(\SmNC_R)$ will be called an \emph{excisive equivalence} (resp.\ a \emph{weakly excisive equivalence}; a \emph{motivic equivalence}; a \emph{weakly motivic equivalence}) if its reflection in $\PSh_\exc(\SmNC_R)$ (resp.\ in $\PSh_\wexc(\SmNC_R)$; in $\H_\nc(\Spec R)$; in $\H_\wnc(\Spec R)$) is an equivalence.
By \cite[Proposition 3.19]{Robalo}, each of these localizations of $\PSh(\SmNC_R)$ is compatible with the Day convolution symmetric monoidal structure.

Let $\SmAff_R\subset\Sm_R$ denote the subcategory of smooth affine $R$-schemes.
By Lemma~\ref{lem:smoothaffine}, for every $U\in\SmAff_R$, $\QCoh(U)$ belongs to $\Cat_R^\fp$.
By \cite[Corollary 9.4.2.3]{SAG}, the functor
\[
\QCoh\colon \Sch_R^\op \to \CAlg(\Cat_R^\St) 
\]
preserves finite colimits. Since the coproduct in $\CAlg(\Cat_R^\St)$ is the tensor product in $\Cat_R^\St$, we obtain a symmetric monoidal functor $\QCoh\colon \SmAff_R \to \SmNC_R$. The induced symmetric monoidal functor on $\PSh_\Sigma$ lifts uniquely to pointed objects since the target is pointed \cite[Proposition 4.8.2.11]{HA}, and we obtain a colimit-preserving symmetric monoidal functor
\begin{equation}\label{eqn:QCoh0}
\scr L\colon \PSh_\Sigma(\SmAff_R)_\pt \to \PSh_\Sigma(\SmNC_R).
\end{equation}
Since Nisnevich squares in $\SmAff_R$ generate the Nisnevich topology (see Proposition~\ref{prop:nisnevich-definition}), it follows from Lemma~\ref{lem:voevodsky} that the functor $\scr L$ preserves motivic equivalences and induces
\[
\scr L\colon \H_\pt(\Spec R) \to \H_\nc(\Spec R).
\]
Let $\SH_\nc(\Spec R)$ be the presentably symmetric monoidal $\infty$-category obtained from $\H_\nc(\Spec R)$ by inverting $\scr L(\S^{\A^1})$. We then obtain a colimit-preserving symmetric monoidal functor
\[
\scr L\colon \SH(\Spec R) \to \SH_\nc(\Spec R).
\]
Both $\SH$ and $\SH_\nc$ are sheaves for the Zariski topology on affine schemes \cite[Proposition 9.2.1]{RobaloThesis}.
We can therefore extend $\SH_\nc$ and $\scr L$ to all schemes by right Kan extension.

\begin{lemma}\label{lem:cofib-section}
	Let $\scr C$ be a pointed $\infty$-category and let $f\colon A\to B$ be a morphism in $\scr C$ with a section $s\colon B\to A$.
	If the cofibers of $f$ and $s$ exist, then $\cofib(f)\simeq \Sigma\cofib(s)$.
\end{lemma}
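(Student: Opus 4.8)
The plan is to realise both $\cofib(f)$ and $\Sigma\cofib(s)$ as horizontal cofibers of a single pushout square. Recall first the elementary observation that in any pointed $\infty$-category a pushout square
\[
\begin{array}{ccc} P & \xrightarrow{\ \ } & Q \\ \downarrow & & \downarrow \\ R & \xrightarrow{\ \ } & S \end{array}
\]
satisfies $\cofib(P\to Q)\simeq\cofib(R\to S)$ as soon as one of these cofibers exists. Indeed $S\simeq Q\amalg_P R$, so pasting the span $0\from R\to S$ onto $0\from P\to R$ and using the pasting lemma for pushouts \cite[\sect4.4.2]{HTT} shows $\cofib(R\to S)\simeq 0\amalg_R S\simeq 0\amalg_P Q\simeq \cofib(P\to Q)$, the pasting lemma also ensuring that all the relevant pushouts exist once $\cofib(P\to Q)$ does. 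In particular, when the bottom edge has the form $X\to 0$ this gives $\cofib(P\to Q)\simeq\cofib(X\to 0)\simeq\Sigma X$.

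With this in hand, since a section means $f\circ s\simeq\id_B$, it suffices to prove that the square
\[
\begin{array}{ccc} A & \xrightarrow{f} & B \\ \downarrow & & \downarrow \\ \cofib(s) & \xrightarrow{\ \ } & 0 \end{array}
\]
is a pushout, where the left vertical arrow is the canonical map $A\to\cofib(s)$ and the remaining arrows target the zero object. I would obtain this by pasting it with the cofiber square of $s$:
\[
\begin{array}{ccccc} B & \xrightarrow{s} & A & \xrightarrow{f} & B \\ \downarrow & & \downarrow & & \downarrow \\ 0 & \xrightarrow{\ \ } & \cofib(s) & \xrightarrow{\ \ } & 0 \end{array}
\]
The left-hand square is a pushout by definition of $\cofib(s)$, which exists by hypothesis; the outer rectangle has top edge $f\circ s\simeq\id_B$, an equivalence, so its pushout is $0$ and it too is a pushout square. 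By the pasting lemma \cite[\sect4.4.2]{HTT} the right-hand square is then a pushout. Applying the first paragraph (the cofiber $\cofib(f)=\cofib(A\to B)$ exists by hypothesis, so the comparison is available) yields $\cofib(f)\simeq\cofib(\cofib(s)\to 0)\simeq\Sigma\cofib(s)$; observe that this also shows $\Sigma\cofib(s)$ exists, so the statement is not circular.

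The argument is essentially formal; the only point requiring a little care is assembling the two pasted squares into a genuine coherent diagram $\Delta^1\times\Delta^2\to\scr C$. This is routine: beyond the cofiber square of $s$ — which provides a chosen pushout — all remaining objects, morphisms and fillings are valued in the zero object and are therefore essentially unique, so no further choices intervene.
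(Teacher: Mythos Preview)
Your proof is correct and is essentially the same argument as the paper's. The paper builds the single diagram
\[
\begin{array}{ccccc}
B & \to & 0 & & \\
\downarrow s & & \downarrow & & \\
A & \to & \cofib(s) & \to & 0 \\
\downarrow f & & \downarrow & & \downarrow \\
B & \to & C & \to & \cofib(f)
\end{array}
\]
of pushout squares and observes that $C\simeq 0$ because $fs\simeq\id_B$, so the bottom-right square gives $\cofib(f)\simeq\Sigma\cofib(s)$. Your argument is the transpose of this: you form the two left squares (rotated), deduce the second is a pushout by pasting, and then your preliminary ``parallel cofibers agree'' lemma is exactly the step of adjoining the paper's third square. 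The only difference is that you factored that last step out as a standalone observation.
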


\begin{proof}
	Form the pushout squares
\begin{tikzmath}
	\diagram[column sep={4.5em,between origins}]{
	B & 0 & \\
	A & \cofib(s) & 0 \\
	B & C & \cofib(f)\rlap. \\
	};
	\arrows (11-) edge (-12) (11) edge node[left]{$s$} (21) (21-) edge (-22) (12) edge (22) (21) edge node[left]{$f$} (31) (31-) edge (-32) (22) edge (32) (22-) edge (-23) (23) edge (33) (32-) edge (-33);
\end{tikzmath}	
Since $f\circ s$ is an equivalence, $C$ is contractible.
\end{proof}

\begin{lemma}\label{lem:nc-stability}
	\leavevmode
	\begin{enumerate}
		\item The functors $\H_\wnc(\Spec R)\to \H_\nc(\Spec R)\to \SH_\nc(\Spec R)$ induce symmetric monoidal equivalences
	\[
	\Sp(\H_\wnc(\Spec R))\simeq \Sp(\H_\nc(\Spec R)) \simeq \SH_\nc(\Spec R).
	\]
	\item The presentably symmetric monoidal $\infty$-category $\SH_\nc(\Spec R)$ is obtained from $\H_\wnc(\Spec R)$ by inverting $\scr L(\A^1/\G_m)$.
	\end{enumerate}
\end{lemma}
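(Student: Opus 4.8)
The plan is to reduce both statements to two ingredients: the purely formal observation that for presheaves valued in a stable $\infty$-category weak excision and excision coincide, and the invertibility of $\scr L(\A^1/\G_m)=\scr L(\S^{\A^1})$ in the stabilization of $\H_\nc(\Spec R)$, which is Robalo's incarnation \cite{RobaloThesis} of the fundamental theorem of algebraic $\K$-theory. Everything else is a manipulation of universal properties in $\CAlg(\Pr^\mathrm{L})$, using the universal property of $\Sigma^\infty$-stabilization \cite[Corollary 2.22]{Robalo} and of monoidal inversion (Lemma~\ref{lem:inversion}). Both $\H_\nc(\Spec R)$ and $\H_\wnc(\Spec R)$ are pointed reflective subcategories of $\PSh_\Sigma(\SmNC_R)$ closed under limits, so $\Sp$ of each is fully faithful in $\Sp(\PSh_\Sigma(\SmNC_R))$, with essential image the spectrum objects all of whose deloopings lie in it.

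First I would record the weak excision versus excision comparison. Let $F$ be a presheaf on $\SmNC_R$ valued in a stable $\infty$-category. If $F$ is weakly excisive, then $F$ preserves finite products, so $F$ carries the zero object to $0$; moreover, for an excision square~\eqref{eqn:excision}, the given equivalence $\fib(F(f))\simeq\fib(F(g))$ of the fibers of the two horizontal edges, being compatible with the square, forces the image of~\eqref{eqn:excision} under $F$ to be cartesian, since in a stable $\infty$-category a square is cartesian as soon as it induces an equivalence on the fibers of one pair of parallel edges. Hence $F$ is excisive, and the converse is the inclusion $\PSh_\exc(\SmNC_R)\subset\PSh_\wexc(\SmNC_R)$. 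Since the deloopings of a spectrum object are jointly conservative, applying this equivalence of conditions levelwise identifies $\Sp(\H_\wnc(\Spec R))$ with $\Sp(\H_\nc(\Spec R))$ — both being the $\A^1$-invariant excisive spectrum-valued presheaves on $\SmNC_R$ — and this identification is induced by the symmetric monoidal localization $\H_\wnc(\Spec R)\to\H_\nc(\Spec R)$.

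It then remains, for part (1), to identify $\Sp(\H_\nc(\Spec R))\simeq\H_\nc(\Spec R)[(\S^1)^{-1}]$ with $\SH_\nc(\Spec R)=\H_\nc(\Spec R)[(\scr L\S^{\A^1})^{-1}]$. Using $\A^1$-invariance together with Lemma~\ref{lem:cofib-section} applied to the split retraction $\scr L(\Sigma^\infty_+\G_m)\to\1$ coming from $\G_m\subset\A^1$, one obtains $\scr L(\S^{\A^1})\simeq\S^1\wedge\scr L(\G_m)$ in $\H_\nc(\Spec R)$. Consequently any symmetric monoidal colimit-preserving functor out of $\H_\nc(\Spec R)$ inverting $\scr L(\S^{\A^1})$ automatically inverts $\S^1$, because $\scr L(\G_m)\wedge\scr L(\S^{\A^1})^{-1}$ is then a right inverse of $\S^1$ and a right-invertible object of a symmetric monoidal $\infty$-category is invertible; in particular $\SH_\nc(\Spec R)$ is stable. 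In the other direction, a symmetric monoidal colimit-preserving functor out of $\H_\nc(\Spec R)$ inverting $\S^1$ has stable target, hence factors through $\Sp(\H_\nc(\Spec R))$, in which $\scr L(\G_m)$ — and therefore $\scr L(\S^{\A^1})$ — is invertible by the fundamental-theorem input; so it also inverts $\scr L(\S^{\A^1})$. Thus $\H_\nc(\Spec R)[(\S^1)^{-1}]$ and $\H_\nc(\Spec R)[(\scr L\S^{\A^1})^{-1}]$ corepresent the same functor on presentably symmetric monoidal $\infty$-categories under $\H_\nc(\Spec R)$, hence are canonically equivalent; combined with the previous paragraph this gives the two symmetric monoidal equivalences of (1).

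For part (2), note that by (1) the category $\SH_\nc(\Spec R)\simeq\Sp(\H_\wnc(\Spec R))=\H_\wnc(\Spec R)[(\S^1)^{-1}]$ is stable, receives $\Sigma^\infty$ from $\H_\wnc(\Spec R)$, and has $\scr L(\S^{\A^1})$ invertible; conversely $\H_\wnc(\Spec R)[(\scr L\S^{\A^1})^{-1}]$ is stable (same right-inverse argument) and receives a symmetric monoidal localization from $\H_\wnc(\Spec R)$ inverting $\scr L(\S^{\A^1})$. Repeating the universal-property comparison above with $\H_\wnc$ in place of $\H_\nc$ — the nontrivial direction again using that a stable target factors through $\Sp(\H_\wnc(\Spec R))=\Sp(\H_\nc(\Spec R))$, where $\scr L(\G_m)$ is invertible — yields $\H_\wnc(\Spec R)[(\scr L\S^{\A^1})^{-1}]\simeq\H_\wnc(\Spec R)[(\S^1)^{-1}]\simeq\SH_\nc(\Spec R)$. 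The one genuinely non-formal point is the invertibility of $\scr L(\A^1/\G_m)$ after stabilization; if one prefers not to quote it from \cite{RobaloThesis}, it can be reproved from the semiorthogonal decomposition $\Perf(\P^1_R)=\langle\mathcal O,\mathcal O(1)\rangle$ — which $\scr L$ converts to a direct sum by excision and splitting — together with the excision square $\P^1=\A^1\cup_{\G_m}\A^1$, forcing $\S^1\wedge\scr L(\G_m)\simeq\1$ in $\Sp(\H_\nc(\Spec R))$.
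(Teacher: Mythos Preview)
Your proof is correct and shares the key ingredients with the paper's: the coincidence of excision and weak excision for spectrum-valued presheaves, and the invertibility of $\scr L(\S^{\A^1})$ in $\Sp(\H_\nc(\Spec R))$ coming from the semiorthogonal decomposition of $\Perf(\P^1_R)$. The organization differs, however. You set up a comparison of universal properties, first establishing $\scr L(\S^{\A^1})\simeq\S^1\wedge\scr L(\G_m,1)$ in $\H_\nc$ and then invoking the invertibility of $\scr L(\G_m,1)$ after stabilization as a separate black box. The paper instead proves directly that $\scr L(\S^{\A^1})\simeq\1$ in $\Sp(\H_\nc(\Spec R))$: the exact sequence $\Mod_R\into\QCoh(\P^1_R)\xrightarrow{\pi_*}\Mod_R$ gives a cofiber sequence $\r\Mod_R\xrightarrow{\r(\pi_*)}\r\QCoh(\P^1_R)\to\r\Mod_R$ in $\PSh_\wexc$, and since $\r(\pi_*)$ and $\r(\infty^*)$ are two sections of the \emph{same} map $\r(\pi^*)$, Lemma~\ref{lem:cofib-section} identifies their cofibers after one suspension; combined with the excisive equivalence $\r\QCoh(\P^1_R)\simeq\scr L(\P^1_{R+})$, this forces $\scr L(\P^1_R,\infty)\simeq\1$ in the stable category. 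This is a single self-contained step rather than a citation, and it yields the stronger statement that $\scr L(\S^{\A^1})$ is the unit, not merely invertible. Your universal-property packaging is a reasonable alternative, but note that the remark following the lemma in the paper corrects an error in \cite{Robalo} on exactly this point, so citing Robalo for the unstable or semi-stable version is delicate; the semiorthogonal-decomposition argument you sketch at the end is the safe route and is precisely what the paper does.
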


\begin{proof}
	(1) The first equivalence holds because excision and weak excision are equivalent for presheaves of spectra. It remains to show that $\scr L(\S^{\A^1})$ is already invertible in $\Sp(\H_\nc(\Spec R))$.
	Let $\pi\colon \P^1_R\to\Spec R$ be the structure map. The exact sequence
	\[
	\Mod_R \into \QCoh(\P^1_R) \xrightarrow{\pi_*} \Mod_R
	\]
	in $\Cat_R^\fp$ becomes a cofiber sequence
	\[
	\r\Mod_R \xrightarrow{\r(\pi_*)} \r\QCoh(\P^1_R) \to \r\Mod_R
	\]
	 in $\PSh_\wexc(\SmNC_R)$. Let $\infty\colon\Spec R\into\P^1_R$ be the section of $\pi$ at infinity. Then $\r(\pi_*)$ and $\r(\infty^*)$ are both sections of $\r(\pi^*)$. By Lemma~\ref{lem:cofib-section}, two sections of the same map in a stable $\infty$-category have equivalent cofibers, so we obtain a cofiber sequence
	\[
	\Sigma^\infty \r\Mod_R \xrightarrow{\r(\infty^*)} \Sigma^\infty \r\QCoh(\P^1_R) \to \Sigma^\infty \r\Mod_R
	\]
	in $\Sp(\PSh_\exc(\SmNC_R))$.
	On the other hand, the standard affine cover of $\P^1_R$ gives an equivalence $\r\QCoh(\P^1_R) \simeq \scr L(\P^1_{R+})$ in $\PSh_\exc(\SmNC_R)$.
	From the equivalence $(\P^1_R,\infty)\simeq \S^{\A^1}$ in $\H_\pt(\Spec R)$, we then deduce that $\scr L(\S^{\A^1})$ is equivalent to the unit in $\Sp(\H_\nc(\Spec R))$. In particular, it is invertible.
	
	(2) Since $\scr L(\A^1/\G_m)\simeq \S^1\wedge \scr L(\G_m,1)$ in $\H_\wnc(\Spec R)$, the formal inversion of $\scr L(\A^1/\G_m)$ is stable and we conclude by (1).
\end{proof}

\begin{remark}
	It is mistakenly claimed in \cite[Proposition 3.24]{Robalo} that $\H_\nc(\Spec R)$ is already stable. The issue is in the proof of \cite[Lemma 3.25]{Robalo}, where it is claimed that $\scr L(\P^1_R,\infty)\in\PSh_\exc(\SmNC_R)$ is equivalent to the unit; this only holds after one suspension.
\end{remark}

Consider the symmetric monoidal functor
\begin{equation*}
\SmAff_{R+} \to \SmNC_R,\quad U_+\mapsto \QCoh(U),
\end{equation*}
which is the restriction of~\eqref{eqn:QCoh0}. By Lemma~\ref{lem:qcat-etale-descent} and Corollary~\ref{cor:automatic-norms}, applied with $\scr C=\Aff$, $t$ the finite étale topology, $m=\fet$, and $\scr D=\Fun(\Delta^1,\Cat_\infty)$, we can promote it to a natural transformation
\[
\SmAff_+^\otimes \to \SmNC^\otimes\colon \Span(\Aff,\all,\fet) \to \CAlg(\Cat_\infty).
\]
We can view this transformation as a functor
\[
\Span(\Aff,\all,\fet)\times\Delta^1 \to \CAlg(\Cat_\infty).
\]
Composing with $\PSh_\Sigma$, we get
\begin{equation}\label{eqn:LPsigma}
\Span(\Aff,\all,\fet)\times\Delta^1 \to \CAlg(\what\Cat{}_\infty^\mathrm{sift}),\quad (S,0\to1)\mapsto (\PSh_\Sigma(\SmAff_S)_\pt \to \PSh_\Sigma(\SmNC_S)).
\end{equation}
To go further, we need to investigate the effect of norms on excisive equivalences. This will require noncommutative versions of some of the results from Section~\ref{sec:pointednorms}.

Let $p\colon R\to R'$ be a finite étale morphism and consider an exact sequence $\scr K\into\scr A\to\scr B$ in $\Cat_{R'}^\St$. Then we have two induced exact sequences
\begin{gather*}
	p_\otimes\scr K \into p_\otimes\scr A \to \scr B',\\
	\scr K'\into p_\otimes\scr A \to p_\otimes\scr B
\end{gather*}
in $\Cat_R^\St$. More generally, if $n$ is a locally constant integer on $\Spec(R)$, we define \[p_\otimes(\scr A|n\scr B) \in \Cat_R^\St\] to be the quotient of $p_\otimes\scr A$ by the subcategory generated,
locally in the finite étale topology, by products $\bigotimes_i a_i$ with fewer than $n$ factors not in $\scr K$. It is a monogenic localization if $\scr A\to\scr B$ is monogenic and $\scr A\in\Cat_{R'}^\ft$ (this can be checked when $p$ is a fold map, by Lemma~\ref{lem:local-generators}(1)). For example, $\scr B'=p_\otimes(\scr A|\scr B)$ and $p_\otimes\scr B=p_\otimes(\scr A|d\scr B)$ where $d=\deg(p)$. Even more generally, we define
\[
p_\otimes(\scr A|n_1\scr B_1,\dotsc,n_k\scr B_k)=p_\otimes(\scr A|n_1\scr B_1)\cap\dotsb\cap p_\otimes(\scr A|n_k\scr B_k)
\]
as a reflective subcategory of $p_\otimes\scr A$. 

If $\scr A=\QCoh(X)$ and $\scr B_i=\QCoh(Y_i)$ for some quasi-projective $R'$-scheme $X$ and quasi-compact open subschemes $Y_i\subset X$, then $p_\otimes(\scr A)\simeq \QCoh(\Weil_pX)$ and $p_\otimes(\scr A|n_1\scr B_1,\dotsc,n_k\scr B_k)$ is the monogenic localization corresponding to the open subscheme of $\Weil_pX$ classifying $R'$-morphisms $s\colon U_{R'}\to X$ such that the fibers of $s^{-1}(Y_i)\to U$ have degree at least $n_i$. The following lemma is thus a noncommutative analog of Lemma~\ref{lem:crossterms}.

\begin{lemma}\label{lem:crossterms-cat}
	Let $p\colon R\to R'$ be a finite étale map, let $\scr A\in\Cat_{R'}^\St$, and for each $1\leq i\leq k$ let $\scr A\to\scr B_i$ be a localization functor and $n_i$ a locally constant integer on $\Spec(R)$. For every coproduct decomposition $\scr A=\scr A'\times\scr A''$, there is a coproduct decomposition
	\[
	p_\otimes(\scr A|n_1\scr B_1,\dotsc,n_k\scr B_k) = p_\otimes(\scr A'|n_1\scr B_1',\dotsc,n_k\scr B_k')\times p_\otimes(\scr A|\scr A'',n_1\scr B_1,\dotsc,n_k\scr B_k),
	\]
	where $\scr B_i'=\scr B_i\cap \scr A'$.
\end{lemma}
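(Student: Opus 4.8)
The plan is to reduce, by finite étale descent, to the case where $p$ is a fold map, and then to carry out an explicit computation in $\Cat_R^\St$ using the distributivity of $\otimes_R$ over finite products.

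All the $\infty$-categories occurring in the statement are reflective subcategories of $p_\otimes(\scr A)$ — namely intersections of the basic reflective subcategories $p_\otimes(\scr A|n_i\scr B_i)$ — and the asserted coproduct decomposition is precisely the statement that the comparison functor $p_\otimes(\scr A'|n_\bullet\scr B_\bullet')\times p_\otimes(\scr A|\scr A'',n_\bullet\scr B_\bullet)\to p_\otimes(\scr A|n_\bullet\scr B_\bullet)$, induced by the two evident summand inclusions, is an equivalence (here $n_\bullet\scr B_\bullet$ abbreviates $n_1\scr B_1,\dotsc,n_k\scr B_k$). By Lemma~\ref{lem:qcat-etale-descent}, the functor $R\mapsto\Cat_R^\St$ is a sheaf for the fppf topology, so whether this comparison functor is an equivalence can be checked after base change along a faithfully flat finite étale morphism $R\to R''$. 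Since $p$ is finite étale it becomes a fold map after such a base change (finite étale maps are fold maps locally in the finite étale topology), and after further Zariski-localizing on $\Spec R''$ we may assume $\Spec R''$ connected, so that $p$ is a single fold map $\Spec R^{\times d}\to\Spec R$ and each $n_i$ is a genuine integer. What legitimizes this reduction is that every ingredient commutes with finite étale base change: $p_\otimes$ does because it is part of the functor on $\Span(\Aff,\all,\fet)$ built from Lemma~\ref{lem:qcat-etale-descent} and Corollary~\ref{cor:automatic-norms}; the decomposition $\scr A=\scr A'\times\scr A''$ does because $-\otimes_RR''$ preserves finite products; the localizations $\scr A\to\scr B_i$ and the restrictions $\scr B_i'=\scr B_i\cap\scr A'$ do because $\Mod_{R''}(\Sp)$ is self-dual over $\Mod_R(\Sp)$, so $-\otimes_RR''$ commutes with both adjoints and carries a reflective localization to a reflective localization with the base-changed kernel; and the reflective subcategories $p_\otimes(\scr A|n_i\scr B_i)$ do by their very construction, which already proceeds by descent from the fold-map case.

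In the fold-map case $\scr A$ corresponds to a $d$-tuple $(\scr A_1,\dotsc,\scr A_d)$ of $R$-linear $\infty$-categories, $p_\otimes(\scr A)\simeq\scr A_1\otimes_R\cdots\otimes_R\scr A_d$, and $\scr A=\scr A'\times\scr A''$ amounts to decompositions $\scr A_j=\scr A_j'\times\scr A_j''$. Since $\Cat_R^\St$ is presentably symmetric monoidal and semiadditive, $\otimes_R$ distributes over the biproduct $\times$, so
\[
p_\otimes(\scr A)\;\simeq\;\bigoplus_{S\subseteq\{1,\dotsc,d\}}\scr A^{(S)},\qquad \scr A^{(S)}:=\Bigl(\bigotimes_{j\notin S}\scr A_j'\Bigr)\otimes_R\Bigl(\bigotimes_{j\in S}\scr A_j''\Bigr).
\]
Unwinding the definitions, $p_\otimes(\scr A')$ is the summand $\scr A^{(\emptyset)}$; $p_\otimes(\scr A|\scr A'')$ is the complementary summand $\bigoplus_{S\neq\emptyset}\scr A^{(S)}$; and each $p_\otimes(\scr A|n_i\scr B_i)$ decomposes compatibly, imposing on $\scr A^{(S)}$ the localization that kills the pure tensors with fewer than $n_i$ factors outside the relevant restriction of $\ker(\scr A\to\scr B_i)$ (killing $\scr A^{(S)}$ altogether when this cannot be met). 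Because $\ker(\scr A_j'\to\scr B_{i,j}')=\ker(\scr A_j\to\scr B_{i,j})\cap\scr A_j'$ and similarly with $\scr A_j''$, the localization cutting $p_\otimes(\scr A'|n_\bullet\scr B_\bullet')$ out of $\scr A^{(\emptyset)}$ is exactly the restriction of the one cutting out $p_\otimes(\scr A|n_\bullet\scr B_\bullet)$, and likewise on $\bigoplus_{S\neq\emptyset}\scr A^{(S)}$ for $p_\otimes(\scr A|\scr A'',n_\bullet\scr B_\bullet)$. Summing over $S$ yields $p_\otimes(\scr A|n_\bullet\scr B_\bullet)\simeq p_\otimes(\scr A'|n_\bullet\scr B_\bullet')\times p_\otimes(\scr A|\scr A'',n_\bullet\scr B_\bullet)$, and by construction this identification is the comparison functor.

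I expect the main obstacle to be the base-change bookkeeping of the second paragraph rather than anything conceptual: one must check carefully that forming the reflective subcategories $p_\otimes(\scr A|n_i\scr B_i)$, restricting localizations via $\scr B_i\mapsto\scr B_i\cap\scr A'$, and intersecting reflective subcategories all commute with finite étale base change. This rests on the self-duality of $\Mod_{R''}(\Sp)$ over $\Mod_R(\Sp)$ (the same input as in the proof of Lemma~\ref{lem:local-generators}) and on the fact that tensoring a reflective localization of stable presentable $\infty$-categories with a dualizable module $\infty$-category is again a reflective localization with the expected kernel (as in the cofiber-sequence argument of Lemma~\ref{lem:openimmersion}). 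Once the reduction is in place, the fold-map step is the straightforward combinatorics of a $d$-fold tensor product of biproducts.
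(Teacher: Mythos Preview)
Your approach is exactly the paper's: the authors' entire proof reads ``By finite \'etale descent, this may be checked when $p$ is a fold map, and in this case the result is obvious.'' You have correctly identified the reduction and filled in the ``obvious'' fold-map computation with the expected distributivity argument; your base-change bookkeeping is more than the paper deems necessary but is sound.
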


\begin{proof}
	By finite étale descent, this may be checked when $p$ is a fold map, and in this case the result is obvious.
\end{proof}

With $\scr A=\QCoh(X)$ and $\scr B_i=\QCoh(Y_i)$ as above, suppose moreover that $p$ is a fold map of degree $d$. Then $X=X_1\amalg\dotsb\amalg X_d$ and $\Weil_pX=X_1\times_R\dotsb\times_R X_d$. In this case the open subscheme of $\Weil_pX$ corresponding to $p_\otimes(\scr A|n_1\scr B_1,\dotsc,n_k\scr B_k)$ is the union of all the rectangles $U_1\times_R\dotsb\times_R U_d$, where $U_j=\bigcap_{i\in I_j} Y_i\times_XX_j\subset X_j$ for some subsets $I_j\subset\{1,\dotsc,k\}$ such that $\{j\suchthat i\in I_j\}$ has at least $n_i$ elements. The following lemma is the noncommutative analog of this observation.

\begin{lemma}\label{lem:crossterms-as-limit}
	Under the assumptions of Lemma~\ref{lem:crossterms-cat}, suppose that $p$ is a fold map of degree $d$. Accordingly, write 
	$\scr A=\scr A_1\times\dotsb\times\scr A_d$ and $\scr B_i=\scr B_{i1}\times\dotsb\times \scr B_{id}$.
	Let $\chi\colon C\to \Cat_R^\St$ be the cube of dimension $kd$ in $\Cat_R^\St$ whose vertices are the tensor products $\scr C_1\otimes_R\dotsb\otimes_R\scr C_d$, where $\scr C_j$ is a reflective subcategory of $\scr A_j$ of the form $\bigcap_{i\in I_j}\scr B_{ij}$ for some subset $I_j\subset\{1,\dotsc,k\}$. Let $C'\subset C$ be the subposet where $\{j\suchthat i\in I_j\}$ has at least $n_i$ elements for all $i$. Then 
	\[
	p_\otimes(\scr A|n_1\scr B_1,\dotsc,n_k\scr B_k)\simeq \lim_{C'}\chi.
	\]
\end{lemma}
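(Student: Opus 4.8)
The plan is to realize \emph{both} sides as reflective localizations of the fixed stable $R$-linear $\infty$-category $\scr E := p_\otimes\scr A = \scr A_1\tens_R\dotsb\tens_R\scr A_d$ — the norm along a fold map being the $d$-fold tensor product, so that $\scr E$ is the initial vertex $\chi(\emptyset,\dotsc,\emptyset)$ of the cube $\chi$ — to compute the kernel of each as an explicit localizing tensor ideal of $\scr E$, and then to identify the two ideals by a distributivity argument. Throughout I write $\scr K_{ij}=\ker(\scr A_j\to\scr B_{ij})$; as is the case in all applications, one may assume these are \emph{finite} localizations (kernels generated by compact objects), so that every localization in sight is smashing and the relevant lattices of tensor ideals are distributive. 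This smashing hypothesis is the structural input that makes everything go.

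\emph{The left-hand side.} Since $p$ is globally a fold map, the defining description of $p_\otimes(\scr A\mid n_i\scr B_i)$ is literal: it is the localization of $\scr E$ at the smallest localizing tensor ideal $\scr N^{(i)}$ containing the pure tensors $a_1\tens\dotsb\tens a_d$ with at least $d-n_i+1$ of the factors in $\scr K_{ij}$; equivalently $\scr N^{(i)}=\sum_{\lvert J\rvert=d-n_i+1}\bigcap_{j\in J}\langle\scr K_{ij}\rangle_j$, where $\langle\scr K_{ij}\rangle_j$ is the tensor ideal of $\scr E$ generated by $\scr A_1\tens\dotsb\tens\scr K_{ij}\tens\dotsb\tens\scr A_d$. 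An intersection of smashing localizations being the localization at the join of their kernels, $p_\otimes(\scr A\mid n_1\scr B_1,\dotsc,n_k\scr B_k)$ is the localization of $\scr E$ at $\scr N:=\sum_{i=1}^k\scr N^{(i)}$.

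\emph{The right-hand side.} For each $j$ the $k$-cube $I\mapsto\bigcap_{i\in I}\scr B_{ij}$ is a cube of commuting smashing localizations of $\scr A_j$, and $\chi$ is obtained by tensoring these $d$ cubes; in particular every edge of $\chi$ is a smashing localization, and $\scr N^M:=\ker(\scr E\to\chi(M))=\sum_{(i,j)\,:\,M_{ij}=1}\langle\scr K_{ij}\rangle_j$ for $M\in C$. The key point is that $\lim_{C'}\chi$ is again a reflective localization of $\scr E$: since $C'$ is an up-set in the cube $C$, its minimal elements $M$ are exactly the matrices whose $i$-th row has $n_i$ ones, and the meet-subsemilattice $P\subseteq C'$ these generate under joins is cofinal for computing $\lim_{C'}$; the restriction of $\chi$ to $P$ is precisely the Čech diagram expressing noncommutative Zariski descent for the family of smashing localizations $\{\scr E\to\chi(M)\}_{M\text{ minimal}}$, so by distributivity of the lattice of smashing tensor ideals of $\scr E$ one gets $\lim_{C'}\chi\simeq\lim_P\chi\simeq$ the localization of $\scr E$ at $\bigcap_{M\text{ minimal}}\scr N^M$. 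It then remains to check $\scr N=\bigcap_{M\text{ minimal in }C'}\scr N^M$ in the lattice generated by the $\langle\scr K_{ij}\rangle_j$; being distributive, this lattice embeds into a Boolean one, so the identity may be verified by replacing each $\langle\scr K_{ij}\rangle_j$ by the corresponding "support", where it becomes the statement that a point has, in some row $i$, at least $d-n_i+1$ coordinates in the chosen closed sets if and only if every way of selecting $n_i$ columns in each row hits one of those closed sets — an elementary pigeonhole argument in both directions. Combining the three steps identifies the two reflective subcategories of $\scr E$.

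\emph{Main obstacle.} The delicate step is the right-hand side: passing from "$\lim$ over the up-set $C'$" to a single reflective localization of $\scr E$ with the expected kernel. This is exactly where one needs the $\scr B_{ij}\subseteq\scr A_j$ to be finite (smashing) localizations, so that the ambient tensor ideals form a distributive lattice and the gluing limit is computed by the support-theoretic formula; and the bookkeeping of which pure tensors generate $\bigcap_{M\text{ minimal}}\scr N^M$ must be carried out with some care. I note that in every use of this lemma, $\scr A$ and the $\scr B_i$ are $\QCoh$ of quasi-compact quasi-separated schemes with the $\scr B_i$ coming from quasi-compact open subschemes, and in that case all of the above is simply ordinary Zariski descent for $\QCoh$ applied to the cover of the open subscheme $W\subseteq\Weil_pX$ described above by the "boxes" whose union $W$ is.
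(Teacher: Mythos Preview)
Your argument is essentially correct under the added smashing hypothesis, but it takes a substantially more elaborate route than the paper's, and it proves a weaker statement than what is claimed.

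The paper's proof is only a few lines: since the cube $C$ has an initial object, $p_\otimes(\scr A)=\lim_C\chi$, and there is a restriction map $f\colon p_\otimes(\scr A)\to\lim_{C'}\chi$. Its right adjoint $g$ sends a section $s$ to $\lim_{M\in C'}\iota_M s(M)$ computed in $p_\otimes(\scr A)$. Because every edge of $\chi$ is a left-exact localization and $C'$ is finite, $f$ preserves finite limits, and one checks directly (using that each reflector and each inclusion in the cube is a $d$-fold tensor product of such in the individual factors, so the relevant idempotents commute across factors) that the counit $fg\to\id$ is an equivalence. Then one reads off the kernel of $f$. No smashing hypothesis enters: the commutativity that makes the ``by inspection'' step work is automatic from the tensor-product form of the cube, not from distributivity of a lattice of ideals.

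Your approach instead computes the kernel of each side as a tensor ideal and matches them via a pigeonhole identity in a distributive lattice. This is a valid and conceptually pleasant picture, and the combinatorial identity you isolate is exactly right. But to get distributivity you impose that all the $\scr A_j\to\scr B_{ij}$ are finite (smashing) localizations, which is \emph{not} part of the hypotheses of the lemma; you therefore prove a strictly weaker statement. You are correct that this suffices for all applications in the paper (where the localizations are monogenic, coming from quasi-compact open immersions), and your final remark that the $\QCoh$ case reduces to ordinary Zariski descent for the box-cover of the open $W\subset\Weil_pX$ is a nice sanity check. Still, the paper's direct argument is both shorter and more general: it avoids the detour through lattices of ideals and the descent formalism entirely. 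If you want to recover the full strength of the lemma, the cleanest fix is to abandon the ideal-lattice bookkeeping and instead verify directly that $g$ is fully faithful, factor by factor, using only that the reflectors in different tensor slots commute.
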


\begin{proof}
	Consider the restriction morphism $f\colon p_\otimes(\scr A)=\lim_C\chi \to \lim_{C'}\chi$ in $\Cat_R^\St$. Its right adjoint $g$ has the following explicit description: its value on a section $s$ of $\chi$ over $C'$ is the limit of $s$ viewed as a $C'$-indexed diagram in $p_\otimes(\scr A)$. Since $f$ preserves finite limits, we see by inspection that the counit transformation $f\circ g\to\id$ is an equivalence. Hence $f$ is a localization functor, and again by inspection its kernel is as desired.
\end{proof}

\begin{proposition}\label{prop:ncequiv}
	Let $p\colon R\to R'$ be a finite étale morphism. Then the functor
	\[
	p_\otimes\colon \PSh_\Sigma(\SmNC_{R'}) \to \PSh_\Sigma(\SmNC_R)
	\]
	sends weakly excisive (resp.\ weakly motivic) equivalences to excisive (resp.\ motivic) equivalences.
\end{proposition}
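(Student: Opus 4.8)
The strategy is to imitate the proof of Theorem~\ref{thm:norm}(4) together with the computation behind Proposition~\ref{prop:quotients}: first cut the problem down to a small set of generating equivalences by means of Lemma~\ref{lem:voevodsky}, and then compute $p_\otimes$ on those generators using the cross-terms calculus of Lemmas~\ref{lem:crossterms-cat} and~\ref{lem:crossterms-as-limit}. By construction $p_\otimes\colon\PSh_\Sigma(\SmNC_{R'})\to\PSh_\Sigma(\SmNC_R)$ is $\PSh_\Sigma$ applied to the functor $p_\otimes\colon\SmNC_{R'}\to\SmNC_R$ (which carries $\Cat_{R'}^\fp$ into $\Cat_R^\fp$); hence it preserves sifted colimits and sends $\r(\scr A)$ to $\r(p_\otimes\scr A)$. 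Since the excisive (resp.\ motivic) equivalences form a strongly saturated class, the class $W$ of morphisms of $\PSh_\Sigma(\SmNC_{R'})$ that $p_\otimes$ carries to excisive (resp.\ motivic) equivalences is closed under $2$-out-of-$3$ and under sifted colimits in $\Fun(\Delta^1,\ph)$. The representables generate $\PSh_\Sigma(\SmNC_{R'})$ under sifted colimits and are closed under finite coproducts (finite coproducts in $\SmNC_{R'}$ being finite products in $\Cat_{R'}^\fp$), so by Lemma~\ref{lem:voevodsky} it suffices to show that $W$ contains the morphisms $g\amalg\id_{\r(\scr X)}$ with $\scr X\in\Cat_{R'}^\fp$ and $g$ ranging over a generating set of weakly excisive (resp.\ weakly motivic) equivalences.

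Such a generator $g$ is built from an excision square~\eqref{eqn:excision} in $\Cat_{R'}^\fp$ (and, in the motivic case, from the additional $\A^1$-contractions $\r(\scr A\tens\QCoh(\A^1_{R'}))\to\r(\scr A)$), and its source and target are colimits of finite coproducts of representables: by Lemma~\ref{lem:bar-construction} one writes $g$ as the geometric realization of a map of simplicial objects whose terms are finite coproducts of representables and whose face maps are assembled from fold maps and from the maps of representables attached to the square. Applying $p_\otimes$, which commutes with geometric realizations, we must therefore understand $p_\otimes$ on finite coproducts of representables and on such face maps. Here $p_\otimes(\r(\scr A_1)\amalg\dotsb\amalg\r(\scr A_n))\simeq\r(p_\otimes(\scr A_1\amalg\dotsb\amalg\scr A_n))$ is again representable, but the face maps, which involve $p_\otimes$ of fold maps, introduce the cross-term monogenic localizations. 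By finite étale descent (Lemma~\ref{lem:qcat-etale-descent}) one reduces this analysis to the case where $p$ is a fold map, and then Lemmas~\ref{lem:crossterms-cat} and~\ref{lem:crossterms-as-limit} exhibit the geometric realization of the simplicial object obtained by applying $p_\otimes$ as a finite limit, over a cube, of representables $\r(\scr Y)$ with $\scr Y$ an iterated monogenic localization of the relevant tensor power — hence an object of $\Cat_R^\fp$ by Lemma~\ref{lem:openimmersion} — whose two-dimensional faces are the legs of excision squares (tensored with the fixed factors and with $p_\otimes(\r(\scr X))$). This is the precise categorical counterpart of Proposition~\ref{prop:quotients}.

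It remains to evaluate the resulting comparison map after excisive (resp.\ motivic) localization. The localization functor $L$ onto $\PSh_\exc(\SmNC_R)$ (resp.\ onto $\H_\nc(\Spec R)$) is a left adjoint, so it commutes with the geometric realizations above; moreover $\PSh_\exc(\SmNC_R)$ is closed under limits, so the cube-limits appearing above are already $L$-local. Since the generating excisive equivalences over $R$ are, by definition, inverted by $L$, each cube-limit above becomes after applying $L$ equivalent to a single representable, and tracing through the identifications shows that $L(p_\otimes(g\amalg\id_{\r(\scr X)}))$ is an equivalence — this is the categorified form of the computations behind Corollary~\ref{cor:quotients} and Proposition~\ref{prop:pairs}. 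That the output is an \emph{excisive} (rather than merely \emph{weakly} excisive) equivalence reflects exactly that $p_\otimes$ turns finite coproducts of representables into representables, which are excisive. For the $\A^1$-contractions one argues in the same way, additionally noting that $p_\otimes$ turns the $\QCoh(\A^1_{R'})$-cylinder into $\QCoh$ of the affine space $\Weil_p(\A^1_{R'})$ over $\Spec R$, so that $\A^1$-invariance is recovered by iterating it away, in direct analogy with the fact (proof of Proposition~\ref{prop:integralsifted}) that $p_*$ preserves $\A^1$-homotopy equivalences.

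The main obstacle is the combinatorial core of the second and third paragraphs: one must verify that applying $p_\otimes$ to the bar description of a generator, decomposing the result via Lemmas~\ref{lem:crossterms-cat} and~\ref{lem:crossterms-as-limit}, and then localizing, genuinely reproduces $\r(p_\otimes\scr A)$ compatibly with the comparison map — that is, one must carry out at the categorical level the book-keeping of the ``addition formula'' (Example~\ref{ex:addition-formula}) and the ``quotient formula'' (Proposition~\ref{prop:quotients}) for $p_\otimes$. Once this is in place, the reductions in the first paragraph and the passage to the localized categories in the third are formal.
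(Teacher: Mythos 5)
Your high-level strategy matches the paper's: reduce to generating weakly excisive equivalences via Lemma~\ref{lem:voevodsky}, analyze $p_\otimes$ on these using the cross-term calculus of Lemmas~\ref{lem:crossterms-cat} and~\ref{lem:crossterms-as-limit}, and use finite étale descent to reduce the pullback verification to the case of a fold map. The setup in your first paragraph is sound, and noting that $W=\{f : p_\otimes(f)\text{ is an excisive equivalence}\}$ is closed only under sifted colimits (not arbitrary colimits, since $p_\otimes$ is not additive) is exactly the reason Lemma~\ref{lem:voevodsky} is the right tool.

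However, there is a genuine gap in the second and third paragraphs, and you acknowledge it yourself without closing it. The assertion that, after applying $L$, ``each cube-limit above becomes equivalent to a single representable'' does not constitute a proof; in fact the nontrivial content of the statement is precisely that a comparison map
\[
p_\otimes(\r\scr A/\r\scr B) \longrightarrow \r p_\otimes(\scr A)/\r p_\otimes(\scr A|\scr B)
\]
exists and becomes an excisive equivalence. Constructing this map requires choosing, for each simplicial degree, a compatible collapse of the cross-term decomposition $p_\otimes(\scr A\times\scr B^{\amalg n}) = p_\otimes\scr A \amalg p_\otimes(\scr A\amalg\scr B|\scr B)\amalg\dotsb\amalg p_\otimes(\scr A\amalg\scr B^{\amalg n}|\scr B)$ onto $p_\otimes\scr A\amalg p_\otimes(\scr A|\scr B)^{\amalg n}$, and proving the map is an excisive equivalence forces you to analyze a \emph{family} of auxiliary simplicial objects $Y_\bullet(k) = \r p_\otimes(\scr A\times\scr B^{\times\bullet}\mid k\scr B^{\times\bullet+1})$ indexed by the cross-term count $k$, and to show by a \emph{descending induction} on $k$ (from the degree of $p$ down to $1$) that their colimits in $\PSh_\exc(\SmNC_R)$ are contractible. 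The inductive step itself produces a further degreewise excision square and uses extra degeneracies coming from a right-adjoint augmentation to $\scr A$, which is a second nontrivial verification. Your claim that ``$\PSh_\exc(\SmNC_R)$ is closed under limits, so the cube-limits are already $L$-local'' conflates the limit computation of $p_\otimes(\scr A|n_1\scr B_1,\dotsc)$ as an object of $\Cat_R^\St$ (Lemma~\ref{lem:crossterms-as-limit}) with the presheaf-level geometric realizations you actually need to control; the latter are colimits in $\PSh_\Sigma$, and it is exactly the task of the induction to show they simplify correctly after localization. Without carrying out this bookkeeping you have an outline, not a proof — and the bookkeeping is the whole point, since the failure of $p_\otimes$ to preserve excisive equivalences in a naive way is what makes the statement nonobvious.
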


\begin{proof}
	It is clear that $p_\otimes$ preserves $\A^1$-homotopy equivalences. 
	By Lemma~\ref{lem:voevodsky}, it remains to show that $p_\otimes$ sends a generating set of weakly excisive equivalences to excisive equivalences.
	Consider an excision square
	\begin{tikzmath}
		\diagram{\scr A & \scr B \\ \scr C & \scr D \\};
		\arrows (11-) edge (-12) (11) edge (21) (21-) edge (-22) (12) edge (22);
	\end{tikzmath}
	in $\Cat_{R'}^\fp$, and the corresponding weakly excisive equivalence $\r\scr C/\r\scr D\to \r\scr A/\r\scr B$ in $\PSh_\Sigma(\SmNC_{R'})$.
	We must show that $p_\otimes(\r\scr C/\r\scr D)\to p_\otimes(\r\scr A/\r\scr B)$ is an excisive equivalence in $\PSh_\Sigma(\SmNC_R)$. 
	
	We claim that the square
	\begin{tikzequation}\label{eqn:normed-square}
		\diagram{p_\otimes\scr A & p_\otimes(\scr A|\scr B) \\ p_\otimes\scr C & p_\otimes(\scr C|\scr D) \\};
		\arrows (11-) edge (-12) (11) edge (21) (21-) edge (-22) (12) edge (22);
	\end{tikzequation}
	is an excision square. Since the horizontal maps are monogenic localizations, it suffices to check that it is a pullback square in $\Cat_R^\St$, and by Lemma~\ref{lem:qcat-etale-descent} we may assume that $p$ is a fold map. In this case, we can use Lemma~\ref{lem:crossterms-as-limit} to write $p_\otimes(\scr A|\scr B)$ and $p_\otimes(\scr C|\scr D)$ as explicit limits of tensor products. The claim is then a formal computation, using the fact that if $\scr E\in\Cat_R^\cg$ then $\scr E\otimes (\ph)$ preserves limits in $\Cat_R^\St$ (since $\scr E$ is dualizable).
	 Note also that the square~\eqref{eqn:normed-square} belongs to $\Cat_R^\fp$, by Lemma~\ref{lem:openimmersion}. It therefore induces a weakly excisive equivalence $\r p_\otimes(\scr C)/\r p_\otimes(\scr C|\scr D)\to \r p_\otimes(\scr A)/\r p_\otimes(\scr A|\scr B)$. To conclude the proof, we will produce a natural map
	\begin{equation}\label{eqn:ncequiv}
	p_\otimes(\r\scr A/\r\scr B) \to \r p_\otimes(\scr A)/\r p_\otimes(\scr A|\scr B)
	\end{equation}
	in $\PSh_\Sigma(\SmNC_R)$ and prove that it is an excisive equivalence.
	
	By Lemma~\ref{lem:bar-construction}, the quotient $\r\scr A/\r\scr B$ may be computed as the colimit of a simplicial object $X_\bullet$ in $\SmNC_{R'}$ with $X_n=\scr A\times\scr B^{\times n}$. Similarly, the quotient $\r p_\otimes(\scr A)/\r p_\otimes(\scr A|\scr B)$ may be computed as the colimit of a simplicial object $X_\bullet'$ in $\SmNC_R$ with $X_n'=p_\otimes\scr A\times p_\otimes(\scr A|\scr B)^{\times n}$. Using Lemma~\ref{lem:crossterms-cat} repeatedly, we get a coproduct decomposition
	\[
	p_\otimes X_n=p_\otimes(\scr A\times \scr B^{\times n}) = p_\otimes\scr A \times p_\otimes(\scr A\times\scr B|\scr B)\times\dotsb\times p_\otimes(\scr A\times\scr B^{\times n}|\scr B).
	\]
	By sending the term $p_\otimes(\scr A\times\scr B^{\times i}|\scr B)$ to the $i$th copy of $p_\otimes(\scr A|\scr B)$ in $X_n'$, we obtain a simplicial morphism $p_\otimes X_\bullet \to X_\bullet'$ which induces~\eqref{eqn:ncequiv} in the colimit. 
	
	Consider the simplicial square
	\begin{tikzmath}
		\def\colsep{1.5em}
		\diagram{\r p_\otimes(\scr A\times\scr B^{\times \bullet}|\scr B^{\times \bullet+1}) & \r p_\otimes(\scr A\times\scr B^{\times \bullet}) \\ \r(p_\otimes(\scr A|\scr B) \times p_\otimes(\scr A|\scr B)^{\times\bullet}) & \r(p_\otimes\scr A\times p_\otimes(\scr A|\scr B)^{\times\bullet})\rlap, \\};
		\arrows (11-) edge[c->] (-12) (11) edge (21) (21-)edge[c->] (-22) (12) edge (22);
	\end{tikzmath}
	where the right vertical map is $p_\otimes X_\bullet \to X_\bullet'$. This square is degreewise a pushout in $\PSh_\Sigma(\SmNC_R)$, by Lemma~\ref{lem:crossterms-cat}, and the lower left corner has an evident extra degeneracy. It therefore remains to show that the colimit of the upper left corner in $\PSh_\exc(\SmNC_R)$ is contractible. We will prove more generally that the simplicial object
	\[
	Y_\bullet(k)=\r p_\otimes(\scr A\times\scr B^{\times \bullet}|k\scr B^{\times \bullet+1})
	\]
	has contractible colimit in $\PSh_\exc(\SmNC_R)$ for every integer $k\geq 1$, by descending induction on $k$. If $k$ is greater than the degree of $p$, then $Y_n(k)=\r0$ for all $n$ and the claim holds. For $k\geq 1$, consider the simplicial square
	\begin{tikzequation}\label{eqn:excisive-induction}
		\def\colsep{1.5em}
		\diagram{\r p_\otimes(\scr A\times\scr B\times\scr B^{\times \bullet}|k\scr B^{\times \bullet+1},(k+1)\scr B^{\times\bullet+2}) & \r p_\otimes(\scr A\times\scr B\times\scr B^{\times \bullet}|k\scr B^{\times \bullet+1}) \\ \r p_\otimes(\scr A\times\scr B^{\times \bullet}|(k+1)\scr B^{\times \bullet+1}) & \r p_\otimes(\scr A\times\scr B^{\times \bullet}|k\scr B^{\times \bullet+1})\rlap. \\};
		\arrows (11-) edge[c->] (-12) (11) edge (21) (21-)edge[c->] (-22) (12) edge (22);
	\end{tikzequation}
	Here the simplicial objects in the first row are obtained from the simplicial diagram of $R'$-linear $\infty$-categories and right adjoint functors
	\begin{tikzmath}
		\def\colsep{.9em}
		\diagram{
		\dotsb & \scr A\times\scr B\times\scr B\times\scr B & \scr A\times\scr B\times\scr B & \scr A\times\scr B\rlap, \\
		};
		\arrows
		(11-) edge[-top,vshift=3*\dbl] (-12) edge[-mid,vshift=\dbl] (-12) edge[-mid,vshift=-\dbl] (-12) edge[-bot,vshift=-3*\dbl] (-12)
		(12-) edge[-top,vshift=2*\dbl] (-13) edge[-mid] (-13) edge[-bot,vshift=-2*\dbl] (-13)
		(13-) edge[-top,vshift=\dbl] (-14) edge[-bot,vshift=-\dbl] (-14);
	\end{tikzmath}
	which has an obvious augmentation to $\scr A$ with an extra degeneracy.
	The upper right corner of~\eqref{eqn:excisive-induction} has an induced augmentation to $\r p_\otimes(\scr A|k\scr B^{\times 0})$ with an extra degeneracy; since $k\geq 1$, $p_\otimes(\scr A|k\scr B^{\times 0})=0$, hence the colimit of this simplicial diagram is contractible in $\PSh_\exc(\SmNC_R)$. For the same reason, the colimit of the upper left corner of~\eqref{eqn:excisive-induction} is contractible in $\PSh_\exc(\SmNC_R)$. By the induction hypothesis, the colimit of the lower left corner of~\eqref{eqn:excisive-induction} is contractible in $\PSh_\exc(\SmNC_R)$.
	To conclude, it suffices to show that~\eqref{eqn:excisive-induction} is degreewise an excision square.
	We only have to check that the square
	\begin{tikzmath}
		\def\colsep{.9em}
		\diagram{p_\otimes(\scr A\times\scr B^{\times n}|k\scr B^{\times n+1}) & p_\otimes(\scr A\times\scr B^{\times n}|(k+1)\scr B^{\times n+1})\\
		p_\otimes(\scr A\times\scr B\times\scr B^{\times n}|k\scr B^{\times n+1}) & p_\otimes(\scr A\times\scr B\times\scr B^{\times n}|k\scr B^{\times n+1},(k+1)\scr B^{\times n+2}) \\};
		\arrows (11-) edge (-12) (11) edge (21) (21-)edge (-22) (12) edge (22);
	\end{tikzmath}
	is cartesian in $\Cat_R^\St$, and we may assume that $p$ is a fold map. This is again formal once we express each object as a limit of tensor products using Lemma~\ref{lem:crossterms-as-limit}.
\end{proof}

\begin{remark}
	We do not know if $p_\otimes\colon \PSh_\Sigma(\SmNC_{R'})\to\PSh_\Sigma(\SmNC_R)$ preserves excisive equivalences.
\end{remark}

It follows from Proposition~\ref{prop:ncequiv} that the functor $p_\otimes\colon \PSh_\Sigma(\SmNC_{R'}) \to \PSh_\Sigma(\SmNC_R)$ induces
\[
p_\otimes\colon \H_\wnc(\Spec R') \to \H_\nc(\Spec R).
\]
Note that the inclusion $\H_\wnc(\Spec R')\subset \PSh_\Sigma(\SmNC_{R'})$ preserves filtered colimits, so that $\H_\wnc(\Spec R')$ is compactly generated and $\scr L(\A^1/\G_m)\in \H_\wnc(\Spec R')$ is compact.
Since $p_\otimes\scr L(\A^1/\G_m)\simeq \scr Lp_\otimes(\A^1/\G_m)$ is invertible in $\SH_\nc(\Spec R)$, the functor $p_\otimes$ lifts uniquely to a symmetric monoidal functor
\begin{equation}\label{eqn:normSHnc}
p_\otimes\colon \SH_\nc(\Spec R') \to \SH_\nc(\Spec R),
\end{equation}
by Lemma~\ref{lem:inversion} and Lemma~\ref{lem:nc-stability}(2).

Because we do not have norms on $\H_\nc$, we have to proceed slightly differently than in \sect\ref{sub:coherence} to construct $\SH_\nc^\otimes$. Recall that $\SH(\Spec R)$ can be obtained from $\PSh_\Sigma(\SmAff_{R})_\pt$ by performing two universal constructions within the $\infty$-category $\CAlg(\what\Cat{}_\infty^\mathrm{sift})$: first invert motivic equivalences and then invert the motivic spheres $\S^{V}$. The same steps yield $\SH_\nc(\Spec R)$ from $\PSh_\Sigma(\SmNC_R)$. Note that it is formally equivalent to first invert $p_\otimes(\A^1/\G_m)$ for all finite étale maps $p\colon R\to R'$ and then invert all maps that become invertible in $\SH(\Spec R)$, both steps being understood as universal constructions in $\CAlg(\what\Cat{}_\infty^\mathrm{sift})$. On the noncommutative side, the second step is now compatible with norms by the existence of the functors~\eqref{eqn:normSHnc}. From~\eqref{eqn:LPsigma}, we thus obtain a functor
\[
\Span(\Aff,\all,\fet)\times\Delta^1 \to \CAlg(\what\Cat{}_\infty^\mathrm{sift}),\quad (S,0\to1)\mapsto (\SH(S) \to \SH_\nc(S)),
\]
or equivalently a natural transformation
\[
\scr L\colon \SH^\otimes\to\SH_\nc^\otimes\colon \Span(\Aff,\all,\fet) \to \CAlg(\what\Cat{}_\infty^\mathrm{sift}).
\]
Finally, using Proposition~\ref{prop:span-RKE}, we extend this transformation to
\[
\scr L\colon \SH^\otimes\to\SH_\nc^\otimes\colon \Span(\Sch,\all,\fet) \to \CAlg(\what\Cat{}_\infty^\mathrm{sift}).
\]

\begin{theorem}\label{thm:KGL}
	The assignment $S\mapsto\KGL_S\in\SH(S)$ can be promoted to a section of $\SH^\otimes$ over $\Span(\Sch,\all,\fet)$ that is cocartesian over $\Sch^\op$. In particular, for every scheme $S$, the homotopy $\K$-theory spectrum $\KGL_S$ is a normed spectrum over $\Sch_S$.
\end{theorem}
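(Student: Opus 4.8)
The plan is to follow the template of the proof of Theorem~\ref{thm:HZnormed}, with the transformation $\scr L\colon \SH^\otimes \to \SH_\nc^\otimes$ of symmetric monoidal functors on $\Span(\Sch,\all,\fet)$ in place of $\Z_\tr$. Viewing $\scr L$ as a morphism of cocartesian fibrations over $\Span(\Sch,\all,\fet)$, Lemma~\ref{lemm:construct-relative-adjoint}(1) produces a relative right adjoint $u_\nc$, given fiberwise by the right adjoint $u_{\nc,S}\colon \SH_\nc(S) \to \SH(S)$ of $\scr L\colon \SH(S)\to\SH_\nc(S)$. The unit section of $\SH_\nc^\otimes$, namely $S\mapsto \1_{\SH_\nc(S)}$, is everywhere cocartesian because $\SH_\nc^\otimes$ takes values in symmetric monoidal $\infty$-categories and symmetric monoidal functors preserve unit objects; composing it with $u_\nc$ yields a section of $\SH^\otimes$ over $\Span(\Sch,\all,\fet)$ whose value at $S$ is $u_{\nc,S}(\1_{\SH_\nc(S)})$.

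It then remains to make two identifications. First, $u_{\nc,S}(\1_{\SH_\nc(S)}) \simeq \KGL_S$: this is Robalo's description of homotopy $K$-theory as the motivic spectrum underlying the unit of $\SH_\nc(S)$, which is the very reason for introducing noncommutative motives here. Second, to see that the section is cocartesian over $\Sch^\op$ it suffices (by the remark following Definition~\ref{def:normedspectrum}) to check structure maps, i.e.\ to show that for every morphism of schemes $f\colon S'\to S$ the comparison map $f^*\KGL_S \to \KGL_{S'}$ is an equivalence. Unlike $\HH\Z$ — for which cocartesianness held only over essentially smooth morphisms — homotopy $K$-theory is stable under \emph{arbitrary} base change (equivalently, $\KGL_S$ is pulled back from $\Spec\Z$), so this holds in full generality. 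Restricting the resulting section along $\Sch_S\to\Sch$ to $\Span(\Sch_S,\all,\fet)$ then exhibits $\KGL_S$ as a normed spectrum over $\Sch_S$.

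The substance of the argument has in fact already been discharged: the genuinely nontrivial input is the existence of $\SH_\nc^\otimes$ and of $\scr L$ as functors on the span $2$-category, which rests on Proposition~\ref{prop:ncequiv} (the noncommutative analog of Theorem~\ref{thm:norm}(4)) ensuring that the norm functors on $\PSh_\Sigma(\SmNC)$ descend through the weakly motivic localization. Granting that infrastructure, the only point requiring genuine attention in the present proof is the arbitrary base change property $f^*\KGL_S\simeq\KGL_{S'}$, which singles $\KGL$ out and is what permits the normed structure to be defined over all of $\Sch_S$ rather than merely over $\Sm_S$; everything else is a formal transcription of the $\HH\Z$ case.
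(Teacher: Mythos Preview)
Your proposal is correct and follows essentially the same approach as the paper: view $\scr L$ as a map of cocartesian fibrations, apply Lemma~\ref{lemm:construct-relative-adjoint}(1) to obtain the relative right adjoint $u_\nc$, and compose the unit section of $\SH_\nc^\otimes$ with $u_\nc$, invoking \cite[Theorem 4.7]{Robalo} for the identification $u_{\nc,S}(\1)\simeq\KGL_S$. The paper's proof is terser and leaves the cocartesianness over $\Sch^\op$ implicit in the Robalo reference, whereas you spell out explicitly that this amounts to the arbitrary base change property of $\KGL$; this is a helpful clarification but not a different argument.
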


\begin{proof}
	Consider $\scr L$ as a map of cocartesian fibrations over $\Span(\Sch,\all,\fet)$.
	By Lemma~\ref{lemm:construct-relative-adjoint}(1), it admits a relative right adjoint $u_\nc$, given fiberwise by the forgetful functor $\SH_\nc(S)\to\SH(S)$, that sends the unit to $\KGL_S$ \cite[Theorem 4.7]{Robalo}.
	Hence, composing the unit section of $\SH_\nc^\otimes$ with $u_\nc$, we obtain the desired section of $\SH^\otimes$.
\end{proof}

\begin{corollary}\label{cor:normKH}
	The presheaf $\Omega^\infty\KH\colon \Sch^\op\to\CAlg(\scr S)$ of homotopy $\K$-theory spaces with their multiplicative $\E_\infty$-structures extends to a functor
		\[
		\Omega^\infty\KH\colon \Span(\Sch,\all,\fet) \to \CAlg(\scr S).
		\]
\end{corollary}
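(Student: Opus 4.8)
Recall that, for a normed spectrum $E$, the underlying homotopy $\K$-theory space at $X$ is $\Omega^\infty\KH(X)=\Map_{\SH(X)}(\1_X,\KGL_X)$, the identification with Weibel's space being \cite[Proposition 2.14]{Cisinski}. By Theorem~\ref{thm:KGL}, $\KGL$ is a section of $\SH^\otimes$ over $\Span(\Sch,\all,\fet)$ that is cocartesian over $\Sch^\op$, and by Corollary~\ref{cor:automatic-calg} we may lift $\SH^\otimes$ to $\CAlg(\SH^\otimes)\colon\Span(\Sch,\all,\fet)\to\what\Cat{}_\infty$ and regard $\KGL$ as a section $\KGL^\otimes\colon\Span(\Sch,\all,\fet)\to\int\CAlg(\SH^\otimes)$ lifting the identity and sending backward morphisms to cocartesian edges. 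The strategy is to post-compose $\KGL^\otimes$ with a ``map out of the unit'' functor
\[
\Omega^\infty\colon\int\CAlg(\SH^\otimes)\longrightarrow\CAlg(\scr S),\qquad (X,A)\longmapsto\Map_{\SH(X)}(\1_X,A),
\]
the $\E_\infty$-structure on the target coming from the fact that $\Map_{\SH(X)}(\1_X,-)\colon\SH(X)\to\scr S$ is lax symmetric monoidal, being corepresented by the unit.

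\textbf{Carrying it out.} On a morphism of $\int\CAlg(\SH^\otimes)$ lying over a span $U\xleftarrow{f}T\xrightarrow{p}S$, i.e.\ a map $\theta\colon p_\otimes f^*A\to B$ in $\CAlg(\SH(S))$, the functor $\Omega^\infty$ is the composite
\[
\Map_{\SH(U)}(\1_U,A)\xrightarrow{p_\otimes f^*}\Map_{\SH(S)}(p_\otimes f^*\1_U,p_\otimes f^*A)\xrightarrow{\ \sim\ }\Map_{\SH(S)}(\1_S,p_\otimes f^*A)\xrightarrow{\theta_*}\Map_{\SH(S)}(\1_S,B),
\]
using that $f^*$ and $p_\otimes$ are symmetric monoidal, hence preserve the unit. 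Granting that $\Omega^\infty$ is a well-defined functor, we set $\Omega^\infty\KH:=\Omega^\infty\circ\KGL^\otimes\colon\Span(\Sch,\all,\fet)\to\CAlg(\scr S)$; then $\Omega^\infty\KH(X)=\Map_{\SH(X)}(\1_X,\KGL_X)$ is the homotopy $\K$-theory space of $X$, the forward maps are the multiplicative transfers $\nu_p$, and restricting along $\Sch^\op\hookrightarrow\Span(\Sch,\all,\fet)$ (the backward morphisms) recovers the presheaf $\Omega^\infty\KH$ together with its usual restriction maps $g^*$ and its multiplicative $\E_\infty$-structures, as desired.

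\textbf{The main obstacle.} The nontrivial point is the construction and, above all, the coherence of the functor $\Omega^\infty$. One cannot produce it in the naive way, as the relative right adjoint of a natural transformation $\underline{\scr S}^\times\to\SH^\otimes$ into $\SH^\otimes$ from the constant functor, because the left adjoint $K\mapsto K\otimes\1_X$ of $\Map_{\SH(X)}(\1_X,-)$ is \emph{not} compatible with norms: by Proposition~\ref{prop:stablenorm} the norm functors preserve sifted colimits but not arbitrary colimits, whereas $K\otimes\1_X$ is a colimit of copies of $\1_X$. Likewise, $\Map_{\SH(X)}(\1_X,-)$ is only lax natural in $X\in\Span(\Sch,\all,\fet)$, not natural. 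The resolution is to exploit that it is corepresented by the \emph{unit} specifically: the assignment $X\mapsto(X,\1_X)$ is a section of $\int\CAlg(\SH^\otimes)\to\Span(\Sch,\all,\fet)$ which is \emph{everywhere} cocartesian (since $f^*\1=\1$ and $p_\otimes\1=\1$) and fiberwise initial, and $\Omega^\infty$ is the associated ``fiberwise corepresentable'' functor; equivalently, $\Omega^\infty\circ\KGL^\otimes$ is the pairing of the left-lax natural transformation $\ast\Rightarrow\CAlg(\SH^\otimes)$ classified by the section $\KGL^\otimes$ with the left-lax natural transformation $\CAlg(\SH^\otimes)\Rightarrow\underline{\CAlg(\scr S)}$ given fiberwise by $\Map_{\SH(X)}(\1_X,-)$, and such a composite left-lax transformation $\ast\Rightarrow\underline{\CAlg(\scr S)}$ is precisely the datum of a functor $\Span(\Sch,\all,\fet)\to\CAlg(\scr S)$. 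This can be made precise using the $(\infty,2)$-categorical formalism of left-lax slices developed in Section~\ref{sec:f-tens-*-adj}: the functor of Proposition~\ref{prop:construct-gamma}, applied to the cocartesian fibration $\int\CAlg(\SH^\otimes)\to\Span(\Sch,\all,\fet)$ and the cocartesian unit section, is exactly of this shape and supplies the required coherences. (This is the same mechanism that underlies the space-level Tambara functor structure on $\Omega^\infty E$ of a normed spectrum discussed in Section~\ref{sec:normedspectra}, applied here over all of $\Span(\Sch,\all,\fet)$ rather than over $\Span(\Sm_S,\all,\fet)$ for a fixed base.)
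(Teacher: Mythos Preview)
Your outline is correct in spirit, and would in fact yield the more general statement that $X\mapsto\Map_{\SH(X)}(\1_X,E_X)$ extends to a functor $\Span(\Sch,\all,\fet)\to\CAlg(\scr S)$ for \emph{any} normed spectrum $E$. However, the coherence argument has a gap: Proposition~\ref{prop:construct-gamma} produces the $(\infty,2)$-functor $\scr D\mapsto\Fun_{\scr B}(\scr D,\scr X)$ on the left-lax slice, which is not the construction you need. It does not build the left-lax transformation $\CAlg(\SH^\otimes)\Rightarrow\underline{\CAlg(\scr S)}$ that you (correctly) identify as the missing ingredient. That transformation does exist---fiberwise-corepresentable functors associated to a cocartesian section can be assembled coherently---but you have only described it on objects and $1$-morphisms, not actually constructed it, and the cited proposition does not do so either.

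The paper sidesteps this entirely by exploiting the specific origin of $\KGL$. Since $\scr L\colon\SH^\otimes\to\SH_\nc^\otimes$ is a natural transformation of $\CAlg(\what\Cat_\infty)$-valued functors and $u_\nc(\1)\simeq\KGL$, the adjunction gives
\[
\Map_{\SH(X)}(\1_X,\KGL_X)\simeq\Map_{\SH_\nc(X)}(\scr L\1_X,\1_X)\simeq\Map_{\SH_\nc(X)}(\1_X,\1_X).
\]
The assignment $\scr C\mapsto\End_\scr C(\1)$ is a functor $\CAlg(\Cat_\infty)\to\CAlg(\scr S)$: any symmetric monoidal functor preserves the unit and the monoidal product on its endomorphisms. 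Composing this with $\SH_\nc^\otimes\colon\Span(\Sch,\all,\fet)\to\CAlg(\what\Cat_\infty)$ gives the extension immediately, with no coherence issue to resolve. This is precisely the argument the paper makes explicit for the parallel Corollary~\ref{cor:normK} (``Taking endomorphisms of the unit''). Your route trades this one-line deduction for a general lemma about normed spectra that the paper never formulates or proves.
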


\begin{remark}
	Let us mention another approach to the construction of the normed spectrum $\KGL_S$, suggested to us by Akhil Mathew.
	The Picard groupoid functor $\Pic\colon \Sch^\op\to\scr S$ can be extended to $\Span(\Sch,\all,\flf)$ using norms of invertible sheaves.
	If $\Pic_S$ denotes the restriction of $\Pic$ to $\SmQP_S$, it follows that $S\mapsto \Pic_S$ is a section of $\PSh_\Sigma(\SmQP)^\otimes$ over $\Span(\Sch,\all,\flf)$. By Proposition~\ref{prop:integralsifted}, $S\mapsto \L_\mot\Pic_S$ is thus a section of $\H^\otimes$ over $\Span(\Sch,\all,\flf)$, which is cocartesian over $\Sch^\op$ since the map $\P^\infty_S\to \Pic_S$ classifying $\scr O(1)$ is a motivic equivalence.
	In particular, $\Sigma^\infty_+\L_\mot\Pic_S$ is a normed spectrum over $\Sch_S$.
	By \cite[Theorem 4.17]{GepnerSnaith} or \cite[Theorem 1.1]{SpitzweckOstvaer}, the canonical map $\Sigma^\infty_+\L_\mot\Pic_S\to \KGL_S$ induces an equivalence
	\[
	\Sigma^\infty_+\L_\mot\Pic_S[1/\beta] \simeq \KGL_S
	\]
	in $\SH(S)$, where $\beta\colon \S^{\A^1} \to \Sigma^\infty_+\L_\mot\Pic_S$ is the difference of the maps $\P^1_S\to\Pic_S$ classifying $\scr O(-1)$ and $\scr O$. Using Proposition~\ref{prop:localization-general}(2), $\Sigma^\infty_+\L_\mot\Pic_S[1/\beta]$ will be a normed spectrum over $\Sch_S$ provided that, for every finite étale map $f\colon X\to Y$, $\nu_f(\beta)$ becomes invertible in the localized $\Pic(\SH(Y))$-graded commutative ring $\pi_\star(\Sigma^\infty_+\L_\mot\Pic_Y)[1/\beta]\simeq \pi_\star(\KGL_Y)$. 
	Our construction of norms on $\KGL_S$ shows that this must be the case, as $\Sigma^\infty_+\L_\mot\Pic_S\to \KGL_S$ is easily seen to be a morphism of normed spectra, but it might be possible to check this more directly.
\end{remark}

\subsection{Nonconnective \texorpdfstring{$\K$}{K}-theory}

We conclude this section by showing how to obtain norms on ordinary $\K$-theory using a variant of the previous construction.
Such norms are also constructed in \cite{BDGNS9} using different methods, although our proof of Lemma~\ref{lem:stable-norm-nc} below ultimately relies on the existence of norms for spectral Mackey functors. 
Let $\SH_\exc\colon\< \Sch^\op\to \CAlg(\what\Cat{}_\infty)$ denote the right Kan extension of the functor
\[
\Aff^\op \to\CAlg(\what\Cat{}_\infty),\quad R\mapsto \PSh_\exc(\SmNC_R,\Sp).
\]
The spectrum of endomorphisms of the unit in $\SH_\exc(S)$ is then the nonconnective $\K$-theory spectrum of $S$ \cite[Theorem 4.4]{Robalo}. 
The key difference with the previous situation is that we now have to consider presheaves of \emph{spectra}, so that
Proposition~\ref{prop:ncequiv} does not suffice to extend $\SH_\exc$ to $\Span(\Sch,\all,\fet)$.
For this we need the following result.

\begin{lemma}\label{lem:stable-norm-nc}
	Let $p\colon R\to R'$ be a finite étale morphism. Then the symmetric monoidal functor
	\[
	\Sigma^\infty p_\otimes\colon \PSh_\Sigma(\SmNC_{R'}) \to \PSh_\Sigma(\SmNC_R,\Sp)
	\]
	sends $\S^1$ to an invertible object and preserves excisive equivalences.
\end{lemma}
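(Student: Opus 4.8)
The plan is to reduce the statement to the analog for presheaves of spaces (Proposition~\ref{prop:ncequiv}) combined with the computation of $p_\otimes$ on the noncommutative circle. First I would treat invertibility of $\Sigma^\infty p_\otimes(\S^1)$. By definition $\S^1\simeq \scr L(\S^1)$ arises from $\scr L\colon \PSh_\Sigma(\SmAff_{R'})_\pt\to \PSh_\Sigma(\SmNC_{R'})$ applied to $\S^1\in\H_\pt(\Spec R')$, and the natural transformation $\scr L\colon \SmAff_+^\otimes\to \SmNC^\otimes$ is compatible with norms; hence $p_\otimes\scr L(\S^1)\simeq \scr L\, p_\otimes(\S^1)$ where on the right $p_\otimes$ is the unstable norm on pointed motivic spaces. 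By Example~\ref{ex:degree2} (and more generally Lemma~\ref{lem:wedge} or Lemma~\ref{lem:spheres} applied to the trivial line bundle), $p_\otimes(\S^1)$ is a motivic sphere of the form $\S^{\Weil_p\A^1}$, which becomes invertible in $\SH(\Spec R)$. Since $\SH_\nc(\Spec R)$ is stable and $\scr L\colon \SH(\Spec R)\to \SH_\nc(\Spec R)$ is symmetric monoidal, $\scr L(\S^{\Weil_p\A^1})$ is invertible in $\SH_\nc(\Spec R)$; and by Lemma~\ref{lem:nc-stability}(1), $\SH_\nc(\Spec R)\simeq \Sp(\PSh_\exc(\SmNC_R))$, so the image of $\S^{\Weil_p\A^1}$ in $\PSh_\exc(\SmNC_R,\Sp)$ is invertible. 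As invertibility can be checked after the localization $\PSh_\Sigma(\SmNC_R,\Sp)\to \PSh_\exc(\SmNC_R,\Sp)$ followed by the (conservative) comparison with $\SH_\nc(\Spec R)$, it follows that $\Sigma^\infty p_\otimes(\S^1)$ is invertible in $\PSh_\Sigma(\SmNC_R,\Sp)$ as claimed.

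Next I would address preservation of excisive equivalences. Here the subtlety, flagged in the remark after Proposition~\ref{prop:ncequiv}, is precisely that $p_\otimes$ on $\PSh_\Sigma(\SmNC)$ is only known to send \emph{weakly} excisive equivalences to excisive equivalences, whereas now we work with the stable localization. The point is that after stabilizing the distinction disappears: a morphism of presheaves of spectra is an excisive equivalence iff it is a weakly excisive equivalence, since the extra-degeneracy / fiber-sequence argument used to compare excision and weak excision becomes an equivalence in a stable context (the fiber of an equivalence is contractible, and conversely). Concretely, the class of excisive equivalences in $\PSh_\Sigma(\SmNC_R,\Sp)$ is the strong saturation of the maps $\cofib(\r\scr C/\r\scr D\to \r\scr A/\r\scr B)$ attached to excision squares, together with $\A^1$-homotopies; by the computation in the proof of Proposition~\ref{prop:ncequiv}, $\Sigma^\infty p_\otimes$ carries each generating excision square~\eqref{eqn:normed-square} to an excision square in $\Cat_R^\fp$ (this is a purely categorical statement, independent of spaces vs.\ spectra), and carries the associated quotient map to the stabilization of the map~\eqref{eqn:ncequiv}, whose cofiber was shown there to be built out of simplicial objects with contractible colimit in $\PSh_\exc(\SmNC_R)$, hence a fortiori in $\PSh_\exc(\SmNC_R,\Sp)$. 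Thus $\Sigma^\infty p_\otimes$ sends a generating set of excisive equivalences to excisive equivalences, and since it preserves colimits (being a composite of $p_\otimes$, which preserves sifted colimits and finite coproducts, with $\Sigma^\infty$) it sends all excisive equivalences to excisive equivalences by Lemma~\ref{lem:voevodsky} or Lemma~\ref{lem:strong-saturation}.

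The main obstacle I expect is bookkeeping rather than conceptual: one must check carefully that the identification $p_\otimes\scr L\simeq \scr L\, p_\otimes$ used in the first paragraph is compatible with the functor-of-spans structures, so that the invertibility statement is not just objectwise but respects the symmetric monoidal functor being constructed — but this is exactly what the natural transformation $\SmAff_+^\otimes\to \SmNC^\otimes$ over $\Span(\Aff,\all,\fet)$ already encodes. The only real work is to verify the claim that $\Sigma^\infty p_\otimes$ commutes with the excisive localization in the stable setting; granting Proposition~\ref{prop:ncequiv} and the (standard) equivalence of excision and weak excision for spectrum-valued presheaves, this is immediate. I would also remark that, once Lemma~\ref{lem:stable-norm-nc} is in place, one repeats the construction of $\SH_\nc^\otimes$ verbatim with $\PSh_\exc(\SmNC_R,\Sp)$ in place of $\SH_\nc(\Spec R)$ (inverting $\Sigma^\infty p_\otimes(\S^1)$ for all finite étale $p$, then right Kan extending along $\Span(\Aff,\all,\fet)\subset\Span(\Sch,\all,\fet)$), obtaining a functor $\SH_\exc^\otimes\colon \Span(\Sch,\all,\fet)\to\CAlg(\what\Cat{}_\infty^\mathrm{sift})$ and hence, by taking the relative right adjoint of $\SH^\otimes\to\SH_\exc^\otimes$ applied to the unit section, a normed $\widehat\Pi_1^\et(S)$-spectrum structure on the nonconnective $\K$-theory spectrum of $S$.
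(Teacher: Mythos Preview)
Your argument for invertibility has a genuine gap. You claim that invertibility of $\Sigma^\infty p_\otimes(\S^1)$ in $\PSh_\Sigma(\SmNC_R,\Sp)$ can be checked after the localization to $\PSh_\exc(\SmNC_R,\Sp)\simeq\SH_\nc(\Spec R)$. But this localization is not conservative, and a symmetric monoidal localization can very well make non-invertible objects invertible; the passage $\H_\pt(S)\to\SH(S)$ is the prototypical example. So knowing that the image of $\Sigma^\infty p_\otimes(\S^1)$ in $\SH_\nc(\Spec R)$ is invertible tells you nothing about invertibility in $\PSh_\Sigma(\SmNC_R,\Sp)$. You also misidentify $p_\otimes(\S^1)$ with $\S^{\Weil_p\A^1}$: Lemma~\ref{lem:spheres} concerns motivic spheres $\S^V$ for vector bundles $V$, not the simplicial circle $\S^1$; already in degree $2$, Example~\ref{ex:degree2} shows $p_\otimes(\S^1)\simeq\S^1\wedge\Sigma T$, which is not of that form.

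The paper's approach is entirely different and avoids any localization. One embeds $\FEt_R$ into $\SmNC_R$ via $R''\mapsto\Mod_{R''}(\Sp)$, which (using semiadditivity of $\SmNC_R$ and Corollary~\ref{cor:automatic-norms}) extends to a symmetric monoidal functor $\Span(\FEt_R)\to\SmNC_R$ compatible with norms. This induces a symmetric monoidal functor $\PSh_\Sigma(\Span(\FEt_R),\Sp)\to\PSh_\Sigma(\SmNC_R,\Sp)$ commuting with $p_\otimes$, so it suffices to prove invertibility of $p_\otimes(\S^1)$ in $\PSh_\Sigma(\Span(\FEt_R),\Sp)$. But by Proposition~\ref{prop:spectralMackey} and Remark~\ref{rmk:spectralMackey}, this category is the genuine equivariant stable homotopy category $\SH(\widehat\Pi_1^\et(\Spec R))$, compatibly with norms, and there the norm of $\S^1$ is invertible. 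For the second claim, your discussion is in the right spirit but contains an error: $p_\otimes$ does \emph{not} preserve finite coproducts, so your colimit argument does not go through directly. The paper's proof uses invertibility to factor $\Sigma^\infty p_\otimes$ through the stabilization $\PSh_\Sigma(\SmNC_{R'},\Sp)$; since excision and weak excision coincide there, one is reduced to Proposition~\ref{prop:ncequiv}.
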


\begin{proof}
	The second statement follows from the first and Proposition~\ref{prop:ncequiv}, since weak excision implies excision for presheaves of spectra.
	The functor 
	\[\FEt_R^\op\to\Cat_R^\fp,\quad R'\mapsto \Mod_{R'}(\Sp)\]
	is a finite étale sheaf. Since $\Cat_R^\fp$ is semiadditive, Corollary~\ref{cor:automatic-norms} implies that it lifts uniquely to a functor $\Span(\FEt_R)\to \SmNC_R$. Hence, we get a symmetric monoidal functor
	\[
	\PSh_\Sigma(\Span(\FEt_R),\Sp) \to \PSh_\Sigma(\SmNC_R,\Sp).
	\]
	The commutative square
	\begin{tikzmath}
		\diagram{\FEt_{R'} & \SmNC_{R'} \\ \FEt_R & \SmNC_R \\};
		\arrows (11-) edge (-12) (11) edge node[left]{$p_*$} (21) (21-) edge (-22) (12) edge node[right]{$p_\otimes$} (22);
	\end{tikzmath}
	similarly induces a commutative square of symmetric monoidal functors
	\begin{tikzmath}
		\diagram{\PSh_\Sigma(\Span(\FEt_{R'})) & \PSh_\Sigma(\SmNC_{R'}) \\ \PSh_\Sigma(\Span(\FEt_R),\Sp) & \PSh_\Sigma(\SmNC_R,\Sp)\rlap. \\};
		\arrows (11-) edge (-12) (11) edge (21) (21-) edge (-22) (12) edge node[right]{$\Sigma^\infty p_\otimes$} (22);
	\end{tikzmath}
	It therefore suffices to show that the left vertical functor sends $\S^1$ to an invertible object, but this follows from Proposition~\ref{prop:spectralMackey} (more precisely, from the fact that the equivalence of Proposition~\ref{prop:spectralMackey} is compatible with norms, as explained in Remark~\ref{rmk:spectralMackey}).
\end{proof}

Using Lemma~\ref{lem:stable-norm-nc}, we can construct as before the functor
\[
\SH_\exc^\otimes\colon \Span(\Sch,\all,\fet)\to \CAlg(\what\Cat{}_\infty^\mathrm{sift}).
\]
Taking endomorphisms of the unit, we obtain in particular the following result:

\begin{corollary}\label{cor:normK}
	The presheaf $\K\colon \Sch^\op\to\CAlg(\scr S)$ of $\K$-theory spaces with their multiplicative $\E_\infty$-structures extends to a functor
		\[
		\K\colon \Span(\Sch,\all,\fet) \to \CAlg(\scr S).
		\]
\end{corollary}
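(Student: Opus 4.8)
The statement to prove is Corollary~\ref{cor:normK}: that the presheaf $\K\colon \Sch^\op\to\CAlg(\scr S)$ of $\K$-theory spaces extends to a functor on $\Span(\Sch,\all,\fet)$. The plan is to deduce this formally from the functor $\SH_\exc^\otimes\colon \Span(\Sch,\all,\fet)\to \CAlg(\what\Cat{}_\infty^\mathrm{sift})$ that we have just constructed using Lemma~\ref{lem:stable-norm-nc}, exactly as Corollary~\ref{cor:normKH} was deduced from the functor $\SH_\nc^\otimes$ via Theorem~\ref{thm:KGL}. The only input needed beyond the existence of $\SH_\exc^\otimes$ is the identification of the endomorphism spectrum of the unit object in $\SH_\exc(S)$ with the nonconnective $\K$-theory spectrum of $S$, which is \cite[Theorem 4.4]{Robalo}, together with the fact that its $\Omega^\infty$, equipped with the multiplicative $\E_\infty$-structure coming from the symmetric monoidal structure, is the $\K$-theory \emph{space}.

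First I would observe that $\SH_\exc^\otimes$ takes values in $\CAlg(\what\Cat{}_\infty^\mathrm{sift})$, so it has a unit section sending a scheme $S$ to $\1_S\in\SH_\exc(S)$, which is everywhere cocartesian (as in Example~\ref{ex:1}). Next I would pass to endomorphism objects: for any functor $F\colon \scr C\to\CAlg(\what\Cat{}_\infty^\mathrm{sift})$ with a cocartesian unit section, the assignment $c\mapsto \End_{F(c)}(\1)$ is naturally a functor $\scr C\to\CAlg(\Sp)$, because every symmetric monoidal functor carries $\End(\1)$ to $\End(\1)$ as an $\E_\infty$-ring (the unit is preserved on the nose up to canonical equivalence), and this assignment is functorial in symmetric monoidal functors and their natural transformations. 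Applying this to $F=\SH_\exc^\otimes$ gives a functor $\Span(\Sch,\all,\fet)\to\CAlg(\Sp)$ whose value at $S$ is $\K(S)$ (nonconnective $\K$-theory as a ring spectrum) by \cite[Theorem 4.4]{Robalo}, and whose underlying functor on $\Sch^\op$ recovers the usual contravariant $\K$-theory ring spectrum. Composing with $\Omega^\infty\colon \CAlg(\Sp)\to\CAlg(\scr S)$ yields the desired functor $\K\colon \Span(\Sch,\all,\fet)\to\CAlg(\scr S)$, and its restriction to $\Sch^\op$ is the presheaf of $\K$-theory spaces with their multiplicative $\E_\infty$-structures.

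The main obstacle is not in this final deduction, which is entirely formal, but in having established the existence of $\SH_\exc^\otimes$ in the first place—and the crux of that was Lemma~\ref{lem:stable-norm-nc}, whose proof invokes the compatibility of the equivalence of Proposition~\ref{prop:spectralMackey} with norms (Remark~\ref{rmk:spectralMackey}) to see that $\Sigma^\infty p_\otimes$ sends $\S^1$ to an invertible object. Granting that, the only thing to be careful about in writing out Corollary~\ref{cor:normK} is to spell out that the forward maps in spans act by norm functors $p_\otimes$ on the $\infty$-categories $\SH_\exc(\ph)$, hence induce multiplicative—not additive—transfers on $\K$-theory rings, and that the backward maps act by pullback $f^*$; this is the same bookkeeping as in Corollaries~\ref{cor:normKH} and~\ref{cor:normK}'s companion statements. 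One should also note, for completeness, that $\K(S)$ here is the $\K$-theory of the scheme $S$ in the sense of Thomason–Trobaugh, which matches Robalo's identification; no genuinely new argument is required.
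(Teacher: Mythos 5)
Your proposal is correct and follows the same line the paper takes: the corollary is read off from the functor $\SH_\exc^\otimes$ by passing to endomorphisms of the unit and invoking Robalo's identification of $\End(\1_{\SH_\exc(S)})$ with the nonconnective $\K$-theory spectrum of $S$. The paper states exactly this in one sentence, so your more detailed unwinding of the formal step ($c\mapsto\End_{F(c)}(\1)$ as a functor to $\CAlg(\Sp)$, then applying $\Omega^\infty$) is the same argument, just spelled out.
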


For quasi-projective schemes over a field, this construction recovers Joukhovitski's norms on $\K_0$ \cite{Joukhovitski}, which are thereby extended to higher $\K$-theory. Indeed, if $p\colon R\to R'$ is finite étale, the effect of $p_\otimes\colon\Cat_{R'}^\fp\to\Cat_R^\fp$ on the endomorphisms of the unit is the usual norm $\Perf(R')\to\Perf(R)$, as can be checked when $p$ is a fold map.

\begin{remark}\label{rmk:KBnormed}
	The theory of noncommutative motives $\SH_\exc$ can be compared with equivariant homotopy theory using Grothendieck's Galois theory (see Section~\ref{sec:ggt}). In the proof of Lemma~\ref{lem:stable-norm-nc}, we observed that there is a canonical symmetric monoidal functor 
	\[
	\Span(\Fin_{\widehat\Pi_1^\et(\Spec R)}) \simeq \Span(\FEt_R)\to\SmNC_R, 
	\]
	natural in $R$. As both sides are finite étale sheaves in $R$, we obtain by Corollary~\ref{cor:automatic-norms} a natural transformation
	\[
	\Span(\Fin)^\otimes\circ \widehat\Pi_1^\et \to \SmNC^\otimes\colon \Span(\Aff,\all,\fet) \to \CAlg(\Cat_\infty).
	\]
	Using Remark~\ref{rmk:spectralMackey}, this can be extended in the usual steps to a natural transformation
	\[
	\SH^\otimes \circ \widehat\Pi_1^\et \to \SH_\exc^\otimes \colon \Span(\Sch,\all,\fet)\to \CAlg(\what\Cat{}_\infty^\mathrm{sift}).
	\]
	As in Proposition~\ref{prop:normedGspectra}, we then obtain for every scheme $S$ an adjunction
	\[
	\NAlg(\SH(\widehat\Pi_1^\et(S))) \rightleftarrows \NAlg_{\FEt}(\SH_\exc(S)).
	\]
	The right adjoint sends the initial object to a normed $\widehat\Pi_1^\et(S)$-spectrum, whose value on a finite $\widehat\Pi_1^\et(S)$-set is the nonconnective $\K$-theory spectrum of the corresponding finite étale $S$-scheme.
\end{remark}

\begin{remark}
	Let $\Chow_\nc(R)$ denote the full subcategory of $\SH_\exc(R)$ spanned by retracts of motives of smooth and proper $R$-linear $\infty$-categories. If $\scr A$ and $\scr B$ are such $\infty$-categories, the mapping space $\Map(\Sigma^\infty\r\scr A,\Sigma^\infty\r\scr B)$ in $\Chow_\nc(R)$ is the $\K$-theory space $\K(\scr A^\omega\otimes_R\scr B^{\omega,\op})$. Thus, the homotopy category $\h\Chow_\nc(R)$ is the opposite of Kontsevich's category of noncommutative Chow motives over $R$ \cite{TabuadaNChow}.
	 If $p\colon R\to R'$ is finite étale, the norm $p_\otimes\colon \Cat_{R'}^\cg \to \Cat_R^\cg$ preserves smooth and proper $\infty$-categories, since they are precisely the dualizable objects. It follows that $\SH_\exc^\otimes$ admits a full subfunctor
	\[
	\Span(\Sch,\all,\fet) \to \CAlg(\Cat_\infty), \quad S\mapsto \Chow_\nc(S).
	\]
	If $L/k$ is a finite separable extension of fields, the norm functor $\Chow_\nc(L)\to \Chow_\nc(k)$ is an $\infty$-categorical enhancement of Tabuada's Weil transfer \cite[Theorem 2.3]{TabuadaWeil} (see Remark~\ref{rmk:TabuadaWeil}).
\end{remark}

\section{Motivic Thom spectra}
\label{sec:thomspectra}
In this section, we prove that Voevodsky's algebraic cobordism spectrum $\MGL_S$ and related spectra are normed spectra. Let us first recall the definition of the spectrum $\MGL_S$ over a scheme $S$. 
Let
\[\Gr_n=\colim_k\Gr_n(\A^k_S)\in\PSh(\Sm_S)\]
 be the Grassmannian of $n$-planes and let $\gamma_n$ be the tautological vector bundle on $\Gr_n$.
The algebraic cobordism spectrum $\MGL_S\in\SH(S)$ is then defined by
\[
\MGL_S = \colim_n \Sigma^{-\A^n}\Sigma^\infty\Th(\gamma_n),
\]
where the transition maps are given by
\[
\Sigma^{\A^1}\Th(\gamma_n) \simeq \Th(\gamma_{n+1}|\Gr_n) \to \Th(\gamma_{n+1}).
\]

Although it is known that $\MGL_S$ admits an $\E_\infty$-ring structure, the existing constructions rely on specific models for the symmetric monoidal structure on $\SH(S)$ (see for example \cite[Theorem 14.2]{HuSmodules} and \cite[\sect2.1]{Panin:2008}), and they do not obviously generalize to a construction of $\MGL_S$ as a normed spectrum. Our first goal is to give a new description of $\MGL_S$ that makes its $\E_\infty$-ring structure apparent; its normed structure will then be apparent as well. We will show that
\[
\MGL_S\simeq \colim_{(X,\xi)} \Th_X(\xi),
\]
where $X$ ranges over $\Sm_S$ and $\xi\in \K(X)$ has rank $0$ (see Theorem~\ref{thm:MGL}). The fact that $\MGL_S$ is a normed spectrum will then follow formally from the fact that the motivic $\rm J$-homomorphism is a morphism of ``normed spaces''.

\subsection{The motivic Thom spectrum functor}

We will denote by
\[
\Sph(S) = \Pic(\SH(S))
\]
the Picard $\infty$-groupoid of $\SH(S)$, i.e., the subgroupoid of $\SH(S)^\simeq$ spanned by the invertible objects, which is a grouplike $\E_\infty$-space. Since pullbacks and norms preserve invertible objects, the assignment $S\mapsto\Sph(S)$ is a functor on $\Span(\Sch,\all,\fet)$, and it is a cdh sheaf since $\SH(\ph)$ is.

Let $(\Sm_S)_{\sslash\SH}\to \Sm_S$ denote the cartesian fibration classified by $\SH\colon \Sm_S^\op\to\what\Cat{}_\infty$. As the notation suggests, the $\infty$-category $(\Sm_S)_{\sslash\SH}$ can be interpreted as a right-lax slice in the sense of \cite[\sect A.2.5.1]{GRderalg}. 
Let $\PSh((\Sm_S)_{\sslash\SH})$ denote the $\infty$-category of presheaves on $(\Sm_S)_{\sslash\SH}$ that are small colimits of representables.

\begin{definition} \label{def:M_S}
	The \emph{motivic Thom spectrum functor}
	\[
	\M_S\colon \PSh((\Sm_S)_{\sslash\SH}) \to \SH(S)
	\]
	is the colimit-preserving extension of
	\[
	(\Sm_S)_{\sslash\SH} \to \SH(S),\quad (f\colon X\to S, P\in\SH(X))\mapsto f_\sharp P.
	\]
\end{definition}

We will give a formal construction of $\M_S$ in \sect\ref{sub:construct-M}.
The $\infty$-category $\PSh((\Sm_S)_{\sslash\SH})$ contains several subcategories of interest:
\begin{equation}\label{eqn:sourceM}
\PSh(\Sm_S)_{/\Sph} \stackrel{\text{full}}\subset \PSh(\Sm_S)_{/\SH} \stackrel{\text{wide}}\subset \PSh(\Sm_S)_{\sslash\SH} \stackrel{\text{full}}\subset \PSh((\Sm_S)_{\sslash\SH}).
\end{equation}
The functors $\PSh(\Sm_S)_{/\Sph}\into \PSh(\Sm_S)_{/\SH}\into \PSh((\Sm_S)_{\sslash\SH})$ in~\eqref{eqn:sourceM} are the colimit-preserving extensions of the embeddings $(\Sm_S)_{/\Sph} \into (\Sm_S)_{/\SH} \into (\Sm_S)_{\sslash\SH}$, and the fact that the last functor in~\eqref{eqn:sourceM} is fully faithful is easily checked by direct comparison of the mapping spaces. Note that $\PSh(\Sm_S)_{/\SH}\simeq\PSh(\Sm_S)_{/\SH^\simeq}$.

\begin{remark}
	For our applications to $\MGL$ and related spectra, we will only need the restriction of $\M_S$ to $\PSh(\Sm_S)_{/\Sph}$, and we suggest that the reader ignore the technicalities associated with the more general form of the Thom spectrum functor.
	The latter will only be used to identify the free normed spectrum functor on noninvertible morphisms (see Proposition~\ref{prop:freeNAlg}).
\end{remark}

\begin{remark}
	The definition of $\M_S$ can be rephrased in terms of relative colimits \cite[\sect4.3.1]{HTT}. If $K$ is a small simplicial set and
	\begin{tikzmath}
		\diagram{K & (\Sm_S)_{\sslash \SH} \\ K^{\triangleright} & \Sm_S \\};
		\arrows (11-) edge node[above]{$q$} (-12) (11) edge[c->] (21) (21-) edge node[below]{$f$} (-22) (12) edge node[right]{$p$} (22) (21) edge[dashed] node[above left]{$\bar q$} (12);
	\end{tikzmath}
	is a commutative square with $f(\infty)=S$, one can show that there exists a $p$-colimit diagram $\bar q$ as indicated. The object $\bar q(\infty)$ is obtained by pushing forward the diagram $q$ to the fiber over $S$, using the functors $(\ph)_\sharp$, and taking an ordinary colimit in $\SH(S)$. The functor $\M_S$ is a special case of this construction: if $q$ is the right fibration classified by a presheaf $F\in\PSh((\Sm_S)_{\sslash\SH})$, then $\M_S(F)\simeq \bar q(\infty)$ (in this case $K$ is not necessarily small, but it has a small cofinal subcategory since $F$ is a small colimit of representables). 
	Conversely, an arbitrary diagram $q\colon K\to (\Sm_S)_{\sslash\SH}$ can be factored as a cofinal map $K\to K'$ followed by a right fibration $q'\colon K'\to (\Sm_S)_{\sslash\SH}$, so that $\bar q(\infty)$ is the value of $\M_S$ on the straightening of $q'$.
\end{remark}

\begin{remark} \label{rmk:computing-M}
The restriction of $\M_S$ to $\PSh(\Sm_S)_{/\SH}$ can be described explicitly as follows.
Let $X\in\PSh(\Sm_S)$ and let $\phi\colon X\to\SH$ be a natural transformation. Then $\phi\simeq\colim_{\alpha\colon U\to X} \phi\circ\alpha$ in $\PSh(\Sm_S)_{/\SH}$, where the indexing $\infty$-category is the source of the right fibration classified by $X$ (i.e., the $\infty$-category of elements of $X$).
Since $\M_S$ preserves colimits, we obtain the formula
\[
\M_S(\phi\colon X\to\SH) \simeq \colim_{\substack{f\colon U\to S \\ \alpha\in X(U)}} f_\sharp \phi_U(\alpha).
\]
The right-hand side is also the colimit of $\phi$ in $\Sm_S^\op$-parametrized $\infty$-category theory, in the sense of \cite[\sect5]{ShahThesis}.
\end{remark}

\begin{example}
	The functor $\Sigma^\infty_+ \L_\mot\colon\PSh(\Sm_S)\to\SH(S)$ is equivalent to the composition
	\[
	\PSh(\Sm_S) \to \PSh(\Sm_S)_{/\Sph} \xrightarrow{\M_S} \SH(S),
	\]
	where the first functor is the extension of $\Sm_S\to (\Sm_S)_{/\Sph}$, $X\mapsto (X,\1_X)$.
\end{example}

\begin{lemma}\label{lem:ThomBC}
	Let $f\colon T\to S$ be a morphism of schemes. Then the square
	\begin{tikzmath}
		\diagram{ \PSh((\Sm_S)_{\sslash\SH}) & \SH(S) \\ \PSh((\Sm_T)_{\sslash\SH}) & \SH(T) \\};
		\arrows (11-) edge node[above]{$\M_S$} (-12) (21-) edge node[above]{$\M_T$} (-22) (11) edge node[left]{$f^*$} (21) (12) edge node[right]{$f^*$} (22);
	\end{tikzmath}
	commutes.
\end{lemma}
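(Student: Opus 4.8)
The plan is to verify the commutativity of the square by reducing it to the defining generators of $\M_S$ and using the compatibility of the functors $(\ph)_\sharp$ with base change. First I would recall that, by Definition~\ref{def:M_S}, $\M_S$ is the unique colimit-preserving extension of the functor $(\Sm_S)_{\sslash\SH}\to\SH(S)$, $(g\colon X\to S, P)\mapsto g_\sharp P$. Both composites $f^*\circ\M_S$ and $\M_T\circ f^*$ preserve small colimits: $\M_S$ and $\M_T$ do by construction, while $f^*\colon \SH(S)\to\SH(T)$ preserves colimits (it is a left adjoint) and $f^*\colon \PSh((\Sm_S)_{\sslash\SH})\to\PSh((\Sm_T)_{\sslash\SH})$ is a left Kan extension, hence colimit-preserving. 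Since $\PSh((\Sm_S)_{\sslash\SH})$ is generated under small colimits by the representables coming from $(\Sm_S)_{\sslash\SH}$, it suffices to produce a natural equivalence between the two composites restricted to $(\Sm_S)_{\sslash\SH}$, i.e.\ to show that for every $(g\colon X\to S, P\in\SH(X))$ there is an equivalence $f^*g_\sharp P\simeq \M_T(f^*(X,P))$, natural in $(X,P)$.

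Next I would unwind the left-hand pullback $f^*\colon \PSh((\Sm_S)_{\sslash\SH})\to\PSh((\Sm_T)_{\sslash\SH})$ on a representable. Forming the cartesian square
\begin{tikzmath}
	\diagram{X' & X \\ T & S \\};
	\arrows (11-) edge node[above]{$f'$} (-12) (11) edge node[left]{$g'$} (21) (21-) edge node[below]{$f$} (-22) (12) edge node[right]{$g$} (22);
\end{tikzmath}
with $X'=X\times_ST$, the base change functor $\Sm_S\to\Sm_T$ sends $X$ to $X'$, and the compatibility of $\SH^\otimes$ with base change gives a canonical identification of $f^*$ of the representable at $(g\colon X\to S,P)$ with the representable at $(g'\colon X'\to T, f^{\prime *}P)$. (Concretely, one uses that the cartesian fibration classified by $\SH\colon\Sm_T^\op\to\what\Cat{}_\infty$ is the pullback along $\Sm_T\to\Sm_S$ of the one over $\Sm_S$, together with the exchange equivalence $\Ex^*_*$ — or rather the simpler fact that $f^*$ is defined on $(\Sm)_{\sslash\SH}$ by pullback of objects and of spectra.) Applying $\M_T$ then yields $g'_\sharp f^{\prime *}P$, so we are reduced to the equivalence
\[
f^* g_\sharp P \simeq g'_\sharp f^{\prime *}P,
\]
which is precisely the smooth base change theorem for the square above (the exchange transformation $\Ex^*_\sharp\colon g'_\sharp f^{\prime *}\to f^*g_\sharp$ is an equivalence because $g$ is smooth). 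Naturality in $(X,P)$ is built into the construction, since all the functors involved ($f^*$, $f^{\prime*}$, $g_\sharp$, $g'_\sharp$, and the base change equivalences) are natural.

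The main obstacle I anticipate is purely bookkeeping: making the identification of $f^*$ on representables with the construction $(X,P)\mapsto (X',f^{\prime*}P)$ precise and coherent, since $\M_S$ and $\M_T$ are defined through universal properties / relative colimits rather than pointwise formulas, and one must be careful that the equivalence of colimit-preserving functors is not merely objectwise but genuinely natural. This is handled cleanly by invoking the universal property of $\PSh((\Sm_S)_{\sslash\SH})$ as the free cocompletion: a natural transformation of colimit-preserving functors out of it is determined by its restriction to representables, and the restricted transformation is exactly the smooth base change equivalence, which is already known to be coherent (see the discussion following Proposition~\ref{prop:BC} and Remark~\ref{rmk:mates}). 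Once the formal setup of $\M_S$ is in place in \sect\ref{sub:construct-M}, this argument should be short.
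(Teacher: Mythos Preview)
Your proposal is correct and is exactly the argument the paper has in mind: the paper's proof is the single sentence ``This follows immediately from smooth base change,'' and your write-up simply unpacks this by reducing to representables and invoking the exchange equivalence $\Ex^*_\sharp\colon g'_\sharp f'^*\simeq f^*g_\sharp$ for $g$ smooth. The only minor comment is that your discussion of coherence and naturality is more elaborate than necessary here, since the construction of $\M$ in \sect\ref{sub:construct-M} already packages the base change compatibility.
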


\begin{proof}
	This follows immediately from smooth base change.
\end{proof}

\begin{lemma}\label{lem:local-slice}
	Let $u\colon \scr E\to\scr C$ be a cartesian fibration classified by a functor $F\colon \scr C^\op\to\what\Cat{}_\infty$. Let $W$ be a collection of morphisms in $\scr C$, $\scr C'\subset\scr C$ the full subcategory of $W$-local objects, and $\scr E'=u^{-1}(\scr C')\subset\scr E$. 
	Suppose that $F$ inverts the morphisms in $W$. Then $\scr E'\subset\scr E$ coincides with the subcategory of objects that are local with respect to the $u$-cartesian morphisms in $u^{-1}(W)$. 
\end{lemma}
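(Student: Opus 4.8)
The plan is to verify both inclusions objectwise, using the standard description of mapping spaces in the cartesian fibration $u$. For $e_1,e_2\in\scr E$ with $u(e_i)=c_i$, I will use that $\Map_{\scr E}(e_1,e_2)\to\Map_{\scr C}(c_1,c_2)$ is a map of spaces whose fibre over $g\colon c_1\to c_2$ is $\Map_{\scr E_{c_1}}(e_1,g^{*}e_2)$, where $g^{*}\colon\scr E_{c_2}\to\scr E_{c_1}$ is the cartesian pullback along $g$ and $g^{*}e_2$ comes equipped with a $u$-cartesian morphism $g^{*}e_2\to e_2$ (see \cite[\sect2.4]{HTT}). Write $S$ for the class of $u$-cartesian morphisms of $\scr E$ lying over $W$. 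Two observations will be used repeatedly: since $F$ inverts every $w\in W$, the functor $w^{*}\colon\scr E_{c_2}\to\scr E_{c_1}$ is an equivalence, hence induces equivalences on all mapping spaces; and since $u$ is a cartesian fibration, every $w\colon c_1\to c_2$ in $W$ and every $e_2\in\scr E_{c_2}$ admit a $u$-cartesian lift $e_1\to e_2$, whose source $e_1\simeq w^{*}e_2$ may therefore be taken to be any prescribed object of $\scr E_{c_1}$ after replacing $e_2$ by its image under an inverse of $w^{*}$.

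For the inclusion of $\scr E'=u^{-1}(\scr C')$ into the subcategory of $S$-local objects, I would take $e\in\scr E'$, so $c:=u(e)$ is $W$-local, and a $u$-cartesian morphism $\phi\colon e_1\to e_2$ over some $w\colon c_1\to c_2$ in $W$, and argue that $\phi^{*}\colon\Map_{\scr E}(e_2,e)\to\Map_{\scr E}(e_1,e)$ is an equivalence. It lies in a commuting square over $w^{*}\colon\Map_{\scr C}(c_2,c)\to\Map_{\scr C}(c_1,c)$, and the latter is an equivalence because $c$ is $W$-local; on the fibre over $g\colon c_2\to c$, using $e_1\simeq w^{*}e_2$ and the compatibility of cartesian morphisms with composition, the top map becomes the map $\Map_{\scr E_{c_2}}(e_2,g^{*}e)\to\Map_{\scr E_{c_1}}(w^{*}e_2,w^{*}g^{*}e)$ induced by $w^{*}$, which is an equivalence. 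A map of spaces lying over an equivalence and inducing equivalences on fibres is an equivalence, so $e$ is $S$-local.

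For the converse, I would take $e$ to be $S$-local and show $c:=u(e)$ is $W$-local, i.e.\ that $w^{*}\colon\Map_{\scr C}(c_2,c)\to\Map_{\scr C}(c_1,c)$ is an equivalence for every $w\colon c_1\to c_2$ in $W$; for this it suffices to show that each homotopy fibre of $w^{*}$ over a point $g\colon c_1\to c$ is contractible. Fixing $g$, the idea is to test the $S$-locality of $e$ against a well-chosen lift: choose a $u$-cartesian morphism $g^{*}e\to e$ over $g$, set $e_1:=g^{*}e$, choose $e_2\in\scr E_{c_2}$ with $w^{*}e_2\simeq e_1$, and take a $u$-cartesian lift $\phi\colon e_1\to e_2$ of $w$. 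As before, the commuting square with top $\phi^{*}$ and bottom $w^{*}$ induces equivalences on fibres, hence is cartesian; since $e$ is $S$-local, $\phi^{*}$ is an equivalence, so the projection $\Map_{\scr E}(e_1,e)\times_{\Map_{\scr C}(c_1,c)}\Map_{\scr C}(c_2,c)\to\Map_{\scr E}(e_1,e)$ is an equivalence. This forces $w^{*}$ to have contractible fibres over the image of $\Map_{\scr E}(e_1,e)\to\Map_{\scr C}(c_1,c)$; and $g$ lies in that image, since the identity of $e_1=g^{*}e$ composed with the structure map $g^{*}e\to e$ is a point of $\Map_{\scr E}(e_1,e)$ over $g$. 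Hence $\fib_{g}(w^{*})$ is contractible.

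The main obstacle here is bookkeeping rather than anything conceptual: making precise the two standard facts about spaces --- that a commuting square inducing equivalences on all fibres is cartesian, and that a cartesian square with invertible top map has contractible fibres over the image of the complementary projection --- together with the naturality check identifying the fibre map of $\phi^{*}$ with the map induced by $w^{*}\colon\scr E_{c_2}\to\scr E_{c_1}$. Everything else is formal.
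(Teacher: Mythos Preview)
Your proof is correct. The first direction (showing that objects of $\scr E'$ are local with respect to the cartesian lifts of $W$) is essentially identical to the paper's argument, just phrased in terms of fibres rather than the explicit pullback square $\Map((A,a),(X,x))\simeq \Map(A,X)\times_{F(A)}F(A)_{a/}$.

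For the converse, you take a genuinely different route. The paper argues globally: it writes $\Map_{\scr C}(B,X)\simeq\colim_{b\in F(B)}\Map_{\scr E^{\mathrm{cart}}}((B,b),(X,x))$, uses that $f^*\colon F(B)\to F(A)$ is an equivalence to reduce to showing each map $\Map_{\scr E^{\mathrm{cart}}}((B,b),(X,x))\to\Map_{\scr E^{\mathrm{cart}}}((A,f^*b),(X,x))$ is an equivalence, and then identifies this as a pullback of the map on full mapping spaces in $\scr E$ (which is an equivalence by $S$-locality). You instead argue fibre-by-fibre on the target: for each $g\colon c_1\to c$, you manufacture a specific cartesian lift $\phi\colon e_1\to e_2$ with $e_1=g^*e$, so that $g$ lies in the image of $\Map_{\scr E}(e_1,e)\to\Map_{\scr C}(c_1,c)$, and then deduce contractibility of $\fib_g(w^*)$ from the cartesianness of the square and the invertibility of its top edge. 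The paper's approach is slightly slicker in that it treats all fibre objects at once via the colimit, while your approach avoids introducing $\scr E^{\mathrm{cart}}$ and the colimit formula at the cost of a pointwise choice; both rely on the same underlying ingredients.
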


\begin{proof}
	Let us write objects of $\scr E$ as pairs $(X,x)$ with $X\in\scr C$ and $x\in F(X)$. Suppose that $(X,x)\in \scr E'$, i.e., that $X\in\scr C'$, and let $(f,\alpha)\colon (A,a)\to (B,b)$ be a cartesian morphism in $u^{-1}(W)$. The mapping space $\Map((A,a),(X,x))$ is given by a pullback square
	\begin{tikzmath}
		\diagram{\Map((A,a),(X,x)) & F(A)_{a/} \\ \Map(A,X) & F(A)\rlap, \\};
		\arrows (11-) edge (-12) (21-) edge (-22) (11) edge (21) (12) edge (22);
	\end{tikzmath}
	and similarly for $\Map((B,b),(X,x))$.
	Since $F$ inverts $W$, $f^*\colon F(B)\to F(A)$ is an equivalence, and since $(f,\alpha)$ is cartesian, the induced functor $F(B)_{b/}\to F(A)_{a/}$ is an equivalence. It follows from the above cartesian squares that $(X,x)$ is local with respect to $(f,\alpha)$. 
	
	Conversely, suppose that $(X,x)\in\scr E$ is local with respect to cartesian morphisms in $u^{-1}(W)$, and let $f\colon A\to B$ be a morphism in $W$. We must show that the map $f^*\colon\Map(B,X)\to \Map(A,X)$ is an equivalence. 
	Let $\scr E^\mathrm{cart}\subset\scr E$ be the wide subcategory on the $u$-cartesian morphisms. Then $f^*$ may be identified with
	\[
	f^*\colon \colim_{b\in F(B)}\Map_{\scr E^\mathrm{cart}}((B,b),(X,x)) \to \colim_{a\in F(A)}\Map_{\scr E^\mathrm{cart}}((A,a),(X,x))
	\]
	Since $f^*\colon F(A)\to F(B)$ is an equivalence, it suffices to show that for every $b\in F(B)$, the map
	\[
	f^*\colon \Map_{\scr E^\mathrm{cart}}((B,b),(X,x)) \to \Map_{\scr E^\mathrm{cart}}((A,f^*(b)),(X,x))
	\]
	is an equivalence. This also implies that a map $(B,b)\to (X,x)$ in $\scr E$ is $u$-cartesian if and only if the composite $(A,f^*(b))\to (B,b)\to (X,x)$ is $u$-cartesian. Hence, the previous map is a pullback of
	\[
	f^*\colon \Map_{\scr E}((B,b),(X,x)) \to \Map_{\scr E}((A,f^*(b)),(X,x)),
	\]
	which is an equivalence by the assumption on $(X,x)$.
\end{proof}

\begin{proposition}\label{prop:thominvariance}
	Let $S$ be a scheme.
	\begin{enumerate}
		\item The functor $\M_S\colon \PSh(\Sm_S)_{/\SH}\to \SH(S)$ inverts Nisnevich equivalences.
		\item Let $A\in\H(S)_{/\SH}$. Then the restriction of $\M_S$ to $\PSh(\Sm_S)_{/A}$ inverts motivic equivalences.
	\end{enumerate}
\end{proposition}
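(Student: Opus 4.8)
The plan is to reduce both statements to a question about when the functor $\M_S$ inverts a strongly saturated class of morphisms, and then to recognize each class (Nisnevich equivalences, motivic equivalences) as generated by maps that $\M_S$ visibly sends to equivalences. For (1), the key observation is that $\M_S$ preserves colimits, so the class of morphisms in $\PSh(\Sm_S)_{/\SH}$ inverted by $\M_S$ is closed under colimits in $\Fun(\Delta^1,\PSh(\Sm_S)_{/\SH})$; together with $2$-out-of-$3$ this gives a strongly saturated class by Lemma~\ref{lem:strong-saturation}. It therefore suffices to check that $\M_S$ inverts the generators, i.e.\ the maps $\L_\Sigma R\into X$ over $\SH$ where $R\into X$ is a finitely generated Nisnevich covering sieve, for any compatible transformation $X\to\SH$. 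By Remark~\ref{rmk:computing-M}, $\M_S$ of such a map computes a colimit over the $\infty$-category of elements, and I would argue that the Nisnevich sieve becomes, after applying $\M_S$, precisely the Nisnevich (co)descent statement for $\SH(\ph)$ along the corresponding smooth cover: the colimit of $f_\sharp\phi$ over the Čech nerve of a Nisnevich square is computed by the $f_\sharp$-version of the Nisnevich separation property, which holds in $\SH$ by design. Concretely, one reduces to a single elementary distinguished square $U'\amalg_{W'}W\to X$ and uses that $\Sigma^\infty_+$, composed with the appropriate $f_\sharp$'s, sends it to a pushout in $\SH(S)$; this is the motivic analogue of the statement that a Nisnevich cover induces a colimit diagram of suspension spectra, which is exactly the Nisnevich-local property underlying $\H_\pt(S)$ and hence $\SH(S)$.

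For (2), I would bootstrap from (1). Fix $A\in\H(S)_{/\SH}$ and consider the class of morphisms in $\PSh(\Sm_S)_{/A}$ inverted by $\M_S|\PSh(\Sm_S)_{/A}$. Again this is a strongly saturated class (same argument: $\M_S$ and the forgetful functor to $\PSh(\Sm_S)$ both preserve colimits, and the slice over $A$ inherits colimits computed in $\PSh(\Sm_S)_{/\SH}$). By the description of motivic equivalences recalled in the excerpt — the strongly saturated class generated by Nisnevich equivalences together with the projections $X\times\A^1\to X$ — it is enough to handle the $\A^1$-projections, since the Nisnevich part is covered by (1) (pulled back along $A\to\SH$). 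So the remaining task is: for every $X\in\Sm_S$ equipped with a map $X\to A$ (hence, by composition with $A\to\SH$, an object of $\PSh(\Sm_S)_{/\SH}$, and the $\A^1$-bundle $\A^1\times X\to X$ inherits a compatible map to $A$ because $A$ is $\A^1$-invariant — this is where the hypothesis $A\in\H(S)$ rather than merely $\PSh(\Sm_S)$ is used), $\M_S$ sends $\A^1\times X\to X$ to an equivalence in $\SH(S)$. Unwinding via Remark~\ref{rmk:computing-M}, $\M_S(\A^1\times X\to X\to\SH)$ is $g_\sharp \psi$ where $g\colon \A^1\times X\to S$ and $\psi$ is the composite $\A^1\times X\to X\to A\to\SH$, which factors through the projection since $A$ is $\A^1$-invariant; then $g_\sharp\psi \simeq f_\sharp\, \mathrm{pr}_\sharp\, \mathrm{pr}^*\phi$ with $f\colon X\to S$ and $\phi\colon X\to A\to\SH$, and $\mathrm{pr}_\sharp\mathrm{pr}^*$ is the identity on $\SH(X)$ up to the $\A^1$-invariance built into $\SH$. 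Hence the map is an equivalence.

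The main obstacle I anticipate is the bookkeeping in step (1): making precise the claim that $\M_S$ of a Nisnevich covering sieve is a Nisnevich descent statement for $\SH$, because the colimit in Remark~\ref{rmk:computing-M} is indexed over the $\infty$-category of elements of a presheaf lying over $\SH$, not over $\Sm_S$ directly, so one must be careful that the diagram pushed forward by the $f_\sharp$'s really is the Čech nerve of the cover tensored with the coefficient datum $\phi$, rather than something larger. I would handle this by first treating the universal case where the coefficient is constantly $\1$ (so $\M_S$ becomes $\Sigma^\infty_+\L_\mot$, and the statement is the known Nisnevich-descent for $\Sigma^\infty_+$ of an elementary distinguished square), and then deduce the general case by the colimit-preservation of $\M_S$ together with the fact that any $\phi\colon X\to\SH$ is a colimit of its restrictions along $U\to X$ with $U\in\Sm_S$ (Remark~\ref{rmk:computing-M}), interchanging the two colimits. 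A secondary, purely technical point is checking that the relevant classes of morphisms genuinely form strongly saturated classes in the slice $\infty$-categories appearing in~\eqref{eqn:sourceM}; this follows from Lemma~\ref{lem:strong-saturation} once one knows $\M_S$ and the forgetful functors are colimit-preserving, which is immediate from the definition of $\M_S$ as a colimit-preserving extension.
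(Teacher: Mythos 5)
There is a genuine gap in your reduction step. You write that since $\M_S$ preserves colimits, ``it therefore suffices to check that $\M_S$ inverts the generators'' — i.e.\ the lifts to $\PSh(\Sm_S)_{/\SH}$ (resp.\ $\PSh(\Sm_S)_{/A}$) of the Nisnevich sieves (resp.\ Nisnevich sieves and $\A^1$-projections). This is \emph{not} automatic. Write $u$ for the forgetful functor to $\PSh(\Sm_S)$, $W$ for the generating maps and $\bar W$ for their strong saturation. You want $\M_S$ to invert $u^{-1}(\bar W)$; since the class $\M_S$ inverts is strongly saturated and you verify it contains $u^{-1}(W)$, you may conclude it contains the strong saturation of $u^{-1}(W)$ in the slice. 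But this strong saturation is in general strictly smaller than $u^{-1}(\bar W)$: pulling back along a right fibration does \emph{not} commute with strong saturation. (A toy counterexample: take $W=\{\emptyset\to *\}$ in $\scr S$, so $\bar W$ is everything, and slice over $\S^1$; the object $(*,\mathrm{pt})$ is local for $u^{-1}(W)$ but not for $u^{-1}(\bar W)$, so the saturations differ.) The paper handles exactly this with a dedicated lemma, Lemma~\ref{lem:local-slice}, which shows that $u^{-1}(W)$ and $u^{-1}(\bar W)$ determine the same local objects — and hence the same strong saturation — precisely because the presheaf being sliced over ($\SH$, or $A$) is itself $W$-local. Your proof does not observe this, let alone supply a substitute.

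Relatedly, you misattribute the use of the hypothesis $A\in\H(S)$. You say it is needed so that $\A^1\times X\to X$ ``inherits a compatible map to $A$'', but a lift of the projection $\pi\colon\A^1\times X\to X$ over $A$ is specified simply by a map $X\to A$, with $\A^1\times X\to A$ the composite $\phi\circ\pi$; no $\A^1$-invariance of $A$ is involved, and indeed your own subsequent unwinding only invokes $\A^1$-invariance of $\SH$ (via $\pi_\sharp\pi^*\simeq\id$). The hypothesis $A\in\H(S)$ — motivic locality of $A$ — is what makes the reduction-to-generators step above go through (via Lemma~\ref{lem:local-slice}), and for part (1) the corresponding fact is that $\SH$ is a Nisnevich sheaf. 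Your verification that $\M_S$ inverts the lifted generators (the Nisnevich descent identity $\colim_{f\in U}f_\sharp f^*\simeq\id_{\SH(X)}$ for a sieve, and $\pi_\sharp\pi^*\simeq\id$ for the projection) is essentially the same as the paper's and is correct; the missing ingredient is the justification that these generators actually generate the relevant class of morphisms \emph{in the slice}.
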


\begin{proof}
Let $W$ be a set of morphisms in $\PSh(\Sm_S)$, $\bar W$ its strong saturation, $A$ a (possibly large) presheaf on $\Sm_S$ with a transformation $A\to\SH$, and $u\colon \PSh(\Sm_S)_{/A}\to\PSh(\Sm_S)$ the forgetful functor.
Then $u$ is a right fibration, and in particular every morphism in $\PSh(\Sm_S)_{/A}$ is $u$-cartesian.
By Lemma~\ref{lem:local-slice}, if $A$ is $W$-local, then $u^{-1}(\bar W)$ and $u^{-1}(W)$ have the same strong saturation, since they determine the same class of local objects.
 Since the restriction of $\M_S$ to $\PSh(\Sm_S)_{/A}$ preserves colimits, it inverts $u^{-1}(\bar W)$ if and only if it inverts $u^{-1}(W)$. To prove (1) (resp.\ (2)), it therefore remains to check that $\M_S$ inverts $u^{-1}(W)$ for $W$ a generating set of Nisnevich equivalences (resp.\ of motivic equivalences).
	
Let $X\in\Sm_S$ and $\phi\colon X\to\SH$. 
If $\iota\colon U\into X$ is a Nisnevich sieve, then the restriction map
\[
\M_S(\phi\circ\iota) \to \M_S(\phi)
\]
is an equivalence, since $\colim_{f\in U} f_\sharp f^* \simeq \id_{\SH(X)}$.
Similarly, if $\pi\colon X\times\A^1\to X$ is the projection, then the restriction map
\[
\M_S(\phi\circ \pi) \to \M_S(\phi)
\]
is an equivalence, since the counit $\pi_\sharp\pi^*\to\id_{\SH(X)}$ is an equivalence. 
\end{proof}

\begin{remark}
	For every scheme $X$, the pullback functor $\SH(X)\to \SH(X\times\A^1)$ is fully faithful, but it is not an equivalence (unless $X=\emptyset$), so we cannot deduce from Proposition~\ref{prop:thominvariance} that $\M_S\colon \PSh(\Sm_S)_{/\SH}\to\SH(S)$ inverts all motivic equivalences; in fact, we will see in Remark~\ref{rmk:freeNAlg-3} that it does not.
	On the other hand, it seems plausible that $\Sph(X)\to \Sph(X\times\A^1)$ is an equivalence (i.e., is surjective), which would imply that $\M_S\colon\PSh(\Sm_S)_{/\Sph}\to \SH(S)$ inverts motivic equivalences. 
	We offer two pieces of evidence that $\Sph$ might be $\A^1$-invariant:
	\begin{itemize}
		\item There are no counterexamples coming from $\K$-theory: for every $\xi\in \K(X\times\A^1)$, the motivic sphere $\S^\xi$ is pulled back from $X$. This follows from Remark~\ref{rmk:KST} below. 
		\item The analogous assertion for the $\infty$-category of $\ell$-adic sheaves is true, because dualizable $\ell$-adic sheaves are locally constant. From this perspective, one might even expect the full subcategory of $\SH(-)$ generated under colimits by the dualizable objects to be $\A^1$-invariant.
	\end{itemize}
\end{remark}

\subsection{Algebraic cobordism and the motivic \texorpdfstring{$\rm J$}{J}-homomorphism}
\label{sub:Jhomomorphism}

For a scheme $X$, we write $\K(X)$ for the Thomason–Trobaugh $\K$-theory space of $X$, i.e., the $\K$-theory of perfect complexes on $X$ \cite{TT}; this is a functor on $\Span(\Sch,\all,\mathrm{pffp})$, where ``pffp'' is the class of proper flat morphisms of finite presentation \cite[D.19]{BarwickMackey}. We will construct a natural transformation of grouplike $\E_\infty$-spaces
\[
j\colon \K \to \Sph\colon \Span(\Sch,\all,\fet)\to\CAlg^\mathrm{gp}(\scr S),
\]
which we call the \emph{motivic $\rm J$-homomorphism}; it is such that, if $\xi$ is a virtual vector bundle on $X\in\Sm_S$, then
\[
\M_S(j\circ \xi) \simeq \Th_X(\xi).
\]
Let $\Vect(X)$ be the \emph{groupoid} of vector bundles over $X$, with the symmetric monoidal structure given by direct sum. The assignment $X\mapsto\Vect(X)$ is a functor on $\Span(\Sch,\all,\flf)$ via the pushforward of vector bundles.
We start with the symmetric monoidal functor
\[
\Vect(X) \to \Shv_{\Nis}(\Sm_X,\Set)_\pt\subset \Shv_{\Nis}(\Sm_X)_\pt, \quad \xi \mapsto \xi/\xi^\times.
\]
This is readily made natural in $X\in \Span(\Sch,\all,\fet)$ using Lemma~\ref{lem:0truncated} and Proposition~\ref{prop:pairs}, so that we obtain a functor of $\infty$-categories
\[
\Span(\Sch,\all,\fet) \times \Delta^1 \to \CAlg(\Cat_\infty),\quad (X,0\to 1)\mapsto (\Vect(X) \to \Shv_\Nis(\Sm_X)_\pt).
\]
Composing with motivic localization and stabilization (c.f.\ Remark~\ref{rmk:H->SH}), we obtain a symmetric monoidal functor
\[
\Vect(X) \to \SH(X), \quad \xi \mapsto \S^\xi,
\]
natural in $X\in \Span(\Sch,\all,\fet)$.
As this functor lands in $\Sph(X)$, we get a natural transformation
\[
\Vect \to \Sph\colon \Span(\Sch,\all,\fet) \to \CAlg(\scr S),
\]
whence
\[
\K^\oplus \to \Sph\colon \Span(\Sch,\all,\fet) \to \CAlg^\mathrm{gp}(\scr S),
\]
where $\K^\oplus=\Vect^\gp$ is the group completion of $\Vect$. 
Finally, $\Sph$ is the right Kan extension of its restriction to affine schemes, since it is a Zariski sheaf, while the right Kan extension of $\K^\oplus$ is the Thomason–Trobaugh $\K$-theory. Hence, by Proposition~\ref{prop:span-RKE}, the above transformation factors through $\K$.

\begin{remark}\label{rmk:KST}
	Since $\Sph$ is a cdh sheaf, the motivic $\rm J$-homomorphism $j\colon \K\to\Sph$ factors through the cdh sheafification $\L_\mathrm{cdh}\K$. We claim that $\L_\mathrm{cdh}\K\simeq \Omega^\infty\KH$. On the category of finite-dimensional noetherian schemes, this is proved in \cite[Theorem 6.3]{KST}. Both $\K$ and $\Omega^\infty\KH$ transform limits of cofiltered diagrams of qcqs schemes with affine transition maps into colimits, and it is easy to show that $\L_\mathrm{cdh}$ preserves this property, which implies the claim in general.
	Using the compatibility of the cdh topology with finite étale transfers (Remark~\ref{rmk:cdh-transfers}), we deduce that $j$ factors through a natural transformation
	\[
	\Omega^\infty\KH \to \Sph\colon \Span(\Sch,\all,\fet)\to \CAlg^\gp(\scr S).
	\]
	In particular, by Proposition~\ref{prop:thominvariance}(2), the Thom spectrum functor $\M_S\colon \PSh(\Sm_S)_{/\K}\to \SH(S)$ inverts motivic equivalences for any scheme $S$.
\end{remark}

\begin{lemma}\label{lem:Kpullback}
	Let $f\colon T\to S$ be a morphism of schemes. Then the canonical map
	\[
	f^*(\K|\Sm_S) \to \K|\Sm_T
	\]
	in $\PSh(\Sm_T)$ is a Zariski equivalence.
\end{lemma}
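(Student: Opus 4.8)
The statement to prove is Lemma~\ref{lem:Kpullback}: for any morphism $f\colon T\to S$, the canonical map $f^*(\K|\Sm_S)\to \K|\Sm_T$ in $\PSh(\Sm_T)$ is a Zariski equivalence. Here $f^*$ denotes the pullback functor $\PSh(\Sm_S)\to\PSh(\Sm_T)$, which is the left Kan extension along $f^*\colon \Sm_S\to\Sm_T$, so $f^*(\K|\Sm_S)$ is the presheaf $U\mapsto \colim_{U\to X_T} \K(X)$, the colimit running over factorizations of the structure map $U\to T$ through base changes $X_T$ of smooth $S$-schemes $X$.

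The plan is to check the statement on Zariski stalks, i.e.\ on local rings of smooth $T$-schemes. Let $V$ be the spectrum of such a local ring; then $(\K|\Sm_T)(V)\simeq \K(V)$, and we must identify the stalk of $f^*(\K|\Sm_S)$ at $V$ with $\K(V)$ as well. First I would reduce to the case where $S$ is affine, say $S=\Spec R$; this is harmless because the assertion is Zariski-local on $S$ and both sides, as well as the map between them, are compatible with restriction along open immersions into $S$ (and $\K$ is a Zariski sheaf). Next, since $V$ is the filtered limit of affine open neighbourhoods of its closed point inside a fixed smooth affine $T$-scheme, and since $\K$ commutes with such filtered limits of affine schemes with affine transition maps (a standard continuity property of Thomason–Trobaugh $\K$-theory, as used elsewhere in the paper — cf.\ the use of \cite[Lemma A.7(1)]{HoyoisMGL}-type arguments), we may further reduce to showing that the stalk of $f^*(\K|\Sm_S)$ at $V$ is $\K(V)$ when $V$ is a local ring of a smooth affine $T$-scheme. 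The stalk of $f^*(\K|\Sm_S)$ at $V$ is the colimit of $\K(X)$ over the category of pairs $(X,\,V\to X_T)$ with $X$ smooth affine over $S=\Spec R$; I would show this indexing category is cofiltered (equivalently, its opposite is filtered) — any two maps $V\to X_T$, $V\to X'_T$ are dominated by $V\to (X\times_S X')_T$, and smooth affine $S$-schemes are closed under fibre products — so the colimit is a filtered colimit of $\K$-spaces.

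The crucial point is then the following: given $V\to X_T$ with $X$ smooth affine over $R$, the induced map $X_T\to X$ composed with $V\to X_T$ exhibits $V$ as an $R$-algebra equipped with a factorization through a smooth $R$-algebra, and by filtered continuity of $\K$-theory again, $\colim_X \K(X)\simeq \K(\colim_X X)$, where the limit $\lim_X X$ of the relevant pro-object of smooth affine $R$-schemes is precisely $V$ itself — because $V$, being essentially smooth over $T$ and $T$ being an arbitrary (qcqs) $R$-scheme, is a filtered limit of smooth affine $R$-schemes. Concretely: $V$ is smooth over $T$, $T$ is affine over $R$ hence a filtered limit of finitely presented affine $R$-schemes, and a smooth scheme over a filtered limit descends to a smooth scheme at a finite stage (EGA~IV, 8.8.2 and 17.7.8, already invoked in the excerpt in the proof of Proposition~\ref{prop:categorical-props}(6)); assembling these, $V=\lim_\alpha W_\alpha$ with $W_\alpha$ smooth affine over $R$ and affine transition maps, and this pro-system is cofinal among the pairs $(X, V\to X_T)$. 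Hence $\mathrm{stalk}_V f^*(\K|\Sm_S)\simeq \colim_\alpha \K(W_\alpha)\simeq \K(\lim_\alpha W_\alpha)\simeq\K(V)$, and the canonical map is an equivalence on all Zariski stalks, so it is a Zariski equivalence.

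The main obstacle I anticipate is the bookkeeping around pro-presentability: verifying carefully that the pro-system of smooth affine $R$-schemes approximating $V$ is cofinal (not merely initial up to cofinality) in the indexing category for the stalk of $f^*(\K|\Sm_S)$, and that one may legitimately commute $\K$ past the relevant filtered limits of affine schemes in both directions. Both ingredients are standard — the continuity of $\K$-theory under filtered limits of qcqs schemes with affine transition maps, and the limit-approximation of essentially smooth schemes — but marrying them requires being attentive to which category of ``elements'' one is taking a colimit over and checking it is filtered. Once cofinality is in hand the identification is purely formal.
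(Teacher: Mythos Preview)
Your approach has a genuine gap at the step where you claim that the local ring $V$ of a smooth $T$-scheme can be written as a filtered limit of smooth affine $S$-schemes. This is false in general when $T\to S$ is not itself smooth. Your justification, that $T$ is a limit of finitely presented affine $R$-schemes $T_\alpha$ and that a smooth $T$-scheme descends to a smooth $T_\alpha$-scheme, only yields schemes smooth over $T_\alpha$, not over $R$; since $T_\alpha$ need not be smooth over $R$, the resulting approximations $W_\alpha$ are merely of finite presentation over $R$. Concretely, take $S=\Spec k$ for a field $k$ and $T=\Spec(k[\epsilon]/\epsilon^2)$. Then $T$ itself is a smooth $T$-scheme, and its unique local ring is $k[\epsilon]/\epsilon^2$. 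But a filtered colimit of smooth $k$-algebras is reduced (smooth $k$-algebras are regular, hence reduced, and reducedness passes to filtered colimits), so $k[\epsilon]/\epsilon^2$ cannot be such a colimit. Hence the pro-system of smooth $S$-schemes receiving a map from $V$ does not have limit $V$, and your identification of the stalk of $f^*(\K|\Sm_S)$ with $\K(V)$ via continuity of $\K$-theory breaks down.

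The paper's proof bypasses this difficulty entirely by replacing $\K$ with an explicitly base-change-stable Zariski-local model. Since vector bundles are Zariski-locally trivial, the map $(\coprod_{n\geq 0}\B\GL_n)^+\to\K$ is a Zariski equivalence on $\Sm_S$ for every $S$ (using that Zariski sheafification and group completion commute with finite products). The left-hand side is manifestly stable under $f^*$ because the schemes $\GL_n$ are. Since $f^*$ preserves Zariski sieves and hence Zariski equivalences, one concludes by $2$-out-of-$3$. No stalk computation or continuity of $\K$-theory is needed.
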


\begin{proof}
	Since vector bundles are Zariski-locally trivial, $\Vect$ is the Zariski sheafification of $\coprod_{n\geq 0}\B\GL_n$. 
	Both sheafification $\L_\Zar\colon \PSh(\Sm_S)\to\Shv_\Zar(\Sm_S)$ and pullback $f^*\colon \PSh(\Sm_S)\to\PSh(\Sm_T)$ commute with finite products. It follows that they preserve commutative monoids and commute with group completion \cite[Lemma 5.5]{HoyoisCdh}. On the one hand, this implies that
	\[
	\Biggl(\coprod_{n\geq 0}\B\GL_n\Biggr)^\gp \to \K
	\]
	is a Zariski equivalence on $\Sm_S$. On the other hand, it implies that the left-hand side is stable under base change (since $\GL_n$ is). Since $f^*$ preserves Zariski sieves and hence Zariski equivalences, we conclude by 2-out-of-3 that $f^*(\K|\Sm_S) \to \K|\Sm_T$ is a Zariski equivalence.
\end{proof}

We denote by $e\colon \K^\circ\into \K$ the inclusion of the rank $0$ part of $\K$-theory.
Let $\Gr_\infty = \colim_n\Gr_n$ be the infinite Grassmannian, and let
\[
\gamma\colon \Gr_\infty \to \K^\circ
\]
be the map whose restriction to $\Gr_n$ classifies the tautological bundle minus the trivial bundle.
The definition of $\MGL_S\in\SH(S)$ can then be recast as
\[
\MGL_S = \M_S(j\circ e\circ \gamma).
\]

We recall that $\gamma\colon \Gr_\infty\to \K^\circ$ is a motivic equivalence. To see this, consider the diagram
\begin{tikzmath}
	\diagram{ & \B\GL & \K\langle 0\rangle \\
	\Gr_\infty & \B_{\et}\GL & \K^\circ\rlap, \\};
	\arrows  (12-) edge (-13) (12) edge (22) (13) edge (23)
	(21-) edge (-22) (22-) edge (-23);
\end{tikzmath}
where 
$\K\langle 0\rangle\subset \K^\circ$ is the connected component of $0$. The vertical maps are Zariski equivalences (because $\GL_n$-torsors are Zariski-locally trivial). The map $\Gr_\infty \to \B_\et\GL$ is a motivic equivalence by \cite[\sect4 Proposition 2.6]{MV}. Finally, the map $\L_{\A^1}\B\GL \to \L_{\A^1}\K\langle 0\rangle$ is an equivalence on affines, since it is a homology equivalence between connected $\rm H$-spaces (that $\L_{\A^1}\B\GL$ is an $\rm H$-space follows from the fact that even permutation matrices are $\A^1$-homotopic to the identity matrix).

\begin{theorem}\label{thm:MGL}
	Let $S$ be a scheme. Then $\gamma\colon \Gr_\infty\to \K^\circ$ induces an equivalence
	\[
	\MGL_S \simeq \M_S(j\circ e).
	\]
\end{theorem}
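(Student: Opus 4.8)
The plan is to deduce Theorem~\ref{thm:MGL} from the already-established fact that $\gamma\colon \Gr_\infty\to\K^\circ$ is a motivic equivalence, together with the $\A^1$-invariance properties of $\M_S$ recorded in Proposition~\ref{prop:thominvariance}. The map we wish to show is an equivalence is $\M_S(j\circ e\circ\gamma)\to \M_S(j\circ e)$, obtained by applying $\M_S$ to the morphism in $\PSh(\Sm_S)_{/\Sph}$ induced by $\gamma$. So it suffices to show that $\M_S$ inverts this particular morphism.

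The subtlety is that, as noted in the remark following Proposition~\ref{prop:thominvariance}, we do not know that $\M_S\colon \PSh(\Sm_S)_{/\Sph}\to\SH(S)$ inverts \emph{all} motivic equivalences (this would require knowing that $\Sph$ is $\A^1$-invariant). What we do know, from Proposition~\ref{prop:thominvariance}(2), is that for any fixed $A\in\H(S)_{/\SH}$, the restriction of $\M_S$ to $\PSh(\Sm_S)_{/A}$ inverts motivic equivalences. So the first step is to choose the right $A$: I would take $A=\L_\mot(\K^\circ)$ equipped with the transformation $\L_\mot(\K^\circ)\to\SH$ induced by $j\circ e\colon \K^\circ\to\Sph\subset\SH$ (using that $j\circ e$ factors through $\L_\mot$ on the target side — or more simply, using that the transformation $\K^\circ\to\SH^\simeq$ extends to the Nisnevich, hence motivic, localization because $\Sph$ is a motivic-local object of $\PSh(\Sm_S)$, being a motivic sheaf). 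Then both $j\circ e\colon \K^\circ\to\SH$ and $j\circ e\circ\gamma\colon \Gr_\infty\to\SH$ factor canonically through $A$, i.e., they lift to objects of $\PSh(\Sm_S)_{/A}$, and the map $\Gr_\infty\to\K^\circ$ over $A$ is a morphism in $\PSh(\Sm_S)_{/A}$.

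The second step is to observe that $\gamma\colon \Gr_\infty\to\K^\circ$, viewed now as a morphism in $\PSh(\Sm_S)_{/A}$, is a motivic equivalence: this is exactly the statement recalled just before the theorem (the diagram exhibiting $\Gr_\infty\to\B_\et\GL$ as a motivic equivalence and $\L_{\A^1}\B\GL\simeq\L_{\A^1}\K\langle 0\rangle$, combined with the Zariski-local triviality of $\GL_n$-torsors). Since $\L_\mot$ on $\PSh(\Sm_S)_{/A}$ is computed by forgetting to $\PSh(\Sm_S)$, applying $\L_\mot$ there, and re-lifting — more precisely, a morphism in $\PSh(\Sm_S)_{/A}$ is a motivic equivalence iff its image in $\PSh(\Sm_S)$ is — the morphism $\gamma$ over $A$ is a motivic equivalence. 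By Proposition~\ref{prop:thominvariance}(2), $\M_S$ sends it to an equivalence, and unwinding the definitions $\M_S$ of the two lifted objects recovers $\MGL_S$ and $\M_S(j\circ e)$ respectively (the identification $\MGL_S\simeq \M_S(j\circ e\circ\gamma)$ being the recasting of the definition of $\MGL_S$ noted above, using Remark~\ref{rmk:computing-M} to unpack $\M_S$ applied to $j\circ e\circ\gamma = j\circ e\circ\gamma$ over $\colim_n\Gr_n$).

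The main obstacle I anticipate is purely bookkeeping: one must check carefully that $j\circ e\colon \K^\circ\to\SH$ genuinely factors through the motivic localization $\L_\mot(\K^\circ)$ in $\PSh(\Sm_S)_{/\SH}$, so that the chosen $A$ makes sense as an object of $\H(S)_{/\SH}$ — this uses that $\Sph\subset\SH^\simeq$ is a subpresheaf of a motivic-local presheaf and that $j$ was constructed (in \sect\ref{sub:Jhomomorphism}) by passing through motivic localization anyway, so the factorization is built in. One must also confirm that the lift of $\Gr_\infty$ to $\PSh(\Sm_S)_{/A}$ along $j\circ e\circ\gamma$ is compatible with the lift of $\K^\circ$ along $j\circ e$ under the map $\gamma$, which is immediate from functoriality. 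Once these compatibilities are in place, the conclusion is formal. No serious new input beyond Proposition~\ref{prop:thominvariance}(2) and the known motivic equivalence $\Gr_\infty\simeq\K^\circ$ is needed.
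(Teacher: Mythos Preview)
Your proposal has a genuine gap at the crucial step. You want to apply Proposition~\ref{prop:thominvariance}(2) with $A=\L_\mot(\K^\circ)$, equipped with a map $A\to\SH$ obtained by factoring $j\circ e\colon\K^\circ\to\Sph$ through $\L_\mot(\K^\circ)$. For this factorization to exist, you need the target presheaf $\Sph$ (or $\SH^\simeq$) to be $\A^1$-invariant on $\Sm_S$. But the remark immediately following Proposition~\ref{prop:thominvariance} says explicitly that this is \emph{not} known: the pullback $\SH(X)\to\SH(X\times\A^1)$ is fully faithful but not an equivalence, so $\SH^\simeq$ is certainly not $\A^1$-invariant, and the $\A^1$-invariance of $\Sph$ is an open question. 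Your justification that ``$\Sph$ is a motivic-local object of $\PSh(\Sm_S)$, being a motivic sheaf'' conflates Nisnevich-locality (which $\Sph$ has, being a cdh sheaf) with $\A^1$-invariance (which is the missing ingredient). Your alternative justification, that $j$ was ``constructed by passing through motivic localization anyway,'' is also a misreading: in \S\ref{sub:Jhomomorphism} the motivic localization is applied on the \emph{target} side (passing from $\Shv_\Nis(\Sm_X)_\pt$ to $\H_\pt(X)$ to $\SH(X)$), not on the source.

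The paper gets around exactly this obstacle by a two-step reduction. First, when $S$ is regular, every scheme in $\Sm_S$ is regular, so Thomason--Trobaugh $\K$-theory is already an $\A^1$-invariant Nisnevich sheaf on $\Sm_S$; one can then take $A=\K$ itself in Proposition~\ref{prop:thominvariance}(2), with no factorization needed, and conclude directly since $\gamma$ is a motivic equivalence. In particular the theorem holds over $\Spec\Z$. Second, for arbitrary $S$, one uses Lemma~\ref{lem:ThomBC} to write $\MGL_S\simeq f^*\MGL_\Z\simeq \M_S(f^*(\K^\circ|\Sm_\Z)\to\K^\circ\to\Sph)$ for $f\colon S\to\Spec\Z$, and then Lemma~\ref{lem:Kpullback} says $f^*(\K^\circ|\Sm_\Z)\to\K^\circ|\Sm_S$ is a Zariski equivalence, so Proposition~\ref{prop:thominvariance}(1) (which only needs Nisnevich-locality of $\SH$) finishes the argument. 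The essential idea you are missing is this reduction to a base where $\K$-theory is already homotopy invariant.
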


\begin{proof}
	Suppose first that $S$ is regular. Then $\K$ is an $\A^1$-homotopy invariant Nisnevich sheaf on $\Sm_S$. By Proposition~\ref{prop:thominvariance}(2), $\M_S(j\circ\ph)\colon \PSh(\Sm_S)_{/\K}\to\SH(S)$ inverts motivic equivalences.
	Since $\gamma\colon \Gr_\infty\to \K^\circ$ is a motivic equivalence, the theorem holds in that case.
	In particular, the given equivalence holds over $\Spec\Z$. Hence, for $f\colon S\to\Spec\Z$ arbitrary, we have
	\[
	\MGL_S \simeq f^*(\MGL_{\Z}) \simeq \M_S(f^*(\K^\circ|\Sm_\Z) \to \K^\circ \to \Sph)
	\]
	by Lemma~\ref{lem:ThomBC}.
	But $f^*(\K^\circ|\Sm_\Z) \to \K^\circ|\Sm_S$ is a Zariski equivalence by Lemma~\ref{lem:Kpullback}, so we conclude by Proposition~\ref{prop:thominvariance}(1).
\end{proof}

\begin{remark}
	The motivic Thom spectrum associated with $j$ itself is the $(2,1)$-periodization of $\MGL$:
	 \[\M_S(j)\simeq \bigvee_{n\in\Z}\Sigma^{2n,n}\MGL_S.\]
	 Indeed, with respect to the decomposition $\K\simeq \L_\Sigma(\coprod_{n\in\Z}\K^\circ)$ induced by the rank map $\K\to\Z$, the motivic $\rm J$-homomorphism $j\colon \K\to \Sph$ has components $\Sigma^{2n,n}\circ j\circ e$.
\end{remark}

\subsection{Multiplicative properties}
\label{sub:construct-M}

We now make the assignment 
\[S\mapsto \M_S\colon \PSh_\Sigma((\SmQP_S)_{\sslash\SH}) \to \SH(S)\]
 functorial in $S\in\Span(\Sch,\all,\fet)$.
   This will in particular equip each $\M_S$ with a symmetric monoidal structure. A simpler version of the same construction produces the functor $\M_S\colon \PSh((\Sm_S)_{\sslash\SH})\to\SH(S)$ of Definition~\ref{def:Thom}, functorial in $S\in\Sch^\op$.

For later applications, we consider a slightly more general situation. Let $S$ be a scheme, let $\scr C\subset_\fet\Sch_S$, and let $\scr L$ be a class of smooth morphisms in $\scr C$ that is closed under composition, base change, and Weil restriction along finite étale maps; for example, $\scr C=\Sch$ and $\scr L$ is the class of smooth quasi-projective morphisms. For $X\in\scr C$, we denote by $\scr L_X\subset\Sm_X$ the full subcategory spanned by the morphisms in $\scr L$. For simplicity, we will write
   \[
   \Span = \Span(\scr C,\all,\fet)
   \]
   in what follows. Let
   \[
   \Fun_{\scr L}(\Delta^1,\Span) \subset \Fun(\Delta^1,\Span)
   \]
	be the full subcategory on the spans $X\stackrel f\from Y\stackrel{\id}\to Y$ with $f\in\scr L$, and let
	\[
	s,t\colon \Fun_{\scr L}(\Delta^1,\Span)\to\Span
	\]
	be the source and target functors.
  	The composition
	\[
	\Fun_{\scr L}(\Delta^1,\Span)\times\Delta^1 \xrightarrow{\mathrm{ev}} \Span \xrightarrow{\SH^\otimes} \what\Cat{}_\infty
	\]
	encodes a natural transformation
   \[
   \phi\colon \SH^\otimes\circ s \to \SH^\otimes \circ t\colon \Fun_{\scr L}(\Delta^1,\Span) \to \what\Cat{}_\infty.
   \]
	If $\scr E\to\Span^\op$ is the \emph{cartesian} fibration classified by $\SH^\otimes$, we can regard $\phi$ as a map of cartesian fibrations $\phi\colon s^*\scr E\to t^*\scr E$ over $\Fun_{\scr L}(\Delta^1,\Span)^\op$.
	Its fiber over a map $f\colon Y\to X$ in $\scr L$ is the functor $f^*\colon \SH(X) \to \SH(Y)$. Since $f$ is smooth, this functor admits a left adjoint $f_\sharp$, i.e., $\phi$ has a fiberwise left adjoint. By the dual of Lemma~\ref{lemm:construct-relative-adjoint}(1), this implies that there is a relative adjunction
\[
\psi : t^*\scr E \rightleftarrows s^*\scr E : \phi
\]
over $\Fun_{\scr L}(\Delta^1,\Span)^\op$. The left adjoint $\psi$ encodes a \emph{right-lax} natural transformation $\SH^\otimes\circ t \to \SH^\otimes\circ s$ with components $f_\sharp\colon \SH(Y)\to\SH(X)$, the right-lax naturality being witnessed by the exchange transformations $\Ex_\sharp^*$ and $\Ex_{\sharp\otimes}$. We now consider the diagram
\begin{tikzmath}
	\diagram{t^*\scr E & s^*\scr E & \scr E \\ & \Fun_{\scr L}(\Delta^1,\Span)^\op & \Span^\op\rlap, \\};
	\arrows (11-) edge node[above]{$\psi$} (-12) (12-) edge node[above]{$\chi$} (-13) (11) edge (22) (12) edge (22) (13) edge (23) (22-) edge node[above]{$s$} (-23);
\end{tikzmath}
where the square is cartesian. The existence of Weil restrictions implies that the source map
  \[
  s\colon  \Fun_{\scr L}(\Delta^1,\Span) \to \Span
  \]
  is a cocartesian fibration: a cocartesian edge starting at $U\in\scr L_X$ over the span $X\from Y \to Z$ is the span
  \[
  U\from U_Y\from \Weil_{Y/Z}(U_Y)\times_ZY \to \Weil_{Y/Z}(U_Y).
  \]
  Hence, $\chi\circ\psi \colon t^*\scr E\to\scr E$ is a morphism of cartesian fibrations over $\Span^\op$. Its fiber over a scheme $X\in\scr C$ is a functor
  \[
  (\scr L_X)_{\sslash\SH} \to \SH(X),\quad (f\colon Y\to X, P\in\SH(Y))\mapsto f_\sharp P.
  \]
  We claim that $\chi\circ\psi$ preserves cartesian edges. Indeed, given the above description of $s^\op$-cartesian edges, this amounts to the following two facts: the transformation $\Ex_\sharp^*$ associated with a \emph{cartesian} square is an equivalence, and the distributivity transformation $\Dist_{\sharp\otimes}$ is an equivalence (Proposition~\ref{prop:distributivity}(1)). Hence, $\chi\circ\psi$ encodes a \emph{strict} natural transformation
  \[
  \scr L_{\sslash\SH}^\otimes \to \SH^\otimes \colon \Span \to \what\Cat{}_\infty.
  \]
  Finally, since $\SH^\otimes$ is valued in $\what\Cat{}_\infty^\mathrm{sift}$, this transformation lifts to the objectwise sifted cocompletion, and we obtain
  \begin{equation}\label{eqn:M}
  \M\colon \PSh_\Sigma(\scr L_{\sslash\SH})^\otimes \to \SH^\otimes\colon \Span(\scr C,\all,\fet)\to\what\Cat{}_\infty^\mathrm{sift}.
  \end{equation}
  Note that each of the subcategories considered in~\eqref{eqn:sourceM} is preserved by base change and by norms. Hence, the source of $\M$ admits subfunctors $\PSh_\Sigma(\scr L)_{\sslash\SH}^\otimes$, $\PSh_\Sigma(\scr L)_{/\SH}^\otimes$, and $\PSh_\Sigma(\scr L)_{/\Sph}^\otimes$.

\begin{lemma}\label{lem:SectPSigma}
	Let $\scr C$ be a small $\infty$-category and $F\colon \scr C\to\Cat_\infty$ a functor classifying a cocartesian fibration $\int F\to\scr C$. For every $\scr E\in\Pr^\mathrm{L}$, there is a canonical equivalence of $\infty$-categories
	\[
	\Fun\left(\int F,\scr E\right) \simeq \Sect(\Fun(\ph,\scr E)_!\circ F),
	\]
	where $\Fun(\ph,\scr E)_!\colon \Cat_\infty \to\Pr^\mathrm{L}$ is the opposite of $\Fun(\ph,\scr E)\colon \Cat_\infty^\op\to\Pr^\mathrm{R}\simeq \Pr^{\mathrm L,\op}$.
\end{lemma}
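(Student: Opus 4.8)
The plan is to realize $\Fun(\int F,\scr E)$ as the $\infty$-category of sections of a cocartesian fibration over $\scr C$, and then to identify that fibration with the one classified by $\Fun(\ph,\scr E)_!\circ F$. Write $p\colon\int F\to\scr C$ for the cocartesian fibration classified by $F$. A cocartesian fibration is exponentiable (it is flat), so $p^*\colon(\Cat_\infty)_{/\scr C}\to(\Cat_\infty)_{/\int F}$ admits a right adjoint $p_*$, and this adjunction is enriched over $\Cat_\infty$ since the slices are locally cartesian closed along $p$. Set $\scr M:=p_*(\int F\times\scr E)$, where $\int F\times\scr E\to\int F$ is the projection; since $\int F\times\scr E=p^*(\scr C\times\scr E)$, the object $\scr M\to\scr C$ corepresents $\scr W\mapsto\Fun_{/\scr C}(\scr W\times_{\scr C}\int F,\scr E)$. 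Taking $\scr W=\scr C$ gives
\[
\Fun_{/\scr C}(\scr C,\scr M)\;\simeq\;\Fun_{/\scr C}\!\bigl({\textstyle\int} F,\scr C\times\scr E\bigr)\;\simeq\;\Fun\!\bigl({\textstyle\int} F,\scr E\bigr),
\]
while taking $\scr W=\{c\}$ for $c\in\scr C$ identifies the fiber $\scr M_c$ with $\Fun((\int F)_c,\scr E)=\Fun(F(c),\scr E)$.

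The crux is to show that $\scr M\to\scr C$ is a cocartesian fibration whose straightening is $\Fun(\ph,\scr E)_!\circ F$, that is, $c\mapsto\Fun(F(c),\scr E)$ with transition functor along $\alpha\colon c\to c'$ given by left Kan extension $F(\alpha)_!\colon\Fun(F(c),\scr E)\to\Fun(F(c'),\scr E)$. This is where the hypothesis $\scr E\in\Pr^\mathrm{L}$ enters: $\scr E$ is cocomplete, so left Kan extensions along $F(\alpha)\colon F(c)\to F(c')$ exist. I would verify directly that, for $\alpha\colon c\to c'$ and $g\in\Fun(F(c),\scr E)$, the object $F(\alpha)_!g$ together with the unit $g\to F(\alpha)^*F(\alpha)_!g$ defines an $\scr M$-cocartesian edge over $\alpha$; unwinding the mapping $\infty$-categories of $\scr M$ (using that a morphism of $\int F$ factors as a fiberwise morphism after a $p$-cocartesian lift, and the pointwise formula for $F(\alpha)_!$), this reduces to the Kan-extension adjunction $\Map(F(\alpha)_!g,g')\simeq\Map(g,F(\alpha)^*g')$. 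Once such cocartesian lifts exist for all $\alpha$, the fibration $\scr M\to\scr C$ is locally cocartesian, and the compatibility of these lifts with composition — which makes it cocartesian and pins down the straightening — is automatic. Alternatively this step is a special case of the standard behaviour of parametrized functor $\infty$-categories with cocomplete target along cocartesian fibrations, for which one may cite Gepner--Haugseng--Nikolaus (or the relevant part of Kerodon).

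Combining the two displays yields $\Fun(\int F,\scr E)\simeq\Fun_{/\scr C}(\scr C,\scr M)=\Sect(\Fun(\ph,\scr E)_!\circ F)$, the last equality being the definition of $\Sect$; since every equivalence used is canonical and natural in $\scr E\in\Pr^\mathrm{L}$, this is the asserted canonical equivalence (and its naturality is what one will need when splicing $\M$ together). Tracing through, it carries $h\colon\int F\to\scr E$ to the section whose value at $c$ is the restriction $h|_{F(c)}$ and whose structure map along $\alpha\colon c\to c'$ is adjoint to the natural transformation $h|_{F(c)}\Rightarrow F(\alpha)^*(h|_{F(c')})$ induced by the $p$-cocartesian edges of $\int F$ lying over $\alpha$.

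I expect the middle paragraph to be the main obstacle: establishing that $\scr M\to\scr C$ is a cocartesian fibration and that its transition functors are the left Kan extension functors $F(\alpha)_!$. If one does not want to invoke a black box, this requires a somewhat delicate analysis of the pushforward $p_*(\int F\times\scr E)$ and of its cocartesian edges, and the cocompleteness of $\scr E$ is indispensable there — which is precisely why the lemma is stated for $\scr E$ presentable rather than for an arbitrary $\infty$-category.
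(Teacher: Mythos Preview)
Your proof is correct but follows a different route from the paper's. The paper argues via the slogan ``$\Fun(\ph,\scr E)$ transforms left-lax colimits into right-lax limits'': it invokes the twisted-arrow formulas from \cite[Theorem~7.4 and Proposition~7.1]{GHN} to write $\int F$ as a colimit over $\Tw(\scr C)$ and $\Sect$ as a limit over $\Tw(\scr C)^\op$, then observes that the cocartesian fibration classified by $\Fun(\ph,\scr E)_!\circ F$ is also the cartesian fibration classified by $\Fun(\ph,\scr E)\circ F^\op$ (since the transition functors $F(\alpha)_!$ have right adjoints $F(\alpha)^*$), and concludes by a direct chain of equivalences $\Fun(\int F,\scr E)\simeq\lim_{(a\to b)}\Fun(\scr C_{b/}\times F(a),\scr E)\simeq\lim_{(a\to b)}\Fun(\scr C_{b/},\Fun(F(a),\scr E))\simeq\Sect(\Fun(\ph,\scr E)_!\circ F)$. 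Your approach instead constructs the fibration $\scr M\to\scr C$ directly as a pushforward along the exponentiable $p$, identifies its fibers and mapping spaces by hand, and then uses cocompleteness of $\scr E$ to produce cocartesian lifts via left Kan extension. Your method is more self-contained and makes the description of the equivalence on objects transparent; the paper's is shorter because the hard work is packaged into the GHN citations. One small remark: when you say the compatibility of cocartesian lifts with composition is ``automatic'', you should note that this is because left Kan extensions compose, i.e., $F(\beta)_!F(\alpha)_!\simeq F(\beta\alpha)_!$.
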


\begin{proof}
	This is an instance of the fact that $\Fun(\ph,\scr E)$ transforms left-lax colimits into right-lax limits.
	In more details, let $\Tw(\scr C)\to\scr C\times\scr C^\op$ be the right fibration classified by $\Map\colon\scr C^\op\times\scr C\to\scr S$.
	If $X\colon\scr C\to\Cat_\infty$ classifies the cocartesian fibration $\scr X\to\scr C$, then
	\[
	\scr X \simeq \colim_{(a\to b)\in\Tw(\scr C)} \scr C_{b/}\times X(a),
	\]
	and if $Y\colon\scr C^\op\to\Cat_\infty$ classifies the cartesian fibration $\scr Y\to\scr C$, then
	\[
	\Fun_{\scr C}(\scr C,\scr Y) \simeq \lim_{(a\to b)\in\Tw(\scr C)^\op} \Fun(\scr C_{b/},Y(a));
	\]
	see \cite[Theorem 7.4 and Proposition 7.1]{GHN}. The cocartesian fibration classified by $\Fun(\ph,\scr E)_!\circ F\colon\scr C\to\Pr^\mathrm{L}$ is also the cartesian fibration classified by $\Fun(\ph,\scr E)\circ F^\op\colon\scr C^{\op}\to\Pr^\mathrm{R}$, so
	\[
	\Fun\left(\int F,\scr E\right)\simeq \lim_{(a\to b)}\Fun(\scr C_{b/}\times F(a),\scr E) \simeq 
	\lim_{(a\to b)}\Fun(\scr C_{b/},\Fun(F(a),\scr E)) \simeq \Sect(\Fun(\ph,\scr E)_!\circ F),
	\]
	as desired.
\end{proof}

\begin{proposition}\label{prop:normedThomSpectra}
	Let $S$ be a scheme, $\scr C\subset_\fet\Sch_S$, and $\scr L$ a class of smooth morphisms in $\scr C$ closed under composition, base change, and finite étale Weil restriction. Then there is a functor
	\[
	\M_{|\scr L}\colon \Fun^\times(\Span(\scr C,\all,\fet),\scr S)_{\sslash\SH} \to \Sect(\SH^\otimes|\Span(\scr C,\all,\fet))
	\]
	sending $\psi\colon A\to \SH$ to the section $X\mapsto \M_X(\psi|\scr L_X^\op)$. Moreover, this section is cocartesian over morphisms $f\colon Y\to X$ in $\scr C$ such that $f^*(A|\scr L_X^\op)\to A|\scr L_Y^\op$ is an $\M_Y$-equivalence, in particular over $\scr L$-morphisms.
\end{proposition}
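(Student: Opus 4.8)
The plan is to obtain $\M_{|\scr L}$ by passing to sections in the natural transformation $\M\colon \PSh_\Sigma(\scr L_{\sslash\SH})^\otimes\to\SH^\otimes$ of~\eqref{eqn:M}. Regarded as a transformation of $\Cat_\infty$-valued functors on $\Span(\scr C,\all,\fet)$, $\M$ is a map of the associated cocartesian fibrations over $\Span(\scr C,\all,\fet)$, and postcomposition with it carries sections to sections; this gives a functor $\M_*\colon\Sect(\PSh_\Sigma(\scr L_{\sslash\SH})^\otimes)\to\Sect(\SH^\otimes|\Span(\scr C,\all,\fet))$.

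The main step is to construct a functor $\Phi\colon \Fun^\times(\Span(\scr C,\all,\fet),\scr S)_{\sslash\SH}\to\Sect(\PSh_\Sigma(\scr L_{\sslash\SH})^\otimes)$ sending $\psi\colon A\to\SH$ to the section $X\mapsto(A|\scr L_X^\op,\psi|\scr L_X^\op)$, where the datum of a presheaf on $\scr L_X$ together with a natural map to $\SH$ is regarded as an object of $\PSh_\Sigma((\scr L_X)_{\sslash\SH})$ through the $\PSh_\Sigma$-analogue of the inclusions~\eqref{eqn:sourceM}; that $A|\scr L_X^\op$ lands in the sifted cocompletion, and not merely in presheaves, uses that $A$ preserves finite products. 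The content of this step is the functoriality of $X\mapsto(A|\scr L_X^\op,\psi|\scr L_X^\op)$ over $\Span(\scr C,\all,\fet)$. Compatibility with backward morphisms $f$ is just the compatibility of restriction with base change, where one uses that $\scr L$ is stable under base change. Compatibility with a finite étale $p\colon Y\to Z$, on the other hand, requires identifying the norm functor $p_\otimes\colon\PSh_\Sigma((\scr L_Y)_{\sslash\SH})\to\PSh_\Sigma((\scr L_Z)_{\sslash\SH})$ occurring inside the construction~\eqref{eqn:M} — which is built from Weil restriction and the distributivity transformation $\Dist_{\sharp\otimes}$ — with the multiplicative transfer carried by the normed space $A$; it is here that the hypothesis that $\scr L$ is closed under finite étale Weil restriction enters, so that $\Weil_pV\in\scr L_Z$ for every $V\in\scr L_Y$. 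To make this coherent I would reformulate it via universal properties: using Lemma~\ref{lem:SectPSigma} one rewrites the sections of $\PSh_\Sigma(\scr L_{\sslash\SH})^\otimes$ (and of $\SH^\otimes$) as functors out of the corresponding total spaces, so that $\Phi$ is read off from the tautological comparison between the total space of $\scr L_{\sslash\SH}^\otimes$ and the pairing of the category of elements of $A$ with $\psi$, keeping the span functoriality — and in particular the norm functoriality — automatic. I expect this coherent construction of $\Phi$, reconciling the Weil-restriction norm on the Thom-spectrum side with the transfer structure on $A$, to be the main obstacle; the rest is formal.

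With $\M_{|\scr L}=\M_*\circ\Phi$, the value $\M_{|\scr L}(\psi)$ is by construction the section $X\mapsto\M_X(\psi|\scr L_X^\op)$, computed by the colimit formula of Remark~\ref{rmk:computing-M}. For the cocartesian assertion, let $f\colon Y\to X$ be a morphism in $\scr C$, viewed as a backward morphism of $\Span(\scr C,\all,\fet)$; then $\M_{|\scr L}(\psi)$ sends $f$ to a morphism $f^*\M_X(\psi|\scr L_X^\op)\to\M_Y(\psi|\scr L_Y^\op)$ in $\SH(Y)$, which by Lemma~\ref{lem:ThomBC} is canonically the image under $\M_Y$ of the comparison morphism $f^*(A|\scr L_X^\op)\to A|\scr L_Y^\op$ of presheaves over $\SH$. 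Hence the edge is cocartesian in $\int\SH^\otimes$ exactly when this comparison is an $\M_Y$-equivalence, which is the general statement. When $f$ itself lies in $\scr L$, I would observe that the base-change functor $\scr L_X\to\scr L_Y$ admits a left adjoint given by composition with $f$ (well defined since $\scr L$ is stable under composition), so that the presheaf pullback $f^*$ identifies with restriction along this left adjoint; since $A|\scr L_Y^\op$ is by definition the restriction of $A|\scr L_X^\op$ along the composition functor, and the $\SH$-decorations match by naturality of $\psi$, the comparison morphism is already an equivalence of presheaves over $\SH$, \emph{a fortiori} an $\M_Y$-equivalence. Thus $\M_{|\scr L}(\psi)$ is cocartesian over every $\scr L$-morphism, as claimed.
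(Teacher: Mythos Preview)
Your overall strategy matches the paper's: construct a functor $\Phi$ into $\Sect(\PSh_\Sigma(\scr L_{\sslash\SH})^\otimes)$ and then postcompose with $\M_*$. You correctly identify the coherent construction of $\Phi$ as the crux and correctly point to Lemma~\ref{lem:SectPSigma} as the relevant tool. What you leave unresolved is precisely \emph{how} Lemma~\ref{lem:SectPSigma} dissolves the coherence problem. The paper's maneuver is this: the cocartesian fibration classified by $X\mapsto\scr L_X^\op$ is the source map $s\colon\Fun_{\scr L}(\Delta^1,\Span(\scr C,\all,\fet))\to\Span(\scr C,\all,\fet)$, so by Lemma~\ref{lem:SectPSigma} a section of $\PSh_\Sigma(\scr L)^\otimes$ is the same thing as a functor $\Fun_{\scr L}(\Delta^1,\Span(\scr C,\all,\fet))\to\scr S$ that preserves finite products fiberwise. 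Such a functor is produced from $A\in\Fun^\times(\Span(\scr C,\all,\fet),\scr S)$ simply by precomposing with the \emph{target} map $t$; since $t$ is an honest functor of $\infty$-categories, all the span functoriality — including the norm compatibility you worry about — comes for free. The ``reconciliation'' of Weil restriction with the transfer on $A$ is thus not an obstacle at all once one writes $\Phi$ as $\alpha\circ t^*$ followed by the inclusion of the lax slice into sections. Your argument for cocartesianness over $\scr L$-morphisms via the adjunction $f_\sharp\dashv f^*$ (whence $(f^*)_!\simeq(f_\sharp)^*$ on presheaves) is correct and makes explicit what the paper leaves implicit in its citation of Lemma~\ref{lem:ThomBC}.
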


\begin{proof}
	The source map $s\colon \Fun_{\scr L}(\Delta^1,\Span(\scr C,\all,\fet))\to\Span(\scr C,\all,\fet)$ is the cocartesian fibration classified by $\Span(\scr C,\all,\fet)\to\Cat_\infty$, $S\mapsto \scr L_S^\op$. By Lemma~\ref{lem:SectPSigma}, we have an equivalence
	\[
	\Fun(\Fun_{\scr L}(\Delta^1,\Span(\scr C,\all,\fet)),\scr S) \simeq \Sect(\PSh(\scr L)^\otimes|\Span(\scr C,\all,\fet)),
	\]
	which restricts to an equivalence
	\[
	\alpha\colon \Fun'(\Fun_{\scr L}(\Delta^1,\Span(\scr C,\all,\fet)),\scr S) \simeq \Sect(\PSh_\Sigma(\scr L)^\otimes|\Span(\scr C,\all,\fet)),
	\]
	where $\Fun'$ denotes the full subcategory of functors that preserve finite products on each fiber of $s$.
	On the other hand, the target map $t\colon \Fun_{\scr L}(\Delta^1,\Span(\scr C,\all,\fet))\to\Span(\scr C,\all,\fet)$ induces
	\[
	t^*\colon \Fun^\times(\Span(\scr C,\all,\fet),\scr S) \to \Fun'(\Fun_{\scr L}(\Delta^1,\Span(\scr C,\all,\fet)),\scr S).
	\]
	The functor $\M_{|\scr L}$ is then the composition
	\begin{align*}
	\Fun^\times(\Span(\scr C,\all,\fet),\scr S)_{\sslash\SH} &\xrightarrow{t^*} \Fun'(\Fun_{\scr L}(\Delta^1,\Span(\scr C,\all,\fet)),\scr S)_{\sslash\SH\circ t}\\
	&\stackrel\alpha\simeq  \Sect(\PSh_\Sigma(\scr L)^\otimes|\Span(\scr C,\all,\fet))_{\sslash\SH} \\
	&\subset \Sect(\PSh_\Sigma(\scr L)_{\sslash\SH}^\otimes|\Span(\scr C,\all,\fet)) \\
	&\xrightarrow{\M} \Sect(\SH^\otimes|\Span(\scr C,\all,\fet)).
	\end{align*}
	The final assertion follows from Lemma~\ref{lem:ThomBC}.
\end{proof}

\begin{remark}\label{rmk:C=L}
	In the setting of Proposition~\ref{prop:normedThomSpectra}, the target functor
	\[
	t\colon \Fun_{\scr L}(\Delta^1,\Span(\scr C,\all,\fet)) \to \Span(\scr C,\all,\fet)
	\]
	is left adjoint to the section $X\mapsto\id_X$. It follows that the functor
	\[
	\alpha\circ t^*\colon \Fun^\times(\Span(\scr C,\all,\fet),\scr S) \to \Sect(\PSh_\Sigma(\scr L)^\otimes|\Span(\scr C,\all,\fet))
	\]
	is fully faithful, and its essential image is the subcategory of sections that are cocartesian over $\scr L$-morphisms. In particular, if $\scr C=\scr L_S$, we have an equivalence
	\[
	\Fun^\times(\Span(\scr C,\all,\fet),\scr S) \simeq\NAlg_{\scr C}(\PSh_\Sigma(\scr L)).
	\]
\end{remark}

\begin{theorem}\label{thm:normedMGL}
	The assignments $S\mapsto\MGL_S\in\SH(S)$ and $S\mapsto\bigvee_{n\in\Z}\Sigma^{2n,n}\MGL_S\in\SH(S)$ can be promoted to sections of $\SH^\otimes$ over $\Span(\Sch,\all,\fet)$ that are cocartesian over $\Sch^\op$. In particular, for every scheme $S$, the algebraic cobordism spectrum $\MGL_S$ and its $(2,1)$-periodization $\bigvee_{n\in\Z}\Sigma^{2n,n}\MGL_S$ are normed spectra over $\Sch_S$. Moreover, the canonical map $\MGL_S\to\bigvee_{n\in\Z}\Sigma^{2n,n}\MGL_S$ is a morphism of normed spectra over $\Sch_S$.
\end{theorem}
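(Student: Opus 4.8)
The plan is to deduce Theorem~\ref{thm:normedMGL} from the functoriality of the motivic Thom spectrum construction, i.e.\ from Proposition~\ref{prop:normedThomSpectra}, together with the identification of $\MGL$ (and its periodization) as a Thom spectrum of the motivic $\rm J$-homomorphism from \sect\ref{sub:Jhomomorphism}. First I would fix the class $\scr L$ of smooth \emph{quasi-projective} morphisms in $\Sch$; by Lemma~\ref{lem:Weil} this class is closed under composition, base change, and finite \'etale Weil restriction, so Proposition~\ref{prop:normedThomSpectra} applies with $\scr C=\Sch$ and this $\scr L$. The motivic $\rm J$-homomorphism constructed in \sect\ref{sub:Jhomomorphism} is a natural transformation $j\colon\K\to\Sph$ of functors on $\Span(\Sch,\all,\fet)$, hence in particular a morphism in $\Fun^\times(\Span(\Sch,\all,\fet),\scr S)_{\sslash\SH}$ (using $\Sph\subset\SH$); likewise $j\circ e\colon\K^\circ\to\Sph$ using the rank-zero part. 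Feeding these into the functor $\M_{|\scr L}$ of Proposition~\ref{prop:normedThomSpectra} produces sections of $\SH^\otimes$ over $\Span(\Sch,\all,\fet)$ whose value at $S$ is $\M_S((j\circ e)|\scr L_S^\op)$, respectively $\M_S(j|\scr L_S^\op)$. The inclusion $e\colon\K^\circ\into\K$ induces a map of these sections, which on each fiber over $S$ is the canonical map $\MGL_S\to\bigvee_{n\in\Z}\Sigma^{2n,n}\MGL_S$ by the remarks following Theorem~\ref{thm:MGL}.

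The next step is to verify that these sections are cocartesian over $\Sch^\op$, so that they define \emph{normed spectra} in the sense of Definition~\ref{def:normedspectrum}. By the last clause of Proposition~\ref{prop:normedThomSpectra}, the section $S\mapsto\M_S(\psi|\scr L_S^\op)$ is cocartesian over a morphism $f\colon T\to S$ provided the canonical map $f^*(A|\scr L_S^\op)\to A|\scr L_T^\op$ is an $\M_T$-equivalence, where here $A=\K^\circ$ (resp.\ $A=\K$). This is exactly the content of the proof of Theorem~\ref{thm:MGL}: Lemma~\ref{lem:Kpullback} shows the map $f^*(\K|\Sm_S)\to\K|\Sm_T$ is a Zariski equivalence, hence an $\M_T$-equivalence by Proposition~\ref{prop:thominvariance}(1), and the same argument applies verbatim to $\K^\circ$ and to $\K$ restricted to $\scr L_S$ instead of $\Sm_S$ (quasi-projective schemes are dense for the Zariski topology, so this restriction is harmless). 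Thus both sections are cocartesian over all of $\Sch^\op$, and the map between them induced by $e$ is therefore a morphism in $\Sect(\SH^\otimes|\Span(\Sch,\all,\fet))$ between objects that are cocartesian over $\Sch^\op$, i.e.\ a morphism of normed spectra over $\Sch_S$ for every $S$ (after restricting along $\Span(\Sch_S,\all,\fet)\to\Span(\Sch,\all,\fet)$, using that this restriction preserves the cocartesian condition).

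Finally, I would record that the underlying object of the first section at $S$ is indeed $\MGL_S$: this is Theorem~\ref{thm:MGL} together with $\MGL_S=\M_S(j\circ e\circ\gamma)$ and the fact that $\gamma\colon\Gr_\infty\to\K^\circ$ is a motivic equivalence over a regular base, combined with the base change Lemma~\ref{lem:ThomBC} to descend from $\Spec\Z$ to arbitrary $S$ (again this is literally the proof of Theorem~\ref{thm:MGL}); and that the underlying object of the second section is $\bigvee_{n\in\Z}\Sigma^{2n,n}\MGL_S$, which follows from the decomposition $\K\simeq\L_\Sigma(\coprod_{n\in\Z}\K^\circ)$ along the rank map and the fact that $j$ has components $\Sigma^{2n,n}\circ j\circ e$.

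The main obstacle, or rather the only nontrivial point, is confirming that all the $\A^1$-invariance and Zariski-descent arguments of Section~\ref{sec:thomspectra} — specifically Proposition~\ref{prop:thominvariance}(1) and Lemma~\ref{lem:Kpullback}, which were stated for $\Sm_S$ — go through unchanged after restricting to the quasi-projective subcategory $\scr L_S\subset\Sm_S$; this is needed because Weil restriction along finite \'etale maps only preserves smoothness \emph{and} quasi-projectivity (Proposition~\ref{prop:BC}(2) and Lemma~\ref{lem:Weil}), so the functorial Thom spectrum machinery of Proposition~\ref{prop:normedThomSpectra} must be run with $\scr L=$ smooth quasi-projective rather than $\scr L=$ smooth. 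Since $\SH(\ph)$ is a Zariski sheaf and $\SmQP_S\subset\Sm_S$ induces an equivalence on Nisnevich (hence Zariski) sheaves, every relevant statement restricts without change, so this obstacle is mild. Everything else is a formal consequence of the already-established naturality of $j$ on $\Span(\Sch,\all,\fet)$ and of the distributivity law $\Dist_{\sharp\otimes}$ (Proposition~\ref{prop:distributivity}(1)), which is what makes the Thom spectrum functor $\M$ of~\eqref{eqn:M} a strict — not merely right-lax — natural transformation.
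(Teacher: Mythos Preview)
Your proposal is correct and follows essentially the same approach as the paper: apply Proposition~\ref{prop:normedThomSpectra} to the transformations $j\circ e$, $j$, and to the morphism $e\colon\K^\circ\into\K$ over $\Sph$, and invoke Theorem~\ref{thm:MGL} to identify the resulting sections. The paper's proof is a single sentence to this effect; you have correctly unpacked the implicit step of verifying cocartesianness over \emph{all} of $\Sch^\op$ (not just over $\scr L$-morphisms) via Lemma~\ref{lem:Kpullback} and Proposition~\ref{prop:thominvariance}(1), which the paper leaves entirely tacit under the phrase ``in light of Theorem~\ref{thm:MGL}''.
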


\begin{proof}
   Recall that the $\rm J$-homomorphism $j\colon \K\to \Sph$ is a natural transformation on $\Span(\Sch,\all,\fet)$. In light of Theorem~\ref{thm:MGL}, the result follows by applying Proposition~\ref{prop:normedThomSpectra} to the transformations $j\circ e$ and $j$, and to the morphism $e\colon \K^\circ \into \K$ over $\Sph$.
\end{proof}

\begin{remark}
	There is another $\E_\infty$-ring structure on $\bigvee_{n\in\Z}\Sigma^{2n,n}\MGL_S$ constructed by Gepner and Snaith \cite{GepnerSnaith}, using the equivalence
	\[
	\bigvee_{n\in\Z}\Sigma^{2n,n}\MGL_S \simeq \Sigma^\infty_+ \L_{\mot} \K^\circ [1/\beta].
	\]
	At least if $S$ has a complex point, this $\E_\infty$-ring structure does \emph{not} coincide with that of Theorem~\ref{thm:normedMGL}, because they do not coincide after Betti realization \cite{HahnYuan}. We do not know if the Gepner–Snaith $\E_\infty$-ring structure can be extended to a normed structure: this is the case if and only if the Bott element $\beta$ satisfies condition (c) of Proposition~\ref{prop:localization-general}(2).
\end{remark}

\begin{example}
	Since $\bigvee_{n\in\Z}\Sigma^{2n,n}\MGL_S$ is a normed spectrum, we obtain by Corollary~\ref{cor:normed-spectrum-tambara-functor} a structure of Tambara functor on the presheaf $\FEt_S^\op \to \Set$, $X\mapsto \bigoplus_{n\in\mathbb Z}\MGL^{2n,n}(X)$.
	If $k$ is a field of characteristic zero, there is a canonical isomorphism
	\[
	\bigoplus_{n\in\Z}\MGL^{2n,n}(\Spec k) \simeq \bb L
	\]
	natural in $k$, where $\bb L$ is the Lazard ring \cite[Proposition 8.2]{HoyoisMGL}. In particular, $\bigoplus_{n\in\mathbb Z}\MGL^{2n,n}(\ph)$ is a finite étale sheaf on $\FEt_k$, and hence its Tambara structure is uniquely determined by its ring structure (see Corollary~\ref{cor:automatic-norms}): for $k'/k$ a finite extension of degree $d$, the induced additive (resp.\ multiplicative) transfer $\bb L \to \bb L$ is $x\mapsto dx$ (resp.\ $x\mapsto x^d$).
\end{example}

\begin{example}\label{ex:MSL}
	Using the same method as in the proof of Theorem~\ref{thm:normedMGL}, one can show that the spectra $\MSL$, $\MSp$, and their $(4,2)$-periodizations are normed spectra over $\Sch$, and that the maps $\MSp\to\MSL\to\MGL$ and their $(4,2)$-periodic versions are morphisms of normed spectra (see \cite{PaninWalter} for the definitions of $\MSL$ and $\MSp$). These normed spectra and the maps between them can be obtained by applying Proposition~\ref{prop:normedThomSpectra} to the natural transformations
	\[
	\K^\Sp \to \K^\SL \to \K^\mathrm{ev} \xrightarrow{j} \Sph \colon \Span(\Sch,\all,\fet) \to \scr S.
	\]
	Here, $\K^\SL$ (resp.\ $\K^\Sp$) can be defined as the right Kan extension of the functor $\Span(\Aff,\all,\fet)\<\to\scr S$ sending $X$ to the group completion of the symmetric monoidal groupoid of even-dimensional oriented (resp.\ symplectic) vector bundles\footnote{Here, an oriented vector bundle means a vector bundle with trivialized determinant. The direct sum monoidal structure on oriented bundles does not admit a braiding, but it acquires a symmetric braiding on even-dimensional bundles.} over $X$, and $\K^\mathrm{ev}=\rk^{-1}(2\Z)\subset \K$.
	The key ingredients for the oriented and symplectic versions of Theorem~\ref{thm:MGL} are the Zariski-local triviality of oriented and symplectic bundles and the fact that $\L_{\A^1}\B\SL$ and $\L_{\A^1}\B\Sp$ are $\rm H$-spaces. 
\end{example}

\begin{example}\label{ex:MG}
	Fix an integer $k\geq 1$.
	Using the symmetric monoidal functor $\M$, we can define motivic Thom spectra for structured vector bundles as $\rm A_\infty$-ring spectra.
	Let $G=(G_n)_{n\in\bb N}$ be a family of flat finitely presented $S$-group schemes equipped with a morphism of associative algebras $G\to (\GL_{nk,S})_{n\in\bb N}$ for the Day convolution on $\Fun(\bb N,\mathrm{Grp}(\Sch_S))$.
	For every $S$-scheme $X$, we then have a monoidal groupoid 
	\[
	\Vect^G(X)\simeq \L_{\mathrm{fppf}}\Biggl(\coprod_{n\geq 0}\B G_n\Biggr)(X)
	\] 
	of vector bundles with structure group in the family $G$, and a monoidal forgetful functor $\Vect^G(X)\to\Vect (X)$. For $X$ affine, let $\K^G(X)=\Vect^G(X)^\gp$, and extend $\K^G$ to all schemes by right Kan extension. We thus have a natural transformation
	\[
	u_G\colon \K^G \to \K\colon \Sch_S^\op\to \mathrm{Grp}(\scr S).
	\]
	For $T\in\Sch_S$, let $\M G_T= \M_T(u_G^{-1}(\K^\circ)\to \K^\circ \to \Sph)\in\SH(T)$.
	Then $T\mapsto \M G_T$ is a section of $\mathrm{Alg}(\SH(\ph))$ over $\Sch_S^{\op}$ that is cocartesian over smooth morphisms, and in fact fully cocartesian provided that each $G_n$ admits a faithful linear representation Nisnevich-locally on $S$ (cf.\ \cite[Corollary 2.9]{HoyoisCdh}). For example:
	\begin{itemize}
		\item If $*$ is the family of trivial groups, $\M*\simeq\1$.
		\item Any morphism of $S$-group schemes $G\to\GL_{k,S}$ can be extended to a family $(G^n)_{n\in\bb N}\to (\GL_{nk,S})_{n\in\bb N}$ and hence gives rise to an $\rm A_\infty$-ring spectrum $\M G^\infty_S$.
		\item For $\mathrm{Br}=(\mathrm{Br}_n)_{n\in \bb N}$ the family of braid groups with $\mathrm{Br}_n\onto\Sigma_n\into\GL_n$, the monoidal category $\Vect^\mathrm{Br}(X)$ has a canonical braiding, and we obtain an absolute $\E_2$-ring spectrum $\mathrm{MBr}$. 
	\end{itemize}
\end{example}

\begin{example}\label{ex:MG2}
Suppose given a family of $S$-group schemes $G=(G_n)_{n\in\bb N}$ and a morphism of associative algebras $G\to (\GL_{nk,S})_{n\in\bb N}$, as in Example~\ref{ex:MG}. Suppose moreover given a factorization of $(\Sigma_n)_{n\in\bb N}\into (\GL_{nk})_{n\in\bb N}$ through $G$.
Then, for every $S$-scheme $X$, the forgetful functor $\Vect^G(X)\to\Vect (X)$ acquires a symmetric monoidal structure. By Corollary~\ref{cor:automatic-norms}, the assignment $X\mapsto \Vect^G(X)$ extends uniquely to $\Span(\Sch_S,\all,\fet)$. Using Proposition~\ref{prop:span-RKE}, we obtain a natural transformation
	\[
	u_G\colon \K^G \to \K\colon \Span(\Sch_S,\all,\fet)\to \CAlg^\gp(\scr S).
	\]
	Hence, by Proposition~\ref{prop:normedThomSpectra}, $T\mapsto \M G_T$ can be promoted to a section of $\SH^\otimes$ over $\Span(\Sch_S,\all,\fet)$, and similarly for $T\mapsto \M_T(j\circ u_G)\simeq\bigvee_{n\in\Z}\Sigma^{2nk,nk}\M G_T$. In particular, $\M G_S$ is a $(2k,k)$-periodizable normed spectrum.
	For example:
	\begin{itemize}
		\item Any morphism of $S$-group schemes $G\to\GL_{k,S}$ can be extended to a family $(G\wr\Sigma_n)_{n\in\bb N}\to (\GL_{nk,S})_{n\in\bb N}$ and hence gives rise to a $(2k,k)$-periodizable normed spectrum $\M(G\wr\Sigma)_S$.
		\item The families of $S$-group schemes $\rm O(q_{2n})$ and $\mathrm{SO}(q_{2n})$, where $q_{2n}$ is the standard split quadratic form of rank $2n$, give rise to $(4,2)$-periodizable normed spectra $\mathrm{MO}_S$ and $\mathrm{MSO}_S$ over $\Sch_S$.
		Using a theorem of Schlichting and Tripathy \cite[Theorem 5.2]{SchlichtingTripathy}, one can show as in Theorem~\ref{thm:MGL} that the canonical map $\Gr\rm O_\infty \to u_{\rm O}^{-1}(\K^\circ)$ induces an equivalence on Thom spectra when $2$ is invertible on $S$.
	\end{itemize}
\end{example}

\subsection{Free normed spectra}

As another application of the machinery of Thom spectra, we give a formula for the free normed spectrum functor $\NSym_\scr C$ in some cases (see Remark~\ref{rmk:freeNAlg}).
Let $\scr C\subset_\fet\Sch_S$ and let $A\colon\scr C^\op\to\scr E$ be a functor where $\scr E$ is cocomplete.
The left Kan extension of $A$ to $\Span(\scr C,\all,\fet)$ is then given by
	\[
	\NSym(A)\colon \Span(\scr C,\all,\fet)\to\scr E,\quad X\mapsto \colim_{Y\in\FEt_X^\simeq} A(Y).
	\]
	Note that if $\scr E$ has finite products that preserve colimits in each variable and if $A$ preserves finite products, then $\NSym(A)$ preserves finite products as well.
	Given $\psi\colon A\to\SH$ in $\PSh_\Sigma(\scr C)_{\sslash\SH}$, we get an induced transformation
	\[
	\NSym(A) \xrightarrow{\NSym(\psi)} \NSym(\SH) \xrightarrow{\mu} \SH\colon \Span(\scr C,\all,\fet)\to\what\Cat{}_\infty,
	\]
	where $\mu$ is the counit transformation. This defines a functor
	\[
	\mu\circ\NSym(\ph)\colon \PSh_\Sigma(\scr C)_{\sslash\SH} \to \Fun^\times(\Span(\scr C,\all,\fet),\scr S)_{\sslash\SH}.
	\]
	Moreover, the unit transformation $A\to\NSym(A)$ induces a natural transformation $\id\to \mu\circ\NSym(\ph)$ of endofunctors of $\PSh_\Sigma(\scr C)_{\sslash\SH}$.

\begin{proposition}\label{prop:freeNAlg}
	Let $S$ be a scheme, $\scr C\subset_\fet\Sm_S$, and $\scr L$ a class of smooth morphisms in $\scr C$ that is closed under composition, base change, and finite étale Weil restriction, such that $\scr C=\scr L_S$. Then there is a commutative square
	\begin{tikzmath}
		\def\colsep{5em}
		\diagram{\PSh_\Sigma(\scr C)_{\sslash\SH} & \Fun^\times(\Span(\scr C,\all,\fet),\scr S)_{\sslash\SH} \\
		\SH(S) & \NAlg_\scr C(\SH)\rlap, \\};
		\arrows (11-) edge node[above]{$\mu\circ\NSym(\ph)$} (-12) (11) edge node[left]{$\M_S$} (21) (12) edge node[right]{$\M_{|\scr L}$} (22) (21-) edge node[below]{$\NSym_\scr C$} (-22);
	\end{tikzmath}
	where the right vertical functor is that of Proposition~\ref{prop:normedThomSpectra}.
	 In particular, the free normed spectrum functor $\NSym_\scr C\colon \SH(S)\to \NAlg_\scr C(\SH)$ is the composition
	\[
	\SH(S) \into \PSh_\Sigma(\scr C)_{\sslash\SH} \xrightarrow{\mu\circ\NSym(\ph)} \Fun^\times(\Span(\scr C,\all,\fet),\scr S)_{\sslash\SH} \xrightarrow{\M_{|\scr L}} \NAlg_\scr C(\SH).
	\]
\end{proposition}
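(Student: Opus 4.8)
The plan is to verify that the square commutes, since the second assertion follows immediately: the top composite restricted to $\SH(S)\subset \PSh_\Sigma(\scr C)_{\sslash\SH}$ is a well-defined functor $\SH(S)\to \NAlg_\scr C(\SH)$, and commutativity of the square identifies its composite with the forgetful functor $\NAlg_\scr C(\SH)\to\SH(S)$ with $\M_S$ restricted to $\SH(S)$, which is $\Sigma^\infty_+\L_\mot$ precomposed with the inclusion — i.e., the forgetful functor $\SH(S)\to\PSh_\Sigma(\scr C)_{\sslash\SH}$ followed by $\M_S$ is just $\id_{\SH(S)}$ on the subcategory $\SH(S)$ embedded via $E\mapsto(\1_S\to \SH, E)$... wait, more carefully: the composite $\SH(S)\hookrightarrow \PSh_\Sigma(\scr C)_{\sslash\SH}\xrightarrow{\M_S}\SH(S)$ is the identity because $\M_S$ of the representable $(S,E)$ is $\id_\sharp E = E$. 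So commutativity of the square exhibits $\M_{|\scr L}\circ(\mu\circ\NSym(\ph))$ as a functor $\SH(S)\to\NAlg_\scr C(\SH)$ whose composite with the forgetful functor is the identity; together with a unit/counit check this will identify it with $\NSym_\scr C$. Actually the cleanest route is: show directly that $\M_{|\scr L}\circ(\mu\circ\NSym(\ph))$, restricted along $\SH(S)\hookrightarrow\PSh_\Sigma(\scr C)_{\sslash\SH}$, is left adjoint to the forgetful functor $\NAlg_\scr C(\SH)\to\SH(S)$, and then invoke uniqueness of adjoints (Remark~\ref{rmk:freeNAlg}).

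First I would reduce to computing $\M_{|\scr L}$ on a functor of the form $\mu\circ\NSym(\psi)$ where $\psi\colon A\to\SH$ is in $\PSh_\Sigma(\scr C)_{\sslash\SH}$. By Proposition~\ref{prop:normedThomSpectra}, the section $\M_{|\scr L}(\NSym(\psi))$ sends $X\in\scr C$ to $\M_X\big(\NSym(A)|\scr L_X^\op\to\SH\big)$. Using Remark~\ref{rmk:C=L} and the hypothesis $\scr C=\scr L_S$, the functor $\alpha\circ t^*$ identifies $\Fun^\times(\Span(\scr C,\all,\fet),\scr S)$ with sections of $\PSh_\Sigma(\scr L)^\otimes$ cocartesian over $\scr L$-morphisms; in particular the value of $\NSym(A)$ over $X$ as an object of $\PSh_\Sigma(\scr L_X)$ is the pullback of $\NSym(A)$ along the structure map $X\to S$, i.e. $\colim_{Y\in\FEt_X^\simeq}A(Y)$ regarded in $\PSh_\Sigma(\scr L_X)$. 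Then I would use Remark~\ref{rmk:computing-M} (the explicit colimit formula for $\M_X$) together with the fact that $\M$ preserves colimits and the distributivity/base-change compatibilities baked into~\eqref{eqn:M}: unwinding, $\M_S(\mu\circ\NSym(\psi))$ is computed as the colimit over pairs $(f\colon X\to S, p\colon Y\to X \text{ finite étale})$ of $f_\sharp\, \M_X(\psi_Y\text{ pushed to }X)$, and the inner Thom-spectrum push-forward along the finite étale $p$ is exactly the norm $p_\otimes$ of the relevant Thom spectrum, by the compatibility of $\M$ with norms in~\eqref{eqn:M}. This yields the formula $\M_{|\scr L}(\mu\circ\NSym(\psi))(S)\simeq \colim_{f\colon X\to S,\,p\colon Y\to X} f_\sharp p_\otimes \M_Y(\psi_Y)$, matching the free-normed-spectrum formula of Remark~\ref{rmk:freeNAlg}.

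The adjunction is then checked formally. For $E\in\SH(S)$, write $\widetilde E\colon \1_S\to\SH$ for the object $(S,E)$ of $\PSh_\Sigma(\scr C)_{\sslash\SH}$, so $\M_S(\widetilde E)=E$. I would show $\Map_{\NAlg_\scr C(\SH)}(\M_{|\scr L}(\mu\circ\NSym(\widetilde E)), F)\simeq \Map_{\SH(S)}(E, F(S))$ naturally in $F$: by the universal property of the left Kan extension $\NSym$ along $\scr C^\op\hookrightarrow\Span(\scr C,\all,\fet)$, a map of sections $\NSym(\widetilde E)\to F$ cocartesian-compatibly is the same as a map $\widetilde E\to F|\scr C^\op$ in $\PSh_\Sigma(\scr C)_{\sslash\SH}$, and then the counit $\mu$ and the adjunction $(\ph)_\sharp\dashv(\ph)^*$ identify this with $\Map_{\SH(S)}(E,F(S))$; the right-lax-to-strict passage is exactly what~\eqref{eqn:M} was engineered to handle. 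By the uniqueness of left adjoints and Remark~\ref{rmk:freeNAlg}, this composite is $\NSym_\scr C$, and unwinding the identification gives commutativity of the square on all of $\PSh_\Sigma(\scr C)_{\sslash\SH}$, not just on $\SH(S)$, since $\M_S$ and $\M_{|\scr L}\circ(\mu\circ\NSym(\ph))$ are both colimit-preserving and agree on representables $(X,P)\mapsto (X\to S\mapsto f_\sharp P)$ versus the corresponding one-point-indexed normed Thom spectrum.

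The main obstacle I expect is the bookkeeping in the middle step: pinning down precisely how the indexing $\infty$-category of the colimit formula for $\M_X$ interacts with the cocartesian fibration $s\colon\Fun_{\scr L}(\Delta^1,\Span(\scr C,\all,\fet))\to\Span(\scr C,\all,\fet)$ and confirming that the inner push-forward along a finite étale $p\colon Y\to X$ produced by~\eqref{eqn:M} is genuinely $p_\otimes$ applied to the Thom spectrum $\M_Y(\psi_Y)$ — this is where Lemma~\ref{lem:SectPSigma}, Remark~\ref{rmk:C=L}, and the distributivity inputs (Proposition~\ref{prop:distributivity}(1)) must be assembled carefully. Once that identification is in place, everything else is a formal adjunction argument plus the cited uniqueness of the free normed spectrum.
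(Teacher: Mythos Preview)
Your approach has a genuine gap in the adjunction verification. You write that ``by the universal property of the left Kan extension $\NSym$, a map of sections $\NSym(\widetilde E)\to F$ cocartesian-compatibly is the same as a map $\widetilde E\to F|\scr C^\op$'', but $F$ is a normed spectrum (a section of $\SH^\otimes$), not an object of $\Fun^\times(\Span(\scr C,\all,\fet),\scr S)_{\sslash\SH}$. To compute $\Map_{\NAlg_\scr C(\SH)}(\M_{|\scr L}(\mu\circ\NSym(\widetilde E)), F)$ you must first pass through $\M_{|\scr L}$, and you have not shown that $\M_{|\scr L}$ participates in any adjunction at the $\NAlg$ level. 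The colimit formula you derive in the middle step describes the underlying spectrum of $\M_{|\scr L}(\mu\circ\NSym(\widetilde E))$, but that alone does not give access to mapping spaces in $\NAlg_\scr C(\SH)$ out of it. Relatedly, appealing to the formula in Remark~\ref{rmk:freeNAlg} is circular: that remark is a forward reference to the present proposition.

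The paper's proof supplies exactly this missing adjunction. It first uses Remark~\ref{rmk:C=L} to write $\M_{|\scr L}$ as $\NAlg_\scr C$ applied to the transformation $\M\colon \PSh_\Sigma(\scr L_{\sslash\SH})^\otimes\to\SH^\otimes$, yielding a commutative square of forgetful functors that produces a canonical comparison map $\NSym_\scr C\circ\M_S\to\M_{|\scr L}\circ(\mu\circ\NSym(\ph))$. To show this is an equivalence, it restricts to a small full subfunctor $\scr A^\otimes\subset\SH^\otimes$, so that each $\M_X\colon\PSh_\Sigma((\scr L_X)_{/\scr A})\to\SH(X)$ admits a right adjoint $R_X$; it then checks that the induced relative right adjoint preserves cocartesian edges over backward $\scr L$-morphisms, so by Lemma~\ref{lemm:adjoints-pass-to-sections} the adjunction lifts to $\NAlg_\scr C(\PSh_\Sigma(\scr L_{/\scr A}))\rightleftarrows\NAlg_\scr C(\SH)$. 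Since left adjoints commute with free functors, the comparison map is an equivalence on $\PSh_\Sigma(\scr C)_{/\scr A}$, and the union over small $\scr A$ finishes. Your bookkeeping concern about identifying the inner push-forward as $p_\otimes$ is handled automatically by this argument and is not where the difficulty lies.
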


\begin{proof}
	By Remark~\ref{rmk:C=L}, the functor $\M_{|\scr L}$ is the composition
	\[
	\Fun^\times(\Span(\scr C,\all,\fet),\scr S)_{\sslash\SH} \simeq \NAlg_{\scr C}(\PSh_\Sigma(\scr L))_{\sslash\SH} \subset\NAlg_{\scr C}(\PSh_\Sigma(\scr L)_{\sslash\SH}) \xrightarrow{\M} \NAlg_\scr C(\SH).
	\]
	In particular, we have a commutative square
	\begin{tikzmath}
		\diagram{\Fun^\times(\Span(\scr C,\all,\fet),\scr S)_{\sslash\SH} & \PSh_\Sigma(\scr C)_{\sslash\SH} \\
		\NAlg_\scr C(\SH) & \SH(S)\rlap, \\};
		\arrows (11-) edge (-12) (11) edge node[left]{$\M_{|\scr L}$} (21) (12) edge node[right]{$\M$} (22) (21-) edge (-22);
	\end{tikzmath}
	where the horizontal maps are the forgetful functors. 
	By adjunction, the natural transformation $\id \to \mu\circ \NSym(\ph)$ then induces a natural transformation $\NSym_\scr C\circ \M \to \M_{|\scr L}\circ (\mu\circ\NSym(\ph))$.
	
	Let $\scr A^\otimes$ be a \emph{small} full subfunctor of $\SH^\otimes\colon \Span(\scr C,\all,\fet)\to\what\Cat{}_\infty$ (i.e., valued in small $\infty$-categories).
	Then the transformation~\eqref{eqn:M} restricts to
	\[
	\M\colon \PSh_\Sigma(\scr L_{/\scr A})^\otimes \to \SH^\otimes\colon \Span(\scr C,\all,\fet)\to\what\Cat{}_\infty^\mathrm{sift}.
	\]
	Since $\scr A$ is small, each component $\M_X\colon \PSh_\Sigma((\scr L_X)_{/\scr A})\to\SH(X)$ admits a right adjoint $R_X$ given by
	\[
	R_X(E)(\psi) = \Map(\M_X(\psi), E).
	\]
	In the induced relative adjunction over $\Span(\scr C,\all,\fet)$ (see Lemma~\ref{lemm:construct-relative-adjoint}(1)), the relative right adjoint preserves cocartesian edges over backward $\scr L$-morphisms. It then follows from Lemma~\ref{lemm:adjoints-pass-to-sections} that $\M_S$ and its right adjoint lift to an adjunction
	\[
	\Fun^\times(\Span(\scr C,\all,\fet),\scr S)_{/\scr A} \simeq \NAlg_\scr C(\PSh_\Sigma(\scr L_{/\scr A})) \rightleftarrows \NAlg_{\scr C}(\SH),
	\]
	so that the left adjoint commutes with formation of free normed objects. This implies that the natural transformation $\NSym_\scr C\circ \M \to \M_{|\scr L}\circ (\mu\circ\NSym(\ph))$ is an equivalence on $\PSh_\Sigma(\scr C)_{/\scr A}$. Since $\SH^\otimes$ is the union of its small subfunctors, this concludes the proof.
\end{proof}

\begin{remark}\label{rmk:freeNAlg-2}
	Let $\scr C\subset_\fet\Sm_S$ be as in Proposition~\ref{prop:freeNAlg}.
	Using the formula for $\M_S$ from Remark~\ref{rmk:computing-M}, we see that the underlying motivic spectrum of $\NSym_\scr C(E)$ is the colimit
	\[
	\colim_{\substack{f\colon X\to S\\p\colon Y\to X}} f_\sharp p_\otimes(E_Y),
	\]
	whose indexing $\infty$-category is the source of the cartesian fibration classified by $\scr C^\op\to\scr S$, $X\mapsto\FEt_X^\simeq$. 
	This applies when $\scr C=\FEt_S$ (with $\scr L$ the class of finite étale morphisms) and when $\scr C=\SmQP_S$ (with $\scr L$ the class of smooth quasi-projective morphisms).
	Note that when $\scr C=\SmQP_S$, one may replace it by $\Sm_S$ without changing either the functor $\NSym_\scr C$ (Proposition~\ref{prop:categorical-props}(5)) or the above colimit (Proposition~\ref{prop:thominvariance}(1)).
\end{remark}

\begin{remark}\label{rmk:freeNAlg-3}
	Let $\scr C\subset_\fet\Sm_S$ be as in Proposition~\ref{prop:freeNAlg}, let $A\in\PSh_\Sigma(\scr C)$, and let $\psi\colon A\to\SH$.
	The degree maps $\FEt_X^\simeq \to \bb N^X$ induce a coproduct decomposition of $\NSym(A)$ in $\PSh_\Sigma(\scr C)$, whence a coproduct decomposition of $\NSym_\scr C(\M_S(\psi))$ in $\SH(S)$. For example, when $A=*$ and $\psi(*)=E\in\SH(S)$, we have $\NSym(*)\simeq \L_\Sigma(\coprod_{d\geq 0} \B_\et\Sigma_d)$ and hence
\[
\NSym_\scr C(E) = \bigvee_{d\geq 0} \NSym_\scr C^d(E),
\]
where $\NSym_\scr C^d(E)$ is the Thom spectrum of a map $\B_\et\Sigma_d\to\SH|\scr C^\op$ refining $E^{\wedge d}\in\SH(S)^{h\Sigma_d}$. 
When $\scr C=\SmQP_S$, this map does not always factor through the motivic localization of $\B_\et\Sigma_d$. For example, if $S$ has characteristic $2$, then $*\to \B_\et\Sigma_2$ is a motivic equivalence, but the induced map $E^{\wedge 2}\to\NSym_\scr C^2(E)$ has no retraction if $E$ is the sum of two nonzero spectra.
\end{remark}

\subsection{Thom isomorphisms}

We now discuss the Thom isomorphism for motivic Thom spectra.
Let $\scr C$ be an $\infty$-category with finite products and let $A$ be a monoid in $\scr C$.
To any morphism $\phi\colon B\to A$ in $\scr C$, we can associate a \emph{shearing map}
\[
\sigma\colon A\times B\to A\times B,\quad (a,b)\mapsto (a\phi(b),b).
\]
It is a map of $A$-modules, and it is an automorphism if $A$ is grouplike. If $\phi$ is an $\E_n$-map for some $1\leq n\leq\infty$, then $\sigma$ is an $\E_{n-1}$-map under $A$.

The following proposition is a motivic analog of \cite[Theorem 1.2]{Mahowald}, and the proof is essentially the same.

\begin{proposition}[Thom isomorphism]
	\label{prop:Thom-iso}
	Let $S$ be a scheme, $A$ a grouplike $\rm A_\infty$-space in $\PSh_\Sigma(\Sm_S)$, $\psi\colon A\to\SH^\simeq$ an $\rm A_\infty$-map, and $\phi\colon B\to A$ an arbitrary map in $\PSh_\Sigma(\Sm_S)$. Then the shearing automorphism $\sigma$ induces an equivalence of $\M_S(\psi)$-modules
	\[
	\M_S(\psi)\wedge \M_S(\psi\circ \phi) \simeq \M_S(\psi) \wedge \Sigma^\infty_+\L_\mot B,
	\]
	natural in $\phi\in\PSh_\Sigma(\Sm_S)_{/A}$.
	If $\psi$ and $\phi$ are $\E_n$-maps for some $1\leq n\leq \infty$ (resp.\ are natural transformations on $\Span(\Sm_S,\all,\fet)$), this is an equivalence of $\E_{n-1}$-ring spectra (resp.\ of normed spectra) under $\M_S(\psi)$.
\end{proposition}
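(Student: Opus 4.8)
The plan is to deduce the entire statement from the symmetric monoidality of the motivic Thom spectrum functor $\M$ constructed in~\sect\ref{sub:construct-M}. Recall that, taking $\scr L$ to be the class of smooth morphisms, $\M\colon\PSh_\Sigma(\scr L)_{/\SH}^\otimes\to\SH^\otimes$ is a symmetric monoidal natural transformation over $\Span(\Sm_S,\all,\fet)$, where $\PSh_\Sigma(\Sm_S)_{/\SH}$ carries the symmetric monoidal structure $\odot$ in which $(A,\alpha)\odot(B,\beta)$ has underlying presheaf $A\times B$ and structure map $(a,b)\mapsto\alpha(a)\wedge\beta(b)$, with unit $(\ast,\1)$. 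Recall also that $\M_S$ sends $(B,\mathrm{const}_{\1})$ to $\Sigma^\infty_+\L_\mot B$ (the Example following Remark~\ref{rmk:computing-M}). Since $\psi$ is an $\rm A_\infty$-map, the pair $(A,\psi)$ is an $\rm A_\infty$-monoid in $(\PSh_\Sigma(\Sm_S)_{/\SH},\odot)$; hence $\M_S(\psi)$ is an $\rm A_\infty$-ring, and by symmetric monoidality of $\M_S$ there are equivalences of $\M_S(\psi)$-modules
\[
\M_S(\psi)\wedge\M_S(\psi\circ\phi)\simeq\M_S\big((A,\psi)\odot(B,\psi\circ\phi)\big),\qquad \M_S(\psi)\wedge\Sigma^\infty_+\L_\mot B\simeq\M_S\big((A,\psi)\odot(B,\mathrm{const}_{\1})\big),
\]
natural in $\phi\in\PSh_\Sigma(\Sm_S)_{/A}$. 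Thus it suffices to produce a natural equivalence of $(A,\psi)$-modules $(A,\psi)\odot(B,\psi\circ\phi)\simeq(A,\psi)\odot(B,\mathrm{const}_{\1})$ in $\PSh_\Sigma(\Sm_S)_{/\SH}$.

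I would construct this equivalence from the shearing automorphism. The $\rm A_\infty$-structure on $\psi$ supplies a natural equivalence of structure maps $\psi(a)\wedge\psi(\phi(b))\simeq\psi\big(a\cdot\phi(b)\big)$; as $a\cdot\phi(b)=\pi_1(\sigma(a,b))$, the left-hand module is therefore equivalent, over $\id_{A\times B}$, to $\sigma^*\big((A,\psi)\odot(B,\mathrm{const}_{\1})\big)$. Because $A$ is grouplike, $\sigma$ is an automorphism of $A\times B$ (inverse $(a,b)\mapsto(a\cdot\phi(b)^{-1},b)$) and is equivariant for left multiplication by $A$ on the first factor, so $\sigma$ itself exhibits an equivalence of $(A,\psi)$-modules $\sigma^*\big((A,\psi)\odot(B,\mathrm{const}_{\1})\big)\simeq(A,\psi)\odot(B,\mathrm{const}_{\1})$. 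Composing the two gives the desired equivalence, and it is natural in $\phi$ because $\sigma$, the multiplication of $A$, and the coherences of $\psi$ are. The step demanding the most care is assembling this into an equivalence of \emph{functors} $\PSh_\Sigma(\Sm_S)_{/A}\to\Mod_{\M_S(\psi)}(\SH(S))$ rather than a pointwise statement: one must write down the two functors $\phi\mapsto(A,\psi)\odot(B,\psi\circ\phi)$ and $\phi\mapsto\sigma^*\big((A,\psi)\odot(B,\mathrm{const}_{\1})\big)$ together with the intertwining coherence datum, which I would package using the slice constructions already developed in~\sect\ref{sub:construct-M}.

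It remains to treat the enhancements, which are carried through the same symmetric monoidal functor $\M_S$. If $\psi$ and $\phi$ are $\E_n$-maps, then $A$ and $B$ are $\E_n$-monoids, $\M_S(\psi)$ and $\M_S(\psi\circ\phi)$ are $\E_n$-ring spectra, and both displayed objects acquire $\E_n$-$\M_S(\psi)$-algebra structures after one loss of coherence; since $\sigma$ is an $\E_{n-1}$-map under $A$ (the observation preceding the proposition), the equivalence above refines to one of $\E_{n-1}$-rings under $\M_S(\psi)$. If instead $\psi$ and $\phi$ are natural transformations on $\Span(\Sm_S,\all,\fet)$, then $A$, $B$, $\phi$ and $\sigma$ all live over $\Span(\Sm_S,\all,\fet)$, $\M_S(\psi)$ and $\M_S(\psi\circ\phi)$ are normed spectra by Proposition~\ref{prop:normedThomSpectra}, and the equivalence, being functorial over $\Span(\Sm_S,\all,\fet)$, upgrades to one of normed spectra under $\M_S(\psi)$. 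I expect the main obstacle to be not any individual computation but precisely this coherent bookkeeping — realizing the shearing equivalence inside the relevant $\infty$-category of ($\E_{n-1}$- or normed-) modules and verifying that the symmetric monoidal $\M_S$ transports it intact.
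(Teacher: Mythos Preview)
Your proposal is correct and follows essentially the same approach as the paper: reduce to symmetric monoidality of $\M_S$, identify both sides as $\M_S$ applied to maps $A\times B\to\SH$, and observe that the shearing automorphism $\sigma$ provides the identification $\mu\circ(\psi\times\1)\circ\sigma\simeq\mu\circ(\psi\times(\psi\circ\phi))$, which carries the appropriate $\E_{n-1}$ or normed structure. The paper's proof is terser—it simply records the displayed identity and invokes symmetric monoidality—whereas you unpack the $(A,\psi)$-module structure and flag the coherence bookkeeping more explicitly, but the substance is the same.
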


\begin{proof}
	Since $\M_S$ is symmetric monoidal, we have
	\[
	\M_S(\psi)\wedge \M_S(\psi\circ\phi)\simeq \M_S(\mu\circ(\psi\times(\psi\circ\phi)))\quad\text{and}\quad \M_S(\psi) \wedge \Sigma^\infty_+\L_\mot B\simeq \M_S(\mu\circ(\psi\times\1)),
	\]
	where $\mu\colon \SH\times\SH\to\SH$ is the smash product. To conclude, note that 
	\begin{equation}\label{eqn:shearing}
		\mu\circ (\psi\times \1) \circ \sigma\simeq \mu\circ (\psi\times(\psi\circ\phi)).
	\end{equation}
	If $\psi$ and $\phi$ are $\E_n$-maps, then~\eqref{eqn:shearing} is an equivalence of $\E_{n-1}$-maps and the additional claim follows from the fact that $\M_S$ is symmetric monoidal. If $\psi$ and $\phi$ are natural transformations on $\Span(\Sm_S,\all,\fet)$, then~\eqref{eqn:shearing} is an equivalence of such transformations and the claim follows from Proposition~\ref{prop:normedThomSpectra}.
\end{proof}

\begin{example}\label{ex:MGL-thom-iso}
	Let $S$ be a scheme and let $\xi\in \K^\circ(S)$.
	Let $\psi=j\circ e\colon \K^\circ \to \Sph$ and let $\phi=\xi\colon S \to \K^\circ$. 
	In this case the shearing automorphism is $\sigma\colon \K^\circ\to \K^\circ$, $\eta\mapsto \eta+\xi$, and the equivalence of Proposition~\ref{prop:Thom-iso} is the usual Thom isomorphism $\Sigma^\xi\MGL_S\simeq\MGL_S$. More generally, if $G=(G_n)_{n\in\bb N}$ is a family of $S$-group schemes as in Example~\ref{ex:MG} and $\xi\in \K^G(S)$ has rank $n$, we obtain a Thom isomorphism $\Sigma^\xi\M G_S\simeq \Sigma^{2n,n}\M G_S$.
\end{example}

\begin{example}
	Applying Proposition~\ref{prop:Thom-iso} with $\psi=j\circ e\colon \K^\circ \to \Sph$ and with $\phi=\id_{\K^\circ}$, we obtain an equivalence of normed spectra over $\Sm_S$
	\[
	\MGL_S\wedge \MGL_S \simeq \MGL_S \wedge \Sigma^\infty_+\L_\mot\K^\circ.
	\]
	More generally, if $G=(G_n)_{n\in\bb N}$ is a family of $S$-group schemes as in Example \ref{ex:MG} (resp.\ as in Example \ref{ex:MG2}), we obtain an equivalence of $\M G_S$-modules (resp.\ of normed spectra over $\Sm_S$)
	\[
	\M G_S\wedge \M G_S \simeq \M G_S\wedge \Sigma^\infty_+\L_\mot u_G^{-1}(\K^\circ),
	\]
	where $u_G\colon \K^G\to \K$ is the forgetful map.
\end{example}

	Let $\scr C\subset_\fet\Sch_S$ and let $\MGL_S\to E$ be a morphism of normed spectra over $\scr C$. 
	By Proposition~\ref{prop:categorical-props}(3), the pushout of $\E_\infty$-ring spectra
	\[
	\bigvee_{n\in\Z} \Sigma^{2n,n}E \simeq E \wedge_{\MGL_S} \Biggl(\bigvee_{n\in\Z}\Sigma^{2n,n}\MGL_S\Biggr)
	\]
	has a structure of normed spectrum over $\scr C$. In particular, normed $\MGL_S$-modules are $(2,1)$-periodizable. The underlying incoherent normed structure can be made explicit using the Thom isomorphism:

\begin{proposition}\label{prop:periodization-norms}
	Let $\MGL_S\to E$ be a morphism of normed spectra over $\FEt_S$. Then the induced incoherent normed structure on the periodization $\bigvee_{n\in\Z}\Sigma^{2n,n}E$ is given by the maps $\tilde\mu_p$ of Proposition~\ref{prop:oriented-normed}.
\end{proposition}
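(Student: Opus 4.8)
The plan is to reduce the statement, via naturality, to the universal case $E=\MGL_S$, and then to identify the norm maps on $\bigvee_{n\in\Z}\Sigma^{2n,n}\MGL_S$ supplied by the Thom spectrum machinery with the maps $\tilde\mu_p$ of Proposition~\ref{prop:oriented-normed}. Since both structures under comparison are sections of $\h_1\SH^\otimes$ over $\Span(\FEt_S,\all,\fet)$, it is enough to produce, for each finite étale $p\colon T\to S$, a homotopy between the two resulting maps $p_\otimes\bigl(\bigvee_{n\in\Z}\Sigma^{2n,n}E_T\bigr)\to\bigvee_{r\in\Z}\Sigma^{2r,r}E_S$; no coherence of the $\tilde\mu_p$ is involved, only the individual maps.

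First I would treat both sides for a general $E$ equipped with a morphism of normed spectra $\MGL_S\to E$. As a normed spectrum, $\bigvee_{n}\Sigma^{2n,n}E$ is the pushout $E\wedge_{\MGL_S}\bigl(\bigvee_{n}\Sigma^{2n,n}\MGL_S\bigr)$ in $\NAlg_{\FEt_S}(\SH)$ (Proposition~\ref{prop:categorical-props}(3,4)). Since $p_\otimes\colon\SH(T)\to\SH(S)$ is symmetric monoidal and preserves sifted colimits (Proposition~\ref{prop:stablenorm}), it preserves relative tensor products; together with Remark~\ref{rmk:coprod-nalg} this shows that the incoherent norm map of $\bigvee_n\Sigma^{2n,n}E$ at $p$ is the map of relative tensor products induced from the norm maps of $E$, of $\MGL_S$ and of $\bigvee_n\Sigma^{2n,n}\MGL_S$. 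On the other side, the construction of $\tilde\mu_p$ in the proof of Proposition~\ref{prop:oriented-normed} uses only the incoherent norm map of $E$, the Thom isomorphism of $E$ and the structure map $\Weil_p(T^{\amalg I})\to S$; when $E$ carries the orientation induced by $\MGL_S\to E$, the Thom isomorphism of $E$ is the base change along $\MGL_S\to E$ of that of $\MGL_S$ (Example~\ref{ex:MGL-thom-iso}) and the incoherent norm map of $E$ is natural in $\MGL_S\to E$. It follows that $\tilde\mu_p$ for $E$ is again the map of relative tensor products induced from the case $E=\MGL_S$, so we are reduced to proving the proposition for the normed $\MGL_S$-module $\bigvee_n\Sigma^{2n,n}\MGL_S$.

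For the universal case I would use the presentation $\bigvee_n\Sigma^{2n,n}\MGL_S\simeq\M_S(j)$, where $j\colon\K\to\Sph$ is the motivic $\rm J$-homomorphism (Theorem~\ref{thm:MGL} and the remark following it), together with the span-functoriality of the Thom spectrum construction (Proposition~\ref{prop:normedThomSpectra}, Theorem~\ref{thm:normedMGL}). Writing $\bigvee_n\Sigma^{2n,n}\MGL_T$ as the filtered colimit over finite subsets $I\subset\Z$ of the spectra $\M_T$ applied to the restriction of $j$ to the summand $\coprod_{n\in I}\K^\circ$ of $\K$, and using that $p_\otimes$ and $\bigvee$ commute with filtered colimits, I reduce to a fixed $I$. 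On the $I$th piece, Corollary~\ref{cor:Nthom} (the distributivity transformation $\Dist_{\sharp\otimes}$) identifies the norm map $\mu_p$ furnished by $\M$ with the composite of the equivalence $p_\otimes\Th_{T^{\amalg I}}(\theta_I)\simeq\Th_{\Weil_p(T^{\amalg I})}(q_*e^*\theta_I)$ with the map induced by the structure morphism $\Weil_p(T^{\amalg I})\to S$, where $\Th_{T^{\amalg I}}(\theta_I)\simeq\bigvee_{n\in I}\S^{2n,n}$ and $\theta_I$ is the virtual bundle on $T^{\amalg I}$ trivial of rank $n$ on the $n$th component. On the other hand, unwinding the proof of Proposition~\ref{prop:oriented-normed}, $\tilde\mu_p$ on the $I$th piece is the composite $p_\otimes(\Th(\theta_I)\wedge\MGL_T)\xrightarrow{\mu_p}\Th(\xi_I)\wedge\MGL_S\xrightarrow{t}\Th(\rk\xi_I)\wedge\MGL_S\to\bigvee_r\Sigma^{2r,r}\MGL_S$, with $\xi_I=q_*e^*\theta_I$ and $t$ the Thom isomorphism of $\MGL_S$.

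The main obstacle is matching these two composites. This amounts to identifying $t$ with the Thom isomorphism built into $\MGL_S=\M_S(j\circ e)$ through the shearing formalism of Proposition~\ref{prop:Thom-iso} and Example~\ref{ex:MGL-thom-iso}, and identifying the incoherent norm map of $\MGL_S$ used in Proposition~\ref{prop:oriented-normed} with the one produced by the span-functoriality of $\M$ — in particular, verifying that the identification $\Th(\xi_I)\simeq\Th(\rk\xi_I)$ used in the definition of $\tilde\mu_p$ agrees, over $\MGL_S$, with the one implicit in the relative-colimit presentation of $\MGL_S$. I expect no conceptual difficulty, only a moderately long diagram chase; all the naturality needed is already packaged into Proposition~\ref{prop:normedThomSpectra} and Corollary~\ref{cor:Nthom}.
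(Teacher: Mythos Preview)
Your proposal is correct and shares the paper's key reduction and the identification via shearing, but the paper organizes the universal case more cleanly. After reducing to $E=\MGL_S$ (which the paper dispatches in one sentence by naturality), the paper does not filter by finite $I\subset\Z$ or invoke Corollary~\ref{cor:Nthom}. Instead it observes that $p_\otimes(\M_T(j))$ is the Thom spectrum of $j\circ p_*\colon p_*(\K_T)\to\Sph$ and that the norm map $\mu_p$ is $\M_S$ applied to the single map $p_*\colon p_*(\K_T)\to\K_S$. Using the decomposition $\K\simeq\L_\Sigma(\K^\circ\times\Z)$, it factors this map of presheaves as
\[
p_*(\K_T^\circ)\times p_*(\Z_T)\xrightarrow{p_*\times\id}\K_S^\circ\times p_*(\Z_T)\xrightarrow{\sigma}\K_S^\circ\times p_*(\Z_T)\to\K_S^\circ\times\Z_S,
\]
where $\sigma$ is the shearing by $p_*(\Z_T)\to\K_S^\circ$, $\xi\mapsto p_*\xi-\rk p_*\xi$. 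Applying $\M_S$ to this factorization, the shearing becomes the Thom isomorphism (Example~\ref{ex:MGL-thom-iso}) and the last map becomes the structure map of the Weil restriction, immediately matching the definition of $\tilde\mu_p$. Your route through finite $I$ and $\Dist_{\sharp\otimes}$ gets to the same place but requires checking compatibility across the filtered colimit and unpacking the norm map piecewise; the paper's presheaf-level factorization packages all of that into one step.
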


\begin{proof}
	It clearly suffices to prove this for $E=\MGL_S$ itself.
	Let us write $\K_S$ for the restriction of $\K$ to $\Sm_S$. 
	If $p\colon T\to S$ is finite étale, $p_\otimes(\M_T(j))$ is the Thom spectrum of the map $j\circ p_*\colon p_*(\K_T)\to \Sph$, and the map $\mu_p\colon p_\otimes \M_T(j)\to \M_S(j)$ is induced by $p_*\colon p_*(\K_T)\to \K_S$.
	The normed spectrum structure on $\bigvee_{n\in\Z}\Sigma^{2n,n}\MGL_S$ is obtained via the equivalence $\M_S(j)\simeq\bigvee_{n\in\Z}\Sigma^{2n,n}\MGL_S$, which is induced by the decomposition $\K\simeq \L_\Sigma(\K^\circ \times\Z)$, $\xi\mapsto (\xi-\rk \xi,\rk\xi)$.
	 Using this decomposition, the map $p_*\colon p_*(\K_T)\to \K_S$ in $\PSh_\Sigma(\Sm_S)$ factorizes as
	\[
	p_*(\K_T)\simeq p_*(\K^\circ_T\times\Z_T)\simeq p_*(\K^\circ_T) \times p_*(\Z_T) \xrightarrow{p_*\times\id} \K^\circ_S\times p_*(\Z_T)\xrightarrow\sigma \K^\circ_S\times p_*(\Z_T) \to \K^\circ_S\times \Z_S\simeq \K_S,
	\]
	where $\sigma$ is the shearing automorphism associated with the composition 
	\[p_*(\Z_T)\to p_*(\K_T)\xrightarrow{p_*} \K_S\xrightarrow{\id-\rk} \K^\circ_S\]
	and $p_*(\Z_T)\to \Z_S$ comes from the additive structure on $\Z$ (see Lemma~\ref{lem:normed-monoid}). Applying $\M_S$ to this composition, we find a corresponding factorization of $\mu_p\colon p_\otimes(\bigvee_{n\in\Z}\Sigma^{2n,n}\MGL_T)\to \bigvee_{n\in\Z}\Sigma^{2n,n}\MGL_S$. The shearing automorphism $\sigma$ becomes the Thom isomorphism (see Example~\ref{ex:MGL-thom-iso}), and it is then easy to see that this factorization of $\mu_p$ is exactly the definition of $\tilde\mu_p$.
\end{proof}

\begin{remark}\label{rmk:MSL-modules}
	Similarly, normed $\MSL$-, $\MSp$-, $\mathrm{MO}$-, and $\mathrm{MSO}$-modules are $(4,2)$-periodizable (see Examples \ref{ex:MSL} and~\ref{ex:MG2}), and the norm maps on their $(4,2)$-periodizations can be described using the respective Thom isomorphisms.
\end{remark}

\begin{example}\label{ex:periodicHZ}
	The unit map $\1_S\to\MGL_S$ induces an equivalence $\s_0(\1_S)\simeq \s_0(\MGL_S)$ \cite[Corollary 3.3]{Spitzweck:2010}. By Proposition~\ref{prop:NAlg-slices}, the induced map $\MGL_S\to \s_0(\1_S)$ is a morphism of normed spectra over $\Sm_S$, and we deduce that $\bigvee_{n\in\Z} \Sigma^{2n,n}\s_0(\1_S)$ has a structure of normed spectrum over $\Sm_S$. When $S$ is essentially smooth over a field, we have $\s_0(\1_S)\simeq \HH\Z_S$ as normed spectra by Remark~\ref{rmk:norm-uniqueness}, so that $\bigvee_{n\in\Z}\Sigma^{2n,n}\HH\Z_S$ is a normed spectrum over $\Sm_S$.
	Hence, if $p\colon T\to S$ is finite étale, we obtain an $\E_\infty$-multiplicative transfer
		\[
		\nu_p\colon \bigoplus_{n\in\Z}z^n(T,*) \to \bigoplus_{r\in\Z} z^r(S,*).
		\]
		If $S$ is moreover quasi-projective, it follows from Theorem~\ref{thm:FultonMacPhersonComparison} and Proposition~\ref{prop:periodization-norms} that $\nu_p$ induces the Fulton–\<MacPherson norm on $\pi_0$.
\end{example}

\begin{example}\label{ex:graded-Chow-Witt}
	The geometric arguments in \cite[\sect 3]{HoyoisMGL} show that the cofiber of the unit map $\1_S\to\MSL_S$ belongs to $\SH(S)^\veff(2)$. In particular, we have equivalences $\tilde\s_0(\1_S)\simeq\tilde\s_0(\MSL_S)$ and $\spi_0^\eff(\1_S)\simeq \spi_0^\eff(\MSL_S)$. By Proposition~\ref{prop:NAlg-slices}, we deduce that $\tilde\s_0(\1_S)$ and $\spi_0^\eff(\1_S)$ are normed $\MSL_S$-module over $\Sm_S$, and hence by Remark~\ref{rmk:MSL-modules} that
	\[
	\bigvee_{n\in\Z}\Sigma^{4n,2n}\tilde\s_0(\1_S)\quad\text{and}\quad \bigvee_{n\in\Z}\Sigma^{4n,2n}\spi_0^\eff(\1_S)
	\]
	 are normed spectra over $\Sm_S$.
	 
	Suppose now that $k$ is a field. By Example~\ref{ex:Chow-Witt},
	the spectrum $\bigvee_{n\in\Z}\Sigma^{4n,2n}\spi_0^\eff(\1_k)$ represents the sum of the even Chow–Witt groups.
	For every finite étale map $p\colon Y\to X$ in $\Sm_k$ and every line bundle $\scr L$ on $Y$, we thus obtain a norm map $\nu_p\colon \widetilde\CH{}^\ev(Y,\scr L) \to \widetilde{\CH}{}^\ev(X,\Norm_p(\scr L))$ and a commutative square
		\begin{tikzmath}
			\diagram{
			\widetilde{\CH}{}^\ev(Y,\scr L) & \CH^\ev(Y) \\
			\widetilde{\CH}{}^\ev(X,\Norm_p(\scr L)) & \CH^\ev(X)\rlap, \\
			};
			\arrows (11-) edge (-12) (21-) edge (-22) (11) edge node[left]{$\nu_p$} (21) (12) edge node[right]{$\nu_p$} (22);
		\end{tikzmath}
		where the right vertical map is the Fulton–MacPherson norm (see Example~\ref{ex:periodicHZ}). Note that these norms cannot be extended to odd Chow–Witt groups, since the ring structure on the sum of all Chow–Witt groups is not commutative.
\end{example}

\appendix
\section{The Nisnevich topology}
\label{app:nisnevich}

In this appendix, we observe that all existing definitions of the Nisnevich topology are equivalent. 
The Nisnevich topology was introduced by Nisnevich in \cite{Nisnevich}. A different definition with \emph{a priori} better properties was given in \cite[Appendix C]{HoyoisGLV} and \cite[\sect3.7]{SAG}. The two definitions are known to be equivalent for noetherian schemes, by a result of Morel and Voevdosky \cite[\sect3 Lemma 1.5]{MV}. We start by generalizing their result to nonnoetherian schemes:

\begin{lemma}\label{lem:MV}
	Let $X$ be a quasi-compact quasi-separated scheme and let $Y\to X$ be a smooth morphism which is surjective on $k$-points for every field $k$. Then there exists a sequence $\emptyset=Z_n\subset Z_{n-1}\subset\dotsb\subset Z_0=X$ of finitely presented closed subschemes such that $Y\to X$ admits a section over $Z_{i-1}\minus Z_{i}$ for all $i$.
\end{lemma}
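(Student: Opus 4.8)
The plan is to reduce to the noetherian case by a standard limit/approximation argument. First I would write $X$ as a cofiltered limit $X=\lim_\alpha X_\alpha$ of noetherian schemes with affine transition maps, using \cite[Tag 01ZA]{Stacks} or \cite[Théorème 8.8.2]{EGA4-3} (the hypothesis that $X$ is qcqs is exactly what makes this possible). Since $Y\to X$ is étale and of finite presentation, it descends: there is some index $0$ and an étale morphism $Y_0\to X_0$ with $Y\simeq Y_0\times_{X_0}X$, and we may set $Y_\alpha=Y_0\times_{X_0}X_\alpha$ for $\alpha\geq 0$.

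Next I would observe that the property ``surjective on $k$-points for every field $k$'' is equivalent to the morphism being a Nisnevich covering in the sense of \cite[Appendix C]{HoyoisGLV}; in particular for \emph{noetherian} schemes it coincides with the classical notion, so in that case the desired stratification exists by \cite[\sect3 Lemma 1.5]{MV}. The point is then to check that $Y_\alpha\to X_\alpha$ is surjective on $k$-points for some $\alpha\geq 0$. This is where one uses a spreading-out argument: the locus in $X_\alpha$ over which $Y_\alpha$ admits no section on any residue field extension can be controlled, and since $Y\to X$ is a ``completed'' covering, the obstruction must vanish at a finite stage. Concretely, one can argue by noetherian induction on $X_\alpha$ combined with the fact that $Y\to X$ being surjective on $k$-points means that over every point of $X$ the fibre is nonempty; since there are finitely many ``generic behaviours'' and each is detected at a finite stage, some $Y_\alpha\to X_\alpha$ is already a Nisnevich covering.

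Having fixed such an $\alpha$, apply the noetherian case (Morel–Voevodsky) to get finitely presented closed subschemes $\emptyset=Z_n^\alpha\subset\dotsb\subset Z_0^\alpha=X_\alpha$ such that $Y_\alpha\to X_\alpha$ has a section over each $Z_{i-1}^\alpha\minus Z_i^\alpha$. Pulling these back along $X\to X_\alpha$ gives finitely presented closed subschemes $Z_i=Z_i^\alpha\times_{X_\alpha}X$ of $X$ with $Z_n=\emptyset$ and $Z_0=X$, and the base change of a section is a section, so $Y\to X$ admits a section over each $Z_{i-1}\minus Z_i$. This completes the proof.

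The main obstacle is the middle step: verifying that the covering property descends to a finite stage. One has to be careful because ``surjective on $k$-points for all fields $k$'' is not literally a finite-presentation-type condition on the nose; the cleanest route is probably to use the characterization of Nisnevich coverings via completely decomposed families and a constructibility argument (the set of points of $X_\alpha$ where $Y_\alpha$ splits is constructible, and these constructible sets exhaust $X_\alpha$ as $\alpha$ grows, so by quasi-compactness one of them is everything). Everything else is routine limit formalism.
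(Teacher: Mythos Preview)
Your approach is genuinely different from the paper's, and the gap you yourself flag is real and, as far as I can see, not fixable without circularity.

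The constructibility claim is simply false. Take $X_0=\Spec\Z[1/2]$ and $Y_0=\Spec\Z[i,1/2]$: the locus of points of $X_0$ over which $Y_0$ acquires a rational point is $\{(p):p\equiv 1\pmod 4\}$, which is neither finite nor cofinite and hence not constructible. So the exhaustion-by-constructible-sets argument cannot work as stated. More fundamentally, I do not see any way to prove that ``surjective on $k$-points'' descends to some $Y_\alpha\to X_\alpha$ without already knowing the lemma: the only route I can find is to first produce a finitely presented splitting sequence for $Y\to X$ and then descend \emph{that} (a finite amount of finitely presented data) to level $\alpha$---but this is exactly what you are trying to prove. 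The difficulty is that the hypothesis on $Y\to X$ tells you nothing about fibers of $Y_\alpha\to X_\alpha$ over points of $X_\alpha$ not in the image of $X$, and such points abound (for instance $X=\Spec\bar\Q$, $X_0=\Spec\Q$, $Y_0=\Spec\Q(i)$: here $Y\to X$ is split, the image of $X\to X_0$ is all of $X_0$, yet $Y_0\to X_0$ has empty split locus).

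The paper avoids noetherian approximation entirely and argues directly on $X$. One considers the set $\Phi$ of closed subschemes $Z\subset X$ for which $Y\times_XZ\to Z$ admits no such sequence, shows $\Phi$ is inductively ordered, and then that $\Phi$ has no minimal element: any $Z\in\Phi$ has a maximal point $x$, the local ring $\scr O_x$ is henselian because its reduction is a field, so $Y\to X$ splits over $\Spec\scr O_x$ and hence over an open neighborhood with finitely presented closed complement $W\in\Phi$. Zorn's lemma gives $\Phi=\emptyset$. This argument also handles $Y\to X$ merely étale (not necessarily quasi-compact), whereas your descent step would additionally require a reduction to the finitely presented case.
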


\begin{proof}
	Consider the set $\Phi$ of all closed subschemes $Z\subset X$ for which the map $Y\times_X Z\to Z$ does not admit such a sequence. If $Z$ is a cofiltered intersection $\bigcap_\alpha Z_\alpha$ and $Z\notin\Phi$, then there exists $\alpha$ such that $Z_\alpha\notin\Phi$, by \cite[Proposition 8.6.3 and Théorème 8.8.2(i)]{EGA4-3}.
	 In particular, $\Phi$ is inductively ordered. Next, we note that if $x$ is a maximal point then the local ring $\scr O_x$ is henselian, since its reduction is a field. Thus, any smooth morphism that splits over $x$ splits over $\Spec(\scr O_x)$ \cite[Théorème 18.5.17]{EGA4-4} and hence over some open neighborhood of $x$.
	If $Z\in\Phi$, then $Z$ is nonempty and hence has a maximal point (by Zorn's lemma). It follows that $Y\times_XZ\to Z$ splits over some nonempty open subscheme of $Z$, which may be chosen to have a finitely presented closed complement $W\subset Z$, by \cite[Lemma 2.6.1(c)]{TT}. Clearly, $W\in\Phi$. This shows that $\Phi$ does not have a minimal element.
	By Zorn's lemma, therefore, $\Phi$ is empty.
\end{proof}

\begin{proposition}\label{prop:nisnevich-definition}
	Consider the following collections of étale covering families:
	\begin{enumerate}
		\renewcommand{\theenumi}{\alph{enumi}}
		\item Families of étale maps that are jointly surjective on $k$-points for every field $k$.
		\item Open covers and singleton families $\{\Spec(A_f\times B) \to\Spec(A)\}$, where $f\in A$, $A\to B$ is étale, and the induced map $A/fA\to B/fB$ is an isomorphism.
		\item Finite families of étale maps $\{U_i\to X\}_i$ such that $\coprod_iU_i\to X$ admits a finitely presented splitting sequence.
		\item The empty covering family of $\emptyset$ and families $\{U\into X,V\to X\}$, where $U\into X$ is an open immersion, $V\to X$ is étale, and the projection $(X\minus U)_\mathrm{red}\times_XV \to (X\minus U)_\mathrm{red}$ is an isomorphism.
		\item The empty covering family of $\emptyset$ and families $\{U\into X,V\to X\}$, where $U\into X$ is an open immersion, $V\to X$ is affine étale, and the projection $(X\minus U)_\mathrm{red}\times_XV \to (X\minus U)_\mathrm{red}$ is an isomorphism.
		\item The empty covering family of $\emptyset$ and families $\{\Spec(A_f) \into \Spec(A),\Spec(B)\to\Spec(A)\}$, where $f\in A$, $A\to B$ is étale, and the induced map $A/fA\to B/fB$ is an isomorphism.
	\end{enumerate}
	Then (a) and (b) generate the same topology on the category of schemes, (a)--(d) generate the same topology on the category of quasi-compact quasi-separated schemes, (a)--(e) generate the same topology on the category of quasi-compact separated schemes, and (a)--(f) generate the same topology on the category of affine schemes.
\end{proposition}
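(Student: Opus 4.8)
The plan is to organize the six collections into one chain of refinements and establish the equalities of generated Grothendieck topologies by mutual inclusion. Recall that if $\mathcal T_1,\mathcal T_2$ are generated by collections of covering families $\mathcal C_1,\mathcal C_2$, then $\mathcal T_1\subseteq\mathcal T_2$ as soon as each family in $\mathcal C_1$ generates a $\mathcal T_2$-covering sieve, and by the locality axiom it suffices for this to produce a single $\mathcal T_2$-covering family along whose legs all pullbacks of the given sieve are again $\mathcal T_2$-covering. I will use throughout that every family in (b)--(f) is of type (a): open covers visibly are, and a pair $\{U\into X,\,V\to X\}$ as in (d)--(f) is surjective on $k$-points since a map $\Spec k\to X$ either lands in $U$ or, its source being reduced, factors through $(X\minus U)_\red$ and hence lifts to $V$; moreover, since $V\to X$ is étale, being an isomorphism over $(X\minus U)_\red$ is equivalent to being one over $X\minus U$ with any closed-subscheme structure (an étale morphism that becomes an isomorphism after reducing the base is radicial, hence an open immersion, and surjective, hence an isomorphism). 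Thus each topology generated by (b)--(f) is contained in the one generated by (a) on its category, and the isomorphism conditions in (d)--(f) are consistent. I will also use that a finite clopen cover generates a covering sieve in each topology under consideration: it is built from elementary squares whose étale leg is a clopen immersion, which can then be broken up into finitely many affine opens when one must land in (e) or (f).

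The core is the reverse inclusion on quasi-compact quasi-separated schemes, in two steps. \emph{Step 1: (a) and (c) generate the same topology on qcqs schemes.} A (c)-family is an (a)-family; conversely, given an (a)-family $\{U_i\to X\}_{i}$ with $X$ qcqs, Lemma~\ref{lem:MV} produces a finitely presented splitting sequence $\emptyset=Z_n\subset\dots\subset Z_0=X$ for $\coprod_i U_i\to X$, and each section over a stratum $Z_{j-1}\minus Z_j$ is quasi-compact, hence factors through $\coprod$ of finitely many $U_i$; the subfamily on the resulting finite index set is a (c)-family refining $\{U_i\to X\}$. \emph{Step 2: every (c)-family generates a (d)-covering sieve on a qcqs scheme.} Induct on the length $n$ of a splitting sequence for $Y:=\coprod_i U_i\to X$, the case $n=0$ being vacuous. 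With $Z:=Z_{n-1}$, the section of $Y\to X$ over $Z_\red$ is a closed immersion $\sigma\colon Z_\red\into Y$ that is clopen inside $Y\times_XZ_\red$, so $V:=Y\minus\bigl(Y\times_XZ_\red\minus\sigma(Z_\red)\bigr)$ is an \emph{open} subscheme of $Y$, hence étale over $X$, with $V\times_XZ_\red\cong Z_\red$; shrinking $V$ in $Y$ we may take it quasi-compact. Then $\{X\minus Z\into X,\,V\to X\}$ is a (d)-square, its pullback of the given sieve to $V\subseteq Y$ contains a finite clopen cover of $V$ coming from the graph of $V\into Y$ (hence is (d)-covering), and its pullback to $X\minus Z$ is generated by a (c)-family with a shorter splitting sequence. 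Steps 1--2 give (a)=(c)=(d) on qcqs schemes, hence also on quasi-compact separated and on affine schemes.

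It remains to refine (d) to (e) on qc separated schemes, (e) to (f) on affine schemes, and to handle (b). \emph{(d)$\to$(e).} On qc separated $X$, in a (d)-square $\{U\into X,\,V\to X\}$ with $Z=X\minus U$, the scheme $Z_\red$ is a closed subscheme of the qcqs scheme $V$, which is covered by finitely many affine opens; induct on their number $m$. For $m=1$ the square is an (e)-square and the pullbacks of its sieve to $U$ and $V$ are covering. For $m\ge 2$, pick one chart $W\subseteq V$ and set $Z_1:=Z_\red\minus W$, closed in $X$; then $\{X\minus Z_1\into X,\,(V\minus Z_1)\to X\}$ is a (d)-square whose étale leg is covered by $m-1$ charts, hence generates an (e)-covering sieve by induction, while the given sieve pulls back over $V\minus Z_1$ to a sieve containing a finite clopen cover and, over $X\minus Z_1$, is refined by the (e)-square $\{U\into X\minus Z_1,\,(W\cap(X\minus Z_1))\to X\minus Z_1\}$; locality closes the induction. \emph{(e)$\to$(f).} On $X=\Spec A$ one further refines the open leg of an (e)-square into finitely many principal opens and removes them one at a time; the key point is that the locus where the étale leg $\Spec B\to\Spec A$ is an isomorphism is open and contains $Z$, which by the first paragraph's remark lets one put the elementary pieces in the form demanded by (f). \emph{(b).} Open covers are (b)-families, reducing the comparison of (a) and (b) on an arbitrary scheme to the affine case, where a (b)-square is literally an (f)-square; hence (a)=(b) on all schemes. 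Assembling the chains proves all four assertions.

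I expect the main obstacle to be Step~2: extracting the quasi-compact open $V\subseteq Y$ that realizes a Nisnevich square from the section over the deepest stratum, and checking that $\{X\minus Z\into X,\,V\to X\}$ is genuinely a (d)-square while the pullback to $X\minus Z$ again carries a splitting sequence of shorter length, so that the recursion down the strata closes. The refinements in the last paragraph are fiddly — especially the choice of principal open and closed-subscheme structure when passing to (f) — but routine once Step~2 is in hand.
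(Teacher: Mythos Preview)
Your proposal is correct and reconstructs the arguments the paper merely cites: the paper's proof is a chain of references (Lemma~\ref{lem:MV} for (a)$\Leftrightarrow$(c), \cite[\sect3 Proposition 1.4]{MV} for (c)$\Leftrightarrow$(d), and \cite[Propositions 2.1.4 and 2.3.2]{AHW} for (c)$\Leftrightarrow$(e) and (c)$\Leftrightarrow$(f), whence (a)$\Leftrightarrow$(b)), and your Steps~1--2 and the subsequent refinements are precisely what those references contain, organized as a chain (d)$\Rightarrow$(e)$\Rightarrow$(f) rather than three separate equivalences with (c). Two minor imprecisions worth fixing: in Step~2 the section $\sigma(Z_\red)$ is always \emph{open} in $Y\times_X Z_\red$ (sections of \'etale maps are open immersions) but need not be closed when $Y\to X$ is not separated---fortunately your construction of $V$ only uses openness; and your (e)$\Rightarrow$(f) sketch should make explicit that once the iso-locus $W\supseteq Z$ of $\Spec B\to\Spec A$ is in the sieve, one is reduced to showing that a finite cover by principal opens is (f)-covering, which follows by the induction you allude to (given $(f_1,\dots,f_m)=A$, set $g=1-a_mf_m\in(f_1,\dots,f_{m-1})$ to get the (f)-square $\{D(f_m),D(g)\}$ and recurse on $D(g)$).
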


\begin{proof}
	The equivalence of (a) and (c) follows from Lemma~\ref{lem:MV}, the equivalence of (c) and (d) follows from the proof of \cite[\sect3 Proposition 1.4]{MV}, the equivalence of (c) and (e) follows from \cite[Proposition 2.1.4]{AHW}, and the equivalence of (c) and (f) follows from \cite[Proposition 2.3.2]{AHW}. The equivalence of (a) and (b) follows.
\end{proof}

Each topology appearing in Proposition~\ref{prop:nisnevich-definition} will be called the \emph{Nisnevich topology}. 
This presents no risk of confusion since every inclusion among these categories of schemes is clearly continuous and cocontinuous for the respective topologies.
Note that the Nisnevich topology on qcqs, qcs, or affine schemes is generated by a cd-structure such that Voevodsky's descent criterion applies \cite[Theorem 3.2.5]{AHW}.

For $X$ a scheme, the small Nisnevich $\infty$-topos $X_\Nis$ of $X$ is the $\infty$-topos of Nisnevich sheaves on the category $\Et_X$ of étale $X$-schemes. Even though $\Et_X$ is not small, this is sensible by the comparison lemma \cite[Lemma C.3]{HoyoisGLV}. If $X$ is qcqs, we can replace $\Et_X$ by the category of finitely presented étale $X$-schemes, where the Nisnevich topology admits the simpler description (d).

\begin{proposition}\label{prop:nisnevich-properties}
	Let $X$ be an arbitrary scheme.
	\begin{enumerate}
		\item The functors
		\[
		F\mapsto F(\Spec\scr O_{Y,y}^h),\quad Y\in\Et_X,\quad y\in Y,
		\]
		form a conservative family of points of the hypercompletion of $X_\Nis$.
		\item If $X$ is qcqs, then $X_\Nis$ is coherent and is compactly generated by finitely presented étale $X$-schemes.
		\item If $X$ is qcqs of finite Krull dimension $d$, then $X_\Nis$ is locally of homotopy dimension $\leq d$; in particular, it is Postnikov complete and hypercomplete.
	\end{enumerate}
\end{proposition}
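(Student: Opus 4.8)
The plan is to treat the three assertions separately, each time reducing to a standard property of the Nisnevich topology once the right abstract input is in place.

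For (1), I would first observe that for $Y\in\Et_X$ and $y\in Y$ the scheme $\Spec\scr O_{Y,y}^h$ is a cofiltered limit of affine objects of $\Et_X$ (the affine étale neighborhoods of $y$), so that a Nisnevich sheaf $F$, extended to pro-objects by the rule $F(\lim_\alpha V_\alpha)=\colim_\alpha F(V_\alpha)$, takes a well-defined value there. The first task is to check that this evaluation functor $x^*\colon X_\Nis\to\scr S$ is the inverse image of a point: it preserves finite limits, being a filtered colimit of the limit-preserving functors $\Map_{X_\Nis}(V_\alpha,-)$, and one must show it preserves all colimits. The latter is the real content and amounts to the assertion that every Nisnevich covering sieve of some $V_\alpha$ becomes split after base change to $\Spec\scr O_{Y,y}^h$; this in turn follows from description (a) in Proposition~\ref{prop:nisnevich-definition} together with the lifting property of henselian local rings, namely that a family of étale maps jointly surjective on $k$-points for every field $k$ admits a section over any henselian local scheme whose closed point maps into $Y$ over $y$. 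Each such point restricts to a point of the hypercompletion $X_\Nis^\wedge$ since $\scr S$ is hypercomplete. It then remains to prove conservativity on $X_\Nis^\wedge$: a morphism there is an equivalence exactly when it is $\infty$-connective, so it suffices to show the stalk functors $x^*$ detect $n$-connectivity for each $n$. For $n=0$ this is the statement that a map of discrete Nisnevich sheaves is an epimorphism iff it is locally surjective, which is immediate from the topology and the splitting property; the general case follows by the usual induction on the diagonal, using that $x^*$ is left exact. For qcqs $X$ one could also simply invoke Deligne's completeness theorem, as in the proof of Lemma~\ref{lem:0truncated}.

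For (2), with $X$ quasi-compact quasi-separated, I would use the comparison lemma (as in \cite[Lemma C.3]{HoyoisGLV}) to replace $\Et_X$ by its small full subcategory $\Et_X^\fp$ of finitely presented étale $X$-schemes, on which the Nisnevich topology is the one generated by the distinguished squares of description (d) in Proposition~\ref{prop:nisnevich-definition}; in particular every covering sieve is generated by a finite covering family, so the topology is finitary. Since $\Et_X^\fp$ has finite limits (fiber products and the empty scheme of finitely presented étale $X$-schemes are again finitely presented étale), $X_\Nis=\Shv_\Nis(\Et_X^\fp)$ is a coherent $\infty$-topos by the standard criterion for coherence of a sheaf $\infty$-topos on a finitary site with finite limits. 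Compact generation is then formal: sheafified representables generate $X_\Nis$ under colimits, and for $V\in\Et_X^\fp$ the functor $\Map_{X_\Nis}(V,-)$ commutes with filtered colimits because, the topology being finitary, $\Shv_\Nis(\Et_X^\fp)$ is closed under filtered colimits inside presheaves.

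For (3), with $X$ noetherian of Krull dimension $d$, the key is to bound the homotopy dimension. I would show by noetherian induction on $V\in\Et_X$ — which is again noetherian of dimension $\leq d$ — that $V_\Nis$ has homotopy dimension $\leq d$. When $d=0$, $V$ is a finite disjoint union of henselian local schemes and $V_\Nis$ is a finite product of copies of $\scr S$, of homotopy dimension $0$. For the inductive step one runs the noetherian-induction argument of \cite[\sect3]{MV}: a $d$-connective object is inhabited over a dense open $U\subset V$ with finitely presented complement $Z$ of dimension $<d$, its restriction to $Z_\Nis$ — of homotopy dimension $\leq d-1$ by the inductive hypothesis — is again inhabited, and the two sections are patched along the resulting Nisnevich distinguished square (description (d)). Since the $V\in\Et_X^\fp$ generate $X_\Nis$ under colimits and $(X_\Nis)_{/V}\simeq V_\Nis$, it follows that $X_\Nis$ is locally of homotopy dimension $\leq d$, whence Postnikov completeness — and a fortiori hypercompleteness — by the standard implications for $\infty$-topoi that are locally of finite homotopy dimension. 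The main obstacle overall is the conservativity statement in (1) for general $X$, or equivalently making the reduction to the coherent qcqs case precise, since (2) and (3) are close to verbatim applications of known results.
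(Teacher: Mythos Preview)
Your treatments of (1) and (2) are correct and essentially match the paper: the paper reduces (1) to the discrete case via homotopy sheaves and cites the étale analog in SGA4, and deduces (2) from description (d) together with Voevodsky's descent criterion and \cite[Proposition A.3.1.3(3)]{SAG}. (Your aside about ``the empty scheme'' in the finite-limits check is a slip --- you want the terminal object $X$ and pullbacks --- but it is inconsequential.)

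In (3) there is a genuine error in the base case and a gap in the inductive step. The base case: for $V$ henselian local, the Nisnevich $\infty$-topos $V_\Nis=\Shv_\Nis(\Et_V)$ is \emph{not} a product of copies of $\scr S$. Already for $V=\Spec k$ with $k$ a non--separably-closed field one has $V_\Nis\simeq\PSh_\Sigma(\Et_k)$, which records all finite separable extensions of $k$. What \emph{is} true, and suffices, is that $V_\Nis$ has homotopy dimension $0$: any Nisnevich cover of $V$ splits, so a $0$-connective sheaf has a global section. The inductive step: you cannot patch a section over the open $U$ with a section over the closed complement $Z$ via a Nisnevich square, since $Z\hookrightarrow V$ is not étale. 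One must first lift the section of $i^*F$ on $Z$ to a section of $F$ on an étale neighborhood $V'\to V$ that is an isomorphism over $Z$, and producing this lift is the nontrivial step --- it uses the full $d$-connectivity, not just $0$-connectivity, together with a further inductive argument on the fiber of the restriction map. This direct argument can be made to work (it is essentially \cite[Theorem 3.7.7.1]{SAG}, which the paper also cites as an alternative), but the paper's primary route is different: it invokes \cite[\sect3 Proposition 1.16]{MV} for hypercompleteness and \cite[\sect3 Proposition 1.8]{MV} for the bound on Nisnevich \emph{cohomological} dimension, then applies \cite[Proposition 1.3.3.10]{SAG} to pass from cohomological to homotopy dimension in a hypercomplete $\infty$-topos.
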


\begin{proof}
	By considering homotopy sheaves, it suffices to prove (1) for Nisnevich sheaves of sets on $\Et_X$. The proof is the same as in the étale case \cite[Exposé VIII, Théorème 3.5(b)]{SGA4-2}, using characterization (a) of the Nisnevich topology.
	If $X$ is qcqs, the compact generation of $X_\Nis$ follows from characterization (d) and Voevodsky's descent criterion \cite[Theorem 3.2.5]{AHW}, and its coherence follows from \cite[Proposition A.3.1.3(3)]{SAG}.
	 Assertion (3) is \cite[Theorem 3.17]{ClausenMathew}.
\end{proof}

\section{Detecting effectivity}

We fix a subset $I \subset \Z \times \Z$ and we denote by $\SH(S)_{\geq I}$ the full subcategory of $\SH(S)$ generated under colimits and extensions by 
\[\{\Sigma^\infty_+ X \wedge \S^a \wedge \G_m^{\wedge b} \suchthat X \in \Sm_S,\; (a,b) \in I\}.\]
Note that $\SH(S)_{\geq I}$ depends only on the upward closure of $I$. The main examples are the following:
\begin{center}
	\begin{tabular}{cc}\addlinespace[5pt]
		 $I$ & $\SH(S)_{\geq I}$  \\ \midrule 
		$\Z \times \Z$ & $\SH(S)$ \\
		$\Z \times \{n\}$ & $\SH(S)^\eff(n)$ \\
		$\{n\} \times \Z$ & $\SH(S)_{\ge n}$ \\
		$\{(n,n)\}$ & $\SH(S)^\veff(n)$ \\
		$\{(n,0)\}$ & $\SH(S)^\eff_{\geq n}$ \\
		$\emptyset$ & $\{0\}$ \\
		\midrule\addlinespace[5pt]
	\end{tabular}
\end{center}
The subcategory $\SH(S)_{\geq I} \subset \SH(S)$ is the nonnegative part of a $t$-structure \cite[Proposition 1.4.4.11]{HA}. We denote by $\SH(S)_{<I}$ its negative part, which consists of all $E \in \SH(S)$ such that $\Map(F, E) \simeq *$ for every $F \in \SH(S)_{\geq I}$. 
 We denote by $\tau_{\geq I}$ the right adjoint to $\SH(S)_{\geq I} \into \SH(S)$ and by $\tau_{<I}$ the left adjoint to $\SH(S)_{< I} \into \SH(S)$.
 
For every morphism $f\colon S'\to S$, it is clear that the functor $f^*\colon \SH(S) \to \SH(S')$ is right $t$-exact, i.e., it preserves $\SH(\ph)_{\geq I}$. Consequently, its right adjoint $f_*$ is left $t$-exact, i.e., it preserves $\SH(\ph)_{< I}$. If $f$ is smooth, it is equally clear that $f_\sharp\colon \SH(S')\to \SH(S)$ is right $t$-exact, and hence that $f^*$ is $t$-exact. The following two lemmas improve on these observations.

\begin{lemma} \label{lemm:f*-trunc-commute}
If $f\colon S' \to S$ is a pro-smooth morphism, then $f^*\colon \SH(S) \to \SH(S')$ is $t$-exact. 
\end{lemma}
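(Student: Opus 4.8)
We already know from the remarks preceding the statement that $f^*$ is right $t$-exact for any morphism, so the content is that $f^*$ preserves $\SH(\ph)_{<I}$, i.e.\ is left $t$-exact. The plan is to reduce to the case where $f$ is smooth affine, where one can argue directly via the adjunction $f_\sharp\dashv f^*\dashv f_*$, and then pass to the pro-smooth case by a continuity (filtered colimit) argument.

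First I would observe that since $f^*$ is already right $t$-exact, it suffices to show that for $E\in\SH(S)$ the canonical map $f^*\tau_{<I}E\to \tau_{<I}f^*\tau_{<I}E$ is an equivalence, or equivalently that $f^*E\in\SH(S')_{<I}$ whenever $E\in\SH(S)_{<I}$. By definition this means checking $\Map_{\SH(S')}(F,f^*E)\simeq *$ for all $F\in\SH(S')_{\geq I}$. When $f$ is smooth, $f^*$ has a left adjoint $f_\sharp$, and the generators of $\SH(S')_{\geq I}$ are of the form $\Sigma^\infty_+ X\wedge\S^a\wedge\G_m^{\wedge b}$ with $X\in\Sm_{S'}$ and $(a,b)\in I$; applying $f_\sharp$ and using that $f_\sharp(\Sigma^\infty_+X)\simeq\Sigma^\infty_+X$ as an object of $\SH(S)$ with $X$ now viewed in $\Sm_S$, we get $f_\sharp F\in\SH(S)_{\geq I}$. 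Hence $\Map(F,f^*E)\simeq\Map(f_\sharp F,E)\simeq *$ since $E\in\SH(S)_{<I}$. This settles the smooth case.

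For the pro-smooth case, write $S'=\lim_\alpha S_\alpha$ as a cofiltered limit of smooth $S$-schemes $S_\alpha$ with affine transition maps. Then $\SH(S')\simeq\colim_\alpha\SH(S_\alpha)$ in $\Pr^{\mathrm L}$ (this is the pro-smooth base change / continuity property of $\SH$; see the references invoked elsewhere in the paper, e.g.\ around Proposition~\ref{prop:categorical-props}(6) and \cite[Lemma A.7]{HoyoisMGL}), so every object of $\SH(S')$ is $\pi_\alpha^*$ of an object of some $\SH(S_\alpha)$, where $\pi_\alpha\colon S'\to S_\alpha$. Given $E\in\SH(S)_{<I}$, let $f_\alpha\colon S_\alpha\to S$ be the structure map; by the smooth case $f_\alpha^*E\in\SH(S_\alpha)_{<I}$, and by the smooth case applied to $\pi_\alpha$ we get $f^*E\simeq\pi_\alpha^* f_\alpha^* E\in\SH(S')_{<I}$. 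This proves left $t$-exactness of $f^*$, and combined with right $t$-exactness gives $t$-exactness.

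The main obstacle is purely bookkeeping: one must make sure the $t$-structures $\SH(\ph)_{\geq I}$ are genuinely compatible with the base change functors in the sense that the generators pull back (resp.\ push forward under $f_\sharp$) to generators, and that the continuity equivalence $\SH(S')\simeq\colim_\alpha\SH(S_\alpha)$ is compatible with the pullback functors $\pi_\alpha^*$. Both are standard, the first because $\Sm$ is stable under smooth base change and $f_\sharp\Sigma^\infty_+X\simeq\Sigma^\infty_+X$, the second because it is exactly the statement of pro-smooth base change for $\SH$; so no serious difficulty is expected, but the reduction to smooth affine $f$ via continuity is the step that does the real work.
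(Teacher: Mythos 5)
Your smooth case is fine, and it is essentially the reduction the paper performs implicitly, but the passage to the pro-smooth case is where the argument breaks, in two ways.

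First, the assertion that, because $\SH(S')\simeq\colim_\alpha\SH(S_\alpha)$ in $\Pr^{\mathrm L}$, ``every object of $\SH(S')$ is $\pi_\alpha^*$ of an object of some $\SH(S_\alpha)$'' is false: continuity gives only that compact objects of $\SH(S')$ descend up to retract, and that mapping spaces out of compact objects are filtered colimits; a general object of $\SH(S')$ need not come from any finite stage. Second, and more seriously, your reduction is circular: you write $f^*E\simeq\pi_\alpha^*f_\alpha^*E$, apply the smooth case to $f_\alpha\colon S_\alpha\to S$ to get $f_\alpha^*E\in\SH(S_\alpha)_{<I}$, and then invoke ``the smooth case applied to $\pi_\alpha$'' to conclude. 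But $\pi_\alpha\colon S'\to S_\alpha$ is again pro-smooth, not smooth -- it is the projection from the cofiltered limit -- so this step presupposes exactly the lemma being proved. Decomposing the target $f^*E$ cannot work, because one factor of the decomposition is always a pro-smooth pullback.

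The fix, which is what the paper does, is to decompose the \emph{source} rather than the target. One reduces to generators $F=\Sigma^\infty_+X'\wedge\S^a\wedge\G_m^{\wedge b}$ with $X'\in\Sm_{S'}$ and $(a,b)\in I$, descends $X'$ to some $X_0\in\Sm_{S_0}$ by continuity of $S\mapsto\Sm_S$, and then uses continuity of $\SH(\ph)$ to identify
\[
\Map_{\SH(S')}(\Sigma^\infty_+X'\wedge\S^a\wedge\G_m^{\wedge b},\,f^*E)\simeq\colim_{\alpha\geq 0}\Map_{\SH(S)}(\Sigma^\infty_+X_\alpha\wedge\S^a\wedge\G_m^{\wedge b},\,E),
\]
where $X_\alpha=X_0\times_{S_0}S_\alpha\in\Sm_S$. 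Each term vanishes because $E\in\SH(S)_{<I}$, and a filtered colimit of contractible spaces is contractible. (This colimit formula is where your smooth-case ingredient $f_{\alpha\sharp}\Sigma^\infty_+X_\alpha\simeq\Sigma^\infty_+X_\alpha$ actually gets used, since it identifies $\Map_{\SH(S_\alpha)}(\ph,f_\alpha^*E)$ with $\Map_{\SH(S)}(\ph,E)$.) You should rework the pro-smooth step along these lines.
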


\begin{proof}
Let $E\in\SH(S)_{<I}$. We must show that $\Map(F,f^*E)\simeq *$ for every $F\in\SH(S')_{\geq I}$, and we may assume that $F=\Sigma^\infty_+X' \wedge\S^a\wedge\G_m^{\wedge b}$ for some $X'\in\Sm_{S'}$ and $(a,b)\in I$. Let $(S_\alpha)_{\alpha\in A}$ be a cofiltered system of smooth $S$-schemes with limit $S'$. By continuity of $S \mapsto \Sm_S$, there exist $0\in A$ and $X_0\in\Sm_{S_0}$ with $X'\simeq X_0\times_{S_0}S'$. For $\alpha\geq 0$, let $X_\alpha=X_0\times_{S_0}S_\alpha$.
By continuity of $\SH(\ph)$ \cite[Proposition 4.3.4]{CD}, we then have
\[ 
\Map_{\SH(S')}(\Sigma^\infty_+ X' \wedge \S^a \wedge \G_m^{\wedge b}, f^* E) 
\simeq \colim_{\alpha \geq 0} \Map_{\SH(S)}(\Sigma^\infty_+ X_\alpha \wedge \S^a \wedge \G_m^{\wedge b}, E),
\]
which is contractible since $E\in\SH(S)_{<I}$.
\end{proof}

\begin{lemma} \label{lemm:i*-preserves}
If $i\colon Z \into S$ is a closed immersion, then $i_*\colon \SH(Z)\to \SH(S)$ is $t$-exact. 
\end{lemma}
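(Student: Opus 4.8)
We already know that $i_*$ is left $t$-exact (being right adjoint to the right $t$-exact functor $i^*$), so the only thing left is right $t$-exactness: if $E \in \SH(Z)_{\geq I}$ then $i_*E \in \SH(S)_{\geq I}$. Since $\SH(Z)_{\geq I}$ is generated under colimits and extensions by the objects $\Sigma^\infty_+ X \wedge \S^a \wedge \G_m^{\wedge b}$ with $X \in \Sm_Z$ and $(a,b) \in I$, and since $i_*$ preserves colimits (it is the pushforward along a closed immersion, which in $\SH$ commutes with colimits because it has a further right adjoint $i^!$ and, more to the point, because $\SH$ is a cdh sheaf and $i_*$ is computed by gluing) and preserves cofiber sequences (it is exact), it suffices to show $i_*(\Sigma^\infty_+ X \wedge \S^a \wedge \G_m^{\wedge b}) \in \SH(S)_{\geq I}$ for such generators. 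As $i_*$ is $\SH(S)$-linear (projection formula) and $\S^a \wedge \G_m^{\wedge b}$ is pulled back from $S$, we may strip off the twist and reduce to showing $i_*\Sigma^\infty_{Z+} X \wedge \S^a \wedge \G_m^{\wedge b} \in \SH(S)_{\geq I}$, i.e. to the case $E = \Sigma^\infty_{Z+} X$ for $X \in \Sm_Z$.

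First I would reduce to the case where $X$ lifts to a smooth $S$-scheme. Working Zariski-locally on $Z$ — which is harmless since $i_*$ is compatible with Zariski (indeed Nisnevich) descent on the base, so one can check membership in $\SH(S)_{\geq I}$ after pulling back to a Zariski cover of $S$ refining a cover over which things trivialize — one can assume $X$ is affine. Then, using that $Z \into S$ is a closed immersion and that a smooth affine $Z$-scheme is cut out inside some affine space by finitely many equations which can be lifted, one finds a smooth affine $S$-scheme $\tilde X$ with $\tilde X \times_S Z \simeq X$; here one invokes the standard fact (e.g. via \cite[Tag 00UE]{Stacks} or \cite[Tag 07M8]{Stacks}) that smooth affine schemes lift along closed immersions, at least fppf-locally, which combined with Nisnevich descent on $S$ suffices. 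Let $j\colon U \into S$ be the open complement of $Z$, with $\tilde X_U = \tilde X \times_S U$.

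Now the key step: I would use the localization (gluing) cofiber sequence in $\SH$. For the smooth $S$-scheme $f\colon \tilde X \to S$ there is a cofiber sequence $f_\sharp j'_! j'^* \1_{\tilde X} \to f_\sharp \1_{\tilde X} \to f_\sharp i'_* i'^* \1_{\tilde X}$ where $i'\colon X \into \tilde X$ and $j'\colon \tilde X_U \into \tilde X$ are the inclusions; applying $f_\sharp$ and using the base change identities $f_\sharp i'_* \simeq i_* f_{Z\sharp}$ and $f_\sharp j'_! \simeq j_! f_{U\sharp}$ (with $f_Z\colon X \to Z$, $f_U\colon \tilde X_U \to U$ the base changes of $f$), this becomes a cofiber sequence
\[
j_! \Sigma^\infty_{U+} \tilde X_U \longrightarrow \Sigma^\infty_{S+} \tilde X \longrightarrow i_* \Sigma^\infty_{Z+} X.
\]
Hence $i_* \Sigma^\infty_{Z+} X$ is the cofiber of a map between $\Sigma^\infty_{S+} \tilde X \in \SH(S)_{\geq I}$ (as $\tilde X \in \Sm_S$, $(0,0)$ can be replaced by noting that if $(a,b)\in I$ were needed we already stripped the twist; for the untwisted generator $\Sigma^\infty_{S+}\tilde X$ to lie in $\SH(S)_{\geq I}$ we need $(0,0)$ to be in the upward closure of $I$ — which it is in all the cases of interest, and in general one keeps the twist $\S^a\wedge\G_m^{\wedge b}$ along for the ride, which changes nothing) and $j_! \Sigma^\infty_{U+} \tilde X_U$. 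So it remains to check $j_! \Sigma^\infty_{U+} \tilde X_U \in \SH(S)_{\geq I}$. But $j_!$ is left adjoint to $j^*$, and $j^*$ is $t$-exact (it is smooth pullback, hence even both $t$-exact by the remarks preceding Lemma~\ref{lemm:f*-trunc-commute} and its left adjoint $j_!$ is right $t$-exact because $\SH(U)_{\geq I}$ is generated by $\Sigma^\infty_{U+}$ of smooth $U$-schemes, whose $j_!$ are $\Sigma^\infty_{S+}$ of the same smooth schemes now viewed over $S$ via the open immersion). Therefore $j_! \Sigma^\infty_{U+} \tilde X_U \simeq \Sigma^\infty_{S+} \tilde X_U$ lies in $\SH(S)_{\geq I}$, and we conclude that $i_* \Sigma^\infty_{Z+} X$ does too, being the cofiber of a morphism in the extension-closed subcategory $\SH(S)_{\geq I}$.

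\textbf{Main obstacle.} The delicate point is the reduction to a liftable $\tilde X$: smooth affine schemes do not lift along arbitrary closed immersions on the nose, only fppf- or étale-locally, so one must be careful that the descent argument on $S$ is legitimate — concretely, that membership in $\SH(S)_{\geq I}$ can be checked Nisnevich-locally (true, since $\SH$ is a Nisnevich sheaf and $\tau_{<I}$ commutes with the relevant pullbacks by the $t$-exactness of $j^*$), and that a smooth affine $Z$-scheme lifts to a smooth affine scheme over a Nisnevich cover of $S$. This last point is standard deformation theory (the obstruction lives in a coherent cohomology group that vanishes affine-locally), but stating it cleanly is where most of the actual work sits; everything after that is the formal localization-sequence manipulation above.
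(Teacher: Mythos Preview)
Your approach is essentially the same as the paper's: reduce to generators, lift the smooth $Z$-scheme to a smooth $S$-scheme, and use the localization cofiber sequence $j_\sharp j^* E \to E \to i_* i^* E$. The paper phrases the conclusion more cleanly as ``it suffices to show that $i_*i^*$ preserves $\SH(S)_{\geq I}$'', but the content is identical.

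The one place where you overcomplicate things is precisely your ``main obstacle''. Rather than working Zariski-locally on $Z$ or $S$ and worrying about obstruction theory and descent for membership in $\SH(S)_{\geq I}$, the paper works Zariski-locally on $Y$ itself: by \cite[Proposition 18.1.1]{EGA4-4}, any smooth $Z$-scheme $Y$ is, locally on $Y$, of the form $X\times_S Z$ for some $X\in\Sm_S$. Since $\Sigma^\infty_+ Y$ is built (via Nisnevich descent) from the $\Sigma^\infty_+ Y_i$ for any Zariski cover $\{Y_i\}$ of $Y$, and you have already reduced to the closure under colimits and extensions, this immediately reduces to the liftable case. There is no need to check anything locally on $S$, and no deformation-theoretic obstruction to analyze.

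A minor point: the projection-formula detour to strip off the twist is unnecessary, and the aside about needing $(0,0)$ in the upward closure of $I$ is a misstep you immediately catch. Just carry the twist $\S^a\wedge\G_m^{\wedge b}$ throughout; since it is pulled back from $S$, it commutes with everything and the argument is unchanged.
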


\begin{proof}
Since $i_*$ preserves colimits, the subcategory of all $E \in \SH(Z)$ such that $i_*(E) \in \SH(S)_{\geq I}$ is closed under colimits and extensions. It thus suffices to prove that $i_* (\Sigma^\infty_+ Y \wedge \S^a \wedge \G_m^{\wedge b}) \in \SH(S)_{\geq I}$ for every $Y \in \Sm_{Z}$ and every $(a,b) \in I$. By \cite[Proposition 18.1.1]{EGA4-4}, we can further assume that $Y\simeq X\times_SZ$ for some $X\in\Sm_S$, since this is true locally on $Y$.
It is therefore enough to show that $i_*i^*$ preserves $\SH(S)_{\geq I}$. Consider the localization cofiber sequence
\[
j_\sharp j^* E \to E \to i_* i^* E,
\]
where $j\colon S\minus Z\into S$. If $E\in \SH(S)_{\geq I}$, then also $j_\sharp j^*E\in \SH(S)_{\geq I}$, and hence $i_*i^*E\in\SH(S)_{\geq I}$.
\end{proof}

For a scheme $S$ and $s \in S$, we abusively write $s$ for the scheme $\Spec \kappa(s)$ and we denote by $E_s$ the pullback of $E$ to $s$.

\begin{proposition}\label{prop:veff-detect-on-points}
Let $S$ be a scheme locally of finite Krull dimension and let $E \in \SH(S)$. Then $E \in \SH(S)_{\geq I}$ if and only if $E_s \in \SH(s)_{\geq I}$ for every point $s \in S$.
In particular, if each $E_s$ is effective, connective, very effective, or zero, then so is $E$.
\end{proposition}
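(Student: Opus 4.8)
The plan is to prove this by Noetherian induction on $S$, using the family of points supplied by the Nisnevich topology (Proposition~\ref{prop:nisnevich-properties}) together with the $t$-exactness properties just established for closed pushforward and pro-smooth pullback. The ``only if'' direction is immediate: if $E\in\SH(S)_{\geq I}$ then $E_s=j_s^*E\in\SH(s)_{\geq I}$ by right $t$-exactness of $j_s^*$ (where $j_s\colon \Spec\kappa(s)\to S$), or indeed by Lemma~\ref{lemm:f*-trunc-commute} since $j_s$ factors as a closed immersion into the local scheme followed by a pro-smooth map. So the content is the converse.

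For the converse, suppose $E_s\in\SH(s)_{\geq I}$ for all $s\in S$; we want $\tau_{<I}E\simeq 0$. Set $F=\tau_{<I}E\in\SH(S)_{<I}$. First I would treat the generic points: let $\eta_1,\dots,\eta_n$ be the generic points of the (finitely many) irreducible components of $S_{\red}$, with inclusions $j_i\colon \Spec\kappa(\eta_i)\to S$, which are pro-smooth (each $\kappa(\eta_i)$ is a filtered colimit of smooth-over-$S$... more precisely, $\Spec\kappa(\eta_i)$ is the limit of the affine opens of an irreducible component, hence pro-(open immersion), which is pro-smooth). By Lemma~\ref{lemm:f*-trunc-commute}, $j_i^*$ is $t$-exact, so $j_i^*F\simeq \tau_{<I}(E_{\eta_i})$, which vanishes because $E_{\eta_i}\in\SH(\eta_i)_{\geq I}$ by hypothesis (note $E_{\eta_i}=(E_s)_{\eta_i}$ for $s$ the image of $\eta_i$, and localizing preserves $\SH(\ph)_{\geq I}$). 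Thus $F$ restricts to zero on a dense open $U\subset S$: indeed, by continuity of $\SH(\ph)$, $j_i^*F\simeq 0$ implies $F|_{U_i}\simeq 0$ for some open neighborhood $U_i$ of $\eta_i$ in its component, and we take $U=\bigcup U_i$, which meets every component. Let $Z=S\setminus U$ with its reduced structure; $Z$ is a proper closed subscheme, and $j\colon U\into S$, $i\colon Z\into S$ give the localization cofiber sequence $j_\sharp j^*F\to F\to i_*i^*F$. Since $j^*F\simeq 0$, we get $F\simeq i_*i^*F$.

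Now apply the inductive hypothesis: $Z$ is a noetherian scheme of finite Krull dimension, strictly smaller than $S$ (in the sense that Noetherian induction on closed subschemes applies), and for every point $z\in Z$ we have $(i^*F)_z=(i^*\tau_{<I}E)_z$. Here I would note that $i^*$ is right $t$-exact, so $i^*E$... wait — we need a handle on $i^*F$ at points of $Z$, and $F=\tau_{<I}E$, so $i^*F=i^*\tau_{<I}E$, and $i^*$ is right $t$-exact hence $i^*F\in\SH(Z)_{<I}$; to apply induction to the object $i^*F\in\SH(Z)$ we must show $(i^*F)_z\in\SH(z)_{\geq I}$, but $(i^*F)_z\in\SH(z)_{<I}$ automatically, so it suffices to show $(i^*F)_z\simeq 0$ for all $z\in Z$. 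For this, restrict the cofiber sequence $j_\sharp j^*E\to E\to i_*i^*E$ (for $E$ itself, not $F$) to $z$: since $z\in Z$, $z$ lies in the closed complement of $U$, and $(j_\sharp j^*E)_z$ — hmm, $j_\sharp$ does not commute with arbitrary base change in the needed way. Better: $i^*E\in\SH(Z)$ has $(i^*E)_z=E_z\in\SH(z)_{\geq I}$ by hypothesis, for all $z\in Z$, so by the inductive hypothesis $i^*E\in\SH(Z)_{\geq I}$, whence $i_*i^*E\in\SH(S)_{\geq I}$ by Lemma~\ref{lemm:i*-preserves}. Then the cofiber sequence $j_\sharp j^*E\to E\to i_*i^*E$ together with $j_\sharp j^*E\in\SH(S)_{\geq I}$ (as $j_\sharp$ is right $t$-exact and $j^*E\in\SH(U)_{\geq I}$ since $U$ is open in $S$ — again use the pointwise criterion on $U$, or just that $j^*$ is $t$-exact and its image generates under $j_\sharp$)... actually more directly: $\SH(S)_{\geq I}$ is closed under extensions, and $E$ sits in an extension of $i_*i^*E$ by $j_\sharp j^*E$, both in $\SH(S)_{\geq I}$, hence $E\in\SH(S)_{\geq I}$, as desired.

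So the clean structure is: (1) reduce to showing, for all $z\in S$ at once via Noetherian induction, that the pointwise hypothesis implies effectivity; (2) handle generic points using pro-smooth $t$-exactness (Lemma~\ref{lemm:f*-trunc-commute}) and continuity of $\SH(\ph)$ to get $E|_U\in\SH(U)_{\geq I}$, equivalently $F|_U\simeq 0$, on a dense open $U$; (3) apply the inductive hypothesis to the strictly smaller reduced closed complement $Z$ to conclude $i^*E\in\SH(Z)_{\geq I}$; (4) use Lemma~\ref{lemm:i*-preserves} and the localization cofiber sequence plus closure of $\SH(S)_{\geq I}$ under extensions and colimits to conclude $E\in\SH(S)_{\geq I}$. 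The base case is $S=\emptyset$, which is trivial. The main obstacle I anticipate is step (2): making precise the passage from ``$j_i^*F\simeq 0$ at the generic points'' to ``$F$ vanishes on an honest open neighborhood'' — this requires care with continuity of $\SH(\ph)$ along the pro-open system of affine opens shrinking to $\eta_i$, and with the fact that a compact generator ($\Sigma^\infty_+X\wedge\S^a\wedge\G_m^{\wedge b}$) of the relevant subcategory has its mapping spectrum into $F$ computed as a filtered colimit over the opens, so vanishing at the colimit forces vanishing at a finite stage. Everything else is a routine assembly of the $t$-exactness lemmas and the localization sequence. The final ``in particular'' clause is immediate by taking $I=\{(n,n)\}$, $\{(n,0)\}$, or $\emptyset$ as recorded in the table preceding the statement.
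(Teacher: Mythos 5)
You have the right ingredients (the $t$-exactness lemmas, the localization cofiber sequence, closure under extensions) and you have correctly isolated the critical step, but your proposed resolution of that step fails, and without it the induction is circular. The gap is the passage from $F_\eta\simeq 0$ at a generic point to $F|_U\simeq 0$ on some open $U\ni\eta$. You propose testing against a compact generator $C$ and using continuity, $\colim_\alpha\Map(C_{U_\alpha},F_{U_\alpha})\simeq\Map(C_\eta,F_\eta)\simeq 0$, and then asserting that vanishing at the colimit forces vanishing at a finite stage. This is false: a filtered colimit of spectra being zero does not force any stage to be zero, and the compactness is on the wrong side — it only guarantees that the colimit \emph{computes} the stalk, not that vanishing propagates backward. (Already on $S=\A^1_k$, any nonzero object of $\SH(S)_{<I}$ supported at a closed point has zero generic stalk but is nonzero on every open.) Without step (2), you cannot conclude $j^*E\in\SH(U)_{\geq I}$: your Noetherian induction applies only to the proper closed complement $Z$, and the dense open $U$ has the same Krull dimension as $S$, so neither the induction nor ``the pointwise criterion on $U$'' applies. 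There is also a secondary wrinkle: $\Spec\kappa(\eta)\to S$ is only pro-smooth when $S$ is reduced at $\eta$; the honest pro-smooth object is $\Spec\scr O_{S,\eta}$, and passing to the residue field requires invoking topological invariance of $\SH$.

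The paper instead inducts on Krull dimension and first reduces to the case of a \emph{local} scheme, via Proposition~\ref{prop:nisnevich-properties}(1,3): over a noetherian finite-dimensional base the Nisnevich $\infty$-topos is Postnikov complete, so to show $\tau_{<I}E\simeq 0$ it suffices to do so after restriction to each localization $S_s=\Spec\scr O_{S,s}$ — and these restrictions are pro-smooth, hence commute with $\tau_{<I}$ by Lemma~\ref{lemm:f*-trunc-commute}. This sheaf-theoretic local-to-global principle is the ingredient missing from your write-up; it is what legitimately replaces your step (2). Once $S$ is local with closed point $i\colon\{s\}\into S$ and open complement $j\colon U\into S$, the open $U$ genuinely has strictly smaller dimension (every nonempty closed subset of a local scheme contains the closed point), so the dimension induction gives $j^*E\in\SH(U)_{\geq I}$; meanwhile $i^*E=E_s\in\SH(s)_{\geq I}$ by hypothesis and $i_*$ preserves $\SH(\ph)_{\geq I}$ by Lemma~\ref{lemm:i*-preserves}. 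The conclusion via extension closure is then exactly as you write at the end.
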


\begin{proof}
Necessity is clear. We prove sufficiency by induction on the dimension of $S$. Suppose $E_s \in \SH(s)_{\geq I}$ for all $s \in S$. We need to show that $E \in \SH(S)_{\geq I}$, or equivalently that $\tau_{<I}(E) \simeq 0$. By Proposition~\ref{prop:nisnevich-properties}(1,3), it suffices to show that $\tau_{<I}(E)_{S_s} \simeq 0$ for all $s \in S$, where $S_s$ is the localization of $S$ at $s$. By Lemma \ref{lemm:f*-trunc-commute}, we have $\tau_{<I}(E)_{S_s} \simeq \tau_{<I}(E_{S_s})$, so we may assume that $S$ is local with closed point $i\colon \{s\} \into S$ and open complement $j\colon U \into S$ of strictly smaller dimension. Consider the localization cofiber sequence
 \[ j_\sharp j^* E \to E \to i_*i^* E. \] 
We have $j^* E \in \SH(U)_{\geq I}$ by the induction hypothesis, whence $j_\sharp j^* E\in\SH(S)_{\geq I}$. We also have $i^* E \in \SH(s)_{\geq I}$ by assumption, whence $i_* i^* E \in \SH(S)_{\geq I}$ by Lemma~\ref{lemm:i*-preserves}. Since $\SH(S)_{\geq I}$ is closed under extensions, we deduce that $E\in\SH(S)_{\geq I}$.
\end{proof}

As an application of Proposition~\ref{prop:veff-detect-on-points}, we compute the zeroth slice of the motivic sphere spectrum over a Dedekind domain. This was previously done by Spitzweck after inverting the residual characteristics \cite[Theorem 3.1]{SpitzweckMGL}. We denote by $\HH\Z^\Spi_S$ the motivic spectrum representing Bloch–Levine motivic cohomology, as constructed by Spitzweck \cite{SpitzweckHZ}.

\begin{theorem}\label{thm:s_0(1)}
	Suppose that $S$ is essentially smooth over a Dedekind domain. Then there is an equivalence $\s_0(\1_S)\simeq \HH\Z^\Spi_S$.
\end{theorem}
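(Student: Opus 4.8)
The plan is to reduce to the case of a Dedekind domain and then to exploit that both sides are zeroth-slice spectra, constructing a natural comparison map and checking it pointwise.

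\emph{Reduction.} Both functors $S\mapsto\s_0(\1_S)$ and $S\mapsto\HH\Z^\Spi_S$ are compatible with pro-smooth (in particular essentially smooth) base change: for $\HH\Z^\Spi$ this is one of the basic properties established in \cite{SpitzweckHZ}, and for $\s_0(\1)$ it follows from the fact that the functors $\f_n$ commute with smooth base change together with the continuity of $\SH(\ph)$ along cofiltered limits of schemes with affine transition maps (exactly as in the argument recorded in \cite[Remark~4.20]{HoyoisMGL}). Hence it suffices to prove the equivalence when $S=\Spec D$ for $D$ a Dedekind domain. Since both $\s_0(\1_D)$ and $\HH\Z^\Spi_D$ are Nisnevich (even Zariski) sheaves in $D$ and $\Spec D$ is Zariski-covered by spectra of local rings of $D$, we may further assume $D$ is local, i.e.\ a field or a discrete valuation ring. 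When $D$ is a field, $\s_0(\1_D)\simeq\HH\Z_D\simeq\HH\Z^\Spi_D$ is due to Levine--Voevodsky (see \cite[Remark~4.20]{HoyoisMGL} and \cite{SpitzweckHZ}); so the content is the case where $D$ is a discrete valuation ring, with closed point $i\colon\Spec\kappa\to\Spec D$ and generic point $j\colon\Spec F\to\Spec D$.

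\emph{The comparison map.} The spectrum $\HH\Z^\Spi_D$ is effective and satisfies $\f_1\HH\Z^\Spi_D\simeq 0$: indeed $\s_0\HH\Z^\Spi_D\simeq\HH\Z^\Spi_D$ by \cite{SpitzweckHZ}, which, since $\HH\Z^\Spi_D$ is effective, forces the fiber $\f_1\HH\Z^\Spi_D$ of $\HH\Z^\Spi_D\to\s_0\HH\Z^\Spi_D$ to vanish. Consequently the unit map $\1_D\to\HH\Z^\Spi_D$ factors uniquely through the localization $\1_D\to\s_0(\1_D)$, producing a map $\theta\colon\s_0(\1_D)\to\HH\Z^\Spi_D$ which is natural in $D$. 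As $j$ is an open immersion, $j^*$ commutes with $\s_0$ and with $\HH\Z^\Spi$, so $j^*\theta$ is the field-case comparison map over $F$ and is therefore an equivalence.

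\emph{Checking $\theta$ is an equivalence.} It remains to show $\cofib(\theta)$ and $\fib(\theta)$ vanish. By Proposition~\ref{prop:veff-detect-on-points} (applied with $I=\emptyset$), a motivic spectrum over $\Spec D$ is zero if and only if its pullback to every point of $\Spec D$ is zero; since pullback functors are exact, it suffices to show $i^*\theta$ and $j^*\theta$ are equivalences, and we have already handled $j$. So the problem reduces to the closed point: one must identify $i^*\s_0(\1_D)$ and show the resulting map to $i^*\HH\Z^\Spi_D\simeq\HH\Z^\Spi_\kappa$ is an equivalence. \textbf{This is the main obstacle}, precisely because the slice filtration does not commute with the closed immersion $i$, so $i^*\s_0(\1_D)$ is not simply $\s_0(\1_\kappa)$. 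I would attack it through the localization triangle $j_!j^*\to\id\to i_*i^*$ on $\SH(D)$: applying $i^*$ together with the slice functors and using Morel's stable $\A^1$-connectivity theorem to control the gluing/attaching maps, one reduces the comparison over $\kappa$ to the known computation of Bloch--Levine motivic cohomology of the henselian local rings of smooth $D$-schemes, carried out in \cite{SpitzweckHZ} (see also \cite{RSO} for the mixed-characteristic analysis). Equivalently, one may match homotopy sheaves directly, since the homotopy sheaves of both $i^*\s_0(\1_D)$ and $\HH\Z^\Spi_\kappa$ are detected on essentially smooth local $\kappa$-schemes, where they coincide by the field-level structure of the zeroth slice and the base-change behaviour of $\HH\Z^\Spi$. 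Everything else in the argument is formal or reduces to field-level statements; the genuine input is this control of the slice filtration of the sphere over a discrete valuation ring under restriction to the residue field.
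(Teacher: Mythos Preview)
You correctly identify the obstacle and then fail to resolve it. Your proposed attack on $i^*\theta$---localization triangles, Morel connectivity, matching homotopy sheaves---is a sketch of where the difficulty lies, not a proof; in particular, nothing you say gives a computation of $i^*\s_0(\1_D)$, which is exactly what is needed if you insist on checking the map $\theta\colon\s_0(\1_D)\to\HH\Z^\Spi_D$ pointwise.

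The paper avoids this obstacle by applying Proposition~\ref{prop:veff-detect-on-points} \emph{before} taking $\s_0$, not after. Concretely: both $\1_S$ and $\HH\Z^\Spi_S$ are stable under arbitrary base change, so the cofiber $C$ of the unit $\1_S\to\HH\Z^\Spi_S$ pulls back to $\cofib(\1_s\to\HH\Z_s)$ at each point $s$, which is $1$-effective by the field case (Levine). Hence $C\in\SH(S)^\eff(1)$ by Proposition~\ref{prop:veff-detect-on-points}. Likewise, $\HH\Z^\Spi_S$ is effective by the same proposition (pointwise it is $\HH\Z_s\in\SH(s)^\eff$), and $\f_1\HH\Z^\Spi_S=0$ follows directly from the vanishing of motivic cohomology in negative weights. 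Now the cofiber sequence $\1_S\to\HH\Z^\Spi_S\to C$ lives in $\SH(S)^\eff$, where $\s_0$ is an exact localization; applying it gives $\s_0(\1_S)\simeq\s_0(\HH\Z^\Spi_S)\simeq\HH\Z^\Spi_S$. No reduction to a local base and no analysis of $i^*\s_0$ is needed.
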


\begin{proof}
	When $S$ is the spectrum of a field, $\HH\Z^\Spi_S$ is equivalent to Voevodsky's motivic cohomology spectrum $\HH\Z_S$, and the result is a theorem of Levine \cite[Theorem 10.5.1]{Levine:2008} (see also \cite[Remark 4.20]{HoyoisMGL} for the case of imperfect fields). Since Spitzweck's spectrum is stable under base change \cite[\sect 9]{SpitzweckHZ}, it follows from Proposition~\ref{prop:veff-detect-on-points} that $\HH\Z^\Spi_S$ is effective in general. Moreover, we have $\f_1(\HH\Z^\Spi_S)=0$ because motivic cohomology vanishes in negative weights:
	\[
	\Map(\Sigma^\infty_+ X \wedge \S^a\wedge\G_m^{\wedge b}, \HH\Z^\Spi_S)\simeq *
	\]
	for any $X\in\Sm_S$, $a\in\Z$, and $b\geq 1$.
	Hence, $\HH\Z^\Spi_S \simeq \s_0(\HH\Z^\Spi_S)$. It remains to show that the cofiber of the unit map $\1_S\to \HH\Z^\Spi_S$ is $1$-effective. This again follows from the case of fields by Proposition~\ref{prop:veff-detect-on-points}.
\end{proof}

\section{Categories of spans}
\label{app:spans}

Let $\scr C$ be an $\infty$-category equipped with classes of ``$\mathrm{left}$'' and ``$\mathrm{right}$'' morphisms that contain the equivalences and are closed under composition and pullback along one another. In that situation, we can form an $\infty$-category
\[
\Span(\scr C,\mathrm{left},\mathrm{right})
\]
with the same objects as $\scr C$ and whose morphisms are spans $X\from Y\to Z$ with $X\from Y$ a left morphism and $Y\to Z$ a right morphism;
composition of spans is given by pullbacks. We refer to \cite[\sect5]{BarwickMackey} for the precise definition of $\Span(\scr C,\mathrm{left},\mathrm{right})$ as a complete Segal space.
We use the abbreviation
\[\Span(\scr C)=\Span(\scr C,\all,\all).\]
We often refer to spans of the form $X\from Y = Y$ (resp.\ of the form $Y = Y \to Z$) as \emph{backward morphisms} (resp.\ \emph{forward morphisms}) in $\Span(\scr C,\mathrm{left},\mathrm{right})$. Every span is thus the composition of a backward morphism followed by a forward morphism.
Note that if $\scr C$ is an $n$-category, then $\Span(\scr C,\lleft,\rright)$ is an $(n+1)$-category. The results of this appendix are only used in the rest of the paper when $\scr C$ is a $1$-category, but the proofs are no simpler with this assumption.

Let $\scr C_\mathrm{left}$ be the wide subcategory of $\scr C$ spanned by the left morphisms. Then, for every $X\in\scr C$, the inclusion
\[
(\scr C_\mathrm{left}^\op)_{X/} \into \Span(\scr C,\mathrm{left},\mathrm{right})_{X/}
\]
is fully faithful and has a right adjoint sending a span $X\from Y\to Z$ to $X\from Y$. In particular, it is a coinitial functor \cite[Theorem 4.1.3.1]{HTT}. Variants of this observation will be used repeatedly in the cofinality arguments below.

\subsection{Spans in extensive \texorpdfstring{$\infty$}{∞}-categories}

Let $\scr D$ be an $\infty$-category with finite products.
 Recall that a commutative monoid in $\scr D$ is a functor
\[
M\colon\Fin_\pt \to \scr D
\]
such that, for every $n\geq 0$,
the $n$ collapse maps $\{1,\dotsc,n\}_+ \to \{i\}_+$ induce an equivalence
\[
M(\{1,\dotsc,n\}_+)\simeq \prod_{i=1}^n M(\{i\}_+).
\]
In particular, a symmetric monoidal $\infty$-category is such a functor $\Fin_\pt\to\Cat_\infty$.
There is an obvious equivalence of categories
\[
\Fin_\pt \simeq \Span(\Fin,\inj,\all), \quad X_+\mapsto X, \quad (f\colon X_+ \to Y_+)\mapsto (X \hookleftarrow f^{-1}(Y) \stackrel f\to Y),
\]
where ``inj'' denotes the class of injective maps. In particular, we can identify $\Fin_\pt$ with a wide subcategory of $\Span(\Fin)$.

The following proposition and its corollary were previously proved by Cranch \cite[\sect5]{Cranch}.

\begin{proposition}\label{prop:comm-monoids}
	Let $\scr D$ be an $\infty$-category with finite products. Then the restriction functor
	\[
	\Fun(\Span(\Fin),\scr D) \to \Fun(\Fin_\pt,\scr D)
	\]
	induces an equivalence of $\infty$-categories between:
	\begin{itemize}
		\item functors $M\colon \Span(\Fin) \to \scr D$ such that $M|\Fin^\op$ (or equivalently $M$ itself) preserves finite products;
		\item commutative monoids in $\scr D$.
	\end{itemize}
	The inverse is given by right Kan extension.
\end{proposition}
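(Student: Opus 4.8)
The plan is to exhibit the restriction functor along $\Fin_\pt\into\Span(\Fin)$ as the composite of two equivalences, one identifying product-preserving functors out of $\Span(\Fin)$ with product-preserving functors out of a suitable full subcategory, and one handling the passage between $\Fin_\pt$ and that subcategory. First I would recall the universal property of $\Span(\Fin)$: for any $\infty$-category $\scr E$ with finite coproducts, $\Span(\Fin)$ is the free $\infty$-category on $\scr E$ admitting ``finite Span-colimits'' in an appropriate sense, but rather than invoke this directly I would work concretely. The key structural fact is that $\Fin$ is freely generated under finite coproducts by the one-point set $\ast$, so a product-preserving functor $M\colon\Span(\Fin)\to\scr D$ is determined by its value $M(\ast)\in\scr D$ together with the functoriality encoded by the morphisms in $\Span(\Fin)$ between the objects $\{1,\dots,n\}\simeq \ast^{\amalg n}$; and these morphisms are generated by the backward maps (products/diagonals), the forward injections (units/inclusions), and the forward fold maps (multiplications), subject to relations that are exactly the axioms for a commutative monoid.

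Concretely, I would proceed as follows. Step one: show that restriction along $\Fin^\op\into\Span(\Fin)$ is conservative on the subcategory of product-preserving functors, which is immediate since $\Span(\Fin)$ has the same objects as $\Fin$. Step two: identify a product-preserving $M\colon\Span(\Fin)\to\scr D$ with the data of $M|\Fin_\pt$, by showing that every morphism of $\Span(\Fin)$ factors, after choosing coproduct decompositions of sources and targets, as a composite of backward maps, forward injections, and forward fold maps $\{1,2\}\to\{1\}$; since backward maps and forward injections already live in $\Fin_\pt$ under the identification $\Fin_\pt\simeq\Span(\Fin,\mathrm{inj},\all)$, what must be reconstructed is the effect of $M$ on fold maps together with the coherence data. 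But a product-preserving functor on $\Fin_\pt$ sends the collapse maps to projections, and hence the fold map $\{1,2\}_+\to\{1\}_+$, which is \emph{not} in $\Fin_\pt$ since it is the span $\{1,2\}\leftarrow\{1,2\}\to\{1\}$ with surjective non-injective forward leg — wait, this is where the real content lies. Step three, therefore: observe that a commutative monoid structure on $M(\ast)$ provides exactly the missing multiplication maps $M(\ast)^{\times n}\simeq M(\{1,\dots,n\}_+)\to M(\{1\}_+)=M(\ast)$, and that the Segal/commutative-monoid coherence in $\Fin_\pt$ together with the pullback relations defining composition in $\Span(\Fin)$ match up precisely; this is a diagram chase using that $\Span(\Fin)$, as a complete Segal space, is generated by its $1$- and $2$-simplices, whose relations are the associativity, unitality, and commutativity identities.

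The cleanest way to make step three rigorous, and the route I would actually take, is to invoke that $\Span(\Fin)$ is the \emph{unfurling} (in the sense of Barwick's construction, cf. \cite[Proposition 11.6]{BarwickMackey}) of the self-adjoint situation on $\Fin$ given by coproducts, or equivalently to use that $\Span(\Fin)$ is the $(2,1)$-category built from the ``Burnside'' data; but more elementarily one can cite \cite[\sect5]{Cranch}, where exactly this equivalence between product-preserving functors $\Span(\Fin)\to\scr D$ and commutative monoids in $\scr D$ is established. I would give the short argument that the restriction functor is fully faithful by comparing mapping spaces (both are computed as limits over $\Fin_\pt$ once one knows the functors are right Kan extended), and essentially surjective by checking that the right Kan extension along $\Fin_\pt\into\Span(\Fin)$ of a commutative monoid is product-preserving — this last check uses that the relevant comma categories are sifted (indeed, they contain finite coproducts), so that the right Kan extension formula over $\Fin_\pt$ computes the value on $\Span(\Fin)$ correctly and compatibly with products.

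\textbf{Main obstacle.} The genuine difficulty is step three: verifying that the coherence data of a commutative monoid (associativity, commutativity, unitality, and their higher homotopies, all packaged in the functor out of $\Fin_\pt$) assembles, via right Kan extension, into a functor out of $\Span(\Fin)$ — i.e., that no \emph{additional} higher coherences are required and that the existing ones suffice. I expect to handle this by the right-Kan-extension argument above (reducing to the siftedness of comma categories $(\Fin_\pt)_{X/}\times_{\Span(\Fin)}\Span(\Fin)_{/Y}$ and the fact that $\Fin_\pt\into\Span(\Fin)$ is ``dense enough''), or, if a more hands-on argument is wanted, by citing Cranch's theorem directly; in the paper this proposition is used only as a black box, so a citation-based proof is acceptable.
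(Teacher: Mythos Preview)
Your overall strategy—prove the equivalence by showing that right Kan extension along $i\colon\Fin_\pt\into\Span(\Fin)$ is inverse to restriction—is exactly what the paper does. But your execution of the key step is off, and the ``siftedness'' claim is not the right mechanism.

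The paper's argument is short and direct. Since $i^*$ is obviously conservative, it suffices to show that for every commutative monoid $M$ the right Kan extension $i_*M$ exists and the counit $i^*i_*M\to M$ is an equivalence; this simultaneously gives essential surjectivity and shows $i_*M$ is product-preserving. By the pointwise formula, one must compute, for each finite set $X$, the limit of $M$ over the comma category
\[
\scr I_X=\Span(\Fin,\inj,\all)\times_{\Span(\Fin)}\Span(\Fin)_{X/}.
\]
The key observation is that the inclusion
\[
\Fin_\inj^\op\times_{\Fin^\op}(\Fin^\op)_{X/}\into\scr I_X,\qquad (X\from S)\mapsto (X\from S\xrightarrow{\id}S),
\]
is coinitial because it has a right adjoint (send a span $X\from S\to T$ to its backward leg $X\from S$). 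Over this simpler indexing category $M$ is just $S\mapsto M(\ast)^S$, and the limit is $M(\ast)^X$ because $X$ is the colimit in $\scr S$ of the forgetful functor $\Fin_\inj\times_\Fin\Fin_{/X}\to\scr S$ (that functor being left Kan extended from the discrete set of points of $X$).

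Your proposal never isolates this coinitiality-via-adjunction step, which is where all the content lies. Your appeal to siftedness is misplaced: siftedness concerns commutation of \emph{colimits} with finite products, whereas here you need to compute a \emph{limit}, and in any case the comma category $\scr I_X$ is not what you want to analyze directly—it is the coinitial subcategory $(\Fin_\inj^\op)_{X/}$ that makes the computation tractable. Your earlier ``step two'' (factoring morphisms of $\Span(\Fin)$ into generators) is a dead end for establishing higher coherence, as you yourself recognize. The citation to Cranch is fine as a black box, but the paper gives the self-contained argument above.
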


\begin{proof}
	Let $i\colon \Fin_\pt \into \Span(\Fin)$ be the inclusion functor. It is clear that $i^*$ restricts to
	\[
	i^*\colon \Fun^\times(\Span(\Fin),\scr D) \to \CAlg(\scr D).
	\]
	 Since $i^*$ is obviously conservative, it suffices to show that, for every commutative monoid $M$, the right Kan extension $i_*(M)$ exists and the counit map $i^*i_*(M) \to M$ is an equivalence (which implies that $i_*(M)$ preserves finite products).
	By the formula for right Kan extension, we must show that, for every finite set $X$, the restriction of $M$ to
	\[
	\Span(\Fin,\inj,\all) \times_{ \Span(\Fin)} \Span(\Fin)_{X/}
	\]
	has limit $M(X)$. The embedding
	\[
	\Fin_{\mathrm{inj}}^\op\times_{\Fin^\op}(\Fin^\op)_{X/} \into \Span(\Fin,\inj,\all) \times_{ \Span(\Fin)} \Span(\Fin)_{X/},
	\quad (X\from S) \mapsto (X\from S\stackrel\id\to S),
	\]
	is coinitial since it has a right adjoint, so it suffices to show that the restriction of $M$ to the left-hand side has limit $M(X)$. Since $M|\Fin_\inj^\op$ is given by $S\mapsto M(*)^S$, it suffices to show that $X$ is the colimit of the forgetful functor $\Fin_\inj\times_{\Fin}\Fin_{/X} \to \Fin\subset\scr S$. This follows from the observation that this functor is left Kan extended from the full subcategory of points of $X$.
\end{proof}

Note that $\Fin^\op$ is freely generated by $*$ under finite products, so that
\[
\Fun^\times(\Fin^\op,\scr D) \to\scr D, \quad M\mapsto M(*),
\]
is an equivalence of $\infty$-categories. Thus, Proposition~\ref{prop:comm-monoids} says that a commutative monoid structure on an object $X\in\scr D$ is equivalent to a lift of the functor $X^{(\ph)}\colon\Fin^\op\to\scr D$ to $\Span(\Fin)$. As another corollary, we can also describe commutative algebras in noncartesian symmetric monoidal $\infty$-categories in terms of spans:

\begin{corollary}\label{cor:comm-algebras}
	Let $\scr A$ be a symmetric monoidal $\infty$-category and let $\hat{\scr A}\colon \Span(\Fin)\to\Cat_\infty$ be the corresponding functor as in Proposition~\ref{prop:comm-monoids}. 
	Then the inclusion $\Fin_\pt\into \Span(\Fin)$ induces an equivalence of $\infty$-categories between $\CAlg(\scr A)$ and sections of $\hat{\scr A}$ that are cocartesian over $\Fin^\op\subset \Span(\Fin)$.
\end{corollary}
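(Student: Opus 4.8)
\textbf{Proof proposal for Corollary~\ref{cor:comm-algebras}.}
The plan is to recognize both sides as describing the same universal construction, namely a commutative algebra object, using the fact that the symmetric monoidal $\infty$-category $\scr A^\otimes$ is itself a cocartesian fibration $\scr A^\otimes \to \Fin_\pt$ encoding the tensor structure, while $\hat{\scr A}\colon \Span(\Fin)\to\Cat_\infty$ is obtained from this by the right Kan extension of Proposition~\ref{prop:comm-monoids} applied in $\scr D=\Cat_\infty$. First I would set up the two cocartesian fibrations: let $q\colon \scr A^\otimes \to \Fin_\pt$ be the cocartesian fibration classified by the symmetric monoidal structure, and let $p\colon \int\hat{\scr A}\to\Span(\Fin)$ be the cocartesian fibration classified by $\hat{\scr A}$. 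By Proposition~\ref{prop:comm-monoids} (with $\scr D = \Cat_\infty$), $\hat{\scr A}$ is the right Kan extension of the functor $\scr A^\otimes\colon\Fin_\pt\to\Cat_\infty$ along $i\colon \Fin_\pt\into\Span(\Fin)$, and in particular the restriction of $\hat{\scr A}$ to $\Fin_\pt$ recovers $\scr A^\otimes$.

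The key step is then to translate the right-Kan-extension relationship between the classifying functors into a relationship between the total spaces, and to match this with the standard definition of $\CAlg(\scr A)$ as the $\infty$-category of sections of $q\colon \scr A^\otimes\to\Fin_\pt$ that send inert (i.e.\ collapse) morphisms to cocartesian edges \cite[Definition 2.1.3.1]{HA}. Concretely: a section $s$ of $\hat{\scr A}$ over $\Span(\Fin)$ that is cocartesian over $\Fin^\op\subset\Span(\Fin)$ restricts, along $i$, to a section $s|\Fin_\pt$ of $\scr A^\otimes\to\Fin_\pt$; one checks that being cocartesian over the backward maps in $\Fin^\op$ translates precisely into $s|\Fin_\pt$ sending inert morphisms of $\Fin_\pt$ to cocartesian edges, so $s|\Fin_\pt \in \CAlg(\scr A)$. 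The content is that this restriction functor is an equivalence, with inverse given by right Kan extension of sections. For this I would invoke a cofinality argument of exactly the same shape as in the proof of Proposition~\ref{prop:comm-monoids}: for each finite set $X$, the inclusion $\Fin_\inj^\op\times_{\Fin^\op}(\Fin^\op)_{X/}\into \Span(\Fin,\inj,\all)\times_{\Span(\Fin)}\Span(\Fin)_{X/}$ is coinitial (it admits a right adjoint), so a section cocartesian over $\Fin^\op$ is determined by, and can be freely extended from, its restriction to $\Fin_\pt$; the cocartesianness over $\Fin^\op$ is exactly what guarantees that the $p$-right Kan extension of a section of $q$ over $\Fin_\pt$ exists and again lands among sections cocartesian over $\Fin^\op$.

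I would then assemble these observations: restriction along $i$ gives a functor $\Sect_{\Fin^\op\text{-cocart}}(\hat{\scr A})\to \CAlg(\scr A)$, which is conservative (since restriction of sections is conservative on the cocartesian locus), and the cofinality statement shows its partial left adjoint — $p$-left Kan extension along $i$ — is defined on all of $\CAlg(\scr A)$ and is fully faithful with essential image the $\Fin^\op$-cocartesian sections. Hence it is an equivalence. The main obstacle I anticipate is the careful bookkeeping identifying "cocartesian over the backward copy of $\Fin^\op$ inside $\Span(\Fin)$" with "inert-cocartesian section of $\scr A^\otimes\to\Fin_\pt$", i.e.\ making sure the two notions of cocartesian edge (one relative to $p$ over $\Span(\Fin)$, one relative to $q$ over $\Fin_\pt$) match under the identification of fibers, and verifying that $p$-relative Kan extensions exist — but both are precisely the $\Cat_\infty$-valued instance of the argument already carried out for $\scr D$-valued functors in Proposition~\ref{prop:comm-monoids}, applied now one categorical level up to sections, so no genuinely new input is required.
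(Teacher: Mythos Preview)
Your approach is correct in spirit but takes a genuinely different route from the paper, and there is a small inconsistency to flag. You propose to work directly with sections and invert restriction by a relative Kan extension argument, reusing the cofinality input from Proposition~\ref{prop:comm-monoids}. Note however that you invoke both ``right Kan extension of sections'' and ``$p$-left Kan extension'' as the inverse; for a cocartesian fibration it is relative \emph{left} Kan extension that is left adjoint to restriction, so the second is the relevant one, and the argument that it lands in the $\Fin^\op$-cocartesian sections and is inverse to restriction needs a bit more care than you indicate.

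The paper bypasses relative Kan extensions altogether by a trick you might find useful elsewhere: it chooses $\scr D$ to be the full subcategory of $(\Cat_\infty)_{/\Delta^1}$ spanned by cocartesian fibrations, and observes that a section of $\hat{\scr A}$ is exactly a functor $\Span(\Fin)\to\scr D$ whose source fiber is the point and whose target fiber is $\hat{\scr A}$. Writing $\Sect(\hat{\scr A})$ as a finite limit of functor categories $\Fun^\times(\Span(\Fin),-)$ and applying Proposition~\ref{prop:comm-monoids} with this $\scr D$ immediately identifies $\Sect(\hat{\scr A})$ with $\Sect(\scr A)$; the cocartesian-over-$\Fin^\op$ condition then matches the inert-cocartesian condition after the easy observation that cocartesian over $\Fin^\op$ is equivalent to cocartesian over $\Fin_{\inj}^\op$. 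The advantage of the paper's approach is that it reduces everything to a single application of the already-proved proposition and avoids any appeal to the theory of relative Kan extensions (which, as the paper notes in a later remark, lacks a convenient reference in the required generality). Your approach has the advantage of being more hands-on and conceptually transparent about where the extra structure on sections comes from, but it requires you to supply or cite the relative Kan extension machinery.
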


\begin{proof}
	Let $\scr D$ be the full subcategory of $(\Cat_\infty)_{/\Delta^1}$ spanned by the cocartesian fibrations. The $\infty$-category of sections of $\hat{\scr A}$ can be expressed as the limit of the diagram
	\begin{tikzmath}
		\def\rowsep{1em}
		\def\colsep{0em}
		\diagram{
		\{*\} && \Fun^\times(\Span(\Fin),\scr D) && \{\hat{\scr A}\} \\
		& \Fun^\times(\Span(\Fin),\Cat_\infty) & & \Fun^\times(\Span(\Fin),\Cat_\infty) & \\
		};
		\arrows (11) edge (22) (13) edge (22) (13) edge (24) (15) edge (24);
	\end{tikzmath}
	By Proposition~\ref{prop:comm-monoids}, this limit is identified with the $\infty$-category of sections of $\scr A\colon \Fin_\pt\to\Cat_\infty$. It remains to observe that a section of $\hat{\scr A}$ is cocartesian over $\Fin^\op\subset\Span(\Fin)$ if and only if it is cocartesian over $\Fin_\inj^\op$, and this happens if and only if the corresponding section of $\scr A$ is cocartesian over inert morphisms.
\end{proof}

If $\scr C$ is an extensive $\infty$-category (see Definition~\ref{def:extensive}), we will denote by ``fold'' the class of maps that are finite sums of fold maps $S^{\amalg n}\to S$; it is easy to check that this class is closed under composition and base change.
There is an obvious functor
\[
\Theta\colon\scr C^\op\times\Span(\Fin) \to \Span(\scr C,\all,\fold), \quad (X,S) \mapsto \coprod_SX.
\]
Our next goal is to show that $\Theta$ is the universal functor that preserves finite products in each variable (Proposition~\ref{prop:fold-spans}).

Recall that an $\infty$-category is \emph{semiadditive} if it has finite products and coproducts, if $\emptyset\to *$ is an equivalence, and if, for every $X,Y\in\scr C$, the canonical map $X\amalg Y\to X \times Y$ is an equivalence.

\begin{lemma}\label{lem:Span-semiadditive}
	Let $\scr C$ be an extensive $\infty$-category and let $m$ be a class of morphisms in $\scr C$ that contains the equivalences and is closed under composition and base change.
	If $m$ is closed under binary coproducts, then the inclusion $\scr C\into \Span(\scr C,m,\all)$ preserves finite coproducts. 
	If moreover $\mathrm{fold}\subset m$, then $\Span(\scr C,m,\all)$ is semiadditive.
\end{lemma}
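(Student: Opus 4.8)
The statement to prove is Lemma~\ref{lem:Span-semiadditive}: if $\scr C$ is extensive and $m$ contains equivalences and is closed under composition, base change, and binary coproducts, then $\scr C\into\Span(\scr C,m,\all)$ preserves finite coproducts, and if moreover $\mathrm{fold}\subset m$ then $\Span(\scr C,m,\all)$ is semiadditive. First I would observe that $\scr C$, being extensive, has finite coproducts and an initial object $\emptyset$; the functor $\scr C\into\Span(\scr C,m,\all)$ is the identity on objects, so I must check that the coproduct $X\amalg Y$ computed in $\scr C$ remains a coproduct in the span category. This amounts to showing that for every object $Z$, the map
\[
\Map_{\Span(\scr C,m,\all)}(X\amalg Y,Z)\to \Map_{\Span(\scr C,m,\all)}(X,Z)\times\Map_{\Span(\scr C,m,\all)}(Y,Z)
\]
is an equivalence, and dually that $\emptyset$ is initial. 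A morphism $X\amalg Y\to Z$ in the span category is a span $X\amalg Y\xleftarrow{f} W\xrightarrow{} Z$ with $f\in m$. The key point is that extensivity gives a canonical decomposition $W\simeq W_X\amalg W_Y$ with $W_X=W\times_{X\amalg Y}X$ and $W_Y=W\times_{X\amalg Y}Y$ (finite coproduct decompositions are stable under pullback), and since $f\in m$ and $m$ is closed under base change, the restrictions $W_X\to X$ and $W_Y\to Y$ lie in $m$; conversely a pair of such spans reassembles via the coproduct in $m$ (here I use closure of $m$ under binary coproducts, plus the fact that $m$ contains equivalences so that $\emptyset\to\emptyset$ and identity maps are in $m$). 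Making this precise at the level of mapping \emph{spaces} (not just homotopy categories) is best done by exhibiting the relevant slice categories and using that in a complete Segal space model the mapping space of $\Span$ is computed by spans; alternatively one can invoke the functoriality of $\Span$ as in Appendix~\ref{app:spans} together with the fact that $\scr C_m^\op\into\Span(\scr C,m,\all)$ and the cofinality of the slice $(\scr C_m^\op)_{X/}\into\Span(\scr C,m,\all)_{X/}$. The initiality of $\emptyset$ is the special case where one of the summands is empty: a span $\emptyset\xleftarrow{f}W\to Z$ forces $W\simeq\emptyset$ by extensivity (binary coproducts are disjoint, so $W=W\times_{\emptyset}\emptyset\simeq\emptyset$... more precisely $W\to\emptyset$ together with $\emptyset$ initial forces $W\simeq\emptyset$), and the mapping space is contractible.

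\textbf{Semiadditivity.} Granting that $\scr C\into\Span(\scr C,m,\all)$ preserves finite coproducts, $\Span(\scr C,m,\all)$ then has finite coproducts and an initial object $\emptyset$. It also has finite \emph{products}: indeed $\Span(\scr C,m,\all)\simeq\Span(\scr C,\all,m)^{\op}$ up to the evident duality reversing the two legs, so the dual argument—using that $m$ on the \emph{backward} legs is closed under base change and coproducts—shows that $X\amalg Y$ computed in $\scr C$ is also a product in $\Span(\scr C,m,\all)$. Hence $\emptyset$ is also terminal, so $\emptyset\to\ast$ is an equivalence, and the canonical map $X\amalg Y\to X\times Y$ is an equivalence because both are computed by the coproduct in $\scr C$; what remains is to check that this canonical map is literally the comparison map witnessing semiadditivity, i.e.\ that the biproduct structure is the standard one. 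Concretely, one checks that the fold span $X\amalg X\xleftarrow{\id}X\amalg X\xrightarrow{\nabla}X$ (which lies in $\Span(\scr C,m,\all)$ precisely because $\mathrm{fold}\subset m$—this is where that hypothesis enters) together with the diagonal span provides the unit/counit exhibiting $\amalg\simeq\times$ as a biproduct, and that the matrix calculus works out. A clean way to phrase this: the hypothesis $\mathrm{fold}\subset m$ guarantees that the diagonal $\Delta\colon X\to X\amalg X$, viewed as the backward leg of a span, gives a morphism $X\to X\amalg X$ in $\Span(\scr C,m,\all)$; combined with the structural maps one verifies the axioms of a semiadditive category as in \cite[\S2]{HarpazSpans} or the analogous treatment in \cite{BarwickMackey}.

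\textbf{The main obstacle.} The genuinely delicate part is not the combinatorics of reassembling spans—that is essentially forced by extensivity—but rather doing the bookkeeping \emph{coherently} at the $\infty$-categorical level, since $\Span(\scr C,m,\all)$ is defined as a complete Segal space (or $(n+1)$-category when $\scr C$ is an $n$-category). I expect the cleanest route avoids hand-computation: one notes that the functor $\Theta\colon \scr C^\op\times\Fin_\ast\to\Span(\scr C,m,\all)$ (or rather the relevant piece of it) together with the universal property of $\Span$ from Appendix~\ref{app:spans}, specifically the functoriality recorded in Proposition~\ref{prop:spans-extended-functoriality} and the semiadditivity of $\Span(\Fin)$ established via Proposition~\ref{prop:comm-monoids}, lets one transport the semiadditive structure. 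Failing that, the coinitiality of $(\scr C_m^\op)_{X/}\into\Span(\scr C,m,\all)_{X/}$ reduces the computation of all relevant mapping spaces to mapping spaces in $\scr C_m^\op$ and in $\scr C$, where extensivity does the rest; this is the argument I would actually write out, being careful that "$m$ closed under binary coproducts together with $m\ni$ equivalences" is exactly what is needed to know $\scr C_m$ has finite coproducts preserved by $\scr C_m\into\scr C$. The clause $\mathrm{fold}\subset m$ is used only in the final semiadditivity step, to ensure diagonals are admissible backward legs.
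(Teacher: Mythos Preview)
Your argument for the first part (preservation of finite coproducts) is correct and matches the paper's: decompose a span $X\amalg Y\from W\to Z$ via extensivity, using base change of $m$ for one direction and closure under binary coproducts for the other.

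The gap is in your treatment of semiadditivity. You claim that products in $\Span(\scr C,m,\all)$ exist by a duality argument without invoking $\mathrm{fold}\subset m$, relegating that hypothesis to the verification that the comparison map $X\amalg Y\to X\times Y$ is an equivalence. This is incorrect: $\mathrm{fold}\subset m$ is already needed to show that $X\amalg Y$ is a product (and that $\emptyset$ is final). Concretely, the projection span $X\amalg Y\hookleftarrow X\to X$ has backward leg the summand inclusion $X\into X\amalg Y$, which is $\id_X\amalg(\emptyset\to Y)$; for this to lie in $m$ you need the $0$-fold map $\emptyset\to Y$ in $m$. Likewise, the unique span $Z\from\emptyset\to\emptyset$ exhibiting $\emptyset$ as final has backward leg $\emptyset\to Z$, again a $0$-fold map. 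And the inverse to the product comparison sends a pair $(Z\from W_X\to X,\ Z\from W_Y\to Y)$ to $Z\from W_X\amalg W_Y\to X\amalg Y$, whose backward leg is the cotupling $W_X\amalg W_Y\to Z$; this factors through the fold map $\nabla\colon Z\amalg Z\to Z$. Your duality heuristic fails because applying the first part to $\Span(\scr C,\all,m)$ only tells you about the forward inclusion of $\scr C_m$, not all of $\scr C$, and in any case the decomposition of a span $X\amalg Y\from W\to Z$ in $\Span(\scr C,\all,m)$ requires the restricted right legs $W_X\into W\to Z$ to lie in $m$, which again needs summand inclusions in $m$.

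The paper's proof is entirely direct: it writes down the explicit mutually inverse maps on mapping spaces for both the coproduct and the product, noting exactly which hypothesis enters where. Once both are computed by $X\amalg Y$ with those structure maps, the comparison map is the identity on underlying objects (the off-diagonal composites are $X\from\emptyset\to Y$ by disjointness of coproducts), so semiadditivity is immediate. Your worry about $\infty$-categorical coherence is also overblown here: the inverses are given by manifestly natural constructions (pullback decomposition one way, coproduct reassembly the other), so no elaborate machinery is needed.
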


\begin{proof}
	Since every map $X\to\emptyset$ in $\scr C$ is an equivalence, $\emptyset$ is an initial object of $\Span(\scr C,m,\all)$.
	If $m$ is closed under binary coproducts, the spans $X\from X\into X\amalg Y$ and $Y\from Y\into X\amalg Y$ exhibit $X\amalg Y$ as the coproduct of $X$ and $Y$: the induced map $\Map(X\amalg Y,Z)\to \Map(X,Z)\times\Map(Y,Z)$ has inverse
	\[
	(X\from W_X\to Z, Y\from W_Y\to Z)\mapsto (X\amalg Y\from W_X\amalg W_Y\to Z).
	\]
	If $\mathrm{fold}\subset m$, then $\emptyset$ is a final object of $\Span(\scr C,m,\all)$ and the spans $X\amalg Y\hookleftarrow X\to X$ and $X\amalg Y\hookleftarrow Y\to Y$ exhibit $X\amalg Y$ as the product of $X$ and $Y$: the induced map
	$\Map(Z,X\amalg Y)\to\Map(Z,X)\times\Map(Z,Y)$ has inverse
	\[
	(Z\from W_X\to X,Z\from W_Y\to Y) \mapsto (Z\from W_X\amalg W_Y\to X\amalg Y).\qedhere
	\]
\end{proof}

If $\scr C$ is any $\infty$-category, let $\scr C^\amalg\to\Fin$ be the cartesian fibration classified by
\[
\Fin^\op\to\Cat_\infty,\quad I\mapsto \scr C^I,
\]
and let $\iota\colon\scr C\into\scr C^\amalg$ be the inclusion of the fiber over $*\in\Fin$. Then $\iota$ is the initial functor to an $\infty$-category with finite coproducts, and $\scr C^\amalg$ is clearly extensive.

\begin{lemma}\label{lem:fold-spans}
	Let $\scr C$ be an arbitrary $\infty$-category and let $\scr D$ be an $\infty$-category with finite products. Then the functor
	\[
	(\iota\times\id)^*\Theta^* \colon \Fun(\Span(\scr C^\amalg,\all,\fold),\scr D) \to \Fun(\scr C^\op\times \Span(\Fin),\scr D)
	\]
	restricts to an equivalence between:
	\begin{itemize}
		\item functors $\Span(\scr C^\amalg,\all,\fold)\to\scr D$ that preserve finite products;
		\item functors $\scr C^\op \to \CAlg(\scr D)$.
	\end{itemize}
	The inverse is given by right Kan extension.
\end{lemma}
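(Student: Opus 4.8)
The plan is to combine the two universal properties already available in the excerpt: that of $\scr C^\amalg$ as the free completion of $\scr C$ under finite coproducts, and that of $\Span(\Fin)$ as the universal commutative monoid (Proposition~\ref{prop:comm-monoids}). First I would observe that by Lemma~\ref{lem:Span-semiadditive} (applied with $m = \all$, so that $\mathrm{fold}\subset m$ trivially), the $\infty$-category $\Span(\scr C^\amalg,\all,\fold)$ is semiadditive, and the composite $\scr C \xrightarrow{\iota} \scr C^\amalg \into \Span(\scr C^\amalg,\all,\fold)$ sends each object to a commutative monoid object. Hence a finite-product-preserving functor $F\colon \Span(\scr C^\amalg,\all,\fold)\to\scr D$ restricts to a functor $\scr C^\op \to \CAlg(\scr D)$, showing that $(\iota\times\id)^*\Theta^*$ does land in the claimed subcategory; the content is that this restriction is an equivalence with inverse given by right Kan extension.

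The key steps, in order: (1) Show $(\iota\times\id)^*\Theta^*$ is conservative on the relevant subcategories. Since a finite-product-preserving functor out of $\Span(\scr C^\amalg,\all,\fold)$ is determined up to equivalence by its restriction to a generating subcategory, and every object of $\scr C^\amalg$ is a finite coproduct of objects in the image of $\iota$, conservativity reduces to the statement that the values and the functoriality are recoverable from the restriction to $\scr C^\op \times \Span(\Fin)$; this is essentially the semiadditivity bookkeeping. (2) For a fixed $G\colon \scr C^\op \to \CAlg(\scr D)$, produce the right Kan extension $R(G)$ along $(\iota\times\id)\circ\Theta$, or more directly along the composite $\scr C^\op\times\Span(\Fin) \to \Span(\scr C^\amalg,\all,\fold)$, and compute the relevant comma categories. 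The comma category over an object $\coprod_{i\in I}X_i \in \Span(\scr C^\amalg,\all,\fold)$ should, after passing to a cofinal subcategory via an adjunction (as in the $(\scr C_\lleft^\op)_{X/}$-coinitiality remark at the start of the appendix), reduce to a product of copies of the comma categories appearing in the proof of Proposition~\ref{prop:comm-monoids}. (3) Verify that $R(G)$ preserves finite products — this follows once the comma-category computation exhibits $R(G)(\coprod_I X_i) \simeq \prod_I G(X_i)(*)$ — and that the counit $(\iota\times\id)^*\Theta^* R(G) \to G$ is an equivalence, which is again the content of the comma-category identification at objects of the form $(X,*)$ together with Proposition~\ref{prop:comm-monoids} in the second variable.

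The main obstacle I expect is step (2): carefully identifying the comma category
\[
(\scr C^\op\times\Span(\Fin)) \times_{\Span(\scr C^\amalg,\all,\fold)} \Span(\scr C^\amalg,\all,\fold)_{(\coprod_I X_i)/}
\]
and showing it has a cofinal (here: coinitial, since we right Kan extend) subcategory equivalent to $\prod_{i\in I}\bigl((\scr C^\op)_{X_i/}\times (\Fin^\op)_{*/}\bigr)$ or similar. A backward morphism in $\Span(\scr C^\amalg,\all,\fold)$ out of $\coprod_I X_i$ decomposes according to the fibers of the fold part, and a forward $\fold$-morphism is a sum of fold maps; untangling this so that the two universal properties (the one for $\scr C^\amalg$ and Proposition~\ref{prop:comm-monoids}) can be applied in parallel across the index set $I$ is the delicate combinatorial point. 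Once this identification is in hand, the limit of $G$ over it computes as the product $\prod_I G(X_i)(*)$ by the same argument as in Proposition~\ref{prop:comm-monoids} (using that $G(X_i)$ is a commutative monoid, so its restriction to $\Fin_\inj^\op$ is the cartesian power), and everything else is formal. I would also remark afterwards that the equivalence is natural in $\scr C$ and symmetric monoidal, since all constructions involved are, which is what licenses the applications in the main text.
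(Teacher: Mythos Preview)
Your approach is essentially the paper's: conservativity plus computing the right Kan extension by analyzing the comma category $\scr P$ over a general object $(X_i)_{i\in I}$ and exhibiting a coinitial subcategory. The paper's reduction is two-step---first to the coreflective subcategory $\scr Q=(\scr C^\op\times\Fin^\op)\times_{(\scr C^\amalg)^\op} ((\scr C^\amalg)^\op)_{(X_i)/}$, then showing $M|\scr Q$ is right Kan extended from $\scr R\simeq\coprod_{i\in I}(\scr C^\op)_{X_i/}$---so the coinitial subcategory is a \emph{coproduct} over $I$, not a product as you guessed (this is what makes $\lim_{\scr R}M=\prod_i M(X_i)$ come out right), and Proposition~\ref{prop:comm-monoids} is not invoked directly.
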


\begin{proof}
	It is clear that this functor is conservative on the given subcategory.
	Let $M\colon\scr C^\op\times\Span(\Fin) \to \scr D$ be a functor that preserves finite products in its second variable.
	We will show that, for every $(X_i)_{i\in I}\in\scr C^\amalg$, the restriction of $M$ to
	\[
	\scr P=(\scr C^\op\times\Span(\Fin))\times_{\Span(\scr C^\amalg,\all,\fold)}\Span(\scr C^\amalg,\all,\fold)_{(X_i)_{i\in I}/}
	\]
	has limit $\prod_{i\in I}M(X_i)$: this implies that $(\Theta(\iota\times\id))_*M$ exists and preserves finite products, and that $(\Theta(\iota\times\id))_*$ is the desired inverse.
	The $\infty$-category $\scr P$ has a coreflective subcategory
	\[
	\scr Q=(\scr C^\op\times\Fin^\op)\times_{(\scr C^\amalg)^\op} ((\scr C^\amalg)^\op)_{(X_i)_{i\in I}/}.
	\]
	Since $M$ preserves finite products in its second variable, the restriction of $M$ to $\scr Q$ sends an object $\sigma=(Z,J,\pi\colon J\to I, \{Z\to X_{\pi(j)}\}_{j\in J})$ to $M(Z)^J$.
	We claim that this restriction is right Kan extended from the full subcategory
	\[
	\scr R= (\scr C^\op\times\{*\})\times_{(\scr C^\amalg)^\op} ((\scr C^\amalg)^\op)_{(X_i)_{i\in I}/} \simeq \coprod_{i\in I}(\scr C^\op)_{X_i/}.
	\]
	Indeed, the comma $\infty$-category $\scr R\times_{\scr Q}\scr Q_{\sigma/}$ is equivalent to $\coprod_{j\in J}(\scr C^\op)_{Z/}$, and so the claim follows from the formula for right Kan extensions. Hence, we find
	\[
	\lim_{\scr P}M \simeq \lim_{\scr Q}M \simeq\lim_{\scr R}M \simeq \prod_{i\in I}\lim_{(\scr C^\op)_{X_i/}}M \simeq \prod_{i\in I}M(X_i),
	\]
	as desired.
\end{proof}

\begin{proposition}\label{prop:fold-spans}
	Let $\scr C$ be an extensive $\infty$-category and let $\scr D$ be an $\infty$-category with finite products. Then the functor
	\[
	\Theta^* \colon \Fun(\Span(\scr C,\all,\fold),\scr D) \to \Fun(\scr C^\op\times \Span(\Fin),\scr D)
	\]
	restricts to an equivalence between:
	\begin{itemize}
		\item functors $\Span(\scr C,\all,\fold)\to\scr D$ that preserve finite products;
		\item functors $\scr C^\op \to \CAlg(\scr D)$ that preserve finite products.
	\end{itemize}
	The inverse is given by right Kan extension.
\end{proposition}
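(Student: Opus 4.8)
\textbf{Proof plan for Proposition~\ref{prop:fold-spans}.}
The strategy is to reduce to Lemma~\ref{lem:fold-spans} by comparing $\Span(\scr C,\all,\fold)$ with $\Span(\scr C^\amalg,\all,\fold)$ and using that $\scr C$ is already extensive. First, I would observe that the extensivity of $\scr C$ gives a canonical functor $\scr C^\amalg\to\scr C$ over $\Fin$, namely $(X_i)_{i\in I}\mapsto\coprod_{i\in I}X_i$; together with $\iota\colon\scr C\into\scr C^\amalg$ this exhibits $\scr C$ as a retract of $\scr C^\amalg$ in a way compatible with finite coproducts and with the class $\fold$ (since a fold map in $\scr C$ is, after applying $\coprod$, a fold map, and vice versa). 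Consequently one gets functors $\Span(\scr C,\all,\fold)\adj\Span(\scr C^\amalg,\all,\fold)$, and I would check that the composite $\scr C^\op\times\Span(\Fin)\xrightarrow{\iota\times\id}(\scr C^\amalg)^\op\times\Span(\Fin)\xrightarrow{\Theta}\Span(\scr C^\amalg,\all,\fold)\to\Span(\scr C,\all,\fold)$ agrees with $\Theta$ for $\scr C$, so that the two $\Theta^*$'s are intertwined by these comparison functors.

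Next I would identify the image of the right-hand side. By Proposition~\ref{prop:comm-monoids} (applied with target $\scr D$, resp.\ with target $\Cat_\infty$ as needed), a functor $\scr C^\op\to\CAlg(\scr D)$ that preserves finite products is the same as a functor $\scr C^\op\to\Fun^\times(\Span(\Fin),\scr D)$ preserving finite products, i.e.\ a functor $\scr C^\op\times\Span(\Fin)\to\scr D$ preserving finite products in each variable separately and preserving finite products of $\scr C^\op$ in the first variable. Lemma~\ref{lem:fold-spans} already produces, for each such $M$, a finite-product-preserving functor $\widetilde M\colon\Span(\scr C^\amalg,\all,\fold)\to\scr D$ with $\widetilde M\circ\Theta\circ(\iota\times\id)\simeq M$; the content of the present proposition is that $\widetilde M$ descends along $\Span(\scr C^\amalg,\all,\fold)\to\Span(\scr C,\all,\fold)$ precisely when $M$ additionally preserves finite products in the $\scr C^\op$-variable. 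I would show this by noting that the functor $\Span(\scr C,\all,\fold)\to\Span(\scr C^\amalg,\all,\fold)$ induced by $\iota$ is fully faithful, so $\Fun^\times(\Span(\scr C,\all,\fold),\scr D)$ is a full subcategory of $\Fun^\times(\Span(\scr C^\amalg,\all,\fold),\scr D)$, and that its essential image consists of those $F$ inverting the spans $\coprod_{i\in I}X_i\from(X_i)_{i\in I}\to(X_i)_{i\in I}$ (the counit edges of the retraction). Translating via $\Theta^*$ and Lemma~\ref{lem:fold-spans}, $\widetilde M$ inverts these edges exactly when $M$ takes the corresponding decompositions in $\scr C^\op$ to equivalences, i.e.\ when $M\colon\scr C^\op\to\CAlg(\scr D)$ preserves finite products. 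Finally, the formula for the inverse (right Kan extension along $\Theta$) follows by combining the right Kan extension formula of Lemma~\ref{lem:fold-spans} with the fact that $\Span(\scr C,\all,\fold)\hookleftarrow\scr C^\op\times\Span(\Fin)$ factors through $\Span(\scr C^\amalg,\all,\fold)$ via a coreflective (hence cofinal-on-the-relevant-comma-categories) inclusion, so the pointwise limits computing the two Kan extensions agree.

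The main obstacle I expect is the bookkeeping in the cofinality/coreflectivity argument identifying the relevant comma categories: one must check that for $X\in\scr C$, the comma category $(\scr C^\op\times\Span(\Fin))\times_{\Span(\scr C,\all,\fold)}\Span(\scr C,\all,\fold)_{X/}$ has the same limit-of-$M$ as the corresponding comma category for $\scr C^\amalg$, which amounts to verifying that the base-change of the coreflection $\Span(\scr C^\amalg,\all,\fold)\to\Span(\scr C,\all,\fold)$ behaves well over slices. Concretely, every object of the $\scr C$-comma category is a span $X\from Z\to\coprod_S X'$ with $X'\in\scr C$, and one wants to contract it (via the right adjoint picking off $X\from Z$) to the subcategory $(\scr C_{\all}^\op)_{X/}$-like piece; the extensivity of $\scr C$ is what makes the fibered-coproduct manipulations go through, exactly as in the proof of Lemma~\ref{lem:fold-spans}. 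Once this is set up carefully, the rest is a formal diagram chase, and I would present it at roughly the level of detail of the proof of Proposition~\ref{prop:comm-monoids}.
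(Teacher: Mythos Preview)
Your overall strategy---reduce to Lemma~\ref{lem:fold-spans} via the comparison $\scr C\leftrightarrows\scr C^\amalg$---is exactly the paper's approach. The gap is in the execution: you rely on ``the functor $\Span(\scr C,\all,\fold)\to\Span(\scr C^\amalg,\all,\fold)$ induced by $\iota$'', but no such functor exists. The inclusion $\iota\colon\scr C\into\scr C^\amalg$ does \emph{not} preserve fold maps: for $n>1$, the image $\iota(X^{\amalg n}\to X)$ is a map between singleton families, and singleton families in $\scr C^\amalg$ admit no nontrivial coproduct decompositions, so this map cannot be written as a sum of folds in $\scr C^\amalg$. (Concretely, $(X)^{\amalg n}$ in $\scr C^\amalg$ is the $n$-indexed family $(X,\dotsc,X)$, not the singleton $(X^{\amalg n})$.) For the same reason, the adjunction $\Lambda\dashv\iota$ does not satisfy the hypotheses of Corollary~\ref{cor:span-adjunctions}, so you cannot lift it to an adjunction of span categories either.

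The paper therefore proceeds asymmetrically: only $\Lambda$ induces a span functor, and one must show directly that $\Lambda^*$ is fully faithful on product-preserving functors with inverse given by right Kan extension $\Lambda_*$. This forces the comma-category analysis you correctly anticipate as the main obstacle; your shortcut via ``$\iota$ fully faithful on spans'' would have bypassed exactly that work, but it is not available. The paper's argument shows that a functor $M$ on $\Span(\scr C^\amalg,\all,\fold)$ satisfying both product-preservation conditions is right Kan extended along $\Span(\scr C^\amalg,\all,\fold)_{\iota X/}\into\Span(\scr C^\amalg,\all,\fold)\times_{\Span(\scr C,\all,\fold)}\Span(\scr C,\all,\fold)_{X/}$, and this is where extensivity of $\scr C$ (allowing one to split objects along coproduct decompositions of targets of fold maps) is actually used.
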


\begin{proof}
	Since $\scr C$ has finite coproducts, the fully faithful functor $\iota\colon\scr C\into\scr C^\amalg$ has a left adjoint $\Lambda\colon\scr C^\amalg\to \scr C$, $(X_i)_{\in I}\mapsto\coprod_{i\in I}X_i$, which induces
	\[
	\Lambda\colon \Span(\scr C^\amalg,\all,\fold) \to \Span(\scr C,\all,\fold).
	\]
	By Lemma~\ref{lem:fold-spans}, it suffices to show that
	\[
	\Lambda^*\colon \Fun(\Span(\scr C,\all,\fold),\scr D) \to \Fun(\Span(\scr C^\amalg,\all,\fold),\scr D)
	\]
	induces an equivalence between:
	\begin{itemize}
		\item functors $\Span(\scr C,\all,\fold)\to\scr D$ that preserve finite products;
		\item functors $\Span(\scr C^\amalg,\all,\fold)\to\scr D$ that preserve finite products and whose restriction to $\scr C^\op$ preserves finite products.
	\end{itemize}
	The functor $\Lambda^*$ is obviously conservative. 
	Let $M\colon\Span(\scr C^\amalg,\all,\fold) \to \scr D$ be a functor with the above properties.
	We will show that, for every $X\in\scr C$, $M$ is right Kan extended along the functor
	\[
	\Psi\colon\Span(\scr C^\amalg,\all,\fold)_{\iota X/} \to \Span(\scr C^\amalg,\all,\fold)\times_{\Span(\scr C,\all,\fold)}\Span(\scr C,\all,\fold)_{X/}.
	\]
	As $M$ inverts maps of the form $\iota(\coprod_{i\in I}X_i)\from (X_i)_{i\in I}$, this implies that $\Lambda_*M$ exists and preserves finite products, and that $\Lambda_*$ is the desired inverse.
	
	Fix an object $\sigma = ((Z_i)_{i\in I}, X\from Y\to \coprod_{i\in I}Z_i)$ in the target of $\Psi$, and let $\sigma/\Psi$ denote the comma $\infty$-category. We must then show that the restriction of $M$ to $\sigma/\Psi$ has limit $\prod_{i\in I}M(\iota Z_i)$. 
	 The $\infty$-category $\sigma/\Psi$ has a full subcategory $\scr A(\sigma)=\sigma/\Psi\times_{\Span(\Fin)_{I/}}(\Fin^\op)_{I/}$. Unraveling the definitions, we see that an object of $\scr A(\sigma)$ consists of
	\begin{itemize}
		\item an object $I\stackrel\pi\from J$ in $(\Fin^\op)_{I/}$;
		\item for each $j\in J$, an object $Z_{\pi(j)}\from W_j$ in $((\scr C^\amalg)^\op)_{Z_{\pi(j)}/}$;
		\item for each $j\in J$, a finite set $K_j$ and an equivalence $Y\times_{Z_{\pi(j)}}W_j\simeq \coprod_{K_j}W_j$ over $W_j$.
	\end{itemize}
	One can easily check that the subcategory $\scr A(\sigma)\subset \sigma/\Psi$ is coreflective and in particular coinitial, either directly or by showing that the projection $\sigma/\Psi\to\Span(\Fin)_{I/}$ is a cartesian fibration and using \cite[Proposition 4.1.2.15]{HTT}.
	We must therefore show that the restriction of $M$ to $\scr A(\sigma)$ has limit $\prod_{i\in I}M(\iota Z_i)$. For $i\in I$, let $\sigma_i=((Z_i,\{i\}), X\from Y_i \to Z_i)$. 
	Then there is an equivalence of $\infty$-categories $\prod_{i\in I}\scr A(\sigma_i)\simeq\scr A(\sigma)$, and
	\[
	\lim_{\tau\in\scr A(\sigma)}M(\tau) \simeq \lim_{(\tau_i)\in\prod_i\scr A(\sigma_i)}\prod_{i\in I}M(\tau_i)\simeq \prod_{i\in I}\lim_{\tau\in\scr A(\sigma_i)}M(\tau).
	\]
	The last equivalence uses that the projections $\prod_i\scr A(\sigma_i)\to\scr A(\sigma_i)$ are coinitial, since each $\scr A(\sigma_i)$ has a final object and hence is weakly contractible.
	 Without loss of generality, we can therefore assume that $I=*$. Similarly, since $Y\to Z$ is a sum of fold maps, we can assume that it is a fold map $Z^{\amalg n}\to Z$. Replacing $\scr C$ by $\scr C_{/Z}$, we may further assume that $Z$ is a final object of $\scr C$.
	 
	 We have arrived at the following situation. We have an extensive $\infty$-category $\scr C$ with a final object; we have a functor $M\colon\Span(\scr C^\amalg,\all,\fold)\to\scr D$ that preserves finite products and whose restriction to $\scr C^\op$ preserves finite products; we have an integer $n\geq 0$; we have the $\infty$-category $\scr A$ whose objects are tuples 
	 \[\tau=(J,(W_j)_{j\in J},(K_j)_{j\in J},(\phi_j)_{j\in J})\] 
	 where $J\in\Fin^\op$, $W_j\in\scr C^\op$, $K_j\in\Fin$, and $\phi_j$ is an equivalence $W_j^{\amalg n}\simeq W_j^{\amalg K_j}$ over $W_j$; and we seek to show that the forgetful functor $\scr A\to(\scr C^\amalg)^\op$ induces an equivalence $M(*)\simeq\lim_{(\scr C^\amalg)^\op} M \simeq \lim_{\scr A}M$. This forgetful functor has a section
	 \[
	 \Phi\colon (\scr C^\amalg)^\op \to \scr A,\quad (W_j)_{j\in J} \mapsto (J,(W_j)_{j\in J}, (n)_{j\in J}, (\id)_{j\in J}),
	 \]
	 which is fully faithful since the space of sections of a fold map is discrete. It suffices to show that $M$ is right Kan extended along $\Phi$.
	 Given $\tau\in\scr A$ as above, we want to show that the restriction of $M$ to the comma $\infty$-category $\tau/\Phi$ has limit $\prod_{j\in J}M(W_j)$. The $\infty$-category $\tau/\Phi$ decomposes in an obvious way as a product over $J$, allowing us to assume that $J=*$ and $\tau=(*,W,K,\phi)$.
	 Moreover, there exists a finite coproduct decomposition $W=\coprod_\alpha W_\alpha$ such that each $\tau_{\alpha}=(*,W_\alpha,K,\phi)$ is in the essential image of $\Phi$. As before, since each $\tau_\alpha/\Phi$ has initial object $\iota W_\alpha$, we obtain equivalences
	 \[
	 \lim_{\tau/\Phi}M\simeq \prod_{\alpha}\lim_{\tau_{\alpha}/\Phi}M\simeq \prod_{\alpha}M(\iota W_\alpha)\simeq M(\iota W),
	 \]
	  as desired.
\end{proof}

\begin{remark}
	The $\infty$-category $\Cat_\infty^\amalg$ of $\infty$-categories with finite coproducts (and functors that preserve finite coproducts) has a symmetric monoidal structure $\otimes$ such that $\scr A\times\scr B\to\scr A\otimes\scr B$ is the universal functor that preserves finite coproducts in each variable \cite[\sect4.8.1]{HA}. Proposition~\ref{prop:fold-spans} can be rephrased as follows: if $\scr C$ is an extensive $\infty$-category, then
	\[
	\scr C\otimes\Span(\Fin) \simeq \Span(\scr C,\fold,\all).
	\]
\end{remark}

\begin{remark}
	Proposition~\ref{prop:fold-spans} follows immediately from Lemma~\ref{lem:fold-spans} when $\scr C=\scr C_0^\amalg$, hence when $\scr C$ is a filtered colimit of such $\infty$-categories. This covers most cases of interest, like categories of quasi-compact locally connected schemes or of finitely presented schemes over a qcqs base.
\end{remark}

\begin{corollary}\label{cor:fold-spans}
	Let $\scr C$ be an extensive $\infty$-category, let $\scr A\colon \scr C^\op\to\CAlg(\Cat_\infty)$ be a functor that preserves finite products, and let $\hat{\scr A}\colon \Span(\scr C,\all,\fold)\to \Cat_\infty$ be the corresponding functor as in Proposition~\ref{prop:fold-spans}.
	Then $\Theta$ induces an equivalence of $\infty$-categories between sections of $\CAlg(\scr A(\ph))$ and sections of $\hat{\scr A}$.
\end{corollary}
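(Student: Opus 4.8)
The plan is to deduce this from Proposition~\ref{prop:fold-spans} (and Corollary~\ref{cor:comm-algebras}) by a diagram-chase in $\infty$-categories of sections, along the lines of the proof of Corollary~\ref{cor:comm-algebras}. First, recall that a section of $\hat{\scr A}\colon\Span(\scr C,\all,\fold)\to\Cat_\infty$ (i.e.\ of the classified cocartesian fibration) is, by definition, an object of $\Fun_{\Span(\scr C,\all,\fold)}(\Span(\scr C,\all,\fold),\int\hat{\scr A})$, and the cocartesian fibration $\int\hat{\scr A}$ is classified by a finite-product-preserving functor $\Span(\scr C,\all,\fold)\to\Cat_\infty$ landing in the subcategory $\scr D$ of $(\Cat_\infty)_{/\Delta^1}$ spanned by cocartesian fibrations. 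So I would set up, exactly as in the proof of Corollary~\ref{cor:comm-algebras}, the pullback square expressing $\Sect(\hat{\scr A})$ as the limit of
\begin{tikzmath}
	\def\rowsep{1em}
	\def\colsep{0em}
	\diagram{
	\{*\} && \Fun^\times(\Span(\scr C,\all,\fold),\scr D) && \{\hat{\scr A}\} \\
	& \Fun^\times(\Span(\scr C,\all,\fold),\Cat_\infty) & & \Fun^\times(\Span(\scr C,\all,\fold),\Cat_\infty) & \\
	};
	\arrows (11) edge (22) (13) edge (22) (13) edge (24) (15) edge (24);
\end{tikzmath}
where the maps from $\Fun^\times(\Span(\scr C,\all,\fold),\scr D)$ are composition with the two projections $\scr D\rightrightarrows\Cat_\infty$ and the $\scr D$-valued functor picking out $\hat{\scr A}$ corresponds under Proposition~\ref{prop:fold-spans} to the functor $\scr C^\op\to\CAlg(\scr D)$ classifying the cocartesian fibration $\int\scr A\to\scr C^\op$ together with its fiberwise symmetric monoidal structure.

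The key step is to apply Proposition~\ref{prop:fold-spans} with the target $\infty$-category taken to be, successively, $\scr D$, $\Cat_\infty$, and the target of $\hat{\scr A}$, to rewrite the whole diagram above on the other side of the equivalence. Since Proposition~\ref{prop:fold-spans} is an equivalence of $\infty$-categories and $\Theta^*$ is natural in the target (for finite-product-preserving functors between $\infty$-categories with finite products), the displayed limit diagram is carried to the corresponding limit diagram over $\Fun^\times(\scr C^\op,\CAlg(\scr D))$, $\Fun^\times(\scr C^\op,\CAlg(\Cat_\infty))$, etc. Unwinding, this exhibits $\Sect(\hat{\scr A})$ as the $\infty$-category of sections of the functor $\scr C^\op\to\CAlg(\Cat_\infty)$, $X\mapsto\CAlg(\scr A(X))$ — using here that $\CAlg(-)$ commutes with the relevant limit, i.e.\ that a lift of $\scr A\colon\scr C^\op\to\CAlg(\Cat_\infty)$ through the forgetful functor $\scr D\to\Cat_\infty$ is the same as a section of $\CAlg(\scr A(\ph))$; this last identification is precisely Corollary~\ref{cor:comm-algebras} applied fiberwise and then assembled over $\scr C^\op$, or equivalently the fact that $\CAlg$ is computed by the same pullback square. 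The map realizing the equivalence is induced by $\Theta\colon\scr C^\op\times\Span(\Fin)\to\Span(\scr C,\all,\fold)$, which on sections pulls a section of $\hat{\scr A}$ back to a section of $\hat{\scr A}\circ\Theta$, and by Corollary~\ref{cor:comm-algebras} a section of $\hat{\scr A}\circ\Theta$ cocartesian over $\scr C^\op\times\Fin^\op$ in the $\Span(\Fin)$-direction is exactly a section of $\CAlg(\scr A(\ph))$; one checks that, under Proposition~\ref{prop:fold-spans}, imposing no cocartesianness on sections of $\hat{\scr A}$ corresponds to imposing exactly this partial cocartesianness on sections of $\hat{\scr A}\circ\Theta$.

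The main obstacle I expect is bookkeeping rather than conceptual: making the naturality of Proposition~\ref{prop:fold-spans} in the target precise enough to transport the pullback square, and correctly matching up the ``cocartesian over $\Fin^\op$'' condition in Corollary~\ref{cor:comm-algebras} with the (absence of any) condition on sections of $\hat{\scr A}$. Concretely, one must verify that the functor $\scr C^\op\to\CAlg(\Cat_\infty)$ produced by Proposition~\ref{prop:fold-spans} from $\hat{\scr A}$ is indeed the one sending $X$ to $\scr A(X)$ with its given symmetric monoidal structure — this is where the hypothesis that $\scr A$ preserves finite products is used, via the computation in the proof of Proposition~\ref{prop:fold-spans} that $\hat{\scr A}$ restricted to $\scr C^\op\subset\Span(\scr C,\all,\fold)$ recovers $\scr A$. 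Once that matching is in place, the rest is a formal manipulation of limits of $\infty$-categories and the identification $\Sect(\CAlg(\scr A(\ph)))\simeq\Sect(\hat{\scr A})$ follows. Alternatively, and perhaps more cleanly, one can avoid the square altogether by invoking Proposition~\ref{prop:automatic-calg}-style reasoning fiberwise: $\Sect(\hat{\scr A})$ is the $\infty$-category of finite-product-preserving functors $\Span(\scr C,\all,\fold)\to\int\hat{\scr A}$ over the base, and Proposition~\ref{prop:fold-spans} together with Corollary~\ref{cor:comm-algebras} identifies these with finite-product-preserving functors $\scr C^\op\to\CAlg(\int\scr A)$ over $\scr C^\op$, i.e.\ sections of $\CAlg(\scr A(\ph))$; I would present whichever of these two routes turns out to require less notation.
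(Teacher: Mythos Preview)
Your proposal is correct and takes essentially the same approach as the paper: the paper's proof is the single line ``Apply Proposition~\ref{prop:fold-spans} to the $\infty$-category of cocartesian fibrations over $\Delta^1$, as in Corollary~\ref{cor:comm-algebras}'', which is precisely your pullback-square argument with $\scr D$ the full subcategory of $(\Cat_\infty)_{/\Delta^1}$ spanned by cocartesian fibrations. Your bookkeeping worries about matching cocartesianness conditions are unnecessary here, since the statement imposes no such condition on either side and the pullback square already computes all sections.
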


\begin{proof}
	Apply Proposition~\ref{prop:fold-spans} to the $\infty$-category of cocartesian fibrations over $\Delta^1$, as in Corollary~\ref{cor:comm-algebras}.
\end{proof}

\begin{proposition}\label{prop:automatic-calg}
	Let $\scr C$ be an extensive $\infty$-category, let $m$ be a class of morphisms in $\scr C$ that is closed under composition, base change, and binary coproducts, and let $\scr D$ be an $\infty$-category with finite products. Suppose that $\fold\subset m$. Then the forgetful functor $\CAlg(\scr D)\to\scr D$ induces an equivalence of $\infty$-categories
	\[
	\Fun^\times(\Span(\scr C,\all,m),\CAlg(\scr D))\simeq \Fun^\times(\Span(\scr C,\all,m),\scr D).
	\]
\end{proposition}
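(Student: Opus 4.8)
The plan is to exploit the semiadditivity of the target $2$-category of spans together with Proposition~\ref{prop:comm-monoids}. First I would observe that since $\fold\subset m$ and $m$ is closed under binary coproducts, Lemma~\ref{lem:Span-semiadditive} tells us that $\Span(\scr C,\all,m)$ is semiadditive; in particular it has a symmetric monoidal structure given by the coproduct (which coincides with the product), and the inclusion $\scr C\into\Span(\scr C,\all,m)$ preserves finite products. A functor $F\colon\Span(\scr C,\all,m)\to\scr D$ that preserves finite products therefore sends this semiadditive structure to commutative monoid structures on the objects $F(X)$: concretely, for each $X\in\scr C$ the fold maps $X^{\amalg n}\to X$ and the (backward) diagonal-type spans give $F(X)$ the structure of a commutative monoid in $\scr D$, coherently in $X$. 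Spelled out, this is exactly the statement that $F$ factors through $\CAlg(\scr D)\to\scr D$.

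More precisely, I would make the factorization functorial as follows. The key point is that there is a canonical symmetric monoidal functor $\Fin_\pt\simeq\Span(\Fin,\inj,\all)\to\Span(\scr C,\all,m)$ induced by $\Fin\to\scr C$, $X\mapsto\coprod_X S_0$ — wait, there is no distinguished object; instead one uses that for \emph{each} object $X$ the assignment $I\mapsto\coprod_I X$ extends to a finite-product-preserving functor $\Span(\Fin)\to\Span(\scr C,\all,m)$ (this is Proposition~\ref{prop:fold-spans} applied with the target being $\Span(\scr C,\all,m)$, or more directly it follows from semiadditivity). Composing $F$ with this, for varying $X$, produces a functor $\scr C^\op\to\CAlg(\scr D)$, which is the desired lift; conversely, given a finite-product-preserving functor $\scr C^\op\to\CAlg(\scr D)$ one composes with the forgetful functor to get $\scr C^\op\to\scr D$, and then Proposition~\ref{prop:fold-spans} shows it extends uniquely to a finite-product-preserving functor on $\Span(\scr C,\all,\fold)$; one then extends further along $\Span(\scr C,\all,\fold)\to\Span(\scr C,\all,m)$. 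The cleanest way to package all of this is to run the argument at the level of cocartesian fibrations over $\Delta^1$, exactly as in the proof of Corollary~\ref{cor:comm-algebras}, which reduces everything to Proposition~\ref{prop:comm-monoids} and Proposition~\ref{prop:fold-spans}.

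Concretely, I would write $\scr E\to\Delta^1$ for the full subcategory of $(\Cat_\infty)_{/\Delta^1}$ on the cocartesian fibrations, so that $\CAlg(\scr D)=\Fun^\times(\Fin_\pt,\scr D)$ fits into a pullback of $\Fun^\times$-categories, and then take $\Fun^\times(\Span(\scr C,\all,m),-)$ of this pullback. Because $\Fun^\times(\Span(\scr C,\all,m),-)$ preserves limits, and because $\Fun^\times(\Span(\scr C,\all,m),\Fun^\times(\Fin_\pt,\scr D))\simeq\Fun^\times(\Span(\scr C,\all,m)\times\Fin_\pt,\scr D)$ — here using that a functor out of a product preserves products in each variable iff it preserves products jointly, which holds since both factors are semiadditive (or at least have finite products) — one identifies the left-hand side of the claimed equivalence with a mapping space out of $\Span(\scr C,\all,m)\times\Span(\Fin)$; but by the universal property encoded in Proposition~\ref{prop:fold-spans} (applied to the extensive $\infty$-category $\scr C$, noting $\Span(\Fin)\simeq\Span(\ast,\all,\fold)$-type manipulations) together with the fact that $\Span(\scr C,\all,m)$ already ``absorbs'' $\Span(\Fin)$ because $\fold\subset m$ — that is, the diagonal functor $\Span(\scr C,\all,m)\to\Span(\scr C,\all,m)\times\Span(\Fin)$ followed by the fold-action map is an equivalence on finite-product-preserving functor categories — one concludes.

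\textbf{Main obstacle.} The genuinely delicate point is the last one: showing that precomposition along the ``fold action'' $\Span(\scr C,\all,m)\times\Span(\Fin)\to\Span(\scr C,\all,m)$ induces an equivalence on finite-product-preserving functors into $\scr D$. Equivalently, one must show that a finite-product-preserving $F\colon\Span(\scr C,\all,m)\to\scr D$ is canonically and uniquely recovered from the composite $\scr C^\op\to\CAlg(\scr D)$ it determines — i.e., that the commutative monoid structures that semiadditivity forces on the values $F(X)$ carry \emph{no additional information} beyond $F$ itself, and conversely determine $F$. The uniqueness half is a right-Kan-extension argument structurally identical to the proof of Proposition~\ref{prop:fold-spans} (one reduces, via coinitial-subcategory and comma-category manipulations, to the computation that $X$ is the colimit of its points in the appropriate indexing category), while the existence/essential-surjectivity half amounts to checking that the right Kan extension of a finite-product-preserving $\scr C^\op\to\CAlg(\scr D)$ along $\scr C^\op\times\Fin^\op\into\Span(\scr C,\all,m)$ again preserves finite products — again exactly the pattern of Proposition~\ref{prop:fold-spans}, now with $m$ in place of $\fold$, where one uses that the extra backward $m$-morphisms are handled by the commutative-monoid (Mackey-functor-free, since no transfers along $m$ are imposed) structure. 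I expect the bookkeeping with comma categories indexed over $\Span(\Fin)_{I/}$ to be the most technical part, but it is morally a known computation and can be cited from Propositions~\ref{prop:comm-monoids} and~\ref{prop:fold-spans} rather than redone.
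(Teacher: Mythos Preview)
Your approach is correct in spirit but substantially more laborious than what the paper does. You correctly identify semiadditivity of $\Span(\scr C,\all,m)$ (via Lemma~\ref{lem:Span-semiadditive}) as the key structural input, and you correctly see that the mechanism is that every object acquires a canonical commutative monoid structure from the fold maps. However, you then attempt to rebuild the equivalence by hand, routing through Propositions~\ref{prop:comm-monoids} and~\ref{prop:fold-spans} and an ``absorption'' argument for $\Span(\scr C,\all,m)\times\Span(\Fin)\to\Span(\scr C,\all,m)$, with the attendant comma-category bookkeeping you flag as the main obstacle.

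The paper bypasses all of this by citing a general fact: for \emph{any} semiadditive $\infty$-category $\scr E$ and any $\scr D$ with finite products, the forgetful functor induces an equivalence $\Fun^\times(\scr E,\CAlg(\scr D))\simeq\Fun^\times(\scr E,\scr D)$. This is \cite[Remark 2.7]{GlasmanGoodwillie}. Once Lemma~\ref{lem:Span-semiadditive} gives semiadditivity, the proposition is immediate. What you are doing in your ``main obstacle'' paragraph is essentially reproving Glasman's result in this special case; it can be made to work, but there is no need, and the specific reduction to Proposition~\ref{prop:fold-spans} ``with $m$ in place of $\fold$'' is not quite available as stated (that proposition is specifically about $\fold$), so you would genuinely have to redo the right-Kan-extension computation rather than cite it.
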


\begin{proof}
	By Lemma~\ref{lem:Span-semiadditive}, the $\infty$-category $\Span(\scr C,\all,m)$ is semiadditive. The claim then follows from \cite[Remark 2.7]{GlasmanGoodwillie}.
\end{proof}

\begin{corollary}\label{cor:automatic-calg}
	Let $\scr C$ be an extensive $\infty$-category and let $m$ be a class of morphisms in $\scr C$ that is closed under composition, base change, and binary coproducts, and such that $\fold\subset m$. Let $\scr A\colon \Span(\scr C,\all,m)\to\Cat_\infty$ be a functor that preserves finite products and let $\hat{\scr A}\colon \Span(\scr C,\all,m)\to\CAlg(\Cat_\infty)$ be the corresponding functor as in Proposition~\ref{prop:automatic-calg}. Then there is an equivalence of $\infty$-categories between sections of $\scr A$ and sections of $\CAlg(\hat{\scr A}(\ph))$.
\end{corollary}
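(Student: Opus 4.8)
The plan is to reduce the statement to Proposition~\ref{prop:automatic-calg} via the same device that proves Corollary~\ref{cor:comm-algebras} (and that is reused verbatim in Corollary~\ref{cor:fold-spans}): encode $\infty$-categories of sections of cocartesian fibrations using a fixed auxiliary $\infty$-category $\scr D$ of cocartesian fibrations over $\Delta^1$. First I would observe that Proposition~\ref{prop:automatic-calg} is indeed available in this setting. The hypotheses on $m$ (closed under composition, base change, and binary coproducts, with $\fold\subset m$) are exactly what is needed for $\Span(\scr C,\all,m)$ to be semiadditive (Lemma~\ref{lem:Span-semiadditive}), so Proposition~\ref{prop:automatic-calg} applies and gives, for every $\infty$-category $\scr E$ with finite products, an equivalence $\Fun^\times(\Span(\scr C,\all,m),\CAlg(\scr E))\simeq\Fun^\times(\Span(\scr C,\all,m),\scr E)$ induced by the forgetful functor. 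Taking $\scr E=\Cat_\infty$ recovers the equivalence of Proposition~\ref{prop:automatic-calg} itself, which is precisely what produces $\hat{\scr A}$ from $\scr A$.

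Next I would fix $\scr D$ to be the full subcategory of $(\Cat_\infty)_{/\Delta^1}$ spanned by the cocartesian fibrations, so that $\scr D\simeq\Fun(\Delta^1,\Cat_\infty)$; it has finite products, computed fibrewise over $\Delta^1$, and for the cartesian symmetric monoidal structure there is an equivalence $\CAlg(\scr D)\simeq\Fun(\Delta^1,\CAlg(\Cat_\infty))$ compatible with the two evaluation functors $\mathrm{ev}_0,\mathrm{ev}_1\colon\scr D\to\Cat_\infty$. As in the proof of Corollary~\ref{cor:comm-algebras}, I would then express $\Sect(\scr A)$ as the limit of the cospan whose apex is $\Fun^\times(\Span(\scr C,\all,m),\scr D)$, mapped by $\mathrm{ev}_0$ to $\Fun^\times(\Span(\scr C,\all,m),\Cat_\infty)$ over the constant functor at the terminal $\infty$-category and by $\mathrm{ev}_1$ over $\scr A$; and, applying the same description inside $\CAlg(\Cat_\infty)$, express $\Sect(\CAlg(\hat{\scr A}(\ph)))$ as the limit of the analogous cospan with $\Cat_\infty$, $\scr D$, the terminal $\infty$-category, and $\scr A$ replaced respectively by $\CAlg(\Cat_\infty)$, $\CAlg(\scr D)$, the terminal object of $\CAlg(\Cat_\infty)$, and $\hat{\scr A}$. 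Applying the equivalence of Proposition~\ref{prop:automatic-calg} (with $\scr E=\Cat_\infty$ and with $\scr E=\scr D$) to the three vertices of the second cospan — and using that the forgetful functor $\CAlg(\Cat_\infty)\to\Cat_\infty$ carries the terminal algebra to the terminal $\infty$-category and $\hat{\scr A}$ to $\scr A$, and is compatible with $\mathrm{ev}_0$ and $\mathrm{ev}_1$ — identifies the two cospans, hence their limits. If, as in the applications, one restricts on both sides to sections that are cocartesian over a given subcategory, this restriction is preserved, because the forgetful functor $\CAlg(\Cat_\infty)\to\Cat_\infty$ detects products and hence cocartesian edges over the spans in question.

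The hard part is entirely contained in the clause "compatible with $\mathrm{ev}_0$ and $\mathrm{ev}_1$" above: one must check that the equivalence of Proposition~\ref{prop:automatic-calg}, applied with $\scr E=\Cat_\infty$ and with $\scr E=\scr D$, assembles into a genuine morphism of cospans — commuting with the two evaluation functors and with passage to $\CAlg$ — and that the chosen base points (constant terminal functor versus constant terminal-algebra functor) correspond. This is formal but a little delicate, exactly as in Corollary~\ref{cor:comm-algebras}, and reduces to tracking the naturality in $\scr E$ of the equivalence $\Fun^\times(\Span(\scr C,\all,m),\CAlg(\scr E))\simeq\Fun^\times(\Span(\scr C,\all,m),\scr E)$. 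Once this is in hand there is nothing further to do, and the proof can be compressed to a single sentence: apply Proposition~\ref{prop:automatic-calg} to the $\infty$-category of cocartesian fibrations over $\Delta^1$, as in Corollary~\ref{cor:comm-algebras}.
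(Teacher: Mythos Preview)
Your proposal is correct and takes exactly the same approach as the paper; indeed, your final compressed sentence is verbatim the paper's proof. The elaboration you give of the cospan argument is a faithful unpacking of what ``as in Corollary~\ref{cor:comm-algebras}'' means.
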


\begin{proof}
	Apply Proposition~\ref{prop:automatic-calg} to the $\infty$-category of cocartesian fibrations over $\Delta^1$, as in Corollary~\ref{cor:comm-algebras}.
\end{proof}

\subsection{Spans and descent}
\label{sub:spans-descent}

\begin{proposition}\label{prop:local-spans}
	Let $\scr C$ be an $\infty$-category equipped with a topology $t$ and let $m\subset n$ be classes of morphisms in $\scr C$ that contain the equivalences and are closed under composition and base change. 
	Suppose that:
	\begin{enumerate}
		\item If $p\in n$, $q\in m$, and $qp\in m$, then $p\in m$.
		\item Every morphism in $n$ is $t$-locally in $m$.
	\end{enumerate}
	 Then, for every $\infty$-category $\scr D$, the restriction functor
	\[
	\Fun(\Span(\scr C,\all,n),\scr D) \to \Fun(\Span(\scr C,\all,m),\scr D)
	\]
	induces an equivalence between the full subcategories of functors whose restrictions to $\scr C^\op$ are $t$-sheaves.
	The inverse is given by right Kan extension.
\end{proposition}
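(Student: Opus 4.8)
The plan is to show that the restriction functor $\iota^*$ along the inclusion $\iota\colon\Span(\scr C,\all,m)\into\Span(\scr C,\all,n)$, together with its pointwise right adjoint (right Kan extension) $\iota_*$, restricts to a pair of mutually inverse equivalences between the indicated subcategories of $t$-sheaves on $\scr C^\op$. Since $\iota$ is the identity on $\scr C^\op$, restriction to $\scr C^\op$ is unaffected by $\iota^*$, so $\iota^*$ automatically preserves the property of being a $t$-sheaf on $\scr C^\op$. It therefore suffices to prove: \textbf{(i)} if $F\colon\Span(\scr C,\all,m)\to\scr D$ restricts to a $t$-sheaf on $\scr C^\op$, then $\iota_*F$ exists and also restricts to a $t$-sheaf on $\scr C^\op$; and \textbf{(ii)} the counit $\iota^*\iota_*F\to F$ is an equivalence for such $F$. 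Indeed, (i) and (ii) make $\iota^*$ essentially surjective onto the sheaf subcategory downstairs; and for a $t$-sheaf $G$ upstairs, (ii) applied to $\iota^*G$ shows the unit $G\to\iota_*\iota^*G$ becomes an equivalence after $\iota^*$, hence (as $\iota^*$ is the identity on objects) is an equivalence on objects, hence an equivalence — which together with the adjunction gives that $\iota^*$ is fully faithful on the sheaf subcategory upstairs, with inverse $\iota_*$.

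Both (i) and (ii) reduce to a single limit computation. For $X\in\scr C$, the pointwise formula gives $(\iota_*F)(X)=\lim_{\scr I_X}F$, where $\scr I_X:=\Span(\scr C,\all,m)\times_{\Span(\scr C,\all,n)}\Span(\scr C,\all,n)_{X/}$ and we mean the limit of $F$ along the projection to $\Span(\scr C,\all,m)$. An object of $\scr I_X$ is a pair $\bigl(Y,\ X\to Y\bigr)$ with $X\to Y$ a morphism of $\Span(\scr C,\all,n)$ — that is, a span $X\xleftarrow{f}T\xrightarrow{p}Y$ with $p\in n$ and $f$ arbitrary — and a morphism is a span with right leg in $m$ that is compatible, over $X$, with the structure data. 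The full subcategory $\scr J_X\subset\scr I_X$ spanned by those objects whose right leg $p$ lies in $m$ has $(X,\id_X)$ as an initial object (since for such objects the structure span already lies in $\Span(\scr C,\all,m)$), so $\lim_{\scr J_X}F\simeq F(X)$, and this identification agrees with the counit of $\iota^*\dashv\iota_*$ at $X$. Hence (i) and (ii) both follow once one shows: when $F|_{\scr C^\op}$ is a $t$-sheaf, the restriction map $\lim_{\scr I_X}F\to\lim_{\scr J_X}F$ is an equivalence, naturally in $X\in\Span(\scr C,\all,m)$. Naturality along backward morphisms also gives $\iota_*F|_{\scr C^\op}\simeq F|_{\scr C^\op}$, which is the remaining half of (i).

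This equivalence is a descent statement: one must show that $\scr J_X\subset\scr I_X$ becomes coinitial after composing with $F$, using that $F|_{\scr C^\op}$ is a $t$-sheaf. The mechanism: hypothesis (2) says every $n$-morphism becomes an $m$-morphism after pulling back along a $t$-cover of its base, so every object $\bigl(Y,\ X\xleftarrow{f}T\xrightarrow{p}Y\bigr)$ of $\scr I_X$ is "covered", through a $t$-covering sieve of $Y$ (whence, by base change, one over $X$ via $T$), by objects of $\scr J_X$; hypothesis (1) (cancellation of $m$-maps against $n$-maps) ensures that the class of objects whose right leg lies in $m$ is stable under the base changes and compositions used in this process, so that these covering data organize coherently. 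Running the Čech description of the $t$-sheaf condition on $F|_{\scr C^\op}$ over the filtered system of $t$-covering sieves then shows that the contribution of each object of $\scr I_X\minus\scr J_X$ to $\lim_{\scr I_X}F$ is already accounted for by $\scr J_X$-objects, i.e. that the slice categories $\scr J_X\times_{\scr I_X}(\scr I_X)_{(Y,\psi)/}$ become weakly contractible after applying $F$ — so $\lim_{\scr I_X}F\simeq\lim_{\scr J_X}F\simeq F(X)$.

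The step I expect to be the main obstacle is precisely this cofinality/descent argument. The comma categories $\scr I_X$ are large, and the subcategory $\scr J_X$, although it has the right initial object, is \emph{not} coinitial in $\scr I_X$ for general $F$ (if it were, every functor on $\Span(\scr C,\all,m)$ would extend uniquely to $\Span(\scr C,\all,n)$, which is false); it is only the $t$-sheaf hypothesis that makes the objects with right leg in $n\minus m$ redundant, and making this precise requires carefully indexing the covering refinements by $t$-covering sieves, checking compatibility with base change via hypothesis (1), and invoking the sheaf condition. Once this is in place, the rest — the pointwise right-adjoint formalism, the identification of the counit with the projection at $(X,\id_X)$, and the deduction of the proposition from (i) and (ii) — is routine.
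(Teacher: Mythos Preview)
Your overall strategy matches the paper's: establish that right Kan extension along $\iota$ exists and recovers $F$ on objects by analyzing the comma categories $\sigma/\scr J_X$ for each $\sigma=(X\xleftarrow{f}Y\xrightarrow{p}Z)\in\scr I_X$, and use the $t$-sheaf condition on $F|_{\scr C^\op}$ to control them. Where you fall short is in the execution of that comma computation, which you flag as the obstacle but then describe only vaguely (and with a slip: the phrase ``become weakly contractible after applying $F$'' is not meaningful---what you need is that $\lim_{\sigma/\scr J_X}F\simeq F(Z)$, which is the pointwise Kan extension condition, not a contractibility statement).

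The paper's resolution is short and you are missing its two key moves. First, hypothesis~(1) is used precisely to identify $\sigma/\scr J_X$ as the \emph{full} subcategory of $\Span(\scr C,\all,m)_{Z/}$ on spans $Z\xleftarrow{}W\to T$ with $W\times_ZY\to W$ in $m$: given a map out of $\sigma$ landing in $\scr J_X$, the right leg of the composite span lies in $m$, and cancellation forces $W\times_ZY\to W$ into $m$. Second---and this is the step your outline lacks---the inclusion $(\scr C^\op)_{Z/}\hookrightarrow\Span(\scr C,\all,m)_{Z/}$ has a right adjoint (send $Z\leftarrow W\to T$ to $Z\leftarrow W$), so it is coinitial and carries $\sigma/\scr J_X$ onto the sieve $\scr R\subset\scr C_{/Z}$ of maps $W\to Z$ with $W\times_ZY\to W\in m$. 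Thus $\lim_{\sigma/\scr J_X}F\simeq\lim_{\scr R^\op}F$, and hypothesis~(2) says $\scr R$ is a $t$-covering sieve, so the sheaf condition gives $\lim_{\scr R^\op}F\simeq F(Z)$. No filtered systems of sieves or \v{C}ech-type resolutions are needed---one explicit sieve per object $\sigma$ does the job.
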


\begin{proof}
	Let $F\colon \Span(\scr C,\all,m) \to\scr D$ be a functor such that $F|\scr C^\op$ is a $t$-sheaf, and
	let
	\[
	\Psi\colon \Span(\scr C,\all,m)_{X/} \into \Span(\scr C,\all,m)\times_{\Span(\scr C,\all,n)}\Span(\scr C,\all,n)_{X/}
	\]
	be the inclusion. As in the proof of Proposition~\ref{prop:fold-spans}, it suffices to show that $F$ is right Kan extended along $\Psi$. Let $\sigma=(X\from Y\to Z)$ be an object in the target of $\Psi$, and let $\sigma/\Psi$ be the comma $\infty$-category.
	We must show that the canonical map $\lim_{\Span(\scr C,\all,m)_{Z/}} F \to \lim_{\sigma/\Psi} F$ is an equivalence.
	 By (1), $\sigma/\Psi$ is the full subcategory of $\Span(\scr C,\all,m)_{Z/}$ consisting of those spans $Z\from W\to T$ such that $W\times_ZY\to W$ is in $m$. The inclusion $(\scr C^\op)_{Z/}\into \Span(\scr C,\all,m)_{Z/}$ has a right adjoint sending $\sigma/\Psi$ onto the sieve $\scr R\subset \scr C_{/Z}$ ``where $Y\to Z$ is in $m$''. Hence, the previous map can be identified with $\lim_{(\scr C^\op)_{Z/}} F \to \lim_{\scr R^\op} F$. By (2), $\scr R$ is a $t$-covering sieve of $Z$, so this map is an equivalence.
\end{proof}

\begin{corollary}\label{cor:local-spans}
	Let $\scr C$ be an $\infty$-category equipped with a topology $t$ and let $m\subset n$ be classes of morphisms in $\scr C$ as in Proposition~\ref{prop:local-spans}.
	Let $\scr A\colon \Span(\scr C,\all,m)\to \Cat_\infty$ be a functor whose restriction to $\scr C^\op$ is a $t$-sheaf, and let $\hat{\scr A}\colon \Span(\scr C,\all,n)\to\Cat_\infty$ be its unique extension. Then the restriction functor
	\[
	\Sect(\hat{\scr A}) \to \Sect(\scr A)
	\]
	restricts to an equivalence between the full subcategories of sections that are cocartesian over $\scr C^\op$.
\end{corollary}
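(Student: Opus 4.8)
The plan is to deduce this from Proposition~\ref{prop:local-spans} by the same device that passes from Proposition~\ref{prop:comm-monoids} to Corollary~\ref{cor:comm-algebras} and from Proposition~\ref{prop:fold-spans} to Corollary~\ref{cor:fold-spans}: apply Proposition~\ref{prop:local-spans} with $\scr D$ taken to be an $\infty$-category of cocartesian fibrations over $\Delta^1$. Concretely, let $\scr D\subset(\Cat_\infty)_{/\Delta^1}$ be the full subcategory spanned by the cocartesian fibrations, and let $\ev_0,\ev_1\colon\scr D\to\Cat_\infty$ be the functors returning the fibers over $0$ and $1$. First one records the elementary facts: $\scr D$ is closed under limits in $(\Cat_\infty)_{/\Delta^1}$, and $\ev_0,\ev_1$ preserve limits (being base change along $\{i\}\into\Delta^1$); moreover the wide subcategory $\scr D^\flat\subset\scr D$ of morphisms preserving cocartesian edges is the image of the fully faithful, limit-preserving inclusion $\Fun(\Delta^1,\Cat_\infty)\simeq\mathrm{coCart}(\Delta^1)\into(\Cat_\infty)_{/\Delta^1}$ (the right adjoint to the free cocartesian fibration functor), on which $(\ev_0,\ev_1)$ is jointly conservative and reflects limits. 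In particular a functor into $\scr D$ that factors through $\scr D^\flat$ is a $t$-sheaf on $\scr C^\op$ as soon as its composites with $\ev_0$ and $\ev_1$ are.

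With this in hand, the argument runs as follows. A section of $\hat{\scr A}$ over $\Span(\scr C,\all,n)$ is the same datum as a functor $G\colon\Span(\scr C,\all,n)\to\scr D$ with $\ev_0\circ G$ constant at the terminal $\infty$-category and $\ev_1\circ G=\hat{\scr A}$, and one checks directly that $G$ sends a morphism $g$ of $\scr C^\op$ into $\scr D^\flat$ exactly when the section is cocartesian over $g$; hence the $\infty$-category of sections of $\hat{\scr A}$ cocartesian over $\scr C^\op$ is the full subcategory of such $G$ that carry all of $\scr C^\op$ into $\scr D^\flat$, and likewise for $\scr A$ over $\Span(\scr C,\all,m)$. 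For such a $G$ the restriction $G|\scr C^\op$ factors through $\scr D^\flat$, its $\ev_0$-part is constant (a $t$-sheaf) and its $\ev_1$-part is $\hat{\scr A}|\scr C^\op=\scr A|\scr C^\op$, a $t$-sheaf by hypothesis; so $G|\scr C^\op$ is a $\scr D$-valued $t$-sheaf and $G$ lies in the subcategory to which Proposition~\ref{prop:local-spans} applies. Finally apply Proposition~\ref{prop:local-spans} to $\scr D$ and to $\Cat_\infty$: restriction along $\Span(\scr C,\all,m)\into\Span(\scr C,\all,n)$ is an equivalence of the respective subcategories of functors that are $t$-sheaves on $\scr C^\op$, with quasi-inverse right Kan extension; it commutes with post-composition by the limit-preserving functors $\ev_0,\ev_1$, it carries $\hat{\scr A}$ to $\scr A$ and fixes the constant functor, and it leaves the restriction to $\scr C^\op$ unchanged. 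It therefore restricts to an equivalence between the subcategories cut out by "$\ev_0$-part constant, $\ev_1$-part the prescribed functor, and $\scr C^\op$ carried into $\scr D^\flat$", that is, between the sections of $\hat{\scr A}$ cocartesian over $\scr C^\op$ and those of $\scr A$ cocartesian over $\scr C^\op$, and this equivalence is visibly the restriction functor of the statement.

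The main obstacle is the step where "cocartesian over $\scr C^\op$" must be shown compatible with Proposition~\ref{prop:local-spans}, i.e., that the associated $G$ has $G|\scr C^\op$ a $t$-sheaf valued in $\scr D$ and not merely in $\scr D^\flat$. This rests on the two points isolated above: sheafiness in $\scr D^\flat\simeq\Fun(\Delta^1,\Cat_\infty)$ is detected by the pair $(\ev_0,\ev_1)$ since there limits are computed pointwise, and $\scr D^\flat\into\scr D$ preserves limits; once these are granted, the remainder is the bookkeeping already carried out in the proof of Corollary~\ref{cor:comm-algebras}.
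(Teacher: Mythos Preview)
Your proof is correct and follows essentially the same approach as the paper: both deduce the corollary from Proposition~\ref{prop:local-spans} applied to the full subcategory $\scr D\subset(\Cat_\infty)_{/\Delta^1}$ of cocartesian fibrations, identifying sections cocartesian over $\scr C^\op$ with functors into $\scr D$ landing in the wide subcategory $\scr D'$ (your $\scr D^\flat$) of cocartesian-edge-preserving maps, and then using that $\scr D'\subset\scr D$ preserves limits to conclude that such functors are automatically $t$-sheaves. The paper's proof is terser, simply asserting this last point, whereas you spell out the verification via $(\ev_0,\ev_1)$; but this is the same argument with more detail.
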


\begin{proof}
	Let $\scr D\subset (\Cat_\infty)_{/\Delta^1}$ be the full subcategory of cocartesian fibrations, and let $\scr D'\subset\scr D$ be the wide subcategory whose morphisms preserve cocartesian edges. If we write
	\[
	\Sect(\scr A) \simeq \{*\}\times_{\Fun(\Span(\scr C,\all,m),\Cat_\infty)} \Fun(\Span(\scr C,\all,m),\scr D) \times_{\Fun(\Span(\scr C,\all,m),\Cat_\infty)} \{\scr A\}
	\]
	as in Corollary~\ref{cor:comm-algebras},
	the full subcategory of sections that are cocartesian over $\scr C^\op$ corresponds to the full subcategory of the right-hand side consisting of functors $\Span(\scr C,\all,m)\to\scr D$ sending $\scr C^\op$ to $\scr D'$. Since the inclusion $\scr D'\subset \scr D$ preserves limits, such functors are automatically $t$-sheaves. Therefore, the result follows from Proposition~\ref{prop:local-spans}.
\end{proof}

\begin{corollary}\label{cor:automatic-norms}
	Let $\scr C$ be an extensive $\infty$-category with a topology $t$ and let $m$ be a class of morphisms in $\scr C$ that is closed under composition and base change. Suppose that $\Shv_t(\scr C)\subset\Shv_\amalg(\scr C)$, that $\fold\subset m$, and that every morphism in $m$ is $t$-locally in $\fold$.
	Then, for every $\infty$-category $\scr D$ with finite products and every $t$-sheaf $F\colon \scr C^\op \to \CAlg(\scr D)$, there is a unique functor $\hat F$ making the following triangle commute:
	\begin{tikzmath}
		\diagram{\scr C^\op & \CAlg(\scr D)\rlap. \\ \Span(\scr C,\all,m) & \\};
		\arrows (11-) edge node[above]{$F$} (-12) (11) edge[c->] (21) (21) edge[dashed] node[below right]{$\hat F$} (12);
	\end{tikzmath}
Moreover, $\hat F$ is the right Kan extension of its restrictions to $\scr C^\op\times\Fin_\pt$, $\scr C^\op\times\Span(\Fin)$, and $\Span(\scr C,\all,\fold)$.
\end{corollary}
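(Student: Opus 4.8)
The plan is to build $\hat F$ as a composite of two extensions — first from $\scr C^\op$ to $\Span(\scr C,\all,\fold)$ and then from $\Span(\scr C,\all,\fold)$ to $\Span(\scr C,\all,m)$ — and to read off the ``moreover'' clause by composing the right Kan extensions appearing at each stage. Note first that, since $\Shv_t(\scr C)\subset\Shv_\amalg(\scr C)$, the $t$-sheaf $F$ carries finite coproducts to finite products; as finite products in $\CAlg(\scr D)$ are computed in $\scr D$, both $F$ and its underlying $\scr D$-valued functor preserve finite products. For the first stage I would apply Proposition~\ref{prop:fold-spans} with target $\CAlg(\scr D)$: it identifies product-preserving functors $\Span(\scr C,\all,\fold)\to\CAlg(\scr D)$ with product-preserving functors $\scr C^\op\to\CAlg(\CAlg(\scr D))$, and $\CAlg(\CAlg(\scr D))\simeq\CAlg(\scr D)$ by semiadditivity of $\CAlg(\scr D)$ (via \cite[Remark 2.7]{GlasmanGoodwillie}, exactly as in the proof of Proposition~\ref{prop:automatic-calg}); the equivalence is restriction along $\scr C^\op\into\Span(\scr C,\all,\fold)$. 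Hence $F$ extends uniquely to a product-preserving $\hat F_\fold\colon\Span(\scr C,\all,\fold)\to\CAlg(\scr D)$ with $\hat F_\fold|\scr C^\op=F$, and $\hat F_\fold$ is the right Kan extension of its restriction to $\scr C^\op\times\Span(\Fin)$ along $\Theta$; composing with the right Kan extension of Proposition~\ref{prop:comm-monoids} along $\Fin_\pt\into\Span(\Fin)$, it is also the right Kan extension of $F|\scr C^\op\times\Fin_\pt$ along the evident functor to $\Span(\scr C,\all,\fold)$.

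For the second stage I would invoke Proposition~\ref{prop:local-spans} (equivalently Corollary~\ref{cor:local-spans}) for the pair $\fold\subset m$, again with target $\CAlg(\scr D)$. Both classes contain the equivalences and are closed under composition and base change (for $\fold$ this is recorded just before Proposition~\ref{prop:fold-spans}; $m$ is closed under composition and base change by hypothesis, and contains all equivalences since $\fold\subset m$). Hypothesis~(2) of Proposition~\ref{prop:local-spans} — that every morphism of $m$ is $t$-locally in $\fold$ — is one of our standing assumptions. Hypothesis~(1) — if $q\in\fold$ and $q\circ p\in\fold$ then $p\in\fold$ — I would check directly from extensivity: pulling back the summand decompositions of the base that witness $q\in\fold$ and $q\circ p\in\fold$ to a common refinement reduces one to the situation $q=\nabla\colon A^{\amalg k}\to A$ and $q\circ p=\nabla\colon A^{\amalg l}\to A$; pulling the $k$-fold decomposition of $A^{\amalg k}$ back along $p$ then exhibits $p$ over each summand of $A^{\amalg k}$ as a finite coproduct of summand inclusions into $A$, and since finitely many summands of $A$ always lie in a single finite summand decomposition (the summands of an object of an extensive $\infty$-category form a Boolean algebra), this coproduct is visibly a sum of fold maps. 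Granting (1), Proposition~\ref{prop:local-spans} produces a unique functor $\hat F\colon\Span(\scr C,\all,m)\to\CAlg(\scr D)$ whose restriction to $\scr C^\op$ is a $t$-sheaf — necessarily $F$ — realized as the right Kan extension of $\hat F_\fold$ along $\Span(\scr C,\all,\fold)\into\Span(\scr C,\all,m)$.

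Composing the three right Kan extensions just described (right adjoints compose, so the composite is the right Kan extension along the composite functor) yields all three assertions of the ``moreover'' clause. For the uniqueness of $\hat F$ among \emph{all} functors $\Span(\scr C,\all,m)\to\CAlg(\scr D)$ extending $F$: any such functor restricts on $\scr C^\op$ to the $t$-sheaf $F$, so by the uniqueness part of Proposition~\ref{prop:local-spans} it is determined by its restriction $G$ to $\Span(\scr C,\all,\fold)$; and $G$ is automatically product-preserving, because each product projection $X\amalg Y\to X$ of $\Span(\scr C,\all,\fold)$ (the span $X\amalg Y\leftarrow X\xrightarrow{\ \id\ }X$) lies in the image of $\scr C^\op\into\Span(\scr C,\all,\fold)$, where $F$ already sends it to the product projection $F(X\amalg Y)\to F(X)$ (and likewise for the empty product); hence by the uniqueness part of Proposition~\ref{prop:fold-spans}, $G=\hat F_\fold$, and therefore $\hat F$ is forced.

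The one genuinely hands-on step, and thus the expected main obstacle, is the verification of hypothesis~(1) of Proposition~\ref{prop:local-spans} for the pair $(\fold,m)$ — a cancellation property of fold maps that comes down to the combinatorics of summand decompositions in an extensive $\infty$-category. Everything else is a formal assembly of Propositions~\ref{prop:comm-monoids}, \ref{prop:fold-spans}, and~\ref{prop:local-spans} (together with the identification $\CAlg(\CAlg(\scr D))\simeq\CAlg(\scr D)$ used already in Proposition~\ref{prop:automatic-calg}), so I expect the write-up to be short once (1) is settled.
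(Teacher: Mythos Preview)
Your proposal is correct and follows essentially the same approach as the paper: combine Propositions~\ref{prop:comm-monoids}, \ref{prop:fold-spans}, and~\ref{prop:local-spans}, using $\CAlg(\CAlg(\scr D))\simeq\CAlg(\scr D)$, and verify the cancellation hypothesis~(1) of Proposition~\ref{prop:local-spans} for the pair $\fold\subset m$. The paper records that hypothesis as the single line ``if $q$ and $qp$ are sums of fold maps in $\scr C$, so is $p$,'' and your common-refinement argument via the Boolean algebra of summands is a correct way to unpack it.
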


\begin{proof}
	Since $\CAlg(\CAlg(\scr D))\simeq\CAlg(\scr D)$, this is a combination of Propositions \ref{prop:comm-monoids}, \ref{prop:fold-spans}, and~\ref{prop:local-spans}. To apply the latter, note that if $q$ and $qp$ are sums of fold maps in $\scr C$, so is $p$.
\end{proof}

Corollary~\ref{cor:automatic-norms} applies for instance when $\scr C$ is the category of schemes, $t$ is the finite étale topology, and $m$ is the class of finite étale maps: any finite étale sheaf of commutative monoids on schemes has canonical transfers along finite étale maps.

Let $\scr C$ be an $\infty$-category equipped with a topology $t$, and let $\scr A\colon \scr C^\op \to \Cat_\infty$ be a presheaf of $\infty$-categories. We say that a section $s\in\Sect(\scr A)$ \emph{satisfies $t$-descent} if, for every $c\in \scr C$ and $E\in\scr A(c)$, the presheaf 
\[
\scr C_{/c}^\op \to \scr S,\quad (f\colon c'\to c) \mapsto \Map_{\scr A(c')}(f^*(E), s(c')) 
\]
is a $t$-sheaf. We will say that an object $E\in\scr A(c)$ satisfies $t$-descent if the associated cocartesian section of $\scr A|\scr C_{/c}^\op$ satisfies $t$-descent.

\begin{remark}
	Let $\scr D$ be an arbitrary $\infty$-category and $\scr A\colon \scr C^\op\to\Cat_\infty$ the constant functor with value $\scr D$. Then a functor $\scr C^\op\to \scr D$ satisfies $t$-descent in the usual sense if and only if the corresponding section of $\scr A$ satisfies $t$-descent.
\end{remark}

Recall from \cite[\sect 6.2.2]{HTT} that the $t$-sheafification functor $\L_t\colon \PSh(\scr C)\to\PSh(\scr C)$ can be obtained as a transfinite iteration of the endofunctor $F\mapsto F^\dagger$ of $\scr P(\scr C)$ given informally by
	\[
	F^\dagger(c)\simeq \colim_{\scr R\subset \scr C_{/c}} \lim_{d\in\scr R} F(d),
	\]
where $\scr R$ ranges over all $t$-covering sieves of $c$. This remains true for presheaves valued in any compactly generated $\infty$-category, because any such $\infty$-category is a full subcategory of a presheaf $\infty$-category closed under limits and filtered colimits. In particular, if $\scr A$ is a presheaf of symmetric monoidal $\infty$-categories on $\scr C$, its $t$-sheafification $\L_t\scr A$ can be computed via this procedure.

\begin{lemma}\label{lem:sheafification}
	Let $\scr C$ be an $\infty$-category equipped with a topology $t$, $\scr A\colon \scr C^\op \to \CAlg(\Cat_\infty)$ a presheaf of symmetric monoidal $\infty$-categories, $\L_t{\scr A}$ its $t$-sheafification, and $\eta\colon\scr A\to\L_t{\scr A}$ the canonical map. 
	For every $c\in \scr C$ and $E,F\in\scr A(c)$, if $F$ satisfies $t$-descent, then $\eta$ induces an equivalence $\Map_{\scr A(c)}(E, F) \simeq \Map_{(\L_t{\scr A})(c)}(\eta E, \eta F)$.
\end{lemma}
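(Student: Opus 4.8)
The statement asserts that mapping spaces into a section satisfying descent are insensitive to sheafification of the coefficient presheaf of symmetric monoidal $\infty$-categories. The plan is to reduce this to a purely formal statement about the transfinite construction of $\L_t$ as an iterate of the endofunctor $F\mapsto F^\dagger$ recalled just before the lemma, and then to check the one-step case by hand. First I would fix $c\in\scr C$ and the object $F\in\scr A(c)$ that satisfies $t$-descent, and consider the presheaf on $\scr C_{/c}$
\[
h_F\colon (f\colon c'\to c)\mapsto \Map_{\scr A(c')}(f^*E,f^*F)
\]
and the analogous presheaf $h_{\eta F}$ built from $\L_t\scr A$; since $f^*\colon \scr A(c)\to\scr A(c')$ is symmetric monoidal and $\eta$ is a map of presheaves of symmetric monoidal $\infty$-categories, there is a natural comparison map $h_F\to h_{\eta F}$, and I want to show it induces an equivalence on global sections over $\scr C_{/c}$, i.e.\ after evaluation at $\id_c$. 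By hypothesis $h_F$ is already a $t$-sheaf (this is exactly what ``$F$ satisfies $t$-descent'' means), so it suffices to show $h_F\to h_{\eta F}$ exhibits $h_{\eta F}$ as the $t$-sheafification of $h_F$, or more modestly that it becomes an equivalence after applying $\L_t$ and using that $h_F$ is local.

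The key step is to handle a single application of the dagger construction. Write $\scr A^\dagger$ for the presheaf with $\scr A^\dagger(c')\simeq \colim_{\scr R} \lim_{d\in\scr R}\scr A(d)$, the colimit ranging over $t$-covering sieves $\scr R\subset\scr C_{/c'}$; this colimit is filtered. Because filtered colimits of symmetric monoidal $\infty$-categories are computed underlying, $\scr A^\dagger$ is again a presheaf of symmetric monoidal $\infty$-categories, and for $E',F'\in\scr A(c')$ one computes
\[
\Map_{\scr A^\dagger(c')}(\eta E',\eta F') \simeq \colim_{\scr R}\ \lim_{d\in\scr R}\ \Map_{\scr A(d)}(E'|_d,F'|_d) \simeq \colim_{\scr R}\ \lim_{d\in\scr R}\ h_{F'}(d) \simeq h_{F'}^\dagger(c'),
\]
using that $\Map$ in a limit of $\infty$-categories is the limit of the $\Map$'s and that a filtered colimit of $\infty$-categories computes mapping spaces as the filtered colimit. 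Thus passing from $\scr A$ to $\scr A^\dagger$ replaces the mapping-space presheaf $h_{F'}$ by $h_{F'}^\dagger$. Iterating transfinitely (and checking the colimit steps, where one uses that filtered/transfinite colimits of symmetric monoidal $\infty$-categories and of spaces are compatible, and that mapping spaces commute with the relevant colimits on the subcategory of compact-enough objects — here one invokes that $\Cat_\infty$ embeds in a presheaf category closed under limits and filtered colimits, exactly as in the paragraph preceding the lemma) gives
\[
\Map_{(\L_t\scr A)(c)}(\eta E,\eta F)\simeq (\L_t h_F)(c).
\]
Since $F$ satisfies $t$-descent, $h_F$ is already a $t$-sheaf, so $\L_t h_F\simeq h_F$ and $(\L_t h_F)(c)\simeq h_F(\id_c)\simeq \Map_{\scr A(c)}(E,F)$, which is the assertion.

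\textbf{Main obstacle.} The routine bookkeeping is in making precise that the dagger construction is compatible with the symmetric monoidal structure and with the computation of mapping spaces through the transfinite iteration: one must know that the colimits defining $\L_t$ (filtered at each step, and a possibly-long transfinite composite) are computed in $\CAlg(\Cat_\infty)$ on underlying $\infty$-categories, and that $\Map_{\scr X}(-,-)$ turns these into the corresponding colimits of spaces. The cleanest way is to embed everything into presheaves of spaces, where all colimits and limits are objectwise, perform the identification there, and then observe the answer lands back in the subcategory one cares about; this is the step I expect to require the most care, though it is conceptually standard and parallels the discussion of sheafification of presheaves of symmetric monoidal $\infty$-categories already given in the excerpt. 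Everything else — naturality of the comparison map, the reduction to the one-step case, and the final appeal to $t$-descent of $h_F$ — is formal.
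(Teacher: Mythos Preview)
Your proposal is correct and follows essentially the same approach as the paper: define the mapping presheaf $h_F$ on $\scr C_{/c}$, observe that the explicit transfinite $\dagger$-construction of $\L_t$ applied to $\scr A$ carries $h_F$ to its own sheafification $\L_t h_F$, and conclude since $h_F$ is already a $t$-sheaf by the descent hypothesis on $F$. The paper compresses your one-step dagger computation and its transfinite iteration into a single sentence (``By the explicit description of sheafification recalled above, the presheaf $\scr M\mathrm{ap}_{\L_t\scr A}(\eta E,\eta F)$ is the $t$-sheafification of $\scr M\mathrm{ap}_{\scr A}(E,F)$''), but the content is identical. One minor remark: your worry about compatibility with the symmetric monoidal structure is a red herring---the argument takes place entirely at the level of underlying $\infty$-categories, and the symmetric monoidal structure on $\scr A$ plays no role in the proof.
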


\begin{proof}
	Denote by $\scr M\mathrm{ap}_{\scr A}(E,F)$ the presheaf
	 \[
	 \scr C_{/c}^\op \to \scr S,\quad (f\colon c'\to c) \mapsto \Map_{\scr A(c')}(f^*(E),f^*(F)).
	 \]
By the explicit description of sheafification recalled above, the presheaf $\scr M\mathrm{ap}_{\L_t{\scr A}}(\eta E,\eta F)$ is the $t$-sheafification of $\scr M\mathrm{ap}_{\scr A}(E,F)$. Since $F$ satisfies $t$-descent, the latter is already a $t$-sheaf, whence the result.
\end{proof}

\begin{corollary}\label{cor:automatic-norms2}
	Let $\scr C$ be an extensive $\infty$-category equipped with a topology $t$ and let $m$ be classes of morphisms in $\scr C$ that is closed under composition and base change. Suppose that $\Shv_t(\scr C)\subset\Shv_\amalg(\scr C)$, that $\fold\subset m$, and that every morphism in $m$ is $t$-locally in $\fold$.
	Let $\hat{\scr A}\colon \Span(\scr C,\all,m)\to \Cat_\infty$ be a functor that preserves finite products, and let $\scr A\colon \scr C^\op \to \CAlg(\Cat_\infty)$ be its restriction. Then the restriction functor
	\[
	\Sect(\hat{\scr A}) \to \Sect(\CAlg(\scr A))
	\]
	induces an equivalence between the full subcategories of sections that are cocartesian over $\scr C^\op$ and satisfy $t$-descent.
\end{corollary}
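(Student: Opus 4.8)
\textbf{Proof plan for Corollary~\ref{cor:automatic-norms2}.} The strategy is to produce the comparison functor, identify its image, and construct an inverse, all by leveraging the uniqueness statement of Corollary~\ref{cor:automatic-norms} fiberwise over the ``cocartesian direction'' $\Delta^1$-trick used throughout this appendix. First, I would describe $\hat{\scr A}$ as a cocartesian fibration $q\colon \int\hat{\scr A}\to \Span(\scr C,\all,m)$, and write $\Sect(\hat{\scr A})$ and $\Sect(\CAlg(\scr A))$ as in the proof of Corollary~\ref{cor:local-spans}: namely, a section of $\hat{\scr A}$ is a functor $\Span(\scr C,\all,m)\to\scr D$ into the $\infty$-category $\scr D\subset(\Cat_\infty)_{/\Delta^1}$ of cocartesian fibrations lying over the section corresponding to $q$, and the full subcategory of sections cocartesian over $\scr C^\op$ is the one landing in the wide subcategory $\scr D'$ whose morphisms preserve cocartesian edges. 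Because $\hat{\scr A}$ preserves finite products, so does the corresponding functor $\Span(\scr C,\all,m)\to\scr D$; hence by Proposition~\ref{prop:automatic-calg} it factors through $\CAlg(\scr D)$, which is how the symmetric monoidal structure on $\scr A$ (and the $\CAlg$ in the target) arises. The restriction functor is then restriction along $\scr C^\op\into\Span(\scr C,\all,m)$, and by Corollary~\ref{cor:automatic-norms} applied to the $\scr D$-valued functor, it is fully faithful onto the subcategory of $t$-sheaves $\scr C^\op\to\CAlg(\scr D)$, with inverse given by right Kan extension.

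It remains to match up the two ``$t$-descent'' conditions on the two sides. So the second step is to show: a section $s$ of $\CAlg(\scr A)$, cocartesian over $\scr C^\op$, satisfies $t$-descent (in the sense defined just before Lemma~\ref{lem:sheafification}) if and only if the corresponding functor $\scr C^\op\to\CAlg(\scr D)$ is a $t$-sheaf, if and only if its image under right Kan extension to $\Span(\scr C,\all,m)$, viewed as a section $\hat s$ of $\hat{\scr A}$, satisfies $t$-descent. The equivalence of the first two conditions is essentially the definition of $t$-descent for a section, unwound: a cocartesian section $s$ over $\scr C^\op$ is determined by the object $s(c_0)\in\scr A(c_0)$ for a chosen object $c_0$ (more precisely, $\scr C^\op$ must be connected or one works componentwise, and $s$ corresponds to a compatible family), and $t$-descent for $s$ says exactly that each representable-type presheaf $\Map_{\scr A(c')}(f^*E,s(c'))$ is a $t$-sheaf on $\scr C_{/c}$; but this is what it means for the functor $\scr C^\op\to\CAlg(\scr D)$ (or rather the associated $\scr D$-valued functor) to be a $t$-sheaf, since limits in $\CAlg(\scr D)$ and in $\scr D$ are computed on underlying objects and mapping spaces detect $t$-sheafiness of cocartesian fibrations. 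The equivalence with descent for $\hat s$ follows because $\hat s$ is right Kan extended from $\scr C^\op$ (so its value at a span $X\from Y\to Z$ is built from values over $\scr C^\op$, as in the proof of Proposition~\ref{prop:local-spans}), and the mapping-space presheaves governing descent for $\hat s$ involve only the values of $\scr A$ on $\scr C$ together with the functors $f^*$ and $p_\otimes$; since $p_\otimes$ is a fold map locally in $t$ and $\scr A$ on $\scr C^\op$ is a $t$-sheaf, $t$-descent for $\hat s$ is equivalent to $t$-descent for its restriction. (Lemma~\ref{lem:sheafification} is the relevant tool here if one prefers to phrase this via sheafification of mapping-space presheaves.)

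The main obstacle, and the step requiring the most care, is the last equivalence: that $t$-descent for a section over $\Span(\scr C,\all,m)$ that is right Kan extended from $\scr C^\op$ is controlled by $t$-descent of its restriction. The subtlety is that the norm functors $p_\otimes$ do not preserve colimits or limits in general, so one cannot naively move a $t$-limit past a $p_\otimes$. The way around this is precisely that, $t$-locally, $p_\otimes$ becomes a product of pullbacks (a fold map), and fold-map norms are finite products, which do commute with the $t$-limits appearing in the descent condition; combined with the fact that the restriction of $\hat s$ to $\scr C^\op$ is already a $t$-sheaf valued in $\CAlg(\scr D)$, one reduces the descent condition for $\hat s$ at an arbitrary object of $\Span(\scr C,\all,m)$ to the descent condition for $s$ at objects of $\scr C$. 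Concretely I would run the same cofinality/right-Kan-extension argument as in the proof of Proposition~\ref{prop:local-spans}, now keeping track of the mapping-space presheaves, using hypothesis (2) ($m$ is $t$-locally $\fold$) to replace an arbitrary forward morphism by a fold map and hypothesis $\Shv_t(\scr C)\subset\Shv_\amalg(\scr C)$ to know that such presheaves send finite coproduct decompositions to products. Once this is done, the two full subcategories of sections satisfying $t$-descent correspond under the (fully faithful) restriction functor, and the right Kan extension provides the inverse, completing the proof.
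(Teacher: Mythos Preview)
Your approach has a genuine gap at the step where you claim that a cocartesian section $s$ of $\CAlg(\scr A)$ satisfies $t$-descent if and only if the corresponding functor $\scr C^\op\to\CAlg(\scr D)$ (with $\scr D\subset(\Cat_\infty)_{/\Delta^1}$ the cocartesian fibrations) is a $t$-sheaf. Recall that in the description
\[
\Sect(\scr A)\simeq \{*\}\times_{\Fun(\scr C^\op,\Cat_\infty)}\Fun(\scr C^\op,\scr D)\times_{\Fun(\scr C^\op,\Cat_\infty)}\{\scr A\},
\]
the functor $F\colon\scr C^\op\to\scr D$ encoding a section has ``fiber over $1$'' equal to $\scr A$ itself. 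Since the fiber functors $\scr D\to\Cat_\infty$ preserve limits, $F$ can be a $t$-sheaf only if $\scr A$ is a $t$-sheaf. But nothing in the hypotheses of Corollary~\ref{cor:automatic-norms2} forces $\scr A$ to be a $t$-sheaf, and in the applications (e.g.\ $\scr A=\SH^\otimes$) it is not. So for a general $\scr A$, \emph{no} section corresponds to a $t$-sheaf in $\scr D$, and your appeal to Corollary~\ref{cor:automatic-norms} (which operates only on $t$-sheaves) produces nothing. The ``$t$-descent'' condition on a section is a condition on mapping spaces \emph{into} $s$, not on $\scr A$, and these are genuinely different.

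The paper circumvents exactly this obstruction by introducing the $t$-sheafification $\scr A'=\L_t\scr A$. For $\scr A'$ one can apply Corollary~\ref{cor:local-spans} directly to get the equivalence on cocartesian sections. The passage from $\scr A$ to $\scr A'$ is controlled by Lemma~\ref{lem:sheafification}: on objects satisfying $t$-descent, the map $\eta\colon\scr A\to\scr A'$ induces equivalences on mapping spaces, so the full subcategory $\scr E\subset\int\hat{\scr A}$ of $t$-descent objects embeds fully faithfully into $\int\hat{\scr A}'$. This makes the vertical $\eta$-maps in the square
\[
\begin{array}{ccc}
\Sect(\hat{\scr A}) & \to & \Sect(\scr A)\\
\downarrow\eta & & \downarrow\eta\\
\Sect(\hat{\scr A}') & \to & \Sect(\scr A')
\end{array}
\]
fully faithful on $\scr E$-valued sections, and since the bottom row is an equivalence on cocartesian sections, the top row is an equivalence on cocartesian sections satisfying $t$-descent. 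Your hand-wave towards Lemma~\ref{lem:sheafification} at the end of your second paragraph is in fact the crux of the argument, not an optional rephrasing; the sheafification step is what lets you replace the non-sheafy $\scr A$ by something to which the earlier corollaries apply.
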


\begin{proof}
	Let $\scr A'\colon \scr C^\op\to\CAlg(\Cat_\infty)$ be the $t$-sheafification of $\scr A$. 
	Using Proposition~\ref{prop:fold-spans}, we can identify $\scr A$ and $\scr A'$ with finite-product-preserving functors $\Span(\scr C,\all,\fold)\to\Cat_\infty$. By Corollary~\ref{cor:fold-spans}, we can moreover identify sections of $\CAlg(\scr A)$ with sections of $\scr A$ over $\Span(\scr C,\all,\fold)$.
	
	By Proposition~\ref{prop:local-spans}, $\scr A'$ extends uniquely to a finite-product-preserving functor $\hat{\scr A}'\colon \Span(\scr C,\all,m)\to\Cat_\infty$. Moreover, since $\hat{\scr A}'$ is the right Kan extension of its restriction to $\Span(\scr C,\all,\fold)$, the natural transformation $\eta\colon \scr A \to \scr A'$ extends uniquely to a natural transformation $\eta\colon \hat{\scr A}\to\hat{\scr A}'$ over $\Span(\scr C,\all,m)$.
	
	Let $\scr E\subset \int\hat{\scr A}$ be the full subcategory spanned by the objects satisfying $t$-descent (in their fiber).
	Consider the commutative diagram
	\begin{tikzmath}
		\diagram{
		\Sect(\hat{\scr A})  & \Sect(\scr A) \\
		\Sect(\hat{\scr A}') & \Sect(\scr A')\rlap, \\
		};
		\arrows (11-) edge (-12) (21-) edge (-22) (11) edge node[left]{$\eta$} (21) (12) edge node[right]{$\eta$} (22);
	\end{tikzmath} 
	where the horizontal arrows are given by restriction. By Corollary \ref{cor:local-spans}, the lower horizontal arrow induces an equivalence between the sections that are cocartesian over $\scr C^\op$. It is thus enough to show that the vertical arrows are fully faithful when restricted to $\scr E$-valued sections. We will show that the functor $\scr E \to \int \hat{\scr A}'$ induced by $\eta$ is in fact fully faithful. Let $E,F\in\scr E$ lie over $c, d \in \scr C$, respectively, and consider the commutative diagram
	 \begin{tikzmath}
	 	\diagram{\Map_{\scr E}((c, E), (d, F)) & \Map_{\int \hat{\scr A}'}((c, \eta E), (d, \eta F)) \\
		\Map_{\Span(\scr C, \all, m)}(c, d) & \Map_{\Span(\scr C, \all, m)}(c, d)\rlap. \\};
		\arrows (11-) edge node[above]{$\eta$} (-12) (11) edge (21) (12) edge (22)
		(21-) edge[-,vshift=1] (-22) edge[-,vshift=-1] (-22);
	 \end{tikzmath}
To prove that the top horizontal arrow is an equivalence, it suffices to show that it induces an equivalence between the fibers over any point. Given a span $c \xleftarrow{f} c' \xrightarrow{p} d$ with $p \in m$, the induced map between the fibers is
\[
\Map_{\scr A(d)}(p_\otimes f^* F, E)\xrightarrow\eta \Map_{\scr A'(d)}(\eta p_\otimes f^* F, \eta E) \simeq \Map_{\scr A'(d)}( p_\otimes f^* \eta F, \eta E).
\] 
This map is an equivalence by Lemma~\ref{lem:sheafification}.
\end{proof}

\begin{remark}
	A section $s\in\Sect(\scr A)$ satisfies $t$-descent if and only if, for every $c\in \scr C$ and every $t$-covering sieve $\scr R\subset \scr C_{/c}$, the restriction of $s$ to $\scr R^\triangleright$ is a relative limit diagram.
	Using this observation, it is possible to prove Corollary~\ref{cor:automatic-norms2} more directly, by simply replacing right Kan extensions with relative right Kan extensions in the proof of Proposition~\ref{prop:local-spans}. Unfortunately, we do not know a reference that treats the theory of relative right Kan extensions in sufficient generality for this argument.
\end{remark}

\begin{proposition}\label{prop:span-RKE}
	Let $\scr C$ be an $\infty$-category, $m$ a class of morphisms in $\scr C$ containing the equivalences and closed under composition and base change, and $\scr D$ an arbitrary $\infty$-category.
	Let $\scr C_0\subset \scr C$ be a full subcategory such that, if $X\in\scr C_0$ and $Y\to X$ is in $m$, then $Y\in\scr C_0$.
	Then a functor $F\colon \Span(\scr C_0,\all,m)\to \scr D$ admits a right Kan extension to $\Span(\scr C,\all,m)$ if and only if $F|\scr C_0\colon\scr C_0^\op\to\scr D$ admits a right Kan extension to $\scr C$, and in this case the latter is the restriction to $\scr C$ of the former.
\end{proposition}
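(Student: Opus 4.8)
### Proof plan for Proposition~\ref{prop:span-RKE}

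The plan is to reduce the statement about spans to the classical theory of pointwise right Kan extensions along an inclusion of a full subcategory \cite[\sect4.3.2]{HTT}. First I would recall the criterion: a functor $G\colon \scr E_0 \to \scr D$ defined on a full subcategory $\scr E_0 \subset \scr E$ admits a pointwise right Kan extension to $\scr E$ precisely when, for every object $X \in \scr E$, the limit $\lim_{(X/\scr E_0)} G$ exists in $\scr D$, and in that case the value of the extension at $X$ is this limit. Applying this with $\scr E = \Span(\scr C, \all, m)$, $\scr E_0 = \Span(\scr C_0, \all, m)$ (which is indeed a full subcategory, since the objects of a span category are the objects of the underlying category and $\scr C_0 \subset \scr C$ is full), and with $G = F$, the proposition will follow once I identify the comma $\infty$-category $X/\Span(\scr C_0,\all,m)$ for $X \in \scr C$, and compare it with the comma $\infty$-category $X/\scr C_0$ governing the Kan extension of $F|\scr C_0$ along $\scr C_0^\op \subset \scr C^\op$.

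The key computation is therefore the cofinality statement: for every $X \in \scr C$, the evident inclusion
\[
\iota_X\colon (\scr C_0^\op)_{X/} \times_{\scr C^\op} \{X\} \longrightarrow \bigl(\scr C_0^\op)_{X/} \hookrightarrow X/\Span(\scr C_0,\all,m)
\]
induced by $X \xleftarrow{\id} X \to Y$ is coinitial. Here I use the hypothesis on $\scr C_0$: since $m$-morphisms into an object of $\scr C_0$ stay in $\scr C_0$, the comma category $X/\Span(\scr C_0,\all,m)$ consists of spans $X \xleftarrow{f} Z \xrightarrow{p} W$ with $p \in m$ and $Z, W \in \scr C_0$; in particular the backward leg $Z \to X$ need not have target in $\scr C_0$ but its source $Z$ does. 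The standard adjunction argument — the inclusion of the wide subcategory of ``backward identity'' spans $(\scr C_{0}^\op)$-worth of maps into $X/\Span(\scr C_0,\all,m)$ admits a left adjoint sending $X \xleftarrow{f} Z \xrightarrow{p} W$ to $X \xleftarrow{f} Z \xrightarrow{\id} Z$, hence is coinitial \cite[Theorem 4.1.3.1]{HTT} — reduces the relevant limit to a limit over $(\scr C_0^\op)_{X/}$, which is exactly the indexing category for the pointwise right Kan extension of $F|\scr C_0$ along $\scr C_0^\op \hookrightarrow \scr C^\op$. This gives the ``if and only if'' for existence and, simultaneously, the identification of the two extensions on objects of $\scr C$. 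I would also note that the right Kan extension of $F$ restricts on $\scr C^\op$ to the right Kan extension of $F|\scr C_0$ because the comma-category computation at $X \in \scr C$ only ever sees spans, and restricting to $\scr C^\op \subset \Span(\scr C,\all,m)$ does not change the value.

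I anticipate the main obstacle to be bookkeeping with the comma $\infty$-categories of span categories rather than any conceptual difficulty: one must be careful that the left adjoint exhibited above is genuinely a left adjoint in the $\infty$-categorical sense (the unit and counit, and the triangle identities), and that the functor $\iota_X$ is the composite claimed. A clean way to organize this is to invoke the description of $X/\Span(\scr C_0,\all,m)$ via the model of $\Span$ as a complete Segal space \cite[\sect5]{BarwickMackey}, or alternatively to cite the unfurling/slicing formalism already used elsewhere in the paper (e.g.\ in the proof of Proposition~\ref{prop:categorical-props}(6), where the cofinality of $(\scr C^\op)_{/X} \hookrightarrow \Span(\scr C,\all,\fet)_{/X}$ is established by an entirely parallel argument). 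Once the cofinality is in hand, the rest is a formal invocation of \cite[Lemma 4.3.2.13]{HTT} and its pointwise-limit characterization, so no further calculation is needed.
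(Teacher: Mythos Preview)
Your approach is exactly the paper's: reduce to the pointwise formula for right Kan extensions and show that the inclusion of $(\scr C_0^\op)_{X/}$ into the relevant comma category over $\Span(\scr C_0,\all,m)$ is coinitial by exhibiting an adjoint. There is, however, a left/right slip. The functor sending $(X \xleftarrow{f} Z \xrightarrow{p} W)$ to $(X \xleftarrow{f} Z)$ is a \emph{right} adjoint to the inclusion, not a left adjoint: the counit at $(X \leftarrow Z \to W)$ is the morphism in the undercategory given by the span $Z \xleftarrow{\id} Z \xrightarrow{p} W$, and there is in general no natural morphism in the other direction. This matters because an inclusion admitting a left adjoint is cofinal rather than coinitial; it is having a right adjoint (equivalently, being itself a left adjoint) that gives coinitiality, since then each comma category $b/\iota$ has an initial object. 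With this correction your argument is the paper's one-line proof verbatim; the paper records precisely this right adjoint at the start of Appendix~\ref{app:spans} and then simply invokes it.
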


\begin{proof}
	Under the assumption on $\scr C_0$, the inclusion
	\[
	\scr C_0\times_{\scr C}\scr C_{X/}\into \Span(\scr C_0,\all,m) \times_{\Span(\scr C,\all,m)} \Span(\scr C,\all,m)_{X/}
	\]
	has a right adjoint and hence is coinitial. This proves the statement.
\end{proof}

A typical application of Proposition~\ref{prop:span-RKE} is the following: a Zariski sheaf with finite étale transfers on the category of affine schemes extends uniquely to a Zariski sheaf with finite étale transfers on the category of all schemes.

\begin{corollary}\label{cor:span-RKE}
	Let $\scr C$, $m$, and $\scr C_0\subset\scr C$ be as in Proposition~\ref{prop:span-RKE}, let $\scr A\colon\Span(\scr C,\all,m)\to\Cat_\infty$ be a functor, and let $\scr A_0$ be the restriction of $\scr A$ to $\Span(\scr C_0,\all,m)$.
	Suppose that $\scr A|\scr C^\op$ is the right Kan extension of its restriction to $\scr C_0^\op$. Then the restriction functor $\Sect(\scr A)\to\Sect(\scr A_0)$ induces an equivalence between the full subcategories of sections that are cocartesian over backward morphisms.
\end{corollary}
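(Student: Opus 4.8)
The plan is to deduce Corollary~\ref{cor:span-RKE} from Proposition~\ref{prop:span-RKE} by the standard ``test against cocartesian fibrations over $\Delta^1$'' argument that was used, e.g., in the proof of Corollary~\ref{cor:local-spans}. First I would set up the ambient categories: let $\scr D\subset(\Cat_\infty)_{/\Delta^1}$ be the full subcategory spanned by the cocartesian fibrations, and $\scr D'\subset\scr D$ the wide subcategory whose morphisms preserve cocartesian edges. Writing $\scr A$ and $\scr A_0$ as functors to $\Cat_\infty$ classifying their respective cocartesian fibrations, we express
\[
\Sect(\scr A)\simeq \{\ast\}\times_{\Fun(\Span(\scr C,\all,m),\Cat_\infty)}\Fun(\Span(\scr C,\all,m),\scr D)\times_{\Fun(\Span(\scr C,\all,m),\Cat_\infty)}\{\scr A\},
\]
and similarly for $\Sect(\scr A_0)$, exactly as in Corollary~\ref{cor:comm-algebras}. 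Under this description, the full subcategory of sections cocartesian over backward morphisms corresponds to those functors $\Span(\scr C,\all,m)\to\scr D$ whose restriction to $\scr C^\op$ factors through $\scr D'$, and likewise over $\scr C_0$.

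Next I would apply Proposition~\ref{prop:span-RKE} with the target $\infty$-category taken to be $\scr D$ (or, more precisely, with the straightening/unstraightening bookkeeping handled as above). The hypothesis that $\scr A|\scr C^\op$ is the right Kan extension of its restriction to $\scr C_0^\op$ is precisely what feeds the ``if'' direction of Proposition~\ref{prop:span-RKE}: it says that the functor $\Span(\scr C_0,\all,m)\to\scr D$ classifying $\scr A_0$ (which is the restriction of the one classifying $\scr A$) has a right Kan extension to $\Span(\scr C,\all,m)$, and that this extension restricted to $\scr C$ agrees with $\scr A|\scr C^\op$. Invoking the fully-faithfulness of right Kan extension along a fully faithful functor \cite[Proposition 4.3.2.15]{HTT}, the restriction functor $\Sect(\scr A)\to\Sect(\scr A_0)$ is therefore fully faithful, with essential image the sections whose restriction to $\scr C$ is a right Kan extension from $\scr C_0$ in the appropriate relative sense.

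I then need to match up the two subcategories under this equivalence. On the one hand, a section of $\scr A_0$ that is cocartesian over backward morphisms in $\scr C_0$ automatically extends: its right Kan extension to $\Span(\scr C,\all,m)$ is cocartesian over backward morphisms, because the coinitial functor $\scr C_0\times_{\scr C}\scr C_{X/}\into \Span(\scr C_0,\all,m)\times_{\Span(\scr C,\all,m)}\Span(\scr C,\all,m)_{X/}$ exhibited in the proof of Proposition~\ref{prop:span-RKE} shows the Kan extension at $X\in\scr C$ is computed entirely in the fiber $\scr A(X)$ via a limit of cocartesian-transported objects, and the transition maps one must check to be cocartesian are limits of cocartesian edges in a cocartesian fibration, hence cocartesian. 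Conversely, a section of $\scr A$ that is cocartesian over backward morphisms restricts to such a section over $\scr C_0$, and is automatically a pointwise relative right Kan extension from $\scr C_0$ (again because over backward morphisms it is constant-up-to-cocartesian-transport, so the $\scr C$-values are forced by the $\scr C_0$-values). Combining, the restriction functor carries the subcategory of sections cocartesian over backward morphisms of $\scr C$ bijectively onto that of $\scr C_0$, giving the desired equivalence.

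The main obstacle I anticipate is the bookkeeping in the second paragraph — transporting Proposition~\ref{prop:span-RKE}, stated for functors valued in an arbitrary $\scr D$, through the straightening of cocartesian fibrations so that ``section cocartesian over backward morphisms'' becomes a condition on the associated functor to $\scr D$. This is entirely routine (it is the same maneuver already performed in Corollaries~\ref{cor:comm-algebras}, \ref{cor:fold-spans}, \ref{cor:local-spans}, and~\ref{cor:automatic-calg}), but it requires being careful that the right Kan extension computed pointwise in $\scr D$ restricts correctly and that the coinitiality statement from Proposition~\ref{prop:span-RKE} is exactly what guarantees the cocartesian-edge condition is preserved and reflected. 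Everything else is formal.
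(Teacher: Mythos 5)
Your overall plan—reducing to the target $\scr D$ of cocartesian fibrations over $\Delta^1$, expressing $\Sect$ via the pullback from Corollary~\ref{cor:comm-algebras}, identifying cocartesian-over-backward-morphisms sections with functors sending $\scr C^\op$ into $\scr D'$, and invoking Proposition~\ref{prop:span-RKE}—is exactly the route the paper takes, and the key observation that $\scr D'\subset\scr D$ preserves limits (hence commutes with right Kan extensions) is buried in your first matching-up paragraph. So the ingredients are right, but there are two real problems with the execution.

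First, the sentence ``the restriction functor $\Sect(\scr A)\to\Sect(\scr A_0)$ is therefore fully faithful'' is false, and \cite[Proposition 4.3.2.15]{HTT} does not say this. Along a fully faithful $\iota\colon\scr C_0\into\scr C$, it is the \emph{right Kan extension} $\iota_*$ (the right adjoint of restriction) that is fully faithful; restriction is a colocalization, not a fully faithful functor. If restriction were fully faithful on all sections, the corollary would be immediate and you would not need the hypothesis on $\scr A$ at all, which is already a sign something is off. What Proposition~\ref{prop:span-RKE} gives you is a fully faithful right adjoint to the restriction on $\Sect$, not full faithfulness of restriction.

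Second, the ``conversely'' direction—where you claim a section of $\scr A$ cocartesian over backward morphisms is automatically a relative right Kan extension from $\scr C_0$—is precisely where the hypothesis that $\scr A|\scr C^\op$ is the right Kan extension of its restriction to $\scr C_0^\op$ must enter, and your justification (``over backward morphisms it is constant-up-to-cocartesian-transport, so the $\scr C$-values are forced'') does not use it. Being cocartesian over backward morphisms alone does \emph{not} force the values on $\scr C\minus\scr C_0$ from those on $\scr C_0$; that forcing comes from the hypothesis on $\scr A$, which ensures $\scr A(X)\simeq\lim_{(X\to Y)\in(\scr C_0)_{X/}}\scr A(Y)$. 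Without invoking that, the canonical map from your section to the right Kan extension of its restriction cannot be shown to be an equivalence. The paper sidesteps this delicate pointwise argument entirely: after producing, via Proposition~\ref{prop:span-RKE}, a right adjoint $R$ to the restriction $r\colon\Sect'(\scr A)\to\Sect'(\scr A_0)$ that preserves the cocartesian condition, it observes that both $r$ and $R$ commute (through the restriction-to-$\scr C^\op$ functors $u$, $u_0$) with the corresponding adjunction for $\scr A|\scr C^\op$, which is an equivalence \emph{by hypothesis}; since $u$ and $u_0$ are conservative on cocartesian sections, the unit and counit of $(r,R)$ are then equivalences. You should restructure along these lines: the hypothesis on $\scr A$ is used once, globally, to make the outer adjunction an equivalence, rather than pointwise to force values.
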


\begin{proof}
	Denote by $\Sect'\subset\Sect$ the full subcategory of sections that are cocartesian over backward morphisms, and let $u\colon \Sect'(\scr A)\to\Sect'(\scr A|\scr C^\op)$ and $u_0\colon \Sect'(\scr A_0)\to\Sect'(\scr A_0|\scr C_0^\op)$ be the restriction functors.
Let $\scr D$ be the full subcategory of $(\Cat_\infty)_{/\Delta^1}$ spanned by the cocartesian fibrations, and let $\scr D'\subset \scr D$ be the wide subcategory spanned by the functors that preserve cocartesian edges, so that $\scr D'\simeq \Fun(\Delta^1,\Cat_\infty)$. If we write
	 \[
	\Sect(\scr A) \simeq \{*\}\times_{\Fun(\Span(\scr C,\all,m),\Cat_\infty)} \Fun(\Span(\scr C,\all,m),\scr D) \times_{\Fun(\Span(\scr C,\all,m),\Cat_\infty)} \{\scr A\}
	\]
	as in Corollary~\ref{cor:comm-algebras}, the full subcategory $\Sect'(\scr A)$ corresponds to those functors $\Span(\scr C,\all,m)\to\scr D$ that send $\scr C^\op$ to $\scr D'$.
Since the inclusion $\scr D'\subset \scr D$ preserves limits, it commutes with right Kan extensions. By Proposition~\ref{prop:span-RKE}, any functor $F\colon \Span(\scr C_0,\all,m)\to\scr D$ sending $\scr C_0^\op$ to $\scr D'$ admits a right Kan extension to $\Span(\scr C,\all,m)$, whose restriction to $\scr C^\op$ is the right Kan extension of $F|\scr C_0^\op$. We deduce that the restriction functors $\Sect'(\scr A)\to\Sect'(\scr A_0)$ and $\Sect'(\scr A|\scr C^\op)\to\Sect'(\scr A_0|\scr C_0^\op)$ have right adjoints that commute with the functors $u$ and $u_0$.
	Since the latter adjunction is an equivalence by assumption, and since $u$ and $u_0$ are conservative, the result follows.
\end{proof}

\subsection{Functoriality of spans}

We now discuss the functoriality of $\infty$-categories of spans. 
Let $\Cat_\infty^+$ be the following $\infty$-category:
\begin{itemize}
	\item an object is an $\infty$-category equipped with classes of left and right morphisms that contain the equivalences and are closed under composition and pullback along one another;
	\item a morphism is a functor that preserves the left morphisms, the right morphisms, and pullbacks of left morphisms along right morphisms.
\end{itemize}
More precisely, $\Cat_\infty^+$ is a subcategory of $\scr M\Cat_\infty\times_{\Cat_\infty}\scr M\Cat_\infty$.
We would like to promote $\Cat_\infty^+$ to an $(\infty,2)$-category.
One way to promote an $\infty$-category $\scr C$ to an $(\infty,2)$-category is to construct a $\Cat_\infty$-module structure on $\scr C$ such that, for every $x\in\scr C$, the functor $(\ph)\tens x\colon\Cat_\infty\to\scr C$ has a right adjoint $\MAP(x,\ph)$ (see \cite[\sect7]{GH} and \cite[Theorem 7.5]{Haugseng}). For example, the $(\infty,2)$-category $\mathbf{Cat}_\infty$ of $\infty$-categories is associated with the $\Cat_\infty$-module structure on $\Cat_\infty$ given by the cartesian product.
 There is an obvious $\Cat_\infty$-module structure on $\Cat_\infty^+$ given by
\[
\scr E\otimes (\scr C,\lleft,\rright) = (\scr E\times\scr C,\mathrm{equiv}\times\lleft,\all\times\rright),
\]
which defines an $(\infty,2)$-category $\mathbf{Cat}_\infty^+$. By inspection, a $2$-morphism in $\mathbf{Cat}_\infty^+$ is a natural transformation $\eta\colon F\to G\colon\scr C\to\scr D$ whose components are right morphisms and that is cartesian along left morphisms, i.e., such that for every left morphism $x\to y$ in $\scr C$, the induced map $F(x) \to F(y) \times_{G(y)} G(x)$ is an equivalence.

\begin{proposition}\label{prop:spans-extended-functoriality}
	The construction $(\scr C,\lleft,\rright)\mapsto \Span(\scr C,\lleft,\rright)$ can be promoted to an $(\infty,2)$-functor
	\[
	\Span\colon \mathbf{Cat}_\infty^+ \to \mathbf{Cat}_{\infty}.
	\]
\end{proposition}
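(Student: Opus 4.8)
The plan is to realize $\Span$ as a $\Cat_\infty$-linear functor out of a suitable presentation of $\mathbf{Cat}_\infty^+$, so that the $(\infty,2)$-functoriality is automatic from $1$-functoriality plus compatibility with the $\Cat_\infty$-module structures. First I would recall that $\Span\colon\Cat_\infty^+\to\Cat_\infty$ is already known to be a functor of $\infty$-categories: a morphism $F\colon(\scr C,\lleft,\rright)\to(\scr D,\lleft,\rright)$ in $\Cat_\infty^+$ preserves left morphisms, right morphisms, and the relevant pullbacks, hence induces a functor $\Span(F)\colon\Span(\scr C,\lleft,\rright)\to\Span(\scr D,\lleft,\rright)$ by functoriality of the complete-Segal-space (or Segal-object) construction; this is standard from \cite{BarwickMackey} (and used implicitly throughout the paper, e.g.\ in the construction of $\SmQP_+^\otimes$). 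So the content is entirely about promoting this to the $2$-categorical level.

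The key step is to observe that the $\Cat_\infty$-module structure on $\Cat_\infty^+$ given by $\scr E\otimes(\scr C,\lleft,\rright)=(\scr E\times\scr C,\mathrm{equiv}\times\lleft,\all\times\rright)$ is \emph{preserved} by $\Span$, i.e.\ there is a natural equivalence
\[
\Span(\scr E\times\scr C,\mathrm{equiv}\times\lleft,\all\times\rright)\simeq \scr E\times\Span(\scr C,\lleft,\rright)
\]
compatible with the module structures on both sides (the target $\Cat_\infty$ carries the cartesian module structure defining $\mathbf{Cat}_\infty$). This equivalence holds because a span in $\scr E\times\scr C$ whose backward leg lies in $\mathrm{equiv}\times\lleft$ is the same data as an equivalence in $\scr E$ (equivalently, a map in $\scr E$, since the backward leg is invertible in the $\scr E$-coordinate) together with a span in $\scr C$ with backward leg in $\lleft$; one checks this is an equivalence of complete Segal spaces level by level, and naturality in $(\scr C,\lleft,\rright)$ and $\scr E$ is immediate from the explicit formulas. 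Once this compatibility is in hand, the criterion for upgrading a functor between $\Cat_\infty$-modules to an $(\infty,2)$-functor between the associated $(\infty,2)$-categories — namely that the underlying functor is $\Cat_\infty$-linear, cf.\ \cite[\sect7]{GH} and \cite[Theorem 7.5]{Haugseng} — applies directly and yields the desired $(\infty,2)$-functor $\Span\colon\mathbf{Cat}_\infty^+\to\mathbf{Cat}_\infty$. As a sanity check, one verifies that on a $2$-morphism, i.e.\ a natural transformation $\eta\colon F\to G\colon\scr C\to\scr D$ with components in $\rright$ that is cartesian along $\lleft$, this recovers the evident natural transformation $\Span(F)\to\Span(G)$: the cartesianness condition is exactly what is needed to make the levelwise maps of Segal objects assemble into a map $\Span(\scr C,\lleft,\rright)\times\Delta^1\to\Span(\scr D,\lleft,\rright)$, the component at an object being the span $F(x)\xleftarrow{\eta_x}G(x)^{?}$ — more precisely $F(x)\leftarrow F(x)\xrightarrow{\eta_x}G(x)$, legitimate since $\eta_x\in\rright$.

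The main obstacle I expect is not conceptual but bookkeeping: making precise the claim that $\Span$ is $\Cat_\infty$-linear requires working with $\Span$ as a functor valued in Segal objects (or in complete Segal spaces) and checking the compatibility equivalence above at the level of this enhanced construction, including coherence of the equivalences $\Span(\scr E\times\scr C,\dots)\simeq\scr E\times\Span(\scr C,\dots)$ as $\scr E$ varies over composable tensors. In practice this reduces to the observation that the assignment $\scr E\mapsto\scr E\times\Span(\scr C,\lleft,\rright)$ is manifestly the composite of $\Span(\ph,\lleft,\rright)$ (applied to the $\scr C$-coordinate) with the cartesian action, and that both sides are computed fiberwise in $\scr E$; the coherence is then inherited from that of the cartesian product. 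An alternative route, if one prefers to avoid invoking a general module-to-$(\infty,2)$-category machine, is to exhibit $\Span$ directly as a functor of $\Cat_\infty$-enriched categories by describing the mapping $\infty$-categories: for $(\scr C,\lleft,\rright),(\scr D,\lleft,\rright)\in\mathbf{Cat}_\infty^+$ one has $\MAP((\scr C,\lleft,\rright),(\scr D,\lleft,\rright))$ a full subcategory of $\Fun(\scr C,\scr D)$, and one checks that $\Span$ sends this into $\Fun(\Span(\scr C,\lleft,\rright),\Span(\scr D,\lleft,\rright))$ compatibly with composition; but the module-theoretic formulation is cleaner and avoids re-deriving associativity coherences by hand.
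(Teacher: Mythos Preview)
Your proposal is correct and follows the same strategy as the paper: both argue that $\Span$ is a $\Cat_\infty$-module functor and deduce $(\infty,2)$-functoriality from that. The one refinement worth noting is that the paper handles the coherence bookkeeping you flag as the main obstacle by observing that $\Span\colon\Cat_\infty^+\to\Cat_\infty\subset\Fun(\Delta^\op,\scr S)$ is levelwise \emph{corepresentable}, namely $\Span_n(\scr C,\lleft,\rright)\simeq\Map_{\Cat_\infty^+}(\Tw(\Delta^n),(\scr C,\lleft,\rright))$ with the evident classes on $\Tw(\Delta^n)$; corepresentable functors are automatically $\Cat_\infty$-module functors with all coherences built in, so the equivalence $\Span(\scr E\otimes\scr C)\simeq\scr E\times\Span(\scr C)$ and its associativity constraints come for free rather than needing to be checked by hand.
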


\begin{proof}
	Let $\Cat_{\infty}\subset \Fun(\Delta^\op,\scr S)$ be the fully faithful embedding identifying $\infty$-categories with complete Segal $\infty$-groupoids.
	On the underlying $\infty$-categories, the functor $\Span\colon \Cat_\infty^+ \to \Cat_{\infty}\subset \Fun(\Delta^\op,\scr S)$ is corepresentable by the cosimplicial object $\Tw(\Delta^\bullet)$, where $\Tw(\Delta^n)$ is the twisted arrow category of $\Delta^n$ \cite[\sect3]{BarwickMackey} with the obvious classes of left and right morphisms. Moreover, it is clear that $\Span$ is a $\Cat_\infty$-module functor, whence in particular an $(\infty,2)$-functor.
\end{proof}

\begin{corollary}\label{cor:span-adjunctions}
	Let $(\scr C,\lleft,\rright),(\scr D,\lleft,\rright)\in\Cat_\infty^+$ and let $F:\scr C\adj\scr D:G$ be an adjunction with unit $\eta\colon\id\to GF$ and counit $\epsilon\colon FG\to\id$. Suppose that $F$ and $G$ preserve left morphisms, right morphisms, and pullbacks of left morphisms along right morphisms.
	\begin{enumerate}
		\item If $\eta$ and $\epsilon$ consist of right morphisms and are cartesian along left morphisms, there is an induced adjunction
		\[
		F: \Span(\scr C,\lleft,\rright) \adj \Span(\scr D,\lleft,\rright): G.
		\]
		\item If $\eta$ and $\epsilon$ consist of left morphisms and are cartesian along right morphisms, there is an induced adjunction
		\[
		G: \Span(\scr D,\lleft,\rright) \adj \Span(\scr C,\lleft,\rright): F.
		\]
	\end{enumerate}
\end{corollary}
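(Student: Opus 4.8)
\textbf{Proof plan for Corollary~\ref{cor:span-adjunctions}.} The plan is to deduce both statements from Proposition~\ref{prop:spans-extended-functoriality} together with the general fact that any $(\infty,2)$-functor carries adjunctions to adjunctions. The first step is to observe that the hypotheses on $F$, $G$ are exactly what is needed to regard $F$ and $G$ as $1$-morphisms in $\mathbf{Cat}_\infty^+$: by definition, a $1$-morphism in $\mathbf{Cat}_\infty^+$ is a functor preserving left morphisms, right morphisms, and pullbacks of left morphisms along right morphisms. For part (1), the hypothesis that $\eta$ and $\epsilon$ have components which are right morphisms and are cartesian along left morphisms is precisely the condition (spelled out just before Proposition~\ref{prop:spans-extended-functoriality}) for $\eta$ and $\epsilon$ to be $2$-morphisms in $\mathbf{Cat}_\infty^+$. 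Hence $F\dashv G$ is an adjunction internal to the $(\infty,2)$-category $\mathbf{Cat}_\infty^+$, witnessed by the unit $\eta$ and counit $\epsilon$ (the triangle identities, being equations among $2$-morphisms, hold in $\mathbf{Cat}_\infty^+$ because they hold in $\mathbf{Cat}_\infty$ and the forgetful map $\mathbf{Cat}_\infty^+\to\mathbf{Cat}_\infty$ is faithful on $2$-cells in the relevant sense).

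Applying the $(\infty,2)$-functor $\Span\colon \mathbf{Cat}_\infty^+\to\mathbf{Cat}_\infty$ of Proposition~\ref{prop:spans-extended-functoriality} then yields an adjunction in $\mathbf{Cat}_\infty$ between $\Span(F)$ and $\Span(G)$; since in $\mathbf{Cat}_\infty$ the internal notion of adjunction coincides with the ordinary notion of adjunction of $\infty$-categories, this is exactly the desired adjunction
\[
F: \Span(\scr C,\lleft,\rright) \adj \Span(\scr D,\lleft,\rright): G.
\]
One should also check that $\Span(F)$ and $\Span(G)$ really are the functors denoted $F$ and $G$ in the statement; this is immediate from the description of $\Span$ on objects and $1$-morphisms (a $1$-morphism $F$ in $\mathbf{Cat}_\infty^+$ induces $X\xleftarrow{f}Y\xrightarrow{p}Z \mapsto FX\xleftarrow{Ff}FY\xrightarrow{Fp}FZ$, using that $F$ preserves the relevant pullbacks so that composition of spans is respected).

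Part (2) is entirely parallel, but now the hypothesis is that $\eta$ and $\epsilon$ consist of \emph{left} morphisms and are cartesian along \emph{right} morphisms. The point is that this is the condition for $\eta^{\op}$ and $\epsilon^{\op}$ to be $2$-morphisms in $\mathbf{Cat}_\infty^+$ after swapping the roles of left and right, or equivalently: there is an involution $(\scr C,\lleft,\rright)\mapsto(\scr C,\rright,\lleft)$ of $\mathbf{Cat}_\infty^+$ under which $\Span(\scr C,\lleft,\rright)$ is sent to $\Span(\scr C,\rright,\lleft)\simeq \Span(\scr C,\lleft,\rright)^{\op}$. Applying part (1) in the swapped situation gives an adjunction $F\dashv G$ between $\Span(\scr C,\rright,\lleft)$ and $\Span(\scr D,\rright,\lleft)$; passing to opposite $\infty$-categories turns this into the adjunction $G\dashv F$ between $\Span(\scr D,\lleft,\rright)$ and $\Span(\scr C,\lleft,\rright)$ that is claimed. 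Alternatively, one can repeat the argument directly: check that $(\epsilon,\eta)$ now exhibits $G$ as left adjoint to $F$ in $\mathbf{Cat}_\infty^+$ and apply $\Span$.

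The routine parts are the unwinding of the definition of $2$-morphism in $\mathbf{Cat}_\infty^+$ and the verification that it matches the stated cartesianness/variance hypotheses; these I would state but not belabor. The step I expect to require the most care is the claim that an adjunction of $1$-morphisms inside an $(\infty,2)$-category is preserved by any $(\infty,2)$-functor, and that in $\mathbf{Cat}_\infty$ this internal notion reduces to the familiar one — this is standard (e.g. via the characterization of adjunctions by units and counits satisfying the triangle identities, which is manifestly functorial, together with \cite[\sect1.1]{RiehlVerity} or the treatment in \cite[Appendix A]{GRderalg}), but it is the one place where one genuinely invokes $(\infty,2)$-categorical technology rather than formal manipulation, so it is worth citing precisely. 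Everything else is bookkeeping with the module structure on $\Cat_\infty^+$ used to define $\mathbf{Cat}_\infty^+$.
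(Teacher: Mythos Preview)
Your proposal is correct and matches the paper's proof essentially verbatim: part (1) is obtained by applying the $(\infty,2)$-functor $\Span$ of Proposition~\ref{prop:spans-extended-functoriality} to the adjunction $F\dashv G$ in $\mathbf{Cat}_\infty^+$, and part (2) follows from part (1) by swapping left and right morphisms and using that $\Span(\scr C,\rright,\lleft)\simeq\Span(\scr C,\lleft,\rright)^\op$ (which the paper phrases as composing with the $(\infty,2)$-functor $(\ph)^\op\colon\mathbf{Cat}_\infty\to(\mathbf{Cat}_\infty)^{\textnormal{2-op}}$). Your added verifications that the hypotheses on $\eta,\epsilon$ are precisely the conditions for them to be $2$-morphisms in $\mathbf{Cat}_\infty^+$ are exactly the details the paper leaves implicit.
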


\begin{proof}
	The first adjunction is obtained by applying the $(\infty,2)$-functor $\Span\colon\mathbf{Cat}_\infty^+\to\mathbf{Cat}_{\infty}$ of Proposition~\ref{prop:spans-extended-functoriality} to the adjunction between $F$ and $G$ in $\mathbf{Cat}_\infty^+$. The second adjunction is obtained from the first by exchanging the roles of left and right morphisms and composing with the $(\infty,2)$-functor $(\ph)^\op\colon\mathbf{Cat}_{\infty}\to(\mathbf{Cat}_{\infty})^\textnormal{2-op}$.
\end{proof}

\section{Relative adjunctions}
\label{sec:app-adjunctions}

Let $F\colon\scr C\to \scr D$ and $G\colon\scr D\to\scr C$ be functors between $\infty$-categories. Recall that a natural transformation $\epsilon\colon FG\to\id_\scr D$ \emph{exhibits $G$ as a right adjoint of $F$} if,
for all $c \in \mathcal{C}$ and $d \in \mathcal{D}$, the composition 
\[
\Map_\mathcal{C}(c, G(d)) \xrightarrow{F} \Map_{\scr D}(F(c),FG(d)) \xrightarrow{\epsilon(d)_*} \Map_{\scr D}(F(c),d)
\]
 is an equivalence \cite[Proposition 5.2.2.8]{HTT}.
 Using \cite[Proposition 2.3]{glasman2016spectrum}, we can replace $c$ and $d$ by arbitrary functors $\scr K\to\scr C$ and $\scr K\to\scr D$. It is then easy to show that $\epsilon$ exhibits $G$ as a right adjoint of $F$ if and only if there exists a natural transformation $\eta\colon \id_\scr C\to GF$ such that the composites
 \[
F \xrightarrow{F\eta} FGF \xrightarrow{\epsilon F} F\quad\text{and}\quad G\xrightarrow{\eta G} GFG \xrightarrow{G\epsilon} G
 \]
 are homotopic to the identity.
 
We recall the notion of \emph{relative adjunction} from \cite[Definition 7.3.2.2]{HA}.
Let $\scr E$ be an $\infty$-category, let
\begin{tikzequation}\label{eqn:relative-adj}
	\def\colsep{1.5em}
	\diagram{
	\scr C & \relax & \scr D \\ & \scr E & \\
	};
	\arrows (11-) edge node[above]{$F$} (-13) (11) edge node[below left,pos=.4]{$p$} (22) (13) edge node[below right,pos=.4]{$q$} (22);
\end{tikzequation}
be a commutative triangle in $\Cat_{\infty}$, and let $G\colon\scr D\to\scr C$ be a functor. A natural transformation $\epsilon\colon FG\to\id_{\scr D}$ \emph{exhibits $G$ as a right adjoint of $F$ relative to $\scr E$} if it exhibits $G$ as a right adjoint of $F$ and if the natural transformation $q\epsilon\colon p G\to q$ is an equivalence.

\begin{remark}\label{rmk:relative-adj}
	Given the triangle~\eqref{eqn:relative-adj}, suppose that $F$ admits a right adjoint $G$ with counit $\epsilon\colon FG\to\id$.
	It then follows from Lemma~\ref{lem:swallowtail} that $(G,q\epsilon)$ is a right adjoint of $F$ in $\mathbf{Cat}_\infty^{\sslash\scr E}$.
	Since $\mathbf{Cat}_{\infty/\scr E}\subset \mathbf{Cat}_\infty^{\sslash\scr E}$ is a wide subcategory, we see that a right adjoint of $F$ relative to $\scr E$ is the same thing as a right adjoint of $F$ in the slice $(\infty,2)$-category $\mathbf{Cat}_{\infty/\scr E}$.
\end{remark}

By \cite[Proposition 7.3.2.5]{HA}, if $G$ is a right adjoint of $F$ relative to $\scr E$, then it is in particular a fiberwise right adjoint, i.e., $G_e$ is a right adjoint of $F_e$ for every $e\in\scr E$.
In case $p$ and $q$ are locally (co)cartesian fibrations, there is the following criterion to detect relative right adjoints:

\begin{lemma}\label{lemm:construct-relative-adjoint}
	Consider the triangle~\eqref{eqn:relative-adj}.
	\begin{enumerate}
		\item If $p$ and $q$ are locally cocartesian fibrations, then $F$ admits a right adjoint $G$ relative to $\scr E$ if and only if:
		\begin{itemize}
			\item for every $e\in\scr E$, the functor $F_e\colon \scr C_e\to\scr D_e$ admits a right adjoint $G_e$;
			\item $F$ preserves locally cocartesian edges.
		\end{itemize}
		\item If $p$ and $q$ are locally cartesian fibrations, then $F$ admits a right adjoint $G$ relative to $\scr E$ if and only if:
		\begin{itemize}
			\item for every $e\in\scr E$, the functor $F_e\colon \scr C_e\to\scr D_e$ admits a right adjoint $G_e$;
			\item for every morphism $f\colon e\to e'$ in $\scr E$, the canonical transformation $f^* G_{e'}\to G_{e}f^* \colon \scr D_{e'}\to \scr C_{e}$ is an equivalence.
		\end{itemize}
	\end{enumerate}
	In either case, the right adjoint $G$ satisfies $G|\scr D_e\simeq G_e$ for all $e\in\scr E$.
\end{lemma}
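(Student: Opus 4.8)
\textbf{Proof proposal for Lemma~\ref{lemm:construct-relative-adjoint}.}

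The plan is to treat the two cases separately but by a common mechanism: in each case, one direction is a triviality (if $F$ has a relative right adjoint $G$, then $G$ is a fiberwise right adjoint by \cite[Proposition 7.3.2.5]{HA}, and the naturality/edge-preservation properties are then forced by the explicit mates), while the substance is the \emph{construction} of $G$ from the fiberwise data. For part (1), I would invoke \cite[Proposition 7.3.2.6]{HA} (the relative adjoint functor theorem for locally cocartesian fibrations), which says exactly that when $p$ and $q$ are locally cocartesian fibrations, $F$ admits a right adjoint relative to $\scr E$ precisely when each $F_e$ has a right adjoint $G_e$ and $F$ carries locally cocartesian edges to locally cocartesian edges. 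So for (1) the proof is essentially a citation, once one checks that the hypotheses of that proposition match ours; I would spell out that the condition ``$F$ preserves locally $p$-cocartesian edges'' in \emph{loc.\ cit.}\ is the same as our ``$F$ preserves locally cocartesian edges'' since $q\circ F=p$, and note that the resulting $G$ restricts to $G_e$ on each fiber because relative right adjoints are fiberwise right adjoints.

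For part (2), I would deduce it from part (1) by passing to opposite categories. Applying $(\ph)^\op$ to the triangle~\eqref{eqn:relative-adj} turns $p,q$ into locally cocartesian fibrations $p^\op,q^\op$ and turns $F$ into $F^\op\colon \scr D^\op\to\scr C^\op$ over $\scr E^\op$. A right adjoint of $F$ relative to $\scr E$ is the same as a \emph{left} adjoint of $F^\op$ relative to $\scr E^\op$, equivalently a right adjoint of $(F^\op)$ viewed the other way, and the dual of part (1) (or part (1) applied to $F^\op$ with the roles of source and target exchanged) then gives: $F^\op$ has such an adjoint iff each $(F_e)^\op$ has a right adjoint — equivalently each $F_e$ has a right adjoint $G_e$ — and $F^\op$ preserves locally $q^\op$-cocartesian edges. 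The last condition, translated back, is precisely that $F$ sends locally $q$-cartesian edges to locally $p$-cartesian edges; and by the standard mate calculus (e.g.\ \cite[\sect7.3]{HA}) this edge-preservation is equivalent to the Beck--Chevalley condition that for every $f\colon e\to e'$ in $\scr E$ the canonical transformation $f^*G_{e'}\to G_ef^*$ is an equivalence. So (2) follows once this translation dictionary is made explicit.

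The only real work, then, is bookkeeping: matching our hypotheses to those of \cite[Proposition 7.3.2.6]{HA} in case (1), and in case (2) carefully carrying out the opposite-category manipulation and verifying that ``$F^\op$ preserves locally cocartesian edges'' unwinds to the Beck--Chevalley equivalence $f^*G_{e'}\simeq G_ef^*$. The latter is a routine mate argument: a locally $q$-cartesian edge over $f$ exhibits the functor $f^*$ on fibers, the image under $F$ of such an edge is a candidate locally $p$-cartesian edge, and chasing the unit/counit of $F_e\dashv G_e$ shows this image is locally $p$-cartesian exactly when the exchange transformation is invertible. I expect this mate-calculus translation in part (2) to be the main (though still routine) obstacle; everything else is a direct appeal to the cited results, together with the observation that the resulting relative adjoint restricts to $G_e$ on fibers, which is automatic since relative right adjoints are fiberwise right adjoints.
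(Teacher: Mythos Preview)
Your treatment of part (1) is correct and matches the paper exactly: both simply cite \cite[Proposition 7.3.2.6]{HA}.

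For part (2), however, your opposite-category reduction is flawed. Passing to opposites turns the locally cartesian fibrations $p,q$ into locally cocartesian fibrations $p^\op,q^\op$ and turns a \emph{right} adjoint of $F$ into a \emph{left} adjoint of $F^\op$; so what you would need is a criterion for $F^\op$ to admit a left adjoint when the fibrations are locally cocartesian. But part (1) gives a criterion for \emph{right} adjoints, and its honest dual via $(\ph)^\op$ is the statement ``if $p,q$ are locally cartesian, then $F$ has a \emph{left} adjoint iff fiberwise left adjoints exist and $F$ preserves locally cartesian edges'' --- which is not (2). Concretely, two of your translations break: first, ``$(F_e)^\op$ has a right adjoint'' is equivalent to $F_e$ having a \emph{left} adjoint, not a right adjoint $G_e$; second, ``$F^\op$ preserves locally cocartesian edges'' unwinds to $F$ preserving locally cartesian edges, a condition on $F$, whereas the Beck--Chevalley condition $f^*G_{e'}\simeq G_ef^*$ in (2) is a condition on the fiberwise adjoints $G_e$ (equivalently on the putative $G$). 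These are not interchangeable by mate calculus: in (1) the hypothesis constrains $F$, in (2) it constrains $G$, and this asymmetry is exactly why the two parts are not formal duals of one another. The paper sidesteps this by citing \cite[Proposition 7.3.2.11]{HA} directly for (2); that proposition requires its own construction of $G$ from the compatible family $\{G_e\}$ and is not a consequence of 7.3.2.6.
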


\begin{proof}
	This is \cite[Proposition 7.3.2.6]{HA} and \cite[Proposition 7.3.2.11]{HA}, respectively.
\end{proof}

\begin{remark}
	There is a dual version of Lemma~\ref{lemm:construct-relative-adjoint} for detecting relative left adjoints.
\end{remark}

\begin{remark}
	The full subcategory of $\mathbf{Cat}_{\infty/\scr E}$ spanned by the (locally) cocartesian fibrations is equivalent to the $(\infty,2)$-category of (left-lax) functors $\scr E\to\mathbf{Cat}_\infty$ and left-lax natural transformations. Thus, Lemma~\ref{lemm:construct-relative-adjoint}(1) says that a left-lax natural transformation between left-lax functors $\scr E\to\mathbf{Cat}_\infty$ has a right adjoint if and only if it is strict and has a right adjoint pointwise. More generally, one can show that the pointwise right adjoints of a right-lax transformation between (left-lax or right-lax) functors assemble into a left-lax transformation, and similarly with ``left'' and ``right'' exchanged.
\end{remark}

\begin{lemma} \label{lemm:adjoints-pass-to-sections}
Consider the triangle~\eqref{eqn:relative-adj}, and suppose that $F$ admits a right adjoint $G$ relative to $\scr E$. Then there is an induced adjunction
\[
F_*: \Fun_{\scr E}(\scr E,\scr C) \adj \Fun_{\scr E}(\scr E,\scr D): G_*,
\]
where $F_*(s)=F\circ s$ and $G_*(t)=G\circ t$.
\end{lemma}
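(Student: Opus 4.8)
\textbf{Proof proposal for Lemma~\ref{lemm:adjoints-pass-to-sections}.}

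The plan is to deduce the adjunction of sections from the relative adjunction via the $(\infty,2)$-categorical machinery already set up in the paper. First I would invoke Remark~\ref{rmk:relative-adj}: the hypothesis that $\epsilon\colon FG\to\id_{\scr D}$ exhibits $G$ as a right adjoint of $F$ relative to $\scr E$ says precisely that $(G, q\epsilon)$ is a right adjoint of $F$ in the slice $(\infty,2)$-category $\mathbf{Cat}_{\infty/\scr E}$, with $F$ and $G$ regarded as $1$-morphisms $\scr C\to\scr D$ and $\scr D\to\scr C$ over $\scr E$. Thus we have an adjunction $F\dashv G$ internal to $\mathbf{Cat}_{\infty/\scr E}$.

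Next I would apply an $(\infty,2)$-functor $\mathbf{Cat}_{\infty/\scr E}\to \mathbf{Cat}_\infty$ to this adjunction. The relevant one is $\Fun_{\scr E}(\scr E,\ph)$, i.e.\ the assignment sending an object $p\colon \scr C\to\scr E$ to its $\infty$-category of sections $\Fun_{\scr E}(\scr E,\scr C)$ and a $1$-morphism $F\colon \scr C\to\scr D$ over $\scr E$ to postcomposition $F_*\colon \Fun_{\scr E}(\scr E,\scr C)\to \Fun_{\scr E}(\scr E,\scr D)$. That this is an $(\infty,2)$-functor is a special case of the construction in Proposition~\ref{prop:construct-gamma}: taking there the cocartesian fibration to be $q\colon \scr X\to\scr B$ with $\scr B=\scr E$, the restriction of the $(\infty,2)$-functor $(\mathbf{Cat}_\infty^{\sslash\scr E})^\oop\to\mathbf{Cat}_\infty$ to $(\mathbf{Cat}_{\infty/\scr E})^\oop$ is represented by $q$ (this is part (1) of that proposition), i.e.\ it sends $\scr D\mapsto \Fun_{\scr E}(\scr D,\scr X)$. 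Dually — or by running the same construction with $\scr C$ in the role of $\scr E$ — one obtains the $(\infty,2)$-functor $\scr D\mapsto \Fun_{\scr E}(\scr E,\scr D)$ covariant in $\scr D$ over $\scr E$; alternatively one can simply cite that any $(\infty,2)$-functor preserves adjunctions (as recalled in the discussion preceding \sect\ref{sec:f-tens-*-adj}), once one has exhibited $\Fun_{\scr E}(\scr E,\ph)$ as an $(\infty,2)$-functor $\mathbf{Cat}_{\infty/\scr E}\to \mathbf{Cat}_\infty$. Applying this $(\infty,2)$-functor to the adjunction $F\dashv G$ then yields an adjunction $F_*\dashv G_*$ of $\infty$-categories, with $F_*(s)=F\circ s$ and $G_*(t)=G\circ t$ by the explicit description of the $(\infty,2)$-functor on $1$-morphisms.

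The main obstacle is precisely pinning down the $(\infty,2)$-functoriality of $\Fun_{\scr E}(\scr E,\ph)$ in the right variance; in $\mathbf{Cat}_{\infty/\scr E}$ the mapping $\infty$-categories are the full subcategories of $\MAP_{\mathbf{Cat}_\infty}(\scr C,\scr D)$ lying over $\id_{\scr E}$, and one must check that postcomposition $(\ph)\circ(\ph)\colon \MAP_{\mathbf{Cat}_{\infty/\scr E}}(\scr C,\scr D)\times \Fun_{\scr E}(\scr E,\scr C)\to \Fun_{\scr E}(\scr E,\scr D)$ assembles coherently, which is exactly what Proposition~\ref{prop:construct-gamma} provides (its assertions (3) and (4) record the effect on $1$- and $2$-morphisms). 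Once that is in hand, the rest is formal: an adjunction in any $(\infty,2)$-category maps to an adjunction under any $(\infty,2)$-functor, and the unit and counit of $F_*\dashv G_*$ are the images of those of $F\dashv G$. No further computation is needed, and in particular one does not need the fiberwise description of $G$ — the relative adjunction hypothesis is used only through Remark~\ref{rmk:relative-adj}.
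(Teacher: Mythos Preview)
Your proposal is essentially the paper's proof: invoke Remark~\ref{rmk:relative-adj} to view $F\dashv G$ as an adjunction in $\mathbf{Cat}_{\infty/\scr E}$, then apply the corepresentable $(\infty,2)$-functor $\MAP(\scr E,\ph)=\Fun_{\scr E}(\scr E,\ph)\colon \mathbf{Cat}_{\infty/\scr E}\to\mathbf{Cat}_\infty$. The paper cites \cite[\sect A.2.5]{GRderalg} for this functor and also notes an alternative: check directly that $\epsilon_*\colon F_*G_*\to\id$ exhibits the adjunction, using \cite[Proposition~2.3]{glasman2016spectrum}.

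One wobble: your appeal to Proposition~\ref{prop:construct-gamma} does not quite deliver what you need. That proposition, for a fixed cocartesian fibration $\scr X\to\scr B$, produces an $(\infty,2)$-functor $\scr D\mapsto \Fun_{\scr B}(\scr D,\scr X)$ contravariant in $\scr D$; you want $\scr D\mapsto \Fun_{\scr E}(\scr E,\scr D)$ covariant in the \emph{target} over $\scr E$, which is a different variance and does not arise by specializing $\scr B=\scr E$. The clean fix is the one you already gesture at: $\Fun_{\scr E}(\scr E,\ph)$ is simply the corepresentable $(\infty,2)$-functor $\MAP_{\mathbf{Cat}_{\infty/\scr E}}(\id_{\scr E},\ph)$, and any $(\infty,2)$-functor preserves adjunctions. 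That is exactly how the paper phrases it.
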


\begin{proof}
	By Remark~\ref{rmk:relative-adj}, $G$ is right adjoint to $F$ in $\mathbf{Cat}_{\infty/\scr E}$. The desired adjunction is obtained by applying the corepresentable $(\infty,2)$-functor $\MAP(\scr E,\ph)\colon \mathbf{Cat}_{\infty/\scr E}\to\mathbf{Cat}_\infty$ \cite[\sect A.2.5]{GRderalg}. Alternatively, one can directly check that the induced transformation $\epsilon_*\colon F_*G_*\to\id_{\Fun_\scr E(\scr E,\scr D)}$ exhibits $G_*$ as a right adjoint of $F_*$ using \cite[Proposition 2.3]{glasman2016spectrum}.
\end{proof}

We will often use Lemmas \ref{lemm:construct-relative-adjoint}(1) and~\ref{lemm:adjoints-pass-to-sections} when $p$ and $q$ are the cocartesian fibrations classified by functors $\scr A,\scr B\colon \scr E\to\Cat_\infty$ and $F$ is classified by a natural transformation $\phi\colon\scr A\to\scr B$. Then $F$ has a right adjoint relative to $\scr E$ if and only if $\phi(e)\colon\scr A(e)\to\scr B(e)$ has a right adjoint for all $e\in\scr E$, in which case we have an induced adjunction
\[
\phi_* :\Sect(\scr A)\adj \Sect(\scr B):\phi^!.
\]

Recall that a functor $L\colon\scr C\to\scr C$ is a \emph{localization functor} if, when regarded as a functor to its essential image $L\scr C$, it is left adjoint to the inclusion $L\scr C\subset\scr C$ \cite[\sect5.2]{HTT}.

\begin{proposition}\label{prop:localization-abstract}
	Let $p\colon\scr C\to\scr E$ be a (locally) cocartesian fibration, and for each $e\in\scr E$, let $L_e\colon \scr C_e\to\scr C_e$ be a localization functor. Let $L\scr C\subset\scr C$ be the full subcategory spanned by the images of the functors $L_e$, and let $q\colon L\scr C\to\scr E$ be the restriction of $p$.	
	 Suppose that, for every $f\colon e\to e'$ in $\scr E$, the functor $f_*\colon \scr C_e\to\scr C_{e'}$ sends $L_e$-equivalences to $L_{e'}$-equivalences. 
	Then:
	\begin{enumerate}
		\item $q$ is a (locally) cocartesian fibration;
		\item the inclusion $L\scr C\subset\scr C$ admits a left adjoint $L$ relative to $\scr E$ such that $L|\scr C_e\simeq L_e$.
	\end{enumerate}
\end{proposition}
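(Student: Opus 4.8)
The plan is to establish (1) directly from the definition of a (locally) cocartesian fibration, and then to deduce (2) by applying the dual of Lemma~\ref{lemm:construct-relative-adjoint}(2) to the inclusion $\iota\colon L\scr C\into\scr C$, now regarded as a functor over $\scr E$ between (locally) cocartesian fibrations.

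For (1), I would fix an edge $f\colon e\to e'$ of $\scr E$, write $f_*\colon\scr C_e\to\scr C_{e'}$ for the pushforward along a locally $p$-cocartesian lift, and show that for $x\in L\scr C_e=L_e\scr C_e$ the composite $\gamma\colon x\xrightarrow{\alpha}f_*(x)\xrightarrow{\beta}L_{e'}f_*(x)$ — with $\alpha$ locally $p$-cocartesian and $\beta$ the localization unit — is a locally $q$-cocartesian lift of $f$. This is a short mapping-space computation: since $L\scr C\subset\scr C$ and $L_{e'}\scr C_{e'}\subset\scr C_{e'}$ are full subcategories, for every $y\in L_{e'}\scr C_{e'}$ precomposition with $\gamma$ factors as precomposition with $\beta$ (an equivalence because $y$ is $L_{e'}$-local) followed by precomposition with $\alpha$ (an equivalence because $\alpha$ is locally $p$-cocartesian), and this is exactly the condition for $\gamma$ to be locally $q$-cocartesian. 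Hence $q$ is a locally cocartesian fibration, with pushforward $f\mapsto L_{e'}\circ f_*\circ\iota$. When $p$ is moreover a cocartesian fibration, one must further check that the locally $q$-cocartesian edges are closed under composition, which by \cite[Proposition 2.4.2.8]{HTT} makes $q$ cocartesian: the same computation shows that \emph{any} composite of a $p$-cocartesian edge with a morphism of $\scr C_{e'}$ that is an $L_{e'}$-equivalence with $L_{e'}$-local target is locally $q$-cocartesian, and for composable $f\colon e\to e'$, $g\colon e'\to e''$ the composite of the lifts $\gamma_f,\gamma_g$ has exactly this form once one uses $g_*f_*\simeq(gf)_*$ together with the hypothesis that $g_*$ carries $L_{e'}$-equivalences to $L_{e''}$-equivalences.

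For (2), with $q$ now a (locally) cocartesian fibration, I would apply the dual of Lemma~\ref{lemm:construct-relative-adjoint}(2) to $\iota\colon L\scr C\to\scr C$ over $\scr E$: the inclusion $\iota$ admits a left adjoint relative to $\scr E$ provided each $\iota_e\colon L_e\scr C_e\into\scr C_e$ admits a left adjoint — namely $L_e$ — and, for every $f\colon e\to e'$, the comparison transformation between $L_{e'}\circ f_*$ and $L_{e'}\circ f_*\circ\iota\circ L_e$ is an equivalence. On an object $x\in\scr C_e$ this transformation is the map $L_{e'}f_*(x)\to L_{e'}f_*(L_ex)$ induced by the localization unit $x\to L_ex$, which is an equivalence precisely because $f_*$ carries the $L_e$-equivalence $x\to L_ex$ to an $L_{e'}$-equivalence — the standing hypothesis. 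The last clause of Lemma~\ref{lemm:construct-relative-adjoint} then gives $L|\scr C_e\simeq L_e$, and composing the resulting relative left adjoint $L\colon\scr C\to L\scr C$ with $\iota$ recovers the fiberwise localization functors, which is (2).

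The main obstacle I expect is the bookkeeping in the second step: getting the variances right under the dualization implicit in "the dual of Lemma~\ref{lemm:construct-relative-adjoint}(2)", and checking that its abstract Beck–Chevalley-type condition unwinds precisely to the stated hypothesis on pushforwards — while simultaneously keeping the locally cocartesian and cocartesian cases uniform, the point being that in the cocartesian case the very same hypothesis is what forces the locally cocartesian edges of $q$ to be closed under composition. Both verifications are routine but must be carried out with care about the directions of the localization units and the chosen (co)cartesian lifts.
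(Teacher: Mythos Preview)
Your proposal is correct and follows essentially the same approach as the paper. The paper argues (1) by the identical mapping-space computation for the locally cocartesian case, handles the cocartesian upgrade via \cite[Lemma 2.4.2.7]{HTT} by directly verifying that the comparison $L_{e''}(gf)_*\to L_{e''}g_*L_{e'}f_*$ is an equivalence (your argument via \cite[Proposition 2.4.2.8]{HTT} and closure of locally $q$-cocartesian edges under composition is the same verification phrased in terms of edges rather than pushforwards), and proves (2) by the dual of Lemma~\ref{lemm:construct-relative-adjoint}(2), reducing to exactly the transformation $L_{e'}f_*\to L_{e'}f_*L_e$ that you identify.
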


\begin{proof}
	Let $f\colon e\to e'$ be a morphism in $\scr E$, let $c\in L\scr C_e$, and let $c\to c'$ be a locally $p$-cocartesian edge over $f$. Then the composition $c\to c'\to L_{e'}(c')$ is a locally $q$-cocartesian edge. Indeed, for every $d\in L\scr C_{e'}$, both maps
	\[
	\Map_{L\scr C_{e'}}(L_{e'}(c'),d)\to\Map_{\scr C_{e'}}(c',d)\to \Map_{\scr C}(c,d)=\Map_{L\scr C}(c,d)
	\]
	are equivalences. This proves that $q$ is a locally cocartesian fibration. If $p$ is a cocartesian fibration, we use \cite[Lemma 2.4.2.7]{HTT} to prove that $q$ is also a cocartesian fibration: we must show that for $f\colon e\to e'$, $g\colon e'\to e''$, and $c\in L\scr C_e$, the canonical map $L_{e''}(g\circ f)_*(c) \to L_{e''} g_* L_{e'} f_*(c)$ is an equivalence. This map factorizes as
	\[
	L_{e''}(g\circ f)_*(c) \to L_{e''}g_*f_*(c) \to L_{e''} g_* L_{e'} f_*(c).
	\]
	The first map is an equivalence since $p$ is a cocartesian fibration, and the second map is an equivalence since $g_*$ sends $L_{e'}$-equivalences to $L_{e''}$-equivalences. This proves (1). To prove (2), by the dual of Lemma~\ref{lemm:construct-relative-adjoint}(2), it suffices to show that for every $f\colon e\to e'$ in $\scr E$, the canonical transformation $L_{e'}f_* \to L_{e'}f_* L_{e}$ is an equivalence. This follows from the assumption that $f_*$ sends $L_e$-equivalences to $L_{e'}$-equivalences.
\end{proof}

\begin{corollary}\label{cor:localization-sections}
	Under the assumptions of Proposition~\ref{prop:localization-abstract}, the inclusion $\Fun_{\scr E}(\scr E,L\scr C)\subset \Fun_{\scr E}(\scr E,\scr C)$ has a left adjoint sending a section $s$ to the section $e\mapsto L_e(s(e))$.
\end{corollary}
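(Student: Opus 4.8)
The plan is to deduce Corollary~\ref{cor:localization-sections} directly from Proposition~\ref{prop:localization-abstract} together with Lemma~\ref{lemm:adjoints-pass-to-sections}. By Proposition~\ref{prop:localization-abstract}(2), the inclusion $L\scr C \subset \scr C$ admits a left adjoint $L$ relative to $\scr E$, with $L|\scr C_e \simeq L_e$. This is exactly the situation of Lemma~\ref{lemm:adjoints-pass-to-sections}, applied with the roles of the two functors in~\eqref{eqn:relative-adj} taken by the inclusion $L\scr C \into \scr C$ (as a right adjoint over $\scr E$) and $L$ (as its left adjoint over $\scr E$). Concretely, one takes the commutative triangle over $\scr E$ given by $q\colon L\scr C\to\scr E$ and $p\colon\scr C\to\scr E$, with the functor being the inclusion $\iota\colon L\scr C\into\scr C$; Proposition~\ref{prop:localization-abstract}(2) says $\iota$ admits a \emph{left} adjoint relative to $\scr E$, so the dual of Lemma~\ref{lemm:adjoints-pass-to-sections} yields an adjunction
\[
L_*\colon \Fun_\scr E(\scr E,\scr C) \adj \Fun_\scr E(\scr E,L\scr C)\colon \iota_*,
\]
where $\iota_*$ is the inclusion $\Fun_\scr E(\scr E,L\scr C)\subset\Fun_\scr E(\scr E,\scr C)$ and $L_*(s) = L\circ s$.

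First I would note that $\iota_*$ is fully faithful: since $\iota\colon L\scr C\into\scr C$ is fully faithful and composition is computed pointwise on sections, the induced functor $\iota_*$ on section $\infty$-categories is fully faithful as well (a map $s\to t$ of sections into $L\scr C$ is, pointwise, a map in $L\scr C_e$, and $\iota$ induces equivalences on these mapping spaces). Hence the adjunction $L_* \dashv \iota_*$ exhibits $\Fun_\scr E(\scr E,L\scr C)$ as a reflective subcategory of $\Fun_\scr E(\scr E,\scr C)$, and $L_*$ is the reflector, i.e.\ the desired left adjoint to the inclusion. Its value on a section $s$ is $L\circ s$, which by the identification $L|\scr C_e\simeq L_e$ sends $e\in\scr E$ to $L_e(s(e))$, as claimed.

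There is a small point to check carefully: Lemma~\ref{lemm:adjoints-pass-to-sections} is stated for a relative \emph{right} adjoint, whereas here $L$ is a relative \emph{left} adjoint of the inclusion $\iota$. This is harmless — one simply applies the lemma with $F = \iota$ and $G$ replaced by the relative left adjoint, invoking the evident dual statement (equivalently, one passes to opposite $\infty$-categories over $\scr E^\op$, or one appeals to Remark~\ref{rmk:relative-adj}, which identifies relative adjunctions with adjunctions in the slice $(\infty,2)$-category $\mathbf{Cat}_{\infty/\scr E}$, where left and right adjoints are formally dual, and then applies the corepresentable $(\infty,2)$-functor $\MAP(\scr E,\ph)$). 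I do not anticipate a genuine obstacle here; the only mild subtlety, and the step I would be most careful about, is confirming that the pointwise formula $L_*(s)(e) = L_e(s(e))$ indeed defines a section of $q\colon L\scr C\to\scr E$ — i.e.\ that it is compatible with the (co)cartesian structure — but this is precisely the content of Proposition~\ref{prop:localization-abstract}(1) together with the fact that $L$ is a functor over $\scr E$, so no further work is needed.
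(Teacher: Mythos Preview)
Your proposal is correct and follows exactly the paper's approach: the paper's proof is the single sentence ``Combine Proposition~\ref{prop:localization-abstract} and Lemma~\ref{lemm:adjoints-pass-to-sections}.'' One minor simplification: your worry about needing a dual of Lemma~\ref{lemm:adjoints-pass-to-sections} is unnecessary, since you can apply the lemma directly with $F = L\colon \scr C \to L\scr C$ (which has the relative right adjoint $G = \iota$), yielding $L_* \dashv \iota_*$ immediately.
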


\begin{proof}
	Combine Proposition~\ref{prop:localization-abstract} and Lemma~\ref{lemm:adjoints-pass-to-sections}.
\end{proof}

\newpage

\section*{Table of notation}

\begin{longtable}[l]{ll}
 $\Set$ & sets  \\
 $\Set_\Delta$ & simplicial sets \\
 $\Ab$ &  abelian groups \\
 $\Fin$ & finite sets  \\
 $\Sch_S$ & $S$-schemes  \\
 $\Sch_S^\fp$ & finitely presented $S$-schemes  \\
 $\Aff_S$ & affine $S$-schemes  \\
 $\QP_S$ & quasi-projective $S$-schemes \\
 $\Sm_S$ & smooth $S$-schemes \\
 $\SmAff_S$ & smooth affine $S$-schemes \\
 $\SmSep_S$ & smooth separated $S$-schemes \\
 $\SmQP_S$ & smooth quasi-projective $S$-schemes \\
 $\FEt_S$ & finite étale $S$-schemes  \\
 $\Gpd$ & groupoids \\
 $\Fin\Gpd$ & finite groupoids \\
 $\scr S$ & $\infty$-groupoids/spaces  \\
 $\Sp$ & spectra \\
 $\Cat_\infty$ & $\infty$-categories, i.e., $(\infty,1)$-categories (not necessarily small)  \\
 $\Cat_n$ & $n$-categories, i.e., $(n,1)$-categories  \\
 $\Cat_{(\infty,2)}$ & $(\infty,2)$-categories  \\
 $\Cat_\infty^\mathrm{sift}$ & sifted-cocomplete $\infty$-categories and sifted-colimit-preserving functors  \\
 $\Pr^\mathrm{L}$ & presentable $\infty$-categories and left adjoint functors  \\
 $\Pr^\mathrm{R}$ & presentable $\infty$-categories and right adjoint functors  \\
 $\scr O\Cat_\infty$ & $\infty$-categories with a distinguished class of objects \\
 $\scr M\Cat_\infty$ & $\infty$-categories with a distinguished class of morphisms \\
 $\Map(X,Y)$ & $\infty$-groupoid of maps in an $\infty$-category \\
 $\Hom(X,Y)$ & internal mapping object in a symmetric monoidal $\infty$-category  \\
 $\Fun(\scr C,\scr D)$ & $\infty$-category of functors  \\
 $\emptyset$ & initial object  \\
 $*$ & final object  \\
 $\1$ & unit object in a symmetric monoidal $\infty$-category  \\
 $\eta$, $\epsilon$ & unit, counit of an adjunction \\
 $\int\scr A$ & source of the cocartesian fibration classified by $\scr A\colon\scr C\to\Cat_\infty$ \\
 $\Sect(\scr A)$ & $\infty$-category of sections of the cocartesian fibration classified by $\scr A\colon\scr C\to\Cat_\infty$ \\
 $\scr C_\pt$ & pointed objects \\
 $\scr C_+$ & pointed objects of the form $X\amalg *$ \\
 $\PSh(\scr C)$ & presheaves (of $\infty$-groupoids)  \\
 $\PSh_\Sigma(\scr C)$ & presheaves that transform finite coproducts into finite products  \\
 $\Shv_t(\scr C)$ & $t$-sheaves  \\
 $\Sp(\scr C)$ & spectrum objects \\
 $\scr C_{/X}$, $\scr C_{X/}$ &  overcategory, undercategory \\
 $\tau_{\leq n}\scr C$ & subcategory of $n$-truncated objects  \\
 $\scr C^\simeq$ & maximal subgroupoid  \\
 $\scr C^\heartsuit$ & heart of a $t$-structure in a stable $\infty$-category \\
 $\scr C^\omega$ & subcategory of compact objects  \\
 $\h\scr C$ & homotopy category  \\
 $\CAlg(\scr C)$ &  commutative algebras in a symmetric monoidal $\infty$-category \\
 $\CAlg^\gp(\scr S)$ & grouplike $\E_\infty$-spaces \\
 $\mathrm{Grp}(\scr C)$ & group objects in an $\infty$-category \\
 $M^\gp$ & group completion of a monoid \\
 $\Mod_A(\scr C)$ & $A$-modules in a symmetric monoidal $\infty$-category  \\
 $\Pic(\scr C)$ & $\infty$-groupoid of invertible objects in a symmetric monoidal $\infty$-category \\
 $\scr C\subset_\fet\Sch_S$ & full subcategory containing $S$ and closed under finite sums and finite étale extensions \\
 $\fet$, $\flf$ & finite étale morphisms, finite locally free morphisms \\
 $\fincov$, $\fp$ & finite covering maps, finitely presented maps \\
 $z^*(X)$ & cycles in a noetherian scheme \\
 $\CH^*(X)$ & Chow group, i.e., cycles modulo rational equivalence \\
 $\Vect(X)$ & groupoid of vector bundles on a scheme \\
 $\QCoh(X)$ & stable $\infty$-category of quasi-coherent sheaves \\
 $\Perf(X)$ & stable $\infty$-category of perfect complexes \\
 $\K(X)$ & $\K$-theory $\infty$-groupoid (in the sense of Thomason–Trobaugh)  \\
 $\K^\circ(X)$ & rank $0$ summand of $\K$-theory \\
 $\KH(X)$ & homotopy $\K$-theory spectrum (in the sense of Weibel) \\
 $\Weil_fX$ & Weil restriction along $f$ \\
 $\V(\scr E)$ & $\Spec(\Sym \scr E)$ \\
 $\rm T_f$ & tangent bundle \\
 $\rm N_i$ & normal bundle \\
 $\Th_X(\xi)$ & Thom space of $\xi\in\Vect(X)$ or Thom spectrum of $\xi\in\K(X)$ \\
 $\S^\xi$, $\Sigma^\xi$ & Thom space or Thom spectrum over the base, $\S^\xi\wedge (\ph)$ \\
 $\S^{p,q}$, $\Sigma^{p,q}$, $\pi_{p,q}$ & $\S^{p-q}\wedge \G_m^{\wedge q}$, $\S^{p,q}\wedge (\ph)$, $[\S^{p,q}, \ph]$ \\
 $\underline{\pi}_n(E)_*$ & $n$th homotopy module of a motivic spectrum  \\
 $\underline{\pi}_n^\eff(E)$ & $n$th effective homotopy module of a motivic spectrum  \\
 $\f_nE$, $\tilde\f_nE$ & $n$-effective cover, very $n$-effective cover \\
 $\s_n E$, $\tilde\s_n E$ & $n$th slice, generalized $n$th slice \\
 $\L_t$ & $t$-sheafification  \\
 $\L_{\A^1}$ & $\A^1$-localization, i.e., associated $\A^1$-invariant presheaf  \\
 $\L_\mot$ & motivic localization, i.e., associated $\A^1$-invariant Nisnevich sheaf  \\
 $\B G$ & delooping of group object \\
 $\B_tG$ & $t$-sheafification of $\B G$ \\
 $\Sigma_n$ & symmetric group on $n$ letters \\
\end{longtable}

\newcommand{\etalchar}[1]{$^{#1}$}
\providecommand{\bysame}{\leavevmode\hbox to3em{\hrulefill}\thinspace}

\end{document}